\newdimen\proofrulebreadth \proofrulebreadth=.05em
\newdimen\proofdotseparation \proofdotseparation=1.25ex
\newdimen\proofrulebaseline \proofrulebaseline=2ex
\let\then\relax
\def\hfi{\hskip0pt plus.0001fil}
\mathchardef\squigto="3A3B
\newif\ifinsideprooftree\insideprooftreefalse
\newif\ifonleftofproofrule\onleftofproofrulefalse
\newif\ifproofdots\proofdotsfalse
\newif\ifdoubleproof\doubleprooffalse
\let\wereinproofbit\relax
\newdimen\shortenproofleft
\newdimen\shortenproofright
\newdimen\proofbelowshift
\newbox\proofabove
\newbox\proofbelow
\newbox\proofrulename
\def\shiftproofbelow{\let\next\relax\afterassignment\setshiftproofbelow\dimen0 }
\def\shiftproofbelowneg{\def\next{\multiply\dimen0 by-1 }%
\afterassignment\setshiftproofbelow\dimen0 }
\def\setshiftproofbelow{\next\proofbelowshift=\dimen0 }
\def\setproofrulebreadth{\proofrulebreadth}
\def\prooftree{
%
\ifnum  \lastpenalty=1
\then   \unpenalty
\else   \onleftofproofrulefalse
\fi
%
\ifonleftofproofrule
\else   \ifinsideprooftree
        \then   \hskip.5em plus1fil
        \fi
\fi
%
\bgroup
\setbox\proofbelow=\hbox{}\setbox\proofrulename=\hbox{}%
\let\justifies\proofover\let\leadsto\proofoverdots\let\Justifies\proofoverdbl
\let\using\proofusing\let\[\prooftree
\ifinsideprooftree\let\]\endprooftree\fi
\proofdotsfalse\doubleprooffalse
\let\thickness\setproofrulebreadth
\let\shiftright\shiftproofbelow \let\shift\shiftproofbelow
\let\shiftleft\shiftproofbelowneg
\let\ifwasinsideprooftree\ifinsideprooftree
\insideprooftreetrue
%
\setbox\proofabove=\hbox\bgroup$\displaystyle 
\let\wereinproofbit\prooftree
%
\shortenproofleft=0pt \shortenproofright=0pt \proofbelowshift=0pt
%
\onleftofproofruletrue\penalty1
}
\def\eproofbit{
%
\ifx    \wereinproofbit\prooftree
\then   \ifcase \lastpenalty
        \then   \shortenproofright=0pt  
        \or     \unpenalty\hfil         
        \or     \unpenalty\unskip       
        \else   \shortenproofright=0pt  
        \fi
\fi
%
\global\dimen0=\shortenproofleft
\global\dimen1=\shortenproofright
\global\dimen2=\proofrulebreadth
\global\dimen3=\proofbelowshift
\global\dimen4=\proofdotseparation
\global\count255=\proofdotnumber
%
$\egroup  
%
\shortenproofleft=\dimen0
\shortenproofright=\dimen1
\proofrulebreadth=\dimen2
\proofbelowshift=\dimen3
\proofdotseparation=\dimen4
\proofdotnumber=\count255
}
\def\proofover{
\eproofbit 
\setbox\proofbelow=\hbox\bgroup 
\let\wereinproofbit\proofover
$\displaystyle
}%
\def\proofoverdbl{
\eproofbit 
\doubleprooftrue
\setbox\proofbelow=\hbox\bgroup 
\let\wereinproofbit\proofoverdbl
$\displaystyle
}%
\def\proofoverdots{
\eproofbit 
\proofdotstrue
\setbox\proofbelow=\hbox\bgroup 
\let\wereinproofbit\proofoverdots
$\displaystyle
}%
\def\proofusing{
\eproofbit 
\setbox\proofrulename=\hbox\bgroup 
\let\wereinproofbit\proofusing
\kern0.3em$
}
\def\endprooftree{
\eproofbit 
  \dimen5 =0pt
%
\dimen0=\wd\proofabove \advance\dimen0-\shortenproofleft
\advance\dimen0-\shortenproofright
%
\dimen1=.5\dimen0 \advance\dimen1-.5\wd\proofbelow
\dimen4=\dimen1
\advance\dimen1\proofbelowshift \advance\dimen4-\proofbelowshift
%
\ifdim  \dimen1<0pt
\then   \advance\shortenproofleft\dimen1
        \advance\dimen0-\dimen1
        \dimen1=0pt
        \ifdim  \shortenproofleft<0pt
        \then   \setbox\proofabove=\hbox{%
                        \kern-\shortenproofleft\unhbox\proofabove}%
                \shortenproofleft=0pt
        \fi
\fi
%
\ifdim  \dimen4<0pt
\then   \advance\shortenproofright\dimen4
        \advance\dimen0-\dimen4
        \dimen4=0pt
\fi
%
\ifdim  \shortenproofright<\wd\proofrulename
\then   \shortenproofright=\wd\proofrulename
\fi
%
\dimen2=\shortenproofleft \advance\dimen2 by\dimen1
\dimen3=\shortenproofright\advance\dimen3 by\dimen4
%
\ifproofdots
\then
        \dimen6=\shortenproofleft \advance\dimen6 .5\dimen0
        \setbox1=\vbox to\proofdotseparation{\vss\hbox{$\cdot$}\vss}%
        \setbox0=\hbox{%
                \advance\dimen6-.5\wd1
                \kern\dimen6
                $\vcenter to\proofdotnumber\proofdotseparation
                        {\leaders\box1\vfill}$%
                \unhbox\proofrulename}%
\else   \dimen6=\fontdimen22\the\textfont2 
        \dimen7=\dimen6
        \advance\dimen6by.5\proofrulebreadth
        \advance\dimen7by-.5\proofrulebreadth
        \setbox0=\hbox{%
                \kern\shortenproofleft
                \ifdoubleproof
                \then   \hbox to\dimen0{%
                        $\mathsurround0pt\mathord=\mkern-6mu%
                        \cleaders\hbox{$\mkern-2mu=\mkern-2mu$}\hfill
                        \mkern-6mu\mathord=$}%
                \else   \vrule height\dimen6 depth-\dimen7 width\dimen0
                \fi
                \unhbox\proofrulename}%
        \ht0=\dimen6 \dp0=-\dimen7
\fi
%
\let\doll\relax
\ifwasinsideprooftree
\then   \let\VBOX\vbox
\else   \ifmmode\else$\let\doll=$\fi
        \let\VBOX\vcenter
\fi
\VBOX   {\baselineskip\proofrulebaseline \lineskip.2ex
        \expandafter\lineskiplimit\ifproofdots0ex\else-0.6ex\fi
        \hbox   spread\dimen5   {\hfi\unhbox\proofabove\hfi}%
        \hbox{\box0}%
        \hbox   {\kern\dimen2 \box\proofbelow}}\doll%
%
\global\dimen2=\dimen2
\global\dimen3=\dimen3
\egroup 
\ifonleftofproofrule
\then   \shortenproofleft=\dimen2
\fi
\shortenproofright=\dimen3
%
\onleftofproofrulefalse
\ifinsideprooftree
\then   \hskip.5em plus 1fil \penalty2
\fi
}
\newcommand{\Set}{\mathbf{Set}}
\newcommand{\sSet}{\mathbf{sSet}}
\newcommand{\sCat}{\mathbf{sCat}}
\newcommand{\defeq}{=_{\mathrm{def}}} 
\newcommand{\Trib}{\mathbf{Trib}}
\newcommand{\Clan}{\mathbf{Clan}}
\newcommand{\Cat}{\mathbf{Cat}}
\definecolor{mat}{rgb}{0.57,0.75,1}
\newtheorem{thm}{Theorem}[subsection]
\newtheorem{prop}[thm]{Proposition}
\newtheorem{cor}[thm]{Corollary}
\newtheorem{lemma}[thm]{Lemma}
\theoremstyle{definition}
\newtheorem{defi}[thm]{Definition}
\newtheorem{rem}[thm]{Remark}
\begin{document}

\title{{\bf Notes on clans and tribes}
\vspace{1cm}}

\author{
\sc
A. Joyal \footnote{
\href{http://www.cirget.uqam.ca/}{\sc cirget, UQ\`AM}, 
\href{mailto:joyal.andre@uqam.ca}{joyal.andre@uqam.ca} 
}\\ 
}

\maketitle

\vspace{1cm}

\begin{abstract}

The purpose of these notes is to give a categorical presentation/analysis of homotopy type theory.
The notes are incomplete as they stand (October 2017). The chapter on univalent tribes is missing.
The references are not always connected to the text. A better version is in preparation.

\end{abstract}

\newpage
\setcounter{tocdepth}{3}
\tableofcontents

\newpage

\section*{Introduction}
\addcontentsline{toc}{section}{\bfseries Introduction}

Few things can better illustrate the unity of mathematics than
the homotopy interpretation of Martin-L\" of type theory discovered
by Awodey-Warren [AW] and Voevodsky [Vo].
{\it Homotopy type theory} is the new field of mathematics 
arising form these discoveries [HoTTb].
A long term goal is to develop a user-friendly computerised proof-assistant 
for homotopy theorists (and for all mathematicians).
There are many evidences  that homotopy type theory can effectively contribute to homotopy theory.
For examples, a new proof of the Balkers-Massey theorem was found by using type theoretic methods [FFLL],
the theorem was generalised to an arbitrary modality  in a higher topos [ABFJ1]
and the generalisation applied to Goodwillie's Calculus [ABFJ2].
We believe that category theory can be used as a common ground between the two fields.
The goal of the present notes is to contribute to this common ground.
The notion of tribe presented here has emerged in discussions with Steve Awodey and 
Michael Shulman during the Univalent Foundation Program at the IAS 
in 2012-2013; it is a categorical approximation of Martin-L\"of type theory;
it is closely related to the notion of fibration category introduced by Ken Brown.
We hope that the theory of tribes will be useful both in
homotopy theory and in type theory.

\medskip

The theory of tribes presented here is modular and progressive: we begin with
the theory of clans, followed by the theory of $\pi$-clans, of tribes and $\pi$-tribes.
A {\it clan} is defined to be category equipped
with a class of {\it fibrations} closed under 
composition and base changes.
The theory of clans can be regarded as a categorical version of the theory of dependant types,
without product and propositional equality.
There is a notion of anodyne map in every clan and
a clan is a {\it tribe} if every base change of an
anodyne map is anodyne and
every map can be factored as an anodyne map followed 
by a fibration. Every tribe has the structure of  a Brown fibration category.
We also introduce a notion of simplicial tribe (and of simplicial clan);
it is playing an important role in the {\it homotopy theory of tribes}
that will be developed in a subsequent paper.
A notion of semi-simplicial tribes was introduced by Kapulkin and Szimilo [KS].

\medskip

The present notes are incomplete and much remains to be done.
The notion of {\it univalent tribe} (a $\pi$-tribe with a universe satisfying Voevodsky univalence axiom)
remains to be introduced. 

\medskip

\newpage
\section{Theory of Clans}

\subsection{Basic aspects}

Recall that an object $C$ in a category $ \mathcal{E}$  is said to be {\bf carrable} if its cartesian
product $A\times C$ with any other object $A\in  \mathcal{E}$ exists.
A map $p:C\to B$ is said to be {\bf carrable} if the object $(C,p)$ of the slice category $ \mathcal{E}/B$
is carrable: this means that the fiber product $A\times_B C$
with any other map $f:A\to B$ exists (see Definition \ref{defcarrablemap}). 
$$\xymatrix{
A\times_B C  \ar[d]_{p_1}\ar[rr]^{p_2}& & C \ar[d]^{p} \\
A \ar[rr]^{f}&& B
}$$ 
The projection $p_1:A\times_B C\to A$
is called the {\bf base change} of $p$ along $f$.
We say that a class $\mathcal{F}$ of maps in $\mathcal{E}$ 
is {\bf closed under base changes} if every map in $\mathcal{F}$
is carrable, and the base change of a map in $\mathcal{F}$ along any map in  $ \mathcal{E}$ belongs to $\mathcal{F}$.

\medskip

\begin{defi} \label{defclan} If $\mathcal{E}$ is a category with terminal
object $1$, then a {\bf clan structure} on  $\mathcal{E}$  is a class  of maps $\mathcal{F}\subseteq \mathcal{E}$  satisfying the
following conditions:
  \begin{itemize}
 \item{} Every isomorphism belongs to $\mathcal{F}$;
  \item{} $\mathcal{F}$ is closed under composition and base changes;
       \item{} the unique map $X\to 1$ belongs to $\mathcal{F}$
       for every object $X\in \mathcal{E}$.
  \end{itemize}
    A map in $\mathcal{F}$ is called a {\bf fibration}.
  A {\bf clan} is a category with terminal object equipped with a clan structure.
  \end{defi}

\smallskip

We shall often picture a fibration in a clan with a two headed arrow $A\twoheadrightarrow B$.

\bigskip

Examples of clans:
 \begin{itemize}
    \item{} A category with finite limits has the structure of a clan, where every map is a fibration;  
     \item{} The category of small categories is a clan, where a fibration is an iso-fibration       
          \item{} The category of Kan complexes is a clan, where a fibration is a Kan fibration between Kan complexes.
               \item{} The category of fibrant objects of a Quillen model category is a clan, where a fibration is a fibration between fibrant objects.

\end{itemize}

\medskip

\begin{defi} \label{defcartcat} We shall say that a category 
with finite products is {\it cartesian}.
  \end{defi}

\medskip

\begin{prop}\label{tribeofgenuineprojection0}
Every clan is a cartesian category.
The cartesian product of two fibrations in a clan is a fibration.
\end{prop}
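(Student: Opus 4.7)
The plan is to use the two clan axioms---every map $X \to 1$ is a fibration, and fibrations are closed under base changes and composition---to build up binary products and then to analyse product maps as composites of base changes.

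First I would establish that $\mathcal{E}$ has binary products. For any objects $A,B \in \mathcal{E}$, the unique map $B \to 1$ is a fibration by the third axiom of Definition \ref{defclan}. Since fibrations are closed under base changes, they are in particular carrable, so the pullback
$$\xymatrix{
A\times_1 B \ar[d] \ar[r] & B \ar[d] \\
A \ar[r] & 1
}$$
exists. This pullback serves as the cartesian product $A\times B$. Together with the terminal object $1$ and an easy induction, this shows that $\mathcal{E}$ has all finite products, so $\mathcal{E}$ is cartesian in the sense of Definition \ref{defcartcat}.

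Next I would prove that the product of fibrations is a fibration. Given fibrations $f:A\twoheadrightarrow A'$ and $g:B\twoheadrightarrow B'$, I would factor the product map $f\times g : A\times B \to A'\times B'$ as
$$A\times B \;\xrightarrow{\;f\times \mathrm{id}_B\;}\; A'\times B \;\xrightarrow{\;\mathrm{id}_{A'}\times g\;}\; A'\times B'.$$
The key observation is that each factor is a base change of a fibration. Indeed, $f\times\mathrm{id}_B$ fits into a pullback square with $f$ along the projection $A'\times B \to A'$, and similarly $\mathrm{id}_{A'}\times g$ is the base change of $g$ along $A'\times B'\to B'$. By closure of $\mathcal{F}$ under base changes, both maps are fibrations; by closure under composition, their composite $f\times g$ is a fibration.

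The argument is essentially bookkeeping with the clan axioms, and the only mild subtlety is recognising the two canonical squares as genuine pullbacks (which follows from the universal property of products). I do not anticipate any real obstacle; the entire proof is a direct unpacking of Definition \ref{defclan}.
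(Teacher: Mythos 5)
Your proof is correct and follows essentially the same route as the paper: binary products are obtained as the pullback of $B\to 1$ (a fibration, hence carrable) along $A\to 1$, and $f\times g$ is factored as $(\,\mathrm{id}\times g)\circ(f\times\mathrm{id})$, each factor being a base change of a fibration along a projection. No gaps.
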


\begin{proof} 
A clan $\mathcal{E}$ has a terminal object $1$ by definition.
 If $A$ and $B$ are two objects of $\mathcal{E}$, 
 then the base change of the map $B\to 1$
 along the map $A\to 1$ exists, since the map $B\to 1$ is a fibration.
 This shows that $\mathcal{E}$ has finite cartesian products.
 Let us show that the cartesian product
of two fibrations $f:A'\to A$ and $g:B'\to B$ is a fibration. 
The map $f\times B'$ is a fibration by base change,
since the following square is cartesian.
$$\xymatrix{
A'\times B'  \ar[d]_{f\times B'}\ar[rr]^{p_1}& & A' \ar[d]^f \\
A \times B' \ar[rr]^{p_1} && A
}$$ 
Similarly, the map $A\times g$ is a fibration by base change,
since the 
following square
is cartesian.
$$\xymatrix{
A\times B' \ar[d]_{A\times g}\ar[rr]^{p_1}& & B' \ar[d]^g \\
A \times B \ar[rr]^{p_1} && B
}$$ 
It follows that the map
$f\times g=(A\times g)(f\times B')$ is 
a fibration,
since the composite of two fibrations is a fibration.
\end{proof}

\begin{defi}\label{defcartproj} If $\mathcal{E}$
is a cartesian category, 
we say that a map $p:E\to B$ in $\mathcal{E}$
is a {\bf cartesian projection} if there exists a map $p':E\to E'$ such that 
the following square is cartesian.
\begin{equation} \label{sqaredefcartproj}
\xymatrix{
E  \ar[d]_{p}\ar[rr]^{p'}& & E' \ar[d] \\
B \ar[rr] && 1
}\end{equation}
\end{defi}

In a clan, a cartesian projection $p:E\to B$ is a fibration,
since the map $E'\to 1$
in the square (\ref{sqaredefcartproj})
is a fibration.

\begin{prop}\label{tribeofgenuineprojection}
A cartesian category $\mathcal{E}$ has the structure
of a clan, where a fibration is a cartesian projection; it is the smallest clan
structure on $\mathcal{E}$.
\end{prop}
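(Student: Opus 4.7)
Write $\mathcal{F}_0$ for the class of cartesian projections in $\mathcal{E}$. The plan has two parts: first, verify that $\mathcal{F}_0$ satisfies the three axioms of Definition \ref{defclan}; second, show that every clan structure $\mathcal{F}$ on $\mathcal{E}$ contains $\mathcal{F}_0$, which automatically makes $\mathcal{F}_0$ the smallest.

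For the three axioms, the terminal axiom is immediate: for any $X\in\mathcal{E}$, the square with $E=E'=X$ and $p'=\mathrm{id}_X$ trivially exhibits $X\to 1$ as a cartesian projection. Isomorphisms $p:E\to B$ are cartesian projections by choosing $E'=1$. For closure under base change, suppose $p:E\to B$ is a cartesian projection witnessed by $p':E\to E'$, and let $f:A\to B$ be any map in $\mathcal{E}$. Since $\mathcal{E}$ is cartesian, the product $A\times E'$ exists; the product square on the left and the defining cartesian square for $p$ on the right
$$\xymatrix{
A\times E' \ar[r]\ar[d]_{\pi_1} & E' \ar[d] \\
A \ar[r] & 1
}
\qquad\qquad
\xymatrix{
E \ar[r]^{p'}\ar[d]_{p} & E' \ar[d] \\
B \ar[r] & 1
}$$
together with the map $f:A\to B$ yield, by the universal property of the right-hand pullback, a unique arrow $A\times E'\to E$ through which $A\to B$ and the projection $\pi_2:A\times E'\to E'$ factor. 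The pullback pasting lemma then identifies $A\times E'$ with $A\times_B E$, and the base change $\pi_1$ of $p$ along $f$ is again a cartesian projection. For closure under composition, let $p:E\to B$ and $q:B\to C$ be cartesian projections witnessed by $p':E\to E'$ and $q':B\to B'$; then the map $\langle q'p,p'\rangle:E\to B'\times E'$ exhibits $qp:E\to C$ as a cartesian projection, again by pasting pullback squares.

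For minimality, let $\mathcal{F}$ be any clan structure on $\mathcal{E}$. Given a cartesian projection $p:E\to B$ with witness $p':E\to E'$, the defining square (\ref{sqaredefcartproj}) exhibits $p$ as the base change of $E'\to 1$ along $B\to 1$. Since the terminal-map axiom gives $E'\to 1\in\mathcal{F}$, and $\mathcal{F}$ is closed under base changes, we conclude $p\in\mathcal{F}$. Thus $\mathcal{F}_0\subseteq\mathcal{F}$, and $\mathcal{F}_0$ is the smallest clan structure on $\mathcal{E}$.

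The only step with any real content is verifying that $\mathcal{F}_0$ is closed under base change and composition; both reduce to invoking the pullback pasting lemma, so the main labor is simply drawing the diagrams correctly. Everything else is a direct unpacking of the definitions.
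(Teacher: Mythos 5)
Your proof is correct and follows essentially the same route as the paper: the same witness $\langle q'p,p'\rangle:E\to B'\times E'$ for composites, the same pullback-pasting argument for base changes, and the same observation that $E'\to 1$ is a fibration in any clan structure for minimality (which the paper states in the remark just before the proposition rather than inside the proof). No gaps.
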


\begin{proof} An isomorphism $u:A\to B$ is a cartesian projection, since 
the following square is cartesian when the map $u$ is invertible.
$$\xymatrix{
A  \ar[d]_{u}\ar[r]^{}& \ar[d] 1 \\
B \ar[r] & 1
}$$
Let us show that the composite of two 
cartesian projections $g:E\to B$ and $f:B\to A$
is a cartesian projection. By definition, there
exists two cartesian squares:
$$
\xymatrix{
E  \ar[d]_{g}\ar[rr]^{g'} \ar@{}[drr]|{(1)} & & E' \ar[d] \\
B \ar[rr] && 1
}\quad \quad \quad
\xymatrix{
B  \ar[d]_{f}\ar[rr]^{f'} \ar@{}[drr]|{(2)} & & B' \ar[d] \\
A \ar[rr] && 1
}$$
The bottom square $(c)$ of the following commutative diagram 
is cartesian, since the square $(2)$ above is cartesian.
The square $(b)$ is cartesian by construction.
The composite $(a)+(b)$ of the top squares of the diagram 
is cartesian, since the square $(1)$ above is cartesian.
$$\xymatrix{
E  \ar[d]_{g}\ar[rr]^{(f'g,g')}\ar@{}[drr]|{(a)}& &B'\times  E' \ar[d]_{p_1} \ar[rr]^{p_2} \ar@{}[drr]|{(b)} && E' \ar[d]  \\
B  \ar[d]_f  \ar[rr]^{f'} \ar@{}[drr]|{(c)} && B'\ar[d] \ar[rr] && 1 \\
A\ar[rr] && 1
}$$
It then follows by Lemma \ref{lemmacartesiansq} that the square $(a)$ is cartesian.
Hence the composite square $(a)+(c)$ is cartesian  by the same lemma, since square $(c)$
is cartesian. This shows that the map $fg:E\to A$ is a cartesian projection.
Let us show that every
cartesian projection $p$ is carrable.
By definition, a cartesian projection $p:E\to B$
is the base change of a map $E'\to 1$.
But the map $E'\to 1$ is carrable, since every object is carrable 
in a cartesian category. It follows that the map $p:E\to B$
is carrable, since a base change of a carrable map is carrable.
\end{proof}

\medskip

If $B$ is an object of a clan $\mathcal{E}$, we shall denote by $\mathcal{E}(B)$
the full subcategory of $\mathcal{E}/B$ whose objects are the fibrations $X\twoheadrightarrow B$.
Let us say that a morphism $f:(X,p)\to (Y,q)$ in $\mathcal{E}(B)$  is a {\it fibration}
if the map $f:X\to Y$ is a fibration in $\mathcal{E}$.
$$\xymatrix{
X \ar[dr]_p \ar[rr]^f && Y \ar[dl]^q\\
& B &
}$$
Observe that $\mathcal{E}(1)=\mathcal{E}/1=\mathcal{E}$.

\begin{prop}\label{clanslice}
The category $\mathcal{E}(B)$ has
the structure of a clan with the fibrations defined above.
\end{prop}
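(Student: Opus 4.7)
The plan is to verify the three clan axioms of Definition \ref{defclan} for $\mathcal{E}(B)$ in turn, after first identifying the terminal object. The terminal object of $\mathcal{E}(B)$ is $(B,\mathrm{id}_B)$: it lies in $\mathcal{E}(B)$ since the identity is an isomorphism, hence a fibration in $\mathcal{E}$, and the unique map from an arbitrary $(X,p)$ to $(B,\mathrm{id}_B)$ in $\mathcal{E}/B$ is the structure map $p:X\to B$, which is a fibration in $\mathcal{E}$ by hypothesis. This simultaneously verifies the third axiom: every object of $\mathcal{E}(B)$ admits a fibration to the terminal object.

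Next I would check closure under isomorphism and composition. If $f:(X,p)\to (Y,q)$ is an isomorphism in $\mathcal{E}(B)$, then $f:X\to Y$ is an isomorphism in $\mathcal{E}$ and hence a fibration there, so $f$ is a fibration in $\mathcal{E}(B)$. If $f:(X,p)\twoheadrightarrow (Y,q)$ and $g:(Y,q)\twoheadrightarrow (Z,r)$ are fibrations in $\mathcal{E}(B)$, then their underlying maps are fibrations in $\mathcal{E}$, and the composite $gf$ is a fibration in $\mathcal{E}$ (clans are closed under composition), so $gf$ is a fibration in $\mathcal{E}(B)$.

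The main step—and the one that needs the most care—is closure under base change, because I must check both that the required pullbacks exist inside $\mathcal{E}(B)$ (not merely in $\mathcal{E}$) and that the resulting base-change map is again a fibration in $\mathcal{E}(B)$. Given a fibration $f:(X,p)\twoheadrightarrow (Y,q)$ in $\mathcal{E}(B)$ and any map $h:(Z,s)\to (Y,q)$ in $\mathcal{E}(B)$, the pullback of $f$ along $h$ in $\mathcal{E}/B$ is computed in $\mathcal{E}$: form $Z\times_Y X$ in $\mathcal{E}$ (which exists because $f$ is carrable there) and equip it with the structure map $s\circ p_1:Z\times_Y X\to B$. I then need to argue this equipped object lies in $\mathcal{E}(B)$, i.e., $s\circ p_1$ is a fibration in $\mathcal{E}$; but $p_1$ is a fibration as a base change of $f$ in $\mathcal{E}$, and $s$ is a fibration by assumption on $(Z,s)$, so $s\circ p_1$ is a fibration by composition. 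The same observation shows the base-change map $p_1$ is a fibration in $\mathcal{E}(B)$, since its underlying map is a fibration in $\mathcal{E}$.

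The only subtle point is to recognize that a pullback square in $\mathcal{E}/B$ is exactly a pullback square in $\mathcal{E}$ (with compatible structure maps), so no extra verification is needed beyond what the clan structure on $\mathcal{E}$ provides. Once that is clear, all three axioms follow directly from the corresponding axioms for $\mathcal{E}$, and the proof is essentially a transcription.
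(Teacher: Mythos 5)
Your proof is correct and is precisely the routine verification that the paper leaves to the reader (its proof of Proposition \ref{clanslice} is simply ``Left to the reader''). You correctly identify the one point that actually requires care --- that pullbacks in $\mathcal{E}/B$ are computed on underlying objects in $\mathcal{E}$ and that the resulting vertex, equipped with the composite structure map $s\circ p_1$, again lies in the full subcategory $\mathcal{E}(B)$ --- and the rest transcribes directly from the clan axioms for $\mathcal{E}$.
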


\begin{proof} Left to the reader
\end{proof}

\begin{defi} 
We shall say that $\mathcal{E}(B)$ is the {\bf local clan} of $\mathcal{E}$
at $B$. 
 \end{defi}

Recall that a functor $F:\mathcal{E}\to \mathcal{E}'$ is said to
{\it preserve} the base change of a map $p:X\to B$ along a
map $f:A\to B$ if it takes the cartesian square 
$$\xymatrix{
A\times_B X  \ar[d]_{p_1}\ar[rr]^{p_2}& & X \ar[d]^{p} \\
A \ar[rr]^{f}&& B
}$$ 
to a cartesian square.

 \begin{defi} \label{defmorphismclans} We say that a functor between clans $F:\mathcal{E}\to \mathcal{E}'$
 is a {\it morphism of clans} 
if he following conditions hold:
 \begin{itemize}
 \item{} $F$ takes fibrations to fibrations;
  \item{} $F$ preserves base changes of fibrations;
 \item{} $F$ takes terminal objects to terminal objects.
 \end{itemize}
  \end{defi}

\medskip

 We shall denote by  by $\Clan$ the category whose objects are small clans and whose morphisms
 are morphisms of clans.
 The category $\Clan$ has the structure of a 2-category,
 where a  2-cell is a natural transformation. We say that a morphism of clans
 is an {\it equivalence} if it is an equivalence in this 2-category.

\begin{rem} 
If a functor  isomorphic to a morphism of clans, then it is a morphism of clans.
 \end{rem}

 A clan is a cartesian category by Proposition \ref{tribeofgenuineprojection0}.

\begin{prop}\label{morphismtribecart}
A morphism of clans is a cartesian functor.
\end{prop}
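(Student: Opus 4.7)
The plan is to reduce the preservation of binary products to the preservation of base changes of fibrations, which is built into the definition of a morphism of clans. By Proposition \ref{tribeofgenuineprojection0}, in any clan $\mathcal{E}$ the cartesian product of two objects $A$ and $B$ can be realised as the pullback of the fibration $B \twoheadrightarrow 1$ along the unique map $A \to 1$, since the unique map $B \to 1$ is a fibration by the third clan axiom. Thus the square
$$\xymatrix{
A\times B  \ar[d]_{p_1}\ar[rr]^{p_2}& & B \ar[d] \\
A \ar[rr] && 1
}$$
is cartesian and is the base change of the fibration $B \twoheadrightarrow 1$ along $A \to 1$.

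Next, I would apply $F$ to this square. Since $F$ preserves terminal objects by definition, $F(1)$ is terminal in $\mathcal{E}'$, so the maps $F(A) \to F(1)$ and $F(B) \to F(1)$ are the unique ones. Since $F$ preserves base changes of fibrations (and $B \twoheadrightarrow 1$ is a fibration), the image square
$$\xymatrix{
F(A\times B)  \ar[d]_{F(p_1)}\ar[rr]^{F(p_2)}& & F(B) \ar[d] \\
F(A) \ar[rr] && F(1)
}$$
is cartesian in $\mathcal{E}'$. Because $F(1)$ is terminal, this pullback is precisely the cartesian product $F(A) \times F(B)$ in $\mathcal{E}'$, and the canonical comparison map $F(A\times B) \to F(A)\times F(B)$ is an isomorphism.

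Together with the preservation of the terminal object, this shows that $F$ preserves all finite products, hence is cartesian. There is no real obstacle here; the only subtlety is noticing that the product diagram in a clan is already a pullback of the specific shape that the definition of morphism of clans is designed to preserve, so the conclusion follows directly from the axioms without any additional verification.
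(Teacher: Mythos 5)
Your proof is correct and follows essentially the same route as the paper: realise $A\times B$ as the base change of the fibration $B\twoheadrightarrow 1$ along $A\to 1$, apply the preservation of base changes of fibrations and of terminal objects, and conclude that the image square exhibits $F(A)\times F(B)$. No issues.
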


\begin{proof}  The functor $F:\mathcal{E} \to \mathcal{E}'$ 
preserves base changes of fibrations. Hence it takes a cartesian square
$$\xymatrix{
A\times B  \ar[d]_{p_1}\ar[rr]^{p_2}& & B \ar[d] \\
A \ar[rr]^{}&& 1
}$$ 
to a cartesian square,
$$\xymatrix{
F(A\times B)   \ar[d]_{F(p_1)}\ar[rr]^{F(p_2)}& & F(B) \ar[d]\\
F(A) \ar[rr]^{}&& F(1)
}$$
 since the map $B\twoheadrightarrow 1$ is a fibration. 
The result follows, since $F(1)$ is a terminal object.
\end{proof}

If $F:\mathcal{E} \to \mathcal{E}'$ is a morphism of clans,
we shall denote by $F_{(A)} :\mathcal{E}(A) \to \mathcal{E}'(FA)$
the functor defined by putting $F_{(A)}(E,p)=(FE,F(p))$
for an object $(E,p)\in \mathcal{E}(A)$.

\begin{prop}\label{inducedhomoclan}
If $F:\mathcal{E} \to \mathcal{E}'$ is a morphism of clans,
then so is the functor
$$F_{(A)} :\mathcal{E}(A) \to \mathcal{E}'(FA)$$
induced by $F$ for every object $A\in \mathcal{E} $.
\end{prop}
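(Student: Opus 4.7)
The plan is to verify that $F_{(A)}$ satisfies each of the three conditions in Definition \ref{defmorphismclans} directly, exploiting the fact that the structure of the local clan $\mathcal{E}(A)$ is inherited from $\mathcal{E}$ through the forgetful functor $\mathcal{E}(A) \to \mathcal{E}$.

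First, I would check preservation of fibrations. By the definition of fibration in $\mathcal{E}(A)$ given just before Proposition \ref{clanslice}, a map $f \colon (X,p) \to (Y,q)$ in $\mathcal{E}(A)$ is a fibration iff its underlying map $f \colon X \to Y$ is a fibration in $\mathcal{E}$. Since $F$ takes fibrations in $\mathcal{E}$ to fibrations in $\mathcal{E}'$, the underlying map of $F_{(A)}(f)$ is a fibration in $\mathcal{E}'$, so $F_{(A)}(f)$ is a fibration in $\mathcal{E}'(FA)$.

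Second, I would check preservation of terminal objects. The terminal object of $\mathcal{E}(A)$ is $(A, \mathrm{id}_A)$, and $F_{(A)}(A, \mathrm{id}_A) = (FA, \mathrm{id}_{FA})$, which is terminal in $\mathcal{E}'(FA)$.

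Third, I would check preservation of base changes of fibrations. The key observation is that a square in $\mathcal{E}(A)$ is cartesian iff its underlying square in $\mathcal{E}$ is cartesian — this is the standard fact that the forgetful functor $\mathcal{E}/A \to \mathcal{E}$ creates pullbacks, which restricts to the full subcategory $\mathcal{E}(A) \subseteq \mathcal{E}/A$. So if a cartesian square in $\mathcal{E}(A)$ exhibits a base change along a fibration, then forgetting the map to $A$ yields a cartesian square in $\mathcal{E}$ along a fibration; $F$ preserves this, and the resulting cartesian square in $\mathcal{E}'$ lifts back to a cartesian square in $\mathcal{E}'(FA)$ because $F_{(A)}$ acts on objects by $(E,p) \mapsto (FE, F(p))$ and on morphisms by $F$.

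The only potentially delicate point is the first observation in step three — that pullbacks in $\mathcal{E}(A)$ are computed as pullbacks in $\mathcal{E}$ — but this is just the standard fact about slice categories together with the fact that $\mathcal{E}(A)$ is a full subcategory of $\mathcal{E}/A$ closed under the pullbacks that exist (since a base change of a fibration is a fibration). No substantive obstacle arises; the proof is essentially a bookkeeping exercise transferring the three defining properties of $F$ through the forgetful functor.
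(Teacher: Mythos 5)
Your proof is correct, and the paper itself leaves this proposition to the reader, so there is no official argument to diverge from; your direct verification of the three conditions of Definition \ref{defmorphismclans} is exactly the intended bookkeeping. The one delicate point — that a base-change square of a fibration in $\mathcal{E}(A)$ is cartesian precisely when its underlying square in $\mathcal{E}$ is, because the forgetful functor from the slice creates pullbacks and $\mathcal{E}(A)\subseteq\mathcal{E}/A$ is full and closed under the relevant pullbacks — is identified and justified adequately.
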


\begin{proof} Left to the reader.
\end{proof}

Let $f:A\to B$ be a map in a clan $\mathcal{E}$.
Then the base change of a fibration $p:X\twoheadrightarrow B$ 
along $f$ is a fibration $p_1:A\times_B X\twoheadrightarrow A$.
$$\xymatrix{
A\times_B X   \ar[d]_{p_1}\ar[rr]^{p_2}& & X \ar[d]^{p} \\
A \ar[rr]^{f}&& B.
}$$
Let us put $f^\star(X,p)= (A\times_B X,p_1)$.
This defines 
the  {\it base change functor} 
$$f^\star:\mathcal{E}(B)\to \mathcal{E}(A)$$
We shall often denote the projection $p_2:A\times_BX \to X$ by $f_X:f^\star(X)\to X$.
 If $u:(X,p_X)\to (Y,p_Y)$ is a map in $\mathcal{E}(B)$,
then $f^\star(u)\defeq A\times_Bu$ and the following diagram commutes
  \begin{equation}\label{squareforbasechange6}
  \xymatrix{
  f^\star(X)  \ar[d]_{ f^\star(u)} \ar[rr]^{f_X}& & X \ar[d]_{u}  \ar@/^1,5pc/[dd]^-{p_X}   \\
  f^\star(Y)  \ar[d]^{}\ar[rr]^{f_Y}& & Y \ar[d]_{p_Y}   \\
A \ar[rr]^{f}& & B.  }
\end{equation}

\begin{lemma}\label{squareforbasechange5}
The top square in the diagram (\ref{squareforbasechange6}) is cartesian for any map  $u:(X,p_X)\to (Y,p_Y)$ in $\mathcal{E}(B)$.
 \end{lemma}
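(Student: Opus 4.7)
The plan is to apply the pullback pasting lemma (Lemma~\ref{lemmacartesiansq}) to the vertical composite of the top and bottom squares in diagram~(\ref{squareforbasechange6}).

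First I would unpack the data. By definition, $f^\star(Y) = A \times_B Y$ fits in a cartesian square with projections $p_1 \colon f^\star(Y) \to A$ and $f_Y \colon f^\star(Y) \to Y$; this is exactly the bottom square. Similarly, $f^\star(X) = A \times_B X$ is defined by the cartesian square with sides $p_1 \colon f^\star(X) \to A$ and $f_X \colon f^\star(X) \to X$ over the map $p_X \colon X \to B$. Since $u \colon (X,p_X) \to (Y,p_Y)$ is a morphism in $\mathcal{E}(B)$, we have $p_X = p_Y \circ u$, and $f^\star(u) = A \times_B u$ is the unique map induced by the universal property of the pullback $f^\star(Y)$ applied to the cone $(p_1,\, u \circ f_X)$. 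In particular, $p_1 \circ f^\star(u) = p_1$, so the left-hand vertical composite in~(\ref{squareforbasechange6}) equals the projection $p_1 \colon f^\star(X) \to A$.

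Therefore the outer rectangle of~(\ref{squareforbasechange6}) is precisely the defining cartesian square of $f^\star(X)$ over the map $p_X = p_Y \circ u$:
$$\xymatrix{
f^\star(X) \ar[d]_{p_1} \ar[rr]^{f_X} & & X \ar[d]^{p_X} \\
A \ar[rr]^{f} & & B.
}$$
Since the bottom square is cartesian and the outer rectangle is cartesian, the pasting lemma forces the top square to be cartesian as well.

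There is no real obstacle beyond bookkeeping; the only thing to verify carefully is that the left vertical composite $p_1 \circ f^\star(u)$ coincides with the canonical projection $f^\star(X) \to A$, which is immediate from the definition of $f^\star(u)$ via the universal property. Once this identification is made, the pasting lemma concludes the proof.
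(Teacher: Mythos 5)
Your proof is correct and follows the same route as the paper: both observe that the bottom square and the outer rectangle are the defining pullback squares of $f^\star(Y)$ and $f^\star(X)$ respectively, and then apply Lemma \ref{lemmacartesiansq} to conclude that the top square is cartesian. The only difference is that you spell out the bookkeeping (that $p_1 \circ f^\star(u) = p_1$), which the paper leaves implicit.
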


\begin{proof}
 The bottom square of the diagram \ref{squareforbasechange6} is cartesian by construction:
The composite square is also cartesian by construction.
Hence the top square is cartesian by Lemma \ref{lemmacartesiansq}.
\end{proof}

Recall from Proposition \ref{clanslice} that if $\mathcal{E}$ is a clan, then so is the category $\mathcal{E}(A)$ 
for every object $A\in \mathcal{E}$.

\begin{prop}\label{basechangetribe} If  $\mathcal{E}$ is a clan,
then the base change functor $f^\star:\mathcal{E}(B)\to \mathcal{E}(A)$
is a morphism of clans for every map $ f:A\to B$ in $ \mathcal{E}$.
\end{prop}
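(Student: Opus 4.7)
The plan is to verify the three defining conditions of a morphism of clans (Definition \ref{defmorphismclans}) for $f^\star:\mathcal{E}(B)\to\mathcal{E}(A)$: preservation of terminal objects, of fibrations, and of base changes of fibrations. The first two are immediate; the third is the main step, obtained by a pasting argument built on Lemma \ref{squareforbasechange5}.

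For the terminal object: the terminal object of $\mathcal{E}(B)$ is $(B,\mathrm{id}_B)$, and since the pullback of an isomorphism is an isomorphism, $f^\star(B,\mathrm{id}_B)$ is isomorphic to $(A,\mathrm{id}_A)$, the terminal object of $\mathcal{E}(A)$. For fibrations: a fibration $u$ in $\mathcal{E}(B)$ is by definition one whose underlying morphism in $\mathcal{E}$ is a fibration, and Lemma \ref{squareforbasechange5} exhibits $f^\star(u)$ as a base change of $u$ in $\mathcal{E}$, hence as a fibration in $\mathcal{E}$ and therefore in $\mathcal{E}(A)$.

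For the third condition, consider a base change in $\mathcal{E}(B)$ of a fibration $q$ along a map $g$, namely the cartesian square
$$\xymatrix{
P \ar[d]_{\pi_W}\ar[r]^{\pi_Y} & Y \ar[d]^{q} \\
W \ar[r]_{g} & Z
}$$
(cartesian in $\mathcal{E}(B)$, and therefore also in $\mathcal{E}$ since pullbacks in the slice are computed in the ambient category). Applying $f^\star$ yields a commuting square in $\mathcal{E}(A)$; it suffices to check it is cartesian in $\mathcal{E}$. To do so, form the diagram
$$\xymatrix{
f^\star(P) \ar[d]_{f^\star(\pi_W)} \ar[r]^{f^\star(\pi_Y)} & f^\star(Y) \ar[d]^{f^\star(q)} \ar[r]^{f_Y} & Y \ar[d]^{q} \\
f^\star(W) \ar[r]_{f^\star(g)} & f^\star(Z) \ar[r]_{f_Z} & Z
}$$
whose right square is cartesian by Lemma \ref{squareforbasechange5} applied to $q$. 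By naturality of the family of maps $f_X:f^\star(X)\to X$ (expressed by the commutativity of the top square in diagram (\ref{squareforbasechange6}), applied to $\pi_Y$ and to $g$), the outer rectangle of this diagram coincides with the outer rectangle of
$$\xymatrix{
f^\star(P) \ar[d]_{f^\star(\pi_W)} \ar[r]^{f_P} & P \ar[d]^{\pi_W} \ar[r]^{\pi_Y} & Y \ar[d]^{q} \\
f^\star(W) \ar[r]_{f_W} & W \ar[r]_{g} & Z
}$$
whose left square is cartesian by Lemma \ref{squareforbasechange5} applied to $\pi_W$ and whose right square is the original cartesian one. Lemma \ref{lemmacartesiansq} therefore yields that this common outer rectangle is cartesian, and a second application of Lemma \ref{lemmacartesiansq} to the first diagram (whose outer rectangle and right square are now both cartesian) gives that its left square — which is the image under $f^\star$ of the original cartesian square — is cartesian, as required. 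The main obstacle is identifying the two outer rectangles via naturality so that the pasting lemma can be invoked twice to transport cartesianness across the cube formed by the original square and its image.
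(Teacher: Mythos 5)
Your proof is correct and follows essentially the same route as the paper: both arguments rest on Lemma \ref{squareforbasechange5} to identify $f^\star$ of a map with a base change along $f_{(-)}$, giving preservation of fibrations and the cartesianness of the front and back faces of the relevant cube. The only cosmetic difference is that where the paper invokes the cube lemma \ref{cubelemmacartesiansq} to conclude, you inline its proof as two applications of the pasting lemma \ref{lemmacartesiansq} via the common outer rectangle — which is exactly how the cube lemma is proved in the appendix.
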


\begin{proof} If $u:(X,p_X)\to (Y,p_Y)$ is a fibration in $\mathcal{E}(B)$,
let us show that the map $f^\star(u):f^\star(X)\to f^\star(Y)$
is a fibration  in $\mathcal{E}(A)$. But the map $f^\star(u)$
is the base change of the map $u:X\to Y$ along the map
$f_Y:f^\star(Y)\to Y$, since the square (\ref{squareforbasechange6})
is cartesian by Lemma \ref{squareforbasechange5}.
Thus, $f^\star(u)$ is a fibration, since the base change of a fibration
is a fibration.
The functor $f^\star$ preserves terminal objects, since
the following square is cartesian
\[
 \xymatrix{
A  \ar[d]_{1_A} \ar[rr]^{f}& & B \ar[d]^{1_B}   \\
A  \ar[rr]^{f}& & B
}  
\]
Let us show that the functor $f^\star$ preserves base changes of fibrations.
Let $g:X\to Y$ a fibration in $\mathcal{E}(B)$ and 
$p_1:W\to Z$ be the base change of $g$
along a map $u:Z\to Y$ in $\mathcal{E}(B)$.
\begin{equation}\label{anothercartsquare7}
\xymatrix{
W \ar[d]_{p_1}  \ar[rr]^{p_2} && X \ar[d]^{g} \\
Z\ar[rr]^{u} && Y
}
\end{equation}
We wish to show that the left hand face of following commutative cube is cartesian:
\[
\xymatrix{
f^\star(W) \ar[dr]  \ar[rrr]^{f_W}  \ar[ddd] & &  & W  \ar '[d] [ddd] \ar[dr] &  \\
  & f^\star(X)\ar[ddd]_(0.35){f^\star(g)} \ar[rrr]^(0.4){f_X} &  & & X\ar[ddd]^(0.35){g} \\ 
  &&&&\\
 f^\star(Z) \ar[dr]_{f^\star(u)}    \ar '[r] [rrr]^(0.3){f_Z} & &  &Z  \ar[dr]^u  &   \\
  & f^\star(Y)\ar[rrr]^{f_Y} &  & & Y \ }
\]
The right hand face of the cube is cartesian by hypothesis.
The front and back faces are cartesian by Lemma \ref{squareforbasechange5}.
It then follows from the cube lemma \ref{cubelemmacartesiansq} that 
the left hand face is cartesian.
This shows that the functor $f^\star$ preserves base changes of fibrations.
We have proved that it is a morphism of clans.
\end{proof}

\begin{prop}\label{compositebasechangetribe} If $f:A\to B$ and $g:B\to C$
are two maps in a clan $\mathcal{E}$, then the composite of the base change functors 
$$\xymatrix{
\mathcal{E}(C)\ar[r]^{g^\star} &
 \mathcal{E}(B)  \ar[r]^{f^\star} & \mathcal{E}(A)
}$$
is isomorphic to the base change functor 
$(gf)^\star:\mathcal{E}(C)\to \mathcal{E}(A)$
\end{prop}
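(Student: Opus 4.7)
The plan is to exhibit a natural isomorphism between the two functors by repeatedly applying the pasting lemma for cartesian squares (Lemma \ref{lemmacartesiansq}). All required pullbacks exist because fibrations in a clan are closed under base change.

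Fix an object $(X,p) \in \mathcal{E}(C)$. By definition, $g^\star(X,p) = (B \times_C X, p_1)$ sits in a cartesian square over $g:B\to C$, and applying $f^\star$ yields $f^\star g^\star(X,p) = (A \times_B (B \times_C X), p_1)$, which fits into a cartesian square over $f:A\to B$. Stacking these two cartesian squares vertically produces a rectangle which, by the pasting lemma, is itself cartesian. Its outer boundary exhibits $A \times_B (B \times_C X)$ as a pullback of $p:X\to C$ along $gf:A\to C$. Since $(gf)^\star(X,p)$ is by definition a pullback of $p$ along $gf$, the universal property yields a unique isomorphism
$$\alpha_{(X,p)}: f^\star g^\star(X,p) \xrightarrow{\sim} (gf)^\star(X,p)$$
in $\mathcal{E}(A)$, compatible with the two projections to $A$ and to $X$.

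To establish naturality, let $u:(X,p_X)\to (Y,p_Y)$ be a morphism in $\mathcal{E}(C)$. Both $\alpha_{(Y,p_Y)} \circ f^\star g^\star(u)$ and $(gf)^\star(u) \circ \alpha_{(X,p_X)}$ are morphisms from $f^\star g^\star(X,p_X)$ to $(gf)^\star(Y,p_Y)$. After postcomposition with the projection to $A$, both sides agree (every arrow in sight lies over $A$). After postcomposition with the projection to $Y$, unfolding the definitions of the three maps in terms of their characterising projections shows that both composites equal $u \circ p_2 \circ p_2$, where the inner $p_2$ is the projection $A \times_B (B \times_C X) \to B \times_C X$ and the outer $p_2$ is the projection $B \times_C X \to X$. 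By the uniqueness clause of the universal property of $(gf)^\star(Y,p_Y)$, the two morphisms coincide, so $\alpha$ is a natural isomorphism $f^\star \circ g^\star \cong (gf)^\star$.

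The proof is a mechanical application of pullback pasting and the universal property; no step presents real difficulty. The only care required is in the naturality verification, where one must unfold the several instances of $\alpha$ and the induced maps $f^\star g^\star(u)$, $(gf)^\star(u)$ consistently in terms of their projections before appealing to uniqueness.
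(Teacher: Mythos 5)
Your proof is correct and follows essentially the same route as the paper: paste the two cartesian squares via Lemma \ref{lemmacartesiansq} and invoke the universal property of the pullback along $gf$ to obtain the canonical isomorphism. The paper's own proof is in fact terser, stopping at the objectwise isomorphism and leaving the naturality check (which you carry out explicitly) implicit.
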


\begin{proof} If $X=(X,p)\in \mathcal{E}(C)$, 
then we have the following diagram
of cartesian squares
$$\xymatrix{
f^\star(g^{\star}(X))  \ar[d] \ar[rr]^{}& &g^{\star}(X) \ \ar[d] \ar[rr] && X \ar[d]^p\\
A \ar[rr]^{f} && B \ar[rr]^{g}  && C
}  $$ 
The composite square is cartesian by Lemma \ref{lemmacartesiansq}.
Thus $f^\star(g^{\star}(X)) $ is canonically isomorphic to $(gf)^\star(X)$.
\end{proof}

If $A$ is an object of a clan,  we shall denote $e_A:\mathcal{E}\to \mathcal{E}(A)$ the base change functor 
along the unique map $t_A:A\to 1$.
By definition, we have  $e_A(X)=(A\times X,p_1)$ for every object $X\in \mathcal{E}$.
The functor $e_A$ is a morphism of clans by Proposition \ref{basechangetribe}.

\begin{cor}\label{compositebasechangetribecor} If $f:A\to B$ is a map
 in a clan $\mathcal{E}$, then the following triangle of functors
 commutes up to a canonical isomorphism
$$\xymatrix{
\mathcal{E}\ar[rr]^{e_B}  \ar[drr]_{e_A} && \mathcal{E}(B)  \ar[d]^{f^\star}\\
& &\mathcal{E}(A)  }$$
\end{cor}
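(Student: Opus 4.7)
The plan is to reduce the statement to the already-established Proposition \ref{compositebasechangetribe} via the universal property of the terminal object. The key observation is that $e_B$ and $e_A$ are, by definition, the base change functors along the unique maps $t_B: B \to 1$ and $t_A: A \to 1$, and that since $1$ is terminal there is a forced equality $t_A = t_B \circ f$ in $\mathcal{E}$.

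First I would spell out this equation, noting that both $t_A$ and $t_B \circ f$ are maps $A \to 1$ and hence coincide by uniqueness. Consequently $e_A = t_A^\star = (t_B \circ f)^\star$ as functors $\mathcal{E} \to \mathcal{E}(A)$. Then I would invoke Proposition \ref{compositebasechangetribe} with $g = t_B$, which supplies a canonical isomorphism
\[
(t_B \circ f)^\star \cong f^\star \circ t_B^\star = f^\star \circ e_B.
\]
Composing these identifications yields the desired canonical isomorphism $e_A \cong f^\star \circ e_B$.

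There is essentially no obstacle here: the work has already been done in Proposition \ref{compositebasechangetribe}, and all that remains is to recognize that the triangle in question is the special case obtained by taking the second map to be $t_B$. The only thing worth being careful about is the word \emph{canonical}: the isomorphism produced by Proposition \ref{compositebasechangetribe} is the one coming from the universal property of the pullback square
\[
\xymatrix{
A \times X \ar[d] \ar[rr] & & B \times X \ar[d] \ar[rr] & & X \ar[d] \\
A \ar[rr]^{f} & & B \ar[rr] & & 1,
}
\]
so the resulting isomorphism $e_A(X) \cong f^\star(e_B(X))$ is natural in $X$, giving a genuine natural isomorphism of functors rather than merely a pointwise one.
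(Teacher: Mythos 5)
Your proof is correct and follows exactly the same route as the paper: use the equality $t_A = t_B \circ f$ forced by terminality of $1$, then apply Proposition \ref{compositebasechangetribe} with $g = t_B$. The extra remark on naturality of the canonical isomorphism is a welcome clarification but does not change the argument.
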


\begin{proof}  If $t_A:A\to 1$ and $t_B:B\to 1$ are the canonical maps,
then we have $t_Bf=t_A$. 
The result then follows form Proposition \ref{compositebasechangetribe}, since $e_A=t_A^\star$
and $e_B=t_B^\star$.
\end{proof} 

\medskip
Let $f:A\twoheadrightarrow B$ be a fibration in a clan $\mathcal{E}$, 
then $(A,f)\in \mathcal{E}(B)$. 

\begin{lemma}\label{compositebasechangetribecor2} Let $f:A\twoheadrightarrow B$ be a fibration
 in a clan $\mathcal{E}$. Then $\mathcal{E}(A)=\mathcal{E}(B)(A,f)$.
 \end{lemma}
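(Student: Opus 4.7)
The plan is simply to unwind the definitions on both sides and check that the two categories coincide on objects, on morphisms, and on the class of fibrations. I expect no real obstacle here; the proof is essentially bookkeeping, provided one is careful about the fact that the slice $\mathcal{E}(B)/(A,f)$ is defined over $(A,f)$ as an object of the local clan $\mathcal{E}(B)$.

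First I would describe the objects. An object of $\mathcal{E}(A)$ is a fibration $u:X\twoheadrightarrow A$. An object of $\mathcal{E}(B)(A,f)$ is a fibration in $\mathcal{E}(B)$ with codomain $(A,f)$, namely a pair consisting of an object $(X,p_X)\in\mathcal{E}(B)$ (so $p_X:X\twoheadrightarrow B$ is a fibration) together with a morphism $u:(X,p_X)\to(A,f)$ in $\mathcal{E}(B)$ that is itself a fibration of $\mathcal{E}$. The condition $fu=p_X$ makes $p_X$ determined by $u$, while conversely, given any fibration $u:X\twoheadrightarrow A$ in $\mathcal{E}$, the composite $fu:X\to B$ is a fibration (composition of fibrations), and $u$ is then automatically a morphism $(X,fu)\to(A,f)$ in $\mathcal{E}(B)$ which is a fibration in $\mathcal{E}$. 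These two assignments are mutually inverse, so the objects coincide.

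Next I would check the morphisms. A morphism $(X,u)\to(Y,v)$ in $\mathcal{E}(A)$ is a map $g:X\to Y$ with $vg=u$. A morphism from $\bigl((X,fu),u\bigr)$ to $\bigl((Y,fv),v\bigr)$ in $\mathcal{E}(B)(A,f)$ is a map $g:X\to Y$ in $\mathcal{E}$ such that (i) $g$ is a morphism in $\mathcal{E}(B)$, i.e.\ $(fv)g=fu$, and (ii) $g$ is a morphism over $(A,f)$, i.e.\ $vg=u$. Condition (ii) implies (i) by applying $f$, so the two hom-sets are identical.

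Finally I would check fibrations agree. By definition of fibrations in a local clan (applied twice), $g$ is a fibration in $\mathcal{E}(A)$ iff $g$ is a fibration in $\mathcal{E}$, and $g$ is a fibration in $\mathcal{E}(B)(A,f)$ iff $g$ is a fibration in $\mathcal{E}(B)$ iff $g$ is a fibration in $\mathcal{E}$. Hence the identification of objects and morphisms is compatible with the clan structures, giving the equality $\mathcal{E}(A)=\mathcal{E}(B)(A,f)$.
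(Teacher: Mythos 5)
Your proof is correct and is essentially the paper's own argument: the paper simply names the functor $\Phi(E,p)=((E,fp),p)$ and its inverse $((E,g),p)\mapsto(E,p)$, which are exactly the two mutually inverse assignments you describe. Your additional verification of morphisms and of the fibration structures just makes explicit what the paper leaves to the reader.
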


\begin{proof} The functor  $\Phi:\mathcal{E}(A) \to \mathcal{E}(B)(A,f)$
 defined by putting $\Phi(E,p)=((E,fp),p)$ is an isomorphism of clans.
  $$
\xymatrix{ 
E\ar[r]^{p}  \ar[dr]_{fp} & A\ar[d]^f \\
 & B 
}$$
The inverse isomorphism takes an object $((E,g),p)\in \mathcal{E}(B)(A,f)$ to 
the object $(E,p)\in \mathcal{E}(A)$.
\end{proof}

\begin{lemma} \label{compositionelementary1} 
Let $f:A\twoheadrightarrow B$ be a fibration in a clan $\mathcal{E}$.
Then the elementary morphism of clans
$$e_{(A,f)}: \mathcal{E}(B)\to  \mathcal{E}(B)(A,f)$$
coincides with
the base change functor
$f^\star:   \mathcal{E}(B) \to  \mathcal{E}(A)$.
\end{lemma}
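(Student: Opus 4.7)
The plan is to identify both functors by unpacking their definitions on objects and morphisms, and then applying the isomorphism $\Phi:\mathcal{E}(A) \xrightarrow{\sim} \mathcal{E}(B)(A,f)$ supplied by Lemma \ref{compositebasechangetribecor2}. Throughout, I think of $\mathcal{E}(B)$ as a clan with terminal object $(B,1_B)$.

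First I would observe that the unique morphism from $(A,f)$ to the terminal object $(B,1_B)$ in the clan $\mathcal{E}(B)$ is the fibration $f:A\to B$ itself, regarded as a morphism in $\mathcal{E}(B)$. By definition, $e_{(A,f)}$ is the base change along this morphism, and on an object $(X,p)\in \mathcal{E}(B)$ it is computed as the cartesian product of $(A,f)$ and $(X,p)$ in $\mathcal{E}(B)$, equipped with its first projection to $(A,f)$. Next I would identify that product concretely: the cartesian product of $(A,f)$ and $(X,p)$ in the slice clan $\mathcal{E}(B)$ is the fibered product $A\times_B X$ in $\mathcal{E}$, with structure map $fp_1 = pp_2:A\times_B X\to B$, and projections $p_1:A\times_B X \to A$ and $p_2:A\times_B X\to X$. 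Therefore
\[
e_{(A,f)}(X,p) = \bigl((A\times_B X,\ fp_1),\ p_1\bigr) \in \mathcal{E}(B)(A,f).
\]

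On the other side, by the very definition of the base change functor, $f^\star(X,p) = (A\times_B X, p_1) \in \mathcal{E}(A)$, and under the isomorphism of Lemma \ref{compositebasechangetribecor2} one has
\[
\Phi\bigl(f^\star(X,p)\bigr) = \Phi(A\times_B X,\ p_1) = \bigl((A\times_B X,\ f p_1),\ p_1\bigr),
\]
which agrees with $e_{(A,f)}(X,p)$ on the nose. For a morphism $u:(X,p_X)\to (Y,p_Y)$ in $\mathcal{E}(B)$, both $f^\star(u)$ and $e_{(A,f)}(u)$ are given by the induced map $A\times_B u : A\times_B X \to A\times_B Y$ on pullbacks, so the two functors agree on arrows as well.

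There is no serious obstacle here; the only care needed is to recognise that the cartesian product in the slice clan $\mathcal{E}(B)$ really is the pullback over $B$ in $\mathcal{E}$ (so that base change in $\mathcal{E}(B)$ along the terminal map from $(A,f)$ reduces to base change in $\mathcal{E}$ along $f$), and to be pedantic about the structure map $fp_1$ in the second coordinate under $\Phi$. With those identifications in place, the equality $e_{(A,f)} = f^\star$ (modulo the isomorphism of Lemma \ref{compositebasechangetribecor2}) is a direct verification.
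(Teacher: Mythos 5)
Your proof is correct and follows essentially the same route as the paper's: both compute $e_{(A,f)}(X,p)=((A,f)\times_B(X,p),p_1)=((A\times_B X, fp_1),p_1)$ and identify this with $f^\star(X,p)=(A\times_B X,p_1)$ via the isomorphism of Lemma \ref{compositebasechangetribecor2}. Your version merely spells out the identification of the product in $\mathcal{E}(B)$ with the fiber product over $B$ and the action on morphisms, which the paper leaves implicit.
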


\begin{proof} 
The functor $e_{(A,f)}$
takes an object $(X,p)\in \mathcal{E}(B)$
to the object $((A,f)\times_B (X,p), p_1)=((A\times_B X, fp_1), p_1)$
which is identified with the object $(A\times_B X, p_1)$
of $\mathcal{E}(A)$.
$$
\xymatrix{ 
A\times_B X \ar[d]_{p_1} \ar[rr] && X \ar[d]^{p}  \\
A\ar[rr]^(0.4)f && B 
}
$$
\end{proof}

\subsection{Types, judgments, elements and contexts}
 
\bigskip

We may adopt the language of type theory, by saying that an object $E$ in a clan $\mathcal{E}$ is a {\bf type}
and write
 $$\mathcal{E} \vdash E:Type$$
 When the category $\mathcal{E}$ is clear from the context, we may write more simply 
 \begin{equation}\label{firstjudgment}
 \vdash E:Type
  \end{equation}
The expression (\ref{firstjudgment}) is an instance of what is called a {\bf judgment} in type theory.
If $1$ is the terminal object of $\mathcal{E}$,
then a map $a:1 \to E$ is 
an {\bf element} or a {\bf point} of type $E$ and we write 
$$\mathcal{E} \vdash a:E \quad {\rm or\ more\ simply } \quad \vdash a:E$$

\begin{rem}
An element $a:E$ is often called a {\it term} by type theorists. But this terminology is incorrect, since the notion of term is syntaxical
while the notion of element is semantical. 
\end{rem}

\begin{rem}
The language of type theory may include maps between types $f:E\to F$ in addition to elements. 
If $f\in \mathcal{E}(E,F)$, we may write $ \vdash f:E\to F.$
\end{rem}

In type theory, the assertion that two elements $a:E$ and $b:E$ are equal is written formally as a judgment:
$\vdash a=b:E$.
Similarly, the assertion that two types $E$ and $F$ are equally 
is written formally as a judgment $\vdash E=F$.

\begin{rem}
The equality relation $a=b$ introduced here is said to be {\bf intentional}.  
It should not be confused with the {\bf propositional} equality relation $a\simeq b$
 introduced later with the notion of tribes.
\end{rem}

\medskip

If $p:E\to A$ is a fibration in a clan $ \mathcal{E}$,
then the object $(E,p)\in \mathcal{E}(A)$
is defining a {\bf dependant type in context} $A$.
Recall that the {\bf fiber} of $p:E\twoheadrightarrow A$
at an element $x:A$ is the object $E\langle x\rangle=x^\star(E) $ defined by the pullback square
$$\xymatrix{
E\langle x\rangle  \ar[d] \ar[rr]& & E \ar[d]^{p} \\
1 \ar[rr]^{x}&& A.
}$$
This defines a {\bf family} of objects $E\langle x\rangle$
indexed by a variable element $x:A$. 
The family $(E\langle x\rangle|\ x\!:\!A)$ 
is said to be {\bf internal} to the clan, since it is indexed by an object $A$ of the clan.
In type theory, the object $(E,p)\in \mathcal{E}(A)$ is regarded
as a type $E\langle x\rangle$ which {\bf depends} on a variable element  $x:A$.
The assertion that
$E\langle x\rangle $ is a {\bf dependant type}, depending on $x:A$,
is written as a judgement:
$$x:A\vdash E\langle x\rangle :Type$$
The expression $x:A$ on the left hand side of the symbol $\vdash$
is called the {\bf context} of the judgment.
The context describes the parameter space of the dependant type.
The notions of fibration $p:E\twoheadrightarrow A$ and of dependant type $(E,p)\in \mathcal{E}(A)$ are equivalent.
From a section $s:A\to E$ of a fibration $p:E\to A$ ($ps=1_A$)
we obtain a family of elements $s(x):E(x)$, one for each $x:A$.
$$\xymatrix{
E\langle x\rangle  \ar[rr] \ar[d] && E\ar[d]_{p}\\
  \ar@/^1.5pc/[u]^-{s(x)}  1 \ar[rr]^{x}  &&A.   \ar@/_1.5pc/[u]_-{s}  
}$$
The assertion that the element $s(x):E\langle x\rangle$ depends on $x:A$
can be written as a judgement:
$$x:A\vdash s(x):E\langle x\rangle $$

 \bigskip
 
  A map $f:(E,p)\to (F,q)$ in $\mathcal{E}(A)$ induces
 a map $f\langle x\rangle=x^\star(f) :E\langle x\rangle \to F\langle x\rangle$ for each element $x:A$.
   \[
\xymatrix{
E\langle x\rangle \ar[d]_{f\langle x\rangle}    \ar[rr]   &  & E  \ar[d]^f   \\
  F\langle x\rangle \ar[rr]  \ar[d] & & F  \ar[d]^q  \\
 1\ar[rr]^x && A 
   }
\]
Hence a map $f:(E,p)\to (F,q)$ in $\mathcal{E}(A)$ can be regarded as a {\bf family of maps} $f\langle x\rangle:E\langle x\rangle\to F\langle x\rangle$
indexed by a variable element $x:A$. 
The assertion that $f\langle x\rangle:E\langle x\rangle\to F\langle x\rangle$
 is a map depending on $x:A$ can be written as a judgement:
$$x:A\vdash f\langle x\rangle:E\langle x\rangle\to F\langle x\rangle$$

\medskip

In type theory, most calculations involve changes of context, moving back and forth between
different contexts.  Recall that that for every map $f:A\to B$ in a clan $\mathcal{E}$,
we have a base change functor $f^\star:\mathcal{E}(B)\to \mathcal{E}(A)$.

\begin{prop}\label{basechangetribe2}  
If $f:A\to B$ is a map in a clan $\mathcal{E}$, then
 we have 
 $$E\langle f(x)\rangle=f^\star(E)\langle x \rangle $$
  for every object $E=(E,p)\in \mathcal{E}(B)$ and every element $x:A$.
 \end{prop}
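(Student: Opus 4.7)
The plan is to reduce the claim to the composition property of base change functors, namely Proposition \ref{compositebasechangetribe}.

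First I would unfold both sides using the definitions given just above the statement. The fiber notation $E\langle y \rangle$ means $y^\star(E)$ for $y:B$, so on the one hand
\[
E\langle f(x)\rangle = (fx)^\star(E),
\]
where $fx:1\to A\to B$ is the composite map (this is what ``$f(x)$'' means: apply $f$ to the point $x$). On the other hand, by the same definition applied one level down inside the clan $\mathcal{E}(A)$,
\[
f^\star(E)\langle x\rangle = x^\star(f^\star(E)).
\]
Thus the statement reduces to the identification $x^\star(f^\star(E)) = (fx)^\star(E)$.

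Next I would invoke Proposition \ref{compositebasechangetribe} with the pair of maps $x:1\to A$ and $f:A\to B$: it yields a canonical isomorphism of functors $x^\star \circ f^\star \cong (fx)^\star : \mathcal{E}(B) \to \mathcal{E}(1) = \mathcal{E}$. Evaluating at $E$ gives the desired identification. The main subtlety (and the only thing one could call an obstacle) is that Proposition \ref{compositebasechangetribe} only provides a canonical isomorphism rather than literal equality; but since the fiber notation $E\langle -\rangle$ is itself only defined up to the canonical pullback, the equality in the statement should be read as this canonical isomorphism, so no further work is required.
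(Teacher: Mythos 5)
Your proof is correct and is essentially the paper's own argument: the paper pastes the two defining cartesian squares for $f^\star(E)\langle x\rangle$ and $f^\star(E)$ and applies Lemma \ref{lemmacartesiansq}, which is exactly the content of Proposition \ref{compositebasechangetribe} that you invoke for the pair $x:1\to A$, $f:A\to B$. Your remark about the identity being a canonical isomorphism is also consistent with how the paper treats fibers, so nothing further is needed.
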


\begin{proof}
The two squares of the following diagram are cartesian by construction.
$$\xymatrix{
 f^\star(E)\langle x \rangle  \ar[d]_{}\ar[rr]^{} && f^\star(E)  \ar[d]_{}\ar[rr]^{}& & E \ar[d]^{p} \\
1 \ar[rr]^x&& A \ar[rr]^{f}&& B
}$$ 
Hence the composite square is cartesian by lemma \ref{lemmacartesiansq}.
$$\xymatrix{
 f^\star(E)\langle x \rangle  \ar[d]_{} \ar[rr]^{}& & E \ar[d]^{p} \\
1 \ar[rr]^{f(x)} && B
}$$ 
Thus $E\langle f(x)\rangle=f^\star(E)\langle x \rangle$. 
\end{proof}

In type theory, the base change functor $f^\star:\mathcal{E}(B)\to \mathcal{E}(A)$
is expressed by the following {\it substitution rules} for dependant types and elements:
 \[
\begin{prooftree} 
 y:B \vdash E\langle y \rangle:Type
 \justifies
x:A\vdash E\langle f(x)\rangle:Type
\end{prooftree}
\quad \quad  \quad
\begin{prooftree} 
 y:B \vdash s(y):E\langle y \rangle
 \justifies
x:A\vdash s(f(x)):E\langle f(x)\rangle
\end{prooftree}
\]
It can include a substitution rule for dependant maps:
\[
\begin{prooftree} 
 y:B \vdash u\langle y\rangle:E\langle y \rangle \to F\langle y \rangle
 \justifies
x:A\vdash u\langle f(x)\rangle :E\langle f(x)\rangle  \to F\langle f(x) \rangle
\end{prooftree}
\]

\medskip

Recall that if $A$ is an object of a clan $\mathcal{E}$, then the functor 
$e_A:\mathcal{E}\to \mathcal{E}(A)$ defined by putting $e_A(E)=(A\times E,p_1)$
is a morphism of clans (called {\it elementary} morphism).
The fiber of the projection $p_1:A\times E\to A$ at $x:A$ is canonically isomorphic to $E$,
since the following square is cartesian,
 $$\xymatrix{
E  \ar[d]\ar[rr]^(0.4){x\times E}&& A\times E \ar[d]^{p_1} \\
1 \ar[rr]^{x}&&A.
}$$
Thus, $e_A(E)\langle x\rangle =E$ for every $x:A$.
In type theory, the functor $e_A:\mathcal{E}\to \mathcal{E}(A)$
 is expressed by the following {\it weakening rules}: 
 \medskip
 \[
\begin{prooftree} 
\vdash E:Type
 \justifies
x:A\vdash E:Type.
\end{prooftree}
\quad \quad \quad
\begin{prooftree} 
\vdash t:E 
 \justifies
x:A\vdash t: E.
\end{prooftree}
\quad \quad \quad
\begin{prooftree} 
\vdash u:E\to F 
 \justifies
x:A\vdash u: E\to F
\end{prooftree}
\]

\bigskip

Notice that $e_A(A)=(A\times A,p_1)\in  \mathcal{E}(A)$.
The diagonal
$\delta_A:A\to A\times A$ is a section of the projection $p_1:A\times A\to A$
and its value at $x:A$ is the element $x$ of $e_A(A)\langle x\rangle =A$.
$$\xymatrix{
A  \ar[d]\ar[rr]^(0.4){x\times A}&& A\times A \ar[d]_{p_1} \\
 \ar@/^1pc/[u]^-{x}  1  \ar[rr]^{x}  &&A   \ar@/_1pc/[u]_-{\delta_A}  
}$$
Hence the diagonal $\delta_A:A\to A\times A$ defines the element $x:A$ in context $x:A$.
The judgment 
$$x:A\vdash x:A$$
is a basic {\it axiom} of type theory.

\medskip

\subsection{Sums}

If $A$ is an object of a clan $ \mathcal{E}$,
then the forgetful functor $\mathcal{E}(A)\to \mathcal{E}$
takes a fibration $p:E\to A$
to its domain $E$. 
 Intuitively, we have
$$E=\sum_{x:A} E\langle x\rangle$$
since the domain of a fibration $p:E\to A$ is the disjoint
union of its fibers.  The forgetful functor $\mathcal{E}(A)\to \mathcal{E}$
can be denoted as a summation operation
$$\Sigma_A:\mathcal{E}(A)\to \mathcal{E}$$
In type theory, the functor $\Sigma_A$ is created 
by the following $\Sigma$-{\it formation rule}:
\medskip
 \[
\begin{prooftree} 
 x:A\vdash E\langle x\rangle:Type
 \justifies
\vdash \sum_{x:A} E\langle x\rangle :Type
\end{prooftree}
\]
\medskip
For typographical reasons, we may write $\Sigma(x:A) E\langle x\rangle$ instead of $ \sum_{x:A} E\langle x\rangle$.

\bigskip

If $E=(E,p)\in   \mathcal{E}(A)$,
 then the  unit of the adjunction 
$$\Sigma_A:\mathcal{E}(A)\longleftrightarrow \mathcal{E}:e_A$$
is given by the map $\eta_E\defeq (p,1_E):E\to A\times E$ in $ \mathcal{E}(A)$.
The map $\eta_E$ defines the family of {\it inclusions} 
$$x:A \vdash \eta\langle x\rangle :E\langle x\rangle \rightarrow E$$
If $a:A$ and $b:E\langle a\rangle$, then the element $\eta\langle a\rangle(b)$ is denoted $(a,b)$.
In type theory, the maps $\eta_E$ are created by the
$\Sigma$-{\it introduction rule}:
\medskip
 \[
\begin{prooftree} 
\vdash   a:A \quad \quad \vdash b:E\langle a\rangle
 \justifies
\vdash (a,b): \sum_{x:A} E\langle x\rangle
\end{prooftree}
\]
The fibration $p:E\to A$ is the first projection $p_1: \sum_{x:A} E\langle x\rangle\to A$ 
and it is called the {\it display map} of the sum.

\medskip

 It follows from the adjunction $\Sigma_A\dashv e_A$
 that for every object $B\in \mathcal{E}$ and every dependant  map
  $$x:A\vdash h\langle x\rangle:E\langle x\rangle \to B$$
 there exits a unique map 
 $$\vdash h:\sum_{x:A} E\langle x\rangle \to B$$
such $h\circ \eta\langle x\rangle=h\langle x\rangle$ for every element $x:A$.
By construction, $h(x,y):=h\langle x\rangle(y)$ for $x:A$ and  $y:E\langle x\rangle$.

\medskip

Notices that we have $\Sigma_A (e_A(X))=A\times X$
for every object $X\in \mathcal{E}$. Thus,
$$\sum_{x:A} X =A\times X.$$ 
In particular, $\sum_{x:A} 1 =A$.

\bigskip

If $f:A\to B$ is a fibration in a clan $\mathcal{E}$,
then the {\bf summation functor}  
\begin{equation}\label{sumfunctor}
\Sigma_f :\mathcal{E}(A)\to \mathcal{E}(B) \, .
\end{equation}
is defined by putting $\Sigma_f(E,p)=(E,fp)$ for a fibration $p:E\twoheadrightarrow A$.

\begin{prop} \label{sumalong} 
Let $f:A\twoheadrightarrow B$ be a fibration in a clan $\mathcal{E}$.
Then the summation functor $\Sigma_f :\mathcal{E}(A)\to \mathcal{E}(B) $
is left adjoint to the base change functor $f^\star: \mathcal{E}(B)\to \mathcal{E}(A)$.
The functor $\Sigma_f$ preserves fibrations and base change of fibrations.
 \end{prop}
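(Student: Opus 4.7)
The plan is to reduce both assertions to facts already available in the clan $\mathcal{E}(B)$, via the identifications provided by Lemma \ref{compositebasechangetribecor2} and Lemma \ref{compositionelementary1}. Under the first of these, $\mathcal{E}(A) = \mathcal{E}(B)(A,f)$, and an object $(X,p) \in \mathcal{E}(A)$ corresponds to $((X, fp), p) \in \mathcal{E}(B)(A,f)$. The summation functor $\Sigma_f$, which sends $(X,p)$ to $(X, fp)$, then becomes precisely the forgetful functor $\Sigma_{(A,f)}: \mathcal{E}(B)(A,f) \to \mathcal{E}(B)$; and by Lemma \ref{compositionelementary1} the base change functor $f^\star$ becomes the elementary morphism $e_{(A,f)}: \mathcal{E}(B) \to \mathcal{E}(B)(A,f)$.

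With these identifications in hand, the adjunction $\Sigma_f \dashv f^\star$ is exactly the instance $\Sigma_{(A,f)} \dashv e_{(A,f)}$ of the adjunction $\Sigma_D \dashv e_D$ (recorded earlier for an arbitrary object $D$ of any clan) applied inside $\mathcal{E}(B)$ to the object $D = (A,f)$. Concretely, a morphism $\Sigma_f(X,p) \to (Y,q)$ in $\mathcal{E}(B)$ is a map $u: X \to Y$ in $\mathcal{E}$ satisfying $qu = fp$; the universal property of the pullback $A \times_B Y$ converts such a $u$ bijectively into a map $(p, u): X \to A \times_B Y$ over $A$, i.e.\ a morphism $(X,p) \to f^\star(Y,q)$ in $\mathcal{E}(A)$. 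Naturality in both variables is immediate.

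For the preservation properties, I use that a morphism in $\mathcal{E}(A)$ is a fibration precisely when its underlying map in $\mathcal{E}$ is a fibration, and similarly in $\mathcal{E}(B)$. Since $\Sigma_f$ leaves underlying maps in $\mathcal{E}$ unchanged (it only composes the structural map with $f$), it automatically sends fibrations to fibrations. The same observation handles base change: a cartesian square in $\mathcal{E}(A)$ whose right vertical is a fibration is computed as the underlying cartesian square in $\mathcal{E}$ (with the projection onto $A$ inherited from the cocone), and $\Sigma_f$ sends this to the very same underlying cartesian square in $\mathcal{E}$, which is therefore still a pullback in $\mathcal{E}(B)$.

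The only step requiring genuine care is the bookkeeping in the first paragraph: one must verify that the identifications of Lemma \ref{compositebasechangetribecor2} and Lemma \ref{compositionelementary1} really do transport $\Sigma_f$ and $f^\star$ into the forgetful/elementary pair, rather than some twisted variants. Once this is in place, both the adjunction and the preservation statements reduce to facts already stated or visibly obvious, with no further calculation needed.
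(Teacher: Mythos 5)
The paper gives no argument here --- its proof reads ``Left to the reader'' --- so there is nothing to compare against; your proof is correct and fills the gap. One small caution: the adjunction $\Sigma_{(A,f)}\dashv e_{(A,f)}$ you invoke in the clan $\mathcal{E}(B)$ is itself only \emph{asserted} earlier in the paper (the unit $\eta_E=(p,1_E)$ is written down but the adjunction is never proved), so the real content of your proof is the direct verification in your second paragraph, where the universal property of the pullback $A\times_B Y$ gives the natural bijection between maps $(X,fp)\to(Y,q)$ over $B$ and maps $(X,p)\to f^\star(Y,q)$ over $A$; that argument, together with your observation that $\Sigma_f$ leaves underlying maps unchanged (so that fibrations and cartesian squares, both detected in $\mathcal{E}$, are preserved), is complete and self-contained.
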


 \begin{proof}  Left to the reader.  \end{proof}

  \begin{prop}\label{bergerprinciple} 
If $f:A\to B$ and $g:B\to C$ are fibrations in a clan $\mathcal{E}$, then
$\Sigma_{gf}=\Sigma_g \Sigma_f$.
In particular, $\Sigma_{A}=\Sigma_B \Sigma_f$.
\end{prop}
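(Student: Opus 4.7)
The plan is to verify the equality $\Sigma_{gf} = \Sigma_g\Sigma_f$ by a direct unfolding of the definition of the summation functor, checking agreement on both objects and morphisms, and then derive the ``in particular'' clause as the special case $C=1$.

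First I would work on objects. Take an arbitrary fibration $(E,p) \in \mathcal{E}(A)$, meaning $p:E \twoheadrightarrow A$. By the defining formula of (\ref{sumfunctor}), $\Sigma_f(E,p) = (E,fp)$, which lies in $\mathcal{E}(B)$ because $fp$ is a composite of the fibrations $p$ and $f$, hence a fibration. Applying $\Sigma_g$ gives $\Sigma_g(E,fp) = (E, g(fp)) = (E,(gf)p)$. On the other hand, $\Sigma_{gf}(E,p) = (E,(gf)p)$ by the very same definition, so the two objects are literally identical (not merely isomorphic). In particular the codomains match because $gf$ is a fibration by closure of $\mathcal{F}$ under composition, so that $\Sigma_{gf}$ is defined.

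Next I would check morphisms. If $u:(E,p) \to (E',p')$ is a map in $\mathcal{E}(A)$, i.e.\ $u:E\to E'$ with $p'u=p$, then the three functors $\Sigma_f$, $\Sigma_g\Sigma_f$, $\Sigma_{gf}$ all send $u$ to the underlying map $u:E\to E'$ of $\mathcal{E}$, now regarded as a morphism in the appropriate slice category; the triangle identity $p'u=p$ in $\mathcal{E}(A)$ automatically entails $(fp')u = fp$ and $(gfp')u=(gf)p$, so the image is a well-defined morphism in both $\mathcal{E}(B)$ and $\mathcal{E}(C)$. Functoriality is preserved by the same token.

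Finally, for the ``in particular'' statement, take $C=1$ and let $g:B\to 1$ be the unique map to the terminal object; by Definition~\ref{defclan} this is a fibration. Then $gf:A\to 1$ is the unique map from $A$ to $1$, and unwinding notation we have $\Sigma_{gf} = \Sigma_A$ and $\Sigma_g = \Sigma_B$, so the equality $\Sigma_{gf} = \Sigma_g\Sigma_f$ specialises to $\Sigma_A = \Sigma_B\Sigma_f$.

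I do not anticipate any real obstacle: the proof is a direct computation from the defining formula $\Sigma_f(E,p)=(E,fp)$ together with associativity of composition in $\mathcal{E}$. The only point worth noting is that the identities are strict equalities of functors, not merely natural isomorphisms, which is automatic from how $\Sigma$ is defined on the nose.
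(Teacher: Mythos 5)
Your proof is correct and follows the same route as the paper: a direct unfolding of $\Sigma_f(E,p)=(E,fp)$ showing $\Sigma_g(\Sigma_f(E,p))=(E,gfp)=\Sigma_{gf}(E,p)$. The paper's proof is just this one-line computation on objects; your additional checks on morphisms and the explicit $C=1$ specialisation are fine but routine.
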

 
   \begin{proof}  For every $(E,p)\in  \mathcal{E}(A)$ we have
$$\Sigma_g( \Sigma_f(E,p))=  \Sigma_g(E,fp)= (E, gfp)=\Sigma_{gf}(E,p).$$
  \end{proof}

It follows from Proposition \ref{bergerprinciple} that if $A$ and $B$
are two objects of clan $\mathcal{E}$ and if
$p_1:A\times B\to A$ and $p_2:A\times B\to B$ are
the projections.
then we have 
\begin{equation} \label{0Fubini}
\Sigma_B\Sigma_{p_2}=\Sigma_{A\times B}=\Sigma_A \Sigma_{p_1}
\end{equation}
We may denote the functor $\Sigma_{p_1}$ by
 $\Sigma_B:\mathcal{E}(A\times B)\to \mathcal{E}(A)$
and the functor $\Sigma_{p_2}$ by 
 $\Sigma_A:\mathcal{E}(A\times B)\to \mathcal{E}(B)$.
With this notation, the identity (\ref{0Fubini}) takes the following form:
$$\Sigma_B\Sigma_{A}=\Sigma_{A\times B}=\Sigma_A \Sigma_{B}$$
This is {\it Fubini theorem}:
\begin{equation} \label{1Fubini}
\sum_{y:B} \sum_{x:A}E\langle x, y\rangle =\sum_{(x,y):A\times B}E\langle x, y\rangle= \sum_{x:A}  \sum_{y:B} E\langle x, y\rangle .
\end{equation}

\begin{prop}\label{bergerprinciple2} 
  If $f:A\to B$ is a fibration in a clan $ \mathcal{E}$,  
then for every object $(E,p)\in \mathcal{E}(A)$ and every element $y:B$ we have
$$
\Sigma_f(E)\langle y \rangle=\sum_{x:A(y)} E\langle x \rangle=\sum_{f(x)=y} E\langle x \rangle.
$$
where $A\langle y \rangle$ is the fiber of $f$ at $y:B$.
\end{prop}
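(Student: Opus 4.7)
The plan is to verify the identity by a two-step pasting of pullback squares, relying only on the pullback pasting lemma (Lemma \ref{lemmacartesiansq}) and on the definitions of $\Sigma_f$, of fibers, and of the summation functor $\Sigma_{A\langle y\rangle}$.

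First I would unwind the two sides. By definition, $\Sigma_f(E) = (E, fp) \in \mathcal{E}(B)$, so its fiber at $y:B$ is the object $\Sigma_f(E)\langle y\rangle = y^\star(E, fp)$ computed by the pullback
\[
\xymatrix{
\Sigma_f(E)\langle y\rangle \ar[d] \ar[rr] && E \ar[d]^{fp} \\
1 \ar[rr]^{y} && B.
}
\]
For the right-hand side, let $i: A\langle y\rangle \to A$ denote the canonical map provided by the pullback defining the fiber $A\langle y\rangle = y^\star(A)$. Viewing $E$ as the dependent type $(E,p) \in \mathcal{E}(A)$, its restriction along $i$ is the object $i^\star(E,p) \in \mathcal{E}(A\langle y\rangle)$, and by Proposition \ref{basechangetribe2} its fiber at any $x:A\langle y\rangle$ is $E\langle i(x)\rangle$, i.e. $E\langle x\rangle$ when $x$ is regarded as an element of $A$ with $f(x)=y$. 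Hence by definition of the summation functor $\Sigma_{A\langle y\rangle}$ as the forgetful functor to $\mathcal{E}$, we have $\sum_{x: A\langle y\rangle} E\langle x\rangle = \Sigma_{A\langle y\rangle}\bigl(i^\star(E,p)\bigr)$, which is just the total object of the fibration $i^\star(E,p) \twoheadrightarrow A\langle y\rangle$.

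Then I would paste the two defining pullback squares vertically:
\[
\xymatrix{
i^\star(E) \ar[d] \ar[rr] && E \ar[d]^{p} \\
A\langle y\rangle \ar[d] \ar[rr]^{i} && A \ar[d]^{f} \\
1 \ar[rr]^{y} && B.
}
\]
The top square is cartesian by the definition of $i^\star(E)$ and the bottom square is cartesian by the definition of $A\langle y\rangle$. By Lemma \ref{lemmacartesiansq}, the outer rectangle is cartesian, with right-hand vertical $fp$. But this outer rectangle is precisely the pullback square computing $\Sigma_f(E)\langle y\rangle = y^\star(E, fp)$, so by uniqueness of pullbacks there is a canonical isomorphism $i^\star(E) \cong \Sigma_f(E)\langle y\rangle$. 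Combining with the identification above gives $\sum_{x:A\langle y\rangle} E\langle x\rangle \cong \Sigma_f(E)\langle y\rangle$, which is the desired equality.

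The main obstacle is purely notational rather than mathematical: one has to be careful that when $x$ ranges over the fiber $A\langle y\rangle$, the type $E\langle x\rangle$ really means the fiber of the restricted dependent type $i^\star(E,p)$, so that the summation $\sum_{x:A\langle y\rangle}$ really does correspond to $\Sigma_{A\langle y\rangle} \circ i^\star$. Once this is spelled out, the rest is a single application of the pullback pasting lemma.
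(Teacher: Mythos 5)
Your proposal is correct and follows essentially the same route as the paper: both arguments paste the cartesian square defining the fiber $A\langle y\rangle$ with the cartesian square exhibiting the restriction of $(E,p)$ over $A\langle y\rangle$, apply Lemma \ref{lemmacartesiansq} to identify the total object with $y^\star(E,fp)=\Sigma_f(E)\langle y\rangle$, and then read off the sum as the total object of the restricted fibration. The only cosmetic difference is that you delegate the fiberwise identification $i^\star(E)\langle x\rangle=E\langle i(x)\rangle$ to Proposition \ref{basechangetribe2}, whereas the paper redoes that pullback pasting explicitly with a third square.
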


  \begin{proof} 
  The map $p:E\to A$ induces a map $p:(E,fp)\to (A,f)$
  in $ \mathcal{E}(B)$ and we have $(E,fp)=\Sigma_f(E,p)$.
 If $y:B$ and $x:A\langle y \rangle$, then we have a commutative diagram
  $$\xymatrix{
(\Sigma_fE)\langle y\rangle\langle x\rangle  \ar[rr] \ar[dd]&& (\Sigma_fE)\langle y\rangle \ar[dd]^{p\langle y\rangle}\ar[rr] & &E \ar[dd]^p \\
& (3)&&(2)&\\
1 \ar[rr]^{x}&& A\langle y \rangle \ar[dd] \ar[rr]^{}&& A \ar[dd]^f  \\
&&&(1)&\\
&& 1 \ar[rr]^{y} && B.
}$$
The squares (1) and (3) in this diagram are cartesian by construction. 
The square (2) is cartesian by Lemma \ref{squareforbasechange5}.
Hence the composite square (1+2) is cartesian by Lemma \ref{lemmacartesiansq}.
Thus, $(\Sigma_fE)\langle y\rangle\langle x\rangle=E\langle x\rangle$.
Hence we have 
$$(\Sigma_fE)\langle y\rangle=\sum_{x:A(y)} (\Sigma_fE)\langle y\rangle\langle x\rangle=\sum_{x:A(y)} E\langle x\rangle$$
 \end{proof}

\subsection{The category of clans}

 \begin{defi} \label{reflectsfibrations} 
We say that a morphism of clans $F:\mathcal{E}\to \mathcal{E}'$ 
{\it reflects} fibrations, if the implication 
$$F(f)\enspace  {\rm is\ a\  fibration} \Rightarrow f  \enspace  {\rm is \ a\   fibration} $$
is true for every map $f:A\to B$ in $\mathcal{E}$.
 \end{defi}

Recall that the category of clans has the structure of a 2-category
where a 2-cell is a natural transformation. There is a notion
of equivalence in any 2-category: a morphism of clans $F:\mathcal{E}\to \mathcal{E}'$ 
is an {\it equivalence of clans} if and only if there exists a morphism  of clans $G:\mathcal{E}'\to \mathcal{E}$ 
together with two natural isomorphism $GF\simeq Id$ and $FG\simeq Id$

\begin{prop}\label{equivalenceofclans}
A morphism of clans $F:\mathcal{E}\to \mathcal{E}'$ is an equivalence of clans
if and only if it is an equivalence of categories and it reflects fibrations.
\end{prop}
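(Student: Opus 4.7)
The plan is to prove the two implications separately. Both directions rest on a simple observation: in any clan the class $\mathcal{F}$ of fibrations is closed under conjugation by isomorphisms. Indeed, if $f:A\twoheadrightarrow B$ is a fibration and $u:A'\xrightarrow{\sim}A$, $v:B\xrightarrow{\sim}B'$ are isomorphisms, then $v\circ f\circ u$ is a composite of three fibrations (isomorphisms themselves being in $\mathcal{F}$), hence belongs to $\mathcal{F}$.

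For the forward direction, suppose $F:\mathcal{E}\to\mathcal{E}'$ is an equivalence in the 2-category $\Clan$, witnessed by a morphism of clans $G:\mathcal{E}'\to\mathcal{E}$ together with natural isomorphisms $\eta:\mathrm{Id}_{\mathcal{E}}\simeq GF$ and $\epsilon:FG\simeq\mathrm{Id}_{\mathcal{E}'}$. It is immediate that $F$ is an equivalence of underlying categories. To see that $F$ reflects fibrations, take $f:A\to B$ in $\mathcal{E}$ with $F(f)$ a fibration. Since $G$ preserves fibrations, $GF(f)$ is a fibration in $\mathcal{E}$; the naturality square of $\eta$ at $f$ then presents $f=\eta_B^{-1}\circ GF(f)\circ\eta_A$ as a conjugate of $GF(f)$ by isomorphisms, and the observation gives that $f$ is a fibration.

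For the converse, suppose $F$ is an equivalence of categories and reflects fibrations. Pick any quasi-inverse functor $G:\mathcal{E}'\to\mathcal{E}$ with natural isomorphisms $\eta:\mathrm{Id}_{\mathcal{E}}\simeq GF$ and $\epsilon:FG\simeq\mathrm{Id}_{\mathcal{E}'}$. The same $\eta$ and $\epsilon$ will exhibit $F$ as an equivalence in $\Clan$ provided $G$ itself is a morphism of clans, so the task reduces to checking the three conditions of Definition \ref{defmorphismclans} for $G$: (i) $G$ preserves the terminal object since equivalences of categories preserve all limits; (ii) given a fibration $g$ in $\mathcal{E}'$, the naturality square of $\epsilon$ presents $g$ as a conjugate of $FG(g)$ by isomorphisms, so $FG(g)$ is a fibration by the observation, and then $G(g)$ is a fibration because $F$ reflects fibrations by hypothesis; (iii) $G$ preserves base changes of fibrations because equivalences preserve all pullbacks.

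The only mildly delicate point is the closure of $\mathcal{F}$ under conjugation by isomorphisms, but this is immediate from the clan axioms of Definition \ref{defclan}; everything else is a formal manipulation with quasi-inverses. I expect no real obstacle beyond keeping the direction of the naturality squares straight.
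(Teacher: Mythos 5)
Your proof is correct, and since the paper leaves this proposition "to the reader" it is exactly the argument one is expected to supply: the key observation that fibrations are closed under conjugation by isomorphisms (isomorphisms being fibrations and fibrations being closed under composition) handles both the reflection of fibrations in the forward direction and the preservation of fibrations by the quasi-inverse in the converse, while preservation of terminal objects and of base changes of fibrations by $G$ follows from an equivalence of categories preserving all limits. No gaps.
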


\begin{proof} Left to the reader.
\end{proof}

\begin{rem} 
If $F:\mathcal{E}\to \mathcal{E}'$ is an equivalence of categories and 
one of the categories $\mathcal{E}$ or $ \mathcal{E}'$ has the structure of a clan
then the other category has a unique clan structure for which the functor $F$ is an equivalence of clans.
\end{rem}

\medskip

\begin{defi} \label{defembedding} 
 We say that a morphism of clans $F:\mathcal{E}\to \mathcal{E}'$
 is an {\bf embedding} if it is fully faithful and it reflects fibrations.
  \end{defi}

 \begin{defi} \label{defsubclan} 
 If $\mathcal{E}$ is a clan, we say that a full sub-category $\mathcal{L}\subseteq  \mathcal{E}$
is a {\bf sub-clan} if the following two conditions hold:
\begin{enumerate}
\item{} if $f:A\to B$ and $p:E\to B$ are maps in $\mathcal{L}$ and $p$ is a fibration in $\mathcal{E}$,
then there exists a cartesian square in $\mathcal{E}$ 
$$\xymatrix{
C  \ar[d]_{}\ar[rr]^{}& & E \ar[d]^{p} \\
A \ar[rr]^{f}&& B
}$$
with $C\in \mathcal{L}$.
\item{} the subcategory $\mathcal{L}$ contains an object which is terminal in $\mathcal{E}$;
\end{enumerate}
 \end{defi} 
 
   For example, the full subcategory $\{\top\}$ spanned by a single terminal
object $\top$ of a clan $\mathcal{E}$ is a sub-clan of $\mathcal{E}$.

 \begin{prop} \label{subclan=embedding} 
A sub-clan $\mathcal{L}\subseteq  \mathcal{E}$ has the structure of a clan, where a map
 in $\mathcal{L}$ is a fibration if it is a fibration in $\mathcal{E}$.
 Moreover, the inclusion functor $\mathcal{L}\to \mathcal{E}$ is an 
 embedding.  
 \end{prop}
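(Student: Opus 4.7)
The plan is to verify the four clan axioms for $\mathcal{L}$ under the induced fibration structure, then check that the inclusion is fully faithful, reflects fibrations, and is a morphism of clans.

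First I would verify the clan axioms. The terminal object axiom follows from Condition (2): the object $\top \in \mathcal{L}$ that is terminal in $\mathcal{E}$ is also terminal in $\mathcal{L}$ by fullness, and for any $X \in \mathcal{L}$ the unique map $X \to \top$ in $\mathcal{L}$ agrees with the unique map in $\mathcal{E}$, which is a fibration in $\mathcal{E}$ and therefore a fibration in $\mathcal{L}$. Isomorphisms in $\mathcal{L}$ are isomorphisms in $\mathcal{E}$ (again by fullness) and hence fibrations in $\mathcal{L}$. Composition closure is immediate: the composite of two fibrations in $\mathcal{L}$ lies in $\mathcal{L}$ (as $\mathcal{L}$ is a subcategory) and is a fibration in $\mathcal{E}$. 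For closure under base change, given a fibration $p:E\twoheadrightarrow B$ in $\mathcal{L}$ and a map $f:A\to B$ in $\mathcal{L}$, Condition (1) supplies a cartesian square in $\mathcal{E}$ whose pullback vertex $C$ lies in $\mathcal{L}$. The pullback projection $C\to A$ is a fibration in $\mathcal{E}$, hence a fibration in $\mathcal{L}$. Because $\mathcal{L}$ is full, the universal property of this pullback restricts to $\mathcal{L}$, so $p$ is carrable in $\mathcal{L}$ and its base change belongs to the designated class.

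Next I would address the embedding claim. The inclusion $\iota:\mathcal{L}\to \mathcal{E}$ is fully faithful by fullness of $\mathcal{L}$. It reflects fibrations essentially by construction: a map in $\mathcal{L}$ was declared a fibration precisely when it is one in $\mathcal{E}$. To confirm $\iota$ is a morphism of clans, one checks the three conditions of Definition \ref{defmorphismclans}: preservation of fibrations is tautological, preservation of terminal objects follows from Condition (2), and preservation of base changes of fibrations follows from the previous paragraph, since the pullback computed in $\mathcal{L}$ via Condition (1) is literally a cartesian square in $\mathcal{E}$.

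There is no serious obstacle here; the argument is essentially a direct unpacking of Definitions \ref{defclan}, \ref{defmorphismclans}, \ref{defembedding}, and \ref{defsubclan}. The only point that requires a brief remark is why a pullback in $\mathcal{E}$ with all four vertices in the full subcategory $\mathcal{L}$ is also a pullback in $\mathcal{L}$; this is immediate because testing the universal property against an object $X\in \mathcal{L}$ uses only hom-sets $\mathcal{L}(X,-) = \mathcal{E}(X,-)$ by fullness.
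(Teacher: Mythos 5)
Your proposal is correct. The paper states this proposition without any proof, and your direct verification of the clan axioms from Definition \ref{defsubclan} — including the key observation that a cartesian square in $\mathcal{E}$ with vertices in the full subcategory $\mathcal{L}$ remains cartesian in $\mathcal{L}$ — is precisely the routine argument the paper leaves implicit.
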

 
 \medskip

Recall that a full sub-category $\mathcal{L}\subseteq \mathcal{E}$ 
is said to be {\it replete} if every object of $\mathcal{E}$ which is isomorphic
to an object in $\mathcal{L}$ belongs to $\mathcal{L}$.
Every full sub-category $\mathcal{L}\subseteq \mathcal{E}$ 
is contained in a smallest replete full sub-category its {\it replete closure}
$\mathcal{L}^{rep} \subseteq \mathcal{E}$.
An object $X\in \mathcal{E}$ belongs to $\mathcal{L}^r$
if and only if $X$ is isomorphic to an object of $\mathcal{L}$.
Moreover, the inclusion functor $\mathcal{L} \to \mathcal{L}^{rep}$
is an equivalence of categories.

 \begin{prop} \label{subclanvsreplete} 
A full sub-category $\mathcal{L}$ of a clan $\mathcal{E}$
is a sub-clan if and only its replete closure $\mathcal{L}^{rep} \subseteq \mathcal{E}$
is a sub-clan. 
 \end{prop}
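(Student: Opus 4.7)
The plan is to prove both implications by exploiting the fact that pullback vertices are determined only up to isomorphism, together with the fact that fibrations are closed under pre- and post-composition with isomorphisms (since isomorphisms are fibrations and fibrations are closed under composition).

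For the easy direction, assume $\mathcal{L}$ is a sub-clan; I would verify the two conditions for $\mathcal{L}^{rep}$ directly. The terminal condition is immediate since $\mathcal{L}\subseteq\mathcal{L}^{rep}$ already contains a terminal object of $\mathcal{E}$. For the base-change condition, suppose $f\colon A\to B$ and $p\colon E\to B$ lie in $\mathcal{L}^{rep}$ with $p$ a fibration. Choose isomorphisms $\alpha\colon A\to A_0$, $\beta\colon B\to B_0$, $\varepsilon\colon E\to E_0$ with $A_0,B_0,E_0\in\mathcal{L}$. Then $\beta f\alpha^{-1}\colon A_0\to B_0$ and $\beta p\varepsilon^{-1}\colon E_0\to B_0$ are maps in $\mathcal{L}$, and the latter is a fibration. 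Applying the sub-clan property of $\mathcal{L}$ yields a cartesian square over these transported maps with vertex in $\mathcal{L}$. Since $p$ is carrable, the pullback of $p$ along $f$ also exists in $\mathcal{E}$, and its vertex is canonically isomorphic to the vertex just constructed, hence lies in $\mathcal{L}^{rep}$.

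For the converse, assume $\mathcal{L}^{rep}$ is a sub-clan. The terminal condition for $\mathcal{L}$ follows from a terminality-is-invariant-under-iso argument: $\mathcal{L}^{rep}$ contains some terminal object $T$ of $\mathcal{E}$, and $T$ is isomorphic to some $T'\in\mathcal{L}$; since terminality is preserved under isomorphism, $T'$ is terminal in $\mathcal{E}$. For the base-change condition, suppose $f\colon A\to B$ and $p\colon E\to B$ are in $\mathcal{L}$ with $p$ a fibration. Then they are also maps in $\mathcal{L}^{rep}$, so there exists a cartesian square in $\mathcal{E}$ with vertex $C\in\mathcal{L}^{rep}$. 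Choose an isomorphism $C\cong C'$ with $C'\in\mathcal{L}$; transporting the two projections along this isomorphism produces an isomorphic cartesian square with vertex $C'\in\mathcal{L}$, as required.

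The only real content is bookkeeping: checking that a diagram obtained from a cartesian square by replacing a vertex by an isomorphic object (and adjusting the projections accordingly) is again cartesian, and that fibrations are invariant under pre- and post-composition with isomorphisms. Both are immediate from the axioms of a clan, so no step presents a genuine obstacle; the proposition is essentially the observation that the sub-clan axioms are invariant under replacing objects by isomorphic ones.
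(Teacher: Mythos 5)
Your argument is correct: both directions reduce to the observation that cartesian squares, fibrations, and terminal objects are stable under replacing objects by isomorphic ones (using that isomorphisms are fibrations and fibrations compose), and the bookkeeping you describe goes through without obstacle. The paper leaves this proof to the reader, and your write-up is exactly the intended routine verification.
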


\begin{proof} Left to the reader.
\end{proof} 

\medskip

The intersection of a family of replete sub-clans of a clan $\mathcal{E}$ is a replete sub-clan.
Every sub-category $\mathcal{C} \subseteq \mathcal{E}$ is contained 
in a smallest replete sub-clan $\overline{\mathcal{C}}$, the (replete) sub-clan 
{\it generated} by $\mathcal{C} $.

\medskip

The image of a fully faithful functor  $F:\mathcal{E}\to \mathcal{E}'$ is a full subcategory $F(\mathcal{E})\subseteq  \mathcal{E}'$
and the induced functor $\mathcal{E} \to F(\mathcal{E})$ is an equivalence of categories.
If $F:\mathcal{E}\to \mathcal{E}'$ is an embedding of clans, then $F(\mathcal{E})\subseteq  \mathcal{E}'$
is a sub-clan and the induced functor $\mathcal{E} \to F(\mathcal{E})$ is an equivalence of clans.

\medskip

Recall that the category of small categories $\Cat$ has the structure of clan in which
a fibration is an iso-fibration.
We denote the category of small clans and morphisms of clans by $\Clan$.

\begin{lemma}\label{pullbackofclans} Suppose that we have a pullback square of categories
$$\xymatrix{ 
\mathcal{A}\times_\mathcal{B}\mathcal{E} \ar[rr]^{}   \ar[d]_{}   && \mathcal{E} \ar[d]^F \\
\mathcal{A}\ar[rr]^U &&  \mathcal{B}
}
$$
in which the functors $U$ and $F$ are morphisms of clans.
If the functor $F$ is an isofibration,
then the category $\mathcal{A}\times_\mathcal{B}\mathcal{E}$ has the structure of a clan
in which a map $(f_1,f_2):(X_1,X_2) \to (Y_1,Y_2)$ is a fibration if the maps $f_1$ and $f_2$
are fibrations. Moreover, the square is a pullback in the category $\Clan$.
\end{lemma}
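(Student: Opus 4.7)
The plan is to verify three things in sequence: (a) that the componentwise fibrations endow $\mathcal{A}\times_\mathcal{B}\mathcal{E}$ with a clan structure; (b) that the two projection functors are morphisms of clans; and (c) that the resulting square satisfies the universal property of a pullback in $\Clan$. The role of the isofibration hypothesis on $F$ throughout will be to rectify canonical isomorphisms in $\mathcal{B}$ into strict equalities in $\mathcal{E}$, so that the strict pullback of categories actually carries the desired structure on the nose.

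For (a), closure under composition and the presence of all identities and isomorphisms among the fibrations are immediate, since these properties are verified componentwise in $\mathcal{A}$ and $\mathcal{E}$. For a terminal object, I would fix a terminal $1_\mathcal{A}$; then $U(1_\mathcal{A})$ and $F(1_\mathcal{E})$ are both terminal in $\mathcal{B}$, hence canonically isomorphic. Using that $F$ is an isofibration, I would lift this isomorphism to an isomorphism $1_\mathcal{E}\cong T$ in $\mathcal{E}$ with $F(T)=U(1_\mathcal{A})$, so that $(1_\mathcal{A},T)$ is terminal in $\mathcal{A}\times_\mathcal{B}\mathcal{E}$ and the unique maps into it are componentwise fibrations. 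For closure under base change, given a componentwise fibration $(f_1,f_2)\colon (X_1,X_2)\to (Y_1,Y_2)$ and any map $(h_1,h_2)\colon (Z_1,Z_2)\to (Y_1,Y_2)$, I would form $W_1=Z_1\times_{Y_1}X_1$ in $\mathcal{A}$ and $W_2=Z_2\times_{Y_2}X_2$ in $\mathcal{E}$. Since $U$ and $F$ are morphisms of clans they preserve these base changes, so $U(W_1)$ and $F(W_2)$ are both pullbacks of the same cospan in $\mathcal{B}$, hence canonically isomorphic; the isofibration property of $F$ then lets me replace $W_2$ by an isomorphic $W_2'$ with $F(W_2')=U(W_1)$. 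The pair $(W_1,W_2')$ lies in the strict pullback category, and inspection of the universal property in each factor shows that it realises the base change of $(f_1,f_2)$ along $(h_1,h_2)$.

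For (b), the projections $P_1\colon\mathcal{A}\times_\mathcal{B}\mathcal{E}\to\mathcal{A}$ and $P_2\colon\mathcal{A}\times_\mathcal{B}\mathcal{E}\to\mathcal{E}$ preserve fibrations by the very definition of the fibration class, preserve base changes of fibrations by the componentwise construction just given, and preserve the chosen terminal object by construction. For (c), given another clan $\mathcal{C}$ and morphisms of clans $G_1\colon\mathcal{C}\to\mathcal{A}$ and $G_2\colon\mathcal{C}\to\mathcal{E}$ satisfying $UG_1=FG_2$, the induced functor $G\colon\mathcal{C}\to\mathcal{A}\times_\mathcal{B}\mathcal{E}$ sending $X\mapsto (G_1X,G_2X)$ is the unique functor with $P_iG=G_i$, and each clan-morphism condition on $G$ (preservation of fibrations, of a terminal object, and of base changes of fibrations) transfers componentwise from the corresponding condition on $G_1$ and $G_2$. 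The main obstacle, and indeed the only place where the isofibration hypothesis is essential, is ensuring that the strict equality $U(X_1)=F(X_2)$ demanded for membership in $\mathcal{A}\times_\mathcal{B}\mathcal{E}$ can be arranged for the terminal object and for each base change; without this hypothesis, the natural constructions would only land in the pseudo-pullback and a different formulation would be needed.
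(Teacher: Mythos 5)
The paper leaves this proof to the reader, so there is nothing to compare against; your argument is correct and is the natural way to fill the gap. You rightly isolate the only delicate point --- using the isofibration property of $F$ to rectify the canonical isomorphisms between $U(1_{\mathcal{A}})$ and $F(1_{\mathcal{E}})$, and between $U(Z_1\times_{Y_1}X_1)$ and $F(Z_2\times_{Y_2}X_2)$, into strict equalities so that the terminal object and the base changes land in the strict pullback --- and the remaining verifications (that the rectified cone is cartesian in $\mathcal{A}\times_{\mathcal{B}}\mathcal{E}$, and that the induced functor into the pullback is a morphism of clans) do transfer componentwise as you claim, using that $U$ and $F$ preserve the relevant limits so the two induced comparison maps agree in $\mathcal{B}$.
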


\begin{proof}  Left to the reader.
\end{proof}

\begin{prop}\label{clanofclans} The category of small clans
$\Clan$ has the structure of a clan, 
where a morphism of clans is a fibration if it is an iso-fibration.
The forgetful functor $\Clan \to \Cat$ is a morphism of clans.
\end{prop}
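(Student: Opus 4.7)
The plan is to invoke Lemma \ref{pullbackofclans} as the main engine and otherwise check routine closure properties, organising the verification of Definition \ref{defclan} in four steps.

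First, I would identify the terminal object. The terminal category $\mathbf{1}$ (one object, identity morphism) carries a unique clan structure (every map is a fibration), and for any clan $\mathcal{E}$ the unique functor $\mathcal{E}\to \mathbf{1}$ is trivially a morphism of clans and an iso-fibration. So the terminal map in $\Clan$ belongs to the prospective fibration class.

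Second, I would verify the other closure conditions. Every isomorphism of clans is in particular an iso-fibration of the underlying categories, hence a fibration in the proposed sense. Closure under composition is immediate, since the composition of two morphisms of clans is a morphism of clans, and the composition of two iso-fibrations is an iso-fibration in $\Cat$ (using that $\Cat$ is a clan with iso-fibrations as fibrations).

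Third — and this is the heart of the argument — I would establish closure under base changes. Given an iso-fibration of clans $F:\mathcal{E}\to \mathcal{B}$ and an arbitrary morphism of clans $U:\mathcal{A}\to \mathcal{B}$, Lemma \ref{pullbackofclans} immediately supplies a pullback square in $\Clan$
$$\xymatrix{
\mathcal{A}\times_\mathcal{B}\mathcal{E} \ar[rr] \ar[d]_{G} && \mathcal{E} \ar[d]^F \\
\mathcal{A} \ar[rr]^U && \mathcal{B}
}$$
with the projection $G:\mathcal{A}\times_\mathcal{B}\mathcal{E}\to \mathcal{A}$ a morphism of clans. It remains to check that $G$ is an iso-fibration in $\Cat$; but this follows because iso-fibrations are stable under base change in $\Cat$, and on underlying categories the above square is still a pullback (since the forgetful $\Clan\to \Cat$ reflects the limit, as the pullback in $\Clan$ was constructed by endowing the $\Cat$-pullback with a clan structure). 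Hence $G$ is a fibration in the proposed clan structure on $\Clan$.

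Fourth, I would read off the statement about the forgetful functor $\Clan\to \Cat$: it sends the terminal clan $\mathbf{1}$ to the terminal category, sends iso-fibrations of clans to iso-fibrations by definition, and preserves the pullbacks along them because the pullback in $\Clan$ supplied by Lemma \ref{pullbackofclans} has the same underlying category as the pullback in $\Cat$. The main subtle point — and the only step I would expect any obstacle from — is confirming that the $\Clan$-pullback is created from the $\Cat$-pullback (that is, that Lemma \ref{pullbackofclans} really says the square remains cartesian after forgetting structure), but this is exactly the last clause of that lemma.
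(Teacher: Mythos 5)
Your proposal is correct and follows essentially the same route as the paper, whose entire proof is the single sentence "This follows from Lemma \ref{pullbackofclans}." You have simply made explicit the routine verifications (terminal object, isomorphisms, composition, and the reading-off of the statement about the forgetful functor) that the paper leaves to the reader.
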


\begin{proof} This follows from Lemma \ref{pullbackofclans}.
\end{proof}

\begin{cor}\label{0productofclans} The cartesian 
product of two clans $\mathcal{E}_1$ and $ \mathcal{E}_2$ has the structure of a clan,
where a map $(f_1,f_2):(X_1,X_2)\to (Y_1,Y_2)$ in $ \mathcal{E}_1\times  \mathcal{E}_2$ 
is a fibration if $f_1$ is a fibration in  $ \mathcal{E}_1$
and $f_2$ is a fibration in  $ \mathcal{E}_2$. 
The terminal category has the structure of a clan.
 If $ \mathcal{E}$
is a clan, then the cartesian product functor $\times:\mathcal{E}\times \mathcal{E}\to  \mathcal{E}$
is a morphism of clans.
\end{cor}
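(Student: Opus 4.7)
The plan is to derive everything from Proposition \ref{clanofclans} (that $\Clan$ is itself a clan) together with Lemma \ref{pullbackofclans} and Proposition \ref{tribeofgenuineprojection0}.

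First, since $\Clan$ is a clan, it has a terminal object by definition, and by Proposition \ref{tribeofgenuineprojection0} applied to $\Clan$ it is in particular cartesian. Moreover, the forgetful functor $\Clan \to \Cat$ is a morphism of clans by Proposition \ref{clanofclans}, and hence is a cartesian functor by Proposition \ref{morphismtribecart}. Therefore the terminal object of $\Clan$ has underlying category the terminal category (so the terminal category carries a clan structure, with its unique map being a fibration), and the product in $\Clan$ of two clans has as underlying category the cartesian product of the underlying categories.

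Next, to identify the fibrations in $\mathcal{E}_1\times\mathcal{E}_2$ as componentwise fibrations, I would view the product as a pullback in $\Cat$ of the canonical morphisms of clans $\mathcal{E}_i\to 1$ (where $1$ is the terminal clan). Both projections to $1$ are trivially iso-fibrations, so Lemma \ref{pullbackofclans} applies and yields a clan structure on $\mathcal{E}_1\times\mathcal{E}_2$ making the square a pullback in $\Clan$; by that lemma, a map $(f_1,f_2)$ is a fibration precisely when $f_1$ is a fibration in $\mathcal{E}_1$ and $f_2$ is a fibration in $\mathcal{E}_2$. This is the product clan structure claimed, and it agrees (by uniqueness of pullbacks in $\Clan$) with the one produced by the cartesian structure of $\Clan$.

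For the third assertion, that $\times:\mathcal{E}\times\mathcal{E}\to\mathcal{E}$ is a morphism of clans, I would verify the three conditions of Definition \ref{defmorphismclans} directly. Preservation of fibrations is precisely the second statement of Proposition \ref{tribeofgenuineprojection0}: the product of two fibrations is a fibration. Preservation of terminal objects is immediate, since $1\times 1 \cong 1$. The remaining point is preservation of base changes of fibrations: given a fibration $(p_1,p_2):(E_1,E_2)\twoheadrightarrow(B_1,B_2)$ and a map $(f_1,f_2):(A_1,A_2)\to(B_1,B_2)$ in $\mathcal{E}\times\mathcal{E}$, the componentwise pullback is $(A_1\times_{B_1}E_1,\,A_2\times_{B_2}E_2)$, and I must show that its image under $\times$ is a pullback of $p_1\times p_2$ along $f_1\times f_2$ in $\mathcal{E}$. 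This is a general limit-commutes-with-limit fact: the object $(A_1\times_{B_1}E_1)\times(A_2\times_{B_2}E_2)$ satisfies, by inspection of its universal property, exactly the universal property of $(A_1\times A_2)\times_{B_1\times B_2}(E_1\times E_2)$, since a cone into the latter is the same data as a pair of cones into each component pullback. No step is a real obstacle; the least routine point is this last universal property check, but it is immediate once the cones are written out.
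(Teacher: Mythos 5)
Your proof is correct and follows the route the paper clearly intends (the proof is left to the reader, but the corollary sits immediately after Proposition \ref{clanofclans}, whose own proof invokes Lemma \ref{pullbackofclans}): you realize $\mathcal{E}_1\times\mathcal{E}_2$ as a pullback over the terminal clan via Lemma \ref{pullbackofclans} to get the componentwise fibrations, and you verify the three conditions of Definition \ref{defmorphismclans} directly for $\times$, with the only nontrivial point being that products commute with pullbacks. The one stylistic remark is that deducing the clan structure on the terminal category from the fact that $\Clan$ is a clan is more roundabout than simply observing that the terminal category satisfies the axioms of Definition \ref{defclan} trivially (and is arguably needed as an input to Proposition \ref{clanofclans} rather than a consequence of it), but this does not affect correctness given the order in which the results are stated.
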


\begin{proof} Left to the reader.
\end{proof}

\begin{prop}\label{localisofibration} If a morphism of clans $F:\mathcal{E} \to \mathcal{E}'$ is 
an isofibration, so is the functor $F_{(A)}:\mathcal{E}(A) \to \mathcal{E}'(FA)$
induced by $F$ for every object $A\in \mathcal{E}$.
\end{prop}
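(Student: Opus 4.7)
The plan is to unpack what it means for $F_{(A)}$ to be an isofibration and reduce it directly to the isofibration property of $F$ itself, transporting the slice data along a lifted isomorphism.

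Concretely, suppose $(E,p) \in \mathcal{E}(A)$ and $\phi : F_{(A)}(E,p) = (FE, Fp) \to (Y', q')$ is an isomorphism in $\mathcal{E}'(FA)$. By definition of the slice $\mathcal{E}'(FA)$, the arrow $\phi$ is an isomorphism $\phi : FE \to Y'$ in $\mathcal{E}'$ satisfying $q' \circ \phi = Fp$. Since $F$ is an isofibration, I lift $\phi$ to an isomorphism $\psi : E \to Y$ in $\mathcal{E}$ with $F\psi = \phi$ (so in particular $FY = Y'$). The problem now reduces to equipping $Y$ with the right structure in $\mathcal{E}(A)$.

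Define $q := p \circ \psi^{-1} : Y \to A$. I then need to verify three things. First, $q$ is a fibration: it is the composite of the isomorphism $\psi^{-1}$ (which lies in $\mathcal{F}$ by the first clan axiom) with the fibration $p$, and fibrations are closed under composition. Hence $(Y, q) \in \mathcal{E}(A)$. Second, $F_{(A)}(Y,q) = (Y', q')$: we compute $F(q) = Fp \circ F(\psi)^{-1} = Fp \circ \phi^{-1}$, and from $q' \circ \phi = Fp$ we get $q' = Fp \circ \phi^{-1} = F(q)$. Third, $\psi$ and $\psi^{-1}$ both live over $A$: $q \circ \psi = p \circ \psi^{-1} \circ \psi = p$ by the very definition of $q$, and $p \circ \psi^{-1} = q$ likewise; so $\psi$ is an isomorphism $(E,p) \to (Y,q)$ in $\mathcal{E}(A)$ with $F_{(A)}(\psi) = \phi$.

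There is essentially no obstacle here: the only ingredient beyond the isofibration property of $F$ is that isomorphisms are fibrations and that fibrations are closed under composition (both given by Definition \ref{defclan}), which is what lets the transported structure $(Y,q)$ land in $\mathcal{E}(A)$ rather than only in $\mathcal{E}/A$. The rest is bookkeeping of the commuting triangle over $A$.
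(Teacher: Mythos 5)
Your proof is correct and follows essentially the same route as the paper: lift the isomorphism through $F$ using the isofibration property, then transport the fibration structure along the lifted isomorphism by composing with it (using that isomorphisms are fibrations and that fibrations compose). The only difference is cosmetic — the paper lifts an isomorphism \emph{into} $F_{(A)}(X,p)$ and takes $pv$ as the new structure map, whereas you lift one \emph{out of} it and compose with $\psi^{-1}$; your write-up is in fact slightly more explicit about why the transported object lands in $\mathcal{E}(A)$.
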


 \begin{proof} Let us show that the functor $F_{(A)}$ is an isofibration.
 If $(X,p)\in \mathcal{E}(A)$ and $u:(Y,q)\to (FX,F(p))$ is an isomorphism 
 in $ \mathcal{E}'(FA)$, then 
there exists an isomorphism $v:Z\to X$ in $\mathcal{E}$
such that $F(v)=u$, since the functor $F$  is an isofibration.
The map $v:(Z,pv)\to (X,p)$ is an isomorphism
and we have $F(v)=u:(Y,q)\to (FX,F(p))$, since
$F(Z,pv)=(FZ, F(p)F(v))=(Y, F(p)u)=(Y,q)$.
\end{proof}

\subsection{Generic elements}

Recall that if $K$ is a commutative ring,
then the polynomial ring $K[x]$ is obtained
by adding freely a new element $x$ to $K$.
The freeness of the extension $i:K\to K[x]$ means
that if $f:K\to R$ is a homomorphism of commutative rings, then for every element $r\in R$,
then  there exists a unique
homomorphism $h:K[x]\to R$
such that $h i=f$ and $h(x)=r$,
$$\xymatrix{
K \ar[rr]^{i}\ar[drr]_{f}&&  K[x]  \ar[d]^{h}\\
&& R}
$$
In other words, the element $x\in K[x]$  can
take a value $r\in R$ which is chosen freely.
We shall say that $x$ is {\bf generic}.
Let us denote by $Hom'(K, R)$
the set of pair $(f,r)$, where $f:K\to R$ is a homomorphism of commutative
rings and $r\in R$. Then  the map 
$$i^\star: Hom(K[x], R)\simeq Hom'(K,R)$$
defined by putting $i^\star(h)=(hi,h(x))$ 
is bijective for every commutative ring $R$.

\medskip

If $A$ is an object of a clan $\mathcal{E}$, then the morphism of clans
$e_A:\mathcal{E}\to  \mathcal{E}(A)$  takes an object $E$ to
the object  $e_A(E)=(A\times A,p_1)$.
In particular, $e_A(A)=(A\times A,p_1)$ and 
$e_A(1)=(A,1_A)$ is the terminal object $\top_A$ of the clan $\mathcal{E}(A)$.
The diagonal $\delta_A:A\to A\times A$ is a section of
the projection $p_1:A\times A\to A$.
It thus defines an element $\delta_A: e_A(A)$ in $ \mathcal{E}(A)$.

 \begin{lemma} \label{genericfiber}
 If $p:E\to A$ is a fibration in  $\mathcal{E}$, 
 then the object $(E,p)$ of $ \mathcal{E}(A)$
 is the fiber  of the fibration $e_A(p):e_A(E)\to e_A(A)$ at $\delta_A:e_A(A)$.
 \end{lemma}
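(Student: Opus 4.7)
The plan is to compute the fiber directly as a pullback in $\mathcal{E}(A)$, reduce it to a pullback in $\mathcal{E}$, recognize $e_A(p)$ as a base change of $p$, and finish by one application of the pasting lemma (Lemma \ref{lemmacartesiansq}).

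First I would recall that, by definition, the fiber of the fibration $e_A(p)\colon e_A(E)\to e_A(A)$ at the element $\delta_A\colon \top_A\to e_A(A)$ (where $\top_A=(A,1_A)$ is the terminal object of $\mathcal{E}(A)$) is the pullback of $e_A(p)$ along $\delta_A$ taken inside $\mathcal{E}(A)$. Since $\mathcal{E}(A)$ is a full subcategory of $\mathcal{E}/A$ closed under pullbacks of fibrations, and since pullbacks in $\mathcal{E}/A$ are computed as pullbacks in $\mathcal{E}$, it suffices to compute this pullback in $\mathcal{E}$. Unwinding the definitions, $e_A(E)=(A\times E,p_1)$, $e_A(A)=(A\times A,p_1)$, $e_A(p)=1_A\times p$, and $\delta_A$ is the diagonal $A\to A\times A$, viewed as a morphism $(A,1_A)\to (A\times A,p_1)$ over $A$.

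The key observation is that $e_A(p)$ is itself the base change of $p$ along the second projection, witnessed by the cartesian square
\[
\xymatrix{
A\times E \ar[d]_{1_A\times p}\ar[r]^{p_2} & E \ar[d]^{p}\\
A\times A \ar[r]^{p_2} & A.
}
\]
Since $p_2\circ \delta_A=1_A$, the pasting lemma (Lemma \ref{lemmacartesiansq}) identifies the base change of $e_A(p)$ along $\delta_A$ with the base change of $p$ along $1_A$, which is $(E,p)$ itself. This exhibits $(E,p)$ as the fiber of $e_A(p)$ at $\delta_A$.

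I do not anticipate any real obstacle: once one notices that $e_A(p)=1_A\times p$ is a base change of $p$ along $p_2\colon A\times A\to A$, the proof reduces to a single pasting argument. The only mildly delicate point is the reduction of the fiber in $\mathcal{E}(A)$ to a pullback in $\mathcal{E}$, which is immediate from the fact that $\mathcal{E}(A)\hookrightarrow \mathcal{E}/A$ is fully faithful and that limits in $\mathcal{E}/A$ are computed in $\mathcal{E}$.
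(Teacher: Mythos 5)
Your proof is correct and follows essentially the same route as the paper: both arguments rest on recognizing $e_A(p)=A\times p$ as the base change of $p$ along $p_2\colon A\times A\to A$, and then applying the pasting lemma (Lemma \ref{lemmacartesiansq}) together with $p_2\circ\delta_A=1_A$ to identify the fiber with $(E,p)$. The paper phrases this by exhibiting the explicit square with top map $(p,1_E)$ and noting its composite with the base-change square is trivially cartesian, but that is the same computation you perform.
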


\begin{proof}
The right hand square of the following diagram is cartesian by construction:
 $$\xymatrix{
E \ar[d]_p\ar[rr]^{(p,1_E)} && A\times E \ar[d]^{A\times p} \ar[rr]^{p_2} && E\ar[d]^p \\
A \ar[rr]^{\delta_A}&& A\times A \ar[rr]^{p_2} && A
}$$
The composite of the two squares is trivially cartesian.
Hence the left hand square is cartesian by 
Lemma  \ref{lemmacartesiansq}.
This shows that  following square is cartesian in $\mathcal{E}(A)$,
\begin{equation}\label{tautologicalpullback2}
   \xymatrix{
(E,p) \ar[d]_p\ar[rr]^{(p,1_E)} & & e_A(E) \ar[d]^{e_A(p)} \\
(A,1_A) \ar[rr]^{\delta_A}&& e_A(A).
}\end{equation}
This proves that $(E,p)$ 
 is the fiber at $\delta_A:e_A(A)$ of the fibration $e_A(p):e_A(E)\to e_A(A)$
\end{proof}

\medskip

Let $A$ be an object in a clans $\mathcal{E}$.
If $\mathcal{R}$ is a clan,  
let us denote by $Hom(\mathcal{E}, \mathcal{R})_A$
the category whose objects are the pairs $(F,a)$, where $F:\mathcal{E} \to \mathcal{R}$
is a morphism of clans and $a:F(A)$.
A map $(F,a)\to (G,b)$ in this category is a natural transformation $\phi:F\to G$
such that $\phi_A(a)=b$. 
We shall say that a morphism of clans $e:\mathcal{E} \to \mathcal{E}'$ 
is {\it freely generated} by an element $x_A:e(A)$ if the 
the functor
\begin{equation}\label{restrictionalongfree}
e^\star:Hom(\mathcal{E}',  \mathcal{R})\to Hom(\mathcal{E}, \mathcal{R})_A  \, 
\end{equation}
defined by putting  $e^\star(F)=(F\circ e,F(x_A))$ 
is an equivalence of categories for every clan $ \mathcal{R}$.
When this condition is satisfied, we say 
that the element $x_A:e(A)$ is {\it generic}
and write that $\mathcal{E}'=\mathcal{E}[x_A]$.

\medskip

 \begin{thm} \label{genericelement} 
The morphism of clans $e_A:\mathcal{E} \to \mathcal{E}(A)$ 
is freely generated by the element $\delta_A:e_A(A)$.
 \end{thm}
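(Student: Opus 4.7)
The plan is to construct an explicit quasi-inverse to $e_A^\star$. Given a clan $\mathcal{R}$ and an object $(G,a) \in \Hom(\mathcal{E},\mathcal{R})_A$ --- that is, a morphism of clans $G : \mathcal{E} \to \mathcal{R}$ together with a global element $a : G(A)$ --- I would set
\[
\widetilde{G} \;:=\; a^\star \circ G_{(A)} \;:\; \mathcal{E}(A) \xrightarrow{G_{(A)}} \mathcal{R}(G(A)) \xrightarrow{a^\star} \mathcal{R}(1) = \mathcal{R},
\]
where $G_{(A)}$ is the morphism of clans on slices induced by $G$ (Proposition~\ref{inducedhomoclan}) and $a^\star$ is the base change along $a : 1 \to G(A)$ (Proposition~\ref{basechangetribe}). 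Concretely, $\widetilde{G}(E,p)$ is the fiber of $G(p) : G(E) \twoheadrightarrow G(A)$ over $a$. As a composite of morphisms of clans, $\widetilde{G}$ is itself a morphism of clans.

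To verify $e_A^\star(\widetilde{G}) \simeq (G,a)$ I would check the two components in turn. For $\widetilde{G}\circ e_A \simeq G$: since $G$ is cartesian (Proposition~\ref{morphismtribecart}), a direct unfolding gives a canonical isomorphism $G_{(A)} \circ e_A \simeq e_{G(A)} \circ G$; Corollary~\ref{compositebasechangetribecor} applied to $a : 1 \to G(A)$ then yields $a^\star \circ e_{G(A)} \simeq e_1 = \mathrm{id}_\mathcal{R}$, and composing produces the desired natural isomorphism. For $\widetilde{G}(\delta_A) \simeq a$: the section $\delta_A : (A,1_A) \to e_A(A)$ in $\mathcal{E}(A)$ is carried by $G_{(A)}$ to a section that, under the canonical identification $G_{(A)}(e_A(A)) \simeq e_{G(A)}(G(A))$, is the diagonal $\delta_{G(A)}$; applying $a^\star$ and unfolding the relevant pullback returns the value of the diagonal at $a$, namely $a$ itself.

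For the fully faithful half, the essential tool is Lemma~\ref{genericfiber}, which exhibits every $(E,p) \in \mathcal{E}(A)$ as a pullback
\[
\xymatrix{ (E,p) \ar[rr]^{(p,1_E)} \ar[d]_{p} && e_A(E) \ar[d]^{e_A(p)} \\ (A,1_A) \ar[rr]^{\delta_A} && e_A(A) }
\]
in which $e_A(p)$ is a fibration. Because any morphism of clans $\widetilde{F} : \mathcal{E}(A) \to \mathcal{R}$ preserves base changes of fibrations, the value $\widetilde{F}(E,p)$ is canonically the fiber of $\widetilde{F}(e_A(p)) = (\widetilde{F}\circ e_A)(p)$ at $\widetilde{F}(\delta_A)$. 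Thus $\widetilde{F}$ is reconstructible, up to canonical isomorphism, from the pair $e_A^\star(\widetilde{F}) = (\widetilde{F}\circ e_A,\, \widetilde{F}(\delta_A))$, which confirms uniqueness of $\widetilde{G}$ above and, applied to a natural transformation $\alpha : \widetilde{F}\circ e_A \to \widetilde{F}'\circ e_A$ whose $A$-component sends $\widetilde{F}(\delta_A)$ to $\widetilde{F}'(\delta_A)$, produces by the universal property of pullbacks a unique induced transformation $\widetilde{F}(E,p) \to \widetilde{F}'(E,p)$. This gives the inverse on hom-sets.

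The hard part will not be any single conceptual step but the coherence bookkeeping: one must stitch together the canonical isomorphisms (cartesianness of $G$, the identity $f^\star \circ e_B \simeq e_A$, and the tautological pullback of Lemma~\ref{genericfiber}) so that the pointwise reconstruction $\widetilde{F}(E,p) \simeq \widetilde{F}(\delta_A)^\star (\widetilde{F}e_A)(p)$ is natural in $(E,p)$ and assembles into a genuine 2-categorical equivalence of hom-categories, rather than a mere bijection on objects.
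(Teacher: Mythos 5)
Your proposal is correct and follows essentially the same route as the paper: the quasi-inverse $a^\star\circ G_{(A)}$ is exactly the functor the paper constructs for essential surjectivity, and your use of the tautological pullback of Lemma~\ref{genericfiber} together with preservation of base changes of fibrations (plus the monicity coming from the terminal-object corner) is precisely the paper's cube argument for fully faithfulness. The only material you defer --- the naturality of the reconstructed transformation and the verification that it restricts to $\alpha$ along $e_A$ --- is the same bookkeeping the paper carries out explicitly.
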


 \begin{proof} If $ \mathcal{R}$ is a clan, let us show that the functor 
\begin{equation} \label{genericelemequiv}
e_A^\star:Hom(\mathcal{E}(A),  \mathcal{R})\to Hom(\mathcal{E}, \mathcal{R})_A  \, 
\end{equation}
defined by putting  $e_A^\star(F)=(F\circ e_A,F(\delta_A))$ 
 is an equivalence of categories. We shall first verify that the functor $e_A^\star$ is essentially surjective.
 Let $F:\mathcal{E}\to \mathcal{R}$ be a morphism of clans
 and let $a: F(A)$. The functor
$F_{(A)}:   \mathcal{E}(A) \to   \mathcal{R}(FA) $
 induced by $F$ is a morphism of clans by 
\ref{inducedhomoclan}, hence also the functor
$G\defeq a^\star \circ F_{(A)}$, where $a^\star:  \mathcal{R}(F(A)) \to   \mathcal{R}$
is the base change functor along the map $a:1 \to F(A)$.
By definition of the functor $G$,  
we have a cartesian square
 \begin{equation}\label{defH}
 \xymatrix{
G(E,p) \ar[d]_{p_1}\ar[rr]^{p_2} & & F(E) \ar[d]^{F(p)} \\
1 \ar[rr]^{a}&& F(A).
} \end{equation}
for every object $(E,p)\in  \mathcal{E}(A)$.
Let us put $ \mathsf{in}{(E,p)}:=p_2$.
If $\top_A=(A,1_A)$ denotes the terminal
object of $\mathcal{E}(A) $, then we have $G(\top_A)=1$ and $\mathsf{in}(\top_A)=a$,
since the following square is cartesian.
$$\xymatrix{
1 \ar@{=}[d] \ar[rr]^{a}& & F(A) \ar[d]^{F(1_A)} \\
1 \ar[rr]^{a}&& F(A)
}$$
Let us construct a natural isomorphism  $\phi: G\circ e_A \simeq F$.
 If $X\in \mathcal{E}$, we have $e_A(X)=(A\times X, p_1)$
and the left hand square of  the following diagram
is cartesian, since the square (\ref{defH}) is cartesian,
 $$\xymatrix{
G(e_A(X)) \ar[d]_{}\ar[rr]^{\mathsf{in}(e_A(X))}& & F(A\times X) \ar[d]^{F(p_1)} \ar[rr]^{F(p_2)} && F(X) \ar[d]\\
1 \ar[rr]^{a}&& F(A) \ar[rr] && F(1).
}$$
The right hand square is also cartesian, since the functor $F$ preserves products.
Hence the composite square is cartesian by lemma \ref{lemmacartesiansq}.
But the map $1\to F(A) \to  F(1)$ is 
invertible, since the object $F(1)$ is terminal.
It follows that
the map 
$$\phi_X\defeq F(p_2)\circ  \mathsf{in}(e_A(X)):G(e_A(X)) \to FX$$ 
 is invertible.
This defines a natural isomorphism $\phi:G\circ e_A \simeq F$.
Let us show that  $\phi_A(G(\delta_A))=a$. 
Observe that the map $ \mathsf{in}(E,p):G(E,p)\to F(E)$ is a natural transformation between two functors of $(E,p)\in  \mathcal{E}(A)$.
Hence the following square commutes by the naturally, 
$$\xymatrix{
G(\top_A)   \ar[d]_{G(\delta_A)}\ar[rr]^{ \mathsf{in}(\top_A)}& & F( A) \ar[d]^{F(\delta_A)}  \\
G(e_A(A)) \ar[rr]^{ \mathsf{in}(e_A(A))}&& F(A\times A).
}$$
Thus, $ \mathsf{in}(e_A(A))G(\delta_A)=F(\delta_A) \mathsf{in}(\top_A)$.
We saw above that  $ \mathsf{in}(\top_A)=a$. Thus,
\begin{eqnarray*}
 \phi_AG(\delta_A)&=&F(p_2) \mathsf{in}(e_A(A))G(\delta_A)\\
 &=& F(p_2) F(\delta_A) \mathsf{in}(\top_A)\\
 &=& F(p_2\delta_A) a \\
 &=& F(1_A)a \\
  &=& 1_{F(A)}a \\
  &=& a.
\end{eqnarray*}
We have proved that the functor $e_A^\star$ is essentially surjective.
It remains to show that it is fully faithful.
Let $G_1, G_2:\mathcal{E}(A)\to  \mathcal{R}$ be two morphisms of clans
and let $\alpha:G_1\circ e_A\to G_2\circ e_A$ be a natural transformation
such that $\alpha_A G_1(\delta_A)=G_2(\delta_A)$,
$$\xymatrix{
1  \ar[d]_{G_1(\delta_A)}\ar@{=}[rr]& & 1 \ar[d]^{G_2(\delta_A)}  \\
G_1(e_A(A)) \ar[rr]^{\alpha_A}&& G_2(e_A(A)).
}$$
We shall prove that there is a unique natural transformation $\beta:G_1\to G_2$
such that $\beta\circ e_A=\alpha$.
Consider the following diagram of solid arrows:
\begin{equation}\label{cubegeneric}
\xymatrix{
G_1(E,p) \ar@{..>}[dr]^(0.5){\beta(E,p)}  \ar[rrr] \ar[ddd]  & &  & G_1(e_A(E))\ar '[d] [ddd]^(0,3){G_2(e_A(p))}   \ar[dr]^{\alpha_E}  &  \\
  &  G_2(E,p)  \ar[ddd]_(0.45){} \ar[rrr] &  & &  G_2(e_A(E))\ar[ddd]^{G_2(e_A(p))}  \\ 
  &&&&\\
 1 \ar@{=}[dr] \ar '[r] [rrr]^(0.3){G_1(\delta_A)} & &  &G_1(e_A(A))   \ar[dr]^{\alpha_A}  &   \\
  & 1 \ar[rrr]^{G_2(\delta_A)} &  & &  G_2(e_A(A))  }.
\end{equation}
The front face of the box is the image of the square (\ref{tautologicalpullback2}) by the functor $G_2$
and the back face is the image of the same square by $G_1$.
Thus, both faces are cartesian, since 
the functors $G_1$ and $G_2$ preserve base changes of fibrations
and the square (\ref{tautologicalpullback2}) is the base change square of a fibration by Lemma \ref{genericfiber}.
The bottom face of the box commutes by the hypothesis on $\alpha$.
The right hand face of the box commutes by naturality of $\alpha$. 
It follows that there is a unique map $\beta(E,p):G_1(E,p) \to G_2(E,p)$
such that the resulting cube (\ref{cubegeneric}) commutes. 
Let us show that $\beta(E,p)$ is the unique map $G_1(E,p) \to G_2(E,p)$
such that  the following square commutes,
\begin{equation}\label{magicsquare}
\xymatrix{
 G_1(E,p)   \ar@{..>}[d]_{\beta(E,p)}\ar[rr]^{G_1(p, 1_E)} & & G_1(e_A(E)) \ar[d]^{\alpha_E}  \\
 G_2(E,p) \ar[rr]^{G_2(p, 1_E)}&& G_2(e_A(E)).
}
\end{equation}
The map $G_2(\delta_A)$ in the front face of
cube (\ref{cubegeneric}) is monic, since its domain is a terminal object. 
It follows that $G_2(p,1_E)$ is monic, since the front face of the cube is cartesian.
Thus, $\beta(E,p)$ is the unique map
for which the square (\ref{magicsquare}) commutes.
Let us now prove that the map $\beta(E,p):G_1(E,p) \to G_2(E,p)$
is natural in $(E,p)\in \mathcal{E}(A)$. For this we have to
show that the following square commutes for any map $f:(E,p)\to (L,q)$ in $ \mathcal{E}(A)$,
\begin{equation}\label{natursquareh1h2}
\xymatrix{
  G_1(E,p)   \ar[rr]^{\beta(E,p)}\ar[d]_{G_1(f)} & &   G_2(E,p) \ar[d]^{G_2(f)}  \\
G_1(L,q) \ar[rr]^{\beta(L,q)} && G_2(L,q).
}
\end{equation}
We shall use the following cube:
\[
\xymatrix{
G_1(E,p)  \ar[dr]^{\beta(E,p)}  \ar[rrr]^{G_1(p, 1_E)} \ar[ddd]_{G_1(f)}  & &  & G_1(e_A(E))\ar '[d] [ddd]^(0,3){G_1(e_A(f))}   \ar[dr]^{\alpha_E}   &  \\
  &  G_2(E,p)  \ar[ddd]_(0.40){G_2(f)} \ar[rrr]^(0.4){G_2(p, 1_E)} &&& G_2(e_A(E))\ar[ddd]^{G_2(e_A(f))} \\ 
  &&&&\\
  G_1(L,q)  \ar[dr]^{\beta(L,q)}   \ar '[r] [rrr]^(0.3){^{G_1(q, 1_L)}} & &  &G_1(e_A(L))  \ar[dr]^{\alpha_L}  &   \\
  &  G_2(L,q) \ar[rrr]^{^{G_2(q, 1_L)}} &  & &  G_2(e_A(L))  }.
\]
The front face commutes, since the following 
 square commutes
$$   \xymatrix{
(E,p) \ar[d]_f\ar[rr]^{(p,1_E)} & & e_A(E) \ar[d]^{e_A(f)} \\
(L,q) \ar[rr]^{(q,1_L)} & & e_A(L) .
}$$
Similarly, the back face of the cube commutes.
The top face commutes by definition of $\beta(E,p)$ in the square (\ref{magicsquare}).
Similarly, the bottom face commutes by definition of $\beta(L,q)$.
The right hand face of the cube commutes by naturally of $\alpha$.
It follows that the left hand face commutes, since the map  $G_2(q,1_L)$ 
is monic. The naturality of $\beta$ is proved. 
Let us show that $\beta\circ e_A=\alpha$.
For this, we have to show that $\beta(A\times X,p_1)=\alpha_X$
for every $X\in  \mathcal{E}$.
If $(E,p)=(A\times X,p_1)$, then $e_A(E)=(A\times A\times X,p_1)$
and $(p,1_E)=\delta_A\times X$.
Hence the left hand square of the following diagram commutes, since the square (\ref{magicsquare}) commutes.
$$
\xymatrix{
 G_1(A\times X,p_1)   \ar[d]_{\beta(A\times X,p_1)}\ar[rr]^{G_1(\delta_A\times X)} & & G_1(A\times A\times X,p_1) \ar[d]^{\alpha_{A\times X}} \ar[rr]^{G_1(A\times p_2)} & &  G_1( A\times X,p_1)\ar[d]^{\alpha_X} \\
 G_2(A\times X,p_1) \ar[rr]^{G_2(\delta_A\times X)}&& G_2(A\times A\times X,p_1)
  \ar[rr]^{G_2(A \times p_2)} & &  G_2( A\times X,p_1)
}
$$
The right hand square commutes by the naturally of $\alpha$ applied to the map $p_2: A\times X \to  X$.
But the composite of the horizontal arrows of this diagram are identities, since
$(A\times p_2)(\delta_A\times X)=1_{A\times X}$.
This shows that $\beta(A\times X,p_1)=\alpha_X$.
The existence of $\beta$ is proved. Let us prove the uniqueness of $\beta$.
Let $\beta':G_1\to G_2$ a natural transformation such that $\beta'\circ e_A=\alpha$.
If $(E,p)\in  \mathcal{E}(A)$, then the  following square commutes by naturality of $\beta'$
applied to the map
$(p, 1_E):(E,p)\to (A\times E,p_1)=e_A(E)$,
$$
\xymatrix{
 G_1(E,p)   \ar[d]_{\beta'(E,p)}\ar[rr]^{G_1(p, 1_E)} & & G_1(e_A(E)) \ar[d]^{(\beta'\circ e\!_A)(E)}  \\
 G_2(E,p) \ar[rr]^{G_2(p, 1_E)}&& G_2(e\!_A(E)).
}$$
But we have $(\beta'\circ e_A)(E)=\alpha_E$ by the hypothesis on $\beta'$.
Hence the following square commutes 
$$
\xymatrix{
 G_1(E,p)   \ar[d]_{\beta'(E,p)}\ar[rr]^{G_1(p, 1_E)} & & G_1(e_A(E)) \ar[d]^{\alpha_E}  \\
 G_2(E,p) \ar[rr]^{G_2(p, 1_E)}&& G_2(e_A(E)).
}$$
We saw above that the map $G_2(p, 1_E)$ is monic.
It follows that $\beta'(E,p)=\beta(E,p)$, since the square (\ref{magicsquare})
commutes.
 \end{proof}

 The element $\delta_A:e_A(A)$ of the extension $e_A:\mathcal{E} \to \mathcal{E}(A)$ 
 is generic by theorem \ref{genericelement}. 
We shall write that  $\mathcal{E}(A)= \mathcal{E}[\delta_A]$.

\subsection{The Yoneda embedding}

Let $\Hat{\mathcal{E}}:=[\mathcal{E}^{op},Set]$
be the category of presheaves on a small category $\mathcal{E}$
and let $y:\mathcal{E}\to \Hat{\mathcal{E}}:$
be the Yoneda functor. 
For every object $A\in \mathcal{E}$ and every 
$X\in \Hat{\mathcal{E}}$  the map 
$$\theta:Hom(yA,X)\to F(A)$$
defined by putting $\theta(\alpha)=\alpha(1_A)$
is bijective (Yoneda lemma).
We shall often regard the functor $y$ as an inclusion and write $A$ instead of $yA$.
We shall often identify a natural transformation $\alpha:yA \to X$
with the element $a=\theta(\alpha)$ and write $a:A\to X$.
In this notation, we have
$$F(A)=Hom(yA,X)=Hom(A,X).$$
Moreover, 
if $u:A\to B$ is a map in $\mathcal{E}$
and $b\in X(B)$, then we shall denote the element
$F(u)(b)\in X(A)$ as a composite $bu:A\to X$.
$$
\xymatrix{ 
A \ar@/^1.5pc/[rr]^{bu} \ar[r]^u & B \ar[r]^(0.4){b} & X 
}
$$
Similarly,  a natural transformation  $f:X\to Y$ in $\Hat{\mathcal{E}}$
defines a map $f_A:X(A)\to Y(A)$ for every object $A\in \mathcal{E}$;
if $a\in F(A)$, we shall denote the element $f_A(a)\in F(A)$
as a composite $fa:A\to Y$
$$
\xymatrix{ 
A \ar@/^1.5pc/[rr]^{fa} \ar[r]^a & X \ar[r]^(0.4){f} & Y
}
$$

\begin{defi} 
A map $f:X\to Y$ in $\Hat{\mathcal{E}}$
is said to be {\it representable} if the presheaf 
$b^\star(X)=B\times_Y X$ is representable
$$
\xymatrix{ 
b^\star(X) \ar[d]_{}\ar[r]  & X \ar[d]^f  \\
B\ar[r]^b& Y
}$$
for every object $B\in \mathcal{E}$
and every map $b:B\to Y$, 
\end{defi}

\begin{prop} \label{prop:reppseudoclans} Let $\mathcal{E}$ be a small category.
Then the class of representable maps in the category $\Hat{\mathcal{E}}:=[\mathcal{E}^{op},Set]$
is closed under composition and base changes. 
\end{prop}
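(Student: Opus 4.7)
The plan is to handle the two closure properties separately, in each case using the pullback pasting lemma (two-out-of-three for cartesian squares, invoked earlier in the excerpt as Lemma \ref{lemmacartesiansq}) together with the defining property of representability: every base change to a representable object $B \in \mathcal{E}$ lies, via Yoneda, again in $\mathcal{E}$.

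For closure under base change, suppose $f\colon X \to Y$ is representable and $u\colon Y' \to Y$ is an arbitrary map in $\Hat{\mathcal{E}}$. To show the pullback projection $p\colon Y' \times_Y X \to Y'$ is representable, I would pick any $c\colon C \to Y'$ with $C \in \mathcal{E}$ and form the pullback of $p$ along $c$. By pasting the square defining this pullback with the cartesian square defining $p$, I obtain a cartesian square whose right edge is $f$ and whose bottom edge is the composite $uc\colon C \to Y$. Since $f$ is representable and $C \in \mathcal{E}$, this pullback is representable, proving that $p$ is representable.

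For closure under composition, suppose $f\colon X \to Y$ and $g\colon Y \to Z$ are representable. Given $c\colon C \to Z$ with $C \in \mathcal{E}$, I first pull back $g$ along $c$: since $g$ is representable, the pullback $C \times_Z Y$ is representable, say $C \times_Z Y \cong yB$ for some $B \in \mathcal{E}$, with the induced projection to $Y$ corresponding via Yoneda to a map $b\colon B \to Y$. Then I pull back $f$ along $b$: since $f$ is representable and $B \in \mathcal{E}$, the pullback $B \times_Y X$ is representable. Finally, the pasting lemma applied to the two cartesian squares
\[
\xymatrix{
B \times_Y X \ar[d] \ar[r] & X \ar[d]^f \\
B \ar[d] \ar[r]^b & Y \ar[d]^g \\
C \ar[r]^c & Z
}
\]
identifies $B \times_Y X$ with $C \times_Z X$, so the latter is representable.

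Neither step is a genuine obstacle: both reduce to manipulating the pasting lemma together with Yoneda, which lets us translate ``the presheaf pullback is representable'' into ``we have a genuine map in $\mathcal{E}$ to pull back along.'' The only mild subtlety is being careful, in the composition case, to use Yoneda fullness to produce the map $b\colon B \to Y$ in $\mathcal{E}$ that represents the first pullback, so that we may legitimately invoke representability of $f$ against it.
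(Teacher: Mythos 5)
Your argument is correct, and since the paper leaves this proof to the reader there is nothing to compare it against; both closure properties follow exactly as you say from the pasting lemma (Lemma \ref{lemmacartesiansq}) applied to the defining pullback squares. One cosmetic remark: in the composition step you do not really need ``Yoneda fullness'' to obtain $b\colon B\to Y$ --- the composite $yB\xrightarrow{\ \cong\ }C\times_Z Y\to Y$ is already a map in $\Hat{\mathcal{E}}$ out of a representable, which is precisely what Definition of representability asks you to pull $f$ back along.
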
 

\begin{proof} Left to the reader.\end{proof}

\begin{defi} \label{def:extendedfibrations} If $ \mathcal{E} $ is a clan, 
we say that a map $f:X\to Y$ in $\Hat{\mathcal{E}}$
is an {\it extended fibration} if it is representable
and the map $b^\star(X) \to B$
is a fibration for every object $B\in \mathcal{E}$
and every map $b:B\to Y$.
\end{defi}

\begin{prop} \label{prop:extpresheaf} Let $\mathcal{E}$ be a clan.
Then the class of extended
fibrations in the category $\Hat{\mathcal{E}}:=[\mathcal{E}^{op},Set]$
 is closed under composition and base changes. 
\end{prop}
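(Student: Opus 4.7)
The plan is to verify the two closure properties separately, relying on Proposition \ref{prop:reppseudoclans} for the representability part and on the clan axioms for the fibration part, with the pasting lemma for pullbacks tying them together.

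For closure under base change, let $f\colon X\to Y$ be an extended fibration and $g\colon Z\to Y$ an arbitrary map in $\Hat{\mathcal{E}}$. By Proposition \ref{prop:reppseudoclans}, the base change $g^\star(f)\colon g^\star(X)\to Z$ is representable. To check the fibration condition, pick any $c\colon C\to Z$ with $C\in \mathcal{E}$ and paste the pullback square of $g^\star(f)$ along $c$ on top of the pullback square defining $g^\star(X)$. By Lemma \ref{lemmacartesiansq}, the outer rectangle is the pullback of $f$ along $gc\colon C\to Y$, so the projection $c^\star(g^\star(X))\to C$ is canonically identified with $(gc)^\star(X)\to C$, which is a fibration since $f$ is an extended fibration.

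For closure under composition, let $f\colon X\to Y$ and $g\colon Y\to Z$ be extended fibrations. Representability of $gf$ is immediate from Proposition \ref{prop:reppseudoclans}. Given $c\colon C\to Z$ with $C\in \mathcal{E}$, form the two-storey pullback
$$\xymatrix{
c^\star(X) \ar[r] \ar[d]_{q} & X \ar[d]^{f} \\
c^\star(Y) \ar[r] \ar[d]_{p} & Y \ar[d]^{g} \\
C \ar[r]^{c} & Z
}$$
Since $g$ is an extended fibration, the bottom square exhibits $c^\star(Y)$ as representable by some object $D\in\mathcal{E}$, and $p\colon D\to C$ is a fibration in $\mathcal{E}$. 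The top square is then the pullback of $f$ along the map $D\to Y$, so it is the defining pullback witnessing that $f$ is an extended fibration at this map; hence $q\colon c^\star(X)\to D$ is a fibration in $\mathcal{E}$ and $c^\star(X)$ is representable. Finally, $c^\star(gf)=pq$ is a fibration because the composition of two fibrations is a fibration in a clan.

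The argument is essentially bookkeeping with pullback squares; the only point that requires a moment of care is noticing, in the composition case, that once the middle row of the pullback tower is shown to be representable \emph{and} sit in $\mathcal{E}$, the upper pullback is no longer an abstract pullback in $\Hat{\mathcal{E}}$ but one of the pullbacks used in the definition of ``extended fibration'' for $f$, so the clan-level fibration condition applies. After that observation, composition closure reduces to the composition axiom for fibrations in $\mathcal{E}$.
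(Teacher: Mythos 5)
Your proof is correct; the paper itself leaves this proposition to the reader, and your argument is the standard one the author clearly intends, resting on Proposition \ref{prop:reppseudoclans} for representability and on Lemma \ref{lemmacartesiansq} to identify the relevant pullbacks so that the clan axioms (closure of fibrations under base change and composition) apply. The one point needing care --- that in the composition case the middle row of the pullback tower is representable by an object of $\mathcal{E}$, so the upper square becomes one of the pullbacks appearing in the definition of extended fibration for $f$ --- is exactly the observation you make, so nothing is missing.
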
 

\begin{proof} Left to the reader.\end{proof}

Let $\mathcal{E}$ be a clan.  If $F\in \Hat{\mathcal{E}}$,
we shall denote by $ \Hat{\mathcal{E}}(F)$ the full subcategory
of $ \Hat{\mathcal{E}}/F$ whose objects are extended
fibrations $X\to F$. If $g:F\to G$ is a map in $\Hat{\mathcal{E}}$,
then the base-change functor $g^\star:\Hat{\mathcal{E}}/G\to \Hat{\mathcal{E}}/F$
induces a functor
$$g^\star:\Hat{\mathcal{E}}(G)\to \Hat{\mathcal{E}}(F)$$

\begin{prop} \label{prop:extlocalclans} Let $\mathcal{E}$ be a clan.
If $F\in \Hat{\mathcal{E}}$, then the category $ \Hat{\mathcal{E}}(F)$
has the structure of a clan where a map $f:(X,p)\to (Y,q)$
is a fibration if the map $f:X\to Y$ is an extended fibration.
Moreover, if $g:F\to G$ is a map in $\Hat{\mathcal{E}}$,
then the functor $g^\star:\Hat{\mathcal{E}}(G)\to \Hat{\mathcal{E}}(F)$
is a morphism of clans.
\end{prop}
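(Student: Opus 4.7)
The plan is to check each clan axiom for $\Hat{\mathcal{E}}(F)$ directly, leaning on Proposition~\ref{prop:extpresheaf} (closure of extended fibrations under composition and base change) at every step, and then deduce the morphism-of-clans properties of $g^\star$ from the compatibility of pullbacks in presheaves with pullback along $g$. All verifications amount to recognising that the data already lives inside the sub-$2$-category of $\Hat{\mathcal{E}}/F$ cut out by extended fibrations.

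For the clan structure on $\Hat{\mathcal{E}}(F)$: the terminal object is $(F,1_F)$, since $1_F$ is an extended fibration (its base change along any $b:B\to F$ is $1_B$, a fibration in $\mathcal{E}$), and for any $(X,p)\in \Hat{\mathcal{E}}(F)$ the unique map $p:(X,p)\to(F,1_F)$ is a fibration in $\Hat{\mathcal{E}}(F)$ because $p$ is an extended fibration by hypothesis. An isomorphism $f:(X,p)\to(Y,q)$ in $\Hat{\mathcal{E}}(F)$ has underlying $f:X\to Y$ an iso in $\Hat{\mathcal{E}}$, so each base change $b^\star(f):b^\star(X)\to B$ along $b:B\to Y$ is again an iso, and hence a fibration in $\mathcal{E}$; thus $f$ is an extended fibration. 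Closure under composition is immediate from Proposition~\ref{prop:extpresheaf}. For closure under base change, take a fibration $f:(X,p)\to(Y,q)$ in $\Hat{\mathcal{E}}(F)$ and an arbitrary morphism $u:(W,s)\to(Y,q)$; form the pullback $W\times_Y X$ in $\Hat{\mathcal{E}}$ (which exists since presheaf categories are complete). The projection $f':W\times_Y X\to W$ is an extended fibration by Proposition~\ref{prop:extpresheaf}, so it is a fibration in $\Hat{\mathcal{E}}(F)$; the composite $s\circ f':W\times_Y X\to F$ is extended (again by~\ref{prop:extpresheaf}), so $(W\times_Y X,s\circ f')$ is genuinely an object of $\Hat{\mathcal{E}}(F)$ and provides the required base change. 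This completes the clan axioms.

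For the second assertion, $g^\star:\Hat{\mathcal{E}}/G\to\Hat{\mathcal{E}}/F$ is computed by pullback along $g$, and I would check the three defining conditions of Definition~\ref{defmorphismclans}. It sends $\Hat{\mathcal{E}}(G)$ into $\Hat{\mathcal{E}}(F)$ because the pullback of an extended fibration $p:X\to G$ along $g$ is the extended fibration $F\times_G X\to F$ (Proposition~\ref{prop:extpresheaf}). For a fibration $f:(X,p)\to(Y,q)$ in $\Hat{\mathcal{E}}(G)$, the map $g^\star(f):F\times_G X\to F\times_G Y$ sits in a pullback square over $f$, hence is a base change of an extended fibration and therefore an extended fibration, i.e., a fibration in $\Hat{\mathcal{E}}(F)$. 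Preservation of the terminal object is witnessed by the canonical isomorphism $g^\star(G,1_G)\simeq(F,1_F)$. Finally, preservation of base changes of fibrations reduces to the standard fact that pullback along $g$ preserves all pullbacks in the presheaf category (applied via the cube lemma, Lemma~\ref{cubelemmacartesiansq}), since base changes in $\Hat{\mathcal{E}}(F)$ and $\Hat{\mathcal{E}}(G)$ are computed as in the ambient presheaf category.

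There is no real obstacle here: once Proposition~\ref{prop:extpresheaf} is in hand, every step is either a direct appeal to closure under composition and base change, or a routine pullback-pasting argument, so the proof can reasonably be left to the reader in the same style as the earlier analogous statements (e.g.\ Propositions~\ref{basechangetribe} and~\ref{clanslice}).
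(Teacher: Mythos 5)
Your proof is correct, and since the paper leaves this proposition to the reader it simply supplies the intended routine verification: every clan axiom for $\Hat{\mathcal{E}}(F)$ follows from Proposition~\ref{prop:extpresheaf} together with the fact that a pullback computed in $\Hat{\mathcal{E}}$ whose vertex lands in the full subcategory $\Hat{\mathcal{E}}(F)$ is still a pullback there, and the morphism-of-clans properties of $g^\star$ follow from pullback pasting exactly as in Proposition~\ref{basechangetribe}. No gaps.
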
 

\begin{proof} Left to the reader.\end{proof}

Notice that if $A\in \mathcal{E}$, then the 
Yoneda functor $y:\mathcal{E}\to \Hat{\mathcal{E}}$
induces an equivalence of clans
$\mathcal{E}(A)\simeq \Hat{\mathcal{E}}(A)$.

\subsection{Reedy fibrant squares and cubes}

If $\mathcal{E}$ is a clan and $\mathcal{K}$ is a small category,
we shall say that a natural transformation $\alpha:F\to G$
between two functors $F,G:\mathcal{K}\to \mathcal{E}$
is a {\bf pointwise fibration} if the map $\alpha(k):F(k)\to G(k)$ is a fibration for every object $k\in \mathcal{K}$.
The category $\mathcal{E}^\mathcal{K}$ of functors $\mathcal{K}\to \mathcal{E}$
has a the structure of a clan in which a fibration is a pointwise fibration.

\medskip

If $\mathcal{E}$ is a category and $[1]$ is the poset $\{0,1\}$, then
an object of the category $\mathcal{E}^{[1]}$ is a map $a:A_0\to A_1$
in the category $\mathcal{E}$, and a {\it morphism} $f:a\to b$ in $\mathcal{E}^{[1]}$
is a pair of maps $(f_0;f_1)$ in a commutative square
\begin{equation}\label{thefirstsquarereally}
\xymatrix{ 
A_0  \ar[rr]^{f_0}   \ar[d]_{a}   &&B_0 \ar[d]^{b} \\
A_1 \ar[rr]^{f_1} && B_1
}
\end{equation}
If $\mathcal{E}$ is a clan, then the  pointwise fibrations give 
 the category $\mathcal{E}^{[1]}$ the structure of a clan.

\medskip

A commutative square in a category $\mathcal{E}$
is a functor $X:[1]\times [1]\to \mathcal{E}$.
The four sides of the square can be denoted as follows
\begin{equation}\label{fibrantsquaredef}
\xymatrix{ 
 X_{00}   \ar[rr]^{X_{\star 0}}  \ar[dd]_{X_{0 \star}}  &&X_{10}\ar[dd]^{X_{1 \star}} \\
& X_{\star\star} & \\
 X_{01} \ar[rr]^{X_{\star 1}} && X_{11}
}
\end{equation}

\begin{defi}
If the fiber product $X_{01} \times_{ X_{11} }{X_{10}} $ exists, we shall say that
the map 
$$(X_{0\star}, X_{\star 0}):X_{00}\to X_{01}\times_{X_{11}} X_{10} $$
is the (cartesian) {\bf gap map} of the square (\ref{fibrantsquaredef}). 
\end{defi}

\begin{defi} 
If $\mathcal{E}$ is a clan, we say that a commutative square $X:[1]\times [1]\to \mathcal{E}$ is {\bf Reedy fibrant}
 if the maps  
 $$X_{\star 1}:X_{01}\to X_{11}, \quad  X_{1\star}:X_{10}\to X_{11}  \quad {\rm and}\quad  (X_{0\star}, X_{\star 0}):X_{00}\to X_{01}\times_{X_{11}} X_{10} $$ 
are fibrations.
\end{defi}

\medskip

In other words, a square $X:[1]\times [1]\to \mathcal{E}$ is Reedy fibrant if and only $X_{\star 1}$, 
$ X_{1\star}$ and its gap map are fibrations,

The four sides of a Reedy fibrant square $X$ are fibrations.
To see this, observe that the projection $p_1$ in the following diagram
 is a fibration by base change, since the map $X_{1\star}$ is a fibration:
$$\xymatrix{ 
 X_{00}    \ar@/_1.5pc/[ddr]_-{X_{0\star}}    \ar@/^1.5pc/[drrr]^-{X_{\star 0}}      \ar@{-->}[dr]^{ (X_{0\star}, X_{\star 0})} & && \\
 &X_{01}\times_{X_{11}} X_{10}  \ar[rr]^{p_2}   \ar[d]_{p_1}   &&X_{10}\ar[d]^{X_{1\star}}  \\
 & X_{01}\ar[rr]^{X_{\star 1}} &&X_{11}
}
$$
It follows that $X_{0\star}:X_{00}\to X_{01} $ is a fibration, since 
$X_{0\star}=p_1(X_{0\star}, X_{\star 0})$
and $(X_{0\star}, X_{\star 0})$ is a fibration.
Similarly, $X_{\star 0}:X_{00}\to X_{10} $ is a fibration.

\medskip

The following example of Reedy fibrant squares will be useful later.

\begin{prop}\label{projectionsReedyfibrantquares}
If $f_1:A_1\to B_1$ and $f_2:A_2 \to B_2$
are fibrations in a clan,
then the following two squares are Reedy fibrant:
$$
\xymatrix{ 
  A_1 \times A_2\ar[d]_{p_1} \ar[rr]^{f_1\times f_2} && B_1\times B_2\ar[d]^{p_1} \\
 A_1 \ar[rr]^{f_1} && B_1
}\quad \quad
\xymatrix{ 
  A_1 \times A_2\ar[d]_{p_2} \ar[rr]^{f_1\times f_2} && B_1\times B_2\ar[d]^{p_2}\\
 A_2 \ar[rr]^{f_2} && B_2
}
$$
\end{prop}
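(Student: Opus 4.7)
The plan is to verify the three Reedy fibrancy conditions for each square directly, exploiting the fact that in both cases the relevant pullback simplifies thanks to one of the legs being a projection.

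Consider the first square. The bottom edge is $f_1 : A_1 \to B_1$, which is a fibration by hypothesis. The right edge is $p_1 : B_1 \times B_2 \to B_1$, which is a fibration as the base change of $B_2 \to 1$ along $B_1 \to 1$ (using that every map to $1$ is a fibration and Proposition \ref{tribeofgenuineprojection0}). So the first two conditions are immediate; the real content is the gap map.

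To identify the gap map, I would first observe that the pullback
$$A_1 \times_{B_1} (B_1 \times B_2)$$
of $p_1$ along $f_1$ is canonically isomorphic to $A_1 \times B_2$: indeed, the square
$$\xymatrix{A_1 \times B_2 \ar[r]^{f_1 \times B_2} \ar[d]_{p_1} & B_1 \times B_2 \ar[d]^{p_1} \\ A_1 \ar[r]^{f_1} & B_1}$$
is cartesian, since it is obtained by base change from the product square for $B_2$. Under this identification the cartesian gap map
$$(p_1,\, f_1\times f_2) : A_1 \times A_2 \longrightarrow A_1 \times_{B_1} (B_1 \times B_2) \;\cong\; A_1 \times B_2$$
becomes $1_{A_1}\times f_2$. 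This is a fibration by Proposition \ref{tribeofgenuineprojection0}, since $1_{A_1}$ and $f_2$ are both fibrations (or alternatively, directly as the base change of $f_2$ along $A_1 \times B_2 \to B_2$). The second square is handled identically after swapping the roles of indices $1$ and $2$.

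I do not expect any serious obstacle: the only potentially fiddly step is pinning down the canonical isomorphism $A_1 \times_{B_1} (B_1 \times B_2) \cong A_1 \times B_2$ and verifying that the gap map really becomes $1_{A_1} \times f_2$ under it, but this follows from the universal properties of product and pullback together with Lemma \ref{lemmacartesiansq} (pasting of cartesian squares).
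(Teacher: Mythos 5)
Your proof is correct and follows essentially the same route as the paper: identify the pullback of $p_1$ along $f_1$ as $A_1\times B_2$, recognize the gap map as $A_1\times f_2$, and conclude it is a fibration since $f_2$ is. The only difference is cosmetic — you spell out the canonical isomorphism more explicitly than the paper does.
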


\begin{proof}  Let us show that the first square is Reedy fibrant.
Consider the following diagram with a pullback square:
$$\xymatrix{ 
 A_1\times A_2  \ar@{-->}[rrd]^{A_1\times f_2}  \ar@/^1.5pc/[drrrr]^-{f_1\times f_2}   \ar@/_1.5pc/[ddrr]_-{}   &&&&\\
& & A_1\times B_1  \ar[d]_{p_1} \ar[rr]^{f_1\times B_1}   && B_1\times B_2 \ar[d]^{p_1} \\
& & A_1\ar[rr]^{f_1} && B_1
}
$$
The map $f_1$ is a fibration by hypothesis.
The projection $p_1:B_1\times B_2\to B_1$ is a fibration by Proposition \ref{tribeofgenuineprojection0}.
Also the map $A_1\times f_2$, since $f_2$ is a fibration.
This completes the proof that  the first square is Reedy fibrant.
 \end{proof}

\medskip

\begin{lemma}\label{compostionfibsquare}
The class of Reedy fibrant squares is closed under horizontal (and vertical) composition.
\end{lemma}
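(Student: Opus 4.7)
The plan is to handle horizontal composition explicitly (vertical composition being entirely symmetric). Suppose we have two Reedy fibrant squares $X$ and $Y$ that compose horizontally, so $X_{10}=Y_{00}$, $X_{11}=Y_{01}$, and $X_{1\star}=Y_{0\star}$. The composite square $Z$ has $Z_{00}=X_{00}$, $Z_{01}=X_{01}$, $Z_{10}=Y_{10}$, $Z_{11}=Y_{11}$, with $Z_{\star 0}=Y_{\star 0}\circ X_{\star 0}$, $Z_{\star 1}=Y_{\star 1}\circ X_{\star 1}$, $Z_{0\star}=X_{0\star}$, $Z_{1\star}=Y_{1\star}$.

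The first two conditions for Reedy fibrancy of $Z$ are immediate: $Z_{1\star}=Y_{1\star}$ is a fibration by hypothesis on $Y$, and $Z_{\star 1}=Y_{\star 1}\circ X_{\star 1}$ is a composite of two fibrations, hence a fibration by closure under composition in a clan. The real content is showing that the gap map
\[
g_Z:X_{00}\longrightarrow X_{01}\times_{Y_{11}} Y_{10}
\]
is a fibration.

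My plan is to factor $g_Z$ as a composition of two fibrations. Using $X_{11}=Y_{01}$, the gap map $g_X$ of $X$ has target $X_{01}\times_{Y_{01}}Y_{00}$, and I will show
\[
g_Z \;=\; \bigl(X_{01}\times_{Y_{01}} Y_{00}\xrightarrow{\;h\;} X_{01}\times_{Y_{11}}Y_{10}\bigr)\circ g_X,
\]
where $h$ is induced on the second factor by $Y_{\star 0}:Y_{00}\to Y_{10}$, sending $(x,y)\mapsto (x,Y_{\star 0}(y))$ (well-defined by commutativity of the $Y$-square). The key step is to recognize $h$ as the base change of the $Y$-gap map $g_Y:Y_{00}\to Y_{01}\times_{Y_{11}}Y_{10}$ along the map $X_{01}\times_{Y_{11}}Y_{10}\to Y_{01}\times_{Y_{11}}Y_{10}$ induced by $X_{\star 1}:X_{01}\to Y_{01}$; a routine unwinding of universal properties shows the resulting square is cartesian. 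Since $g_Y$ is a fibration by Reedy fibrancy of $Y$, its pullback $h$ is a fibration, and $g_X$ is a fibration by Reedy fibrancy of $X$. Thus $g_Z = h\circ g_X$ is a fibration, completing the proof for horizontal composition.

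For vertical composition, the argument is obtained by transposing: if $X$ and $Y$ are stacked with $X_{01}=Y_{00}$, the vertical composite has its gap map factoring through the $Y$-gap map followed by the pullback of the $X$-gap map along $Y_{10}\to X_{10}$, with the same reasoning applying verbatim. The main obstacle I anticipate is purely bookkeeping: correctly identifying the base change square that exhibits $h$ as a pullback of $g_Y$, since the indices bounce between the two squares; once the square is written down, cartesianness follows from the pasting lemma (Lemma \ref{lemmacartesiansq}) applied to the evident cube of pullbacks.
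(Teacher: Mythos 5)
Your proof is correct and follows essentially the same route as the paper: the paper likewise disposes of the two edge conditions by closure of fibrations under composition, and then factors the composite gap map as the gap map of the first square followed by the map $\gamma$ (your $h$), which it exhibits as a base change of the gap map of the second square via a cartesian square obtained from the pasting lemma. No gaps; the "routine unwinding" you defer is exactly the paper's squares $(3)$--$(5)$.
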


Ê
\begin{proof} Suppose that the squares of the following diagram are  Reedy fibrant:
\begin{equation}\label{compositeoffibrantsquares}
\xymatrix{ 
A_0  \ar[rr]^{f_0}   \ar[d]_{a}   \ar@{}[rrd]|{(1)} &&B_0\ar[d]^b \ar[rr]^{g_0}     \ar@{}[rrd]|{(2)} &&C_0\ar[d]^c  \\
A_1 \ar[rr]^{f_1} && B_1  \ar[rr]^{g_1} && C_1
}
\end{equation}
Let us show that the composite square (1)+(2) is Reedy fibrant.
The maps $f_1$, $g_1$ and $c$ are fibrations, since the squares (1) and (2) are Reedy fibrant.
Hence the maps $g_1f_1$ and $c$ are fibrations.
Let us show that the gap map of the composite square (1)+(2) is a fibration.
By taking fiber products and using Lemma \ref{lemmacartesiansq}, 
we can construct the following diagram with three pullback squares:
$$\xymatrix{ 
A_0  \ar[drr]^{f_0}   \ar[d]_{(a,f_0)}   &&&&  \\
A_1\times_{B_1} B_0  \ar[rr]^{}   \ar[d]_{\gamma}  \ar@{}[rrd]|{(3)} &&B_0\ar[d]_{(b,g_0)} \ar[drr]^{g_0}  &&  \\
A_1\times_{C_1} C_0  \ar[rr]^{}   \ar[d]_{}   \ar@{}[rrd]|{(4)} &&B_1\times_{C_1} C_0\ar[d] \ar[rr]^{}    \ar@{}[rrd]|{(5)} &&C_0\ar[d]^c  \\
A_1 \ar[rr]^{f_1} && B_1  \ar[rr]^{g_1} && C_1
}
$$
The gap map of square (1) is the map $(a,f_0)$, the gap map of square (2) is the map $(b,g_0)$ 
and the gap map of the composite square (1)+(2) is the $\gamma (a,f_0)$.   
The maps $(a,f_0)$ and $(b,g_0)$ are fibrations, since squares (1) and (2) are Reedy fibrant.
The map $\gamma$ is also a fibration by base change, since square (3) is cartesian.
It follows that the map $\gamma (a,f_0)$ is a fibration.
This completes the proof that the composite square (1)+(2) is Reedy fibrant.
\end{proof} 

\medskip

If $\mathcal{E}$ is a clan, we shall denote by $\mathcal{E}^{(1)}$
the full subcategory of $\mathcal{E}^{[1]}$
whose objects are the fibrations $a:A_0\to A_1$ in $\mathcal{E}$.
The codomain functor $\partial_1:\mathcal{E}^{(1)}\to \mathcal{E}$
is a Grothendieck fibration. Notice that $\partial_1^{-1}(A)=\mathcal{E}(A)$ for every object $A\in \mathcal{E}$.
A morphism $f:a\to b$ in $ \mathcal{E}^{(1)}$
is $\partial_1$-cartesian if and only if the associated square is cartesian:
\begin{equation}\label{anothersquarereally}
\xymatrix{ 
A_0  \ar[rr]^{f_0}   \ar[d]_{a}   &&B_0 \ar[d]^{b} \\
A_1 \ar[rr]^{f_1} && B_1
}
\end{equation}
By definition, a morphism $f:a\to b$ is $\partial_1$-unit if the map $f_1=\partial_1(f)$ is a unit.
Every morphism $f:a\to b$ in $\mathcal{E}^{(1)}$ admits a decomposition $f=f^\sharp f_\sharp$
with $f_\sharp$ a $\partial_1$-unit and $f^\sharp$ a $\partial_1$-cartesian morphism:
\begin{equation}\label{decompositionpartialunitcart}
\xymatrix{ 
A_0  \ar[d]_{a}   \ar[rr]^{(a,f_0)}   && A_1\times_{B_1} B_0  \ar[rr]^{p_2}   \ar[d]_{p_1}   &&B_0\ar[d]^b \\
A_1  \ar@{=}[rr]^{}    && A_1 \ar[rr]^{f_1} && B_1
}
\end{equation}

Let us say that a morphism
$f:a\to b$ in $\mathcal{E}^{(1)}$ is a {\bf Reedy fibration}
if the associated square (\ref{anothersquarereally})
is Reedy fibrant.
More concretely, a morphism $f:a\to b$ is a Reedy fibration if and only
if the maps $f_1:A_1\to B_1$ and $(a,f_0):A_0\to A_1\times_{B_1} B_0$ are fibrations (the map $b:B_0\to B_1$
is a fibration since it is an object of $\mathcal{E}^{(1)}$).
A Reedy fibration $f:a\to b$ is
a pointwise fibration, since the maps $f_0$ and $f_1$ are fibrations in this case.

\begin{prop}\label{arrowscat} If $\mathcal{E}$ is a clan, then
the category $\mathcal{E}^{(1)}$ has the structure of a clan,
where a fibration is a Reedy fibration. 
Moreover, the inclusion functor $\mathcal{E}^{(1)} \subseteq \mathcal{E}^{[1]}$
is a morphisms of clans.
\end{prop}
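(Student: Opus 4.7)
The plan is to check the four clan axioms for $\mathcal{E}^{(1)}$ with the Reedy fibrations, and then check the three conditions making the inclusion $\mathcal{E}^{(1)}\subseteq \mathcal{E}^{[1]}$ a morphism of clans. Three of the clan axioms are essentially immediate, so the whole content is in the closure of Reedy fibrations under base change.

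First the easy points. An isomorphism $f:a\to b$ in $\mathcal{E}^{(1)}$ has both components $f_0,f_1$ isomorphisms, hence fibrations; since both are iso, the induced gap map $(a,f_0):A_0\to A_1\times_{B_1}B_0$ is also an isomorphism, hence a fibration. Closure under composition is exactly Lemma \ref{compostionfibsquare}. The terminal object of $\mathcal{E}^{(1)}$ is the identity $1_1:1\to 1$; the unique morphism from any $a:A_0\twoheadrightarrow A_1$ has components $A_0\to 1$ and $A_1\to 1$ (both fibrations by the clan axiom for $\mathcal{E}$), and the gap map is the map $A_0\to A_1\times_1 1=A_1$, which is the structural fibration $a$ itself.

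The main work is base change. Given a Reedy fibration $f:a\to b$ in $\mathcal{E}^{(1)}$ and an arbitrary morphism $g:c\to b$, I form the componentwise pullback $P_i:=C_i\times_{B_i}A_i$ in $\mathcal{E}^{[1]}$, which exists because the components $f_0,f_1$ of the Reedy fibration $f$ are fibrations in $\mathcal{E}$, hence carrable. I must show (i) the induced structure map $\sigma:P_0\to P_1$ is a fibration (so that $P:=(P_0\to P_1)$ is in fact an object of $\mathcal{E}^{(1)}$), and (ii) the projection $P\to c$ is a Reedy fibration. The key observation factors $\sigma$ as
\[
P_0 \;=\; C_0\times_{B_0}A_0 \;\xrightarrow{\;\sigma_1\;}\; C_0\times_{B_1}A_1 \;\xrightarrow{\;\sigma_2\;}\; C_1\times_{B_1}A_1\;=\;P_1,
\]
with $\sigma_1(c_0,a_0)=(c_0,a(a_0))$ and $\sigma_2(c_0,a_1)=(c(c_0),a_1)$. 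Then $\sigma_2$ is a base change of the fibration $c:C_0\twoheadrightarrow C_1$ along $p_1:C_1\times_{B_1}A_1\to C_1$, hence a fibration. For $\sigma_1$, one verifies by a direct fiber-product chase that the square
\[
\xymatrix{
C_0\times_{B_0}A_0 \ar[rr]^{\sigma_1}\ar[d] && C_0\times_{B_1}A_1 \ar[d]^{(c_0,a_1)\mapsto(a_1,g_0(c_0))}\\
A_0 \ar[rr]^-{(a,f_0)} && A_1\times_{B_1}B_0
}
\]
is cartesian, so $\sigma_1$ is a base change of the Reedy gap map $(a,f_0):A_0\to A_1\times_{B_1}B_0$, which is a fibration by hypothesis on $f$. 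Thus $\sigma$ is a composite of two fibrations, proving (i). For (ii), the components of the projection $P\to c$ are $P_i\to C_i$, base changes of the fibrations $f_i$, and its Reedy gap map $P_0\to P_1\times_{C_1}C_0$ is canonically identified with the map $\sigma_1$ just shown to be a fibration. The pullback diagram in $\mathcal{E}^{(1)}$ is then the one obtained by restricting the $\mathcal{E}^{[1]}$-pullback; universality follows because limits in $\mathcal{E}^{(1)}$ that lie in $\mathcal{E}^{(1)}$ are computed in $\mathcal{E}^{[1]}$.

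Finally, for the inclusion $\mathcal{E}^{(1)}\hookrightarrow \mathcal{E}^{[1]}$: it sends Reedy fibrations to pointwise fibrations (noted before the statement), it preserves the terminal object $1_1:1\to 1$, and the previous construction shows that base changes of Reedy fibrations in $\mathcal{E}^{(1)}$ coincide with componentwise base changes in $\mathcal{E}^{[1]}$, so base changes of fibrations are preserved. The main obstacle is the bookkeeping in the factorization $\sigma=\sigma_2\sigma_1$ and the identification of $\sigma_1$ as a base change of the Reedy gap map $(a,f_0)$; once that is set up, every other verification is a direct application of Lemma \ref{lemmacartesiansq}.
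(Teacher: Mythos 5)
Your proof is correct. The easy axioms (isomorphisms, composition via Lemma \ref{compostionfibsquare}, terminal object) are handled exactly as in the paper, but your treatment of base change takes a genuinely different route. The paper exploits the Grothendieck-fibration structure of $\partial_1:\mathcal{E}^{(1)}\to\mathcal{E}$: it factors the Reedy fibration being pulled back as a $\partial_1$-unit (its gap map over an identity) followed by a $\partial_1$-cartesian morphism, performs the pullback in two stages, and invokes the cube lemma \ref{cubelemmacartesiansq} to handle the cartesian stage. You instead form the componentwise pullback in $\mathcal{E}^{[1]}$ in one step and factor the induced structure map $\sigma:P_0\to P_1$ through $C_0\times_{B_1}A_1$, identifying $\sigma_2$ as a base change of the fibration $c$ and $\sigma_1$ as a base change of the Reedy gap map $(a,f_0)$ via an explicit cartesian square. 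These are two organizations of the same underlying computation --- your intermediate object $C_0\times_{B_1}A_1\to C_1\times_{B_1}A_1$ is precisely the paper's intermediate pullback $c\times_b k$ --- but your version is more self-contained and computational (no appeal to the cube lemma, and the identification of the gap map of $P\to c$ with $\sigma_1$ falls out for free), while the paper's version is more structural and reuses the unit/cartesian factorization it has already set up for $\mathcal{E}^{(1)}$. The only point to make fully explicit in your write-up is the verification that your two displayed squares are cartesian by the universal property rather than by an elementwise chase, since $\mathcal{E}$ is an abstract category; this is routine and does not affect correctness.
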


\begin{proof} Let is proves the first statement.
If $u:a\to b$ is an isomorphism in $\mathcal{E}^{(1)}$
then the square 
$$\xymatrix{ 
A_0  \ar[rr]^{u_0}   \ar[d]_{a}   &&B_0\ar[d]^b \\
A_1 \ar[rr]^{u_1} && B_1
}
$$
is Reedy fibrant, since it is cartesian and the map $u_1$ and $b$ are fibrations. 
Thus, every isomorphism in $\mathcal{E}^{(1)}$ is a Reedy fibration.
The composite of two Reedy fibrations is a Reedy fibration
by Lemma \ref{compostionfibsquare}.
Let us now show that the base change of a Reedy fibration $g:e\to b$
along any morphism $f:a\to b$ in $\mathcal{E}^{(1)}$ exists and is a Reedy fibration.
The morphisn $g:e\to b$ is a fibration in $\mathcal{E}^{[1]}$,
since a Reedy fibration is a pointwise fibration.
Hence the base change of $g:e\to b$ 
along $f:a\to b$ exists in the clan $\mathcal{E}^{[1]}$.
Let us show that $a\times_b e$ is an object of
$\mathcal{E}^{(1)}$ and that the projection
$a\times_b e\to a$ is a Reedy fibration.
We shall use the following cube in which the top and bottom
faces are pullback squares:
\[
\xymatrix{
A_0\times_{B_0}E_0\ar[dr]^{} \ar[rrr]^{} \ar[ddd]_{a\times_b c} & &  & E_0 \ar '[d] [ddd]_(0,4){e}   \ar[dr]^{g_0}   &  \\
  & A_0 \ar[ddd]_(0.4){a} \ar[rrr]^(0.4){f_0} &  & &  B_0\ar[ddd]^{b} \\ 
  &&&&\\
A_1\times_{B_1}E_1 \ar '[r] [rrr]^(0.4){} \ar[dr]_{} & &  &E_1 \ar[dr]^{ g_1} &   \\
  & A_1  \ar[rrr]^{f_1} &  & & B_1   \ }
\]
Let us first consider the case where $g$ is cartesian.
The right hand face of the cube is cartesian in this case.
Hence the left hand face of the cube is cartesian by Lemma \ref{cubelemmacartesiansq}, since
the top, bottom and right hand faces are cartesian.
Hence the map $a\times_b c:A_0\times_{B_0}\times E_0\to A_1\times_{B_1}\times E_1$ is a fibration by
base change, since the map $a:A_0\to A_1$ is a fibration.
This shows that the morphism $a\times_b e$ is a fibration. 
It is thus an object of $\mathcal{E}^{(1)}$.
The projection $A_1\times_{B_1} E_1 \to A_1$
is a fibration by base change, since the map $g_1:E_1\to B_1$ is a fibration.
It follows that the left hand face of the cube is a Reedy fibrant square, since it is cartesian.
We have proved that the projection
$a\times_b e\to a$ is a Reedy fibration in the case where $g$ is cartesian.
Let us now consider the case where $g$ is $\partial_1$-unit.
The map $g_1:E_1\to B_1$ is a unit in this case and
we may suppose that the projection $A_1\times_{B_1}E_1\to A_1$ is also a unit,
since the bottom face of the cube is cartesian by construction.
Hence the projection $a\times_b e\to a$ is a $\partial_1$-unit in this case.
\[
\xymatrix{
A_0\times_{B_0}E_0\ar[dr]^{} \ar[rrr]^{} \ar[ddd]_{a\times_b c} & &  & E_0 \ar '[d] [ddd]_(0,4){e}   \ar[dr]^{g_0}   &  \\
  & A_0 \ar[ddd]_(0.4){a} \ar[rrr]^(0.4){f_0} &  & &  B_0\ar[ddd]^{b} \\ 
  &&&&\\
A_1 \ar '[r] [rrr]^(0.4){} \ar@{=}[dr] & &  &B_1 \ar@{=}[dr] &   \\
  & A_1  \ar[rrr]^{f_1} &  & & B_1   \ .}.
\]
The projection $A_0\times_{B_0}\times E_0\to A_0$
is a fibration by base change, since the map $g_0:E_0\to B_0$ is a fibration and the top
square is cartesian.
Hence the map $a\times_b c$ is a fibration, since $a$ is a fibration.
This shows that $a\times_b e$ is an object of $\mathcal{E}^{(1)}$.
Moreover, the projection $a\times_b e\to a$ is a Reedy fibration, since it is a $\partial_1$-unit
and the map $A_0\times_{B_0}\times E_0\to A_0$ is a fibration.
This completes the proof that the projection
$a\times_b e\to a$ is a Reedy fibration in the case where $g$ is a $\partial_1$-unit.
In the general case, the morphism $g:e\to b$ is the
 the composite of a $\partial_1$-unit $g_\sharp:e\to k$ followed by a cartesian morphism $g^\sharp:k\to b$.
The following diagram of pullback squares
exists by what we have proved so far.
$$\xymatrix{ 
(a\times_b k)\times_k e  \ar[rr]^{q_2}   \ar[d]_{q_1}   &&e \ar[d]^{g_\sharp} \\
a\times_b k  \ar[rr]^{p_2}   \ar[d]_{p_1}   &&k \ar[d]^{g^\sharp} \\
a \ar[rr]^{f} && b
}
$$
Moreover, the morphism $p_1$ and $q_1$ are Reedy fibrations,
since the morphisms $g^\sharp$ and $g_\sharp$ are Reedy fibrations.
But we have $a\times_k e=(a\times_b k)\times_k e$ and $p_1q_1$
is the projection $a\times_k e\to a$,
since the composite of the two squares is cartesian by Lemma \ref{lemmacartesiansq}.
This proves that the projection $a\times_k e\to a$
 is a Reedy fibration, since $p_1$ and $q_1$ are Reedy fibrations.
 It remains to verify that the map $a\to \top$ is a Reedy fibration for every object $a:A_0\to A_1$
  in $\mathcal{E}^{(1)}$. But this is clear, since the following square is fibrant
  for any fibration $a$,
  $$\xymatrix{ 
A_0  \ar[rr]^{}   \ar[d]_{a}   && \top \ar[d] \\
A_1 \ar[rr]^{} && \top
}
$$
This completes the proof that the Reedy fibrations give the category $\mathcal{E}^{(1)}$ the structure of a clan.
 \end{proof}

\bigskip

\subsection{Spans in a clan}

Recall that a {\it span} $A\rightharpoonup B$ between two objects $A$ and $B$ of category $\mathcal{E}$
is a triple $X=(X,u,v)$ where $u:X\to A$ and $v:X\to B$
$$\xymatrix{ 
 & \ar[dl]_{u} X\ar[dr]^{v}   & \\
A && B.
}
$$
The {\it left leg} $\lambda_X$ of the span $(X,u,v)$ is the map $u$
and its {\it right leg} $\rho_X$  is the map $v$.
A span in $ \mathcal{E}$ is a contravariant functor $X:Sd[1]\to \mathcal{E}$,
where $Sd[1]$ denotes the poset of non-empty subsets of $[1]=\{0,1\}$ (it is the barycentric subdivision
of the simplicial interval $[1]$).
$$\xymatrix{ 
 & \ar[dl]_{\lambda_X} X_{01}\ar[dr]^{\rho_X}   & \\
X_0 && X_1 
}$$
A {\it map} of spans $f:X\to Y$ 
is a natural transformation:
$$\xymatrix{ 
X_0  \ar[d]_{f_0}   && \ar[ll]_{\lambda_X}   \ar[d]_{f_{01}} X_{01}  \ar[rr]^{\rho_X}  && X_1  \ar[d]^{f_1}   \\
Y_0   && \ar[ll]_{\lambda_Y}  Y_{01}  \ar[rr]^{\rho_Y}  && Y_1  
}$$
We shall denote the category
of spans in $\mathcal{E}$ by $\mathcal{E}^{\wedge}$. 
We shall denote by $\partial:\mathcal{E}^{\wedge} \to \mathcal{E}\times  \mathcal{E}$
 the functor defined by putting $\partial(X)=(X_0,X_1)$.
If $\mathcal{E}$ is a cartesian category, we shall denote by
$\Lambda:\mathcal{E}^{\wedge}\to  \mathcal{E}^{[1]}$ the functor
defined by putting $\Lambda(X)=(\lambda_X,\rho_X):X_{01}\to X_0\times X_1$.
 If $C:\mathcal{E}\times \mathcal{E}\to  \mathcal{E}$ denotes the cartesian product functor,
then we have a pullback square of categories: 
\begin{equation}\label{fibrantspanssquare}
\xymatrix{ 
\mathcal{E}^{\wedge} \ar[rr]^{\Lambda}   \ar[d]_{\partial}   && \mathcal{E}^{[1]} \ar[d]^{\partial_1} \\
\mathcal{E}\times \mathcal{E} \ar[rr]^C &&  \mathcal{E}
}
\end{equation}

If $\mathcal{E}$ is a clan, then cartesian product functor $C:\mathcal{E}\times \mathcal{E}\to  \mathcal{E}$
is a morphism of clans by \ref{0productofclans}.  
Moreover, the codomain functor $\partial_1: \mathcal{E}^{[1]}\to \mathcal{E}$ is a morphism of clans.

\begin{lemma}\label{spanscatconst} If $\mathcal{E}$ is a clan, then
the square (\ref{fibrantspanssquare}) is cartesian in the category $\Clan$.
\end{lemma}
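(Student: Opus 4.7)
The plan is to apply Lemma \ref{pullbackofclans} to the pullback square (\ref{fibrantspanssquare}). To do this I must verify four things: that the square is a pullback of categories, that $C$ and $\partial_1$ are morphisms of clans, and that $\partial_1$ is an isofibration.

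First, the square is a pullback in $\Cat$ essentially by construction. Concretely, giving an object $X \in \mathcal{E}^\wedge$ is the same as giving $(X_0, X_1) \in \mathcal{E}\times\mathcal{E}$ together with a map $(\lambda_X, \rho_X): X_{01} \to X_0 \times X_1$ in $\mathcal{E}$, i.e.\ an object of $\mathcal{E}^{[1]}$ whose codomain matches $C(X_0, X_1)$; and similarly for morphisms. The functor $C$ is a morphism of clans by Corollary \ref{0productofclans}. The codomain functor $\partial_1: \mathcal{E}^{[1]} \to \mathcal{E}$ is a morphism of clans since fibrations, base changes and terminal objects in $\mathcal{E}^{[1]}$ (equipped with the pointwise clan structure) are all computed pointwise, so $\partial_1$ evidently preserves them.

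Next I check that $\partial_1$ is an isofibration. Given an object $a: A_0 \to A_1$ of $\mathcal{E}^{[1]}$ and an isomorphism $u: B \to A_1$ in $\mathcal{E}$, set $a' = u^{-1} a: A_0 \to B$; then the pair $(1_{A_0}, u): a' \to a$ is an isomorphism in $\mathcal{E}^{[1]}$ with $\partial_1(1_{A_0}, u) = u$, since the square
\[
\xymatrix{
A_0 \ar[r]^{1_{A_0}} \ar[d]_{a'} & A_0 \ar[d]^{a} \\
B \ar[r]^{u} & A_1
}
\]
commutes. Hence $\partial_1$ is an isofibration.

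With all hypotheses satisfied, Lemma \ref{pullbackofclans} applies and gives that $\mathcal{E}^\wedge$ carries a clan structure, that $\partial$ and $\Lambda$ are morphisms of clans, and that the square (\ref{fibrantspanssquare}) is a pullback in $\Clan$. I expect no serious obstacle: the only delicate point is recognising that the pointwise clan structure on $\mathcal{E}^{[1]}$ makes the codomain functor an isofibration, and this is resolved by the explicit lift just exhibited. The resulting description of fibrations in $\mathcal{E}^\wedge$ — that $f: X \to Y$ is a fibration iff $f_0$, $f_1$ are fibrations in $\mathcal{E}$ and $\Lambda(f)$ is a pointwise fibration in $\mathcal{E}^{[1]}$, equivalently iff $f_0$, $f_1$, $f_{01}$ are all fibrations in $\mathcal{E}$ — is then immediate from the lemma.
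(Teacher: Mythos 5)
Your proof is correct and follows essentially the same route as the paper: both reduce the statement to Lemma \ref{pullbackofclans} after observing that $C$ and $\partial_1$ are morphisms of clans, that $\partial_1$ is an isofibration, and that the pointwise fibrations of spans are exactly the maps whose images under $\partial$ and $\Lambda$ are fibrations. You simply spell out the isofibration check and the identification of fibrations more explicitly than the paper does.
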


\begin{proof} 
A map of spans $f:X\to Y$ is a pointwise fibration if and only if 
the map $\partial(f)=(f_0,f_1)$ is a fibration in $\mathcal{E}\times \mathcal{E}$ and the map
$\Lambda(f)=(f_{01}, f_0\times f_1)$ is a (pointwise) fibration in $ \mathcal{E}^{[1]} $. 
Hence the result follows from Lemma \ref{pullbackofclans}, since
 the functor $\partial_1$ is an iso-fibration.
 \end{proof}

\bigskip

Let us  say that a span $X=(X,\lambda_X,\rho_X)$ in clan $\mathcal{E}$ is {\bf fibrant} 
 if the map $(\lambda_X,\rho_X):X_{01}\to X_0\times X_1$ is a fibration.
 We shall denote by $\mathcal{E}^{(\wedge)}$ the full subcategory
of $\mathcal{E}^{\wedge}$ whose objects are fibrant spans.
The functor $\Lambda:\mathcal{E}^{\wedge}\to  \mathcal{E}^{[1]}$
induces a functor $\Lambda:\mathcal{E}^{(\wedge)}\to  \mathcal{E}^{(1)}$
and we have a pullback square of categories
\begin{equation}\label{fibrantspanssquare2}
\xymatrix{ 
\mathcal{E}^{(\wedge)} \ar[rr]^{\Lambda}   \ar[d]_{inc}   && \mathcal{E}^{(1)} \ar[d]^{inc} \\
\mathcal{E}^{\wedge} \ar[rr]^{\Lambda}  &&  \mathcal{E}^{[1]}
}
\end{equation}
We say that a map between fibrant spans $f:X\to Y$ is a {\bf Reedy fibration}
if $\partial(f)=(f_0,f_1) $ is a fibration in $  \mathcal{E}\times   \mathcal{E}$ and
 the map $\Lambda(f):\Lambda(X)\to \Lambda(Y)$ is s Reedy fibration in  $\mathcal{E}^{(1)}$
In other words, $f:X\to Y$ is a Reedy fibration if $f_0$ and $f_1$ are fibrations
and the square
 \begin{equation}\label{squareforspan}
\xymatrix{ 
 X_{01}   \ar[rr]^{f_{01}}   \ar[d]_{(\lambda_X , \ \rho_X)}   &&Y_{01}\ar[d]^{(\lambda_Y,\ \rho_Y)} \\
  X_0 \times X_1\ar[rr]^{f_0\times f_1} && Y_0\times Y_1
}
 \end{equation}
is Reedy fibrant.  In which case the map $f_{01}$ is also a fibration.
The squares  (\ref{fibrantspanssquare2}) and (\ref{fibrantspanssquare})
can be composed vertically, Their composite is  a pullback square of categories: 
\begin{equation}\label{fibrantspanssquare3}
\xymatrix{ 
\mathcal{E}^{(\wedge)} \ar[rr]^{\Lambda}   \ar[d]_{\partial}   && \mathcal{E}^{(1)} \ar[d]^{\partial_1} \\
\mathcal{E}\times \mathcal{E} \ar[rr]^C &&  \mathcal{E}
}
\end{equation}
where the functor $\partial:\mathcal{E}^{(\wedge)} \to \mathcal{E}\times  \mathcal{E}$
is induced by the functor
$\partial:\mathcal{E}^{\wedge} \to \mathcal{E}\times  \mathcal{E}$.

\medskip

\begin{prop}\label{spanscat}
The Reedy fibrations gives the category $\mathcal{E}^{(\wedge)}$ the structure of a clan.
The square (\ref{fibrantspanssquare2}) is a pullback in the category of clans and
the inclusion functor $\mathcal{E}^{(\wedge)} \to \mathcal{E}^{\wedge}$ is a morphism of clans.
\end{prop}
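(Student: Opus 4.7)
The plan is to obtain both the clan structure on $\mathcal{E}^{(\wedge)}$ and the pullback statement simultaneously by applying Lemma \ref{pullbackofclans} to the pullback square (\ref{fibrantspanssquare3}). The essential preliminary is to check that the codomain functor $\partial_1:\mathcal{E}^{(1)}\to \mathcal{E}$ is both a morphism of clans and an iso-fibration; the opposite side $C:\mathcal{E}\times\mathcal{E}\to \mathcal{E}$ is already a morphism of clans by Corollary \ref{0productofclans}.

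To see that $\partial_1$ is a morphism of clans: if $f:a\to b$ is a Reedy fibration in $\mathcal{E}^{(1)}$, then $\partial_1(f)=f_1$ is a fibration by definition of Reedy fibration; the terminal object $1_\top$ of $\mathcal{E}^{(1)}$ is sent to the terminal object $\top$ of $\mathcal{E}$; and the explicit construction of base changes in the proof of Proposition \ref{arrowscat} shows that $\partial_1$ sends the cartesian square defining $a\times_b e\to a$ in $\mathcal{E}^{(1)}$ to the cartesian square defining $A_1\times_{B_1}E_1\to A_1$ in $\mathcal{E}$. To see that $\partial_1$ is an iso-fibration: given an isomorphism $u:A_1\to B_1$ in $\mathcal{E}$ and a fibration $a:A_0\to A_1$ above $A_1$, the pair $(1_{A_0},u):a\to u\circ a$ is an isomorphism in $\mathcal{E}^{(1)}$ whose $\partial_1$-image is $u$, noting that $u\circ a$ is a fibration as a composite of $a$ with an isomorphism.

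Applying Lemma \ref{pullbackofclans} to (\ref{fibrantspanssquare3}) then endows $\mathcal{E}^{(\wedge)}$ with a clan structure in which a map $f:X\to Y$ is a fibration iff $\partial(f)=(f_0,f_1)$ is a fibration in $\mathcal{E}\times\mathcal{E}$ and $\Lambda(f)$ is a Reedy fibration in $\mathcal{E}^{(1)}$. Unfolding, this says $f_0$ and $f_1$ are fibrations and the square (\ref{squareforspan}) is Reedy fibrant, which is exactly the definition of a Reedy fibration of spans introduced just before the proposition. The same lemma also asserts that (\ref{fibrantspanssquare3}) is a pullback in $\Clan$.

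For the last part, since (\ref{fibrantspanssquare3}) is the vertical composite of (\ref{fibrantspanssquare2}) on top of (\ref{fibrantspanssquare}), and (\ref{fibrantspanssquare}) is a pullback in $\Clan$ by Lemma \ref{spanscatconst}, the pasting lemma for pullbacks in $\Clan$ forces (\ref{fibrantspanssquare2}) to be a pullback as well; the inclusion $\mathcal{E}^{(\wedge)}\to \mathcal{E}^{\wedge}$ is then a morphism of clans as one of the legs of a pullback cone in $\Clan$. The main delicate point throughout is the verification that $\partial_1$ is an iso-fibration and a morphism of clans; once that is in hand, the rest is a formal consequence of the two pullback-of-clans lemmas and the unfolding of definitions.
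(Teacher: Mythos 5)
Your proof is correct in substance but routes through a different instance of Lemma \ref{pullbackofclans} than the paper does. The paper applies that lemma directly to the square (\ref{fibrantspanssquare2}), taking the inclusion $\mathcal{E}^{(1)}\to\mathcal{E}^{[1]}$ as the isofibration leg: this inclusion is a morphism of clans by Proposition \ref{arrowscat}, and it is an isofibration for free because $\mathcal{E}^{(1)}$ is a replete full subcategory of $\mathcal{E}^{[1]}$ (a composite of a fibration with an isomorphism is again a fibration). That yields the clan structure on $\mathcal{E}^{(\wedge)}$ and the pullback property of (\ref{fibrantspanssquare2}) in one stroke, and the cartesianness of (\ref{fibrantspanssquare3}) then follows by pasting with (\ref{fibrantspanssquare}). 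You instead apply the lemma to (\ref{fibrantspanssquare3}), which obliges you to check that $\partial_1:\mathcal{E}^{(1)}\to\mathcal{E}$ is a morphism of clans and an isofibration; your verifications are correct, and the morphism-of-clans part could be had even more cheaply by writing this functor as the composite of the inclusion $\mathcal{E}^{(1)}\to\mathcal{E}^{[1]}$ with the codomain functor $\partial_1:\mathcal{E}^{[1]}\to\mathcal{E}$, both of which the paper has already identified as morphisms of clans. Both applications of the lemma produce the same class of fibrations on $\mathcal{E}^{(\wedge)}$, namely the Reedy fibrations of spans, so the two clan structures agree; the paper's choice is slightly more economical, while yours makes the pullback property of (\ref{fibrantspanssquare3}) primary.

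One small logical wrinkle in your final step: to run the cancellation form of the pasting lemma for (\ref{fibrantspanssquare2}) \emph{in the category} $\Clan$, you must already know that all four functors in that square are morphisms of clans, and the inclusion $\mathcal{E}^{(\wedge)}\to\mathcal{E}^{\wedge}$ is one of them --- so you cannot first conclude that the square is a pullback in $\Clan$ and only afterwards read off that the inclusion is a morphism of clans. The fix is immediate: since (\ref{fibrantspanssquare}) is a pullback in $\Clan$ by Lemma \ref{spanscatconst}, and the pair consisting of $\partial:\mathcal{E}^{(\wedge)}\to\mathcal{E}\times\mathcal{E}$ and the composite $\mathcal{E}^{(\wedge)}\to\mathcal{E}^{(1)}\to\mathcal{E}^{[1]}$ is a cone of morphisms of clans over the relevant cospan, the induced functor into $\mathcal{E}^{\wedge}$ is a morphism of clans, and it coincides with the inclusion on underlying categories. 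With that observation inserted, the cancellation argument goes through and your proof is complete.
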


\begin{proof} 
By definition, a map between fibrant spans $f:X\to Y$
is a Reedy fibration if and only if $\partial(f)=(f_0,f_1)$
is a fibration in $\mathcal{E}\times \mathcal{E}$  and $\Lambda(f)$ is a (Reedy) fibration  in $\mathcal{E}^{(1)}$.
Hence the square (\ref{fibrantspanssquare2}) is a pullback in the category $\Clan$
by Lemma \ref{pullbackofclans}, since the inclusion $\mathcal{E}^{(1)}\to \mathcal{E}^{[1]}$
is an isofibration and a morphism of clans.
The square (\ref{fibrantspanssquare2}) is cartesian in the category $\Clan$
since it is a composite of two cartesian squares in this category. 
\end{proof}

If $(\lambda_X,\rho_X):X_{01}\to X_0\times X_1$ is a fibrant span, then
the map $\lambda_X=p_1(\lambda_X,\rho_X)$ is a fibration, since the
projection $p_1: X_0\times X_1\to X_0$ is a fibration.
This defines a functor $\lambda:\mathcal{E}^{(\wedge)}\to  \mathcal{E}^{(1)}$.
Similarly, we have a functor $\rho:\mathcal{E}^{(\wedge)}\to  \mathcal{E}^{(1)}$.

\begin{prop}\label{spanscatproj}
The leg functors $\lambda, \rho: \mathcal{E}^{(\wedge)} \to \mathcal{E}^{(1)}$ 
are morphisms of clans. 
\end{prop}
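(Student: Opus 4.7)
The plan is to verify the three clauses of Definition~\ref{defmorphismclans} for the functor $\lambda$; the argument for $\rho$ is symmetric. Preservation of the terminal object is immediate, since $\lambda$ sends the terminal fibrant span $1\leftarrow 1\to 1$ to the identity $1\to 1$, which is terminal in $\mathcal{E}^{(1)}$.

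For preservation of fibrations, given a Reedy fibration $f\colon X\to Y$ of fibrant spans, I must show that the induced morphism $\lambda(f)\colon \lambda_X\to \lambda_Y$ in $\mathcal{E}^{(1)}$, corresponding to the commutative square with horizontal arrows $f_{01},f_0$ and vertical arrows $\lambda_X,\lambda_Y$, is a Reedy fibration. Its bottom arrow $f_0$ is a fibration by hypothesis, so the only real content is to show that its gap map $\gamma\colon X_{01}\to X_0\times_{Y_0} Y_{01}$ is a fibration. The key idea is to factor $\gamma$ through the gap map $\Gamma\colon X_{01}\to (X_0\times X_1)\times_{Y_0\times Y_1} Y_{01}$ of the Reedy-fibrant square~(\ref{squareforspan}) witnessing that $f$ is a Reedy fibration of fibrant spans. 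The canonical projection $q\colon (X_0\times X_1)\times_{Y_0\times Y_1} Y_{01}\to X_0\times_{Y_0} Y_{01}$ that forgets the $X_1$-coordinate satisfies $q\circ\Gamma=\gamma$, and a short direct computation identifies $q$ as the base change of $f_1\colon X_1\to Y_1$ along the composite $X_0\times_{Y_0} Y_{01}\to Y_{01}\to Y_1$ (second projection followed by $\rho_Y$). Hence $q$ is a fibration; since $\Gamma$ is a fibration by hypothesis, so is $\gamma=q\circ\Gamma$. This factorization is the crux of the proof.

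For preservation of base changes of fibrations, I will invoke the fact that both $\mathcal{E}^{(\wedge)}$ and $\mathcal{E}^{(1)}$ compute pullbacks along their Reedy fibrations componentwise, a fact implicit in the proofs of Propositions~\ref{arrowscat} and~\ref{spanscat}, since these pullbacks are inherited from the ambient categories $\mathcal{E}^{\wedge}$ and $\mathcal{E}^{[1]}$ in which pullbacks are levelwise. As $\lambda$ simply extracts the $01$- and $0$-components of a span, it commutes with such componentwise pullbacks automatically, establishing the third clause. The only step that requires genuine work is the gap-map factorization in the second clause; the rest is bookkeeping.
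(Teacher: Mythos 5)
Your proof is correct, but it takes a genuinely different route from the paper's on both nontrivial points. For preservation of fibrations, the paper does not touch the gap map directly: it stacks the Reedy fibrant square (\ref{squareforspan}) on top of the Reedy fibrant ``projection'' square supplied by Proposition \ref{projectionsReedyfibrantquares} (applied to $f_0\times f_1$ over $f_0$) and invokes closure of Reedy fibrant squares under vertical composition (Lemma \ref{compostionfibsquare}). Your factorization $\gamma = q\circ\Gamma$, with $q$ identified as a base change of $f_1$ along $X_0\times_{Y_0}Y_{01}\to Y_{01}\xrightarrow{\rho_Y}Y_1$, is exactly what falls out of unwinding the proof of that lemma in this particular instance; it is correct and self-contained, while the paper's version is more modular and reuses already-established machinery. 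For preservation of base changes of fibrations and terminal objects, the paper argues quite differently: it exhibits a left adjoint $L(a)=(a,1_{A_0})$ to the unrestricted functor $\lambda'\colon\mathcal{E}^{\wedge}\to\mathcal{E}^{[1]}$, concludes that $\lambda'$ preserves all limits, and then transfers the conclusion to $\lambda$ using that the inclusion $\mathcal{E}^{(1)}\subseteq\mathcal{E}^{[1]}$ is a conservative morphism of clans (Proposition \ref{arrowscat}). Your componentwise-pullback argument also works --- the inclusions $\mathcal{E}^{(\wedge)}\to\mathcal{E}^{\wedge}$ and $\mathcal{E}^{(1)}\to\mathcal{E}^{[1]}$ are morphisms of clans by Propositions \ref{spanscat} and \ref{arrowscat}, and in the ambient functor categories pullbacks along pointwise fibrations are computed levelwise --- but you should state explicitly that the claim is only about pullbacks \emph{along Reedy fibrations} (which is all a morphism of clans must preserve), and that it depends on the first clause having already established that $\lambda(f)$ is a Reedy fibration, so that the relevant pullback in $\mathcal{E}^{(1)}$ exists and is levelwise. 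The adjoint-functor argument buys a slightly stronger statement (preservation of all limits by $\lambda'$) at the cost of introducing $L$; yours stays closer to the definitions.
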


\begin{proof} 
 If $f:X\to Y$ is fibration in $ \mathcal{E}^{(\wedge)}$,
let us show that morphism $\lambda(f):\lambda(X)\to \lambda(Y)$ is a fibration in $ \mathcal{E}^{(1)}$.
The top square of the following diagram is Reedy fibrant, since $f$ is a (Reedy) fibration.
$$
\xymatrix{ 
 &X_{01}   \ar[rr]^{f_{01}}   \ar[d]_{(\lambda_X,\rho_X)}   &&Y_{01}\ar[d]^{(\lambda_Y,\rho_Y)} \\
 & X_0 \times X_1\ar[d] \ar[rr]^{f_0\times f_1} && Y_0\times Y_1\ar[d]\\
  & X_0 \ar[rr]^{f_0} && Y_0
}
$$
The maps $f_0:X_0\to Y_0$ and $f_1:X_1\to Y_1$ are also fibrations since  $f$ is a (Reedy) fibration.
Hence the bottom square of the diagram is Reedy fibrant by Lemma \ref{projectionsReedyfibrantquares}.
It follows by Lemma \ref{compostionfibsquare} that the composite square  is Reedy fibrant:
$$
\xymatrix{ 
 X_{01}   \ar[rr]^{f_{01}}   \ar[d]_{\lambda_X}   &&Y_{01}\ar[d]^{\lambda_Y} \\
 X_0 \ar[rr]^{f_0} && Y_0
}
$$
 This shows that the morphism  $\lambda(f):\lambda(X)\to \lambda(Y)$
is a fibration in $ \mathcal{E}^{(1)}$.
 It remains to show that the functor $\lambda:  \mathcal{E}^{(\wedge)} \to \mathcal{E}^{(1)}$
 preserves base changes of fibrations and terminal objects. 
  We shall use the following the following commutative square of functors,
  where the functor $\lambda':  \mathcal{E}^{\wedge} \to \mathcal{E}^{[1]}$ is defined like $\lambda$,
        $$
\xymatrix{ 
\mathcal{E}^{(\wedge)} \ar[rr]^{\lambda}   \ar[d]_{i_1}  && \mathcal{E}^{(1)} \ar[d]^{i_2} \\
\mathcal{E}^{\wedge} \ar[rr]^{\lambda'}  &&  \mathcal{E}^{[1]} 
}
$$
 It is easy to verify that the functor  $L:\mathcal{E}^{[1]}\to \mathcal{E}^{\wedge}$ defined by putting $L(a)=(a,1_{A_0}):A_0\to A_1\times A_0 $
 for an object $a:A_0\to A_1$ in $\mathcal{E}^{[1]}$ is left
 adjoint to the functor $\lambda'$.
 Hence the functor $\lambda'$ preserves base changes of fibrations and terminal objects,
 since it preserves all limits. 
  The inclusion functors $i_2:  \mathcal{E}^{(1)} \to   \mathcal{E}^{[1]}$ preserves base changes of fibrations and terminal objects
 by Proposition \ref{arrowscat}. Moreover,  $i_2$ is conservative, since it is fully faithful.
It follows that $\lambda$ preserves base changes of fibrations and terminal objects,
since this is true of the functor $\lambda'$.
 \end{proof}

The {\bf composite} $Y\circ_B X$ of two (fibrant) spans $X:A\rightharpoonup B$ and $Y:B\rightharpoonup C$
is the span $X\times_B Y:A\rightharpoonup C$
defined by the following diagram
 with a pullback square:
$$ \xymatrix{
& &  \ar@/_2pc/[ddll]_-{\lambda}   X\times_B Y  \ar@/^2pc/[ddrr]^-{\rho}    \ar[dl]^{p_1} \ar[dr]_{p_2}& & \\
& X\ar[dl]^{\lambda_X} \ar[dr]_{\rho_X} &  & Y\ar[dl]^{\lambda_Y} \ar[dr]_{\rho_Y}   & \\
A && B && C 
 }$$

\begin{prop}\label{spanscomposition}
The composite of $Y\circ_B X:A\rightharpoonup C$ of two fibrant spans $X:A\rightharpoonup B$ and $Y:B\rightharpoonup C$
is fibrant.
\end{prop}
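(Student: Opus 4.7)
The plan is to show that the gap map of the composite span is a fibration by factoring it through a suitable pullback. Let $Z = X \times_B Y$ with projections $p_1: Z \to X$ and $p_2: Z \to Y$ satisfying $\rho_X p_1 = \lambda_Y p_2$. The span $Y \circ_B X$ has left leg $\lambda = \lambda_X p_1$ and right leg $\rho = \rho_Y p_2$, so its gap map is
\[
(\lambda_X p_1,\ \rho_Y p_2)\colon Z \longrightarrow A \times C.
\]

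First I would factor this gap map as the composite
\[
Z \xrightarrow{\ (p_1,\ \rho_Y p_2)\ } X \times C \xrightarrow{\ \lambda_X \times C\ } A \times C.
\]
For the right-hand factor, note that $\lambda_X = p_1 \circ (\lambda_X,\rho_X)$ is a fibration, since $(\lambda_X,\rho_X)$ is a fibration by hypothesis and $p_1\colon A \times B \to A$ is a fibration (fibrations are closed under composition). Hence $\lambda_X \times C$ is a fibration by Proposition \ref{tribeofgenuineprojection0}.

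For the left-hand factor, the key step is to verify that the square
\[
\xymatrix{
Z \ar[r]^{p_2} \ar[d]_{(p_1,\ \rho_Y p_2)} & Y \ar[d]^{(\lambda_Y,\ \rho_Y)} \\
X \times C \ar[r]^{\rho_X \times C} & B \times C
}
\]
is cartesian. Commutativity is immediate from $\rho_X p_1 = \lambda_Y p_2$. The universal property is a routine check: given $w_1\colon W \to Y$ and $w_2 = (u,v)\colon W \to X \times C$ with $(\lambda_Y,\rho_Y)w_1 = (\rho_X u,\ v)$, the equation $\lambda_Y w_1 = \rho_X u$ induces the unique factorization $W \to Z$ through the defining pullback of $Z$, and one checks both legs work. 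Then $(p_1,\rho_Y p_2)$ is a fibration as a base change of the fibration $(\lambda_Y,\rho_Y)$.

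The composite of two fibrations is a fibration, so the gap map of $Y \circ_B X$ is a fibration, which is exactly the statement that $Y \circ_B X$ is a fibrant span. There is no serious obstacle here; the only substantive point is recognizing the correct factorization and checking the pullback square, which is purely formal.
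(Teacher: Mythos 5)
Your proof is correct and follows essentially the same route as the paper: both factor the gap map of $Y\circ_B X$ through $X\times C$, identify the first factor $(p_1,\rho_Y p_2)$ as a base change of the fibration $(\lambda_Y,\rho_Y)$, and handle the second factor using closure of fibrations under products and composition. The only cosmetic difference is that the paper pulls back over $A\times B\times C$ (viewing $X\times_B Y$ as a product in $\mathcal{E}(A\times B\times C)$) and then projects to $A\times C$, whereas you pull back over $B\times C$ directly; the two computations coincide.
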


\begin{proof}
 It is easy to verify that the following square is cartesian,
\begin{equation}\label{twofoldspan}
 \xymatrix{
X\times_B Y \ar[d]_{(p_1\!,\rho)} \ar[rr]^{(\lambda,p_2)}& &A \times Y \ar[d]^{A\times (\lambda_Y\!, \rho_Y)}  \\
X\times C\ar[rr]^{(\lambda_X\!,\rho_X)\times C} &  & A\times B\times C 
 }
 \end{equation}
The map $(\lambda_X,\rho_X)\times C$ is a fibration, since $(\lambda_X,\rho_X)$ is a fibration.
Similarly, the map $A\times (\lambda_Y,\rho_Y)$ is a  fibration.
Thus, $X\times_B Y$ is the cartesian product of two objects of the category $\mathcal{E}(A\times B\times C)$.
Hence the diagonal $d:X\times_B Y\to A\times B\times C$ of the square (\ref{twofoldspan}) is a fibration 
It follows that the map $(\lambda,\rho):X\times_B Y\to X\times Y$ is a fibration,
since the projection $(pr_1,pr_3):A\times B\times C \to A\times C$
is a fibration and we have $(\lambda,\rho)=(pr_1,pr_3)d$.
\end{proof}

 \newpage

\section{Internal products in a clan}

\subsection{Cartesian closed categories}

Recall that if $A$ is a carrable object in a category $\mathcal{E}$,
then the product functor $(-)\times A:\mathcal{E}\to \mathcal{E}$
can be defined.

\begin{defi} Let $A$ be a carrable object in a category $\mathcal{E}$.
The {\it exponential} of an object $B\in \mathcal{E}$ by the object $A$ is 
an object $B^A \in \mathcal{E}$ equipped with a map
$\epsilon:B^A\times A\to B$ which is cofree with respect to the functor $(-)\times A:\mathcal{E}\to \mathcal{E}$.
The map  $\epsilon: B^A \times A \to B$ is called the the {\it evaluation}.
\end{defi}

 The  exponential $B^A$ is also called the {\it internal hom} and denoted $[A,B]$.
 The map $\epsilon: [A,B] \times A \to B$ is also called the {\it application}.
Classically, $\epsilon(f,x)=f(x)$ for $f:[A,B]$ and $x:A$.

\smallskip

By definition,  for every object $C\in \mathcal{E}$ and every map $f:C\times A \to B$
 there exists a unique map $g:C\to [A,B]$ 
 such that $\epsilon(g\times A)=f$.
 $$\xymatrix{
C\times A\ar[rr]^{g\times A} \ar[drr]_f  & & [A,B] \times A\ar[d]^{\epsilon}\\
&& B.
}$$
We shall denote the map $g$ by $\lambda^A(f)$. Then
the following identities hold 
$$\epsilon(\lambda^A(f)\times A)=f\quad \quad {\rm and}\quad \quad \lambda^A(\epsilon(g\times A))=g$$
for $f:C\times A\to B$ and $g:C\to [A,B]$.
The first equality  is called  the $\beta$-{\it conversion rule} 
and the second equality the $\eta$-{\it conversion rule}.
If $f:A\to B$, then $\lceil f\rceil:=\lambda^A(f):1\to [A,B]$ is an element of type $[A,B]$.

\medskip

\begin{defi} We say that a carrable object $A$ in a cartesian category $\mathcal{E}$
is {\it exponentiable} if the functor $(-)\times A:\mathcal{E}\to \mathcal{E}$
has a right adjoint $[A,-]:\mathcal{E}\to \mathcal{E}$.
We say that a cartesian category $\mathcal{E}$ is {\it cartesian closed}
if every object $A\in \mathcal{E}$ is exponentiable.
\end{defi}

\medskip
For examples, the category of sets $\Set$
and the category of small categories $\Cat$ are cartesian closed.

 \begin{prop} \label{prop:preshefloccartclosed}
 The category of presheaves $\Hat{\mathcal{C}}=[\mathcal{C}^{op},Set]$ 
on a small category  $\mathcal{C}$ is cartesian closed. 
 \end{prop}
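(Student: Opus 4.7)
The plan is to construct the exponential by the formula forced by the Yoneda lemma. For presheaves $F,G \in \Hat{\mathcal{C}}$, I would define $[F,G]$ on objects by
$$[F,G](c) := \mathrm{Hom}_{\Hat{\mathcal{C}}}\bigl(y(c) \times F,\ G\bigr),$$
with the contravariant action on a morphism $u : c' \to c$ given by precomposition with $y(u) \times F$. A routine check verifies that this defines a presheaf, and that $[F,-]$ is a functor $\Hat{\mathcal{C}} \to \Hat{\mathcal{C}}$ by postcomposition.

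Next I would define the evaluation $\epsilon : [F,G] \times F \to G$ componentwise: at $c \in \mathcal{C}$, an element of $([F,G] \times F)(c)$ is a pair $(\alpha, x)$ with $\alpha : y(c) \times F \to G$ natural and $x \in F(c)$; using the Yoneda correspondence $x \leftrightarrow \hat{x} : y(c) \to F$, set
$$\epsilon_c(\alpha, x) := \alpha_c(1_c, x) \in G(c).$$
Naturality of $\epsilon$ in $c$ falls out of the naturality of $\alpha$.

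For the universal property, given $f : H \times F \to G$ in $\Hat{\mathcal{C}}$, I would define its transpose $\tilde{f} : H \to [F,G]$ by sending $h \in H(c)$ to the composite
$$y(c) \times F \xrightarrow{\hat{h} \times F} H \times F \xrightarrow{f} G,$$
where $\hat{h} : y(c) \to H$ is the Yoneda transpose of $h$. The two conversion rules $\epsilon \circ (\tilde{f} \times F) = f$ and $\widetilde{\epsilon \circ (g \times F)} = g$ reduce, after evaluating at a component $c$ and plugging in $1_c$, to identities that are immediate from Yoneda.

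The only real obstacle is bookkeeping: verifying that $[F,G]$ is functorial on $\mathcal{C}^{op}$, that $\epsilon$ and $\tilde f$ are natural in $c$, and that the assignment $f \mapsto \tilde f$ itself is natural in $H$. Each of these is a diagram chase that is forced by the Yoneda lemma, so no step requires genuine invention; the content of the proposition is entirely that the Yoneda-dictated formula above actually works. Once this is verified for an arbitrary object $F$, every presheaf is exponentiable and $\Hat{\mathcal{C}}$ is cartesian closed.
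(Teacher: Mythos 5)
Your construction is exactly the one the paper uses: the exponential $[F,G](c)=\mathrm{Hom}(yc\times F,G)$ and the evaluation $\epsilon_c(\alpha,x)=\alpha_c(1_c,x)$ (up to the harmless ordering of the product factors), and you in fact supply more of the verification than the paper, which leaves the cofreeness check to the reader. The proposal is correct and takes essentially the same approach.
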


\begin{proof} If $F,G\in \Hat{\mathcal{C}}$, let $[F,G]:\mathcal{C}^{op} \to Set$
be the presheaf defined by putting 
$$[F,G](C)=Hom(F\times yC, G)$$
for every object $C \in \mathcal{C}$,
where $y:\mathcal{C}\to \Hat{\mathcal{C}}$ is the Yoneda functor.
Let $\epsilon:[F,G]\times F\to G$
be the natural transformation defined by putting
$\epsilon_C(\alpha,x)=\alpha_C(x,1_C)$
for every $C\in \mathcal{C}$, where $\alpha:F\times yC \to G$ is a natural transformation 
and $x\in F(C)$. The proof that $\epsilon$
is cofree with respect to the functor $(-)\times F$
is left to the reader.
\end{proof}

\medskip

Let $F:\mathcal{E} \to \mathcal{E}'$ be a cartesian functor
between cartesian closed categories.
If $A, B\in \mathcal{E}$, then the functor $F$ takes
the evaluation $\epsilon:[A,B]\times A\to B$ 
to a map $F(\epsilon):F[A,B]\times FA \to FB$.
The map  $\psi_{A,B}=\lambda^{F\! A}(F(\epsilon)):F[A,B]\to [F\!A,F\!B]$ 
is called the {\it comparaison map}.

\begin{defi} Let $\mathcal{E}$ and $\mathcal{E}'$ be cartesian closed categories.
We say that a functor $F:\mathcal{E} \to \mathcal{E}'$ is {\it cartesian closed} 
if it preserves finite products and the comparaison map $\psi_{A,B}:F[A,B]\to [FA,FB]$ 
is an isomorphism for every pair of objects $A,B\in \mathcal{E}$.
\end{defi}

\medskip

 The composite of two  cartesian closed functors is cartesian closed.
 There is then a 2-category whose objects are  cartesian closed categories, whose morphisms are cartesian closed
 functors and whose 2-cells are natural isomorphisms.

\medskip

\begin{prop}\label{yonedacarclosed} Let $\mathcal{C}$
be a cartesian closed category. Then
the Yoneda functor $y:\mathcal{C}\to  [\mathcal{C}^{op},\mathbf{Set}]$
is cartesian closed. 
\end{prop}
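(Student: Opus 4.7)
The plan is to unpack both sides of the comparison map $\psi_{A,B}\colon y[A,B]\to [yA,yB]$ using the Yoneda lemma and the explicit description of internal homs in presheaves from Proposition \ref{prop:preshefloccartclosed}, and to check they coincide.

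First I would establish that the Yoneda functor preserves finite products. This is standard: the Yoneda lemma gives, for each $C\in\mathcal{C}$, natural bijections
\[
(yA\times yB)(C)=\Hom(C,A)\times\Hom(C,B)\cong \Hom(C,A\times B)=y(A\times B)(C),
\]
and the terminal presheaf is represented by the terminal object of $\mathcal{C}$. So $y$ is a cartesian functor, and in particular the comparison map $\psi_{A,B}$ is defined.

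Next, for fixed objects $A,B\in\mathcal{C}$ and any $C\in\mathcal{C}$, I would chain together the following natural bijections:
\[
[yA,yB](C)=\Hom(yA\times yC,\,yB)\cong \Hom(y(A\times C),\,yB)\cong \Hom(A\times C,B)\cong \Hom(C,[A,B])=y[A,B](C).
\]
Here the first equality is the definition of the internal hom of presheaves from the proof of Proposition \ref{prop:preshefloccartclosed}; the second uses that $y$ preserves products; the third is the full faithfulness of $y$ (Yoneda); and the fourth is the cartesian closed structure of $\mathcal{C}$. Each step is natural in $C$, so we obtain an isomorphism of presheaves $y[A,B]\cong [yA,yB]$.

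The main (and only nonroutine) step is to verify that this composite isomorphism actually equals the comparison map $\psi_{A,B}=\lambda^{yA}(y(\epsilon))$, rather than some other isomorphism. For this I would chase the identity $1_{[A,B]}\in y[A,B]([A,B])$ through both maps and show that both produce the natural transformation classifying $\epsilon\colon [A,B]\times A\to B$ under the Yoneda correspondence applied at the object $C=[A,B]$. Concretely, $\psi_{A,B}$ is defined as the transpose of $y(\epsilon)\colon y[A,B]\times yA\to yB$ under the adjunction $(-)\times yA\dashv [yA,-]$ in presheaves, while the composite chain above sends $1_{[A,B]}$ to the map $y(\epsilon)\circ(1_{y[A,B]}\times \mathrm{id})$ under the same transpose. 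Since both are $\lambda^{yA}$ applied to $y(\epsilon)$, they agree, and the naturality in $C$ then propagates the equality to every component. This shows $\psi_{A,B}$ is the isomorphism exhibited, completing the proof that $y$ is cartesian closed.
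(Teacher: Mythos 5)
Your proof is correct; the paper in fact leaves this proposition entirely to the reader, and your argument (Yoneda preserves products, the chain of natural bijections $[yA,yB](C)\cong\Hom(A\times C,B)\cong y[A,B](C)$, and the check that the resulting isomorphism is the comparison map $\psi_{A,B}=\lambda^{yA}(y(\epsilon))$ by evaluating at $1_{[A,B]}$) is exactly the standard argument the author presumably intends. The only point to tidy in a final write-up is the direction of your composite isomorphism versus $\psi_{A,B}$ and the harmless symmetry between $yA\times yC$ and $y(C\times A)$ imposed by the paper's convention $[F,G](C)=\Hom(F\times yC,G)$.
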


\begin{proof} Left to the reader.
\end{proof}

Recall that if $A$ is an object in a cartesian category $\mathcal{E}$,
then the base change functor $e_A: \mathcal{E}\to  \mathcal{E}/A$
is defined by putting $e_A(X)=(X\times A,p_2)$.

\begin{defi} Let $A$ be a carrable object  in a category $\mathcal{E}$.
The {\it internal  product} of a map $p:E\to A$
is defined to be an object $P\in \mathcal{E}$ equipped with a map
a map $\epsilon:e_A(P)\to E$ in $\mathcal{E}/A$ which is cofree with respect to the base change functor $e_A$.
We shall denote the object $P$ by $\Pi_A(E,p)$, or $\Pi_A(E)$ and
say that the map $\epsilon:\Pi_A(E)\times A\to E$ the {\it evaluation}.
\end{defi}

By definition,  for every object $C\in \mathcal{E}$ and every map $f:C\times A \to E$  
 in $\mathcal{E}/A$, there exists a unique map $g:C\to \Pi_A(E)$ 
 such that $\epsilon(g\times A)=f$.
 $$\xymatrix{
C\times A\ar[rr]^{g\times A} \ar[drr]_f  & &  \Pi_A(E) \times A\ar[d]^{\epsilon}\\
&& E.
}$$
We shall denote the map $g$ by $\lambda^A(f)$.

\begin{prop}\label{expisfullyprop}  Let $A$ be an exponentiable object  in a category with finite limits $\mathcal{E} $.
Then the base change functor $e_A:\mathcal{E} \to \mathcal{E}/A$ has a 
right adjoint $\Pi_A: \mathcal{E}/A\to  \mathcal{E}$.
More precisely, if $E=(E,p)\in \mathcal{E}/A$, 
then $\Pi_A(E)$ is the fiber of the map $[A,p]:[A,E]\to [A,A]$
at $\lceil 1_A\rceil:1\to [A,A]$,
 \begin{equation}\label{fiberforinternalproduct}
 \xymatrix{
\Pi_A(E) \ar[d] \ar[d]\ar[rr]^{\iota}& &[A,E] \ar[d]^{[A,p]} \\
1 \ar[rr]^{\lceil 1_A\rceil}&& [A,A]
}
 \end{equation}
The evaluation $\epsilon':\Pi_A(E) \times A\to E$
is induced by the evaluation $\epsilon_E:[A,E] \times A\to E$.
\end{prop}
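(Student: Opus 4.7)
My plan is to construct $\Pi_A(E)$ as the fiber in the diagram (\ref{fiberforinternalproduct}), build the evaluation $\epsilon'$ from the evaluation $\epsilon_E$ of the exponential $[A,E]$, and then verify the universal property by using the $(-) \times A \dashv [A,-]$ adjunction to translate the universal property for $\Pi_A$ into a statement about the pullback square.

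First I would record the key translation: a map $f : C \times A \to E$ lies in $\mathcal{E}/A$ (with $C \times A$ viewed over $A$ via $p_2$) precisely when $p f = p_2$, and applying $\lambda^A$ converts this into the equation
\[
[A,p] \circ \lambda^A(f) \;=\; \lambda^A(p_2)
\]
in $\mathrm{Hom}(C, [A,A])$. The next observation is that $\lambda^A(p_2) = \lceil 1_A \rceil \circ t_C$, where $t_C : C \to 1$ is the terminal map; indeed, both sides classify the map $p_2 : C \times A \to A$ under the bijection $\mathrm{Hom}(C \times A, A) \cong \mathrm{Hom}(C, [A,A])$, the right-hand side because $1_A = \epsilon_A \circ (\lceil 1_A \rceil \times A)$ precomposed with the projection gives $p_2$.

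Second, define $\Pi_A(E)$ and $\iota : \Pi_A(E) \to [A,E]$ by the pullback (\ref{fiberforinternalproduct}) and set
\[
\epsilon' \;\defeq\; \epsilon_E \circ (\iota \times A) \;:\; \Pi_A(E) \times A \longrightarrow E.
\]
To see that $\epsilon'$ is a morphism in $\mathcal{E}/A$, i.e.\ $p \epsilon' = p_2$, apply $\lambda^A$: we get $[A,p] \circ \iota = \lceil 1_A \rceil \circ t_{\Pi_A(E)}$ by the commutativity of (\ref{fiberforinternalproduct}), and this is exactly $\lambda^A(p_2 : \Pi_A(E) \times A \to A)$ by the previous paragraph. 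Since $\lambda^A$ is a bijection, $p \epsilon' = p_2$ as required.

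Third, verify the universal property. Given $f : C \times A \to E$ in $\mathcal{E}/A$, form $\lambda^A(f) : C \to [A,E]$. The equation $p f = p_2$ gives $[A,p] \circ \lambda^A(f) = \lceil 1_A \rceil \circ t_C$, so the universal property of the pullback (\ref{fiberforinternalproduct}) yields a unique $g : C \to \Pi_A(E)$ with $\iota \circ g = \lambda^A(f)$. Then
\[
\epsilon' \circ (g \times A) \;=\; \epsilon_E \circ (\iota \times A) \circ (g \times A) \;=\; \epsilon_E \circ (\lambda^A(f) \times A) \;=\; f
\]
by the $\beta$-conversion rule. For uniqueness, if $g' : C \to \Pi_A(E)$ also satisfies $\epsilon' \circ (g' \times A) = f$, then $\iota \circ g'$ satisfies $\epsilon_E \circ ((\iota g') \times A) = f$, so $\iota g' = \lambda^A(f)$ by $\eta$-conversion, and then $g' = g$ by the uniqueness in the pullback. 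This gives the bijection $\mathrm{Hom}_{\mathcal{E}/A}(e_A(C), E) \cong \mathrm{Hom}_{\mathcal{E}}(C, \Pi_A(E))$, natural in $C$, and establishes both the existence of the right adjoint and its description.

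The only subtle point I expect to dwell on is the identification $\lambda^A(p_2) = \lceil 1_A \rceil \circ t_C$; everything else is a mechanical combination of the pullback universal property with the exponential adjunction, so there is no real obstacle.
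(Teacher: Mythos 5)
Your proof is correct and follows essentially the same route as the paper: construct $\Pi_A(E)$ as the stated pullback, set $\epsilon'=\epsilon_E(\iota\times A)$, and translate the cofreeness condition through the adjunction $(-)\times A\dashv [A,-]$ into the universal property of the pullback. The only differences are cosmetic — you verify $p\epsilon'=p_2$ by applying $\lambda^A$ and using the identity $\lambda^A(p_2)=\lceil 1_A\rceil\circ t_C$ where the paper does a direct diagram chase, and you supply the uniqueness argument (via $\eta$-conversion and the pullback) that the paper leaves to the reader.
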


\begin{proof} We have to show that every map $p:E\to A$ has an internal product $\Pi_A(E,p)$.
Consider the following pullback square
 \begin{equation}\label{fiberforinternalproduct5}
 \xymatrix{
P \ar[d] \ar[d]_{} \ar[rr]^{\iota}& &[A,E] \ar[d]^{[A,p]} \\
1 \ar[rr]^{\lceil 1_A\rceil}&& [A,A]
}
 \end{equation}
 where $P$ be the fiber of the map $[A,p]$
at $\lceil 1_A\rceil:1\to  [A,A]$.
Let us put $\epsilon':=\epsilon_E (\iota\times A):P\times A\to  E$
 \begin{equation*}
 \xymatrix{
P\times A \ar[rr]^{\iota \times A}  \ar@/^2.5pc/[rrrr]^-{\epsilon'}  
& &[A,E]\times A  \ar[rr]^{\epsilon_E}&& E
}
 \end{equation*}
 Let us first verify that $\epsilon'$ is a map in $ \mathcal{E}/A$.
 For this we have to show that $p\epsilon'=p_2: P\times A\to A$.
The left hand square of following diagram commutes, since the square (\ref{fiberforinternalproduct5})
commutes.
 \begin{equation*}
 \xymatrix{
P\times A \ar[d]_{!\times A}\ar[rr]^{\iota \times A}  \ar@/^2.5pc/[rrrr]^-{\epsilon'}  
& &[A,E]\times A \ar[d]^{[A,p]\times A} \ar[rr]^{\epsilon_E }&& E\ar[d]^p \\
1\times A \ar[rr]^{\lceil 1_A\rceil \times A}  \ar@/_2.5pc/[rrrr]^-{1_A}   && [A,A]\times A \ar[rr]^{\epsilon_A } && A
}
 \end{equation*}
 The right hand square commutes by definition of the map $[A,p]$.
The top triangle commutes by defintion of $\epsilon'$.
The bottom triangle commutes by defintion of $\lceil 1_A\rceil $.
Thus, $p\epsilon'=\ !\! \times A=p_2$ and this shows that $\epsilon'$ is a map in $ \mathcal{E}/A$.
 Let us show that $\epsilon':e_A(P)\to E$ is cofree with respect to the functor $e_A$.
  For this we have to show that for every object $C\in \mathcal{E}$ and 
  every map $f:e_A(C)\to E$ in $\mathcal{E}/A$,
 there exists a unique map $g:C\to P$ such that $\epsilon' e_A(g)=f$.
  The following square commutes, since $f$ is a map in $ \mathcal{E}/A$.
  \begin{equation*}
 \xymatrix{
C\times A \ar[d]_{!\times A}\ar[rr]^{f}  && E\ar[d]^p \\
1\times A \ar[rr]^{1_A}    &&  A
} \end{equation*}
Hence the following square commutes by the adjunction $(-)\times A \dashv [A,-]$,
   \begin{equation*}
 \xymatrix{
C \ar[d]_{!}\ar[rr]^{\lambda^A(f)}  && [A,E]\ar[d]^{[A,p]} \\
1 \ar[rr]^{\lambda^A(1_A)}    &&  [A,A]
}
 \end{equation*}
Notice that $\lambda^A(1_A)=\lceil 1_A\rceil$.
There is then a unique map $g:C\to P$
such that $\iota g=\lambda^A(f)$, since the square (\ref{fiberforinternalproduct5}) is cartesian.
We then have 
$$\epsilon'(g\times A)= \epsilon_E(\iota\times A )(g\times A)= \epsilon_E(\iota g\times A )=\epsilon_E(\lambda^A(f) \times A )=f.$$
The existence of $g$ is proved; its uniqueness is left to the reader. 
    \end{proof}

\begin{rem} \label{rem:exponentialareprod} With the hypothesis of Proposition \ref{expisfullyprop}, we have $[A,B]=\Pi_A(B\times A,p_2)$
for every object $B\in \mathcal{E} $.
To see this, observe that the product functor $(-)\times A:\mathcal{E}\to \mathcal{E}$ is the composite 
 of the base change functor $e_A:\mathcal{E}\to \mathcal{E}/A$ followed by the
 forgetful functor $\Sigma_A:\mathcal{E}/A\to \mathcal{E}$.
By composing the adjunctions $ \Sigma_A\dashv e_A$ and
  $e_A\dashv \Pi_A$ we obtain an adjunction $\Sigma_A\circ e_A\dashv \Pi_A\circ e_A$.
  Hence the functor $\Pi_A\circ e_A:\mathcal{E}\to \mathcal{E}$
  is right adjoint to the functor $(-)\times A:\mathcal{E}\to \mathcal{E}$.
 \end{rem}

Let $f:A\to B$ be a carrable map in a category $\mathcal{E}$.
Recall that the base change functor 
 $f^\star: \mathcal{E}/B\to  \mathcal{E}/A$
is defined by putting $f^\star(X)=(X\times_B A,p_2)$
for every object $X=(X,p)\in \mathcal{E}/B$.

\begin{defi} \label{def:internalproductalon} Let $f:A\to B$ be a carrable map in a category $\mathcal{E}$.
The {\it internal product } of an object $E=(E,p)\in \mathcal{E}/A$ 
along the map $f$
is defined to be an object $P=(P,q)\in \mathcal{E}/B$
equipped with a map $\epsilon:f^\star(P)\to E$ in $\mathcal{E}/A$ 
which is cofree with respect to the functor $f^\star:\mathcal{E}/B \to \mathcal{E}/A$.
We shall denote the object $P$ by $\Pi_f(E,p)$, or by $\Pi_f(E)$, and
say that the map $\epsilon:\Pi_f(E)\times_B A\to E$
is the {\it evaluation}.
\end{defi}

By definition, the map 
$$
\xymatrix{
(\mathcal{E}/B)(C,P)\ar[rr]^{f^\star} && (\mathcal{E}/A)(f^\star Q,f^\star P) \ar[rr]^{\epsilon\circ (-)} &&  (\mathcal{E}/A)(f^\star Q,E)  
}
$$
is bijective for every object $C=(C,g)\in \mathcal{E}/B$,
In other words, for every map $u:C\times_B A \to E$ in $\mathcal{E}/A$,
there exists a unique map $v:C\to \Pi_f (E)$ in $\mathcal{E}/B$
such that $\epsilon (v\times_B A)=u$.
$$\xymatrix{
C\times_B A\ar[rr]^{v\times_B A} \ar[drr]_u  & &  \Pi_f(E) \times_B A\ar[d]^{\epsilon}\\
&& E.
}$$
We shall denote the map $v$ by $\lambda^A(u)$.

\medskip

\begin{rem} \label{rem:intprod}
It follows from Definition  \ref{def:internalproductalon} that
$$\Pi_f(E,p)=\Pi_{(A,f)}(E,p)$$
where $\Pi_{(A,f)}(E,p)$  denotes the internal product of the
map $p:(E,fp)\to (A,f)$ in $ \mathcal{E}/B$.
.\end{rem}

\subsection{Locally cartesian closed categories}

\begin{defi} A category with finite limits $\mathcal{E}$ is said to be {\it locally cartesian closed}, 
if the category $\mathcal{E}/A$ is cartesian closed for every object $A\in \mathcal{E}$.
\end{defi}

\smallskip

It follows directly from this definition that if $\mathcal{E}$ is locally cartesian closed,
 then so is the category $\mathcal{E}/A$ for every object $A\in \mathcal{E}$.

\medskip

For examples, the category  of sets $\Set$ is locally cartesian closed.
We shall see below that the category of presheaves 
on a small category is locally cartesian closed. 
More generally, a topos is locally cartesian closed.

\medskip

 \begin{prop} \label{preshefloccartclosed}
 The category of presheaves $\Hat{\mathcal{C}}=[\mathcal{C}^{op},Set]$ 
on a small category  $\mathcal{C}$ is locally cartesian closed. 
 \end{prop}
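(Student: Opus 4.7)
The plan is to exploit the canonical equivalence of categories $\Hat{\mathcal{C}}/F \simeq \Hat{\int\! F}$, where $\int\! F$ denotes the category of elements of the presheaf $F$, and then to invoke Proposition \ref{prop:preshefloccartclosed} for the small category $\int\! F$.

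First I would recall the category of elements: its objects are pairs $(C,x)$ with $C \in \mathcal{C}$ and $x \in F(C)$, and a morphism $(C,x) \to (D,y)$ is a map $u : C \to D$ in $\mathcal{C}$ satisfying $F(u)(y) = x$. Because $\mathcal{C}$ is small, so is $\int\! F$, and $\Hat{\int\! F} = [(\int\! F)^{op}, \mathbf{Set}]$ makes sense.

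Next I would construct the equivalence $\Phi : \Hat{\mathcal{C}}/F \to \Hat{\int\! F}$. Given $g : G \to F$, define $\Phi(g)(C,x) = g_C^{-1}(x) \subseteq G(C)$; naturality of $g$ ensures that for any morphism $u : (C,x) \to (D,y)$ in $\int\! F$, the map $G(u) : G(D) \to G(C)$ restricts to a map $\Phi(g)(D,y) \to \Phi(g)(C,x)$. An inverse $\Psi$ takes $P \in \Hat{\int\! F}$ to the presheaf $\Psi(P)(C) = \coprod_{x \in F(C)} P(C,x)$ equipped with the obvious projection to $F$. The verification that $\Phi$ and $\Psi$ are mutually quasi-inverse is a routine unfolding of definitions.

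Finally, by Proposition \ref{prop:preshefloccartclosed} applied to $\int\! F$, the category $\Hat{\int\! F}$ is cartesian closed. Transporting this structure along $\Phi$ endows $\Hat{\mathcal{C}}/F$ with a cartesian closed structure. The only point requiring care is that $\Phi$ preserves finite products, so that transported exponentials are genuine exponentials in $\Hat{\mathcal{C}}/F$. This is immediate: the terminal object $1_F = \mathrm{id}_F$ of $\Hat{\mathcal{C}}/F$ goes to the terminal presheaf on $\int\! F$ (the fibre of $\mathrm{id}_F$ above $(C,x)$ is a singleton), and for $g : G \to F$, $h : H \to F$, the fibre of $G \times_F H \to F$ above $(C,x)$ is $g_C^{-1}(x) \times h_C^{-1}(x)$, giving $\Phi(g \times_F h) \cong \Phi(g) \times \Phi(h)$. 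There is no real obstacle here; the cartesian closedness is a free consequence of Proposition \ref{prop:preshefloccartclosed}, and the work is concentrated in justifying the equivalence $\Phi$.
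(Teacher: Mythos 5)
Your proof is correct and follows exactly the paper's own argument: the paper likewise reduces to the equivalence $\Hat{\mathcal{C}}/F \simeq [(\mathcal{C}/F)^{op},\mathbf{Set}]$ with $\mathcal{C}/F$ the category of elements, and then invokes Proposition \ref{prop:preshefloccartclosed}. You simply supply more of the routine details (the explicit quasi-inverse and the preservation of finite products) that the paper leaves implicit.
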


\begin{proof} Let us show that the category $\Hat{\mathcal{C}}/F$
is cartesian closed for every $F\in \Hat{\mathcal{C}}$.
But the category $\Hat{\mathcal{C}}/F$ is equivalent to the presheaf category $[(\mathcal{C}/F)^{op},Set]$,
where $\mathcal{C}/F$ denotes the category of elements of $F$.
Hence the category $\Hat{\mathcal{C}}/F$ is cartesian 
closed by Proposition \ref{prop:preshefloccartclosed}.
\end{proof}

\begin{prop} \label{prop:cartesianclosedvsinternalprod}
A category with finite limits $\mathcal{E}$ is locally cartesian closed
if and only if the base change functor
$f^\star:\mathcal{E}/B\to \mathcal{E}/A$
has a right adjoint 
$f_\star=\Pi_f:\mathcal{E}/A\to \mathcal{E}/B$
for every map $f:A\to B$ in $\mathcal{E}$.
\end{prop}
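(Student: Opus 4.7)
The proof proposal: I would show the two implications separately, in each direction reducing to a single-object statement (Proposition~\ref{expisfullyprop} giving exponentials) together with the isomorphisms of slice categories established earlier.

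For the \emph{forward} direction, suppose $\mathcal{E}$ is locally cartesian closed and let $f\colon A\to B$ be a map. I view $f$ as an object $(A,f)\in \mathcal{E}/B$. By Lemma~\ref{compositebasechangetribecor2} (applied with ``clan'' replaced by ``category with finite limits'', which the analogous argument yields verbatim) there is a canonical isomorphism of categories
\[
\Phi\colon \mathcal{E}/A \;\xrightarrow{\;\simeq\;}\; (\mathcal{E}/B)\big/(A,f),
\]
and by Lemma~\ref{compositionelementary1} the base change functor $f^{\star}\colon \mathcal{E}/B\to \mathcal{E}/A$ corresponds, under $\Phi$, to the elementary functor $e_{(A,f)}\colon \mathcal{E}/B\to (\mathcal{E}/B)/(A,f)$, which is the functor ``product with $(A,f)$'' in $\mathcal{E}/B$. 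Since $\mathcal{E}/B$ is cartesian closed, the object $(A,f)$ is exponentiable in $\mathcal{E}/B$, hence $e_{(A,f)}$ has a right adjoint by Proposition~\ref{expisfullyprop} (applied inside $\mathcal{E}/B$, yielding the fiber of $[(A,f),(X,p)]\to [(A,f),(A,f)]$ at $\lceil 1_{(A,f)}\rceil$). Transporting this right adjoint through $\Phi$ produces the desired right adjoint $\Pi_{f}\colon \mathcal{E}/A\to \mathcal{E}/B$.

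For the \emph{converse}, suppose every base change functor admits a right adjoint, and fix an object $A\in \mathcal{E}$. I must show $\mathcal{E}/A$ is cartesian closed, i.e.\ every object $(X,g)\in \mathcal{E}/A$ is exponentiable. The crucial observation is that the binary product in $\mathcal{E}/A$ can be written as a composite of three functors: for $(Y,h)\in\mathcal{E}/A$,
\[
(Y,h)\times_{A}(X,g) \;=\; \Sigma_{g}\bigl(g^{\star}(Y,h)\bigr),
\]
so that the endofunctor $(-)\times_A (X,g)\colon \mathcal{E}/A\to \mathcal{E}/A$ equals $\Sigma_{g}\circ g^{\star}$, where $\Sigma_{g}\colon \mathcal{E}/X\to \mathcal{E}/A$ is the forgetful functor (``post-composition with $g$''). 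From the chain of adjunctions $\Sigma_{g}\dashv g^{\star}\dashv \Pi_{g}$ — the first being standard, the second by hypothesis — I obtain, by composition of adjoints, $\Sigma_g\circ g^\star \dashv \Pi_g\circ g^\star$. Thus $(X,g)$ is exponentiable in $\mathcal{E}/A$ with internal hom functor $\Pi_{g}\circ g^{\star}$, which proves that $\mathcal{E}/A$ is cartesian closed.

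The main obstacle I expect is bookkeeping around the equivalence $\mathcal{E}/A\simeq (\mathcal{E}/B)/(A,f)$ in the forward direction: one must verify carefully that under this equivalence $f^{\star}$ really coincides with the ``multiplication by $(A,f)$'' functor in $\mathcal{E}/B$, so that the right adjoint produced by cartesian closedness of $\mathcal{E}/B$ indeed lands in the right place. Since this has already been recorded for clans in Lemmas~\ref{compositebasechangetribecor2} and~\ref{compositionelementary1}, and both proofs use only pullbacks and the universal property of the terminal object of a slice, the argument transfers directly. The converse is essentially formal once the composite $\Sigma_{g}\circ g^{\star}$ is recognized as the product.
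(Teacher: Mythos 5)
Your proof is correct and follows essentially the same route as the paper: the forward direction reduces to exponentiability of $(A,f)$ in the cartesian closed category $\mathcal{E}/B$ via Proposition~\ref{expisfullyprop} together with the slice-of-slice identification (which the paper packages as Remark~\ref{rem:intprod}), and the converse decomposes the product functor as $\Sigma_g\circ g^{\star}$ and composes the adjunctions $\Sigma_g\dashv g^{\star}\dashv \Pi_g$, exactly as in Remark~\ref{rem:exponentialareprod} applied in the slice. The only cosmetic difference is that you work directly inside $\mathcal{E}/A$ where the paper first treats $\mathcal{E}$ and then says ``apply the same argument to $\mathcal{E}/C$.''
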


\begin{proof} If $\mathcal{E}$ is locally cartesian closed and $f:A\to B$
is a map in $\mathcal{E}$ let us show that every map $p:E\to A$
has an  internal product $\Pi_f(E,p)$ along $f$.
The category $\mathcal{E}/B$ is cartesian closed by hypothesis.
Moreover, $\mathcal{E}/B$ has finite limits, since 
the category $\mathcal{E}$ has finite limits. It then follows by Lemma \ref{expisfullyprop}
that the map $p:(E,fp)\to (A,f)$ of $\mathcal{E}/B$ has an internal product $\Pi_{(A,f)}(E,p)$.
But we have $\Pi_f(E,p)=\Pi_{(A,f)}(E,p)$ by Remark \ref{rem:intprod}.
Conversely, if the functor $f^\star$ has a right adjoint $\Pi_f$
for every map $f:A\to B$, let us show that the category $\mathcal{E}$ is locally cartesian closed.
Let us first check that $\mathcal{E}$ is cartesian closed.
For this we have to show that every object $A\in \mathcal{E}$ is exponentiable.
 The base change  functor $e_A:\mathcal{E}\to \mathcal{E}/A$ has a right adjoint $\Pi_A:\mathcal{E}/A\to \mathcal{E}$
 by hypothesis.  Thus, $A$ is exponentiable,
 since we have $[A,B]=\Pi_A(B\times A,p_2)$
 for every object $B\in \mathcal{E}$ (see Remark \ref{rem:exponentialareprod}).
We have proved that $\mathcal{E}$ is cartesian closed.
It remains to show that the category $\mathcal{E}/C$ is cartesian
closed for every object $C\in  \mathcal{E}$.
But the argument above, applied to  $\mathcal{E}/C$ instead of $\mathcal{E}$, shows that $\mathcal{E}/C$ is cartesian closed. 
\end{proof}

\begin{defi} Let $\mathcal{E}$ and $\mathcal{E}'$ be locally cartesian closed categories.
We say that a functor $F:\mathcal{E} \to \mathcal{E}'$ 
is  {\it  locally cartesian closed} if it preserves finite limits
and the induced functor $F/A:\mathcal{E}/A \to \mathcal{E}'/F\!A$
is cartesian closed for every object $A\in \mathcal{E}$. 
 \end{defi}

 \medskip
 
 The composite of two  locally cartesian closed functors is 
 locally cartesian closed.
  There is then a 2-category whose objects are  locally cartesian closed categories, whose morphisms are locally cartesian closed
 functors and whose 2-cells are natural isomorphisms.

 \medskip

\begin{thm} \label{thm:BC} {\rm (Beck-Chevalley law)} Let $\mathcal{E} $
be a locally cartesian closed category. If the following square in $\mathcal{E} $ is cartesian
\begin{equation} \label{BeckChevalysqr}
 \xymatrix{
C \ar[d]_{u}\ar[rr]^{g}& & D \ar[d]^v \\
A \ar[rr]^{f}&& B
}
\end{equation}
then the following squares of functors commute up
to a natural isomorphism:
\begin{equation} \label{BeckChevally3}
 \xymatrix{
\mathcal{E}/C \ar[d]_{u_!}& &\ar[ll]_{g^\star} \mathcal{E}/D \ar[d]^{v_!} \\
\mathcal{E}/A && \ar[ll]_{f^\star} \mathcal{E}/B
}\quad  \quad \quad
 \xymatrix{
\mathcal{E}/C \ar[rr]^{g_\star} & & \mathcal{E}/D \\
\mathcal{E}/A  \ar[u]^{u^\star} \ar[rr]^{f_\star}  && \mathcal{E}/B \ar[u]_{v^\star} \
}
\end{equation}
\end{thm}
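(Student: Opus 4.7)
The plan is in two steps: establish the left square by a direct pasting-of-pullbacks argument, then deduce the right square formally by passing to right adjoints. Right adjoints to the relevant base-change functors exist because $\mathcal{E}$ is locally cartesian closed (Proposition \ref{prop:cartesianclosedvsinternalprod}), which is precisely the ingredient that powers the second half.

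For the left square, I would fix $(X,p) \in \mathcal{E}/D$ and unfold the two composites. On the one hand, $g^\star(X,p) = (C\times_D X,\, \pi_C)$ where $\pi_C$ is the first projection, so $u_!\, g^\star(X,p) = (C\times_D X,\, u\pi_C)$. On the other hand, $v_!(X,p) = (X, vp)$ and $f^\star v_!(X,p) = (A\times_B X,\, \pi_A)$, the pullback of $vp$ along $f$. I would then stack the two pullback squares vertically:
\[
\xymatrix{
C \times_D X \ar[d]_{\pi_C} \ar[rr]^{\pi_X} & & X \ar[d]^{p} \\
C \ar[d]_u \ar[rr]^{g} & & D \ar[d]^{v} \\
A \ar[rr]^{f} & & B
}
\]
The top square is cartesian by construction and the bottom square is cartesian by hypothesis, so Lemma \ref{lemmacartesiansq} (the pasting lemma) gives that the outer rectangle is cartesian. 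This outer rectangle exhibits $(C\times_D X,\, u\pi_C)$ as the pullback of $vp$ along $f$. Uniqueness of pullbacks up to canonical isomorphism then supplies an iso $u_!\, g^\star(X,p) \cong f^\star v_!(X,p)$ in $\mathcal{E}/A$, and naturality in $(X,p)$ is immediate from the universal property applied to morphisms $(X,p) \to (X',p')$ in $\mathcal{E}/D$.

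For the right square, I would invoke the mate correspondence. All four functors in the left square are left adjoints: $u_! \dashv u^\star$ and $v_! \dashv v^\star$ because base-change is right adjoint to post-composition in any slice, and $g^\star \dashv g_\star$, $f^\star \dashv f_\star$ by Proposition \ref{prop:cartesianclosedvsinternalprod}. Hence the composite $u_!\, g^\star$ has right adjoint $g_\star\, u^\star$ and $f^\star v_!$ has right adjoint $v^\star f_\star$. Since naturally isomorphic functors have canonically isomorphic right adjoints, the iso established in the previous paragraph transposes under adjunction to a canonical iso $g_\star\, u^\star \cong v^\star f_\star$, which is the commutativity of the right square.

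No step should present a genuine obstacle. The real content of the left square is the pasting lemma, already available. The right square is purely formal once one records that all four corners have right adjoints. The only point requiring a little care is the naturality statement in the first step, but it is forced by the universal property; and one should check that the induced iso $g_\star u^\star \cong v^\star f_\star$ is the mate of the standard iso on the left (rather than some twist), which amounts to chasing units and counits.
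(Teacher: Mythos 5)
Your proof is correct and follows essentially the same route as the paper: the left square is obtained by pasting the two pullback squares via Lemma \ref{lemmacartesiansq} to identify $C\times_D X$ with $A\times_B X$, and the right square is deduced by passing to right adjoints of the composites $u_!\,g^\star$ and $f^\star v_!$, exactly as in the paper's argument.
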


\begin{proof} 
If $E=(E,p)\in  \mathcal{E}/B$, then the composite of the 
following two squares is cartesian by Lemma \ref{lemmacartesiansq}
\begin{equation}  \label{BeckChevally2}
\xymatrix{
&C\times_D E \ar[d]_{p_1}\ar[rr]^{p_2}& & E \ar[d]^p \\
&C \ar[d]_{u}\ar[rr]^{g}& & D \ar[d]^v \\
& A \ar[rr]^{f}&& B
}
\end{equation}
Thus, $C\times_D E =A\times_B E$. This shows that
$$
 u_!g^\star(E,p)=u_!(C\times_D E,p_1) = (C\times_D E,up_1)\simeq (A\times_B E, up_1)= f^\star(E,vp)=f^\star v_!(E,p)$$
 Hence the functor $ u_!g^\star$ is isomorphic to the functor $f^\star v_!$.
 We have $f^\star v_! \dashv v^\star f_\star $, since we have $v_!\dashv v^\star$ and $f^\star \dashv f_\star$.
 Similarly, we have  $u_!g^\star\dashv g_\star u^\star$, since we have $g^\star \dashv g_\star$ and  $u_!\dashv u^\star$.
 Hence the functor $g_\star u^\star$ is isomorphic to the functor $v^\star f_\star $,
 since the functor $ u_!g^\star$ is isomorphic to the functor $f^\star v_!$ 
 and isomorphic functors have isomorphic right adjoints.
 \end{proof}

Let $\mathcal{E}$ and $\mathcal{E}'$ be  locally cartesian closed categories
and let $F:\mathcal{E} \to \mathcal{E}'$ be a functor which preserves finite limits.
If $f:A\to B$ and $p:E\to A$ are maps in $\mathcal{E}$ 
then the functor $F$ takes the evaluation map $\epsilon:\Pi_f(E,p)\times_B A\to E$
in $\mathcal{E}/A$ to a map $F(\epsilon):F(\Pi_f(E,p))\times_{F\!B} F\!A\to F\!E$ in
the category $ \mathcal{E}'/F\!A$.
We shall say that $F$ preserves {\it internal products} if the map 
 $$\lambda^{F\!A}F(\epsilon):F\Pi_f(E)\to \Pi_{F(f)}(F\!E)$$
 is invertible for every pair of maps $f:A\to B$ and $p:E\to A$ 
in the category $\mathcal{E}$.

\begin{prop} \label{locallyclosedandproduct} Let $F:\mathcal{E}\to \mathcal{E}'$ 
be a functor between locally cartesian closed categories.
If $F$ preserves finite limits, then $F$ is locally cartesian closed functor if and only if it
preserve internal  products. 
 \end{prop}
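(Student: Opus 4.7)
The plan is to use Remark \ref{rem:intprod} together with Proposition \ref{expisfullyprop} to express internal products in terms of exponentials in slice categories, and then transfer the statement about preservation of $\Pi_f$ to a statement about preservation of exponentials in each slice. The key observation is that $\mathcal{E}/A$ being cartesian closed, the binary product functor $(-) \times (X, p_X)\colon \mathcal{E}/A \to \mathcal{E}/A$ factors as $\Sigma_{p_X} \circ p_X^\star$, so its right adjoint (the exponential $[(X, p_X), -]_{\mathcal{E}/A}$) factors as $\Pi_{p_X} \circ p_X^\star$. Concretely,
\[
[(X, p_X), (Y, p_Y)]_{\mathcal{E}/A} \;=\; \Pi_{p_X}(X \times_A Y, p_1),
\]
and dually, by Remark \ref{rem:intprod} and Proposition \ref{expisfullyprop}, for any $f\colon A \to B$ and $p\colon E \to A$ the object $\Pi_f(E, p) = \Pi_{(A,f)}(E,p)$ is the fiber in $\mathcal{E}/B$ at $\lceil 1_{(A,f)}\rceil$ of the map of exponentials $[(A,f), (E, fp)]_{\mathcal{E}/B} \to [(A,f), (A,f)]_{\mathcal{E}/B}$.

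For the forward direction, assume $F$ is locally cartesian closed. Given $f\colon A \to B$ and $p\colon E \to A$ in $\mathcal{E}$, express $\Pi_f(E, p)$ via the fiber presentation above inside $\mathcal{E}/B$. Since $F$ preserves finite limits, the functor $F/B\colon \mathcal{E}/B \to \mathcal{E}'/FB$ also preserves finite limits; since $F/B$ is cartesian closed, it preserves the two exponentials and the map between them; hence it preserves the fiber. Applying Remark \ref{rem:intprod} again on the $\mathcal{E}'$ side identifies the resulting object with $\Pi_{F(f)}(FE, Fp)$, giving the required isomorphism $F\Pi_f(E,p) \simeq \Pi_{F(f)}(FE, Fp)$.

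For the converse, assume $F$ preserves internal products and finite limits. Fix $A \in \mathcal{E}$; we must show $F/A$ is cartesian closed. Given $(X, p_X), (Y, p_Y) \in \mathcal{E}/A$, use the formula $[(X, p_X), (Y, p_Y)]_{\mathcal{E}/A} = \Pi_{p_X}(X \times_A Y, p_1)$ displayed above. Applying $F$ and using preservation of pullbacks and of $\Pi_{p_X}$ yields
\[
F[(X, p_X), (Y, p_Y)]_{\mathcal{E}/A} \;\simeq\; \Pi_{Fp_X}(FX \times_{FA} FY, p_1) \;=\; [(FX, Fp_X), (FY, Fp_Y)]_{\mathcal{E}'/FA}.
\]
One then verifies that this composite isomorphism is the canonical comparison map; this follows by unwinding the adjunction triangles $\Sigma_{p_X} \dashv p_X^\star \dashv \Pi_{p_X}$ and noting that preservation of the units and counits under $F$ is automatic from preservation of finite limits and of $\Pi$.

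The principal obstacle is the identification, in the reverse direction, of the isomorphism produced from preservation of $\Pi_{p_X}$ and of pullbacks with the canonical comparison map $F[(X,p_X),(Y,p_Y)] \to [F(X,p_X), F(Y,p_Y)]$ in $\mathcal{E}'/FA$. This is a naturality/uniqueness argument: both maps are obtained by $\lambda$-abstraction of the image of the evaluation $\epsilon$, and the universal property pins down such an abstraction uniquely, so once the underlying adjunctions are shown to be preserved up to coherent isomorphism the comparison map must coincide with the constructed one. Everything else reduces to routine applications of Remarks \ref{rem:intprod} and \ref{rem:exponentialareprod} and Proposition \ref{expisfullyprop}.
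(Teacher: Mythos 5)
Your forward direction is essentially the paper's own argument: the paper likewise invokes Proposition \ref{expisfullyprop} to present the internal product as the fiber of a map of exponentials, then reduces the general case of $\Pi_f$ to the slice $\mathcal{E}/B$ via Remark \ref{rem:intprod} and the fact that $F/B$ is cartesian closed and preserves finite limits. For the converse the paper simply writes ``left to the reader,'' so your argument there --- writing $[(X,p_X),(Y,p_Y)]_{\mathcal{E}/A}=\Pi_{p_X}(X\times_A Y,p_1)$ via the factorisation $(-)\times(X,p_X)=\Sigma_{p_X}\circ p_X^\star$ and transporting the isomorphism through preservation of pullbacks and of $\Pi$ --- supplies what the paper omits, and you correctly flag that the only real work is checking the resulting isomorphism agrees with the canonical comparison map $\psi_{X,Y}$, which follows because both arise as $\lambda$-abstractions of $F(\epsilon)$ and the universal property determines such an abstraction uniquely.
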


\begin{proof} $(\Rightarrow)$ It follows from Proposition \ref{expisfullyprop}
that if a cartesian closed functor preserves
finite limits, then it preserves the internal product of every map.
Thus, the functor $F$ preserves the product
$\Pi_A(E,p)$ for every map $p:E\to A$ in $\mathcal{E}$.
Let us show more generally that 
$F$ preserves the internal product 
$\Pi_f(E,p)$ of every map $p:E\to A$ along every map $f:A\to B$.
The functor $F/B:\mathcal{E}/B\to \mathcal{E}'/F\! B$ induced by $F$
is locally cartesian closed, since the functor $F$ is locally cartesian closed.
Hence the functor $F/B$ preserves the internal product  $\Pi_{(A,f)}(E)$ by the first 
part of the proof. This proves the result, since $\Pi_f(E)=\Pi_{(A,f)}(E)$
by Remark \ref{rem:intprod}. The converse $(\Leftarrow)$ is left to the reader.
\end{proof}

 \medskip

\begin{prop} \label{basechangecartclosed} Let $\mathcal{E}$ be a locally cartesian closed category.
Then the base change functor 
$$f^\star:\mathcal{E}/B \to \mathcal{E}/A$$
is locally cartesian closed for every map $f:A\to B$. 
\end{prop}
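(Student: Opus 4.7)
The plan is to deduce the statement from the Beck--Chevalley law (Theorem \ref{thm:BC}) combined with the characterisation of locally cartesian closed functors in Proposition \ref{locallyclosedandproduct}. First I would verify that $f^\star$ preserves finite limits. This is automatic: the functor $f^\star:\mathcal{E}/B\to \mathcal{E}/A$ has a left adjoint $\Sigma_f = f_!$ given by post-composition with $f$, so it preserves all limits, in particular finite ones. By Proposition \ref{locallyclosedandproduct}, it then suffices to show that $f^\star$ preserves internal products along every map in $\mathcal{E}/B$.

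Next I would unwind what this preservation means. For every object $(Y,r)\in \mathcal{E}/B$ there is a canonical equivalence $(\mathcal{E}/B)/(Y,r)\simeq \mathcal{E}/Y$, and under these equivalences the base change functor $g^\star$ along a map $g\colon (X,q)\to (Y,r)$ in $\mathcal{E}/B$ coincides with the base change functor $g^\star\colon \mathcal{E}/Y\to \mathcal{E}/X$ computed in $\mathcal{E}$; hence so does its right adjoint $\Pi_g$. On the other hand, writing $v\colon A\times_B Y\to Y$ and $u\colon A\times_B X\to X$ for the second projections, the induced functor $(f^\star)_{(Y,r)}\colon (\mathcal{E}/B)/(Y,r)\to (\mathcal{E}/A)/f^\star(Y,r)$ corresponds, under these identifications, to the base change functor $v^\star\colon \mathcal{E}/Y\to \mathcal{E}/(A\times_B Y)$, and similarly for $X$ with $u^\star$.

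The claim that $f^\star$ preserves the internal product along $g$ then becomes the claim that the canonical map $v^\star\circ g_\star \to \bar g_\star \circ u^\star$ is an isomorphism, where $\bar g\colon A\times_B X\to A\times_B Y$ is the base change of $g$ along $f$. But this is precisely the right-hand Beck--Chevalley isomorphism of Theorem \ref{thm:BC} applied to the pullback square
\[
\xymatrix{
A\times_B X \ar[d]_{u}\ar[rr]^{\bar g}& & A\times_B Y \ar[d]^{v} \\
X \ar[rr]^{g}&& Y.
}
\]
Combining this with step one and Proposition \ref{locallyclosedandproduct} yields that $f^\star$ is locally cartesian closed.

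The main (minor) obstacle is bookkeeping: one has to make sure that the equivalence $(\mathcal{E}/B)/(Y,r)\simeq \mathcal{E}/Y$ is compatible with the formation of internal products and base change, so that the desired preservation statement in the slice categories translates cleanly into the Beck--Chevalley statement in $\mathcal{E}$ itself. Once that translation is made, no further computation is required.
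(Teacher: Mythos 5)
Your proposal is correct and follows exactly the route the paper takes: the paper's proof simply cites Theorem \ref{thm:BC} for the preservation of internal products and then concludes via Proposition \ref{locallyclosedandproduct}. Your write-up is just a careful unpacking of those two citations, including the slice-category bookkeeping that identifies preservation of $\Pi_g$ with the Beck--Chevalley isomorphism $v^\star g_\star \simeq \bar g_\star u^\star$, which the paper leaves implicit.
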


\begin{proof} That the functor $f^\star$ preserves internal
products follows from Theorem \ref{thm:BC}.
Hence the functor $f^\star$ is  locally cartesian closed 
by Proposition \ref{locallyclosedandproduct}.
\end{proof}

\begin{lemma} \label{yonedapreservesintproduct} Let $f:A\to B$
be a carrable map in a small category  $\mathcal{C}$.
Then the Yoneda functor $y:\mathcal{C}\to  \Hat{\mathcal{C}}$
preserves the internal product $\Pi_f(E,p)$ of a map $p:E\to A$ along $f$ when the
internal product exists. 
\end{lemma}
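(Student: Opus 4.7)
The plan is to verify the defining universal property of the internal product directly, by exhibiting a natural bijection between the appropriate hom-sets. Write $P = \Pi_f(E,p)$ with structure map $q:P \to B$ and evaluation $\epsilon: P \times_B A \to E$ in $\mathcal{C}/A$. The map $f$ is carrable, so the pullback $P \times_B A$ exists in $\mathcal{C}$; since the Yoneda functor $y$ preserves all limits that exist in $\mathcal{C}$, we have $y(P \times_B A) = yP \times_{yB} yA = (yf)^\star(yP,yq)$. Hence $y(\epsilon)$ is a candidate evaluation map $(yf)^\star(yP,yq) \to (yE, yp)$ in $\hat{\mathcal{C}}/yA$. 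We must show it is cofree with respect to $(yf)^\star: \hat{\mathcal{C}}/yB \to \hat{\mathcal{C}}/yA$.

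Concretely, for every $g:G\to yB$ in $\hat{\mathcal{C}}$ we need a natural bijection
\[
 \Phi_G: \hat{\mathcal{C}}/yB\bigl(G,\, yP\bigr) \;\xrightarrow{\;\sim\;}\; \hat{\mathcal{C}}/yA\bigl((yf)^\star G,\, yE\bigr),
\]
given by $u \mapsto y(\epsilon)\circ (yf)^\star(u)$. First verify this in the representable case $G = yC$. By Yoneda, a map $g: yC\to yB$ is the image of a unique $g:C\to B$ in $\mathcal{C}$, and full faithfulness of $y$ on slice categories (any map $yC\to yP$ over $yB$ comes from a unique map $C\to P$ over $B$, and similarly on the right) reduces the claim to the defining universal property of $\Pi_f(E,p)$ inside $\mathcal{C}$, which holds by hypothesis.

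Second, extend to arbitrary $G$ by density. Every object $g:G\to yB$ of $\hat{\mathcal{C}}/yB$ is a colimit of representables of the form $yC_i\to yB$, because $\hat{\mathcal{C}}/yB$ is equivalent to the presheaf category on the category of elements of $yB$ (which is $\mathcal{C}/B$). The right-hand functor $\hat{\mathcal{C}}/yA\bigl((yf)^\star(-),\, yE\bigr)$ sends colimits to limits because the base change $(yf)^\star$ is a left adjoint in $\hat{\mathcal{C}}$ (Proposition \ref{preshefloccartclosed} together with Proposition \ref{prop:cartesianclosedvsinternalprod}), hence preserves colimits; and the left-hand functor $\hat{\mathcal{C}}/yB(-,\, yP)$ sends colimits to limits trivially. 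Since the two functors $(\hat{\mathcal{C}}/yB)^{\mathrm{op}} \to \mathrm{Set}$ agree on representables and both send colimits to limits, the natural transformation $\Phi$ is an isomorphism for every $G$. This is precisely the statement that $(yP, y\epsilon)$ satisfies the defining property of $\Pi_{yf}(yE,yp)$, so $y(\Pi_f(E,p)) \cong \Pi_{yf}(yE,yp)$ canonically.

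The main obstacle is ensuring all the identifications in the representable case are genuinely natural in $g: yC\to yB$ (not merely pointwise), since the slice over $yB$ is itself a presheaf category only up to equivalence; handling this cleanly requires observing that the bijection $\hat{\mathcal{C}}/yB(yC, yP) \cong \mathcal{C}/B(C,P)$ is the one coming from the fully faithful embedding $y:\mathcal{C}/B \hookrightarrow \hat{\mathcal{C}}/yB$, and is therefore natural in $(C,g)\in \mathcal{C}/B$ by Yoneda.
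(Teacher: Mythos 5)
The paper's own ``proof'' of Lemma \ref{yonedapreservesintproduct} is simply ``Left to the reader,'' so there is no argument to compare yours against; I can only assess your proof on its own terms, and it is correct. Your strategy --- verify the cofreeness of $y(\epsilon)$ first on representables $yC\to yB$, where it reduces via full faithfulness of $y$ and preservation of the pullbacks $C\times_B A$ (which exist because $f$ is carrable) to the universal property of $\Pi_f(E,p)$ in $\mathcal{C}$, and then extend to all of $\Hat{\mathcal{C}}/yB$ by writing an arbitrary object as a colimit of representables --- is the standard one. The two points that carry the argument are both in place: $\Phi$ is defined globally as $u\mapsto y(\epsilon)\circ(yf)^\star(u)$, so only its invertibility needs to be propagated from the dense subcategory; and $(yf)^\star$ preserves colimits because $\Hat{\mathcal{C}}$ is locally cartesian closed (Propositions \ref{preshefloccartclosed} and \ref{prop:cartesianclosedvsinternalprod}), so both sides of $\Phi$ turn the colimit presentation into a limit, against which a levelwise isomorphism of cones is an isomorphism. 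Your closing remark about naturality in $(C,g)\in\mathcal{C}/B$ correctly identifies the only place where care is needed, and the resolution via the embedding $\mathcal{C}/B\hookrightarrow\Hat{\mathcal{C}}/yB$ is adequate.
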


\begin{proof} Left to the reader.
\end{proof}

\medskip

\begin{prop}\label{yonedacarclosed2} If $\mathcal{C}$ is a small locally cartesian closed category,
then the Yoneda functor $y:\mathcal{C}\to  \Hat{\mathcal{C}}$
is locally cartesian closed.
\end{prop}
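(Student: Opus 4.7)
The plan is to apply Proposition \ref{locallyclosedandproduct}, which reduces the problem to showing that $y$ preserves finite limits and preserves internal products along arbitrary maps. The first of these is immediate: the Yoneda functor $y:\mathcal{C}\to \Hat{\mathcal{C}}$ preserves all small limits that exist in $\mathcal{C}$, so in particular it preserves finite limits.

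For the second condition, let $f:A\to B$ and $p:E\to A$ be maps in $\mathcal{C}$. Since $\mathcal{C}$ is locally cartesian closed, Proposition \ref{prop:cartesianclosedvsinternalprod} guarantees that the base change functor $f^\star:\mathcal{C}/B\to \mathcal{C}/A$ has a right adjoint, so the internal product $\Pi_f(E,p)$ exists in $\mathcal{C}/B$. The category $\Hat{\mathcal{C}}$ is also locally cartesian closed by Proposition \ref{preshefloccartclosed}, so $\Pi_{y(f)}(y(E),y(p))$ exists in $\Hat{\mathcal{C}}/y(B)$. Now Lemma \ref{yonedapreservesintproduct} tells us that $y$ preserves the internal product $\Pi_f(E,p)$ whenever that internal product exists; hence the canonical comparison map
\[
y(\Pi_f(E,p))\longrightarrow \Pi_{y(f)}(y(E),y(p))
\]
is an isomorphism in $\Hat{\mathcal{C}}/y(B)$.

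We have verified that $y$ preserves finite limits and preserves internal products along every map. By Proposition \ref{locallyclosedandproduct}, this implies that $y$ is a locally cartesian closed functor. The main work has been absorbed into the earlier lemmas (Proposition \ref{locallyclosedandproduct} and Lemma \ref{yonedapreservesintproduct}); no further obstacle arises here, since there is nothing to check beyond invoking them.
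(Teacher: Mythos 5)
Your proof is correct and follows essentially the same route as the paper, which simply cites Lemma \ref{yonedapreservesintproduct}; you have merely made explicit the implicit reduction via Proposition \ref{locallyclosedandproduct} (preservation of finite limits plus preservation of internal products). No gaps.
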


\begin{proof} This follows from Lemma \ref{yonedapreservesintproduct}.
\end{proof}

 \subsection{Polynomial functors}

 Let $ \mathcal{E}$ be a locally cartesian closed category.
We shall say that a
 diagram in $ \mathcal{E}$ of the form
  $$\xymatrix{
 & E \ar[dl]_u  \ar[r]^p & B \ar[dr]^v & \\
 I && & J
 }$$
 is  {\it polynomial span} $P=(u,p,v):I\to J$.
  The {\it polynomial functor} $P(-):  \mathcal{E}/I \to \mathcal{E}/J$
  associated to $P$ is defined to be the composite:
 $$\xymatrix{
 &  \mathcal{E}/E  \ar[r]^{p_\star} &  \mathcal{E}/B \ar[dr]^{v_!} & \\
  \mathcal{E}/I \ar[ur]^{u^\star} && &  \mathcal{E}/J
 }$$
 We shall see that the composite of two polynomial functors $P(-):  \mathcal{E}/I \to \mathcal{E}/J$
 and $Q(-):  \mathcal{E}/J\to \mathcal{E}/K$ is a  polynomial functor $(Q\circ P)(-): \mathcal{E}/I \to \mathcal{E}/K$.

 \medskip

Recall that a functor $U : \mathcal{E}\to  \mathcal{F}$ 
induces a functor  $U/A: \mathcal{E}/A\to  \mathcal{F}/U(A)$
 for every object $A\in  \mathcal{E}$, where we put $(U/A)(E,p)=(U(E),U(p))$
 for $(E,p)\in \mathcal{E}/A$.

  \begin{lemma} \label{localleftadjoint} 
  If a functor $U:\mathcal{E}\to \mathcal{F}$
  has a left adjoint $F:\mathcal{F}\to \mathcal{E}$,
  then the functor $U/A: \mathcal{E}/A\to  \mathcal{F}/U(A)$
  has a left adjoint $F_A: \mathcal{F}/U(A) \to \mathcal{E}/A$
  for every object $A\in \mathcal{E}$.
 By construction,  $F_A(E,p)=(F(E),\epsilon_A F(p))$ for a map $p:E\to U(A)$, where
  $\epsilon_A:FU(A)\to A$ is the counit of the adjunction $F\vdash U$.
   \end{lemma}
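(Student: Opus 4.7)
The plan is to show directly that the proposed $F_A$ is a functor and to construct the adjunction bijection by transporting the bijection of the original adjunction $F \dashv U$ across the slice construction.

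First I would verify that $F_A(E,p) \defeq (F(E), \epsilon_A F(p))$ extends to a functor $\mathcal{F}/U(A) \to \mathcal{E}/A$. If $f: (E,p) \to (E',p')$ is a morphism in $\mathcal{F}/U(A)$, meaning $p' f = p$, then $F$ applied to this identity gives $F(p') F(f) = F(p)$, hence $\epsilon_A F(p') F(f) = \epsilon_A F(p)$, so $F(f): F(E) \to F(E')$ is a morphism in $\mathcal{E}/A$ from $F_A(E,p)$ to $F_A(E',p')$. Functoriality is immediate from functoriality of $F$.

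Next I would construct the natural bijection
$$\Hom_{\mathcal{E}/A}(F_A(E,p), (X,q)) \;\cong\; \Hom_{\mathcal{F}/U(A)}((E,p), (U/A)(X,q)).$$
A morphism on the left is a map $g: F(E) \to X$ in $\mathcal{E}$ satisfying $q g = \epsilon_A F(p)$, and a morphism on the right is a map $h: E \to U(X)$ in $\mathcal{F}$ satisfying $U(q) h = p$. I would take the bijection $g \mapsto h \defeq U(g) \eta_E$ furnished by the adjunction $F \dashv U$, and check that it restricts to a bijection between the two subsets of morphisms cut out by the slice conditions. In one direction, if $qg = \epsilon_A F(p)$, then $U(q) h = U(qg)\eta_E = U(\epsilon_A) U(F(p)) \eta_E = U(\epsilon_A) \eta_{U(A)} p = p$ using naturality of $\eta$ and the triangle identity $U(\epsilon_A)\eta_{U(A)} = 1_{U(A)}$. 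Conversely, if $U(q) h = p$, then both $qg$ and $\epsilon_A F(p)$ are morphisms $F(E) \to A$ whose adjoint transposes under $F \dashv U$ equal $p$ (the second by the same triangle identity computation), so $qg = \epsilon_A F(p)$ by the bijectivity of the adjunction correspondence. Naturality of the resulting bijection in $(E,p)$ and $(X,q)$ is inherited from naturality of the original adjunction bijection.

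There isn't really a hard step here; the whole argument is a standard slice-category transport of an adjunction. The only point requiring any care is the compatibility of the adjunction bijection with the slice constraints, which reduces to a single use of the triangle identity $U(\epsilon_A) \eta_{U(A)} = 1_{U(A)}$ combined with naturality of $\eta$.
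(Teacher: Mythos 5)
Your proof is correct; the paper itself leaves this lemma to the reader, so there is no argument of the author's to compare against. Your construction — checking functoriality of $F_A$ and showing that the adjunction bijection $g\mapsto U(g)\eta_E$ restricts to the slice hom-sets via naturality of $\eta$ and the triangle identity $U(\epsilon_A)\eta_{U(A)}=1_{U(A)}$ — is exactly the standard argument the author evidently had in mind, and it is complete.
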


\begin{proof} Left to the reader.
\end{proof}

Let $f:A\to B$ be a map in a locally cartesian closed category $ \mathcal{E}$.
The composite of the functors
$$\xymatrix{
\mathcal{E}/A\ar[rr]^{\Sigma_f} && \mathcal{E}/B \ar[rr]^{\Pi_B} &&  \mathcal{E}
}$$
takes the terminal object $(A,1_A)\in \mathcal{E}/A$
to the object $\Pi_B(A):=\Pi_B(A,f)$. It thus induces a functor
$$\Pi_{B/A}:\mathcal{E}/A\to \mathcal{E}/\Pi_B(A)$$
 if we put $\Pi_{B/A}(E,p)=(\Pi_B(E,fp),\Pi_B(p))$
 for an object $(E,p)\in \mathcal{E}/A$.

 \begin{lemma}\label{choice1}
Let $f:A\to B$ be map in a locally cartesian closed category $ \mathcal{E}$.
Then the functor $\Pi_{B/A}:\mathcal{E}/A \to \mathcal{E}/\Pi_B(A)$
defined above
is isomorphic to the composite
$$\xymatrix{
  \mathcal{E}/A \ar[rr]^(0.4){\epsilon^\star} && \mathcal{E}/ \Pi_B(A)\! \times \! B \ar[rr]^{\Pi_p} && \mathcal{E}/ \Pi_B(A)
 }$$
where $\epsilon :\Pi_B(A)\times B\to A$ is the evaluation and $p: \Pi_B(A) \times B\to  \Pi_B(A)$
is the projection.
\end{lemma}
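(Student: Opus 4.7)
The plan is to establish the asserted isomorphism by a Yoneda argument: for every test object $(X,r)\in \mathcal{E}/\Pi_B(A)$, I will exhibit a natural bijection between $\Hom_{\mathcal{E}/\Pi_B(A)}((X,r),\Pi_{B/A}(E,q))$ and $\Hom_{\mathcal{E}/\Pi_B(A)}((X,r),\Pi_p\epsilon^\star(E,q))$ by showing that both classify exactly the same set of ``twisted'' maps $\phi:X\times B\to E$, namely those satisfying $q\phi=\epsilon\circ(r\times B)$. Here and below I rename the structure map of $(E,p)\in\mathcal{E}/A$ as $q$ to avoid clash with the projection $p:\Pi_B(A)\times B\to \Pi_B(A)$.

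First I unfold the left-hand side. A morphism $(X,r)\to \Pi_{B/A}(E,q)=(\Pi_B(E,fq),\Pi_B(q))$ in $\mathcal{E}/\Pi_B(A)$ is a map $h:X\to \Pi_B(E,fq)$ in $\mathcal{E}$ with $\Pi_B(q)\circ h=r$. The adjunction $e_B\dashv \Pi_B$ converts $h$ into a map $\phi:X\times B\to E$ in $\mathcal{E}/B$, and by naturality of this adjunction the composite $\Pi_B(q)\circ h$ corresponds to $q\circ\phi$. Since the counit of $e_B\dashv\Pi_B$ at $(A,f)$ is the given evaluation $\epsilon$, the map $r:X\to \Pi_B(A)$ itself corresponds to $\epsilon\circ(r\times B):X\times B\to A$. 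Hence the constraint on $h$ translates exactly to $q\phi=\epsilon\circ(r\times B)$, and the auxiliary ``over $B$'' requirement $fq\phi=p_2$ on $\phi$ is automatic from this equation together with the identity $f\epsilon=p_2$.

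Next I unfold the right-hand side. The adjunction $p^\star\dashv \Pi_p$ identifies a morphism $(X,r)\to \Pi_p\epsilon^\star(E,q)$ in $\mathcal{E}/\Pi_B(A)$ with a morphism $p^\star(X,r)=(X\times B,\,r\times B)\to \epsilon^\star(E,q)$ in $\mathcal{E}/(\Pi_B(A)\times B)$. The universal property of the pullback square defining $\epsilon^\star(E,q)=((\Pi_B(A)\times B)\times_A E,\,p_1)$ then identifies such a morphism with a map $\phi:X\times B\to E$ satisfying $q\phi=\epsilon\circ(r\times B)$. This is precisely the same condition obtained in the previous paragraph, and naturality in both $(X,r)$ and $(E,q)$ is immediate from the two adjunctions and the pullback universal property. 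By Yoneda, the functors $\Pi_{B/A}$ and $\Pi_p\circ\epsilon^\star$ are naturally isomorphic.

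The main care-point is the bookkeeping of the several slice structures and the explicit identification of the counit of $e_B\dashv\Pi_B$ at $(A,f)$ with the given evaluation $\epsilon:\Pi_B(A)\times B\to A$; once this is in place, the crucial small observation that $\epsilon$ is itself a morphism of $\mathcal{E}/B$ (so $f\epsilon=p_2$) makes the ``over $B$'' constraint on $\phi$ drop out, forcing both calculations to land on the same equation $q\phi=\epsilon\circ(r\times B)$.
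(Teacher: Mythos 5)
Your argument is correct, but it takes a different route from the paper's. The paper proves the lemma entirely on the left-adjoint side: it invokes Lemma \ref{localleftadjoint} to see that $\Pi_{B/A}$ (being the functor induced on slices by the right adjoint $\Pi_B$ of $e_B$) has a left adjoint $F_A$ given explicitly by $F_A(C,g)=(e_B(C),\epsilon\, e_B(g))$, observes that this is exactly $\epsilon_!\circ p^\star$, and then concludes $\Pi_{B/A}\cong \Pi_p\circ\epsilon^\star$ by uniqueness of right adjoints, since the right adjoint of $\epsilon_!\,p^\star$ is $p_\star\,\epsilon^\star$. You instead work directly on the right-adjoint side, computing $\Hom((X,r),-)$ into both functors and matching the results as the set of maps $\phi:X\times B\to E$ with $q\phi=\epsilon\circ(r\times B)$. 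The underlying content is the same in both cases — everything hinges on identifying the counit of $e_B\dashv\Pi_B$ at $(A,f)$ with the evaluation $\epsilon$, which is also where your observation $f\epsilon=p_2$ (making the ``over $B$'' constraint automatic) enters. What the paper's route buys is brevity, since Lemma \ref{localleftadjoint} has already packaged the slice-of-an-adjunction bookkeeping and the composition of adjoints is formal; what your route buys is self-containedness and an explicit description of the natural bijection, at the cost of redoing by hand the hom-set manipulations that the paper delegates to that lemma. Both are complete proofs.
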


\begin{proof} 
Let us apply Lemma \ref{localleftadjoint} to the functors  $\Pi_B: \mathcal{E}/B\to \mathcal{E}$
and to the object $(A,f) \in \mathcal{E}/B$.
The functor $\Pi_B$ is right adjoint to the base change
functor  $F:=e_B:  \mathcal{E} \to \mathcal{E}/B$.
Hence the induced functor $\Pi_{B/A}: \mathcal{E}/A\to \mathcal{E}/ \Pi_B(A)$
 has a left adjoint $F_{A}$ by Lemma \ref{localleftadjoint}.
  By construction, 
 $F_A(C,g)=(e_B(C), \epsilon e_B(g))$ for a map $g:C\to \Pi_B(A,f)$,
 where $\epsilon:\Pi_B(A)\times B\to A$ is the evaluation.
  But we have $(e_B(C), e_B(g))=(C\times B, g\times B)=p^\star(C,g)$,
where $p:\Pi_B(A)\times B\to \Pi_B(A)$ is the projection.
$$\xymatrix{
 C \ar[d]_{g} & &C\times B \ar@{=}[rr]  \ar[d]^{g\times B}\ar[ll]&&  F_A(C,q) \ar[d]_{}\\
 \Pi_B(A) && \ar[rr]^(0.6){\epsilon} \Pi_B(A)\times B \ar[ll]_{p} &&  A 
}$$
Thus, $F_A=\epsilon_! p^\star$. It follows by
adjointness that $\Pi_{B/A}=p_\star \epsilon^\star$.
\end{proof}

 \begin{lemma}\label{choice2}
 Let $f:A\to B$ be a map in locally cartesian closed category $ \mathcal{E}$.
 Then the functor $\Pi_B\circ  \Sigma_f: \mathcal{E}/A\to  \mathcal{E}$
 is isomorphic to the composite 
 $$\xymatrix{
  \mathcal{E}/A \ar[rr]^(0.4){\epsilon^\star} && \mathcal{E}/ \Pi_B(A)\! \times \! B \ar[rr]^{p_\star} && \mathcal{E}/ \Pi_B(A,f) 
   \ar[rr]^{\Sigma_{\Pi_B(A)}} && \mathcal{E}
 }$$
\end{lemma}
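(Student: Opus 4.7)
The plan is to reduce this statement to Lemma \ref{choice1} plus an elementary observation about how $\Pi_B$ interacts with the forgetful functor $\Sigma_{\Pi_B(A)}:\mathcal{E}/\Pi_B(A)\to \mathcal{E}$. By Lemma \ref{choice1}, the functor $\Pi_{B/A}:\mathcal{E}/A\to \mathcal{E}/\Pi_B(A)$ is already known to be isomorphic to $p_\star\circ\epsilon^\star$. Since the composite displayed in the lemma is obtained by postcomposing $p_\star\epsilon^\star$ with $\Sigma_{\Pi_B(A)}$, it suffices to show that
\[
\Sigma_{\Pi_B(A)}\circ \Pi_{B/A}\;\simeq\;\Pi_B\circ \Sigma_f
\]
as functors $\mathcal{E}/A\to\mathcal{E}$.

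First I would unfold the two sides on an object $(E,q)\in\mathcal{E}/A$. On the right, $\Sigma_f(E,q)=(E,fq)\in \mathcal{E}/B$, so $\Pi_B(\Sigma_f(E,q))=\Pi_B(E,fq)$. On the left, by the very definition of $\Pi_{B/A}$ introduced just before Lemma \ref{choice1}, one has $\Pi_{B/A}(E,q)=(\Pi_B(E,fq),\Pi_B(q))$, where $q$ is viewed as a map $(E,fq)\to (A,f)$ in $\mathcal{E}/B$ and $\Pi_B(q):\Pi_B(E,fq)\to \Pi_B(A,f)=\Pi_B(A)$. Applying the forgetful functor $\Sigma_{\Pi_B(A)}$ discards the structural map and returns $\Pi_B(E,fq)$, which matches the right-hand side on the nose. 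Naturality in $(E,q)$ is immediate from the naturality of $\Pi_B$.

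Combining this identification with Lemma \ref{choice1}, we get the chain of natural isomorphisms
\[
\Pi_B\circ \Sigma_f\;\simeq\;\Sigma_{\Pi_B(A)}\circ \Pi_{B/A}\;\simeq\;\Sigma_{\Pi_B(A)}\circ p_\star\circ \epsilon^\star,
\]
which is exactly the claim.

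I do not expect any significant obstacle: the statement is really a bookkeeping identity, and the only nontrivial ingredient, namely the formula $\Pi_{B/A}\simeq p_\star\epsilon^\star$, is supplied by the preceding lemma. The only point one has to be a bit careful about is the convention that $\Pi_B(A)$ abbreviates $\Pi_B(A,f)$ and hence that $\Pi_B(E,fq)$ lives naturally over $\Pi_B(A)$ via $\Pi_B(q)$; once this is spelled out, the commutation with $\Sigma_{\Pi_B(A)}$ is tautological.
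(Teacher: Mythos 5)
Your proof is correct and follows the same route as the paper: both reduce the claim to Lemma \ref{choice1} together with the observation that $\Sigma_{\Pi_B(A)}\circ\Pi_{B/A}\simeq\Pi_B\circ\Sigma_f$, which holds by the definition of $\Pi_{B/A}$. The paper simply records this as a commuting square of functors, whereas you spell out the object-level unfolding; the content is identical.
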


\begin{proof} The following square of functors commutes by definition of the functor $\Pi_{B/A}$,
$$\xymatrix{
 \mathcal{E}/A  \ar[d]_{\Sigma_f}  \ar[rr]^(0.5){\Pi_{B/A}}& &    \mathcal{E}/\Pi_B(A,f) \ar[d]^{\Sigma_{\Pi_B(A,f)}} \\
 \mathcal{E}/B  \ar[rr]^(0.5){\Pi_B} &&\mathcal{E}
}$$
The result then follows from Lemma \ref{choice1}.
\end{proof}

 \begin{prop}\label{prop:comppoly}
  The composite of two polynomial functors is a polynomial functor.
  \end{prop}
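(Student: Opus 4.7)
Write the two polynomial spans as
$P = (u,p,v)\colon I\to J$ with $I \xleftarrow{u} E \xrightarrow{p} B \xrightarrow{v} J$
and $Q = (u',q,v')\colon J\to K$ with $J \xleftarrow{u'} F \xrightarrow{q} C \xrightarrow{v'} K$. By definition their composite functor is
$$ Q(P(-)) \;=\; v'_!\circ q_\star\circ (u')^\star\circ v_!\circ p_\star\circ u^\star\colon \mathcal{E}/I\to \mathcal{E}/K.$$
The strategy is to bring the two adjacent ``wrong-way'' pairs $(u')^\star\circ v_!$ and then $q_\star\circ (-)_!$ into the standard order $v_!\circ p_\star\circ u^\star$ by two applications of Beck--Chevalley (Theorem \ref{thm:BC}) sandwiched around one use of the distributivity formula of Lemma \ref{choice2}.

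First I would form the pullback $E' := B\times_J F$ with projections $\pi_B\colon E'\to B$ and $\pi_F\colon E'\to F$. By Theorem \ref{thm:BC}, $(u')^\star\circ v_!\simeq (\pi_F)_!\circ \pi_B^\star$, so the composite becomes $v'_!\circ q_\star\circ (\pi_F)_!\circ \pi_B^\star\circ p_\star\circ u^\star$. Next I would apply Lemma \ref{choice2} \emph{inside the slice} $\mathcal{E}/C$: the map $q\colon F\to C$ plays the role of $B\to 1$ and $\pi_F\colon E'\to F$ the role of $f\colon A\to B$, so
$$ q_\star\circ (\pi_F)_! \;\simeq\; w_!\circ r_\star\circ \epsilon^\star,$$
where $G := \Pi_q(E',\pi_F)$ is the relative internal product with structure map $w\colon G\to C$, $r\colon G\times_C F\to G$ is the projection, and $\epsilon\colon G\times_C F\to E'$ is the evaluation (a morphism in $\mathcal{E}/F$). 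Setting $D := G\times_C F$ and $\bar{s} := \pi_B\circ \epsilon\colon D\to B$, the composite is now $v'_!\circ w_!\circ r_\star\circ \bar{s}^\star\circ p_\star\circ u^\star$.

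One more Beck--Chevalley finishes the job. Form the pullback $D' := D\times_B E$ with projections $\bar{p}\colon D'\to D$ and $\bar{s}'\colon D'\to E$; then $\bar{s}^\star\circ p_\star\simeq \bar{p}_\star\circ \bar{s}'^\star$. Substituting and collapsing the composites of left adjoints and of base changes gives
$$ Q(P(-)) \;\simeq\; (v'w)_!\circ (r\bar{p})_\star\circ (u\bar{s}')^\star,$$
which is exactly the polynomial functor of the span $I \xleftarrow{u\bar{s}'} D' \xrightarrow{r\bar{p}} G \xrightarrow{v'w} K$.

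The main obstacle is the middle step: justifying that Lemma \ref{choice2}, which was stated for $\Pi_B\circ \Sigma_f$ with target $\mathcal{E}$, really does apply to $q_\star\circ(\pi_F)_!$ with target $\mathcal{E}/C$. I would handle this by observing that $\mathcal{E}/C$ is itself locally cartesian closed (this is immediate from the definition) and that, via the canonical equivalence $(\mathcal{E}/C)/(X,\alpha)\simeq \mathcal{E}/X$, the functors $\Sigma$, base change and internal product computed in $\mathcal{E}/C$ agree with those computed in $\mathcal{E}$; so Lemma \ref{choice2} applied verbatim in $\mathcal{E}/C$ yields the displayed distributivity isomorphism. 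The two Beck--Chevalley steps and the final regrouping are then purely formal.
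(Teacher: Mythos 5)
Your proof is correct and follows essentially the same route as the paper: one Beck--Chevalley exchange, then Lemma \ref{choice2} applied in the slice over the codomain of $q$ (which is also how the paper uses it, via the identification $\Pi_f=\Pi_{(A,f)}$ of Remark \ref{rem:intprod}), then Beck--Chevalley again, followed by collapsing adjacent functors of the same type. The only difference is bookkeeping: the paper commutes the pulled-back left leg of $Q$ past $\Pi_p$ \emph{before} invoking distributivity and so uses two pullback squares against $\Pi$, whereas you compose the evaluation with the projection to the codomain of $p$ first and get away with a single such exchange --- a harmless and slightly tidier variant.
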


  \begin{proof} 
  Let $P=(s,p,t):I\to J$ and $Q=(u,q,v):J\to K$ be two polynomial spans.
  \begin{equation} \label{pospans}
  \xymatrix{
 & E \ar[dl]_s  \ar[r]^p & A \ar[dr]^t & & F \ar[dl]_{u} \ar[r]^{q} & B \ar[dr]^{v} & \\
 I && & J &&&  K
 }
  \end{equation}
We shall prove that the composite of the following sequence of functors is a polynomial functor:
$$\xymatrix{
 & \mathcal{E}/E  \ar[r]^{\Pi_p} & \mathcal{E}/A \ar[dr]^{\Sigma_t} & & \mathcal{E}/F \ar[r]^{\Pi_q} & \mathcal{E}/B \ar[dr]^{\Sigma_v} & \\
 \mathcal{E}/I \ar[ur]^{s^\star}  && & \mathcal{E}/J  \ar[ru]^{u^\star} &&&  \mathcal{E}/K
 }$$
For this, we shall construct the following diagram of functors, with three squares and one pentagon.
  \begin{equation} \label{pospans2}
 \xymatrix{
 &&& \mathcal{E}/R \ar[r]^\Pi &\mathcal{E}/C   \ar[r]^\Pi & \mathcal{E}/D  \ar[dd]^\Sigma  \\
 &&\mathcal{E}/E' \ar[r]^{\Pi}  \ar[ur]^{\star} &\mathcal{E}/A' \ar[dr]^{\Sigma} \ar[ur]^{\star}&&& \\
 & \mathcal{E}/E \ar[ur]^\star \ar[r]^{\Pi_p} & \mathcal{E}/A \ar[dr]^{\Sigma_t} \ar[ur]^\star & & \mathcal{E}/F \ar[r]^{\Pi_q} & \mathcal{E}/B \ar[dr]^{\Sigma_v} & \\
 \mathcal{E}/I \ar[ur]^{s^\star}  && & \mathcal{E}/J  \ar[ru]^{u^\star} &&&  \mathcal{E}/K
 }  \end{equation}
In this diagram, the arrows marked $\star$ are pullback functors, the arrows marked $\Sigma$ are 
sommation functors and the arrows marked $\Pi$ are  internal product functors.
The first step in constructing the diagram (\ref{pospans2}) is to complete the diagram \ref{pospans}
with two pullback squares, $A'=A\times_J F$ and $E'=E\times_A  A'$,
and to use Beck-Chevalley law \ref{thm:BC}:
$$\xymatrix{
  & & E' \ar[dl] \ar[r]^{p'} & A' \ar[dr]^{t'}  \ar[dl]&  && \\
 & E \ar[dl]_s  \ar[r]^p & A \ar[dr]^t & & F \ar[dl]_{u} \ar[r]^{q} & B \ar[dr]^{v} & \\
 I && & J &&&  K
 }$$
The second step is to add the pentagon by using Lemma \ref{choice2} applied to the map $t':A'\to F$ in the category $ \mathcal{E}/B$.
$$\xymatrix{
  & & &  & \ar[dl] \ar[r] C & \ar[dd] D\\
  & &E'\ar[dl] \ar[r]^{p'} & A'\ar[dr]^{t'}  \ar[dl]&  && \\
 & E \ar[dl]_s  \ar[r]^p & A \ar[dr]^t & & F \ar[dl]_{u} \ar[r]^{q} & B \ar[dr]^{v} & \\
 I && & J &&&  K
 }$$
By construction, $C=\Pi_{q}(A',t')\times_B F $ and $D=\Pi_{q}(A',t')$.
The third step is to complete the last diagram by adding a third pullback square with $R=E'\times_{A'}C$
and to use Beck-Chevalley law \ref{thm:BC}:
$$\xymatrix{
  & & &\ar[r] R  \ar[ld] & \ar[dl] \ar[r] C & \ar[dd] D\\
  & &E'\ar[dl] \ar[r] & A'\ar[dr]^{t'}  \ar[dl]&  && \\
 & E \ar[dl]_s  \ar[r]^p & A \ar[dr]^t & & F \ar[dl]_{u} \ar[r]^{q} & B \ar[dr]^{v} & \\
 I && & J &&&  K
 }$$
 The final step consists in shrinking the diagram (\ref{pospans2}) into  the simpler diagram 
  \begin{equation} \xymatrix{
 &  \mathcal{E}/R  \ar[r]^{\Pi} &  \mathcal{E}/D \ar[dr]^{\Sigma} & \\
  \mathcal{E}/I \ar[ur]^{\star} && &  \mathcal{E}/K
 }
  \end{equation}
 by using the fact that a composite of base change functors
 is a base change functor, and similarly for a composite of summation functors
 and a composite of internal product functors.
   \end{proof}

\newpage

\subsection{$\pi$-clans}

Recall that every fibration in a clan is carrable.

\bigskip

\begin{defi}\label{definitiontribe}
We say that a clan $\mathcal{E}$ is a $\pi$-{\bf clan}, if every fibration $p:E\to A$
has an internal product $\Pi_f(E,p)$ along every fibration $f:A\to B$ and the structure map $\Pi_f(E)\to B$ is a fibration.
  \end{defi}

 By definition,  the evaluation $\epsilon:\Pi_f(E,p)\times_B A \to E$
is cofree with respect to the functor $f^\star: \mathcal{E}/B \to  \mathcal{E}/A$.
More explicitly, for every object $C=(C,g)\in \mathcal{E}/B$
and every map $u:C\times_B A \to E$ in $\mathcal{E}/A$,
there exists a unique map $v:C\to \Pi_f (E)$ in $\mathcal{E}/B$
such that $\epsilon (v\times_B A)=u$.
 $$\xymatrix{
C\times_B A\ar[rr]^{v\times_B A} \ar[drr]_u  & &  \Pi_f(E) \times_B A\ar[d]^{\epsilon}\\
&& E.
}$$
We shall denote the map $v$ by $\lambda^A(u)$. 
It follows from Definition \ref{definitiontribe} applied to the map $A\to 1$
that every fibration $p:E\to A$ 
has an internal product $\Pi_A(E,p)\in \mathcal{E}$.

\medskip

\begin{rem} It follows from the definition of the internal product   $\Pi_f(E,p)$
that the evaluation $\epsilon:\Pi_f(E,p)\times_B A \to E$
is cofree with respect to the functor $f^\star: \mathcal{E}(B) \to  \mathcal{E}(A)$,
since it is cofree with respect to the functor $f^\star: \mathcal{E}/B \to  \mathcal{E}/A$.
The former condition is weaker.
\end{rem}

Examples of $\pi$-clans:
 
\begin{itemize}
    \item{} A locally cartesian closed category, where every map is a fibration;  
   \item{} The category of small groupoids $\mathbf{Grpd}$, where
  a fibration is an iso-fibration;  
\item{} The category of Kan complexes {\bf Kan}, where a fibration is a Kan fibration.
\end{itemize}

 \begin{prop} \label{pitribecartclosed} If $f:A\to B$ is a fibration in a $\pi$-clan $\mathcal{E}$, then
 the base change functor $f^\star: \mathcal{E}(B)  \to  \mathcal{E}(A)$
has a right adjoint $\Pi_f: \mathcal{E}(A) \to  \mathcal{E}(B)$.
\end{prop}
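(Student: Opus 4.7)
The plan is to read off the adjunction directly from the existence of internal products postulated by Definition \ref{definitiontribe}. First I would define $\Pi_f$ on objects: for $(E,p) \in \mathcal{E}(A)$, the $\pi$-clan axiom hands us an object $\Pi_f(E,p)$ together with an evaluation $\epsilon_E : f^\star(\Pi_f(E,p)) \to (E,p)$ in $\mathcal{E}/A$, cofree with respect to $f^\star:\mathcal{E}/B \to \mathcal{E}/A$; moreover, the structure map $\Pi_f(E,p)\to B$ is a fibration, so $\Pi_f(E,p) \in \mathcal{E}(B)$.

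Next I would extend $\Pi_f$ to a functor. Given a morphism $u:(E,p)\to (E',p')$ in $\mathcal{E}(A)$, the composite $u\circ \epsilon_E : f^\star \Pi_f(E,p)\to (E',p')$ is a map in $\mathcal{E}/A$, so the universal property of $\Pi_f(E',p')$ yields a unique map $\Pi_f(u):\Pi_f(E,p)\to \Pi_f(E',p')$ in $\mathcal{E}/B$ with $\epsilon_{E'}\circ f^\star \Pi_f(u) = u\circ \epsilon_E$; since the codomain lies in $\mathcal{E}(B)$ and $\mathcal{E}(B)\subseteq \mathcal{E}/B$ is full, this is a map in $\mathcal{E}(B)$. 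Functoriality ($\Pi_f(vu)=\Pi_f(v)\Pi_f(u)$ and $\Pi_f(1)=1$) follows from the uniqueness clause of the universal property applied to $vu\circ \epsilon_E$.

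Finally I would establish the adjunction $f^\star \dashv \Pi_f$ between $\mathcal{E}(B)$ and $\mathcal{E}(A)$. For $C=(C,g)\in \mathcal{E}(B)$ and $(E,p)\in \mathcal{E}(A)$, the map
\[
\text{Hom}_{\mathcal{E}(B)}(C,\Pi_f(E,p))\ \longrightarrow\ \text{Hom}_{\mathcal{E}(A)}(f^\star C,(E,p)),\qquad v\ \longmapsto\ \epsilon_E\circ f^\star(v)
\]
is a bijection: surjectivity and injectivity are the existence and uniqueness clauses of the universal property of the internal product $\Pi_f(E,p)$, which by Definition \ref{def:internalproductalon} holds for \emph{all} objects $(C,g)\in \mathcal{E}/B$, and hence in particular for those with $g$ a fibration. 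Naturality in both variables again follows from the uniqueness of factorisations through $\epsilon_E$.

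Essentially nothing needs to be proved beyond unwinding definitions: the non-trivial content is already packaged into the $\pi$-clan axiom, which ensures both that $\Pi_f(E,p)$ exists and that it lives in $\mathcal{E}(B)$. The only point requiring care is the observation noted in the remark preceding the proposition: cofreeness with respect to $f^\star:\mathcal{E}/B\to \mathcal{E}/A$ is strictly stronger than cofreeness with respect to the restriction $f^\star:\mathcal{E}(B)\to \mathcal{E}(A)$, and it is the stronger statement that is assumed. Thus the adjunction on the subcategories is automatic; there is no genuine obstacle.
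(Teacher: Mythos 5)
Your proposal is correct and follows the same route as the paper: the $\pi$-clan axiom puts $\Pi_f(E,p)$ in $\mathcal{E}(B)$ because its structure map is a fibration, and cofreeness with respect to $f^\star:\mathcal{E}/B\to\mathcal{E}/A$ restricts (since $\mathcal{E}(B)$ is a full subcategory of $\mathcal{E}/B$) to cofreeness with respect to $f^\star:\mathcal{E}(B)\to\mathcal{E}(A)$, which yields the adjunction. The paper's proof is just a terser version of yours, leaving the functoriality and naturality checks implicit in the general "cofree objects give a right adjoint" principle.
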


  \begin{proof} If $(E,p)\in  \mathcal{E}(A)$, then $\Pi_f(E,p)\in   \mathcal{E}(B)$,
  since the structure map  $\Pi_f(E,p)\to B$ is a fibration.
   The evaluation $\epsilon:\Pi_f(E,p)\times_B A \to E$
is cofree with respect to the functor  $f^\star: \mathcal{E}(B) \to  \mathcal{E}(A)$,
  since it is cofree with respect to the functor $f^\star: \mathcal{E}/B \to  \mathcal{E}/A$.
  Hence the functor  $\Pi_f: \mathcal{E}(A) \to  \mathcal{E}(B)$
  is right adjoint to the functor $f^\star: \mathcal{E}(B) \to  \mathcal{E}(A)$.
   \end{proof}

\begin{prop}\label{localisationpitribe}
If $\mathcal{E}$  is a $\pi$-clan, then so is the clan $\mathcal{E}(B)$ for every object $B\in \mathcal{E}$.
If $p:E\to A$ and $f:A\to B$ are fibrations in $\mathcal{E}$, then  
$$\Pi_f(E,p)=\Pi_{(A,f)}(E,p)$$
where $\Pi_{(A,f)}(E,p)$  denotes the internal product of the
fibration $p:(E,fp)\to (A,f)$ in  $ \mathcal{E}(B)$.
\end{prop}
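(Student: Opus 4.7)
The plan is to reduce the verification directly to the $\pi$-clan structure on $\mathcal{E}$ by identifying the relevant slice categories and base-change functors.

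For the first claim, fix $(X,t)\in \mathcal{E}(B)$. The forgetful functor
$$U_{(X,t)}:\mathcal{E}(B)/(X,t)\longrightarrow \mathcal{E}/X,\qquad \bigl((Z,q),\;g\colon Z\to X\bigr)\longmapsto (Z,g),$$
is an isomorphism of categories (with inverse sending $(Z,g)$ to $((Z,tg),g)$), since the datum $q$ is forced to equal $tg$. I would then verify, in the spirit of Lemma \ref{compositionelementary1}, that for a fibration $\bar f:(A,s)\twoheadrightarrow (X,t)$ in $\mathcal{E}(B)$, the base-change functor $\bar f^\star:\mathcal{E}(B)/(X,t)\to \mathcal{E}(B)/(A,s)$ is carried by these isomorphisms to the base-change functor $f^\star:\mathcal{E}/X\to \mathcal{E}/A$ of $\mathcal{E}$; this is a routine pullback calculation.

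Now consider fibrations $\bar p:(E,r)\twoheadrightarrow (A,s)$ and $\bar f:(A,s)\twoheadrightarrow (X,t)$ in $\mathcal{E}(B)$: the underlying maps $p:E\twoheadrightarrow A$ and $f:A\twoheadrightarrow X$ are fibrations of $\mathcal{E}$. Since $\mathcal{E}$ is a $\pi$-clan, $\Pi_f(E,p)$ exists in $\mathcal{E}/X$ with structure map $\Pi_f(E)\twoheadrightarrow X$ a fibration in $\mathcal{E}$, and the evaluation $\epsilon$ is cofree with respect to $f^\star:\mathcal{E}/X\to \mathcal{E}/A$. Transporting through the identifications just established, $\Pi_f(E,p)$, endowed with the composite structure map $\Pi_f(E)\twoheadrightarrow X\twoheadrightarrow B$ (itself a fibration of $\mathcal{E}$, since $t$ is a fibration), is an object of $\mathcal{E}(B)$ whose evaluation is cofree with respect to $\bar f^\star$; its structural fibration over $(X,t)$ in $\mathcal{E}(B)$ has underlying map $\Pi_f(E)\twoheadrightarrow X$, a fibration in $\mathcal{E}$ as required. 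This both produces $\Pi_{\bar f}(\bar p)$ and shows its structure map is a fibration of $\mathcal{E}(B)$, so $\mathcal{E}(B)$ is a $\pi$-clan.

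The second assertion is then the specialisation of this construction to $(X,t)=(B,1_B)$, the terminal object of $\mathcal{E}(B)$, and $\bar f$ the canonical fibration $(A,f)\twoheadrightarrow (B,1_B)$: the recipe above yields $\Pi_{(A,f)}(E,p)=\Pi_f(E,p)$. The only genuinely delicate step is the base-change identification, but it follows from the pasting lemma for cartesian squares together with the fact that all the pullbacks in question already live in $\mathcal{E}$; beyond this, no new limits are needed.
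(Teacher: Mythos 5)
The paper gives no proof of this proposition (it is ``Left to the reader''), so I can only assess your argument on its own terms. Your overall strategy --- reduce everything to the $\pi$-clan structure of $\mathcal{E}$ by identifying slices of $\mathcal{E}(B)$ with slices of $\mathcal{E}$, in the spirit of Lemma \ref{compositebasechangetribecor2}, Lemma \ref{compositionelementary1} and Remark \ref{rem:intprod} --- is the natural and surely the intended one, and the second assertion does follow by specialising to $(X,t)=(B,1_B)$ as you say.

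There is, however, one step that fails as stated: the claim that $U_{(X,t)}:\mathcal{E}(B)/(X,t)\to\mathcal{E}/X$ is an isomorphism of categories. Your proposed inverse sends $(Z,g)$ to $((Z,tg),g)$, but $(Z,tg)$ is an object of $\mathcal{E}(B)$ only when the composite $tg:Z\to B$ is a fibration, which is not automatic for an arbitrary map $g:Z\to X$ in $\mathcal{E}$. What is true is that $U_{(X,t)}$ is fully faithful (the condition $q=tg$ is indeed forced, and a morphism over $X$ automatically commutes with the structure maps to $B$), with essential image the full subcategory of $\mathcal{E}/X$ spanned by those $(Z,g)$ for which $tg$ is a fibration. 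Fortunately your argument only uses the harmless direction: the cofreeness of the evaluation of $\Pi_f(E,p)$ holds against \emph{all} of $\mathcal{E}/X$, hence a fortiori against this full subcategory, and $\Pi_f(E)$ itself lies in the subcategory because its structure map to $X$ and the fibration $t$ compose to a fibration over $B$. So the universal property required of $\Pi_{\bar f}(\bar p)$ in $\mathcal{E}(B)$ --- which, as the remark following Definition \ref{definitiontribe} emphasises, is tested only against the (smaller) slices of $\mathcal{E}(B)$ and is therefore weaker than the one enjoyed in $\mathcal{E}$ --- does transfer. You should rewrite the first paragraph to assert only the full embedding and make this restriction argument explicit; with that correction the proof is complete.
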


\begin{proof} Left to the reader. \end{proof}

 \begin{prop} \label{pitribecartclosed2} A $\pi$-clan $ \mathcal{E}$ is a cartesian closed category.
 More generally, the category $ \mathcal{E}(A)$
 is cartesian closed for every object $A\in  \mathcal{E}$.
  \end{prop}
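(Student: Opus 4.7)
The plan is to reduce the statement to Proposition \ref{pitribecartclosed} by recognizing $[A,-]$ as a composite of two right adjoints, as indicated in Remark \ref{rem:exponentialareprod}. To show that $\mathcal{E}$ is cartesian closed, it suffices to produce a right adjoint to the functor $(-)\times A:\mathcal{E}\to\mathcal{E}$ for every object $A\in\mathcal{E}$.

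First, observe that in any clan the unique map $t_A:A\to 1$ is a fibration, so the base change functor $e_A=t_A^\star:\mathcal{E}=\mathcal{E}(1)\to\mathcal{E}(A)$ makes sense. Since $\mathcal{E}$ is a $\pi$-clan, Proposition \ref{pitribecartclosed} applied to the fibration $t_A$ produces a right adjoint $\Pi_A=\Pi_{t_A}:\mathcal{E}(A)\to\mathcal{E}$ to $e_A$. Moreover, Proposition \ref{sumalong} (applied to $t_A$) gives a left adjoint $\Sigma_A:\mathcal{E}(A)\to\mathcal{E}$ to $e_A$, so we have the string $\Sigma_A\dashv e_A\dashv\Pi_A$.

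Now the product functor $(-)\times A:\mathcal{E}\to\mathcal{E}$ factors as $\Sigma_A\circ e_A$ (since $\Sigma_A(e_A(X))=\Sigma_A(A\times X,p_1)=A\times X$). Composing the first two adjunctions, one obtains
$$\mathrm{Hom}_{\mathcal{E}}(\Sigma_A e_A(X),Y)\cong \mathrm{Hom}_{\mathcal{E}(A)}(e_A(X),e_A(Y))\cong \mathrm{Hom}_{\mathcal{E}}(X,\Pi_A e_A(Y)),$$
so $(-)\times A$ admits the right adjoint $\Pi_A\circ e_A$, giving $[A,B]=\Pi_A(A\times B,p_1)$. Thus every object $A$ is exponentiable and $\mathcal{E}$ is cartesian closed.

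For the local statement, Proposition \ref{localisationpitribe} says that $\mathcal{E}(B)$ is itself a $\pi$-clan for every $B\in\mathcal{E}$, so the argument above applied to $\mathcal{E}(B)$ in place of $\mathcal{E}$ shows that $\mathcal{E}(B)$ is cartesian closed. The only subtle point to check is that the functors and adjunctions used (namely $e_A$, $\Sigma_A$, $\Pi_A$) restrict correctly from slice categories $\mathcal{E}/B$ to the subcategories $\mathcal{E}(B)$ of fibrations, but this is guaranteed by Proposition \ref{pitribecartclosed} and Proposition \ref{sumalong}, both of which are stated for the categories $\mathcal{E}(B)$. No genuine obstacle arises; the whole proof is bookkeeping on top of these two adjunction propositions.
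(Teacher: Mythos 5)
Your proof is correct and follows essentially the same route as the paper, which simply invokes Remark \ref{rem:exponentialareprod} (the composition of the adjunctions $\Sigma_A\dashv e_A\dashv\Pi_A$ giving $[A,B]=\Pi_A\circ e_A(B)$) and then Proposition \ref{localisationpitribe} for the local statement. Your version is a slightly more careful unfolding of that remark, working with $\mathcal{E}(A)$ rather than $\mathcal{E}/A$ so that the adjunctions from Propositions \ref{sumalong} and \ref{pitribecartclosed} apply verbatim, but the argument is the same.
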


\begin{proof}   By Remark \ref{rem:exponentialareprod}, we have
we have $[A,B]=\Pi_A(B\times A,p_2)$ for every  $A,B\in  \mathcal{E}$.
 This show that $ \mathcal{E}$ is cartesian closed.  
 It follows that $ \mathcal{E}(A)$  is cartesian closed  for every object $A\in  \mathcal{E}$,
 since $ \mathcal{E}(A)$ is a $\pi$-clan by Proposition \ref{localisationpitribe}.
  \end{proof}

 \begin{prop} \label{cartclosedispitribe} 
 A cartesian closed category has the structure of $\pi$-clan,
 where a fibration is a cartesian projection.
 \end{prop}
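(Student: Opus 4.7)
The plan is to build directly on Proposition \ref{tribeofgenuineprojection}, which already supplies the clan structure on $\mathcal{E}$; it remains only to verify, for each pair of fibrations $f\colon A\to B$ and $p\colon E\to A$, that the internal product $\Pi_f(E,p)$ exists and that its structure map is itself a fibration. Since $f$ and $p$ are cartesian projections, I would choose isomorphisms $A\cong B\times A'$ (with $f$ corresponding to $p_1$) and $E\cong A\times F$ (with $p$ corresponding to $p_1$), for suitable objects $A',F\in\mathcal{E}$, and set
\[
\Pi_f(E,p):=(B\times[A',F],\,p_1).
\]
This is manifestly a cartesian projection over $B$, hence a fibration, which takes care of the last clause of Definition \ref{definitiontribe}.

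Next I would construct the evaluation $\epsilon\colon\Pi_f(E)\times_B A\to E$. Under the above identifications, $\Pi_f(E)\times_B A\cong B\times[A',F]\times A'$ and $E\cong B\times A'\times F$, both naturally objects over $A=B\times A'$; define $\epsilon$ as
\[
(b,\phi,a')\longmapsto\bigl(b,\,a',\,\mathrm{ev}_{A',F}(\phi,a')\bigr),
\]
where $\mathrm{ev}_{A',F}\colon[A',F]\times A'\to F$ is the counit of the exponential adjunction coming from the cartesian closed structure on $\mathcal{E}$.

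The universal property is then almost immediate from cartesian closedness. Given an arbitrary $(C,g)\in\mathcal{E}/B$ and a map $u\colon C\times_B A\to E$ in $\mathcal{E}/A$, the identification $C\times_B A\cong C\times A'$ and the requirement that $u$ lie over $A$ force $u(c,a')=(g(c),a',u_3(c,a'))$ for a unique $u_3\colon C\times A'\to F$. Dually, a map $v\colon C\to\Pi_f(E)$ in $\mathcal{E}/B$ has the form $v=(g,v_2)$ with $v_2\colon C\to[A',F]$, and the equation $\epsilon\circ(v\times_B A)=u$ collapses to $\mathrm{ev}_{A',F}\circ(v_2\times A')=u_3$; cartesian closedness provides the unique solution $v_2=\lambda^{A'}(u_3)$. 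The entire argument is really just bookkeeping around the exponential adjunction $(-)\times A'\dashv[A',-]$; no serious obstacle arises, and the only care needed is in managing cleanly the several pullback/projection identifications over $A$ and $B$.
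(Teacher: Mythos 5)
Your proof is correct and follows essentially the same route as the paper: reduce to the case where $f$ and $p$ are genuine product projections and exhibit $\Pi_f(E)$ as $[A',F]\times B$ (the paper writes $[C,F]\times B$), with the universal property coming from the exponential adjunction. The only difference is that you carry out the verification that the paper explicitly leaves to the reader.
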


\begin{proof} Let $ \mathcal{E}$ be a cartesian closed category.
 It follows from Proposition \ref{tribeofgenuineprojection} that $ \mathcal{E}$ has the structure of a clan
 where a fibration is a cartesian projection. 
Let us show that  every cartesian projection $p:E\to A$ admits a product along every cartesian projection  $f:A \to B$
and that the structure map $\Pi_f(E)\to B$ is a cartesian projection.
We may suppose that $p$ is the projection $F\times A \to A$ for some object $F$
and that $f$ is the projection $C\times B\to B$ for some object $C$;
in which case, the map $p$ is the projection $F\times C\times B \to C\times B.$ 
We let the reader verify that  the object
$([C,F]\times B,p_2)$ of $\mathcal{E}/B$  is the product
 of the map $F\times C\times B \to C\times B$ 
along the map $C\times B\to B$. 
  \end{proof}

Recall from Proposition \ref{prop:extpresheaf}
that if $ \mathcal{E} $ is a clan, then the class 
of extended fibrations in $\Hat{\mathcal{E}}=[\mathcal{E}^{op},Set]$
is closed under composition and base changes.
If $F\in \Hat{\mathcal{E}}$, then the full subcategory 
$ \Hat{\mathcal{E}}(F)$ of $ \Hat{\mathcal{E}}/F$ 
whose objects  are extended
fibrations $X\to F$ has the structure of a clan by Proposition \ref{prop:extlocalclans}.

\medskip

\begin{defi}\label{definitionhomopitribe}
Let $\mathcal{E}$ and $ \mathcal{E}'$ be $\pi$-clans.
We shall say that a morphism of clans $F:\mathcal{E} \to \mathcal{E}'$
is $\pi$-{\it closed}, or that it is a {\it morphism of $\pi$-clans},
if the induced functor $F_{(A)}:\mathcal{E}(A) \to \mathcal{E}'(FA)$
is cartesian closed for every object $A\in \mathcal{E}$.
\end{defi}

 \medskip

The composite of 
$\pi$-closed morphisms $F:\mathcal{E}\to  \mathcal{E}'$ and $G:\mathcal{E}'\to  \mathcal{E}"$
is $\pi$-closed.  The category of $\pi$-clans and $\pi$-closed
morphisms has the structure of a 2-category
where a 2-cell is a natural isomorphism.

\begin{prop}\label{homopitribe2} A  morphism of $\pi$-clans $F:\mathcal{E} \to \mathcal{E}'$
preserves products
of fibrations along fibrations.
\end{prop}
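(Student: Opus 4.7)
The plan is to reduce to the case $B = 1$ via localization and then invoke the exponential characterization of internal products.

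First, by Proposition \ref{localisationpitribe}, the object $\Pi_f(E,p)$ coincides with $\Pi_{(A,f)}(E,p)$ computed in the $\pi$-clan $\mathcal{E}(B)$, namely the internal product of the fibration $p:(E,fp)\to (A,f)$ along the terminal map of $\mathcal{E}(B)$. The induced functor $F_{(B)}:\mathcal{E}(B)\to\mathcal{E}'(FB)$ is a morphism of clans by Proposition \ref{inducedhomoclan}; combining Lemma \ref{compositebasechangetribecor2} with Lemma \ref{compositionelementary1}, one sees that $(F_{(B)})_{(X,q)}$ agrees with $F_{(X)}$ for every $(X,q)\in\mathcal{E}(B)$, so $F_{(B)}$ is again $\pi$-closed by Definition \ref{definitionhomopitribe}. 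Replacing $(F,\mathcal{E},\mathcal{E}')$ by $(F_{(B)},\mathcal{E}(B),\mathcal{E}'(FB))$, I may therefore assume $B = 1$ with $A$ fibrant, and am reduced to showing $F(\Pi_A(E,p)) \cong \Pi_{FA}(FE,Fp)$.

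Next, I would apply Proposition \ref{expisfullyprop} in the cartesian closed $\pi$-clan $\mathcal{E}$ (Proposition \ref{pitribecartclosed2}): the object $\Pi_A(E,p)$ is the fiber of $[A,p]:[A,E]\to[A,A]$ at the name $\lceil 1_A\rceil:1\to[A,A]$. Being cartesian closed, $F$ preserves the exponentials and the evaluation maps, hence both $[A,p]$ and $\lceil 1_A\rceil$ (the latter being characterized by $\epsilon_A\circ(\lceil 1_A\rceil\times A)=1_A$, a relation evidently preserved). Being a morphism of clans, $F$ preserves the terminal object and the pullback square defining the fiber. It follows that $F(\Pi_A(E,p))$ is the fiber of $[FA,Fp]$ at $\lceil 1_{FA}\rceil$, which by a second application of Proposition \ref{expisfullyprop} in $\mathcal{E}'$ coincides with $\Pi_{FA}(FE,Fp)$.

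The hard part is the implicit sub-claim that $[A,p]:[A,E]\to[A,A]$ is a fibration, which is needed both for the fiber-defining pullback to exist in the clan and for $F$ to preserve it as a base change of a fibration. Since $[A,-]=\Pi_A\circ e_A$ and $e_A$ is a morphism of clans by Proposition \ref{basechangetribe}, this reduces to showing that $\Pi_A:\mathcal{E}(A)\to\mathcal{E}$ carries fibrations to fibrations. This is analogous to function extensionality in type theory, is natural in the $\pi$-clan formalism, and presumably would be isolated as a lemma in a complete treatment of the theory.
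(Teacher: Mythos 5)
The paper gives no proof of this proposition (it is left to the reader), so your argument can only be judged on its own terms. The overall architecture is sound: the localization step reducing to $B=1$ is exactly Proposition \ref{localisationpitribe} combined with Lemma \ref{localisationhomopitribe} (which you essentially re-derive via Lemma \ref{compositebasechangetribecor2}), and identifying $\Pi_A(E,p)$ with the fiber of $[A,p]$ over $\lceil 1_A\rceil$, then using that a cartesian closed morphism of clans preserves exponentials, evaluations and names, is the natural way to exploit Definition \ref{definitionhomopitribe}.

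The gap you flag is, however, genuine, and it is the entire technical content of the statement. Everything hinges on $[A,p]\colon [A,E]\to [A,A]$ being a fibration: without it the fiber need not even exist in a $\pi$-clan (Proposition \ref{expisfullyprop} is stated for finitely complete categories, so invoking it already presupposes this), and $F$ is only guaranteed to preserve base changes of \emph{fibrations}, so the defining pullback square is not known to be preserved. The needed fact --- that $\Pi_A$, equivalently $[A,-]=\Pi_A\circ e_A$, carries fibrations to fibrations --- is not merely ``presumably a lemma'': it is Lemma \ref{prodfib} (and Proposition \ref{productmorphismofclan}) later in the paper, and its proof is not formal. It requires embedding $\mathcal{E}$ into the presheaf category $\Hat{\mathcal{E}}$ and showing that extended fibrations are stable under internal products (Lemma \ref{lem:extpresheafpiclan}), which in turn uses the Beck--Chevalley law in the locally cartesian closed category $\Hat{\mathcal{E}}$ and the fact that the Yoneda embedding preserves internal products. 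Citing that lemma forward would not be circular, since its proof nowhere uses the present proposition; but as written your argument outsources its hardest step to a result you neither prove nor locate, so it is incomplete. If you import Lemma \ref{prodfib}, the rest of your proof goes through.
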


  \begin{proof}  Left to the reader.
    \end{proof}

\medskip

\begin{lemma} \label{lem:extpresheafpiclan} Let $\mathcal{E}$ be a $\pi$-clan,
then so is the clan $ \Hat{\mathcal{E}}(G)$ for every object $G\in  \Hat{\mathcal{E}}$.
Moreover, the inclusion functor $ \Hat{\mathcal{E}}(G)\to  \Hat{\mathcal{E}}/G$
preserves internal products.
\end{lemma}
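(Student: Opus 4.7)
The plan is to compute the internal product in the ambient locally cartesian closed category $\Hat{\mathcal{E}}/G$ (which is cartesian closed by Proposition \ref{preshefloccartclosed}) and then verify, by pulling back along representables, that the result is an extended fibration and therefore lies in $\Hat{\mathcal{E}}(G)$. This will give both statements simultaneously: the existence of $\Pi_f$ inside $\Hat{\mathcal{E}}(G)$, and the fact that the inclusion functor preserves it.

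First I would fix fibrations $p:(E,\alpha p)\to (A,\alpha)$ and $f:(A,\alpha)\to (B,\beta)$ in $\Hat{\mathcal{E}}(G)$, so $p$ and $f$ are extended fibrations in $\Hat{\mathcal{E}}$ that happen to live over $G$. Since $\Hat{\mathcal{E}}/G$ is locally cartesian closed, the internal product $\Pi_f(E,p)$ exists in $\Hat{\mathcal{E}}/G$ together with a structure map $\pi:\Pi_f(E,p)\to B$. The task is then to show that $\pi$ is an extended fibration; composing with $\beta:B\to G$ (an extended fibration, closed under composition by Proposition \ref{prop:extpresheaf}) then places $\Pi_f(E,p)$ in $\Hat{\mathcal{E}}(G)$.

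To verify that $\pi$ is an extended fibration, I would take any map $b:C\to B$ from a representable $C\in\mathcal{E}$ and form the pullback square
$$
\xymatrix{
C \times_B A \ar[d]_{f'} \ar[r]^(0.55){\tilde{b}} & A \ar[d]^f \\
C \ar[r]^b & B
}
$$
in $\Hat{\mathcal{E}}$. Because $f$ is an extended fibration, $C\times_B A$ is representable by an object of $\mathcal{E}$ and $f'$ is a fibration in $\mathcal{E}$; similarly $\tilde{b}^\star(E,p)$ is representable with structure map a fibration in $\mathcal{E}$. The Beck--Chevalley law (Theorem \ref{thm:BC}) applied in the locally cartesian closed category $\Hat{\mathcal{E}}$ yields a canonical isomorphism
$$
b^\star \Pi_f(E,p) \;\cong\; \Pi_{f'}(\tilde{b}^\star(E,p)).
$$
Since $\mathcal{E}$ is a $\pi$-clan, the right-hand side exists in $\mathcal{E}$ with fibration to $C$, and Lemma \ref{yonedapreservesintproduct} tells us the Yoneda embedding preserves this internal product. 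Hence $b^\star \Pi_f(E,p)$ is representable and the projection to $C$ is a fibration in $\mathcal{E}$, as required.

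Finally, since $\Pi_f(E,p)$ was built inside $\Hat{\mathcal{E}}/G$ and has been shown to lie in the full subcategory $\Hat{\mathcal{E}}(G)$, its universal property restricts unchanged to $\Hat{\mathcal{E}}(G)$, proving both that the latter is a $\pi$-clan and that the inclusion preserves internal products. The main obstacle is the Beck--Chevalley reduction to representable base changes; once that is in place, everything comes from the hypothesis that $\mathcal{E}$ itself is a $\pi$-clan together with the fact (Lemma \ref{yonedapreservesintproduct}) that the Yoneda embedding preserves internal products of fibrations along fibrations.
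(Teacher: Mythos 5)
Your proposal is correct and follows essentially the same route as the paper: form the internal product in the locally cartesian closed presheaf category, pull back along maps from representables, use that base change preserves internal products (Beck--Chevalley), and conclude representability and fibrancy from the $\pi$-clan structure of $\mathcal{E}$ together with the fact that the Yoneda embedding preserves internal products (Lemma \ref{yonedapreservesintproduct}). The only cosmetic difference is that you test the structure map over the intermediate base $B$ and then compose with $\beta:B\to G$, whereas the paper tests directly over $G$; by Remark \ref{rem:intprod} these amount to the same computation.
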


\begin{proof} Let us show that if $p:E\to F$ and $f:F\to G$ are extended fibrations in $\Hat{\mathcal{E}}$,
then the map $q:\Pi_f(E,p)\to G$ is an extended fibration.
For this we have to show that for every object 
$B\in \mathcal{E}$ and every map $b:B\to G$
the object $b^\star( \Pi_f(E,p))$ is repersentable and that
the map $b^\star(q):b^\star( \Pi_f(E,p))\to B$
is a fibration. 
Consider the following diagram of pullback squares
$$\xymatrix{
b^\star(E) \ar[rr]^b \ar[d]_{b^\star(p)}  && E \ar[d]^p \\
b^\star(G) \ar[rr] \ar[d]_{b^\star(f)}  && F  \ar[d]^f \\
B\ar[rr]^b && G 
 }$$
 The base change functor $b^\star :\Hat{\mathcal{E}}/G \to \Hat{\mathcal{E}}/B$
preserves internal product by Proposition \ref{basechangecartclosed}
Thus, $b^\star( \Pi_f(E,p))$ is the internal
product of the map $b^\star(p):b^\star(E)\to b^\star(F)$ 
along the map $b^\star (f):b^\star(F)\to b^\star(G)$.
The presheaf $b^\star(F)$ is representable and the map $b^\star(f):b^\star(F)\to B$
is a fibration, since the map $f:F\to G$ is an extended fibration by hypothesis.
Similarly,  the presheaf $b^\star(E)$ is representable and the map $b^\star(p):b^\star(E)\to b^\star(F)$
is a fibration.
It follows that the presheaf $\Pi_{b^\star(f)}(b^\star(E),b^\star(p)))$
is representable, since $\mathcal{E}$ be a $\pi$-clan
and the Yoneda functor $\mathcal{E}\to \Hat{\mathcal{E}}$
preserves internal products by Lemma \ref{yonedapreservesintproduct}.
Moreover, the map $b^\star(q):\Pi_{b^\star(f)}(b^\star(E),b^\star(p)))\to B$ is a fibration,
since in a $\pi$-clan, the structure map of an internal product of a fibration 
is a fibration.
\end{proof}

Recall from Proposition \ref{basechangetribe} that if $f:A\to B$ is a map in a clan $\mathcal{E}$,
then the base change functor
$f^\star:\mathcal{E}(B)\to \mathcal{E}(A)$
is a morphism of clans,

\begin{prop}\label{charhomopitribe3} If $\mathcal{E}$ is a $\pi$-clan,
then the morphism of clans
$f^\star:\mathcal{E}(B)\to \mathcal{E}(A)$
is $\pi$-closed for any map $f:A\to B$.
\end{prop}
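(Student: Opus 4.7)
The plan is to unfold $\pi$-closedness of $f^\star$ into a Beck--Chevalley condition for internal products in the $\pi$-clan $\mathcal{E}$, and then to verify that condition directly by comparing universal properties.

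By Definition~\ref{definitionhomopitribe} we must show that for every $(C,g) \in \mathcal{E}(B)$ the induced functor
\[ (f^\star)_{(C,g)} \colon \mathcal{E}(B)(C,g) \longrightarrow \mathcal{E}(A)(f^\star(C,g)) \]
is cartesian closed. Using Lemma~\ref{compositebasechangetribecor2} to identify source and target with $\mathcal{E}(C)$ and $\mathcal{E}(A\times_B C)$ respectively, a routine check identifies $(f^\star)_{(C,g)}$ with the base-change functor $k^\star$ along the projection $k \colon A\times_B C \to C$. Since $k^\star$ is a morphism of clans by Proposition~\ref{basechangetribe}, it preserves finite products by Proposition~\ref{morphismtribecart}. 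The proposition therefore reduces to the following claim: for any map $h\colon X\to Y$ in a $\pi$-clan, the base-change functor $h^\star\colon \mathcal{E}(Y) \to \mathcal{E}(X)$ preserves exponentials.

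For fibrations $p_E\colon E \twoheadrightarrow Y$ and $p_F\colon F \twoheadrightarrow Y$ the exponential in the $\pi$-clan $\mathcal{E}(Y)$ is computed, in the manner of Remark~\ref{rem:exponentialareprod}, by the formula $[E,F] \cong \Pi_{p_E}(p_E^\star F)$, where $\Pi_{p_E}\colon \mathcal{E}(E) \to \mathcal{E}(Y)$ is the right adjoint supplied by Proposition~\ref{pitribecartclosed}. Let $\tilde h\colon X\times_Y E \to E$ and $p'\colon X\times_Y E \to X$ denote the two legs of the pullback of $p_E$ along $h$ (so that $p'$ is again a fibration); Proposition~\ref{compositebasechangetribe} gives $\tilde h^\star \circ p_E^\star \cong (p')^\star \circ h^\star$. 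Hence preservation of exponentials by $h^\star$ follows from the Beck--Chevalley isomorphism
\[ h^\star \circ \Pi_{p_E} \;\cong\; \Pi_{p'} \circ \tilde h^\star \colon \mathcal{E}(E) \longrightarrow \mathcal{E}(X). \]

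Proving this Beck--Chevalley isomorphism is the technical heart of the argument and the main obstacle. The plan is to verify it by Yoneda. Fix $(G,q) \in \mathcal{E}(E)$ and an arbitrary fibration $(Y',\gamma) \in \mathcal{E}(X)$. A map $Y' \to h^\star\Pi_{p_E}(G,q)$ in $\mathcal{E}(X)$ is, by the pullback universal property, the same as a map $(Y',h\gamma) \to \Pi_{p_E}(G,q)$ in $\mathcal{E}/Y$; by the universal property of $\Pi_{p_E}$ stated on the whole slice $\mathcal{E}/Y$ (Definition~\ref{def:internalproductalon}, which is essential here because $h\gamma$ need not be a fibration), this is exactly a map $(Y'\times_Y E,\,p_2) \to (G,q)$ in $\mathcal{E}/E$. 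Symmetrically, a map $Y' \to \Pi_{p'}\tilde h^\star(G,q)$ in $\mathcal{E}(X)$ corresponds, via the universal property of $\Pi_{p'}$ in $\mathcal{E}/X$, to a map $(p')^\star(Y',\gamma) \to \tilde h^\star(G,q)$ in $\mathcal{E}/(X\times_Y E)$; after using the canonical identification $Y'\times_X (X\times_Y E) \cong Y'\times_Y E$ this also unwinds to a map $(Y'\times_Y E,\,p_2) \to (G,q)$ over $E$. The resulting bijection is natural in $Y'$, so Yoneda yields the isomorphism.
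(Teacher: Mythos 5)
Your proof is correct, but it takes a genuinely different route from the one in the paper. The paper's proof embeds $\mathcal{E}$ into the presheaf category $\Hat{\mathcal{E}}$ via Yoneda, where everything is locally cartesian closed, invokes the Beck--Chevalley law of Theorem \ref{thm:BC} (via Proposition \ref{basechangecartclosed}) there, and then descends to the sub-clans of extended fibrations using Lemma \ref{lem:extpresheafpiclan} and the equivalences $\mathcal{E}(A)\simeq \Hat{\mathcal{E}}(A)$. You instead stay entirely inside $\mathcal{E}$: after the (correct) reduction via Lemma \ref{compositebasechangetribecor2} to showing that $h^\star:\mathcal{E}(Y)\to\mathcal{E}(X)$ preserves exponentials for an arbitrary map $h$, you prove the needed Beck--Chevalley isomorphism $h^\star\Pi_{p_E}\cong\Pi_{p'}\tilde h^\star$ directly by a representability argument. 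The key observation that makes this work --- and that you rightly flag --- is that in a $\pi$-clan the internal product along a fibration is cofree with respect to $f^\star$ on the \emph{whole} slice $\mathcal{E}/Y$ (Definition \ref{def:internalproductalon}), not merely on $\mathcal{E}(Y)$; this is exactly the ``stronger'' condition noted in the remark following Definition \ref{definitiontribe}, and it is what lets you apply the universal property to $(Y',h\gamma)$ even though $h\gamma$ need not be a fibration. All the pullbacks you form exist because one leg is always a fibration, and both sides of the comparison land in $\mathcal{E}(X)$, so Yoneda in that category applies. What each approach buys: the paper's detour reuses the general locally cartesian closed machinery at the cost of the presheaf infrastructure (Lemmas \ref{yonedapreservesintproduct} and \ref{lem:extpresheafpiclan}); yours is more elementary and self-contained, and makes explicit which universal property carries the argument. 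The only step you leave implicit is that the isomorphism produced by Yoneda coincides with the canonical comparison map $\psi$ of a cartesian closed functor; this follows as usual by chasing the evaluation maps through your chain of bijections, and is at the level of detail the paper itself routinely leaves to the reader.
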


\begin{proof}  This should follows from Proposition \ref{basechangecartclosed},
except that the category $\mathcal{E}$ may not be locally cartesian closed.
The trick is to replace $\mathcal{E}$ by the presheaf category  $ \Hat{\mathcal{E}}$ 
and use Lemma \ref{lem:extpresheafpiclan}.
The base change functor 
$f^\star:\Hat{\mathcal{E}}/B\to\Hat{\mathcal{E}}/A$
is $\pi$-closed, since the caltegory $\mathcal{E}$
is locally cartesian closed. 
Hence, the base change functor 
$f^\star:\Hat{\mathcal{E}}(B)\to\Hat{\mathcal{E}}(A)$
is $\pi$-closed 
since the inclusions $ \Hat{\mathcal{E}}(A)\to  \Hat{\mathcal{E}}/A$
and $ \Hat{\mathcal{E}}(B)\to  \Hat{\mathcal{E}}/B$
preserves products.
This shows that the functor $f^\star:\mathcal{E}(B)\to \mathcal{E}(A)$
is $\pi$-closed, since $\mathcal{E}(A)\simeq \Hat{\mathcal{E}}(A)$
and $\mathcal{E}(B)\simeq \Hat{\mathcal{E}}(B)$.
\end{proof}

\begin{lemma}\label{localisationhomopitribe}
If $F:\mathcal{E} \to \mathcal{E}'$ is a morphism of $\pi$-clans, then so is the functor
$F_{(A)}: \mathcal{E}(A)\to \mathcal{E}'(F\!A)$ for every object $A\in  \mathcal{E}$. 
\end{lemma}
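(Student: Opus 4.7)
The plan is to reduce the statement to the definition of $\pi$-closedness using the localisation identity from Lemma \ref{compositebasechangetribecor2}. First I would note that $F_{(A)}$ is already known to be a morphism of clans by Proposition \ref{inducedhomoclan}, and both $\mathcal{E}(A)$ and $\mathcal{E}'(FA)$ are $\pi$-clans by Proposition \ref{localisationpitribe}, so the only remaining thing to check is the condition of Definition \ref{definitionhomopitribe}: that for every object $(B,g) \in \mathcal{E}(A)$ (i.e.\ every fibration $g:B\twoheadrightarrow A$), the induced functor
$$(F_{(A)})_{(B,g)}: \mathcal{E}(A)(B,g)\to \mathcal{E}'(FA)(F_{(A)}(B,g))$$
is cartesian closed.

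Next I would identify this functor with $F_{(B)}$. By Lemma \ref{compositebasechangetribecor2}, the canonical functor $\Phi: \mathcal{E}(B)\to \mathcal{E}(A)(B,g)$ sending $(E,p)$ to $((E,gp),p)$ is an isomorphism of clans, and the analogous functor $\Phi': \mathcal{E}'(FB)\to \mathcal{E}'(FA)(FB,F(g))$ is likewise an isomorphism. Chasing the formula $F_{(A)}(E,p) = (FE,F(p))$ through these identifications, one checks immediately that the square
$$\xymatrix{
\mathcal{E}(B)\ar[rr]^-{F_{(B)}}\ar[d]_\Phi && \mathcal{E}'(FB)\ar[d]^{\Phi'} \\
\mathcal{E}(A)(B,g) \ar[rr]^-{(F_{(A)})_{(B,g)}} && \mathcal{E}'(FA)(F_{(A)}(B,g))
}$$
commutes. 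Hence $(F_{(A)})_{(B,g)}$ is cartesian closed if and only if $F_{(B)}$ is.

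Finally, since $F:\mathcal{E}\to \mathcal{E}'$ is a morphism of $\pi$-clans by hypothesis, $F_{(B)}$ is cartesian closed for every object $B\in \mathcal{E}$, and in particular for the domain $B$ of any fibration $g:B\twoheadrightarrow A$. Therefore $(F_{(A)})_{(B,g)}$ is cartesian closed for every $(B,g)\in \mathcal{E}(A)$, which is the required condition.

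There is no real obstacle: the only content is the bookkeeping involved in verifying that the localisation construction is \emph{transitive}, i.e.\ that iterated local clans coincide with a single local clan via Lemma \ref{compositebasechangetribecor2} in a way compatible with the induced functors $F_{(-)}$. Once this identification is made, the conclusion is immediate from the definition of a morphism of $\pi$-clans.
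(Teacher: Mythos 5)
The paper leaves this proof to the reader, so there is nothing to compare against; your argument is correct and is surely the intended one. The reduction via Lemma \ref{compositebasechangetribecor2} — identifying $(F_{(A)})_{(B,g)}$ with $F_{(B)}$ through the isomorphisms of clans $\mathcal{E}(B)\cong \mathcal{E}(A)(B,g)$ and $\mathcal{E}'(FB)\cong \mathcal{E}'(FA)(FB,F(g))$, and then invoking Definition \ref{definitionhomopitribe} for $F$ — is exactly the bookkeeping that the lemma requires, and your verification that the square commutes is the only nontrivial point.
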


\begin{proof} Left to the reader. \end{proof} 

Let $\mathcal{E} $ be a $\pi$-clan and
consider  a pullback square in $\mathcal{E} $ 
$$ \xymatrix{
C \ar[d]_{u}\ar[rr]^{g}& & D \ar[d]^v \\
A \ar[rr]^{f}&& B
}$$
where $v$ is fibration; the map $u$ is also a fibration by base change. 
The base change functor $u^\star:\mathcal{E}(C)\to \mathcal{E}(A)$
a right adjoint $\Pi_u$ and the base change functor 
$v^\star:\mathcal{E}(D)\to \mathcal{E}(B)$ has a right 
adjoint $\Pi_v$.

\begin{cor} The following square of functors commutes up
to a canonical isomorphism,
\begin{equation} \label{BeckChevaly2}
 \xymatrix{
\mathcal{E}(C) \ar[d]_{\Pi_u}& & \mathcal{E}(D)\ar[d]^{\Pi_v}  \ar[ll]_{g^\star} \\
\mathcal{E}(A) && \ar[ll]_{f^\star}  \mathcal{E}(B)
}
\end{equation}
\end{cor}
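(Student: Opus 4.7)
The plan is to follow the strategy of Proposition \ref{charhomopitribe3}: reduce the claim to the Beck-Chevalley law for the locally cartesian closed category $\Hat{\mathcal{E}}$ (Theorem \ref{thm:BC}) by transporting everything along the Yoneda embedding.

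First I would construct the canonical Beck-Chevalley natural transformation inside $\mathcal{E}$. Since $u$ is the base change of the fibration $v$ along $f$, it is itself a fibration, so Proposition \ref{pitribecartclosed} supplies the adjunctions $u^\star \dashv \Pi_u$ and $v^\star \dashv \Pi_v$ between the local clans. The identity $fu = vg$ together with Proposition \ref{compositebasechangetribe} furnishes a canonical isomorphism $u^\star f^\star \cong g^\star v^\star$ of functors $\mathcal{E}(B) \to \mathcal{E}(C)$. Combining this isomorphism with the counit of $v^\star \dashv \Pi_v$ and the adjunction $u^\star \dashv \Pi_u$ produces a canonical natural transformation
$$\mu : f^\star \Pi_v \longrightarrow \Pi_u g^\star,$$
and the task reduces to showing that $\mu$ is invertible.

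To check invertibility I would pass to presheaves. By the remark following Proposition \ref{prop:extlocalclans}, the Yoneda functor $y:\mathcal{E} \to \Hat{\mathcal{E}}$ induces equivalences of clans $\mathcal{E}(H) \simeq \Hat{\mathcal{E}}(H)$ for every object $H \in \mathcal{E}$. Lemma \ref{yonedapreservesintproduct} and Lemma \ref{lem:extpresheafpiclan} together show that these equivalences intertwine the base change functors and identify $\Pi_u, \Pi_v$ with the internal products of the corresponding extended fibrations inside the full slices $\Hat{\mathcal{E}}/yA$ and $\Hat{\mathcal{E}}/yB$. Because the Yoneda embedding preserves pullbacks, the square transported into $\Hat{\mathcal{E}}$ remains a pullback. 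Now $\Hat{\mathcal{E}}$ is locally cartesian closed by Proposition \ref{preshefloccartclosed}, so Theorem \ref{thm:BC} guarantees that the Beck-Chevalley mate $f^\star \Pi_v \to \Pi_u g^\star$ computed in $\Hat{\mathcal{E}}$ is an isomorphism; transporting back across the equivalences yields the desired invertibility of $\mu$.

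The main obstacle will be the bookkeeping needed to confirm that the mate built abstractly inside the local clans of $\mathcal{E}$ really coincides, under the Yoneda identifications, with the classical Beck-Chevalley mate in $\Hat{\mathcal{E}}/yA$ and $\Hat{\mathcal{E}}/yB$ given by Theorem \ref{thm:BC}. This reduces to verifying that the units and counits of the adjunctions $u^\star \dashv \Pi_u$ and $v^\star \dashv \Pi_v$ in the clan setting are the restrictions of the units and counits of the corresponding adjunctions in the full presheaf slices, which follows from the product-preservation statement in Lemma \ref{lem:extpresheafpiclan}. No new ideas beyond those already used in the proof of Proposition \ref{charhomopitribe3} are required.
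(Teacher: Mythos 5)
Your proposal is correct and follows essentially the same route as the paper: the paper's (very terse) proof likewise says to replace $\mathcal{E}$ by the presheaf category $\Hat{\mathcal{E}}$, invoke Lemma \ref{lem:extpresheafpiclan} so that the internal products of extended fibrations restrict correctly, and then apply the Beck--Chevalley law of Theorem \ref{thm:BC} in the locally cartesian closed category $\Hat{\mathcal{E}}$, exactly as in Proposition \ref{charhomopitribe3}. You have merely spelled out the bookkeeping (construction of the mate, compatibility of units and counits under the Yoneda identifications) that the paper leaves implicit.
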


\begin{proof} 
This essentially follows from Proposition \ref{charhomopitribe3}, except
that the category $\mathcal{E}$ is not locally cartesian closed.
The trick is to replace the category  $\mathcal{E}$ by the presheaf category  $ \Hat{\mathcal{E}}$ 
and use Lemma \ref{lem:extpresheafpiclan}.
\end{proof}

Let $f:A\to B$ be a fibration in a $\pi$-clan $ \mathcal{E}$.
The composite of the functors
$$\xymatrix{
\mathcal{E}(A)\ar[rr]^{\Sigma_f} && \mathcal{E}(B) \ar[rr]^{\Pi_B} &&  \mathcal{E}
}$$
takes the terminal object $(A,1_A)\in \mathcal{E}(A)$
to the object $\Pi_B(A):=\Pi_B(A,f)$. It thus induces a functor
$$\Pi_{B/A}:\mathcal{E}(A)\to \mathcal{E}/\Pi_B(A)$$
 if we put $\Pi_{B/A}(E,p)=(\Pi_B(E,fp),\Pi_B(p))$
 for an object $E=(E,p)\in \mathcal{E}(A)$.

 \begin{lemma}\label{prodfib} Let $f:A\to B$ be a fibration in a $\pi$-clan $ \mathcal{E}$.
 If $p:E\to A$ is a fibration, then the map $\Pi_B(p):\Pi_B(E,fp)\to \Pi_B(A)$
 is a fibration. Moreover, the functor $\Pi_{B/A}$
 is isomorphic to the composite
  $$\xymatrix{
  \mathcal{E}(A) \ar[rr]^(0.4){\epsilon^\star} &&\mathcal{E}(\Pi_B(A)\! \times \! B) \ar[rr]^{\Pi_p} &&\mathcal{E}(\Pi_B(A))
 }$$
  where $\Pi_B(A):=\Pi_B(A,f)$.
 \end{lemma}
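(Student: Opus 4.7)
The plan is to derive the statement from Lemma \ref{choice1} by transferring the computation to the presheaf category $\widehat{\mathcal{E}}=[\mathcal{E}^{op},\mathbf{Set}]$, which is locally cartesian closed by Proposition \ref{preshefloccartclosed}.

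First, since $\mathcal{E}$ is a $\pi$-clan, the internal product $\Pi_B(A,f)$ exists in $\mathcal{E}$, and the Yoneda functor $y:\mathcal{E}\to \widehat{\mathcal{E}}$ preserves it by Lemma \ref{yonedapreservesintproduct}. Thus $\Pi_B(A)$ is representable, and the evaluation $\epsilon:\Pi_B(A)\times B\to A$ and the projection $p:\Pi_B(A)\times B\to \Pi_B(A)$ are honest maps in $\mathcal{E}$; the projection $p$ is a cartesian projection, hence a fibration by Proposition \ref{tribeofgenuineprojection}.

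Applying Lemma \ref{choice1} to the map $f:A\to B$ inside $\widehat{\mathcal{E}}$ yields an isomorphism of functors
$$\Pi_{B/A}\ \simeq\ p_\star\circ \epsilon^\star\ :\ \widehat{\mathcal{E}}/A\longrightarrow \widehat{\mathcal{E}}/\Pi_B(A).$$
The functor $\epsilon^\star$ is a base change functor along an extended fibration, hence by Proposition \ref{prop:extlocalclans} it restricts to a morphism of clans $\widehat{\mathcal{E}}(A)\to \widehat{\mathcal{E}}(\Pi_B(A)\times B)$. Since $\widehat{\mathcal{E}}(\Pi_B(A))$ is a $\pi$-clan by Lemma \ref{lem:extpresheafpiclan} and $p$ is a fibration there, the right adjoint $p_\star=\Pi_p$ restricts to a functor $\widehat{\mathcal{E}}(\Pi_B(A)\times B)\to \widehat{\mathcal{E}}(\Pi_B(A))$ sending fibrations to fibrations. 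Using the equivalence of clans $\mathcal{E}(X)\simeq \widehat{\mathcal{E}}(X)$ for every $X\in \mathcal{E}$, we recover $\Pi_p\circ \epsilon^\star:\mathcal{E}(A)\to \mathcal{E}(\Pi_B(A))$ as a functor isomorphic to $\Pi_{B/A}$.

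Tracing a fibration $p:E\to A$ through this composite yields $\Pi_{B/A}(E,p)=(\Pi_B(E,fp),\Pi_B(p))\in \mathcal{E}(\Pi_B(A))$, which is precisely the statement that $\Pi_B(p):\Pi_B(E,fp)\to \Pi_B(A)$ is a fibration. The main obstacle is checking that $p_\star=\Pi_p$ applied to an extended fibration in $\widehat{\mathcal{E}}(\Pi_B(A)\times B)$ is again an extended fibration; this is exactly what is encoded in Lemma \ref{lem:extpresheafpiclan}, which lifts the $\pi$-clan structure from $\mathcal{E}$ to the sub-$\pi$-clans of extended fibrations inside $\widehat{\mathcal{E}}$.
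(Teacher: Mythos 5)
Your proposal is correct and follows essentially the same route as the paper: pass to the presheaf category $\Hat{\mathcal{E}}$, apply Lemma \ref{choice1} there, and use Lemma \ref{lem:extpresheafpiclan} together with the stability of extended fibrations under base change to restrict the composite $\Pi_p\circ\epsilon^\star$ back to the local clans $\mathcal{E}(A)\to\mathcal{E}(\Pi_B(A))$. Your explicit remark that the Yoneda embedding preserves the internal product $\Pi_B(A,f)$ (Lemma \ref{yonedapreservesintproduct}), so that the object $\Pi_B(A)$ and the maps $\epsilon$ and $p$ computed in $\Hat{\mathcal{E}}$ are genuinely the ones living in $\mathcal{E}$, is a detail the paper leaves implicit but which is indeed needed for the transfer to work.
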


\begin{proof} This essentially follows from Lemma \ref{choice1}, except
that the category $\mathcal{E}$ is not locally cartesian closed.
The trick is to replace the category  $\mathcal{E}$ by the presheaf category  $ \Hat{\mathcal{E}}$ 
and use Lemma \ref{lem:extpresheafpiclan}.
More precisely, it follows from Lemma \ref{choice1} that
the functor 
$$\Pi_{B/A}:\ \Hat{\mathcal{E}}/A\to  \Hat{\mathcal{E}}/\Pi_B(A)$$
defined by putting $\Pi_{B/A}(E,p)=(\Pi_B(E,fp),\Pi_B(p))$
 for an object $(E,p)\in  \Hat{\mathcal{E}}/A$
  is isomorphic to the composite
 $$\xymatrix{
  \Hat{\mathcal{E}}/A \ar[rr]^(0.4){\epsilon^\star} &&  \Hat{\mathcal{E}}/ \Pi_B(A)\! \times \! B \ar[rr]^{\Pi_p} && \Hat{\mathcal{E}}/ \Pi_B(A)
 }$$
 The first functor induces a functor $\epsilon^\star: \Hat{\mathcal{E}}(A)\to  \Hat{\mathcal{E}}(\Pi_B(A)\! \times \! B  )$
 since the base change of an extended fibration is an extended fibration.
By Lemma \ref{lem:extpresheafpiclan}, the  second functor induces a functor $\Pi_p: \Hat{\mathcal{E}}(\Pi_B(A)\! \times \! B  )\to \Hat{\mathcal{E}}(\Pi_B(A))$.
Thus, if $p:E\to A$ is an extended fibration, then
so is the map $\Pi_B(p):\Pi_B(E,fp)\to \Pi_B(A,f)$.
This shows that the map $\Pi_B(p):\Pi_B(E,fp)\to \Pi_B(A,f)$
 is a fibration when $p:E\to A$ is a fibration.
 \end{proof} 

\medskip

 \begin{lemma}\label{choicepitribe}
 Let $f:A\to B$ be a fibration in a $\pi$-clan $ \mathcal{E}$.
 Then the functor $\Pi_B\circ  \Sigma_f: \mathcal{E}(A)\to  \mathcal{E}$
 is isomorphic to the composite 
 $$\xymatrix{
  \mathcal{E}(A) \ar[rr]^(0.4){\epsilon^\star} && \mathcal{E}(\Pi_B(A)\! \times \! B) \ar[rr]^{p_\star} && \mathcal{E}(\Pi_B(A)) 
   \ar[rr]^{\Sigma_{\Pi_B(A)}} && \mathcal{E}
 }$$
 where $\Pi_B(A):=\Pi_B(A,f)$.
\end{lemma}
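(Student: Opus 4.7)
The plan is to derive this lemma directly from Lemma \ref{prodfib}, by composing both sides with the forgetful functor $\Sigma_{\Pi_B(A)} : \mathcal{E}(\Pi_B(A)) \to \mathcal{E}$.

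First I would observe that, directly from the definition of the functor $\Pi_{B/A}:\mathcal{E}(A) \to \mathcal{E}(\Pi_B(A))$ given just before Lemma \ref{prodfib}, we have
$$\Sigma_{\Pi_B(A)} \circ \Pi_{B/A}(E,p) = \Sigma_{\Pi_B(A)}\bigl(\Pi_B(E,fp),\Pi_B(p)\bigr) = \Pi_B(E,fp) = \Pi_B \circ \Sigma_f(E,p)$$
for every object $(E,p)\in \mathcal{E}(A)$. This identity is natural in $(E,p)$, and hence
$$\Pi_B \circ \Sigma_f \;=\; \Sigma_{\Pi_B(A)} \circ \Pi_{B/A}$$
as functors $\mathcal{E}(A) \to \mathcal{E}$. (This is where the fact, already established in Lemma \ref{prodfib}, that $\Pi_B(p)$ is a fibration is needed, since otherwise the expression $\Sigma_{\Pi_B(A)} \circ \Pi_{B/A}$ would not even be well-defined as a composite of the named functors.)

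Next I would invoke Lemma \ref{prodfib} itself, which provides a canonical isomorphism
$$\Pi_{B/A} \;\simeq\; p_\star \circ \epsilon^\star : \mathcal{E}(A) \longrightarrow \mathcal{E}(\Pi_B(A)),$$
where $\epsilon : \Pi_B(A) \times B \to A$ is the evaluation and $p : \Pi_B(A)\times B \to \Pi_B(A)$ is the projection. Composing this isomorphism with $\Sigma_{\Pi_B(A)}$ and using the identity from the previous paragraph gives
$$\Pi_B \circ \Sigma_f \;\simeq\; \Sigma_{\Pi_B(A)} \circ p_\star \circ \epsilon^\star,$$
which is precisely the asserted isomorphism.

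There is no substantive obstacle here: all the heavy lifting (namely the fact that the relevant internal products exist, are themselves fibrations, and enjoy the Beck--Chevalley isomorphism of Lemma \ref{choice1} transported to the clan setting via the extended-presheaf trick) has already been done in Lemma \ref{prodfib}. The only point requiring care is verifying that $\Sigma_{\Pi_B(A)} \circ \Pi_{B/A}$ really does coincide with $\Pi_B \circ \Sigma_f$ on morphisms as well as objects, but this is immediate because $\Sigma_{\Pi_B(A)}$ and $\Sigma_f$ are both literal forgetful functors that leave morphisms untouched, while $\Pi_{B/A}$ applied to a morphism $u:(E,p)\to (E',p')$ in $\mathcal{E}(A)$ yields $\Pi_B(u)$ regarded as a morphism over $\Pi_B(A)$, whose underlying map in $\mathcal{E}$ is again $\Pi_B(u)=\Pi_B(\Sigma_f u)$.
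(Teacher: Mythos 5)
Your proof is correct and follows exactly the route of the paper's own argument: the identity $\Sigma_{\Pi_B(A)}\circ\Pi_{B/A}=\Pi_B\circ\Sigma_f$ holds by definition of $\Pi_{B/A}$, and the conclusion then follows by composing the isomorphism $\Pi_{B/A}\simeq p_\star\circ\epsilon^\star$ of Lemma \ref{prodfib} with $\Sigma_{\Pi_B(A)}$. Your version merely spells out the details (well-definedness of the composite and agreement on morphisms) that the paper leaves implicit.
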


\begin{proof} The following square of functors commutes by definition of the functor $\Pi_{B/A}$,
$$\xymatrix{
 \mathcal{E}/A  \ar[d]_{\Sigma_f}  \ar[rr]^(0.5){\Pi_{B/A}}& &    \mathcal{E}/\Pi_B(A) \ar[d]^{\Sigma_{\Pi_B(A)}} \\
 \mathcal{E}/B  \ar[rr]^(0.5){\Pi_B} &&\mathcal{E}
}$$
The result then follows from Lemma \ref{prodfib}.
\end{proof}

\begin{prop}\label{distributivitylaw2} If $f:A\to B$ and $g:B\to C$
are fibrations in a $\pi$-clan $\mathcal{E}$,
then following diagram of functors commutes up to a canonical isomorphism,
$$\xymatrix{
 \mathcal{E}(A) \ar[rr]^{\Sigma_f}\ar[rd]_{\epsilon_A^\star} &&\mathcal{E}(B)  \ar[rr]^{\Pi_g} && \mathcal{E}(C)\\
&  \mathcal{E}(g^\star \Pi_g(A))  \ar[rr]^{\Pi_p}& &   \mathcal{E}(\Pi_g(A)) \ar[ru]_{\Sigma_q} & \\
}$$
where $ \Pi_g(A)= \Pi_g(A,f)$, where $\epsilon_A: g^\star \Pi_g(A) \to A$ is the counit of the adjunction $g^\star \dashv \Pi_g$,
where $p$ is the projection $g^\star \Pi_g(A) \to \Pi_g(A)$ and where  $q$
is the structure map $\Pi_g(A)\to C$.
\end{prop}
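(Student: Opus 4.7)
The plan is to reduce to Lemma \ref{choicepitribe} applied to the fibration $f:(A,gf)\twoheadrightarrow (B,g)$ inside the $\pi$-clan $\mathcal{E}(C)$. By Proposition \ref{localisationpitribe}, $\mathcal{E}(C)$ is again a $\pi$-clan, and by Lemma \ref{compositebasechangetribecor2} there are canonical isomorphisms of clans
\[
\mathcal{E}(C)(B,g)=\mathcal{E}(B),\qquad \mathcal{E}(C)(A,gf)=\mathcal{E}(A),\qquad \mathcal{E}(C)(\Pi_g(A),q)=\mathcal{E}(\Pi_g(A)).
\]
Under the first two, the summation functor $\Sigma_f$ computed internally in $\mathcal{E}(C)$ coincides with $\Sigma_f:\mathcal{E}(A)\to \mathcal{E}(B)$, since both send $(E,p)$ to $(E,fp)$.

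Next I would use Lemma \ref{compositionelementary1} to identify the elementary morphism $e_{(B,g)}:\mathcal{E}(C)\to \mathcal{E}(C)(B,g)$ with the base change $g^\star:\mathcal{E}(C)\to \mathcal{E}(B)$, and then pass to right adjoints via Proposition \ref{pitribecartclosed} to identify $\Pi_{(B,g)}$ with $\Pi_g$. In particular $\Pi_{(B,g)}(A,gf)=\Pi_g(A)$, carrying the structure map $q:\Pi_g(A)\to C$, and the counit of $e_{(B,g)}\dashv \Pi_{(B,g)}$ at $(A,gf)$ agrees with the counit $\epsilon_A:g^\star\Pi_g(A)\to A$ of $g^\star\dashv \Pi_g$ at $(A,f)$. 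The fiber product $\Pi_{(B,g)}(A,gf)\times_{(C,1_C)}(B,g)$ formed in $\mathcal{E}(C)$ is the object $g^\star\Pi_g(A)$, whose first projection equals $p:g^\star\Pi_g(A)\to \Pi_g(A)$. Finally, the forgetful functor $\Sigma_{\Pi_{(B,g)}(A,gf)}:\mathcal{E}(C)(\Pi_g(A),q)\to \mathcal{E}(C)$ is identified with $\Sigma_q:\mathcal{E}(\Pi_g(A))\to \mathcal{E}(C)$.

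With all these translations in place, Lemma \ref{choicepitribe} applied to $f:(A,gf)\twoheadrightarrow(B,g)$ in the $\pi$-clan $\mathcal{E}(C)$ produces a canonical isomorphism
\[
\Pi_{(B,g)}\circ \Sigma_f\ \cong\ \Sigma_{\Pi_{(B,g)}(A,gf)}\circ \Pi_p\circ \epsilon^\star,
\]
which transports, under the identifications above, to the desired $\Pi_g\circ \Sigma_f\cong \Sigma_q\circ \Pi_p\circ \epsilon_A^\star$.

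The hard part will not be any single logical step but the bookkeeping: one must verify, under the clan-isomorphism $\mathcal{E}(C)(B,g)\simeq \mathcal{E}(B)$ and its relatives, that $e_{(B,g)}$, its right adjoint $\Pi_{(B,g)}$, the relevant counit, the projection $p$, and the summation $\Sigma$ all correspond to the expected functors in $\mathcal{E}$, and that these identifications are natural in the argument. Once this is checked, the proposition follows formally from Lemma \ref{choicepitribe} applied inside the slice.
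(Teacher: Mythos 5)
Your proposal is correct and follows exactly the paper's own route: the paper's proof consists precisely of the one line "This follows from Lemma \ref{choicepitribe} applied to the map $f:(A,gf)\to (B,g)$ in the $\pi$-clan $\mathcal{E}(C)$." Your additional bookkeeping of the identifications via Lemma \ref{compositebasechangetribecor2} and Lemma \ref{compositionelementary1} merely spells out what the paper leaves implicit.
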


\begin{proof} This follows from Lemma \ref{choicepitribe} applied to  the map $f:(A,gf)\to (B,g)$ in the
$\pi$-clan $\mathcal{E}(C)$.
\end{proof}

\begin{prop}\label{productmorphismofclan} If $f:A\to B$ be a fibration in a $\pi$-clan,
 then the functor $\Pi_f:\mathcal{E}(A)\to \mathcal{E}(B)$
 is a morphism of clans.
 \end{prop}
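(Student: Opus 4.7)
My plan is to verify the three defining conditions of a morphism of clans (Definition \ref{defmorphismclans}) in turn. Since $\Pi_f$ is right adjoint to $f^\star$ by Proposition \ref{pitribecartclosed}, it will preserve all existing limits; in particular it sends the terminal object $(A,1_A)$ to the terminal object $(B,1_B)$ and preserves pullbacks of fibrations. Thus the ``terminal'' and ``base change of fibration'' clauses of Definition \ref{defmorphismclans} will be automatic, and only the preservation of fibrations will require work.

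For that step I plan to avoid reasoning directly in the (possibly non-locally-cartesian-closed) category $\mathcal{E}$ and instead reduce everything to a single application of Lemma \ref{prodfib} inside the $\pi$-clan $\mathcal{E}(B)$, which is again a $\pi$-clan by Proposition \ref{localisationpitribe}. The key identifications I will use are $\mathcal{E}(A)=\mathcal{E}(B)(A,f)$ from Lemma \ref{compositebasechangetribecor2}, together with the fact (Proposition \ref{localisationpitribe}) that under this identification the functor $\Pi_f$ coincides with the internal-product functor $\Pi_{(A,f)}$ of $\mathcal{E}(B)$.

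Given a fibration $u:(E,p)\to(E',p')$ in $\mathcal{E}(A)$, I will reinterpret its data inside $\mathcal{E}(B)$: the map $p':(E',fp')\to (A,f)$ is a fibration in $\mathcal{E}(B)$, and $u:(E,fp)\to(E',fp')$ is a fibration in $\mathcal{E}(B)$ lying over $(A,f)$ (using $p'u=p$ and the fact that both $u$ and $p'$ are fibrations in $\mathcal{E}$). I then apply Lemma \ref{prodfib} \emph{inside} $\mathcal{E}(B)$, taking $p'$ in the role of the lemma's ``$f$'' (so that its codomain ``$B$'' becomes $(A,f)$) and $u$ in the role of ``$p$''. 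The conclusion, unwound via the identification $\mathcal{E}(A)=\mathcal{E}(B)(A,f)$, will be precisely that $\Pi_f(u):\Pi_f(E,p)\to\Pi_f(E',p')$ is a fibration in $\mathcal{E}(B)$, hence a fibration in $\mathcal{E}$.

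The main obstacle is the bookkeeping that makes this dictionary rigorous: one has to check that under the identification $\mathcal{E}(B)(A,f)\simeq\mathcal{E}(A)$, the object $(A,f)\in\mathcal{E}(B)$ really does play the role of the base ``$B$'' of Lemma \ref{prodfib}, so that the lemma's internal product ``$\Pi_B$'' translates to the desired $\Pi_f$, and the source objects $(E,p)$ and $(E',p')$ of $u$ correspond correctly to the lemma's ``$E$'' and ``$A$''. Once this translation is set up, the proof reduces to a single invocation of the lemma with no further computation.
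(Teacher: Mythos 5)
Your proposal is correct and follows essentially the same route as the paper: the paper's proof reads "We may suppose that $B=1$; by Lemma \ref{prodfib} the functor $\Pi_A$ takes fibrations to fibrations; it preserves base changes and terminal objects since it is a right adjoint." Your explicit unwinding of the identification $\mathcal{E}(A)=\mathcal{E}(B)(A,f)$ via Lemma \ref{compositebasechangetribecor2} and Proposition \ref{localisationpitribe} is precisely the bookkeeping that justifies the paper's terse reduction to $B=1$, so the two arguments coincide.
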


 \begin{proof} We may suppose that $B=1$.
 By Lemma \ref{prodfib}, the functor $\Pi_A: \mathcal{E}(A)\to \mathcal{E}$ takes a fibration to a fibration.
 The functor preserves base changes and terminal
 objects, since ot is a right adjoint.
  \end{proof} 

\bigskip

Recall from \ref{defembedding}  a morphism of clans  $F:\mathcal{E}\to \mathcal{E}'$
 is said to be an {\it embedding} if it is fully faithful and if it reflects fibrations.

 \begin{defi} \label{embeddingpiclan} We shall say that a morphism of $\pi$-clans  $F:\mathcal{E}\to \mathcal{E}'$
 is an {\it embedding} if it is it is fully faithful and if it reflects fibrations.
  \end{defi}

 \begin{defi} \label{subpiclan=embedding} If  $\mathcal{E}$ is a $\pi$-clan,
 we shall that a sub-clan $\mathcal{L}\subseteq \mathcal{E}$
 is {\it $\pi$-closed} if $\mathcal{L}$
is a $\pi$-clan and the inclusion functor 
$\mathcal{L}\to \mathcal{E}$ is $\pi$-closed.
\end{defi}

\subsection{Products in type theory}

If $A$ is an object of a $\pi$-clan $ \mathcal{E}$
and  $E=(E,p)\in \mathcal{E}(A)$, let us put
$$\prod_{x:A} E(x)\defeq \Pi_A(E).$$
For typographic reasons, we may write $ \prod(x:A) E(x)$ instead of  $\prod_{x:A} E(x)$.
In type theory, the internal product  is created by the {\it $\Pi$-formation rule}
 \[
\begin{prooftree} 
 x:A \vdash E(x):Type
 \justifies
 \vdash \prod_{x:A} E(x):Type
\end{prooftree}
\]
The counit of the adjunction $e_A\dashv \Pi_A$
is the evaluation $\epsilon:\Pi_A(E)\times A\to E$. 
It is a map in $\mathcal{E}(A)$ and 
we have $\epsilon(s,x)=s(x)$ by definition.
Thus, the evaluation $\epsilon$ is 
created by the $\Pi$-{\it elimination rule}:
\[
\begin{prooftree} 
\vdash s:\prod(x:A)E(x) \quad \quad \vdash a:A
 \justifies
\vdash s(a):E(a)
\end{prooftree}
\]
The map $\epsilon:\Pi_A(E)\times A\to E$
defines the  internal family of {\it projections}
$$a:A\vdash \epsilon(a): \prod_{x:A} E(x) \to E(a).$$
 It follows from the adjunction $e_A\dashv \Pi_A$
 that for every object $B\in \mathcal{E}$ and every internal family of maps
  $$x:A\vdash f(x): B\to E(x)$$
 there exits  a unique map 
 $$f: B\to  \prod_{x:A} E(x)$$
 such $\epsilon(x)\circ f=f(x)$ for every $x:A$.
 Let us put $\lambda(x:A)f(x)\defeq f$.
 This defines an operation which takes a term $f(x,y):E(x)$
 in context $(x:A, y:B)$ to a
 term $\lambda(x:A)f(x,y): \Pi_A(E,p) $
 in context $y:B$. In type theory, the operation
 is provided by the $\Pi$-{\it introduction rule} 
 \[
\begin{prooftree} 
 x:A, y:B  \vdash f(x,y):E(x)
 \justifies
y:B  \vdash \lambda(x:A)f(x,y):\prod(x:A) E(x)
\end{prooftree}
\]

\bigskip

It follows from Lemma \ref{choicepitribe} that the 
 following general {\it distributivity law} holds:
$$\prod_{x:A} \sum_{y:B(x)} E(x,y)=\sum_{s:\prod_{x:A} B(x) } \ \  \prod_{x:A} E(x, s(x))$$
for dependant types
$$x:A \vdash B(y):Type \quad {\rm and} \quad x:A,  y:B(x)\vdash E(x,y):Type $$

\bigskip

If $A$ is an object of a $\pi$-clan $\mathcal{E}$, 
then the base change functor $e_A:\mathcal{E}\to  \mathcal{E}(A)$
along the map $A\to 1$
 is $\pi$-closed by \ref{charhomopitribe3}. 
 Moreover,  the diagonal $\delta_A:A\to A\times A$ is an element  of type $e_A(A)$.

\medskip

 \begin{thm} \label{genericelementlambdatribe} If $A$ is an object in a $\pi$-clan $\mathcal{E}$,
 then the morphism of  $\pi$-clans  $e_A:\mathcal{E}\to  \mathcal{E}(A)$ 
 is freely generated by the element $\delta_A:e_A(A)$.
 Thus, $\mathcal{E}(A)= \mathcal{E}[\delta_A]$.
 \end{thm}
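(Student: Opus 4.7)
The plan is to reduce the theorem to Theorem \ref{genericelement}, which already establishes the corresponding freeness property at the level of clans, and then check that the equivalence there restricts correctly to $\pi$-closed morphisms.

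More precisely, for any $\pi$-clan $\mathcal{R}$, I would denote by $\Hom_\pi(\mathcal{E}',\mathcal{R})$ the category of $\pi$-closed morphisms (with natural transformations as morphisms), and by $\Hom_\pi(\mathcal{E},\mathcal{R})_A$ the analogue of $\Hom(\mathcal{E},\mathcal{R})_A$ restricted to $\pi$-closed morphisms. The morphism $e_A$ is $\pi$-closed by Proposition \ref{charhomopitribe3}, so composition with $e_A$ induces a functor
\[
e_A^\star : \Hom_\pi(\mathcal{E}(A),\mathcal{R}) \to \Hom_\pi(\mathcal{E},\mathcal{R})_A, \qquad F\mapsto (F\circ e_A, F(\delta_A)).
\]
The goal is to prove this functor is an equivalence of categories. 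Since $\Hom_\pi(\mathcal{E}',\mathcal{R})$ is a full subcategory of $\Hom(\mathcal{E}',\mathcal{R})$ (a natural transformation between $\pi$-closed morphisms is just a natural transformation), the fully faithfulness of $e_A^\star$ at the $\pi$-level follows immediately from the fully faithfulness part of Theorem \ref{genericelement}.

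The core of the argument is essential surjectivity. Given a $\pi$-closed morphism $F:\mathcal{E}\to\mathcal{R}$ and an element $a:F(A)$, the proof of Theorem \ref{genericelement} constructs the morphism of clans
\[
G \;=\; a^\star \circ F_{(A)} : \mathcal{E}(A)\longrightarrow \mathcal{R}(FA)\longrightarrow \mathcal{R}
\]
together with a natural isomorphism $\phi: G\circ e_A\simeq F$ such that $\phi_A(G(\delta_A))=a$. I need to check that $G$ is actually $\pi$-closed. For this I would invoke Lemma \ref{localisationhomopitribe} to see that $F_{(A)}:\mathcal{E}(A)\to \mathcal{R}(FA)$ is a morphism of $\pi$-clans (since $F$ is), and Proposition \ref{charhomopitribe3} to see that the base-change functor $a^\star:\mathcal{R}(FA)\to\mathcal{R}$ is $\pi$-closed. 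The composition of two $\pi$-closed morphisms is $\pi$-closed, so $G$ belongs to $\Hom_\pi(\mathcal{E}(A),\mathcal{R})$, witnessing essential surjectivity of $e_A^\star$.

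The main obstacle (really the only nontrivial point beyond citing prior results) is the verification that the candidate lift $G=a^\star\circ F_{(A)}$ preserves internal products along fibrations; but this follows cleanly from the two facts quoted above once one knows that $\pi$-closedness is preserved under composition and under passage to slices. The previous theorem then takes care of everything else, and we conclude $\mathcal{E}(A)=\mathcal{E}[\delta_A]$ in the 2-category of $\pi$-clans.
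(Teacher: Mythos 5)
Your proposal is correct and follows essentially the same route as the paper: reduce to Theorem \ref{genericelement} and verify that the lift of $(F,a)$ is $\pi$-closed by writing it as a composite of a slice morphism and a base-change functor, invoking Lemma \ref{localisationhomopitribe} and Proposition \ref{charhomopitribe3}. The only immaterial difference is that you take the explicit lift $G=a^\star\circ F_{(A)}$ from the proof of the earlier theorem, whereas the paper takes an abstract lift $H$ with $e_A^\star(H)\simeq(F,a)$ and then shows $H$ is isomorphic to exactly such a composite $H(\delta_A)^\star\circ(He_A)_{(A)}$ (and it routes the fully-faithfulness step through the groupoid of isomorphisms $iHom$, since its $2$-cells of $\pi$-clans are natural isomorphisms rather than arbitrary natural transformations).
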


 \begin{proof} 
  If $ \mathcal{R}$ is a $\pi$-clan, let us denote 
  the category of morphisms of $\pi$-clans $\mathcal{E}\to \mathcal{R}$ by
  by $Hom_\pi(\mathcal{E}, \mathcal{R})$ (it is a groupoid, since a 2-cell
  is an isomorphism). Let us denote by $Hom_\pi(\mathcal{E}, \mathcal{R})_A$
  the groupoid whose objects are the pairs $(F,a)$, where $F:\mathcal{E} \to \mathcal{R}$
is a morphism of $\pi$-clans and $a$ is an element of type $F(A)$.
A map $(F,a)\to (G,b)$ in this category is a natural isomorphism $\phi:F\to G$
such that $\phi_A(a)=b$. 
We shall prove that the functor
\begin{equation}\label{restrictionalongfreepi}
e'_A:Hom_\pi(\mathcal{E}(A),  \mathcal{R})\to Hom_\pi(\mathcal{E}, \mathcal{R})_A  \, 
\end{equation}
defined by putting  $e'_A(H)=(H\circ e_A,H(\delta_A))$ 
is an equivalence of groupoids. We shall use Theorem \ref{genericelement}.
Let $Hom(\mathcal{E},  \mathcal{R})$ be the category whose objects
are the morphism of clans $\mathcal{E}\to  \mathcal{R}$
and whose morphism are the natural transformation.
Then the functor 
$$
e^\star_A:Hom(\mathcal{E}(A),  \mathcal{R})\to Hom(\mathcal{E}, \mathcal{R})_A  \, 
$$
defined by putting  $e^\star_A(H)=(H\circ e_A,H(\delta_A))$ 
is an equivalence of catefories by Theorem \ref{genericelement}.
Let us denote by $iHom(\mathcal{E},  \mathcal{R})$
the groupoid of isomorphisms of the category $Hom(\mathcal{E},  \mathcal{R})$.
It follows from Theorem \ref{genericelement} that the functor 
$$
ie^\star_A:iHom(\mathcal{E}(A),  \mathcal{R})\to iHom(\mathcal{E}, \mathcal{R})_A  \, 
$$
induced by $e_A^\star$ is an equivalence of groupoids.
We then have the following commutative square  of functors,
 $$
 \xymatrix{
 Hom_\pi(\mathcal{E}(A),  \mathcal{R}) \ar[rr]^{e'_A} \ar[d] && Hom_\pi(\mathcal{E}, \mathcal{R})_A \ar[d]\\
  iHom(\mathcal{E}(A),  \mathcal{R}) \ar[rr]^{ie_A^\star} && iHom(\mathcal{E}, \mathcal{R})_A
  }
 $$
 where the vertical functors are inclusions of full sub-groupoids.
  The functor $e'_A$ functor is fully faithful, since 
 the functor $ie_A^\star$ is an equivalence of  groupoids.
It remains to show that the functor $e'_A$ is essentially surjective.
 But if $(F,a)\in Hom_\pi (\mathcal{E},  \mathcal{R})_A $, then there exists
 a functor $H\in Hom(\mathcal{E}(A),  \mathcal{R})$ such that
 $e_A^\star(H)\simeq (F,a)$, since the  functor $e_A^\star$ is essentially surjective.
  Let us show that $H\in  Hom_\pi(\mathcal{E}(A),  \mathcal{R})$.
  The following square is cartesian by \ref{genericfiber},
 since the functor $H$ preserves base changes of fibrations,
 $$  \xymatrix{
H(E,p) \ar[d]\ar[rr]^{H(p,1_E)} & & He_A(E) \ar[d]^{He_A(p)} \\
1 \ar[rr]^{H(\delta_A)}&& He_A(A)\ .
}$$
It follows that the functor $H$ is isomorphic to the 
composite
$$\xymatrix{
 \mathcal{E}(A)\ar[rr]^(0.4){(He_A)_{(A)}} && \mathcal{R}(He_A(A))\ar[rr]^(0.6){H(\delta_A)^\star} &&  \mathcal{R}
 }
 $$
where the functor $(He_A)_{(A)}$ is induced by the functor $He_A: \mathcal{E}\to \mathcal{R}$.
The morphism of clans $He_A$ is $\pi$-closed since $He_A$ is isomorphic to $F$
and $F$ is $\pi$-closed. It then follows by Lemma \ref{localisationhomopitribe}
that the functor $(He_A)_{(A)}$ is $\pi$-closed.
The base change functor  $H(\delta_A)^\star$ is also $\pi$-closed
by Proposition \ref{charhomopitribe3}.
This shows that the functor $H$ is $\pi$-closed and hence  that $H\in  Hom_\pi(\mathcal{E}(A),  \mathcal{R})$.
We have proved that the functor $e'_Ai$ is an equivalence of categories.
\end{proof}

 If $p:E\to A$ is a fibration in a $\pi$-clan  $ \mathcal{E}$,
 let us denote by 
 $$e_{p}:\mathcal{E}\to  \mathcal{E}(\Pi_A(E))$$ the
base change functor along the map $\Pi_A(E)\to 1$.
If $\epsilon:\Pi_A(E)\times A\to E$ is the evaluation, then we have $p\epsilon =p_1$.
It follows that the map $s_p:=(p_1,\epsilon):\Pi_A(E)\times A\to \Pi_A(E)\times E$
 is a section of the fibration $e_p(p):\Pi_A(E)\times p: \Pi_A(E)\times E \to \Pi_A(E)\times A$.

 \begin{prop} \label{genericelementlambdatribe2} If $p:E\to A$ is a fibration in a
 $\pi$-clan  $ \mathcal{E}$, then the morphism of $\pi$-clans $e_{p}:\mathcal{E}\to  \mathcal{E}(\Pi_A(E))$
 is freely generated by the section $s_p:e_p(E)\to e_p(A)$ of the fibration $e_p(p)$.
 Thus, $\mathcal{E}(\Pi_A(E) )= \mathcal{E}[s_p]$.
 \end{prop}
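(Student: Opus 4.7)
The plan is to reduce the statement to Theorem \ref{genericelementlambdatribe} applied to the object $\Pi_A(E)\in\mathcal{E}$, which says that the morphism of $\pi$-clans $e_p:\mathcal{E}\to \mathcal{E}(\Pi_A(E))$ is freely generated by the generic element $\delta_{\Pi_A(E)}:e_p(\Pi_A(E))$. The remaining task is then to convert, naturally in a morphism of $\pi$-clans $F:\mathcal{E}\to\mathcal{R}$, elements of $F(\Pi_A(E))$ into sections of $F(p):F(E)\to F(A)$, and to check that the generic element $\delta_{\Pi_A(E)}$ is sent to the distinguished section $s_p$.

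For the conversion, I would argue as follows. By Proposition \ref{homopitribe2}, any morphism of $\pi$-clans $F:\mathcal{E}\to\mathcal{R}$ preserves the internal product of $p$ along $A\to 1$, so the canonical comparison map $F(\Pi_A(E))\to \Pi_{F(A)}(F(E))$ is an isomorphism in $\mathcal{R}$. The universal property of the internal product in $\mathcal{R}$ then puts elements $a:1\to F(\Pi_A(E))$ in natural bijection with sections of $F(p)$, via $a\mapsto F(\epsilon)\circ (a\times F(A))$. Because the bijection is natural with respect to natural isomorphisms between $\pi$-clan morphisms, it promotes to an equivalence of groupoids
\[
Hom_\pi(\mathcal{E},\mathcal{R})_{\Pi_A(E)} \;\simeq\; Hom_\pi(\mathcal{E},\mathcal{R})^{\mathrm{sect}(p)},
\]
where the right-hand side is the groupoid of pairs $(F,t)$ with $t$ a section of $F(p)$. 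Composing with the equivalence from Theorem \ref{genericelementlambdatribe} yields an equivalence
\[
Hom_\pi(\mathcal{E}(\Pi_A(E)),\mathcal{R}) \;\simeq\; Hom_\pi(\mathcal{E},\mathcal{R})^{\mathrm{sect}(p)},
\]
which is the asserted universal property, provided it sends $H\mapsto (H\circ e_p, H(s_p))$.

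The main obstacle, as I see it, is to verify that the element $\delta_{\Pi_A(E)}$ is identified with the section $s_p$ under the above bijection. By naturality, it suffices to check this in the universal case $\mathcal{R}=\mathcal{E}(\Pi_A(E))$, $F=e_p$. Unwinding the definitions, a section $t:e_p(A)\to e_p(E)$ of $e_p(p)$ in $\mathcal{E}(\Pi_A(E))$ is the same as a pair $(p_1,t_2):\Pi_A(E)\times A\to \Pi_A(E)\times E$ with $t_2:\Pi_A(E)\times A\to E$ a map over $A$; while an element $a:1\to e_p(\Pi_A(E))$ in $\mathcal{E}(\Pi_A(E))$ is the same as a map $a_2:\Pi_A(E)\to \Pi_A(E)$ in $\mathcal{E}$ (given by the second projection of $a$). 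The two descriptions are linked by the adjunction $e_A\dashv \Pi_A$ in $\mathcal{E}$, which sends $a_2\mapsto \epsilon\circ(a_2\times A)$. Applying this to the diagonal $\delta_{\Pi_A(E)}$, whose second component is $1_{\Pi_A(E)}$, produces $t_2=\epsilon$, so the associated section is $s_p=(p_1,\epsilon)$, as required.
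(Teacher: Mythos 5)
Your proof is correct and follows essentially the same route as the paper, which simply invokes Theorem \ref{genericelementlambdatribe} together with the canonical bijection between elements of $\Pi_A(E)$ and sections of $p$, under which a generic element corresponds to a generic section. Your write-up usefully fills in the details the paper leaves implicit (the role of Proposition \ref{homopitribe2} in making the bijection natural in $F$, and the computation identifying $\delta_{\Pi_A(E)}$ with $s_p=(p_1,\epsilon)$), but the underlying argument is the same.
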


\begin{proof} This follows from theorem \ref{genericelementlambdatribe},
since there is canonical bijection between the sections of the fibration $p:E\to A$
and the elements of type $\Pi_A(E)$. In this bijection, a generic section
will correspond to a generic element.
\end{proof}

If $A$ and $B$ are two objects of a $\pi$-clan, let us denote 
by 
$$e_{[A,B]}:\mathcal{E}\to  \mathcal{E}([A,B])$$
the
base change functor along the map $[A,B]\to 1$.
If $\epsilon:[A,B]\times A\to B$ is the evaluation, then 
$f_{AB}:=(p_1,\epsilon):[A,B]\times A\to [A,B]\times B$
is a map $e_{[A,B]}(A)\to e_{[A,B]}(B)$.

 \begin{prop} \label{genericelementlambdatribe3} If $A$ and $B$ are two objects in a $\pi$-clan
$ \mathcal{E}$, then the morphism of $\pi$-clans $e_{[A,B]}:\mathcal{E}\to  \mathcal{E}([A,B])$
 is freely generated by the map $f_{AB}:e_{[A,B]}(A)\to e_{[A,B]}(B)$. 
 Thus, $\mathcal{E}([A,B])= \mathcal{E}[f_{AB}]$.
 \end{prop}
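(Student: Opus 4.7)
The plan is to deduce the statement from the previous Proposition \ref{genericelementlambdatribe2} applied to the fibration $p_2:B\times A \to A$. By Remark \ref{rem:exponentialareprod}, we have $[A,B]=\Pi_A(B\times A,p_2)$, so the functor $e_{[A,B]}$ is literally the functor $e_{p_2}$. Proposition \ref{genericelementlambdatribe2} then gives that the morphism of $\pi$-clans $e_{[A,B]}:\mathcal{E}\to \mathcal{E}([A,B])$ is freely generated by the section $s_{p_2}=(p_1,\epsilon):e_{[A,B]}(A)\to e_{[A,B]}(B\times A)$ of the fibration $e_{[A,B]}(p_2)$, where $\epsilon:[A,B]\times A\to B\times A$ is the evaluation of the internal product.

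The remaining task is to replace the datum ``section of $e_{[A,B]}(p_2)$'' by the datum ``map $f_{AB}:e_{[A,B]}(A)\to e_{[A,B]}(B)$''. For this, I use the product structure. Since $e_{[A,B]}$ is a morphism of clans, it preserves finite products, so in the clan $\mathcal{E}([A,B])$ we have $e_{[A,B]}(B\times A)=e_{[A,B]}(B)\times_{\top} e_{[A,B]}(A)$ and the fibration $e_{[A,B]}(p_2)$ is identified with the second projection. Consequently, sections of $e_{[A,B]}(p_2)$ are in natural bijection with maps $e_{[A,B]}(A)\to e_{[A,B]}(B)$ in $\mathcal{E}([A,B])$, the bijection sending a section $s$ to its composite with the first projection. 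Unfolding the formula $s_{p_2}=(p_1,\epsilon)=(p_1,\epsilon',p_2)$, where $\epsilon':[A,B]\times A\to B$ is the classical evaluation, one sees that the map associated with $s_{p_2}$ is precisely $(p_1,\epsilon')=f_{AB}$.

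The final step is to promote this bijection to an equivalence between the relevant comma categories. For any $\pi$-clan $\mathcal{R}$ and any morphism of $\pi$-clans $F:\mathcal{E}\to \mathcal{R}$, the morphism $F$ preserves products, so $F(p_2):F(B\times A)\to FA$ is (up to canonical isomorphism) the projection $FB\times FA\to FA$; hence sections of $F(p_2)$ correspond naturally to maps $FA\to FB$. This gives an isomorphism of groupoids between $Hom_\pi(\mathcal{E},\mathcal{R})_{p_2}$ (pairs $(F,s)$ with $s$ a section of $F(p_2)$) and $Hom_\pi(\mathcal{E},\mathcal{R})_{A,B}$ (pairs $(F,g)$ with $g:FA\to FB$), and this isomorphism is compatible with natural isomorphisms $\phi:F\to G$ because such $\phi$ commute with the product projections. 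Composing the equivalence furnished by Proposition \ref{genericelementlambdatribe2} with this isomorphism yields the required equivalence $e_{[A,B]}^{\star}:Hom_\pi(\mathcal{E}([A,B]),\mathcal{R})\to Hom_\pi(\mathcal{E},\mathcal{R})_{A,B}$. The only delicate point is the bookkeeping that identifies $s_{p_2}$ with $f_{AB}$ through the product isomorphism and checking naturality in $F$; once the identification $[A,B]=\Pi_A(B\times A,p_2)$ is in place, everything else is formal from Proposition \ref{genericelementlambdatribe2} and the fact that morphisms of $\pi$-clans preserve finite products.
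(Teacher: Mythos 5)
Your argument is correct, and it reaches the same destination as the paper by a slightly different factorization. The paper's own proof is a one-line reduction: it mimics the proof of Proposition \ref{genericelementlambdatribe2} by appealing directly to Theorem \ref{genericelementlambdatribe}, via the canonical bijection between maps $A\to B$ and elements of type $[A,B]$ (a generic map corresponding to a generic element). You instead take Proposition \ref{genericelementlambdatribe2} itself as a black box, apply it to the fibration $p_2:B\times A\to A$ using the identification $[A,B]=\Pi_A(B\times A,p_2)$ of Remark \ref{rem:exponentialareprod}, and then only have to supply the purely formal bijection between sections of a product projection and maps into the fibre factor --- which holds after applying any morphism of $\pi$-clans because such morphisms preserve finite products. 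The two routes are grounded in the same chain of canonical bijections (maps $\leftrightarrow$ sections $\leftrightarrow$ elements), but your decomposition has the small advantage of isolating exactly which new ingredient is needed beyond Proposition \ref{genericelementlambdatribe2}, namely product preservation, rather than re-running the elements-versus-sections argument; the paper's route is shorter on the page but leaves that intermediate step implicit. Your identification of the image of $s_{p_2}$ under this correspondence with $f_{AB}=(p_1,\epsilon)$ is the right bookkeeping, and the naturality check in $F$ is exactly what is needed to upgrade the bijection on objects to an equivalence of the comma groupoids.
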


\begin{proof} Similar to the proof of Proposition \ref{genericelementlambdatribe2}
\end{proof}

\newpage

\section{Theory of Tribes}

\subsection{Anodyne maps}

Recall that a map $u:A\to B$ in a category $\mathcal{E}$  is said to have the
{\it left lifting property} with respect to a map $f:X\to Y$,
and that $f$ is said to have the {\it right lifting property} 
with respect to $u$, if every commutative square
\begin{equation} \label{squareanofib}
\xymatrix{A\ar[d]_u \ar[r]^a& X\ar[d]^f \\
B \ar[r]^b &Y
}\end{equation}
has a diagonal filler $d:B\to X$ ($fd=b$ and $du =a$),
$$
\xymatrix{A\ar[d]_u \ar[r]^a& X\ar[d]^f \\
B \ar@{-->}[ru]^{d}  \ar[r]^b &Y
}
$$
We shall denote this relation by $u\pitchfork f$.

\medskip

If $\mathcal{K}$ is a class of maps in a category $ \mathcal{E}$,
we shall denote by ${}^\pitchfork \mathcal{K}$ (resp. $\mathcal{K}^\pitchfork$) the class of maps $f\in \mathcal{E}$
having the left (resp. right)  lifting property 
with respect to every map in $ \mathcal{K}$.

\medskip

\begin{defi} 
We say that a map in a clan 
is {\bf anodyne} if it has the left lifting property with respect
to every fibration.
 \end{defi}

We shall often picture an anodyne map $A\to B$ by an arrow with a tail $A\rightarrowtail B$.

\begin{prop}\label{anodyne closurecompositionret}  
The class of anodyne maps is closed under composition
and retracts. 
\end{prop}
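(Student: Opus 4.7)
The plan is to prove the two closure properties separately, using only the definition of anodyne map as ``left lifting property against every fibration''. Both arguments are instances of the general principle that for any class $\mathcal{K}$ of maps in a category, the class ${}^{\pitchfork}\mathcal{K}$ is closed under composition and retracts; no special property of fibrations in a clan is needed beyond the definition.

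For composition, let $u\colon A\rightarrowtail B$ and $v\colon B\rightarrowtail C$ be anodyne, and let $f\colon X\twoheadrightarrow Y$ be a fibration. Given a commutative square with top edge $a\colon A\to X$ and bottom edge $b\colon C\to Y$ filling $vu$ against $f$, I would first apply the lifting property of $u$ to the square with top $a$ and bottom $bv\colon B\to Y$, obtaining a diagonal $d_1\colon B\to X$ with $d_1u=a$ and $fd_1=bv$. The equality $fd_1=bv$ makes the square with top $d_1$ and bottom $b$ commute, so applying the lifting property of $v$ yields $d_2\colon C\to X$ with $d_2v=d_1$ and $fd_2=b$. Then $d_2(vu)=d_1u=a$ and $fd_2=b$, so $d_2$ is the required filler.

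For retracts, suppose $u\colon A\to B$ is a retract of an anodyne map $u'\colon A'\to B'$ in the arrow category, witnessed by maps $i\colon A\to A'$, $r\colon A'\to A$, $j\colon B\to B'$, $s\colon B'\to B$ with $ri=1_A$, $sj=1_B$, $u'i=ju$, and $us=ru'$. Given a lifting problem for $u$ against a fibration $f\colon X\twoheadrightarrow Y$ with top $a$ and bottom $b$, I would precompose with the retract data to form the square with top $ar\colon A'\to X$ and bottom $bs\colon B'\to Y$; one checks this commutes using $u's=\ldots$ and the retract identities. Since $u'$ is anodyne, there is a filler $d'\colon B'\to X$ with $d'u'=ar$ and $fd'=bs$. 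Then $d:=d'j\colon B\to X$ satisfies $du=d'ju=d'u'i=ari=a$ and $fd=fd'j=bsj=b$.

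I do not anticipate any real obstacle; the only point requiring a bit of care is keeping track of the retract identities in the second part, and this is entirely bookkeeping. Both proofs can be written in a few lines and belong to the general theory of lifting classes.
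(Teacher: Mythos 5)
Your proof is correct and takes essentially the same route as the paper: the paper simply observes that the anodyne maps are ${}^{\pitchfork}\mathcal{F}$ for $\mathcal{F}$ the class of fibrations and cites the general fact (Proposition \ref{triviftingprop}) that ${}^{\pitchfork}\mathcal{K}$ is closed under composition and retracts, which is exactly the general principle you state and then prove in detail. One trivial slip: the retract identity should read $ur=su'$ (both composites $A'\to B$) rather than $us=ru'$, which does not typecheck; your subsequent computations use the correct identities.
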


\begin{proof} If $\mathcal{F}$ denotes the class of fibrations, then the class ${}^\pitchfork \mathcal{F}$
is closed under composition and retracts by Proposition \ref{triviftingprop}.
\end{proof}

\medskip

Recall that a map $r:B\to A$ is called a {\it retraction}
of a map $i:A\to B$ if $ri=1_A$.
A map $i:A\to B$ is called a {\it split monomorphism}
if it admits a retraction $r:B\to A$.

\begin{lemma}\label{anodyne closure}  Every anodyne map is a split monomorphism.
If the composite $vu:A\to C$ of two maps $u:A\to B$ and $v:B\to C$
is anodyne and $v$ is anodyne, then $u$ is anodyne.
\end{lemma}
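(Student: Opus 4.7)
The plan is to split the lemma into its two assertions and handle each by a direct lifting argument.

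For the first assertion, I would exploit the axiom of a clan that the map $A\to 1$ is always a fibration. Given an anodyne map $u:A\to B$, consider the commutative square with top edge $1_A:A\to A$, bottom edge the unique map $B\to 1$, left edge $u$ and right edge the fibration $A\twoheadrightarrow 1$. Since $u$ is anodyne, there exists a diagonal filler $r:B\to A$, and the relation $ru=1_A$ coming from the upper triangle shows that $u$ is split monic.

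For the second assertion, I would use the first assertion to obtain a retraction $s:C\to B$ of the anodyne map $v$, so that $sv=1_B$. Now suppose $f:X\twoheadrightarrow Y$ is a fibration and
$$
\xymatrix{A\ar[d]_u \ar[r]^a & X\ar[d]^f \\ B\ar[r]^b & Y}
$$
is a lifting problem for $u$ (so $fa=bu$). I would convert it into a lifting problem for the anodyne map $vu$ by post-composing with $v$ on the left column and with $s$ on the bottom, producing the square
$$
\xymatrix{A\ar[d]_{vu}\ar[r]^a & X\ar[d]^f \\ C\ar[r]^{bs} & Y.}
$$
Commutativity of the new square is the one line check $bs\cdot vu = b(sv)u = bu = fa$. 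Since $vu$ is anodyne, this square admits a filler $d':C\to X$ with $d'\,vu=a$ and $fd'=bs$.

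Finally I would set $d:=d'v:B\to X$ and verify $du=d'vu=a$ and $fd=fd'v=bsv=b$, so that $d$ fills the original square. As $f$ and the square were arbitrary, $u$ has the left lifting property against every fibration, i.e.\ $u$ is anodyne. No step should present real difficulty; the only subtle point is spotting that the retraction $s$ provided by the first assertion is exactly what is needed to reduce the lifting problem for $u$ to one for the composite $vu$, so I would state the first assertion before the second and use it explicitly in the reduction.
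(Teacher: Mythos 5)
Your proof is correct and follows essentially the same route as the paper: both parts use the retraction of $v$ (obtained from the first assertion via the fibration $A\to 1$, exactly as in the paper) to transfer the lifting property from $vu$ to $u$. The only cosmetic difference is that the paper observes that $u$ is a codomain retract of $vu$ and cites the closure of anodyne maps under retracts, whereas you unfold that retract argument into an explicit diagonal-filler computation; the content is identical.
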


\begin{proof} If $u:A\to B$ is anodyne, then the following square
has a diagonal filler $r:B\to A$, since the map $A\to 1$
is a fibration. 
$$
\xymatrix{ 
A\ar[d]_{u} \ar@{=}[r]& A \ar[d]\\
B \ar[r] & 1
}
$$
Thus, $u$ is a split monomorphism.
Let us prove the last statement of the proposition.
The map $v:B\to C$ is a split monomorphism, since it is anodyne.
Let us choose a retraction $r:C\to B$ ($rv=1_B$).
The map $u:A\to B$ is then a (codomain) retract of the map $vu:A\to C$,
since $r(vu)=u$,
$$
 \xymatrix{ 
&A  \ar[dl]_u \ar[dr]^{vu}  & \\
B \ar[rr]^v  &  &  C   \ar@/^1.5pc/[ll]^-{r}
}
$$
Thus, $u$ is anodyne by \ref{anodyne closurecompositionret},
since $vu$ is anodyne.
\end{proof}

\begin{lemma} \label{charanodyne} A map $u:A\to B$ in a tribe
is anodyne if and only if  for every fibration $f:E\to B$ and every map $a:A\to E$
such that $fa=u$, there exists a map $s:B\to E$ such that $fs=1_B$
and $su=a$.
$$
\xymatrix{ & E\ar[d]^f\\
A\ar[ur]^a \ar[r]^u &B \ar@/_1.5pc/[u]_-{s}
}$$
\end{lemma}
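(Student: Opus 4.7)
The plan is to prove both directions, with the forward direction being essentially a specialization and the reverse direction requiring a pullback trick.

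For the forward implication, suppose $u:A\to B$ is anodyne. Given a fibration $f:E\to B$ and a map $a:A\to E$ with $fa=u$, I would form the commutative square
$$
\xymatrix{A\ar[d]_u \ar[r]^a& E\ar[d]^f \\
B \ar@{=}[r] &B
}
$$
which commutes because $fa=u=1_B\circ u$. Since $u$ is anodyne and $f$ is a fibration, a diagonal filler $s:B\to E$ exists, and by definition $fs=1_B$ and $su=a$.

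For the converse, suppose the stated condition holds, and let $f:X\to Y$ be an arbitrary fibration together with a square as in (\ref{squareanofib}); I need to produce a diagonal filler $d:B\to X$. The key step is to pull back $f$ along the bottom arrow $b:B\to Y$ (which is possible because fibrations are carrable in any clan), obtaining a cartesian square
$$
\xymatrix{B\times_Y X\ar[d]_{q}\ar[rr]^{p_2}& & X\ar[d]^f \\
B \ar[rr]^b && Y
}
$$
in which $q$ is a fibration by base change. Since $fa=bu$, the universal property of the pullback yields a unique map $a':A\to B\times_Y X$ satisfying $p_2 a'=a$ and $qa'=u$. Now I apply the hypothesis to the fibration $q:B\times_Y X\twoheadrightarrow B$ and the map $a'$ (whose image under $q$ is $u$), obtaining a section $s:B\to B\times_Y X$ with $qs=1_B$ and $su=a'$. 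Setting $d:=p_2 s$, the verifications $fd = fp_2 s = bqs = b$ and $du = p_2 s u = p_2 a' = a$ are immediate.

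There is no significant obstacle here: the whole content of the lemma is that the apparently weaker condition (lifting only against the identity square) is in fact equivalent to the full lifting property, and the base-change construction above is the standard way to make the reduction. The only point to double-check is that fibrations in a clan are carrable, which is built into Definition \ref{defclan}, so the pullback used in the converse direction genuinely exists.
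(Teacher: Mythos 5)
Your proof is correct and follows exactly the same route as the paper: the forward direction specializes the lifting property to the identity square, and the converse pulls the fibration back along $b$, applies the hypothesis to the resulting fibration $B\times_Y X\twoheadrightarrow B$ with the gap map $(u,a)$, and composes the section with $p_2$. No differences worth noting.
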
 

\begin{proof}  ($\Rightarrow$) The following commutative square has a diagonal filler,
since $u$ is anodyne and $f$ is a fibration.
$$
\xymatrix{ A \ar[d]_u \ar[r]^{a}& E\ar[d]^f\\
B\ar@{-->}[ur]^s \ar@{=}[r] &B
}$$
($\Leftarrow$) Let us shows that $u$ is anodyne.
For this we have to show that if $f:X\twoheadrightarrow Y$ is a fibration, 
then every commutative square
\begin{equation} \label{squareforlifting}
\xymatrix{A\ar[d]_u \ar[r]^a& X\ar[d]^f \\
B \ar[r]^b &Y
}\end{equation}
has a diagonal filler. Consider the 
 following commutative diagram, in which the right hand square is a pullback,
$$\xymatrix{
 A \ar[r]^(0.4){(u,a)} \ar[d]_u& B\times_Y X\ar[d]^{p_1}\ar[r]^(0.6){p_2} & X\ar[d]^f\\
B \ar@{-->}[ur]^s \ar@{=}[r]  & B \ar[r]^b & Y.     
}
$$
The projection $p_1$ is a fibration by base change, since $f$ is a fibration.
Hence there exists a map $s:B\to B\times_Y X$ such that $p_1s=1_B$
and $su=(u,a)$ by the hypothesis on $u$.
The composite $p_2 s:B\to X$ is then a diagonal filler
of square (\ref{squareforlifting}).
\end{proof}

\begin{defi} \label{defi:AF-fact}
We say that a factorisation $f=pu:A\rightarrowtail  E\twoheadrightarrow B$
of a map  $f:A\to B$  in a clan is an  $AF$-{\bf factorization} 
if $u$ is anodyne and $p$ is a fibration.
\end{defi}

\begin{defi}  \label{defpretypos} We say that
a clan $\mathcal{E}$
is a {\bf tribe}, if the following
two conditions are satisfied
\begin{itemize}
\item{}  every map $f:A\to B$ in $ \mathcal{E}$
admits an $AF$-factorization  $f=pu:A\rightarrowtail  E\twoheadrightarrow B$;
\item{} the base change of an anodyne map along a fibration is anodyne.
\end{itemize}
 \end{defi}

\bigskip

Examples of tribes:
 
\begin{itemize}
    \item{}  The category of Kan complexes {\bf Kan}, where a fibration is a Kan fibration; 
    a map is anodyne if and only if it is a monic homotopy equivalence,
    \item{} The category of small categories $\mathbf{Cat}$, where a fibration is an iso-fibration;
      a functor is anodyne if and only if it is an equivalence of categories monic on objects.
  \item{} The category of small groupoids $\mathbf{Grpd}$, where
  a fibration is an iso-fibration;  a functor is anodyne if and only if it is an equivalence of groupoids monic on objects.
\item{} The category of fibrant object of a right proper model category, if the cofibrations
are the monomorphisms. 
\item{} The category of quasi-categories, where a fibration is a quasi-fibration;
\item{} The category of quasi-categories with finite limits (and maps preserving finite limits),
 where a fibration is a quasi-fibration.
\end{itemize}

\begin{rem} The notion of tribe is invariant under equivalences of clans:
if $F:\mathcal{E}\to \mathcal{E}'$ is an equivalence of clans and 
one of them is a tribe, then so is the other.
\end{rem}

\begin{prop}\label{cartanodyne}  In a tribe, the cartesian product of two
anodyne maps is anodyne.
\end{prop}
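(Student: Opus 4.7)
The plan is to factor the cartesian product $u \times v : A \times B \to A' \times B'$ of two anodyne maps $u : A \rightarrowtail A'$ and $v : B \rightarrowtail B'$ as
\[
u \times v \;=\; (A' \times v) \circ (u \times B),
\]
and show each factor is anodyne; the result then follows from closure of anodyne maps under composition (Proposition \ref{anodyne closurecompositionret}).

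For the factor $u \times B : A \times B \to A' \times B$, I would observe that it is the base change of $u : A \to A'$ along the projection $p_1 : A' \times B \to A'$, as witnessed by the cartesian square
\[
\xymatrix{
A \times B \ar[d]_{u \times B} \ar[rr]^{p_1} && A \ar[d]^{u} \\
A' \times B \ar[rr]^{p_1} && A'
}
\]
used in the proof of Proposition \ref{tribeofgenuineprojection0}. The projection $p_1 : A' \times B \to A'$ is a fibration, since it is the base change of the fibration $B \to 1$ along $A' \to 1$. Thus $u \times B$ is the base change of an anodyne map along a fibration, and so is anodyne by the second axiom of a tribe (Definition \ref{defpretypos}). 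A symmetric argument shows that $A' \times v : A' \times B \to A' \times B'$ is the base change of the anodyne map $v$ along the fibration $p_2 : A' \times B' \to B'$, hence anodyne as well.

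I do not anticipate any real obstacle: the argument is a direct application of the two axioms defining a tribe (stability of anodyne maps under base change along fibrations, and closure under composition), together with the elementary fact from Proposition \ref{tribeofgenuineprojection0} that the projections out of a cartesian product are fibrations in any clan. The only point to be careful about is the orientation of the factorisation, so that in each step one is genuinely pulling back an anodyne map along a fibration rather than the other way around.
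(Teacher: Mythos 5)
Your proof is correct and follows essentially the same route as the paper: factor $u\times v$ as $(A'\times v)\circ(u\times B)$, observe that each factor is the base change of an anodyne map along a product projection (a fibration), and conclude by closure of anodyne maps under composition. No issues.
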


\begin{proof} If $u:A\rightarrowtail B$ is anodyne, then
$u\times C$ is anodyne by base change, 
since the following square is cartesian and the projection $B\times C\to B$
is a fibration.
$$
\xymatrix{ 
A\times C\ar[d]_{u\times C} \ar[r]^{} &A  \ar[d]^{u}\\
B\times C \ar[r]^{} & B.
}
$$
Similarly, if $v:C\rightarrowtail  D$ is anodyne, then $B\times v$ is anodyne.
Thus, $u\times v=(B\times v)(u\times C)$ is anodyne,
since the composite of two anodyne maps is anodyne by \ref{anodyne closurecompositionret}.
\end{proof}

We saw in \ref{clanslice} that if $\mathcal{E}$ is a clan,
then so is the category $\mathcal{E}(A)$ for every object $A\in \mathcal{E}$.
A morphism $f:(E,p)\to (F,q)$ in $\mathcal{E}(A)$ is a fibration
if and only the map $f:E\to F$ is a fibration in $\mathcal{E}$.

\begin{prop} \label{tribeslice} If $\mathcal{E}$ is a tribe, 
then so is the clan $\mathcal{E}(A)$ for every object $A\in \mathcal{E}$.
A morphism $u:(E,p)\to (F,q)$ in $\mathcal{E}(A)$ is anodyne 
if and only the map $u:E\to F$ is anodyne  in $\mathcal{E}$.
\end{prop}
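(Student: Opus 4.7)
The plan is to prove the biconditional characterization of anodyne maps in $\mathcal{E}(A)$ first, and then deduce the tribe structure from it, since both the existence of $AF$-factorizations and the stability of anodyne maps under base change along fibrations will transfer directly from $\mathcal{E}$ once we know anodyneness is detected on underlying maps.

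For the easy direction, suppose $u:E\to F$ is anodyne in $\mathcal{E}$ and that we are given a lifting problem in $\mathcal{E}(A)$ against a fibration $f:(X,r)\to (Y,s)$ of $\mathcal{E}(A)$. Since $f:X\to Y$ is a fibration in $\mathcal{E}$, the underlying square has a filler $d:F\to X$. I then check that $d$ is a morphism of $\mathcal{E}(A)$: writing $b:F\to Y$ for the bottom map (a morphism in $\mathcal{E}(A)$, so $sb=q$), we have $rd = sfd = sb = q$, as needed.

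For the harder direction, suppose $u:(E,p)\to (F,q)$ is anodyne in $\mathcal{E}(A)$, and let $f:X\twoheadrightarrow Y$ be a fibration in $\mathcal{E}$ together with a square $(a,b):u\to f$ in $\mathcal{E}$. The trick is to promote the square to $\mathcal{E}(A)$ by pairing with the structure maps. Consider the fibration $A\times f:(A\times X,p_1)\to (A\times Y,p_1)$ in $\mathcal{E}(A)$ and the morphisms $(p,a):(E,p)\to (A\times X,p_1)$ and $(q,b):(F,q)\to (A\times Y,p_1)$, which are indeed morphisms in $\mathcal{E}(A)$ by construction. The identities $(A\times f)(p,a)=(p,fa)=(qu,bu)=(q,b)\circ u$ make the resulting square commute in $\mathcal{E}(A)$, so by anodyneness of $u$ in $\mathcal{E}(A)$ there is a filler $d:F\to A\times X$. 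Then $p_2 d:F\to X$ is a filler for the original square: $f\circ p_2 d = p_2\circ(A\times f)\circ d = p_2\circ (q,b) = b$ and $p_2 d\circ u = p_2\circ (p,a) = a$.

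Having established the characterization, the tribe axioms for $\mathcal{E}(A)$ follow easily. For the $AF$-factorization of a morphism $f:(E,p)\to (F,q)$, I factor $f=gh$ in $\mathcal{E}$ with $h:E\rightarrowtail K$ anodyne and $g:K\twoheadrightarrow F$ a fibration; equipping $K$ with the structure map $qg:K\to A$ makes $h$ and $g$ morphisms of $\mathcal{E}(A)$, and by the characterization $h$ is anodyne and $g$ is a fibration in $\mathcal{E}(A)$. For stability under base change along a fibration in $\mathcal{E}(A)$, I observe that pullbacks in $\mathcal{E}(A)$ are computed on underlying objects in $\mathcal{E}$, so the base change in $\mathcal{E}(A)$ of an anodyne map along a fibration is, at the level of $\mathcal{E}$, the base change of an anodyne map along a fibration, hence anodyne in $\mathcal{E}$ by the tribe axiom for $\mathcal{E}$, and therefore anodyne in $\mathcal{E}(A)$ by the characterization. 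The only conceptually nontrivial step is the forward direction of the characterization, whose key idea is the pairing-with-the-structure-map construction described above.
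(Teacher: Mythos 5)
Your proof is correct, and its overall skeleton matches the paper's: establish first that anodyneness in $\mathcal{E}(A)$ is detected on underlying maps, then transfer the two tribe axioms from $\mathcal{E}$. The difference lies in how the characterization is obtained. The paper simply invokes Lemma \ref{charanodyne}, which says a map is anodyne iff every fibration over its codomain equipped with a compatible map admits a compatible section; since fibrations over $(F,q)$ in $\mathcal{E}(A)$ are exactly fibrations over $F$ in $\mathcal{E}$ (via composition with $q$), the characterization transfers immediately. You instead argue directly with lifting squares: the easy direction by checking the filler of the underlying square is automatically a morphism over $A$, and the hard direction by your pairing trick $(p,a):(E,p)\to(A\times X,p_1)$, which promotes an arbitrary lifting problem in $\mathcal{E}$ to one in $\mathcal{E}(A)$ against the fibration $A\times f$. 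This is essentially a self-contained reprise of the pullback manoeuvre hidden inside the proof of Lemma \ref{charanodyne}, so it buys independence from that lemma at the cost of a slightly longer argument. A small point in your favour: you actually justify the stability of anodyne maps under base change along fibrations in $\mathcal{E}(A)$ (pullbacks there are computed on underlying objects, so the axiom for $\mathcal{E}$ applies), whereas the paper asserts this step without comment.
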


\begin{proof} It follows from Lemma \ref{charanodyne}
that a map $u:(E,p)\to (F,q)$ 
in  $\mathcal{E}(A)$ is anodyne if and only if 
the map $u:E\to F$
is anodyne  in $\mathcal{E}$ .
The base change of an anodyne map in $\mathcal{E}(A)$
along a fibration in $\mathcal{E}(A)$ is anodyne.
t remains to verify that every map
$f:(E,p)\to (F,q)$ in $\mathcal{E}(A)$
can be factored as an anodyne map
followed by a fibration.
Let us choose a factorization $f=gu:E\to C\to F$ 
of the map $f:E\to F$ in $\mathcal{E}$, with $u:E\to C$
an anodyne map and  $g:C\to F$ a fibration.
The map $qg:C\to A$ is a fibration, since $q$ and $g$ are fibration.
Thus, $(C,qg)\in \mathcal{E}(A)$ and we obtain a factorisation $f=pu:(E,p)\to (C,qg)\to (F,q)$ 
 in $ \mathcal{E}(A)$.
 The map $g:(C,qg)\to (F,q)$ is a fibration  in $ \mathcal{E}(A)$,
since  $g:C\to F$  is a fibration in $\mathcal{E}$.
 Similarly, the map $u:(E,p)\to (C,qg)$ is anodyne in $ \mathcal{E}(A)$ by the first part
of the proof, since $u:E\to C$ is anodyne in $\mathcal{E}$.
The proposition is proved.
\end{proof}

\begin{defi} 
We shall say that $\mathcal{E}(A)$ is the {\bf local tribe} of $\mathcal{E}$
at $A$. 
 \end{defi}

\medskip

 \begin{defi} \label{defmorphismhotribes}  
 If $\mathcal{E}$ and  $\mathcal{E}'$ are tribes,  
 we say that a morphism of clans $F:\mathcal{E} \to \mathcal{E}'$ is a  {\bf morphism of tribes}  
 if it takes anodyne maps to anodyne maps. 
   \end{defi} 
 
\medskip

We shall denote  by $\mathbf{Trib}$ the category of tribes and morphism of tribes.
The terminal category $\mathbf{1}$ has the structure a tribe
 and the resulting tribe $\mathbf{1}$ is terminal in the category $\mathbf{Trib}$.
 The cartesian product of two tribes $\mathcal{E}_1$ and  $ \mathcal{E}_2$ has the structure of a tribe.
 A map $(u_1,u_2):(X_1,X_2)\to (Y_1,Y_2)$
 in $\mathcal{E}_1\times \mathcal{E}_2$ is anodyne
 if and only if $u_1$ is anodyne in $\mathcal{E}_1$ and $u_2$ is anodyne in $ \mathcal{E}_2$.
The category $\Trib$ is cartesian and the forgetful functor $\Trib \to \Clan$
is cartesian.

\medskip

The category $\mathbf{Trib}$ has also the structure of a 2-category in which a 
2-cell is a natural transformation.
An {\bf equivalence of tribes} is an equivalence in this 2-category.
A morphism of tribes $F:\mathcal{E} \to \mathcal{E}'$
is an equivalence of tribes if and only if it is an
equivalence of clans.

\medskip

We say that a morphism of clans $F:\mathcal{E}\to \mathcal{E}'$ 
{\it reflects} anodyne maps if the implication 
$$F(u) \quad {\rm anodyne} \quad \Rightarrow \quad u \quad {\rm anodyne}$$
is true for every map $u:A\to B$ in $\mathcal{E}$.

 \begin{lemma} \label{reflectionofanodynemorphclan} 
 If  a morphism of clans $F:\mathcal{E}\to \mathcal{E}'$ 
 is fully faithful, then it reflects anodyne maps.
 \end{lemma}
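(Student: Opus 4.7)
The plan is to unfold the definition of anodyne: assuming $F(u)$ is anodyne in $\mathcal{E}'$, I need to show that $u:A\to B$ has the left lifting property in $\mathcal{E}$ against an arbitrary fibration $f:X\twoheadrightarrow Y$, given a commutative square
\[
\xymatrix{A\ar[d]_u \ar[r]^a& X\ar[d]^f \\
B \ar[r]^b &Y.
}
\]

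First I would apply $F$ to this square to obtain a commutative square in $\mathcal{E}'$. Since $F$ is a morphism of clans, $F(f):F(X)\to F(Y)$ is a fibration in $\mathcal{E}'$. By hypothesis $F(u)$ is anodyne, so the image square admits a diagonal filler $d':F(B)\to F(X)$ with $F(f)\circ d' = F(b)$ and $d'\circ F(u) = F(a)$.

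Next, since $F$ is fully faithful, there is a unique map $d:B\to X$ in $\mathcal{E}$ with $F(d)=d'$. I would then verify, using faithfulness, that $d$ is a diagonal filler of the original square: the identities $F(fd) = F(f)F(d) = F(f)d' = F(b)$ and $F(du) = F(d)F(u) = d'F(u) = F(a)$ yield $fd=b$ and $du=a$. Thus $u\pitchfork f$ for every fibration $f$ in $\mathcal{E}$, which says that $u$ is anodyne.

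The proof is essentially routine once one recognizes that morphisms of clans preserve fibrations and that fully faithful functors reflect commutativity of triangles; there is no real obstacle. The only subtlety to keep in mind is that fullness is needed to produce $d$ from $d'$, while faithfulness is needed to check that $d$ satisfies both triangle identities, so both halves of the fully-faithful hypothesis are used.
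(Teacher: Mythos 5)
Your proof is correct and is precisely the routine argument the paper intends — indeed the paper's own proof is ``Left to the reader.'' The three ingredients (morphisms of clans preserve fibrations, fullness lifts the filler $d'$ to some $d$, faithfulness verifies the two triangle identities for $d$) are exactly what is needed, and nothing is missing.
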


\begin{proof} Left to the reader.
\end{proof}

\medskip

\begin{thm}\label{basechangehomotopical}
 If $ f:A\to B$ is a map in a tribe $\mathcal{E}$,
 then the base change functor
$$f^\star:\mathcal{E}(B)\to \mathcal{E}(A)$$
is a morphism of tribes.
\end{thm}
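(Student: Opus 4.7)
The plan is to verify that $f^\star$ preserves anodyne maps, since it is already a morphism of clans by Proposition \ref{basechangetribe}. First I would use the AF-factorization axiom of the tribe to write $f=pi:A\rightarrowtail C\twoheadrightarrow B$, with $i$ anodyne and $p$ a fibration. By Proposition \ref{compositebasechangetribe}, we have a canonical isomorphism $f^\star\simeq i^\star\circ p^\star$. Since the class of anodyne maps is closed under retracts (hence under isomorphism), it suffices to prove separately that $p^\star$ and $i^\star$ preserve anodyne maps. Throughout, I would use Proposition \ref{tribeslice} to identify anodyne maps in $\mathcal{E}(X)$ with anodyne maps in $\mathcal{E}$.

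For $p^\star:\mathcal{E}(B)\to\mathcal{E}(C)$, given an anodyne map $w:(X,p_X)\to(Y,p_Y)$ in $\mathcal{E}(B)$, the cartesian square of Lemma \ref{squareforbasechange5} presents $p^\star(w)$ as the pullback of $w$ along the projection $p_Y':C\times_B Y\to Y$. Since $p_Y'$ is itself the base change of the fibration $p:C\to B$ along $p_Y$, it is a fibration. Therefore $p^\star(w)$ is anodyne by the defining tribe axiom that the base change of an anodyne map along a fibration is anodyne. This step is essentially a direct invocation of the axiom.

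The main point is the handling of $i^\star:\mathcal{E}(C)\to\mathcal{E}(A)$. Given an anodyne map $w:(X,p_X)\to(Y,p_Y)$ in $\mathcal{E}(C)$, the square from Lemma \ref{squareforbasechange5}
$$\xymatrix{
i^\star(X) \ar[d]_{i^\star(w)} \ar[rr]^{i_X} && X \ar[d]^w \\
i^\star(Y) \ar[rr]^{i_Y} && Y
}$$
is cartesian. The maps $i_X$ and $i_Y$ are base changes of the anodyne map $i:A\to C$ along the fibrations $p_X$ and $p_Y$ respectively, hence they are anodyne by the tribe axiom. The commutativity of the square gives $w\circ i_X=i_Y\circ i^\star(w)$; the left-hand side is a composite of two anodyne maps and thus anodyne by Proposition \ref{anodyne closurecompositionret}. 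Since $i_Y$ is anodyne and $i_Y\circ i^\star(w)$ is anodyne, the ``left-cancellation'' statement in Lemma \ref{anodyne closure} forces $i^\star(w)$ to be anodyne.

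The main obstacle is exactly the $i^\star$ step: the base change along $i$ is along an arbitrary (non-fibration) map, so the tribe axiom cannot be applied directly. This is resolved by the crucial observation that when the target of the map being pulled back is itself fibrant (i.e., an object of $\mathcal{E}(C)$), the auxiliary maps $i_X$ and $i_Y$ become anodyne, and then Lemma \ref{anodyne closure} does the remaining work.
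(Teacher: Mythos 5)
Your proof is correct and follows essentially the same route as the paper: factor $f$ as an anodyne map followed by a fibration, handle the fibration part by the tribe axiom directly, and handle the anodyne part by showing $i_X$, $i_Y$ are anodyne and invoking the cancellation property of Lemma \ref{anodyne closure} applied to $i_Y\circ i^\star(w)=w\circ i_X$. The only cosmetic difference is your explicit remark that anodyne maps are closed under isomorphism (to pass from $i^\star\circ p^\star$ to $f^\star$), which the paper leaves implicit.
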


\begin{proof} 
We say in \ref{basechangetribe} that the functor $f^\star$ is a morphism of clans.
It remains to show that $f^\star$ preserves anodyne maps.
Let us choose a factorization
$f=gu:A\to C\to B$ with $u:A\to C$ an anodyne map and $g:C\to B$ a fibration.
The functor $g^\star$ preserves anodyne maps, since the base change of an anodyne
map along a fibration is anodyne, by definition of a tribe  \ref{defpretypos}.
Let us show that the functor $u^\star$ preserves anodyne maps.
If $v:(X,p)\to (Y,q)$ is an anodyne map  in $ \mathcal{E}(C)$,
then we have the following diagram of cartesian squares:
$$ \xymatrix{
A\times_C X \ar[d]_{A\times_C v}\ar[rr]^(0.6){u_X }  && X \ar[d]_{v}    \ar@/^1.5pc/[dd]^-{p}   \\
A\times_C Y\ar[d]^{}\ar[rr]^(0.6){u_Y }  && Y\ar[d]_{q}  \\
A \ar[rr]^{u} &&  C \ .
 }$$
The map 
$u_X$ is anodyne by base change, since  $u$ is anodyne and $p:X\to C$ is a fibration.  
Similarly, the map $u_Y$ is also anodyne by base change, since since  $u$ is anodyne and $q:Y\to C$ is a fibration.  
The composite $v u_X$ is anodyne by Lemma \ref{anodyne closurecompositionret}, since $u_X$ and $v$ are anodyne.
It follows that $A\times_C v$ is anodyne by the same lemma,
since $u_Y$ is anodyne and $u_Y(A\times_C v)=v u_X$ is anodyne.
Hence the functor $u^\star$ preserves anodyne maps.
It follows that the functor  $f^\star=u^\star g^\star$
preserves anodyne maps.
\end{proof}

\begin{prop}\label{inducedhfunctor}
If $F:\mathcal{E} \to \mathcal{E}'$ is a morphism of tribes,
then so is the functor
$$F_{(A)} :\mathcal{E}(A) \to \mathcal{E}'(FA)$$
induced by $F$ for every object $A\in \mathcal{E} $.
\end{prop}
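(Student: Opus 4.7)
The plan is very short: combine Proposition \ref{inducedhomoclan} (which already gives that $F_{(A)}$ is a morphism of clans) with the characterisation of anodyne maps in local tribes established in Proposition \ref{tribeslice}. So it remains only to check that $F_{(A)}$ sends anodyne maps of $\mathcal{E}(A)$ to anodyne maps of $\mathcal{E}'(FA)$.

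Let $u : (X,p) \to (Y,q)$ be an anodyne map in $\mathcal{E}(A)$. By Proposition \ref{tribeslice}, this is equivalent to the underlying map $u : X \to Y$ being anodyne in $\mathcal{E}$. Since $F : \mathcal{E}\to \mathcal{E}'$ is a morphism of tribes, $F(u):FX\to FY$ is anodyne in $\mathcal{E}'$. But $F_{(A)}(u):(FX,F(p))\to (FY,F(q))$ has $F(u)$ as underlying map in $\mathcal{E}'$, so applying Proposition \ref{tribeslice} in the other direction (to the tribe $\mathcal{E}'$ at the object $FA$) shows that $F_{(A)}(u)$ is anodyne in $\mathcal{E}'(FA)$.

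There is no real obstacle: the proposition is just the conjunction of \ref{inducedhomoclan} and \ref{tribeslice}, and the single nontrivial input (that $F$ preserves anodyne maps) is part of the hypothesis that $F$ is a morphism of tribes. The only care to take is to invoke Proposition \ref{tribeslice} twice, once in each tribe, to transport the anodyneness between the slice and the ambient category.
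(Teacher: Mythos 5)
Your proposal is correct and is essentially the paper's own argument: the paper likewise cites Proposition \ref{inducedhomoclan} for the clan structure and Proposition \ref{tribeslice} for transporting anodyneness between the slice and the ambient tribe, using that $F$ preserves anodyne maps. Your write-up is in fact slightly more careful in making explicit that \ref{tribeslice} must be invoked in both $\mathcal{E}$ and $\mathcal{E}'$.
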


\begin{proof} We saw in Proposition \ref{inducedhomoclan} that 
the functor $F_{(A)}$ is a morphism of clans.
It follows  from  \ref{tribeslice}
The functor $F_{(A)}$ preserves anodyne maps by 
Proposition \ref{tribeslice}, since the functor
$F$ preserves anodyne maps.
\end{proof}

\begin{defi} \label{pathobjecetdef} A {\it path object} for an object $A$ in a tribe is
a quadruple $(PA,\partial_0,\partial_1, \sigma)$ obtained by factoring the diagonal $A\to A\times A$ 
as an anodyne map $\sigma:A\to PA$ followed by a fibration $(\partial_0,\partial_1):PA \to A\times A$.
 $$ \xymatrix{
A \ar[rr]^-{(1_A,1_A)} \ar[dr]_\sigma && A\times A \\
& PA  \ar[ur]_{(\partial_0,\partial_1)}& 
 }$$
\end{defi}

\medskip

The quadruple $(PA,\partial_0,\partial_1, \sigma)$ yields a commutative diagram
$$ \xymatrix{
&  \ar@/_1pc/[ld]_-{1_A} A\ar[d]_\sigma   \ar@/^1pc/[rd]^-{1_A} &  \\
A &\ar[l]_{\partial_0}  PA \ar[r]^{\partial_1} & A
 }$$
The maps $\partial_0$ and $\partial_1$ 
are fibrations, since  the projections $p_1,p_2:A\times A\to A$ are fibrations
and we have $\partial_0=p_1( \partial_0,\partial_1)$ and
$\partial_1=p_2(\partial_0,\partial_1)$.

\medskip

We shall denote by $P\!A(x,y)$ the fiber of the fibration $(\partial_0,\partial_1):PA \to A\times A$
at an element $(x,y):A\times A$. 
An element of type $PA(x,y)$ is a {\it path} $p:x\leadsto y$ (or a homotopy) from $x$ to $y$.
The element $\sigma(x):PA(x,x)$ is the {\it identity path} $\sigma(x):x\leadsto x$.

\subsection{Paths in type theory}

\bigskip

In Martin-L\"of type theory, there is a type constructor which associate to a type $A$ 
another type $Equ_A$ in context $A\times A$, called the {\bf equality type} of $A$,
  \[
\begin{prooftree} 
  \enspace A:Type
 \justifies
 x\!:\!A.\enspace y\!:\!A \enspace \vdash Equ_A(x,y):Type  
\end{prooftree}
\]
An element of type $Equ_A(x,y)$ is regarded as a {\it proof} that the elements $x$ and $y$ are 
{\it equivalent} $x\simeq y$ (propositional equality). 
There is also an operation which associate to each type $A$ and each element $x:A$ 
an element $r_A(x):Equ_A(x,x)$, called the {\bf reflexivity term},
  \[
\begin{prooftree} 
  \enspace A:Type
 \justifies
x\!:\!A\enspace  \vdash  r_A(x):Equ_A(x,x)
\end{prooftree}
\]
The element $r_A(x)$ is the tautological proof that $x\simeq x$ for element $x:A$.

\begin{rem} In type theory, the dependant type  $Equ_A(x,y)$ is often called
the {\it identity type} and denoted $Id_A(x,y)$.
\end{rem}

It was discovered by Awodey and Warren \cite{} that the dependant type $Equ_A(x,y)$ 
can represents the space of paths between two points $x$ and $y$  
of a space $A$.  Equivalently, the type
 $$Equ(A)=\sum_{x:A,y:A} Equ_A(x,y)$$
is the path space of $A$ and the  fibration
$(\partial_0,\partial_1):Equ(A)\to A\times A$
is the display map of the sum.
The element $r(x):Equ_A(x,x)$ is defining a map $r:A\to Equ(A)$ in a commutative
diagram
$$\xymatrix{
 && Equ(A)\ar[dd]^{(\partial_0,\partial_1)}\\
 &&\\
A \ar[rr]^{( 1_A,1_A)} \ar[rruu]^{r} && A\times A .  
}
$$
The proof that $r$ is anodyne depends on the operation $J$ 
of Martin-L\"of type theory. This operation associates to 
every fibration $f:C\to  Equ(A)$ and
every commutative square 
$$\xymatrix{
  A\ar[d]_r \ar[r]^t & C \ar[d]^f   \\
 Equ(A)  \ar@{=}[r]  &  Equ(A)    
} \quad \quad \xymatrix{
  A\ar[d]_r \ar[r]^t & C \ar[d]^f   \\
 Equ(A)\ar@{-->}[ru]^J  \ar@{=}[r]  &  Equ(A)    
}
$$
a diagonal filler $J=J(t,f): Equ(A) \to C$.
The fibration $f:C\to   Equ(A)$ is the total space
of a dependant type $C(x,y,p)$ in context $(x\!:\!A.\  y\!:\!A. \ p\!:\!Equ_A(x,y))$,
$$ x\!:\!A.\enspace  y\!:\! A.\enspace p\!:\! Equ_A(x,y) \enspace  \vdash  C(x,y,p)\!:\!Type $$
The map $t:A\to C$ is a section of the fibration $r^\star(C)\to A$.
It is thus defined by an element $ t(x): C(x,x,r(x))$ in context $x:A$.
$$ x\!:\!A  \enspace \vdash  t(x)\!:\! C(x,x,r(x))$$
The diagonal filler $J(t,f):Equ(A) \to C$
is a section of the fibration $f:C\to  Equ(A)$.
It is thus defined by an element 
$J(t,C)(x,y,p):C(x,y,p)$
in context $(x\!:\!A.\ y\!:\!A.\ p\!:\!Equ_A(x,y))$,
$$x\!:\!A.  \enspace   y\!:\!A.  \enspace   p\!:\! Equ_A(x,y)  \vdash  J(t,C)(x,y,p)\!:\! C(x,y,p)$$
We have $J(t,f)r=t$, since the square commutes.
Thus, $J(t,C)(x,x,r(x))=t(x)$ for every $x:A$.

\medskip

\subsection{The homotopy category}

\begin{defi} Let $(PB,\sigma, \partial_0,\partial_1)$ be a path object
 for an object $B$ in a tribe $\mathcal{E}$.  A {\bf homotopy} between two maps $f,g:A\to B$ in $\mathcal{E}$
is a map $H:A\to PB$ such that $\partial_0H=f$ and $\partial_1H=g$. 
$$ \xymatrix{
&& B \\
A  \ar@/_1pc/[rrd]_-{g}  \ar@/^1pc/[rru]^-{f}  \ar[rr]^H  && PB \ar[u]_{\partial_0}\ar[d]^{\partial_1}& \\
 && B
 }$$
We shall write that $H:f\leadsto g$.
Two maps $f,g:A\to B$ are  {\it homotopic}, $f\sim g$,  if there exists a homotopy $H:f\leadsto g$.
\end{defi}

The notion of homotopy defined above depends on the choice of a path object 
for $B$. The following lemma implies that the homotopy
relation between the maps $A\to B$ is independent of the choice
of a path object for $B$.

\medskip

\begin{lemma} \label{independancepathob} Let $(PB,\partial_0,\partial_1,\sigma)$
and $(P'B,\partial'_0,\partial'_1, \sigma')$ be two path objects for $B$.
Then there exists a map $d:PB\to P'B$ such that the following diagram commmutes,
$$ \xymatrix{
& B & \\
 PB \ar[ur]^{\partial_0}\ar[dr]_{\partial_1}\ar[rr]^d& & P'B   \ar[ul]_{\partial'_0}\ar[dl]^{\partial'_1}\\
 & B & 
 }$$
 If $H:A\to PB$ is a homotopy between two maps $f,g:A\to B$,
 then $H'\defeq dH:A\to P'B$  is a homotopy between the same maps.
\end{lemma}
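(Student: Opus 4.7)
The plan is to construct $d$ by a single lifting of $\sigma$ against the fibration $(\partial'_0,\partial'_1)$, exploiting the defining property of both path objects.

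First, I observe that by the definition of a path object, both composites $(\partial_0,\partial_1)\sigma$ and $(\partial'_0,\partial'_1)\sigma'$ equal the diagonal $(1_B,1_B):B\to B\times B$. This means the square
$$\xymatrix{
B \ar[d]_{\sigma} \ar[rr]^{\sigma'} && P'B \ar[d]^{(\partial'_0,\partial'_1)} \\
PB \ar[rr]^{(\partial_0,\partial_1)} && B\times B
}$$
commutes. Since $\sigma:B\to PB$ is anodyne (by the definition of the path object $PB$) and $(\partial'_0,\partial'_1):P'B\to B\times B$ is a fibration (by the definition of the path object $P'B$), this square admits a diagonal filler $d:PB\to P'B$ by the very definition of anodyne maps.

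The filler $d$ satisfies $(\partial'_0,\partial'_1)\,d = (\partial_0,\partial_1)$, and composing with the two projections $p_1,p_2:B\times B\to B$ gives $\partial'_0 d = \partial_0$ and $\partial'_1 d = \partial_1$; this is exactly the commutativity asserted in the diagram. (The upper triangle $d\sigma=\sigma'$ comes for free but is not required.)

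Finally, if $H:A\to PB$ is a homotopy with $\partial_0 H = f$ and $\partial_1 H = g$, set $H' := dH$. Then $\partial'_0 H' = \partial'_0 d H = \partial_0 H = f$ and likewise $\partial'_1 H' = \partial_1 H = g$, so $H':A\to P'B$ is a homotopy from $f$ to $g$ computed via the second path object. There is no real obstacle here; the only subtle point is recognizing that the square to be lifted commutes automatically because both $(\partial_0,\partial_1)\sigma$ and $(\partial'_0,\partial'_1)\sigma'$ are forced to be the diagonal.
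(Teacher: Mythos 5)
Your proof is correct and follows exactly the paper's argument: the same lifting square (with $\sigma$ against the fibration $(\partial'_0,\partial'_1)$), the same observation that it commutes because both composites are the diagonal, and the same computation for the transported homotopy. Nothing to add.
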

 
 \begin{proof} The following square has a diagonal filler $d:PB\to P'B$, 
 since $\sigma$ is anodyne and
$(\partial'_0,\partial'_1)$ is a fibration.
 \begin{equation}
 \xymatrix{
B \ar[d]_{\sigma} \ar[rr]^{\sigma'} &&P'B \ar[d]^{(\partial'_0,\partial'_1)}  \\
PB \ar[rr]^{(\partial_0,\partial_1)} && B\times B
 }
 \end{equation}
  If $(\partial_0,\partial_1)H=(f,g)$ then 
$$(\partial'_0,\partial'_1)H'=(\partial'_0,\partial'_1)dH=(\partial_0,\partial_1)H=(f,g).$$
 \end{proof}

\begin{lemma} \label{hoisequiv} Let $A$ and $B$ be two objects of  a tribe $\mathcal{E}$.
Then the homotopy relation $f\sim g $ on the set of maps $A\to B$ 
 is an equivalence.
\end{lemma}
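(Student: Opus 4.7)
The plan is to verify the three properties directly, using the path object $(PB,\partial_0,\partial_1,\sigma)$ together with the lifting property of anodyne maps against fibrations. Throughout, we will exploit the fact that $\sigma:B\rightarrowtail PB$ is anodyne and $(\partial_0,\partial_1):PB\twoheadrightarrow B\times B$ is a fibration.

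\emph{Reflexivity} is immediate: for $f:A\to B$, the composite $\sigma f:A\to PB$ satisfies $\partial_0 \sigma f=f=\partial_1 \sigma f$, hence is a homotopy $f\sim f$.

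\emph{Symmetry} is handled by building a ``reversal'' map $r:PB\to PB$. The square
$$\xymatrix{
B \ar[r]^{\sigma}\ar[d]_{\sigma} & PB \ar[d]^{(\partial_0,\partial_1)}\\
PB \ar[r]_-{(\partial_1,\partial_0)} & B\times B
}$$
commutes (both composites equal $(1_B,1_B)$), and $\sigma$ is anodyne while $(\partial_0,\partial_1)$ is a fibration, so a diagonal filler $r:PB\to PB$ exists with $\partial_0 r=\partial_1$ and $\partial_1 r=\partial_0$. If $H:f\leadsto g$, then $rH:g\leadsto f$.

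\emph{Transitivity} is the only step requiring real work. The plan is to build a composition map $c:PB\times_B PB \to PB$ (where the pullback is formed along $\partial_1$ on the left and $\partial_0$ on the right). Consider the section
$$s\defeq (1_{PB},\sigma\partial_1):PB \longrightarrow PB\times_B PB.$$
The key observation is that $s$ is precisely the base change of the anodyne map $\sigma:B\rightarrowtail PB$ along the fibration $\partial_1:PB\twoheadrightarrow B$, hence $s$ is anodyne by the tribe axiom (Definition \ref{defpretypos}). The square
$$\xymatrix{
PB \ar[r]^-{1_{PB}} \ar[d]_{s} & PB \ar[d]^{(\partial_0,\partial_1)}\\
PB\times_B PB \ar[r]_-{(\partial_0\,\text{pr}_1,\,\partial_1\,\text{pr}_2)} & B\times B
}$$
commutes, so admits a diagonal filler $c:PB\times_B PB\to PB$ satisfying $\partial_0 c=\partial_0\,\text{pr}_1$ and $\partial_1 c=\partial_1\,\text{pr}_2$. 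Given homotopies $H:f\leadsto g$ and $K:g\leadsto h$, the pair $(H,K):A\to PB\times_B PB$ is well-defined since $\partial_1 H=g=\partial_0 K$, and $c(H,K):A\to PB$ is then a homotopy $f\leadsto h$.

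The main obstacle is, as usual, establishing that the appropriate section is anodyne; once one spots that $s$ arises as a base change of $\sigma$ along a fibration, the rest is a routine diagonal filling argument. Lemma \ref{independancepathob} guarantees that the resulting equivalence relation is independent of the chosen path object.
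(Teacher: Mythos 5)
Your proof is correct, and its engine is the same as the paper's: both hinge on the observation that the section $s=(1_{PB},\sigma\partial_1):PB\to PB\times_B PB$ is anodyne because it is $\partial_1^\star(\sigma)$, i.e.\ the pullback of the anodyne map $\sigma$ along a fibration. Where you diverge is in the packaging. The paper treats symmetry by noting that $(PB,\partial_1,\partial_0,\sigma)$ is again a path object and treats transitivity by showing that the composed span $P_2B=PB\times_B PB$, equipped with $\sigma_2=s\sigma$, is itself a path object for $B$ (using Proposition \ref{spanscomposition} to see that $(\partial_0q_0,\partial_1q_1)$ is a fibration); it then glues the two homotopies into a single map $A\to P_2B$ and appeals to the independence of the homotopy relation from the choice of path object (Lemma \ref{independancepathob}). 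You instead stay inside the fixed path object $PB$ and manufacture explicit reversal and concatenation operators $r:PB\to PB$ and $c:PB\times_B PB\to PB$ as diagonal fillers. Your route avoids invoking Proposition \ref{spanscomposition} and makes the appeal to Lemma \ref{independancepathob} unnecessary for transitivity (it is only needed, as in the paper, to see the relation is well defined independently of $PB$); the paper's route produces the reusable fact that $P_2B$ is a path object, which it exploits elsewhere. Both lifting squares you write down do commute, and the filler conditions give exactly $\partial_0c=\partial_0\mathrm{pr}_1$, $\partial_1c=\partial_1\mathrm{pr}_2$, so $c(H,K)$ is indeed a homotopy $f\leadsto h$.

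One small imprecision worth tightening: strictly speaking $s$ is not ``the base change of $\sigma$ along $\partial_1$'' (those two maps do not share a codomain); it is the image $\partial_1^\star(\sigma)$ of the morphism $\sigma:(B,1_B)\to(PB,\partial_0)$ of $\mathcal{E}(B)$ under the base change functor, which by Lemma \ref{squareforbasechange5} exhibits $s$ as the pullback of $\sigma$ along the fibration $\mathrm{pr}_2:PB\times_B PB\to PB$. That is the form in which Definition \ref{defpretypos} applies, and it is exactly the identification the paper makes before citing Theorem \ref{basechangehomotopical}.
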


\begin{proof} Let $(PB,\partial_0,\partial_1,\sigma)$ a path object for $B$.
The homotopy relation on the set $\mathcal{E}(A, B)$
 is reflexive, since the map $\sigma f:A\to PB$ is a homotopy $f\leadsto f$ for any map $f:A\to B$.
The homotopy relation is symmetric, since 
$(PB,\partial_1,\partial_0,\sigma)$ is a path object for $B$.
Let us show that the homotopy relation is transitive. 
The triple $PB=(PB,  \partial_0,  \partial_1)$ is a span $PB:B\multimap B$.
If we composite this  span 
with itself, we obtain a span $(P_2B, \partial_0q_0, \partial_1q_1)$.
$$ \xymatrix{
& &P_2B \ar[dl]_{q_0} \ar[dr]^{q_1}& & \\
& PB\ar[dl]_{\partial_0} \ar[dr]^{\partial_1} &  & PB\ar[dl]_{\partial_0} \ar[dr]^{\partial_1}   & \\
B && B && B 
 }$$
 It follows from Proposition \ref{spanscomposition} that the map
$(\partial_0q_0,\partial_1q_1):P_2B\to B\times B$ is a fibration.
 There is a unique map $\sigma_2:B\to P_2B$ such
 that $q_0\sigma_2=\sigma$ and $q_1\sigma_2 =\sigma$,
 since $\partial_1\sigma=1_B=\partial_0 \sigma$.
 We have $\partial_0q_0\sigma_2=1_B=\partial_1q_1\sigma_2$.
 $$ \xymatrix{
&&  \ar@/_1pc/[lld]_-{1_B} B\ar[d]_{\sigma_2}   \ar@/^1pc/[rrd]^-{1_B} &&  \\
B &&\ar[ll]_{\partial_0q_0}  P_2B \ar[rr]^{\partial_1q_1} && B
 }$$
Let us show that the quadruple $(P_2B, \partial_0q_0, \partial_1q_1,\sigma_2)$  is a path object for $B$.
For this we have to show  that the map $\sigma_2:B\to P_2B$ is anodyne.
We have $\partial_1^\star(PB,\partial_0)= (P_2B,q_0)$,
since the following square is cartesian
$$ \xymatrix{
  P_2B\ar[d]_{q_0} \ar[rr]^{q_1}&& PB \ar[d]^{\partial_0}\\
 PB \ar[rr]^{\partial_1}  && B 
}$$
The map $s\defeq \partial_1^\star(\sigma)$
is a section of the fibration $q_0:P_2B\to PB$, since
the map $\sigma:B\to PB$ is a section of the fibration $\partial_0:PB\to B$.
$$ \xymatrix{
  P_2B\ar[d]^{q_0} \ar[rr]^{q_1}&& PB \ar[d]_{\partial_0}\\
 \ar@/^1pc/[u]^-{s}    PB \ar[rr]^{\partial_1}  && B  \ar@/_1pc/[u]_-{\sigma} 
}$$
Thus, $s$ is anodyne by Theorem \ref{basechangehomotopical}, since $\sigma$ is anodyne. 
By definition, $q_0 s=1_{PB}$ and $q_1s= \sigma\partial_1$.
Hence we have $\sigma_2=s\sigma$, since 
$q_0\sigma_2=\sigma=q_0s \sigma$ and
$q_1\sigma_2 =\sigma= \sigma\partial_1\sigma= q_1s \sigma$.
Thus,  $\sigma_2$
 is anodyne, since $s$ and $\sigma$ are anodyne.
 If $H_0:f\leadsto g$ 
is a homotopy between two maps $f,g:A\to B$ 
and $H_1:g\leadsto h$ a homotopy 
between two maps $g,h:A\to B$,
then we have $\partial_1 H_0=g=\partial_0 H_1$.
Hence there is a unique map $H:A\to P_2B$ such that
  $q_0H=H_0$ and $q_1H=H_1$.
  This defines a homotopy $H:f\leadsto h$,
  since $\partial_0q_0 H=\partial_0 H_0=f$ and $\partial_1q_1 H=\partial_1 H_1=h$.
  \end{proof}

Recall that an equivalence relation $\equiv$ between the arrows of a category $\mathcal{C}$
is said to be a {\it congruence} if it is compatible with the operation of composition.

\begin{prop} \label{homotopycongruence} The homotopy relation $f\sim g$
between the maps of a tribe $\mathcal{E}$ is a congruence. 
\end{prop}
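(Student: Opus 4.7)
The plan is to show that for any maps $f_1, g_1 : A \to B$ with $f_1 \sim g_1$ and $f_2, g_2 : B \to C$ with $f_2 \sim g_2$, one has $f_2 f_1 \sim g_2 g_1$. By Lemma \ref{hoisequiv} the relation $\sim$ is transitive, so it is enough to establish two things separately: $(\ast)$ right-composition by a fixed map preserves $\sim$, and $(\ast\ast)$ left-composition by a fixed map preserves $\sim$. Then $f_2 f_1 \sim f_2 g_1 \sim g_2 g_1$.

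For $(\ast)$, let $k : A' \to A$ and fix a path object $(PB, \partial_0, \partial_1, \sigma)$ for $B$. If $H : A \to PB$ is a homotopy $f \leadsto g$, then $Hk : A' \to PB$ is a homotopy $fk \leadsto gk$, since $\partial_i H k = (\partial_i H) k$. This is immediate.

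The main content is $(\ast\ast)$: given $h : B \to C$, construct an induced map $Ph : PB \to PC$ between path objects. Fix path objects $(PB,\partial_0^B,\partial_1^B,\sigma_B)$ and $(PC,\partial_0^C,\partial_1^C,\sigma_C)$. Consider the square
\[
\xymatrix{
B \ar[d]_{\sigma_B} \ar[rr]^{\sigma_C h} && PC \ar[d]^{(\partial_0^C, \partial_1^C)} \\
PB \ar[rr]^{(h\partial_0^B,\ h\partial_1^B)} && C \times C,
}
\]
which commutes because both composites equal $(h,h) : B \to C \times C$. The left vertical $\sigma_B$ is anodyne and the right vertical is a fibration, so a diagonal filler $Ph : PB \to PC$ exists. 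By construction, $\partial_i^C \circ Ph = h \circ \partial_i^B$ for $i = 0,1$. Then given any homotopy $H : f \leadsto g$ between $f,g : A \to B$, the composite $Ph \circ H : A \to PC$ satisfies $\partial_i^C Ph H = h \partial_i^B H$, so it is a homotopy $hf \leadsto hg$. This establishes $(\ast\ast)$.

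The only step with real content is the construction of $Ph$, which is a standard lifting argument; there is no serious obstacle, since it is exactly the lifting problem solved by anodyne-against-fibration. The invariance of the homotopy relation under the choice of path object (Lemma \ref{independancepathob}) ensures that the conclusion $f_2 f_1 \sim g_2 g_1$ is independent of the path objects chosen along the way.
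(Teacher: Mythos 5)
Your proof is correct and follows essentially the same route as the paper: right-composition is handled by precomposing the homotopy, and left-composition by producing a map $PB\to PC$ as a diagonal filler of exactly the same lifting square (anodyne $\sigma_B$ against the fibration $(\partial_0^C,\partial_1^C)$). The only cosmetic difference is that you make explicit the reduction to the two one-sided compatibilities via transitivity, which the paper leaves implicit.
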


 \begin{proof} We saw in Lemma \ref{hoisequiv}  that the relation $f\sim g$ is an equivalence on 
 each hom set $\mathcal{E}(A,B)$.
Let us prove that if $f,g:A\to B$ and $f\sim g$,
then $fu\sim gu$ for every map $u:A'\to A$
and $vf\sim vg$ for every map $v:B\to B'$.
If $(PB,\partial_0,\partial_1,\sigma)$ is a path object for $B$ 
and $H:A\to PB$ is a homotopy $f\leadsto g$, then $Hu:A'\to PB$
is a homotopy $fu\leadsto gu$. 
If  $(PB', \partial'_0,\partial'_1,\sigma')$ is a path object
 for $B'$, then
 the following square commutes, 
  \begin{equation}\label{homotopycongruencesq}
 \xymatrix{
B \ar[d]_{\sigma} \ar[rr]^{\sigma'v} &&PB' \ar[d]^{(\partial'_0,\partial'_1)}  \\
PB \ar[rr]^{(v\partial_0,v\partial_1)} && B'\times B'
 }
 \end{equation}
since the following diagram commutes
$$ \xymatrix{
&PB  \ar[dr]^{(\partial_0,\partial_1)} & \\
B\ar[d]_v\ar[ur]^{\sigma }  \ar[rr]^-{(1_B,1_B)}  &&  B \times B \ar[d]^{v\times v} \\
B'\ar[rd]_{\sigma'}   \ar[rr]^-{(1_{B'},1_{B'})} && B'\times B' \\
&PB'  \ar[ur]_{(\partial'_0,\partial'_1)} &
 }$$
The square (\ref{homotopycongruencesq}) has a diagonal filler $w:PB\to PB'$, 
since $\sigma$ is anodyne and $(\partial'_0, \partial'_1)$ is a fibration.
If $H:A\to PB$ is a homotopy $f\leadsto g$, then $wH:A\to PB'$ is a homotopy $vf\leadsto vg$.
  \end{proof}

\medskip

We shall denote by $[f]$ the homotopy class of a map $f:X\to Y$ and 
by  $\pi_0\mathcal{E}(X,Y)$ the quotient of the set $\mathcal{E}(X,Y)$
by the homotopy relation.
The composite of two homotopy classes $[f]:X\to Y$ and $[g]:Y\to Z$
is defined by putting $[g][f]\defeq [gf]$.
This is the composition law of a category $Ho(\mathcal{E})$
having the same objects as $\mathcal{E}$
if we put $$Ho(\mathcal{E})(X,Y)\defeq \pi_0\mathcal{E}(X,Y)$$
We shall say that  $Ho(\mathcal{E})$
is the {\it homotopy category} of $\mathcal{E}$.

\medskip

\begin{def} \label{homotopyequiv} 
If $\mathcal{E}$ is a tribe, 
we say that a map 
$f:X\to Y$ in $\mathcal{E}$  is a {\bf homotopy equivalence},  
if the morphism $[f]:X\to Y$ is invertible in the homotopy category $Ho(\mathcal{E})$.
We say that a fibration $f:X\to Y$
 is {\bf trivial} if it is a homotopy equivalence.
We say that an object $A\in \mathcal{E}$ is 
{\bf contractible} if the map $A\to 1$ is a homotopy equivalence.
\end{def}

By definition, a map $f:X\to Y$   is a homotopy equivalence if and only if
there is a map $g:Y\to X$ such that $gf\sim 1_X$ and $fg\sim 1_Y$;
in which case $f$ and $g$ are mutually {\it homotopy inverse}.
The homotopy inverse of a map is unique up to homotopy.

\medskip

Recall that a class $\mathcal{W}$ of maps 
in a category is said to have the {\it 3-for-2 property}  if 
the following condition is satisfied: if two of three maps in a commutative triangle 
$$\xymatrix{
A \ar[r]^u \ar[dr]_{vu} & B\ar[d]^v \\
& C
}$$
belongs to $\mathcal{W}$, then so does the third.

\begin{prop} \label{6four2homotopyequiv} 
The class 
of homotopy equivalences in a tribe 
has the 3-for-2 property and it is closed under retracts.
\end{prop}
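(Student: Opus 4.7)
The plan is to deduce both properties from the fact that they hold for isomorphisms in any category, by transferring along the quotient functor $q:\mathcal{E}\to Ho(\mathcal{E})$ sending a map $f$ to its homotopy class $[f]$. By the preceding discussion, $q$ is a well-defined functor (Proposition \ref{homotopycongruence} gave us that the homotopy relation is a congruence), and by Definition \ref{homotopyequiv} a map $f$ in $\mathcal{E}$ is a homotopy equivalence if and only if $q(f)=[f]$ is an isomorphism in $Ho(\mathcal{E})$.

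For the 3-for-2 property, I would take a commutative triangle with maps $u:A\to B$, $v:B\to C$ and composite $vu:A\to C$, apply $q$ to get a commutative triangle $[v]\circ [u]=[vu]$ in $Ho(\mathcal{E})$, and invoke the elementary fact that in any category, if two out of three morphisms in a commutative triangle are isomorphisms then so is the third (one simply composes the inverses of the two known isos). Since being a homotopy equivalence is precisely being sent to an iso by $q$, this gives the 3-for-2 property for homotopy equivalences.

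For closure under retracts, suppose $f:X\to Y$ is a retract of $g:X'\to Y'$ in the arrow category $\mathcal{E}^{[1]}$, witnessed by maps $(i,j):f\to g$ and $(r,s):g\to f$ with $(r,s)\circ(i,j)=1_f$. Applying the functor $q^{[1]}:\mathcal{E}^{[1]}\to Ho(\mathcal{E})^{[1]}$ (induced pointwise by $q$) shows that $[f]$ is a retract of $[g]$ in $Ho(\mathcal{E})^{[1]}$. The elementary fact that isomorphisms are closed under retracts in any category (if $g$ is invertible with inverse $g^{-1}$, then $rg^{-1}j$ is an inverse for $f$, as one checks using $ri=1_X$, $sj=1_Y$ and naturality of the retract squares) then implies that $[f]$ is an isomorphism, hence $f$ is a homotopy equivalence.

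There is essentially no obstacle: both claims are purely formal consequences of the existence of the quotient category $Ho(\mathcal{E})$ established in Proposition \ref{homotopycongruence}, together with two trivial categorical facts about isomorphisms. The only subtlety worth a line of explanation is checking that the relevant structure (commutative triangles, retract diagrams) is genuinely preserved by the quotient functor, which is immediate from functoriality.
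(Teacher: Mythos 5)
Your proof is correct and follows exactly the paper's own argument: the paper's one-line proof also reduces both properties to the corresponding elementary facts about isomorphisms in $Ho(\mathcal{E})$ via the quotient functor. You have merely spelled out the details the paper leaves implicit.
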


\begin{proof}  The class of isomorphism in a category
has the 3-for-2 property and is closed under retracts.
\end{proof}

\begin{prop} \label{anohomotopyequiv} An anodyne map in a tribe is a homotopy equivalence.
\end{prop}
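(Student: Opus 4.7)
The plan is to show that if $u:A\rightarrowtail B$ is anodyne, then $u$ admits a retraction $r:B\to A$ with $ru = 1_A$ (on the nose) and $ur \sim 1_B$ (up to homotopy), whence $[u]$ is invertible in $Ho(\mathcal{E})$ with inverse $[r]$.

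First, by Lemma \ref{anodyne closure}, every anodyne map is a split monomorphism, so there is a map $r:B\to A$ such that $ru = 1_A$. In particular $ru \sim 1_A$. It remains to produce a homotopy $ur \leadsto 1_B$. Let $(PB,\partial_0,\partial_1,\sigma)$ be a path object for $B$. I claim that the square
$$
\xymatrix{
A \ar[d]_u \ar[rr]^{\sigma u} && PB \ar[d]^{(\partial_0,\partial_1)} \\
B \ar[rr]^{(ur,\, 1_B)} && B\times B
}
$$
commutes: indeed $(\partial_0,\partial_1)\sigma u = (u,u)$ since $\sigma$ is a section of the diagonal, and $(ur,1_B)u = (uru, u) = (u,u)$ since $ru = 1_A$. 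Since $u$ is anodyne and $(\partial_0,\partial_1)$ is a fibration, the square admits a diagonal filler $H:B\to PB$. By construction $\partial_0 H = ur$ and $\partial_1 H = 1_B$, i.e.\ $H:ur\leadsto 1_B$. Combined with $ru = 1_A$, this shows $[r][u] = [1_A]$ and $[u][r] = [1_B]$ in $Ho(\mathcal{E})$, so $u$ is a homotopy equivalence.

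The argument is essentially one lifting square plus the existence of a retraction, so there is no real obstacle: the only delicate point is getting the source and target of the homotopy on the correct sides, which is why I use $(ur,1_B)$ rather than $(1_B,ur)$ as the bottom arrow. This choice ensures the filler $H$ is literally a homotopy from $ur$ to $1_B$ in the sense of Definition \ref{pathobjecetdef}, matching the conventions for $\partial_0$ and $\partial_1$. No other machinery from the excerpt is needed beyond the anodyne lifting property and the definition of a path object.
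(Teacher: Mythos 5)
Your proof is correct and is essentially identical to the paper's own argument: both extract a retraction $r$ from the split-mono property of anodyne maps and then produce the homotopy $ur\leadsto 1_B$ as a diagonal filler of the same lifting square against $(\partial_0,\partial_1)$. The only cosmetic difference is the citation for the split-mono fact (you cite Lemma \ref{anodyne closure}, the paper cites \ref{charanodyne}), and your choice of $(ur,1_B)$ as the bottom arrow matches the paper's exactly.
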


\begin{proof} If $u:A\to B$ is anodyne, let us show that the morphism $[u]:A\to B$
is invertible in the homotopy category. 
There exists a map $r:B\to A$ such that $ru=1_A$,
since an anodyne map is a split monomorphism by \ref{charanodyne}.
Thus, $[r][u]=1_A$. Let us show that $[u][r]=1_B$.
If $(PB,\partial_0,\partial_1,\sigma)$ is a path object for $B$, then the following square commutes
$$ \xymatrix{
A\ar[d]_{u}\ar[rr]^{\sigma u}  && PB
\ar[d]^{(\partial_0,\partial_1)} \\
B\ar[rr]^(0.4){(ur,1_B) }&& B\times B
 }$$
since $(ur,1_B)u=(uru,u)=(u,u)=(1_B,1_B)u=(\partial_0,\partial_1)\sigma u$.
 Hence the square has a diagonal filler $H:B\to PB$, since $u$
 is anodyne and $(\partial_0,\partial_1)$ is a fibration.
The map $H:B\to PB$, is a homotopy $ur\leadsto 1_B$, since $(\partial_0,\partial_1) H=(ur,1_B)$.
Thus, $[u][r]=[ur]=[1_B]=1_B$.
  \end{proof}

Let us say that a functor $F:\mathcal{E} \to \mathcal{C}$  {\it inverts}
a map $f:A\to B$ in $\mathcal{E}$ if the map $F(f):FA\to FB$ is invertible.
If $\mathcal{W}$ is a class of maps in a category $\mathcal{E}$, 
then there is a category $ \mathcal{W}^{-1}\mathcal{E}$ equipped with a functor 
 $Q:\mathcal{E}\to  \mathcal{W}^{-1}\mathcal{E}$ which inverts universally the maps in $\mathcal{W}$.
The universality of $Q$ means that for every functor $F:\mathcal{E} \to \mathcal{C}$ which inverts the maps  in $\mathcal{W}$
there exists a unique functor $F': \mathcal{W}^{-1}\mathcal{E} \to \mathcal{C} $ such that $F=F'Q$,
$$\xymatrix{ 
\mathcal{E}\ar[rr]^Q \ar[drr]_F && \mathcal{W}^{-1}\mathcal{E} \ar[d]^{F'} \\
 &&\mathcal{E}.  \\
}$$

\begin{lemma} \label{inverthoequiv7} If $\mathcal{E}$ is a tribe
and $\mathcal{C}$ is a category, then the following conditions on a functor $F:\mathcal{E}\to \mathcal{C}$
are equivalent:
\begin{enumerate} 
\item{}  $F$ respects the homotopy relation: $f\sim g \Rightarrow F(f)=F(g)$;
\item{} $F$ inverts homotopy equivalences;
\item{} $F$ inverts anodyne maps.
\end{enumerate} 
\end{lemma}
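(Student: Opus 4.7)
My plan is to prove the chain of implications $(1)\Rightarrow(2)\Rightarrow(3)\Rightarrow(1)$.

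For $(1)\Rightarrow(2)$, I would take a homotopy equivalence $f:X\to Y$ with homotopy inverse $g:Y\to X$, so that $gf\sim 1_X$ and $fg\sim 1_Y$. If $F$ respects the homotopy relation, then $F(g)F(f)=F(gf)=F(1_X)=1_{FX}$ and similarly $F(f)F(g)=1_{FY}$, so $F(f)$ is invertible in $\mathcal{C}$.

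The implication $(2)\Rightarrow(3)$ is immediate from Proposition \ref{anohomotopyequiv}, which says that every anodyne map in a tribe is a homotopy equivalence.

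The interesting direction is $(3)\Rightarrow(1)$, and this is where I would use the path object construction. Suppose $F$ inverts every anodyne map, and let $f,g:X\to Y$ with $f\sim g$. Choose a path object $(PY,\partial_0,\partial_1,\sigma)$ for $Y$ and a homotopy $H:X\to PY$ with $\partial_0 H=f$ and $\partial_1 H=g$. Since $\sigma:Y\to PY$ is anodyne, $F(\sigma)$ is invertible in $\mathcal{C}$. From the identities $\partial_0\sigma=1_Y=\partial_1\sigma$ we obtain
\[
F(\partial_0)F(\sigma)=1_{FY}=F(\partial_1)F(\sigma),
\]
and cancelling the invertible $F(\sigma)$ on the right gives $F(\partial_0)=F(\partial_1)$. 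Composing with $F(H)$ then yields
\[
F(f)=F(\partial_0)F(H)=F(\partial_1)F(H)=F(g).
\]

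There is no real obstacle here: the existence of a path object for $Y$ is guaranteed by Definition \ref{pathobjecetdef}, and the argument only uses that $\sigma$ is anodyne together with the two equations $\partial_i\sigma=1_Y$. The only subtlety worth remarking is that the notion of homotopy a priori depends on the choice of path object, but Lemma \ref{independancepathob} ensures that the relation $f\sim g$ is independent of this choice, so it suffices to verify the conclusion for one path object, as I have done.
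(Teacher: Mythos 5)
Your proposal is correct and follows essentially the same route as the paper: the cycle $(1)\Rightarrow(2)\Rightarrow(3)\Rightarrow(1)$, with $(2)\Rightarrow(3)$ by Proposition \ref{anohomotopyequiv} and $(3)\Rightarrow(1)$ by the identical path-object cancellation argument. The only cosmetic difference is in $(1)\Rightarrow(2)$, where you compute directly with a homotopy inverse while the paper phrases it via the induced functor on $Ho(\mathcal{E})$; these are the same argument.
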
 

\begin{proof} 
($1\Rightarrow 2$)
The functor $F$ induces a functor $\tilde{F}:Ho(\mathcal{E}) \to \mathcal{C}$,
since it respects the homotopy relation. If a map $f:A\to B$ in $\mathcal{E}$ is a homotopy equivalence, then the
map $F(f)=\tilde{F}([f])$ is invertible,
since $[f]$ is invertible. 
$(2\Rightarrow 3)$ 
An anodyne map is a homotopy equivalence by \ref{anohomotopyequiv}.
($3\Rightarrow 1$)
If two maps $f,g:A\to B$ are homotopic, let us show that $F(f)=F(g)$.
Let  $(PB, \partial_0,\partial_1,\sigma)$ a path object for $B$ and let $H:A\to PB$
 a homotopy $f\leadsto g$.  
The image of the path object $PB$ by the functor $F$  is a commutative
diagram
$$ \xymatrix{
&&  \ar@/_1pc/[lld]_-{1_{FB}} B\ar[d]_{F(\sigma)}   \ar@/^1pc/[rrd]^-{1_{FB}} &&  \\
FB &&\ar[ll]_{F(\partial_0)}  F(PB) \ar[rr]^{F(\partial_1)} && FB
 }$$
 The map $F(\sigma)$ is invertible by the hypothesis on $F$, since $\sigma$ is anodyne.
Hence we have $F(\partial_0)=F(\partial_1)$, since we have $F(\partial_0)F(\sigma)=F(\partial_1)F(\sigma)$.
But we have $f=\partial_0 H$ and $g=\partial_1H$, since $H$ is a  homotopy $f\leadsto g$.  
Thus,
$$F(f)=F(\partial_0 H)=F(\partial_0)F(H)=F(\partial_1)F(H)=F(\partial_1H)=F(g).$$
\end{proof} 

\begin{prop} \label{inverthoequiv2} If $\mathcal{W}$ (resp. $\mathcal{A}$) is
 the class of homotopy equivalences (resp. anodyne maps) 
in a tribe $\mathcal{E}$, then we have 
 $$Ho(\mathcal{E})=\mathcal{W}^{-1}\mathcal{E}=\mathcal{A}^{-1}\mathcal{E}.$$
\end{prop}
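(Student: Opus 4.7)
The plan is to show that the canonical quotient functor $Q:\mathcal{E}\to Ho(\mathcal{E})$ satisfies the universal property defining each of the two localizations, whence all three categories are canonically isomorphic. By construction, $Ho(\mathcal{E})$ has the same objects as $\mathcal{E}$ and its hom-sets are the quotients $\pi_0\mathcal{E}(X,Y)=\mathcal{E}(X,Y)/{\sim}$; composition is well-defined because $\sim$ is a congruence by Proposition \ref{homotopycongruence}. It follows directly from this construction that a functor $F:\mathcal{E}\to \mathcal{C}$ factors (uniquely) as $F=F'Q$ for some $F':Ho(\mathcal{E})\to \mathcal{C}$ if and only if $F$ respects the homotopy relation, i.e.\ $f\sim g \Rightarrow F(f)=F(g)$.

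Next, I would invoke Lemma \ref{inverthoequiv7}, which establishes the equivalence of the three conditions: respecting the homotopy relation, inverting homotopy equivalences, and inverting anodyne maps. Combining this with the observation above, a functor $F:\mathcal{E}\to \mathcal{C}$ factors uniquely through $Q$ if and only if it inverts all maps in $\mathcal{W}$, if and only if it inverts all maps in $\mathcal{A}$. Note also that $Q$ itself inverts homotopy equivalences (hence anodyne maps by Proposition \ref{anohomotopyequiv}), since if $f:A\to B$ is a homotopy equivalence then $[f]$ is by definition invertible in $Ho(\mathcal{E})$, and $Q(f)=[f]$.

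Therefore $Q:\mathcal{E}\to Ho(\mathcal{E})$ enjoys both the universal property characterizing $\mathcal{W}^{-1}\mathcal{E}$ (a functor inverting $\mathcal{W}$ universally) and the universal property characterizing $\mathcal{A}^{-1}\mathcal{E}$. By the uniqueness (up to unique isomorphism) of a solution to a universal problem, there are canonical isomorphisms of categories
\[
\mathcal{W}^{-1}\mathcal{E} \;\cong\; Ho(\mathcal{E}) \;\cong\; \mathcal{A}^{-1}\mathcal{E},
\]
as required. I do not anticipate any real obstacle: the entire argument is a formal consequence of Lemma \ref{inverthoequiv7} together with the elementary construction of $Ho(\mathcal{E})$ as a quotient category, so the proof should be essentially a one-paragraph appeal to these facts.
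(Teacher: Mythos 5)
Your proposal is correct and follows essentially the same route as the paper: the paper's proof is a two-line appeal to Lemma \ref{inverthoequiv7}, stating that the canonical functor $\mathcal{E}\to Ho(\mathcal{E})$ inverts the maps in $\mathcal{W}$ (resp.\ $\mathcal{A}$) universally and concluding by uniqueness of the localization. Your version merely spells out the intermediate steps (the universal property of the quotient category and the fact that $Q$ itself inverts $\mathcal{W}$) that the paper leaves implicit.
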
 

\begin{proof} 
The canonical functor $\mathcal{E}\to Ho(\mathcal{E})$ inverts universally
the maps in $ \mathcal{W}$ by Lemma \ref{inverthoequiv7}.
Thus, $Ho(\mathcal{E})=\mathcal{W}^{-1}\mathcal{E}$. 
Similarly, $Ho(\mathcal{E})=\mathcal{A}^{-1}\mathcal{E}$. 
\end{proof}

\begin{lemma} \label{homotakepathobject} 
If $F: \mathcal{E} \to  \mathcal{E}'$ is a morphism of tribes
and $(PB, \partial_0,\partial_1,\sigma)$ is a path object
for an object $B\in \mathcal{E}$, then $(F(PB), F(\partial_0), F(\partial_1),F(\sigma))$ 
is a path object for $FB$. 
\end{lemma}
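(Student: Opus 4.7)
The plan is to unpack the definition of path object and verify each of the three ingredients in turn, using the fact that a morphism of tribes preserves products, fibrations, and anodyne maps.

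First I would recall that by Definition~\ref{pathobjecetdef}, a path object for $B$ is a factorization of the diagonal $(1_B,1_B):B\to B\times B$ as an anodyne map $\sigma:B\to PB$ followed by a fibration $(\partial_0,\partial_1):PB\to B\times B$. Thus to check that $(F(PB),F(\partial_0),F(\partial_1),F(\sigma))$ is a path object for $FB$, it suffices to verify (i) that $F(\sigma)$ is anodyne, (ii) that $(F(\partial_0),F(\partial_1)):F(PB)\to FB\times FB$ is a fibration, and (iii) that the composite $(F(\partial_0),F(\partial_1))\circ F(\sigma)$ is the diagonal of $FB$.

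For (i), this is immediate from Definition~\ref{defmorphismhotribes}, since $F$ is a morphism of tribes and $\sigma$ is anodyne. For (iii), since $F$ is a morphism of clans, it is cartesian by Proposition~\ref{morphismtribecart}; hence $F$ preserves finite products, and in particular the canonical map $F(B\times B)\to FB\times FB$ is an isomorphism identifying $F(1_B,1_B)$ with $(1_{FB},1_{FB})$. The composite $(F(\partial_0),F(\partial_1))F(\sigma)=F((\partial_0,\partial_1)\sigma)=F(1_B,1_B)$ is then the diagonal $(1_{FB},1_{FB})$ of $FB$, as required.

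The only step requiring a little care is (ii). By the same cartesian-functor argument, the pair $(F(\partial_0),F(\partial_1))$ identifies, under the canonical isomorphism $F(B\times B)\simeq FB\times FB$, with the image $F((\partial_0,\partial_1))$ of the original fibration. Since $F$ is a morphism of clans it preserves fibrations, so $F((\partial_0,\partial_1))$ is a fibration in $\mathcal{E}'$, and composing with the isomorphism preserves this (isomorphisms are fibrations in any clan). Thus $(F(\partial_0),F(\partial_1))$ is a fibration, completing the verification. I do not anticipate any real obstacle; the lemma is essentially the observation that the notion of path object is built from structure (products, fibrations, anodyne maps) that morphisms of tribes are defined to preserve.
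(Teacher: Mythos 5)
Your proof is correct and is exactly the argument the paper intends (the paper leaves this proof to the reader): $F(\sigma)$ is anodyne because morphisms of tribes preserve anodyne maps, and the identification of $(F(\partial_0),F(\partial_1))$ with $F((\partial_0,\partial_1))$ via the canonical isomorphism $F(B\times B)\simeq FB\times FB$ from Proposition~\ref{morphismtribecart} handles both the fibration condition and the compatibility with the diagonal. No gaps.
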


\begin{proof} Left to the reader.
\end{proof}

\begin{prop} \label{h-homotohomotop} A morphism of tribes  $F: \mathcal{E} \to  \mathcal{E}'$
preserves the  homotopy
relation. It thus induces a functor
$$Ho(F):Ho(\mathcal{E})\to Ho(\mathcal{E}')$$
if we put $Ho(F)([f])=[F(f)]$ for a map $f:X\to Y$ in $\mathcal{E}$.
\end{prop}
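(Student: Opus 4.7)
The plan is to reduce the statement to Lemma \ref{homotakepathobject}, which tells us that morphisms of tribes transport path objects. Once we know that, preservation of the homotopy relation is almost immediate, and the induced functor $Ho(F)$ drops out by the universal property established in Proposition \ref{inverthoequiv2}.

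First I would fix two maps $f,g:A\to B$ in $\mathcal{E}$ with $f\sim g$, and choose a path object $(PB,\partial_0,\partial_1,\sigma)$ for $B$ together with a witnessing homotopy $H:A\to PB$, so that $\partial_0 H=f$ and $\partial_1 H=g$. Applying $F$ to this data produces a map $F(H):F(A)\to F(PB)$ satisfying $F(\partial_0)F(H)=F(f)$ and $F(\partial_1)F(H)=F(g)$. By Lemma \ref{homotakepathobject}, the quadruple $(F(PB),F(\partial_0),F(\partial_1),F(\sigma))$ is a path object for $F(B)$, so $F(H)$ is a bona fide homotopy $F(f)\leadsto F(g)$ in $\mathcal{E}'$. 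Hence $F(f)\sim F(g)$, which proves that $F$ preserves the homotopy relation.

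Next I would construct $Ho(F)$. Since the homotopy relation is a congruence on $\mathcal{E}$ and on $\mathcal{E}'$ by Proposition \ref{homotopycongruence}, and since $F$ preserves this congruence, the assignment $[f]\mapsto [F(f)]$ is a well-defined map on hom-sets: if $[f]=[g]$ then $F(f)\sim F(g)$, so $[F(f)]=[F(g)]$. Functoriality is automatic, because $F$ is a functor and the composition in $Ho(\mathcal{E})$ and $Ho(\mathcal{E}')$ is defined by composition of representatives: $Ho(F)([g][f])=[F(gf)]=[F(g)F(f)]=Ho(F)([g])\,Ho(F)([f])$, and $Ho(F)([1_X])=[1_{FX}]$. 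Alternatively, one can invoke Proposition \ref{inverthoequiv2}: the composite $\mathcal{E}\xrightarrow{F}\mathcal{E}'\to Ho(\mathcal{E}')$ respects the homotopy relation by what we just proved, hence inverts the homotopy equivalences (indeed the anodyne maps) of $\mathcal{E}$, and therefore factors uniquely through $Ho(\mathcal{E})=\mathcal{W}^{-1}\mathcal{E}$.

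The only non-routine ingredient is the observation that $F$ carries a path object to a path object, and this is precisely Lemma \ref{homotakepathobject}: it uses that $F$ is a morphism of clans (so it preserves the cartesian square exhibiting $(\partial_0,\partial_1)$ as a fibration over $B\times B$) and a morphism of tribes (so it sends the anodyne map $\sigma$ to an anodyne map). No further obstacle remains.
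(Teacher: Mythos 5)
Your proof is correct and follows essentially the same route as the paper: both reduce the statement to Lemma \ref{homotakepathobject} (morphisms of tribes carry path objects to path objects) and then observe that the image $F(H)$ of a witnessing homotopy is again a homotopy. The extra paragraph spelling out well-definedness and functoriality of $Ho(F)$ is left implicit in the paper but is the standard argument.
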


\begin{proof}
The image by the functor $F$ of a path object  $PB=(PB, \partial_0,\partial_1,\sigma)$
for an object $B\in \mathcal{E}$ is a path object
$(F(PB), F(\partial_0), F(\partial_1),F(\sigma))$ for $FB$ by Lemma \ref{homotakepathobject}.
If $H:A\to PB$ is a homotopy between two maps $f,g:A\to B$,
then $F(H):FA\to FPB$  is a homotopy between the maps $F(f),F(g):FA\to FB$.
\end{proof}

\begin{cor} \label{inverthoequiv3} If $f:A\to B$ is a map in a tribe $ \mathcal{E}$, then
the base change functor $f^\star:\mathcal{E}(B)\to \mathcal{E}(A)$
induces a functor
$$Ho(f^\star):Ho(\mathcal{E}(B))\to Ho(\mathcal{E}(A)).$$
\end{cor}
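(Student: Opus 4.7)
The plan is to observe that this corollary follows immediately by composing two results already established in the excerpt. First I would invoke Theorem \ref{basechangehomotopical}, which asserts that for any map $f:A\to B$ in a tribe $\mathcal{E}$, the base change functor $f^\star:\mathcal{E}(B)\to \mathcal{E}(A)$ is not merely a morphism of clans but a morphism of tribes (i.e., it preserves anodyne maps in addition to fibrations, terminal objects, and base changes of fibrations).

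Second, I would note that by Proposition \ref{tribeslice}, both $\mathcal{E}(B)$ and $\mathcal{E}(A)$ are themselves tribes, so that the notion of homotopy category makes sense for each of them. Then Proposition \ref{h-homotohomotop}, applied to the morphism of tribes $f^\star:\mathcal{E}(B)\to \mathcal{E}(A)$, produces the desired induced functor
\[
Ho(f^\star):Ho(\mathcal{E}(B))\to Ho(\mathcal{E}(A)),
\]
defined by $Ho(f^\star)([u])=[f^\star(u)]$ for a map $u:(X,p)\to (Y,q)$ in $\mathcal{E}(B)$.

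There is no real obstacle here; the corollary is a direct specialization of the general principle that a morphism of tribes descends to the homotopy categories, once one recognizes (by Theorem \ref{basechangehomotopical}) that base change along an arbitrary map in a tribe qualifies as such a morphism. The only thing worth making explicit in the proof is that the homotopy relation in $\mathcal{E}(B)$ (resp. $\mathcal{E}(A)$) is the one induced by the tribe structure of Proposition \ref{tribeslice}, so that the statement is unambiguous.
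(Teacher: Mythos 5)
Your proposal is correct and follows exactly the paper's argument: the paper's one-line proof simply cites Theorem \ref{basechangehomotopical} to see that $f^\star$ is a morphism of tribes, and then the conclusion is immediate from Proposition \ref{h-homotohomotop}, which the corollary directly follows in the text. The extra care you take in noting that $\mathcal{E}(A)$ and $\mathcal{E}(B)$ are tribes via Proposition \ref{tribeslice} is a harmless (and reasonable) explicit mention of something the paper leaves implicit.
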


\begin{proof} The functor $f^\star$ is a morphism of tribes by Theorem \ref{basechangehomotopical}. 
\end{proof}

If $F:  \mathcal{E}\to \mathcal{E}' $ and $G:  \mathcal{E}'\to \mathcal{E}'' $
are morphisms of tribes, then $Ho(GF)=Ho(G)Ho(F)$.
This defines a functor $Ho:\Trib \to \Cat$.
Moreover, a natural transformation $\alpha:F_1\to F_2$
between two morphisms of tribes $F_1,F_2: \mathcal{E}\to \mathcal{E}'$ 
induces a natural transformation $Ho(\alpha):Ho(F_1)\to Ho(F_2)$.

\begin{prop} \label{the2-functorHo} The functor $Ho:\Trib \to \Cat$
has the structure of a 2-functor.
\end{prop}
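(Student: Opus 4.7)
The plan is to verify the axioms of a (strict) 2-functor that are not already established in the paragraph preceding the proposition. The assignment of $Ho$ on objects, 1-cells, and 2-cells has been defined, and the composition law for 1-cells, $Ho(GF)=Ho(G)Ho(F)$, is stated. What remains to check is: (i) preservation of identity 1-cells; (ii) the claim that $Ho(\alpha)$ is actually a natural transformation between $Ho(F_1)$ and $Ho(F_2)$; (iii) preservation of identity 2-cells; (iv) preservation of vertical composition of 2-cells; (v) preservation of horizontal composition. All of these are formal, obtained by unwinding the definition $Ho(\alpha)_X:=[\alpha_X]$ and applying the congruence property of the homotopy relation (Proposition \ref{homotopycongruence}) together with the fact that $Ho(F)$ is well-defined on homotopy classes (Proposition \ref{h-homotohomotop}).

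For (i), by direct computation $Ho(1_\mathcal{E})(X)=X$ and $Ho(1_\mathcal{E})([f])=[1_\mathcal{E}(f)]=[f]$, so $Ho(1_\mathcal{E})=1_{Ho(\mathcal{E})}$. For (ii), given $[f]:X\to Y$ in $Ho(\mathcal{E})$ represented by $f:X\to Y$ in $\mathcal{E}$, the strict naturality of $\alpha$ yields $F_2(f)\circ\alpha_X=\alpha_Y\circ F_1(f)$ in $\mathcal{E}'$, whence $[F_2(f)]\circ[\alpha_X]=[\alpha_Y]\circ[F_1(f)]$ in $Ho(\mathcal{E}')$, which is the naturality square for $Ho(\alpha)$ at $[f]$; the choice of representative $f$ is immaterial because, for any $g\sim f$, Proposition \ref{h-homotohomotop} gives $[F_i(g)]=[F_i(f)]$ for $i=1,2$. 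For (iii), $Ho(1_F)_X=[(1_F)_X]=[1_{F(X)}]=1_{F(X)}=1_{Ho(F)(X)}$, hence $Ho(1_F)=1_{Ho(F)}$. For (iv), if $\alpha:F_1\to F_2$ and $\beta:F_2\to F_3$, then $(\beta\alpha)_X=\beta_X\alpha_X$ in $\mathcal{E}'$, so $Ho(\beta\alpha)_X=[\beta_X\alpha_X]=[\beta_X]\circ[\alpha_X]=(Ho(\beta)\circ Ho(\alpha))_X$.

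For (v), given $\alpha:F_1\to F_2$ in $\Trib(\mathcal{E},\mathcal{E}')$ and $\beta:G_1\to G_2$ in $\Trib(\mathcal{E}',\mathcal{E}'')$, the horizontal composite has component
\[
(\beta\ast\alpha)_X=G_2(\alpha_X)\circ \beta_{F_1 X}=\beta_{F_2 X}\circ G_1(\alpha_X).
\]
Applying $[-]$, one obtains $Ho(\beta\ast\alpha)_X=[G_2(\alpha_X)]\circ[\beta_{F_1 X}]$; since $[G_2(\alpha_X)]=Ho(G_2)([\alpha_X])=Ho(G_2)(Ho(\alpha)_X)$ and $[\beta_{F_1 X}]=Ho(\beta)_{Ho(F_1) X}$ (using $Ho(F_1)(X)=F_1(X)$), this equals the horizontal composite $(Ho(\beta)\ast Ho(\alpha))_X$ in $\Cat$.

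The main obstacle is nonexistent: every equation of the 2-functor axioms actually holds strictly, not merely up to homotopy, because components of natural transformations compose strictly in $\mathcal{E}'$ and the passage to homotopy classes is a quotient functor $\mathcal{E}'\to Ho(\mathcal{E}')$ that preserves composition. The only point requiring a verification beyond pure definitional unwinding is (ii), which reduces, by Proposition \ref{h-homotohomotop} and the congruence property \ref{homotopycongruence}, to the ordinary naturality of $\alpha$ in $\mathcal{E}'$.
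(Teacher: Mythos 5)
Your proof is correct, and since the paper leaves this verification to the reader there is nothing to compare it against: you have simply supplied the routine check that all 2-functor axioms hold strictly because $[-]:\mathcal{E}'\to Ho(\mathcal{E}')$ is a composition-preserving quotient functor (by Proposition \ref{homotopycongruence}) and $Ho(F)$ is well defined on homotopy classes (by Proposition \ref{h-homotohomotop}). The decomposition into points (i)--(v), including the naturality of $Ho(\alpha)$ and the preservation of horizontal composites, is exactly what the intended argument requires.
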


\begin{proof} Left to the reader.
\end{proof}

The {\it cartesian product} of a path object $PA=(PA,\partial_0^A,\partial_1^A, \sigma^A)$ for $A$
with a path object $PB=(PB,\partial_0^B,\partial_1^B, \sigma^B)$ for $B$ 
is defined by putting 
$$PA\times PB=(PA\times PB,\partial_0^A\times \partial_0^B ,\partial_1^A \times \partial_1^B, \sigma^A\times \sigma^B)$$

\begin{lemma}\label{productpathobject} 
The cartesian product of a path object $PA$ for $A$
with a path object $PB$ for $B$ is a path object for $A\times B$.
\end{lemma}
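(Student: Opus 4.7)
The plan is to verify the three requirements in Definition \ref{pathobjecetdef} for the quadruple
$$PA\times PB=(PA\times PB,\ \partial_0^A\times \partial_0^B,\ \partial_1^A \times \partial_1^B,\ \sigma^A\times \sigma^B).$$
Namely: (i) the map $\sigma^A\times\sigma^B$ is anodyne; (ii) the pair $(\partial_0^A\times \partial_0^B,\partial_1^A \times \partial_1^B):PA\times PB\to (A\times B)\times(A\times B)$ is a fibration; (iii) their composite equals the diagonal of $A\times B$.

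For (i), since $\sigma^A:A\rightarrowtail PA$ and $\sigma^B:B\rightarrowtail PB$ are both anodyne, the product map $\sigma^A\times\sigma^B:A\times B\to PA\times PB$ is anodyne by Proposition \ref{cartanodyne}. For (ii), the first step is to observe that, up to the canonical swap isomorphism
$$\tau:(A\times A)\times(B\times B)\xrightarrow{\ \simeq\ }(A\times B)\times(A\times B)$$
interchanging the middle two factors, the map $(\partial_0^A\times \partial_0^B,\partial_1^A \times \partial_1^B)$ coincides with the composite $\tau\circ\bigl((\partial_0^A,\partial_1^A)\times(\partial_0^B,\partial_1^B)\bigr)$. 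Since $(\partial_0^A,\partial_1^A)$ and $(\partial_0^B,\partial_1^B)$ are fibrations, their product is a fibration by Proposition \ref{tribeofgenuineprojection0}, and composing with the isomorphism $\tau$ (which belongs to every clan structure) yields a fibration as required.

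For (iii), it suffices to chase elements (or use universality of products): the identities $(\partial_0^A,\partial_1^A)\sigma^A=(1_A,1_A)$ and $(\partial_0^B,\partial_1^B)\sigma^B=(1_B,1_B)$ imply immediately that
$$(\partial_0^A\times \partial_0^B,\partial_1^A \times \partial_1^B)\circ(\sigma^A\times\sigma^B)=(1_{A\times B},1_{A\times B}),$$
i.e.\ the diagonal of $A\times B$. Combining (i), (ii), and (iii) gives the desired factorisation of the diagonal as an anodyne map followed by a fibration, so $PA\times PB$ is a path object for $A\times B$. No step looks like a genuine obstacle; the only mild care required is the bookkeeping of the middle-swap isomorphism in step (ii).
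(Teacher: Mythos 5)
Your proof is correct and follows essentially the same route as the paper: anodyne-ness of $\sigma^A\times\sigma^B$ via Proposition \ref{cartanodyne}, and the fibration condition by identifying $(\partial_0^A\times \partial_0^B,\partial_1^A \times \partial_1^B)$ with $(\partial_0^A,\partial_1^A)\times(\partial_0^B,\partial_1^B)$ up to the middle-swap isomorphism and invoking Proposition \ref{tribeofgenuineprojection0}. Your explicit verification of step (iii) is a small addition the paper leaves implicit.
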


\begin{proof} 
The map $\sigma^A\times \sigma^B:A\times B \to PA\times PB$ is anodyne, since the cartesian product of two anodyne maps
is anodyne by lemma \ref{cartanodyne}.
The map
$$\langle \partial_0^A\times \partial_0^B, \partial_1^A\times \partial_1^B\rangle:PA\times PB \to A\times B \times A\times B$$
is isomorphic to the map
$$\langle \partial^0_A,\partial^1_A\rangle \times \langle \partial_0^B, \partial_1^B\rangle: PA\times PB \to A\times A \times B\times B.$$
It is thus a fibration, since the cartesian product of two fibrations is a fibration by 
 Proposition \ref{tribeofgenuineprojection0}.
 \end{proof}

\begin{lemma} \label{homotopyrelprod} 
If $(f,g):C\to A\times B$ and $(f',g'):C\to A\times B$, then
$$(f,g)\sim (f',g')\quad \Longleftrightarrow \quad f\sim f' \quad {\rm and }\quad g\sim g'$$
\end{lemma}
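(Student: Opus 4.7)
The plan is to use Lemma \ref{productpathobject} to identify $PA \times PB$ as a path object for $A \times B$, and then exploit Lemma \ref{independancepathob}, which guarantees that the homotopy relation does not depend on the choice of path object. Once we are free to use this particular path object, the equivalence reduces to unpacking coordinates.

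More precisely, I would begin by choosing a path object $(PA,\partial_0^A,\partial_1^A,\sigma^A)$ for $A$ and a path object $(PB,\partial_0^B,\partial_1^B,\sigma^B)$ for $B$. By Lemma \ref{productpathobject}, the quadruple
$$(PA\times PB,\ \partial_0^A\times \partial_0^B,\ \partial_1^A\times \partial_1^B,\ \sigma^A\times \sigma^B)$$
is then a path object for $A\times B$. By Lemma \ref{independancepathob}, homotopies between maps into $A\times B$ may be computed using this path object without loss of generality.

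For the $(\Rightarrow)$ direction, suppose $(f,g)\sim (f',g')$; then there is a homotopy $H:C\to PA\times PB$ with $(\partial_0^A\times \partial_0^B)H=(f,g)$ and $(\partial_1^A\times \partial_1^B)H=(f',g')$. Writing $H=(H_A,H_B)$ with $H_A:C\to PA$ and $H_B:C\to PB$, the two equations decompose componentwise into $\partial_0^A H_A=f$, $\partial_1^A H_A=f'$, $\partial_0^B H_B=g$, $\partial_1^B H_B=g'$, so $H_A:f\leadsto f'$ and $H_B:g\leadsto g'$. For the $(\Leftarrow)$ direction, given $H_A:f\leadsto f'$ and $H_B:g\leadsto g'$, the pair $(H_A,H_B):C\to PA\times PB$ is visibly a homotopy $(f,g)\leadsto (f',g')$ in this path object.

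There is no substantive obstacle: the entire content is the componentwise structure of the product path object, and the only non-trivial fact used is the independence of the homotopy relation from the chosen path object, which was already established in Lemma \ref{independancepathob}.
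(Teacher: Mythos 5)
Your proof is correct and takes essentially the same route as the paper, whose entire proof is the citation ``This follows from Lemma \ref{productpathobject}.'' Your write-up simply makes explicit the componentwise unpacking of a homotopy valued in the product path object and the appeal to Lemma \ref{independancepathob} that the paper leaves implicit.
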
 

\begin{proof} This follows from Lemma \ref{productpathobject}.
\end{proof}

\begin{prop}\label{hocatiscartesian} The homotopy category of a tribe $\mathcal{E}$ is cartesian and
the canonical functor $\mathcal{E}\to Ho(\mathcal{E})$ is cartesian.
\end{prop}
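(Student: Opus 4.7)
The plan is to use Lemma \ref{homotopyrelprod} and Lemma \ref{productpathobject} to show that the terminal object and the binary products of $\mathcal{E}$ remain terminal and products in $Ho(\mathcal{E})$, and that the quotient functor $Q: \mathcal{E} \to Ho(\mathcal{E})$ sends them to such.

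First I would handle the terminal object. Since $1$ is terminal in $\mathcal{E}$, for every object $X \in \mathcal{E}$ there is a unique map $X \to 1$. In particular, the hom-set $\mathcal{E}(X, 1)$ is a singleton, so its quotient $Ho(\mathcal{E})(X, 1) = \pi_0 \mathcal{E}(X, 1)$ is also a singleton. Hence $1$ is terminal in $Ho(\mathcal{E})$, and $Q$ preserves it.

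Next I would handle binary products. Let $A, B \in \mathcal{E}$ and let $p_1: A \times B \to A$ and $p_2: A \times B \to B$ be the projections; I claim that $(A \times B, [p_1], [p_2])$ is a product cone in $Ho(\mathcal{E})$. For existence of pairing, given morphisms $[f]: C \to A$ and $[g]: C \to B$ in $Ho(\mathcal{E})$, the map $(f,g): C \to A \times B$ in $\mathcal{E}$ satisfies $p_1 (f,g) = f$ and $p_2 (f,g) = g$, so $[(f,g)]$ is a pairing of $[f]$ and $[g]$. For uniqueness, suppose $[h], [h']: C \to A \times B$ both satisfy $[p_1][h] = [f] = [p_1][h']$ and $[p_2][h] = [g] = [p_2][h']$. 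Writing $h = (p_1 h, p_2 h)$ and $h' = (p_1 h', p_2 h')$, we get $p_1 h \sim p_1 h'$ and $p_2 h \sim p_2 h'$. By Lemma \ref{homotopyrelprod}, this implies $h \sim h'$, i.e. $[h] = [h']$. Thus $(A \times B, [p_1], [p_2])$ is a product in $Ho(\mathcal{E})$.

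The key step justifying uniqueness is Lemma \ref{homotopyrelprod}, which in turn rests on Lemma \ref{productpathobject}: the product of path objects is a path object, so a homotopy $(f,g) \leadsto (f',g')$ decomposes into a pair of homotopies and conversely. The fact that $Q$ preserves products is immediate from the construction above: $Q(A \times B) = A \times B$ with $Q(p_1) = [p_1]$ and $Q(p_2) = [p_2]$, and we have just shown this is a product cone. There is no real obstacle here; everything follows mechanically from the congruence property of $\sim$ (Proposition \ref{homotopycongruence}) together with Lemma \ref{homotopyrelprod}.
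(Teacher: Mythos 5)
Your proof is correct and follows the same route as the paper, which simply cites Lemma \ref{homotopyrelprod} for this result; you have usefully spelled out the details (terminal object via singleton hom-sets, existence of pairings lifted from $\mathcal{E}$, uniqueness via Lemma \ref{homotopyrelprod}). No gaps.
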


\begin{proof} This follows from Lemma \ref{homotopyrelprod}.
\end{proof}

\begin{cor}\label{carthomotopy} 
The cartesian product of two 
homotopy equivalences is a homotopy equivalences.
\end{cor}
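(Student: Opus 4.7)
The plan is to deduce the corollary directly from Proposition \ref{hocatiscartesian}. Let $f:A\to A'$ and $g:B\to B'$ be homotopy equivalences in a tribe $\mathcal{E}$. By Definition \ref{homotopyequiv}, this means that the maps $[f]:A\to A'$ and $[g]:B\to B'$ are isomorphisms in the homotopy category $Ho(\mathcal{E})$.

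Next, I invoke Proposition \ref{hocatiscartesian}, which states that the canonical quotient functor $Q:\mathcal{E}\to Ho(\mathcal{E})$ is cartesian. Applying $Q$ to the product map $f\times g:A\times B\to A'\times B'$ and using the fact that $Q$ preserves finite products, we obtain a canonical identification $[f\times g]=[f]\times [g]$ in $Ho(\mathcal{E})$, with respect to the cartesian structure there.

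In any cartesian category, the cartesian product functor preserves isomorphisms (since it is a functor on the product category $Ho(\mathcal{E})\times Ho(\mathcal{E})\to Ho(\mathcal{E})$, and all functors preserve isomorphisms). Thus $[f]\times [g]$ is an isomorphism in $Ho(\mathcal{E})$, and hence so is $[f\times g]$. By Definition \ref{homotopyequiv} again, this means that $f\times g$ is a homotopy equivalence in $\mathcal{E}$, as required.

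There is no real obstacle here: the whole content has been absorbed into Proposition \ref{hocatiscartesian}, and the corollary is a one-line consequence of the general fact that a cartesian functor to a cartesian category preserves products of isomorphisms. The only thing to be mindful of is that the product in $Ho(\mathcal{E})$ is only defined up to canonical isomorphism, but this is harmless since being an isomorphism is invariant under composition with isomorphisms.
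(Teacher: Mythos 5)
Your proof is correct and follows exactly the route the paper takes: the paper's own proof is the one-liner "This follows from Proposition \ref{hocatiscartesian}," and your argument simply spells out the details of that deduction (the quotient functor preserves products, so $[f\times g]=[f]\times[g]$ is an isomorphism whenever $[f]$ and $[g]$ are). Nothing is missing.
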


\begin{proof} This follows from Proposition \ref{hocatiscartesian}.
\end{proof}

The cartesian product of two tribes $\mathcal{E}_1$ 
and $\mathcal{E}_2$ is a tribe $\mathcal{E}_1\times \mathcal{E}_2$.
By functoriality,  we obtain a canonical functor
\begin{equation}\label{homotopyofproduct}
Ho(\mathcal{E}_1\times \mathcal{E}_2)\to Ho(\mathcal{E}_1)\times Ho(\mathcal{E}_2).
\end{equation}
By definition, the functor takes a morphism 
$[(u_1,u_2)]:(A_1,A_2)\to (B_1,B_2)$ in $Ho(\mathcal{E}_1\times \mathcal{E}_2)$
to the morphism $([u_1],[u_2]):(A_1,A_2)\to (B_1,B_2)$ in  $Ho(\mathcal{E}_1)\times Ho(\mathcal{E}_2)$ .

\begin{prop} \label{Hoproduct} 
The functor (\ref{homotopyofproduct}) is an isomorphism of categories.
Hence the functor $Ho:\Trib \to \Cat$ preserves finite products.
\end{prop}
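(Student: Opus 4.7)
The plan is to verify that the comparison functor is bijective on objects and bijective on hom-sets, and then to check that $Ho$ also preserves the terminal tribe.

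First, observe that the category $\mathcal{E}_1\times \mathcal{E}_2$ and the product tribe structure make both sides of the comparison agree on objects: the objects of $Ho(\mathcal{E}_1\times\mathcal{E}_2)$ are pairs $(A_1,A_2)$ with $A_i\in\mathcal{E}_i$, and the same is true of $Ho(\mathcal{E}_1)\times Ho(\mathcal{E}_2)$. The functor in question is the identity on objects.

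For morphisms, the key observation is that if $PA_i=(PA_i,\partial_0^{A_i},\partial_1^{A_i},\sigma^{A_i})$ is a path object for $A_i$ in $\mathcal{E}_i$ for $i=1,2$, then the pair
$$(PA_1,PA_2)\enspace \text{with}\enspace (\partial_0^{A_1},\partial_0^{A_2}),\ (\partial_1^{A_1},\partial_1^{A_2}),\ (\sigma^{A_1},\sigma^{A_2})$$
is a path object for $(A_1,A_2)$ in the product tribe $\mathcal{E}_1\times\mathcal{E}_2$. This holds because a map in $\mathcal{E}_1\times \mathcal{E}_2$ is anodyne (resp. a fibration) iff each of its components is, by definition of the tribe structure on the product. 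Consequently a homotopy $H:(A_1,A_2)\to (PB_1,PB_2)$ between two maps $(u_1,u_2),(u_1',u_2'):(A_1,A_2)\to (B_1,B_2)$ is exactly a pair $(H_1,H_2)$ where $H_i:A_i\to PB_i$ is a homotopy $u_i\leadsto u_i'$ in $\mathcal{E}_i$. This shows
$$(u_1,u_2)\sim (u_1',u_2')\ \Longleftrightarrow\ u_1\sim u_1'\ \text{and}\ u_2\sim u_2'.$$
Combined with the obvious bijection at the level of sets of maps, this yields a bijection
$$\pi_0(\mathcal{E}_1\times\mathcal{E}_2)((A_1,A_2),(B_1,B_2))\ \simeq\ \pi_0\mathcal{E}_1(A_1,B_1)\times \pi_0\mathcal{E}_2(A_2,B_2),$$
which is precisely what the comparison functor induces. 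Compatibility with composition is immediate since composition in both categories is computed componentwise. Hence the functor (\ref{homotopyofproduct}) is an isomorphism of categories.

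For the second assertion, it remains to check that $Ho$ sends the terminal tribe $\mathbf{1}$ (which is terminal in $\Trib$) to the terminal category. Since $\mathbf{1}$ has a single object and a single map (the identity), $Ho(\mathbf{1})$ has one object and one homotopy class of maps, hence is terminal in $\Cat$. Together with the binary case proved above, this shows that the $2$-functor $Ho:\Trib\to\Cat$ from Proposition \ref{the2-functorHo} preserves finite products. The only step requiring care is the verification that the product of path objects is itself a path object, which is the main content of the argument; everything else is formal.
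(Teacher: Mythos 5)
The paper leaves this proof to the reader, and your argument is exactly the intended one: you transport the ideas of Lemma \ref{productpathobject} and Lemma \ref{homotopyrelprod} from products of objects inside one tribe to the product tribe $\mathcal{E}_1\times\mathcal{E}_2$, using that fibrations and anodyne maps in the product are defined componentwise, so that $(PA_1,PA_2)$ is a path object for $(A_1,A_2)$ and the homotopy relation is computed componentwise (the independence of the relation from the choice of path object, Lemma \ref{independancepathob}, justifies working with this particular one). Your proof is correct, including the check on the terminal tribe needed for the second assertion.
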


\begin{proof} Left to the reader.
\end{proof}

\subsection{Mapping path objects}

\begin{defi} \label{mappingpathobjectdef} 
The {\it mapping path object} of a map  $f:A\to B$ in a tribe $\mathcal{E}$ is a quadruple $(M(f),\delta_0,\delta_1,u)$
obtained by factoring the map $(1_A,f) :A\to A\times B$ as 
an anodyne map  $u:A \to M(f)$
followed by a fibration $(\delta_0,\delta_1) :M(f)\to A\times B$.
\end{defi}

The quadruple $(M(f),\delta_0,\delta_1,u)$ yields a commutative diagram
$$ \xymatrix{
&&  \ar@/_1pc/[lld]_-{1_{A}} A \ar[d]_{u}   \ar@/^1pc/[rrd]^-{f} &&  \\
A &&\ar[ll]_{\delta_0}  M(f) \ar[rr]^{\delta_1} && B
 }$$
  The maps $\delta_0$ and $\delta_1$ 
are fibrations, since  the projections $\mathsf{pr}_1:A\times B\to A$ and $\mathsf{pr}_2:A\times B\to B$ are fibrations
and we have $\delta_0=\mathsf{pr}_1( \delta_0,\delta_1)$ and
$\delta_1=\mathsf{pr}_2(\delta_0,\delta_1)$.
 The map $u$ is a homotopy equivalence by \ref{anohomotopyequiv},
since $u$ is anodyne.
It follows that  $\delta_0$ is a homotopy equivalence by 3-for-2, since $\delta_0u=1_A$.
Thus, $u$ is a section of a trivial fibration.

\medskip

The following lemma is essentially due to Ken Brown:

\begin{lemma} \label{KBrownlemma0forhtribes} 
Every map $f:A\to B$ in a tribe admits a factorization
$f=pu:A\to M\to B$, where $p$ is a fibration and where $u$ is anodyne and the section of a trivial fibration $M\to A$.
\end{lemma}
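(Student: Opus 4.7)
The plan is to invoke the mapping path object construction from Definition \ref{mappingpathobjectdef} directly; essentially everything needed has already been observed in the paragraph immediately following that definition, so the proof will consist of collecting those observations.

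Given $f:A\to B$, I would first apply Definition \ref{mappingpathobjectdef} to factor $(1_A,f):A\to A\times B$ as an anodyne map $u:A\to M(f)$ followed by a fibration $(\delta_0,\delta_1):M(f)\to A\times B$, where $\delta_0=\mathsf{pr}_1\circ(\delta_0,\delta_1)$ and $\delta_1=\mathsf{pr}_2\circ(\delta_0,\delta_1)$. I would then set $M:=M(f)$ and $p:=\delta_1$, so that $f=pu$ holds by the defining equality $(\delta_0,\delta_1)u=(1_A,f)$. Two of the three requirements are then immediate: $p=\delta_1$ is a fibration because it is the composite of $(\delta_0,\delta_1)$ with the projection $\mathsf{pr}_2:A\times B\to B$ (both fibrations, by the construction of $M(f)$ and by Proposition \ref{tribeofgenuineprojection0}), and $u$ is anodyne by construction.

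It remains to exhibit $u$ as the section of a trivial fibration. The natural candidate is $\delta_0:M(f)\to A$, and I would verify its two properties separately. First, $\delta_0$ is a fibration by the same projection argument used for $\delta_1$. Second, $\delta_0 u=\mathsf{pr}_1(\delta_0,\delta_1)u=\mathsf{pr}_1(1_A,f)=1_A$, so $u$ is literally a section of $\delta_0$. To see that $\delta_0$ is a homotopy equivalence, I would apply Proposition \ref{anohomotopyequiv} to conclude that the anodyne map $u$ is a homotopy equivalence, and then apply the 3-for-2 property (Proposition \ref{6four2homotopyequiv}) to the triangle $1_A=\delta_0 u$, which forces $\delta_0$ to be a homotopy equivalence as well. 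Hence $\delta_0$ is a trivial fibration in the sense of Definition \ref{homotopyequiv}, and $u$ is a section of it.

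There is no serious obstacle here; the only point requiring any care is that the discussion following Definition \ref{mappingpathobjectdef} already establishes every ingredient (the fibrancy of $\delta_0$, the identity $\delta_0 u=1_A$, and the 3-for-2 conclusion that $\delta_0$ is a homotopy equivalence), so the proof amounts to organising these observations and naming $M=M(f)$, $p=\delta_1$ in the statement of the lemma.
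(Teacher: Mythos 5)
Your proposal is correct and follows exactly the paper's own route: the paper's proof is the one-line remark that it suffices to take $p=\delta_1$ in the mapping path factorization $(1_A,f)=(\delta_0,\delta_1)u:A\to M(f)\to A\times B$, with all the supporting facts (fibrancy of $\delta_0,\delta_1$, the identity $\delta_0u=1_A$, and the 3-for-2 argument showing $\delta_0$ is a trivial fibration) established in the paragraph following Definition \ref{mappingpathobjectdef}, just as you collected them.
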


\begin{proof}  It suffices to take $p=\delta_1$ in
the mapping path factorization $(1_A,f)=(\delta_0,\delta_1)u:A\to M\to A\times B$.
\end{proof}

\medskip

Mapping path objects can be constructed from path objects.
If $PB=(PB, \partial_0,\partial_1, \sigma)$ is a path object for $B$
and $f:A\to B$, consider the following diagram with a pullback square
\begin{equation}\label{constmappingpath}
\xymatrix{
&& B \\
M(f)  \ar@/^1pc/[rru]^-{\partial_1 p_2}   \ar[d]_{p_1} \ar[rr]^{p_2}  && PB  \ar[d]^{\partial_0}  \ar[u]_{\partial_1} \\
A \ar[rr]^f && B 
}
\end{equation}
where $M(f)\defeq A\times_B PB$.
There is a unique map $u:A \to M(f)$, such that $p_1u=1_A$ and $p_2u=\sigma f$,
since $f1_A=f=\partial_0 \sigma f$.
 Let us put $\delta_0\defeq p_1$ and $\delta_1\defeq \partial_1p_2$.

\begin{prop} \label{mappingspaceconstruction} 
The quadruple $(M(f),\delta_0,\delta_1,u)$ constructed above is a mapping path object for $f:A\to B$.
\end{prop}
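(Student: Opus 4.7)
I need to verify the three clauses of Definition \ref{mappingpathobjectdef} for the quadruple built from the pullback square \ref{constmappingpath}: that $(\delta_0, \delta_1)\, u = (1_A, f)$, that $(\delta_0, \delta_1)$ is a fibration, and that $u$ is anodyne.

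The triangle identity $(\delta_0, \delta_1)\, u = (1_A, f)$ is immediate from the defining equations of $u$: indeed $\delta_0 u = p_1 u = 1_A$ and $\delta_1 u = \partial_1 p_2 u = \partial_1 \sigma f = f$, since $\partial_1 \sigma = 1_B$.

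To see that $(\delta_0, \delta_1)\colon M(f) \to A \times B$ is a fibration, I would recognise it as a base change of the fibration $(\partial_0, \partial_1)\colon PB \to B \times B$ built into the path object. Concretely, the square
\[
\xymatrix{
M(f) \ar[d]_{(\delta_0, \delta_1)} \ar[r]^{p_2} & PB \ar[d]^{(\partial_0, \partial_1)} \\
A \times B \ar[r]^{f \times 1_B} & B \times B
}
\]
is cartesian: commutativity uses the identity $f p_1 = \partial_0 p_2$ coming from \ref{constmappingpath}, and the universal property follows quickly from the fact that $M(f) = A \times_B PB$ together with the projection $\partial_1 p_2$. Since the base change of a fibration is a fibration in any clan, this gives the second clause.

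The anodyne-ness of $u$ is the crux, and I expect the cleanest route to be the identification of $u$ with the value of the base change functor $f^\star\colon \mathcal{E}(B) \to \mathcal{E}(A)$ on $\sigma$. Regarded as a morphism in $\mathcal{E}(B)$ from the terminal object $(B, 1_B)$ to $(PB, \partial_0)$, the map $\sigma$ is anodyne: it is anodyne in $\mathcal{E}$ by the definition of a path object, and hence anodyne in $\mathcal{E}(B)$ by Proposition \ref{tribeslice}. Under $f^\star$ these two objects become $(A, 1_A)$ and $(A \times_B PB, p_1) = (M(f), \delta_0)$ respectively, and the induced morphism between them is the unique arrow $A \to M(f)$ satisfying the two pullback relations $p_1 (-) = 1_A$ and $p_2 (-) = \sigma f$; but this is precisely $u$. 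By Theorem \ref{basechangehomotopical}, $f^\star$ preserves anodyne maps, so $u = f^\star(\sigma)$ is anodyne in $\mathcal{E}(A)$, and therefore anodyne in $\mathcal{E}$ by a second appeal to Proposition \ref{tribeslice}.
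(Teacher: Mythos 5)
Your proposal is correct and follows essentially the same route as the paper: the same direct computation for $(\delta_0,\delta_1)u=(1_A,f)$, the same recognition of $(\delta_0,\delta_1)$ as a base change of $(\partial_0,\partial_1)$ along $f\times B$ (the paper deduces cartesianness of that square via Lemma \ref{lemmacartesiansq} rather than the universal property directly, a negligible difference), and the same identification $u=f^\star(\sigma)$ combined with Theorem \ref{basechangehomotopical} for anodyne-ness. Your explicit appeals to Proposition \ref{tribeslice} to pass between anodyne in $\mathcal{E}$ and anodyne in the local tribes are a welcome bit of extra care that the paper leaves implicit.
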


\begin{proof} We have $\delta_0u=p_1u=1_A$
and $\delta_1u=\partial_1p_2u=\partial_1 \sigma f=f$.
It follows that $\langle \delta_0, \delta_1\rangle u=\langle 1_A,f\rangle$.
The following diagram commutes, since $\partial_0 p_2=fp_1=f\delta_0$ and $\partial_1 p_2=\delta_1$.
\begin{equation}\label{cartsquareformappingpath}
\xymatrix{ M(f) \ar[d]_{\langle \delta_0,\delta_1\rangle} \ar[rr]^{p_2} && PB  \ar[d]^{\langle \partial_0,\partial_1\rangle} \\
 A\times B \ar[d]_{\mathsf{pr}_1} \ar[rr]^{f\times B} && B\times B \ar[d]^{\mathsf{pr}_1} \\
  A \ar[rr]^f && B
}
\end{equation}
The top square of the diagram is cartesian by Lemma \ref{lemmacartesiansq}, since  the composite square 
is cartesian by construction and the bottom square is cartesian,. Hence the map $\langle \delta_0,\delta_1\rangle$ is a fibration by base change, 
since the map $\langle \partial_0,\partial_1\rangle$ is  a fibration.
Let us show that the map $u:A\to M(f)$ is anodyne.
 If $f^\star:\mathcal{E}(B)\to \mathcal{E}(A)$
is the base change functor, then we have $f^\star(PB,\partial_0)= (M(f), \delta_0)$,
since the square in the diagram (\ref{constmappingpath}) is cartesian.
The map $\sigma:B\to PB$ defines a morphism $\sigma:(B,1_B)\to (PB,\partial_0)$ in $\mathcal{E}(B)$,
since $\partial_0 \sigma=1_B$.  Similarly, the map $u:A\to M(f)$ defines a morphism $u:(A,1_A)\to (M(f), \delta_0)$ in $\mathcal{E}(A)$,
since $\delta_0 u=1_A$. Moreover, we have $f^\star(\sigma)=u$,
since the following diagram commutes
$$ \xymatrix{
  M(f) \ar[rr]^{p_2}&& PB \\
   A \ar[rr]^{f} \ar[u]^-{u}    && B   \ar[u]_-{\sigma} 
}$$
Thus, $u$ is anodyne by  Theorem \ref{basechangehomotopical},  since $\sigma$ is anodyne. 
\end{proof}

The map $H:=p_2:M(f)\to PB$ in the diagram (\ref{constmappingpath}) above 
is a homotopy $H:f\delta_0\leadsto \delta_1$, since $\partial_0 H=\partial_0 p_2=p_1f=\delta_0 f$
and $\partial_1 H=\partial_1 p_2=\delta_1$.
The homotopy $H:f\delta^0\leadsto \delta^1$  is universal
 in the following sense:

\begin{lemma}\label{verygoodmappingpath}
For any object $C$, any pair of maps
$a:C\to A$, $b:C\to B$ and any homotopy $h:fa\leadsto b$
with values in $PB$, there exists a unique map $w:C\to M(f)$
such that $\delta_0w=a$, $\delta_1w=b$ and $Hw=h$,
\end{lemma}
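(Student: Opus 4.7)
The plan is to invoke the universal property of the pullback square that defines $M(f)$. Recall from the construction (\ref{constmappingpath}) that $M(f) = A \times_B PB$, with the two projections $p_1 = \delta_0 : M(f) \to A$ and $p_2 = H : M(f) \to PB$, and the compatibility relation $f p_1 = \partial_0 p_2$. So a map $w : C \to M(f)$ is the same data as a pair of maps $(a', h') : C \to A$ and $C \to PB$ satisfying $f a' = \partial_0 h'$.

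Given the data $(a, b, h)$ of the lemma, I would take $a' = a$ and $h' = h$. The required compatibility $f a = \partial_0 h$ holds by the hypothesis that $h$ is a homotopy $fa \leadsto b$ (which gives $\partial_0 h = fa$ and $\partial_1 h = b$). Hence the universal property of the pullback produces a unique map $w : C \to M(f)$ such that $p_1 w = a$ and $p_2 w = h$. One then reads off the three desired identities: $\delta_0 w = p_1 w = a$, $Hw = p_2 w = h$, and $\delta_1 w = \partial_1 p_2 w = \partial_1 h = b$.

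For uniqueness, suppose $w' : C \to M(f)$ satisfies $\delta_0 w' = a$, $\delta_1 w' = b$, and $Hw' = h$. Then $p_1 w' = \delta_0 w' = a$ and $p_2 w' = H w' = h$, so $w'$ has the same components as $w$ relative to the pullback cone, forcing $w' = w$.

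There is no real obstacle here; the lemma is essentially just a repackaging of the universal property of the pullback defining $M(f)$, together with the observation that the three conditions $\delta_0 w = a$, $\delta_1 w = b$, $Hw = h$ are equivalent (under the constraint that $h$ be a homotopy $fa \leadsto b$) to specifying the two pullback components $p_1 w = a$ and $p_2 w = h$.
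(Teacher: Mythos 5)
Your proof is correct and is essentially identical to the paper's: both invoke the universal property of the pullback square defining $M(f)=A\times_B PB$ (using $fa=\partial_0 h$ to get a unique $w$ with $\delta_0 w=a$ and $Hw=h$) and then derive $\delta_1 w=\partial_1 Hw=\partial_1 h=b$. Your explicit remark that the three stated conditions reduce to the two pullback components is a slight elaboration of the uniqueness claim that the paper leaves implicit, but the argument is the same.
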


\begin{proof} The square in the following diagram
is cartesian and we have 
 $fa=\partial_0h$.
 $$ \xymatrix{
C  \ar@/_1.5pc/[rdd]_-{a}    \ar@/^1.5pc/[drrr]^-{h}   \ar@{-->}[dr]^{w}&&& \\
&M(f) \ar[rr]^{H} \ar[d]^{\delta_0}&& \ar[d]^{\partial_0}   PB \\
& A\ar[rr]^{f}  && B
 }$$
Hence there is a unique map $w:C\to M(f)$ such that
$\delta_0w=a$ and $Hw=h$. Then we have
$\delta_1 w=\partial_1p_2 w=\partial_1H w=\partial_1 h=b$.
\end{proof}

For example if $C=A$ and $h:f1_A \leadsto f$ is the unit homotopy $\sigma f:f\leadsto f$,
then $w=u:A\to M(f)$.

\begin{lemma} {\rm (Straigthtening lemma)} \label{homotopyliffting} 
Let $p:E\to B$ a fibration in a tribe and suppose that the
following triangle commutes up to homotopy:
\begin{equation}\label{uphomotoptrian}
\xymatrix{
 && E  \ar[d]^{p}\\
A \ar[rr]^(0.6)f \ar[urr]^g && B.
}
\end{equation}
Then there exists a map $g':A\to E$
homotopic to $g$ such that $pg'=f$.
\end{lemma}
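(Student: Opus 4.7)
The plan is to use the mapping path object construction of Proposition \ref{mappingspaceconstruction} applied to the fibration $p:E\to B$. Choose a path object $(PB,\partial_0,\partial_1,\sigma)$ for $B$, and let $H:A\to PB$ witness the assumed homotopy, say with $\partial_0 H=pg$ and $\partial_1 H=f$. Form $M(p)=E\times_B PB$ equipped with its anodyne section $u:E\to M(p)$, its retraction $\delta_0=\pi_1:M(p)\to E$, and the fibration $\delta_1=\partial_1\pi_2:M(p)\to B$, which satisfies $\delta_1 u=p$. The pair $(g,H):A\to M(p)$ is well-defined (since $pg=\partial_0 H$) and satisfies $\delta_0(g,H)=g$ and $\delta_1(g,H)=f$.

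The key step is to solve the lifting problem
\[
\xymatrix{
E \ar[d]_{u} \ar[r]^{1_E} & E \ar[d]^{p} \\
M(p) \ar@{-->}[ur]^{r} \ar[r]^{\delta_1} & B,
}
\]
which commutes because $\delta_1 u=p$. As $u$ is anodyne and $p$ is a fibration, a diagonal filler $r:M(p)\to E$ exists, satisfying $ru=1_E$ and $pr=\delta_1$. Set $g':=r\circ(g,H):A\to E$; then $pg'=pr(g,H)=\delta_1(g,H)=f$ on the nose.

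It remains to show $g'\sim g$, for which I would compare the two retractions $r$ and $\pi_1=\delta_0$ of $u$. Both satisfy $ru=1_E=\pi_1 u$, and $u$ is anodyne hence a homotopy equivalence by Proposition \ref{anohomotopyequiv}, so $[u]$ is invertible in $Ho(\mathcal{E})$; cancelling it yields $[r]=[\pi_1]$, i.e.\ $r\sim\pi_1$. Since the homotopy relation is a congruence (Proposition \ref{homotopycongruence}), composing with $(g,H)$ gives $g'=r(g,H)\sim\pi_1(g,H)=g$.

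The main obstacle is conceptual rather than computational: recognising that one should lift along the anodyne section $u$ of the mapping path object of $p$, rather than attempting to lift the homotopy $H$ through $p$ via some functorial path-space construction $PE\to PB$. Once the filler $r$ is in hand, both $pg'=f$ and the homotopy $g'\sim g$ drop out from formal manipulations with the structure already developed.
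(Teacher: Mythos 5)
Your proof is correct and follows essentially the same route as the paper: both construct the mapping path object $M(p)=E\times_B PB$, obtain the filler $r$ (the paper's $d$) against the anodyne section $u$, set $g'=r\circ(g,H)$, and conclude $g'\sim g$ by comparing the two retractions $r$ and $\delta_0$ of the homotopy equivalence $u$. The only cosmetic difference is that you verify the existence of $(g,H)$ directly from the pullback property where the paper cites Lemma \ref{verygoodmappingpath}.
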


\begin{proof} If $PB=(PB, \partial_0,\partial_1,\sigma)$ is a path object for $B$,
then there exists a map $h:A\to PB$ such that $(\partial_0,\partial_1)h=(pg,f)$,
since $pg$ is homotopic to $f$.
Let $M(p)=(M(p),\delta_0, \delta_1,u)$ be the mapping path object for the map
$p:E\to B$ constructed from $PB$
in Proposition \ref{mappingspaceconstruction}.
The following square commutes, since $\delta_1 u=p$.
$$ \xymatrix{
E \ar[d]_u \ar@{=}[rr]&& E \ar[d]^{p} \\
M(p) \ar[rr]^(0.6){\delta_1}  && B
}$$
The square has a diagonal filler $d:M(p)\to E$, since $u$
is anodyne and $p$ is a fibration.
$$ \xymatrix{
E \ar[d]_u \ar@{=}[rr]&& E \ar[d]^{p} \\
M(p)\ar@{-->}[urr]^(0.45)d \ar[rr]^(0.6){\delta_1}  && B
}$$
By Lemma \ref{verygoodmappingpath}, 
there is a unique map $w:C\to M(p)$ such that
$(\delta_0,\delta_1)w=(g,f)$ and $Hw=h$.
Let us put $g'=d w:C\to E$. Then we have
$pg'=pd w=\delta_1 w=f$. Let us show that $g'\sim g$.
We have $d \sim \delta_0$, since $u$ is a homotopy equivalence by \ref{anohomotopyequiv}
and $d u=1_E=\delta_0u$.
Thus, $g'=d w\sim  \delta_0w=g $.
\end{proof}

\begin{prop} \label{homotopyrightinverse} A fibration in a tribe has a section
if and only if it has a section in the homotopy category.
\end{prop}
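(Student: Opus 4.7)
The plan is to prove both directions, with the forward direction being trivial and the reverse direction being a direct application of the Straightening Lemma \ref{homotopyliffting} proved just above.

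For the $(\Rightarrow)$ direction, suppose $p:E\to B$ is a fibration with a section $s:B\to E$, meaning $ps=1_B$. Then applying the canonical functor $\mathcal{E}\to Ho(\mathcal{E})$ gives $[p][s]=[ps]=[1_B]=1_B$, so $[s]$ is a section of $[p]$ in $Ho(\mathcal{E})$.

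For the $(\Leftarrow)$ direction, suppose $[p]:E\to B$ has a section in $Ho(\mathcal{E})$. By definition of morphisms in $Ho(\mathcal{E})$ as homotopy classes, there exists a map $s:B\to E$ in $\mathcal{E}$ such that $[p][s]=[ps]=1_B=[1_B]$, that is, $ps\sim 1_B$. I would then set up the triangle
$$\xymatrix{
 && E  \ar[d]^{p}\\
B \ar[rr]^(0.6){1_B} \ar[urr]^s && B
}$$
which commutes up to homotopy since $ps\sim 1_B$. The Straightening Lemma \ref{homotopyliffting} applied to this triangle (with $A=B$, $f=1_B$, $g=s$) produces a map $s':B\to E$ homotopic to $s$ satisfying $ps'=1_B$, which is the desired strict section.

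The main step is just recognizing that the statement is an immediate corollary of the Straightening Lemma; there is no real obstacle, since the Straightening Lemma was formulated precisely to rectify homotopy-commutative triangles with a fibration on the right into strictly commutative ones. No additional machinery about path objects, mapping path objects, or anodyne maps is needed beyond what is encapsulated in that lemma.
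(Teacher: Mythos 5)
Your proof is correct and follows exactly the paper's route: the paper's own proof consists of the single line ``This follows from Lemma \ref{homotopyliffting}'', i.e.\ the Straightening Lemma applied precisely as you do, with $A=B$, $f=1_B$ and $g=s$. Your write-up simply makes the trivial forward direction and the instantiation of the lemma explicit.
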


\begin{proof} This follows from Lemma \ref{homotopyliffting}.
\end{proof}

\begin{cor} \label{trivialfibrationhavesection} In a tribe, a trivial fibration 
has a section.
\end{cor}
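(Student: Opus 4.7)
The plan is to apply Proposition \ref{homotopyrightinverse} directly. A trivial fibration $p\colon E\to B$ is by Definition \ref{homotopyequiv} a fibration whose homotopy class $[p]$ is invertible in $Ho(\mathcal{E})$. Let $[q]\colon B\to E$ denote its two-sided inverse in the homotopy category; then in particular $[p][q]=[1_B]$, so $[q]$ is a section of $[p]$ in $Ho(\mathcal{E})$.

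Now I invoke Proposition \ref{homotopyrightinverse}, which asserts that a fibration admits an actual section if and only if it admits one in $Ho(\mathcal{E})$. Applying this to $p$, the existence of $[q]$ as a right inverse of $[p]$ yields an honest section $s\colon B\to E$ of $p$ in $\mathcal{E}$, completing the proof.

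There is really no obstacle here: the whole content lies in Proposition \ref{homotopyrightinverse} (which was itself deduced from the Straightening Lemma \ref{homotopyliffting}), and the corollary is just the observation that a trivial fibration, being a homotopy equivalence, trivially has a section at the level of $Ho(\mathcal{E})$.
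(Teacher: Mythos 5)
Your proof is correct and is exactly the paper's argument: the paper's own proof of this corollary reads "This follows from Proposition \ref{homotopyrightinverse}, since a trivial fibration is invertible in the homotopy category." You have simply spelled out the same one-line deduction in more detail.
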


\begin{proof} This follows from Proposition \ref{homotopyrightinverse},
since a trivial fibration is invertible in the homotopy category.
\end{proof}

\medskip

A map $i:A\to B$ is called a {\it split monomorphism}
if it admits a retraction $r:B\to A$.

\begin{defi} We say that a map $i:A\to B$ in a tribe
is a {\it deformation retract} if it is a split monomorphism and 
a homotopy equivalence.
\end{defi}

For example, an anodyne map is a deformation retract
by \ref{charanodyne}  and \ref{anohomotopyequiv}.

\begin{lemma} \label{defoormationretract} 
If $r:B\to A$ is any retraction of a deformation retract $i:A\to B$, then $ri=1_A$ and $ir\sim 1_B$.
\end{lemma}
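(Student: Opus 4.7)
The plan is to reduce to a one-line argument in the homotopy category $Ho(\mathcal{E})$. The first equation $ri = 1_A$ is just the definition of a retraction, so only $ir \sim 1_B$ requires work.

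For the second equation, I would pass to $Ho(\mathcal{E})$, which exists by Proposition \ref{homotopycongruence} (the homotopy relation is a congruence). Since $i$ is a homotopy equivalence by hypothesis, the morphism $[i]:A\to B$ is an isomorphism in $Ho(\mathcal{E})$ by Definition \ref{homotopyequiv}. The identity $ri=1_A$ yields $[r]\circ[i]=[1_A]=1_A$ in $Ho(\mathcal{E})$, so $[r]$ is a left inverse of the isomorphism $[i]$.

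The key categorical fact is that a left inverse of an isomorphism in any category coincides with its two-sided inverse: if $[i]$ has inverse $[j]$, then $[r]=[r]\circ[i]\circ[j]=1_A\circ[j]=[j]$, hence $[i]\circ[r]=[i]\circ[j]=1_B$. Unwinding, this reads $[ir]=[1_B]$ in $Ho(\mathcal{E})$, which is exactly the statement $ir\sim 1_B$.

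I do not expect any real obstacle here; the argument is purely formal once the homotopy category has been constructed. The only thing to double-check is that composition in $Ho(\mathcal{E})$ is defined by $[g][f]=[gf]$ and that $[1_X]$ is the identity of $X$ in $Ho(\mathcal{E})$, both of which are immediate from the definition given before Definition \ref{homotopyequiv}.
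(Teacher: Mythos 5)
Your proposal is correct and follows essentially the same route as the paper: both arguments pass to $Ho(\mathcal{E})$, use that $[i]$ is invertible there (since a deformation retract is by definition a homotopy equivalence), and cancel $[i]$ against the relation $[r][i]=1_A$ to conclude $[i][r]=1_B$. The paper phrases the cancellation as $iri=1_Bi$ followed by right-cancellation of the isomorphism $[i]$, which is the same formal fact you invoke about left inverses of isomorphisms.
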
 

\begin{proof} We have $iri=i1_A=1_Bi$, since $ri=1_A$ . Thus, $ir\sim 1_B$, since the map $i$ is invertible in the homotopy category.
\end{proof}

\medskip

\begin{defi}\label{defstrongdefretract}
We shall say that a split monomorphism $i:A\to B$
is a {\it strong deformation retract} if it admits a retraction $r:B\to A$ together
with a homotopy $h:ir\leadsto 1_B$ 
such that  $hi=\sigma i$, where $\sigma$ is the unit 
of a path object  $(PB,\partial_0, \partial_1, \sigma)$ for $B$,
\end{defi}

\begin{rem}
It follows from proposition \ref{independancepathob}  that
the existence of a homotopy $h:ir\leadsto 1_B$
satisfying the condition $hi=\sigma i$ is independent of the 
choice of a  path object for $B$.
\end{rem}

\begin{prop} \label{strongdefoormation} 
A map in a tribe is anodyne
if and only if it is a strong deformation retract.
\end{prop}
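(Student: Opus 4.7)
The plan is to prove both implications using the lifting-property machinery, exploiting Lemma \ref{charanodyne} for the hard direction.

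For the easy direction $(\Rightarrow)$, suppose $i\colon A\to B$ is anodyne and let $(PB,\partial_0,\partial_1,\sigma)$ be any path object for $B$. By Lemma \ref{anodyne closure}, $i$ admits a retraction $r\colon B\to A$. The commutative square
$$\xymatrix{A \ar[d]_{i} \ar[rr]^{\sigma i} && PB \ar[d]^{(\partial_0,\partial_1)} \\ B \ar[rr]^{(ir,1_B)} && B\times B}$$
commutes because $(ir,1_B)i=(iri,i)=(i,i)=(\partial_0,\partial_1)\sigma i$, and since $i$ is anodyne and $(\partial_0,\partial_1)$ is a fibration, it admits a diagonal filler $h\colon B\to PB$. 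By construction $\partial_0 h=ir$, $\partial_1 h=1_B$, and $hi=\sigma i$, so $i$ is a strong deformation retract.

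For the harder direction $(\Leftarrow)$, suppose $i\colon A\to B$ is a strong deformation retract with witnesses $r$, $h$, and $(PB,\partial_0,\partial_1,\sigma)$. I will verify anodyneness via the characterization in Lemma \ref{charanodyne}: given a fibration $f\colon E\twoheadrightarrow B$ and a map $a\colon A\to E$ with $fa=i$, I must produce a section $s\colon B\to E$ of $f$ with $si=a$. The key construction is to use the mapping path object $M(f)=E\times_B PB$ of Proposition \ref{mappingspaceconstruction} built from this \emph{specific} $PB$, with the anodyne inclusion $u\colon E\to M(f)$, the projection $\delta_0=p_1$, and the fibration $\delta_1=\partial_1 p_2$.

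The strategy is first to check that the pair $(ar,h)$ lifts to a map $B\to M(f)$: indeed $f(ar)=(fa)r=ir=\partial_0 h$, so the pullback property gives such a map, and moreover $\delta_1(ar,h)=\partial_1 h=1_B$, so $(ar,h)$ is a section of $\delta_1$. Next, because $u$ is anodyne and $f$ is a fibration, the square with $1_E$ on top and $\delta_1$ on the bottom admits a diagonal filler $d\colon M(f)\to E$ with $du=1_E$ and $fd=\delta_1$. Defining $s:=d\circ (ar,h)$, one gets $fs=\delta_1(ar,h)=1_B$ immediately; for $si=a$ the computation $(ar,h)i=(ari,hi)=(a,\sigma i)=ua$ (using $ri=1_A$, $hi=\sigma i$, and the defining identities $p_1 u=1_E$, $p_2 u=\sigma f$) yields $si=d(ar,h)i=dua=a$.

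The main obstacle is the bookkeeping around the mapping path object: one must notice that the very hypothesis $hi=\sigma i$ in the definition of strong deformation retract is exactly what makes $(ar,h)i$ coincide with $ua$, which in turn is exactly what is needed to retrieve $a$ after applying the retraction $d$. Every other ingredient is formal, and the independence of the choice of path object recorded in the remark following Definition \ref{defstrongdefretract} guarantees that no loss of generality is incurred by using the same $PB$ both in the hypothesis and in the construction of $M(f)$.
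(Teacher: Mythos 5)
Your proof is correct, but both halves take a genuinely different route from the paper's. For $(\Rightarrow)$ the paper works inside the mapping path object $M(i)$: it lifts $u:A\to M(i)$ against the fibration $\delta_1$ to get $s:B\to M(i)$ and then reads off $r=\delta_0 s$ and $h=Hs$ simultaneously; you instead take a retraction $r$ from Lemma \ref{anodyne closure} first and then fill the single square $\sigma i / (\partial_0,\partial_1) / (ir,1_B)$ directly, which is shorter and makes it transparent that the condition $hi=\sigma i$ is just the upper triangle of the lifting problem. For $(\Leftarrow)$ the paper uses the universal property of $M(i)$ (Lemma \ref{verygoodmappingpath}) to produce $s:B\to M(i)$ with $\delta_1 s=1_B$ and $si=u$, exhibiting $i$ as a codomain retract of the anodyne map $u:A\to M(i)$ and concluding by closure of anodyne maps under retracts (Proposition \ref{anodyne closurecompositionret}); you instead verify the section-lifting criterion of Lemma \ref{charanodyne} for an arbitrary fibration $f:E\twoheadrightarrow B$, building the section as $d\circ(ar,h)$ where $d$ retracts $M(f)$ onto $E$. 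Your identity $(ar,h)i=ua$, which hinges on $ri=1_A$ and $hi=\sigma i$ matching the pullback components $p_1u=1_E$, $p_2u=\sigma f$, is exactly the point where strongness of the deformation retract is used, and it checks out. The paper's retract argument is more economical in that it quantifies over nothing; your argument is more explicit about what the lifting data actually is, at the cost of carrying the auxiliary fibration $f$ through the computation. Your appeal to the remark after Definition \ref{defstrongdefretract} to justify using the same $PB$ in both the hypothesis and the construction of $M(f)$ is the right way to handle the only real bookkeeping issue.
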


\begin{proof} ($\Rightarrow$) Let us show that an anodyne map
$i:A\to B$ is a strong deformation retract. 
We shall use  the  mapping path object 
 $M(i)=(M(i),\delta_0, \delta_1,u)$ constructed from the  path object 
 $PB=(PB, \partial_0, \partial_1, \sigma)$ 
 in Lemma \ref{verygoodmappingpath}.
By construction, the following square is cartesian
 \begin{equation}\label{anothercartesiansqr}
\xymatrix{
M(i)\ar[d]_{\delta^0} \ar[rr]^{H} && PB  \ar[d]^{\partial_0}\\
A \ar[rr]^i && B,
}
\end{equation}
and $u\defeq (1_A, \sigma i)$ and $\delta_1\defeq \partial_1H$.
The map $H:M(i)\to PB$ is a homotopy $i\delta_0\leadsto \delta_1$
and we have $Hu=\sigma i$.
The following square has a diagonal filler $s:B\to M(i)$, since $i$ is anodyne and $\delta_1$ is a fibration.
$$ \xymatrix{
A \ar[rr]^(0.4)u\ar[d]_i  && M(i) \ar[d]^{\delta_1}  \\
B\ar@{-->}[urr]^s \ar[rr]_{1_B}  && B
}$$
The map $r\defeq \delta_0s:B\to A$ is then a retraction of the map  $i:A\to B$,
since $r i=\delta_0si=\delta_0u=1_A$.
The map $h\defeq Hs:B\to PB$ is a homotopy $ir \leadsto 1_B$,
since the map $H:M(i)\to PB$ is a homotopy $i\delta_0\leadsto \delta_1$
and we have $i\delta_0s=ir$ and  $\delta_1s= 1_B$.
Moreover,  $hi=Hsi=Hu= \sigma i$.
This shows that the map $i:A\to B$ is a strong deformation retract.
 ($\Leftarrow$)
Conversely, let us show that a strong deformation retract $i:A\to B$ is anodyne.
We shall first prove that $i$ is a retract of the map  $u:A\to M(i)$.
By hypothesis, there exists a retraction 
$r:B\to A$ together with a homotopy $h:ir \leadsto 1_B$ (with codomain $PB$)
such that $hi=\sigma i$.  
It then follows from lemma \ref{verygoodmappingpath} that there is a unique map $s:B\to M(i)$
such that $\delta_0s=r$, $\delta_1s=1_B$ and $Hs=h$,
$$ \xymatrix{
&&A \ar[rr]^i && B \\
B \ar@/^1pc/[rru]^-{r}  \ar@/_1pc/[rrrrd]_-{1_B} \ar@{-->}[rr]^s &&M(i)  \ar[rrd]_-{\delta_1}  \ar[u]^-{\delta_0}  \ar[rr]^H  && PB \ar[u]_{\partial_0}\ar[d]^{\partial_1}& \\
&& && B
 }$$
It follows that $si=u$, since the square (\ref{anothercartesiansqr}) is cartesian
and $\delta_0si=ri=1_A=\delta_0u$ and $Hsi=hi=\sigma i =Hu$.
It then follows from the relations $\delta_1s=1_B$ and $si=u$ that the map $i:A\to B$ is a codomain  retract of 
 the map $u:A\to M(i)$, 
  $$ \xymatrix{
&\ar[dl]_i  A  \ar[dr]^{u} & \\
B \ar[rr]^(0.45){s}  && M(i)  \ar@/^1pc/[ll]^-{\delta_1}.
}$$
Thus, $i$ is anodyne by Lemma \ref{anodyne closurecompositionret}, since $u$ is anodyne. 
\end{proof}

\subsection{Fibrewise homotopy}

\medskip

The {\it fibrewise diagonal} of a fibration $p:E\to B$  in a tribe  $\mathcal{E}$ is the diagonal
$E\to E\times_B E$ of the object $(E,p)$ of $ \mathcal{E}(B)$.
A {\bf fibrewise path object} for $p:E\to B$ is a path object for $(E,p)$.
By definition, it is a quadruple $(P(E,p), \partial_0, \partial_1, \sigma)$
obtained by choosing an $AF$-factorisation $(\partial_0,\partial_1)\sigma:E\to P(E,p) \to E\times_B E$
 of the fibrewise diagonal $E\to E\times_B E$.
 $$ \xymatrix{
E \ar[rr]^-{(1_E,1_E)} \ar[dr]_\sigma && E\times_B E \\
& P(E,p)  \ar[ur]_{(\partial_0,\partial_1)}& 
 }$$
The quadruple $(P(E,p), \partial_0, \partial_1, \sigma)$ defines a commutative diagram in the category $ \mathcal{E}$.
$$ \xymatrix{
&  \ar@/_1pc/[ld]_-{1_E} E\ar[d]_\sigma   \ar@/^1pc/[rd]^-{1_E} &  \\
E\ar[dr]_p &\ar[l]_{\partial_0}  P(E,p) \ar[r]^{\partial_1} & E\ar[dl]^p \\
&B&
 }$$
A {\bf fibrewise homotopy} $h:f\leadsto_B g$ between two maps 
$f,g:(A,u)\to (E,p)$ in $\mathcal{E}(B)$ is a map $h:A\to P(E,p)$
such that $\partial_0h=f$ and $\partial_1h=g$. 
$$ \xymatrix{
&& E \\
A  \ar@/_1pc/[rrd]_-{g}  \ar@/^1pc/[rru]^-{f}  \ar[rr]^h  && P(E,p) \ar[u]^{\partial_0}\ar[d]_{\partial_1}\\
 &&  E 
 }$$
 The maps $f,g:(A,u)\to (E,p)$ are {\bf fibrewise homotopic}, $f\sim_B g$, if there exists a fibrewise homotopy $h:f\leadsto_B g$.

\begin{lemma} \label{fibrewisehomotopicishomotopic} 
If two maps $f,g:(A,u)\to (E,p)$ in $\mathcal{E}(B)$ are fibrewise homotopic
then the underlying maps $f,g:A\to E$ in $\mathcal{E}$ are homotopic.
\end{lemma}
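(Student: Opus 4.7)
The plan is to construct an ordinary path object for $E$ and compare it with the given fibrewise path object via a lifting argument. Fix a path object $(PE,\partial^0_E,\partial^1_E,\sigma_E)$ for $E$ in $\mathcal{E}$, and let $(P(E,p),\partial_0,\partial_1,\sigma)$ be the given fibrewise path object. The fibrewise homotopy $h\colon A\to P(E,p)$ satisfies $\partial_0 h=f$ and $\partial_1 h=g$, so it suffices to produce a map $d\colon P(E,p)\to PE$ over $E\times E$, meaning $\partial^0_E d=\partial_0$ and $\partial^1_E d=\partial_1$; then $dh\colon A\to PE$ will be the desired ordinary homotopy.

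To build $d$, I would use the lifting property of $\sigma$ against the fibration $(\partial^0_E,\partial^1_E)$. Let $j\colon E\times_B E\to E\times E$ be the canonical monomorphism. Consider the square
$$
\xymatrix{
E \ar[d]_{\sigma} \ar[rr]^{\sigma_E} && PE \ar[d]^{(\partial^0_E,\partial^1_E)} \\
P(E,p) \ar[rr]^{j\circ(\partial_0,\partial_1)} && E\times E
}
$$
It commutes because both ways around send $E$ to the diagonal $(1_E,1_E)\colon E\to E\times E$: on the top-right route this is the defining property of $\sigma_E$, and on the bottom-left route it is the defining property of $\sigma$ together with the fact that $j$ carries the fibrewise diagonal $E\to E\times_B E$ to the diagonal $E\to E\times E$. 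Since $\sigma$ is anodyne and $(\partial^0_E,\partial^1_E)$ is a fibration, a diagonal filler $d\colon P(E,p)\to PE$ exists.

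The commutativity of the lower triangle of this lifting square gives $(\partial^0_E,\partial^1_E)\circ d=j\circ(\partial_0,\partial_1)$, and projecting onto each factor of $E\times E$ yields $\partial^0_E d=\partial_0$ and $\partial^1_E d=\partial_1$. Consequently $dh\colon A\to PE$ satisfies $\partial^0_E(dh)=\partial_0 h=f$ and $\partial^1_E(dh)=\partial_1 h=g$, so $dh$ is a homotopy $f\leadsto g$ between the underlying maps in $\mathcal{E}$.

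There is no serious obstacle here; the only point that must be checked is the commutativity of the square, which reduces to the observation that both fibrewise and ordinary path objects factor the appropriate diagonal, and these diagonals are compatible via the inclusion $E\times_B E\hookrightarrow E\times E$. The argument is essentially the same as in Lemma \ref{independancepathob} comparing two path objects, except that one of the two factorisations lives over $B$ rather than over a point.
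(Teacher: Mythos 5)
Your proof is correct and follows essentially the same route as the paper: both lift the ordinary path-object factorisation $\sigma_E$ of the diagonal against the fibrewise anodyne map $\sigma$ to obtain a comparison map $d\colon P(E,p)\to PE$ compatible with endpoints, and then compose with the fibrewise homotopy. Your explicit treatment of the inclusion $E\times_B E\to E\times E$ makes the commutativity of the lifting square cleaner than the paper's somewhat telegraphic version, but the argument is the same.
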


\begin{proof} Let $(PE, \partial_0, \partial_1, \sigma\rangle$
be a path object for $E\in \mathcal{E}$ and $(P(E,p), \partial'_0, \partial'_1, \sigma'\rangle$
be a path object of $(E,p)\in \mathcal{E}(B)$.
Then the following square has a diagonal filler $d:PB\to P'B$, 
 since $\sigma'$ is anodyne and $\langle\partial_0,\partial_1\rangle$ is a fibration.
 \begin{equation}
 \xymatrix{
B \ar[d]_{\sigma'} \ar[rr]^{\sigma} &&PE \ar[d]^{\langle\partial_0,\partial_1\rangle}  \\
P'(E,p) \ar[rr]^{\langle\partial'_0,\partial'_1\rangle } && E\times E
 }
 \end{equation}
If $\langle\partial'_0,\partial'_1\rangle H=\langle f,g\rangle$ then 
$\langle\partial_0,\partial_1\rangle dH=\langle\partial'_0,\partial'_1\rangle H=\langle f,g\rangle.$
 \end{proof}

\begin{prop} \label{inverthoequiv4} 
If $f:A\to B$ is a fibration in a tribe $\mathcal{E}$,
then the functor $\Sigma_f:\mathcal{E}(A) \to \mathcal{E}(B)$
induces a functor
$$Ho(\Sigma_f):Ho(\mathcal{E}(A))\to Ho(\mathcal{E}(B))$$
left adjoint to the functor $Ho(f^\star)$.
\end{prop}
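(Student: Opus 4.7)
The plan is to proceed in two stages: first to descend $\Sigma_f$ to a functor on the homotopy categories, and then to descend the adjunction $\Sigma_f\dashv f^\star$ of Proposition~\ref{sumalong} to the homotopy categories, using that $f^\star$ is a morphism of tribes (Theorem~\ref{basechangehomotopical}) and therefore respects the homotopy relation (Proposition~\ref{h-homotohomotop}), which already gives the functor $Ho(f^\star)$ by Corollary~\ref{inverthoequiv3}.

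For the first stage, I would observe that by Proposition~\ref{tribeslice} a map $u:(E,p)\to (F,q)$ in $\mathcal{E}(A)$ is anodyne if and only if its underlying map $u:E\to F$ in $\mathcal{E}$ is anodyne, and likewise in $\mathcal{E}(B)$. Since the functor $\Sigma_f$ acts identically on underlying objects and morphisms and only post-composes the structure map with $f$, it takes anodyne maps in $\mathcal{E}(A)$ to anodyne maps in $\mathcal{E}(B)$. Writing $Q_A$ and $Q_B$ for the canonical functors to the respective homotopy categories, this means the composite $Q_B\circ \Sigma_f:\mathcal{E}(A)\to Ho(\mathcal{E}(B))$ inverts every anodyne map of the tribe $\mathcal{E}(A)$, and Lemma~\ref{inverthoequiv7} then yields that $Q_B\circ \Sigma_f$ respects the homotopy relation on $\mathcal{E}(A)$. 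By the universal property of $Q_A$ (Proposition~\ref{inverthoequiv2}) there is a unique functor $Ho(\Sigma_f):Ho(\mathcal{E}(A))\to Ho(\mathcal{E}(B))$ such that $Ho(\Sigma_f)\circ Q_A=Q_B\circ \Sigma_f$.

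For the second stage, I would show that the ordinary adjunction bijection
$$\mathcal{E}(B)(\Sigma_f X,Y)\;\simeq\;\mathcal{E}(A)(X,f^\star Y)$$
is compatible with the homotopy relations in both slots. If $u\sim v$ in $\mathcal{E}(B)(\Sigma_f X,Y)$, the transposes are $u^\flat=f^\star(u)\circ \eta_X$ and $v^\flat=f^\star(v)\circ \eta_X$ where $\eta$ is the unit; since $f^\star$ respects homotopy, $f^\star(u)\sim f^\star(v)$ and hence $u^\flat\sim v^\flat$. Conversely, if $\alpha\sim\beta$ in $\mathcal{E}(A)(X,f^\star Y)$, the transposes $\alpha^\sharp=\epsilon_Y\circ \Sigma_f(\alpha)$ and $\beta^\sharp=\epsilon_Y\circ \Sigma_f(\beta)$ are homotopic because $\Sigma_f$ respects homotopy by the first stage. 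The bijection therefore descends to a bijection
$$Ho(\mathcal{E}(B))(Ho(\Sigma_f)X,Y)\;\simeq\;Ho(\mathcal{E}(A))(X,Ho(f^\star)Y),$$
natural in $X\in Ho(\mathcal{E}(A))$ and $Y\in Ho(\mathcal{E}(B))$ because of the naturality of the original bijection at the level of $\mathcal{E}(A)$ and $\mathcal{E}(B)$. Equivalently, one may observe that the images of the unit $\eta$ and counit $\epsilon$ under $Q_A$ and $Q_B$ give natural transformations $1\to Ho(f^\star)\circ Ho(\Sigma_f)$ and $Ho(\Sigma_f)\circ Ho(f^\star)\to 1$ satisfying the triangle identities, these identities being obtained by applying $Q_A$ and $Q_B$ to the triangle identities of the original adjunction.

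The main obstacle is really just the bookkeeping in the second stage; no genuinely new homotopical input is needed beyond what is already in the paper. The crucial facts are Proposition~\ref{tribeslice} (which trivializes the first stage by identifying anodyne maps in $\mathcal{E}(A)$ and $\mathcal{E}(B)$ with their underlying anodyne maps in $\mathcal{E}$) together with the homotopy-preservation of $f^\star$ coming from Theorem~\ref{basechangehomotopical} and Proposition~\ref{h-homotohomotop}.
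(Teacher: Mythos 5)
Your proof is correct and follows essentially the same route as the paper's: the key point in both is that the homotopy structure of $\mathcal{E}(A)$ is detected on underlying maps, so that $\Sigma_f$ descends to homotopy categories, after which the adjunction $\Sigma_f \dashv f^\star$ passes down as well. The paper first reduces to $B=1$ via Lemma \ref{compositionelementary1} and invokes Lemma \ref{fibrewisehomotopicishomotopic} where you use Proposition \ref{tribeslice} together with Lemma \ref{inverthoequiv7}, but these are interchangeable ways of making the same observation, and your second stage merely spells out what the paper leaves as ``easy to verify.''
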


\begin{proof} We may suppose that $B=1$ by Lemma \ref{compositionelementary1},
in which case $f^\star=e_A$ and $\Sigma_f$
is the forgetful functor $\Sigma_A: \mathcal{E}(A) \to \mathcal{E}$.
But the forgetful functor $\Sigma_A$ preserves the homotopy relation
by Lemma \ref{fibrewisehomotopicishomotopic}.
 It thus induces a functor
$Ho(\Sigma_A): Ho(\mathcal{E}(A)) \to Ho( \mathcal{E})$.
It is then easy to verify that the adjunction $\Sigma_A \dashv e_A$
induces an adjunction $Ho(\Sigma_A)\dashv Ho(e_A)$.
\end{proof}

 \medskip

If $B$ is an object of a tribe $\mathcal{E}$, then
a {\bf fibrewise mapping path object} of a map $f:(X,p)\to (Y,q)$ in 
$\mathcal{E}(B)$ is obtained by factoring the map $(1_X,f) :X\to X\times_B Y$ as 
an anodyne map  $u:X \to P_B(f)$
followed by a fibration $\langle \delta_0,\delta_1\rangle :P_B(f)\to X\times_B Y$.
From the quadruple $(P_B(f),\delta_0,\delta_1,u)$ we obtain the following commutative diagram in the category $\mathcal{E}$,
$$ \xymatrix{
&&  \ar@/_1pc/[lld]_-{1_{X}} X \ar[d]_{u}   \ar@/^1pc/[rrd]^-{f} &&  \\
X \ar[drr]_p &&\ar[ll]_{\delta_0}  P_B(f) \ar[rr]^{\delta_1} && Y \ar[lld]^{q}\\
&& B &&
 }$$

\begin{prop} \label{homotopyequiv}  
A fibration $f:X\to B$ in a tribe $\mathcal{E}$ is trivial if and only if the object $(X,f)\in  \mathcal{E}(B)$ is contractible.
 \end{prop}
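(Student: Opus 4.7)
The forward direction is essentially an unfolding of definitions. Contractibility of $(X,f)$ in $\mathcal{E}(B)$ means the unique map $f:(X,f)\to(B,1_B)$ is a homotopy equivalence in the tribe $\mathcal{E}(B)$ (Proposition~\ref{tribeslice}). A homotopy inverse $s:(B,1_B)\to(X,f)$ automatically satisfies $fs=1_B$ since the hom-set of morphisms to the terminal object $(B,1_B)$ is a singleton, and the accompanying fibrewise homotopy $sf\sim_B 1_X$ descends to $sf\sim 1_X$ in $\mathcal{E}$ by Lemma~\ref{fibrewisehomotopicishomotopic}. Thus $f$ is a homotopy equivalence in $\mathcal{E}$, hence a trivial fibration.

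For the converse, suppose $f$ is a trivial fibration. Pick a section $s:B\to X$ using Corollary~\ref{trivialfibrationhavesection}; a standard 3-for-2 argument in $Ho(\mathcal{E})$ shows $s$ is automatically a homotopy inverse of $f$, so $sf\sim 1_X$ in $\mathcal{E}$. The plan is to upgrade this to a fibrewise homotopy $sf\sim_B 1_X$; once that is done, $s$ is a fibrewise homotopy inverse of $f:(X,f)\to(B,1_B)$ in $\mathcal{E}(B)$, expressing precisely that $(X,f)$ is contractible. Fix a path object $(PX,\partial_0,\partial_1,\sigma)$ for $X$ and a homotopy $K:X\to PX$ with $(\partial_0,\partial_1)K=(sf,1_X)$.

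The key observation is that $(f\partial_0,f\partial_1)K=(fsf,f\cdot 1_X)=(f,f)=\Delta_B\circ f$, so by the universal property $K$ factors through the pullback $PX_f:=PX\times_{B\times B}B$ (formed along $\Delta_B$ and $(f\partial_0,f\partial_1)$), yielding $\tilde K:X\to PX_f$. Using the equivalent description $PX_f\cong PX\times_{X\times X}(X\times_B X)$, the endpoint projection $(\partial_0',\partial_1'):PX_f\to X\times_B X$ is a fibration, being the base change of the fibration $(\partial_0,\partial_1):PX\to X\times X$ along the inclusion $X\times_B X\hookrightarrow X\times X$. Separately, obtain an honest fibrewise path object $Q$ for $(X,f)$ in $\mathcal{E}(B)$ by $AF$-factoring the fibrewise diagonal $X\to X\times_B X$ in $\mathcal{E}$ as an anodyne $\sigma_Q:X\to Q$ followed by a fibration $q:Q\to X\times_B X$. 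Lifting the anodyne $\sigma_Q$ against the fibration $(\partial_0',\partial_1')$ produces a map $\phi:Q\to PX_f$ over $X\times_B X$.

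The heart of the proof, and the main obstacle, is to transport $\tilde K$ into $Q$. The cleanest route is to show that $\phi$ is a trivial fibration: the projections $\partial_0$ from both $Q$ and $PX_f$ to $X$ are trivial fibrations (they have anodyne sections $\sigma_Q$ and $\sigma'$, the latter arising from $\sigma:X\to PX$ by the pullback universal property), so a 3-for-2 argument applied in $\mathcal{E}(X)$ to the triangle $\partial_0^{PX_f}\circ\phi=\partial_0^Q$ shows $\phi$ is a homotopy equivalence over $X$, and being a fibration (by comparison of the two AF-factorizations of the fibrewise diagonal) it is a trivial fibration. By Corollary~\ref{trivialfibrationhavesection} pick a section $\psi:PX_f\to Q$ of $\phi$ over $X\times_B X$; then $H:=\psi\tilde K:X\to Q$ satisfies $(\partial_0^Q,\partial_1^Q)H=(\partial_0',\partial_1')\tilde K=(sf,1_X)$, and since $H$ has structure map to $B$ equal to $f$ it is a morphism in $\mathcal{E}(B)$, hence the desired fibrewise homotopy witnessing $sf\sim_B 1_X$.
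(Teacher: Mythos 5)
Your easy direction (contractible $\Rightarrow$ trivial) is correct and is essentially the paper's argument, which packages the descent of the fibrewise homotopy into the statement that $\Sigma_B$ preserves homotopy equivalences (Proposition~\ref{inverthoequiv4}). The other direction, however, contains a false intermediate claim rather than a mere gap. You assert that the section $\sigma':X\to PX_f$ is anodyne, hence that $\partial_0^{PX_f}:PX_f\to X$ and then $\phi:Q\to PX_f$ are trivial fibrations. This is not true: $PX_f=PX\times_{B\times B}B$ classifies paths in $X$ whose two endpoints merely lie in the same fibre of $f$, not paths lying \emph{inside} a fibre, and the two notions differ by a loop space of the base. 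Already for $f=1_B$ (certainly a trivial fibration) in the tribe of Kan complexes with $B$ a circle, $PX_f$ is the free loop space of $B$, which is homotopy equivalent to $B\times \mathbb{Z}$; the section $\sigma'$ is the inclusion of constant loops, the fibre of $\partial_0^{PX_f}$ is $\Omega B$, and $\phi:Q\to PX_f$ is the inclusion of the constant-loop component. None of these is an equivalence, $\phi$ admits no section at all (it is not even $\pi_0$-surjective), and an arbitrary homotopy $K:sf\leadsto 1_X$ --- for instance a self-homotopy of $1_{S^1}$ of winding number one --- genuinely fails to factor through $Q$, even up to homotopy over $X\times_B X$. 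So the 3-for-2 argument in $\mathcal{E}(X)$ breaks at exactly this step; the parenthetical claim that $\phi$ is a fibration ``by comparison of the two AF-factorizations'' is also unsupported, since $X\to PX_f\to X\times_B X$ is not an AF-factorization of the fibrewise diagonal (that is precisely what fails).

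The paper's proof is designed to avoid asserting that any comparison between absolute and fibrewise path data is an equivalence. It forms the absolute mapping path object $M(s)$ of the \emph{section} $s$, encodes the homotopy $sf\leadsto 1_X$ as a section $g$ of $\delta_1:M(s)\to X$, and then lifts the anodyne map $u:B\to M(s)$ against the fibration $\delta_1':P_B(s)\to X$ coming from the fibrewise mapping path object, obtaining $d:M(s)\to P_B(s)$ with $\delta_1'd=\delta_1$. The composite $H'dg$ is then automatically a fibrewise homotopy $sf\leadsto_B 1_X$ because fibrewise paths satisfy $f\partial_1'=f\partial_0'$, whence $\delta_0'=f\delta_1'$; no inverse comparison map is ever needed. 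To salvage your outline you would have to replace the transport of $\tilde K$ through $\phi$ by a lifting argument of this kind, going from the absolute object to the fibrewise one rather than the other way around.
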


\begin{proof} ($\Rightarrow$) 
Let $(PX,\partial_0,\partial_1,\sigma)$ be a path object for $X$.
The map $f$ is invertible in the homotopy category $Ho(\mathcal{E})$,
since it is a homotopy equivalence.
Hence the map $f:X\to B$ has a section $s:B\to X$
by \ref{trivialfibrationhavesection}, since it is a fibration.
The map $s$ is then the inverse of $f$
in the category $Ho(\mathcal{E})$, since $f$ is invertible in $Ho(\mathcal{E})$
and $fs=1_B$. Thus, $sf \sim 1_X$.
It follows that there is a map
$h:X\to PX$ such that $\partial_0 h=sf $ and $\partial_1 h=1_X$.
Let $(M(s),\delta_0,\delta_1,u)$ be mapping path object of the map $s:B\to X$ constructed 
from the path object $(PX,\partial_0,\partial_1,\sigma)$ and
 the pullback square
 $$ \xymatrix{
M(s) \ar[rr]^H \ar[d]_{\delta_0}&& PX \ar[d]^{\partial_0} \\
B\ar[rr]^{s}  && X
}$$
By construction,  $\delta_1=\partial_1 H $ and $u=\langle 1_B, \sigma s \rangle$.
There is then a unique map 
$g:X\to M(s) $ such that the following diagram commutes, since $sf=\partial_0 h$.
$$ \xymatrix{
X  \ar@/^1.5pc/[drrr]^-{h}    \ar@/_1.5pc/[ddr]_-{f}  \ar[dr]^{g}&&& \\
&M(s) \ar[rr]^H \ar[d]_{\delta_0}&& PX \ar[d]^{\partial_0}  \\
& B \ar[rr]^{s}  &&X.
}$$
We have $\delta_1g=\partial_1 Hg= \partial_1 h =1_X$.
Thus, $g$ is a section of $\delta_1$.
The map $s:(B,1_B)\to (X,f)$
 in $\mathcal{E}(B)$ has a  fibrewise mapping path object 
 $(P_B(s), \delta'_0,\delta'_1, u')$ constructed from a fibrewise path
 object $(P_B(X,f),  \partial'_0,\partial'_1,\sigma')$ for $(X,f)$.
 By construction, we have the following commutative diagram,
    $$ \xymatrix{
P_B(s) \ar[rr]^{H'} \ar[d]_{\delta_0'}  \ar@/^2.5pc/[rrrr]^-{\delta'_1}   && P_B(X, f) \ar[d]^{\partial'_0}   \ar[rr]^{\partial'_1} && X\ar[d]^f  \\
B \ar[rr]^{s}  && X\ar[rr]^f && B
}$$
The following  square commutes, since $\delta_1u=s=\delta'_1u'$.
$$ \xymatrix{
B \ar[d]_{u} \ar[r]^{u'}& \ar[d]^{\delta_1'} P_B(s)  \\
M(s)\ar[r]^{\delta_1}  & X
}$$
Hence the square has a diagonal filler $d:M(s)\to P_B(s)$,
since $u$ is anodyne and $\delta'_1$ is a fibration.
Let us show that the composite $h'\defeq H'dg:X\to P_B(X)$
is a fibrewise homotopy $sf\leadsto_B 1_X$.
Notice that 
$$f\delta'_1=f\partial'_1H'=f\partial'_0H'=fs\delta'_0=\delta'_0.$$
Thus 
$$\partial'_0h'=\partial'_0 H' dg=s\delta'_0 d g=sf\delta'_1 dg=sf \delta_1 g=sf 1_X=sf.$$
Moreover, 
$$\partial'_1h'=\partial'_1 H'dg=\delta'_1 dg=\delta_1g=1_X.$$
Thus, $sf\sim_B 1_X$ and this shows that the object $(X,f)$ is contractible in the category $\mathcal{E}(B)$.
($\Leftarrow$) The map $f:(X,f)\to (B,1_B)$ is a homotopy equivalence in $\mathcal{E}(B)$,
since the object $(X,f)$ is contractible. But the forgetful functor $\Sigma_B: \mathcal{E}(B)\to \mathcal{E}$
preserves homotopy equivalences by Proposition \ref{inverthoequiv4}.
This shows that the map $f:X\to B$ is a homotopy equivalence in $\mathcal{E}$.
\end{proof}

\begin{thm} \label{homotopyequirefl} If $f:A\to B$
is a fibration in a tribe $ \mathcal{E}$, then the functor 
$\Sigma_f:\mathcal{E}(A)\to \mathcal{E}(B)$ preserves and reflects homotopy
equivalences. 
\end{thm}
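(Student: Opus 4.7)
The plan is to handle both preservation and reflection uniformly by reducing each to the characterization in Proposition \ref{homotopyequiv} together with the identification of slices given by Lemma \ref{compositebasechangetribecor2}. The key observation is that although $\Sigma_f$ is not a morphism of tribes (it fails to preserve terminal objects), it nevertheless preserves both anodyne maps and fibrations. Preservation of fibrations is Proposition \ref{sumalong}; preservation of anodyne maps follows from Proposition \ref{tribeslice}, since a map $u$ in $\mathcal{E}(A)$ is anodyne iff its underlying map in $\mathcal{E}$ is anodyne, and $\Sigma_f$ only changes the structure map, not the underlying map.

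Given any map $u\colon(E,p)\to(F,q)$ in $\mathcal{E}(A)$, I factor it in the tribe $\mathcal{E}(A)$ (which is a tribe by Proposition \ref{tribeslice}) using Lemma \ref{KBrownlemma0forhtribes} as $u = w\circ i$ with $i$ anodyne and $w$ a fibration in $\mathcal{E}(A)$. By Proposition \ref{anohomotopyequiv} both $i$ and $\Sigma_f(i)$ (which is again anodyne by the observation above) are homotopy equivalences, so by the 3-for-2 property (Proposition \ref{6four2homotopyequiv}) we have: $u$ is a homotopy equivalence in $\mathcal{E}(A)$ iff $w$ is, and $\Sigma_f(u)$ is a homotopy equivalence in $\mathcal{E}(B)$ iff $\Sigma_f(w)$ is. Thus the theorem reduces to the case where $u$ is itself a fibration in $\mathcal{E}(A)$.

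So assume $u\colon(E,p)\to(F,q)$ is a fibration in $\mathcal{E}(A)$; equivalently, $u\colon E\to F$ is a fibration in $\mathcal{E}$. Applying Proposition \ref{homotopyequiv} inside the tribe $\mathcal{E}(A)$ to the fibration $u$, triviality of $u$ is equivalent to contractibility of the object $((E,p),u)$ in the local tribe $\mathcal{E}(A)(F,q)$. Applying Lemma \ref{compositebasechangetribecor2} to the fibration $q\colon F\twoheadrightarrow A$ gives a canonical isomorphism of clans $\mathcal{E}(A)(F,q)\simeq\mathcal{E}(F)$ under which $((E,p),u)$ corresponds to $(E,u)\in\mathcal{E}(F)$. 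In particular, triviality of $u$ in $\mathcal{E}(A)$ is equivalent to contractibility of $(E,u)\in\mathcal{E}(F)$. Repeating the same argument in the tribe $\mathcal{E}(B)$ for the fibration $\Sigma_f(u)\colon(E,fp)\to(F,fq)$ and using Lemma \ref{compositebasechangetribecor2} applied to the fibration $fq\colon F\twoheadrightarrow B$, triviality of $\Sigma_f(u)$ in $\mathcal{E}(B)$ is again equivalent to contractibility of the same object $(E,u)\in\mathcal{E}(F)$. The two conditions coincide, which simultaneously proves preservation and reflection.

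The main point requiring care is verifying that the two identifications $\mathcal{E}(A)(F,q)\simeq\mathcal{E}(F)\simeq\mathcal{E}(B)(F,fq)$ produced by Lemma \ref{compositebasechangetribecor2} send $((E,p),u)$ and $((E,fp),\Sigma_f(u))$ to the \emph{same} object of $\mathcal{E}(F)$; this is essentially formal once one writes out the inverse isomorphism given in the proof of that lemma, but it is the crucial bookkeeping step. As a side remark, preservation alone can be obtained more cheaply from Proposition \ref{inverthoequiv4}, but the argument above yields both statements at once and avoids re-using that result.
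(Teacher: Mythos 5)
Your proof is correct and rests on exactly the same mechanism as the paper's: reduce to the case of a fibration via an AF-factorisation (with Proposition \ref{tribeslice} and 3-for-2), then translate ``homotopy equivalence'' into ``contractible object of a slice'' via Proposition \ref{homotopyequiv} and transport that contractibility through the identification of Lemma \ref{compositebasechangetribecor2}. The only difference is one of packaging: you run the slice identification symmetrically on both the $\mathcal{E}(A)$ and $\mathcal{E}(B)$ sides so that preservation and reflection drop out of a single chain of equivalences, whereas the paper proves preservation separately via Proposition \ref{inverthoequiv4} and reserves the slice argument (after reducing to $B=1$) for reflection only.
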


\begin{proof} 
The functor $\Sigma_f$ preserves homotopy
equivalences by Proposition \ref{inverthoequiv4}.
Let us show that it reflects homotopy equivalences.
We may suppose that $B=1$ by Lemma \ref{compositionelementary1},
in which case $f^\star=e_A$ and $\Sigma_f$
is the forgetful functor $\Sigma_A: \mathcal{E}(A) \to \mathcal{E}$.
Let $w:(X,p)\to (Y,q)$ be a morphism in $ \mathcal{E}(A)$;
if the map $w:X\to Y$ is a homotopy equivalence,
let us show that the morphism $w:(X,p)\to (Y,q)$ is a homotopy equivalence in $ \mathcal{E}(A)$.
For this, let us choose an $AF$-factorization $w=gu:X\to E\to  Y$ in $ \mathcal{E}$.
The map $u:X\to E$ is a homotopy equivalence  by \ref{anohomotopyequiv},
since $u$ is anodyne. Hence the map $g:E\to Y$ is a homotopy equivalence 
by 3-for-2, since $w=gu$ is a homotopy equivalence by hypothesis.
Hence the object $(E,g)\in \mathcal{E}(Y)$
is contractible by Proposition \ref{homotopyequiv}.
The map $g:(E,qg)\to (Y,q)$ is a fibration in $\mathcal{E}(A)$ by Proposition \ref{clanslice}
since $g$ is a fibration. 
$$ \xymatrix{
E \ar[dr]_{qg} \ar[r]^{g}& \ar[d]^{q} Y \\
 & A
}$$
But we have $\mathcal{E}(A)(Y,q)=\mathcal{E}(Y)$
and $((E,qg),g)=(E,g)$ by Lemma \ref{compositebasechangetribecor2}.
Hence the object $((E,qg),g)\in \mathcal{E}(A)(Y,q)$
is contractible, since the object $(E,g)\in \mathcal{E}(Y)$ is contractible.
It then follows by Proposition \ref{homotopyequiv}  
that the morphism  $g:(E,qg)\to (Y,q)$ is a homotopy equivalence in $\mathcal{E}(A)$.
But the morphism $u:(X,p)\to (E,qg)$ is anodyne in $ \mathcal{E}(A)$ by
Proposition \ref{tribeslice}, since $u$ is anodyne. Hence
the morphism $u:(X,p)\to (E,qg)$ is a  homotopy equivalence in $ \mathcal{E}(A)$
by \ref{anohomotopyequiv}.
It follows that that the composite $w=gu:(X,p)\to (E,qg)\to (Y,q)$
is a homotopy equivalence in $\mathcal{E}(A)$.
\end{proof}

\begin{prop} \label{trivialfibrationbasechange} In a tribe, the base change of a trivial fibration
along any map is a trivial fibration.  
\end{prop}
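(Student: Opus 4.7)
The plan is to reduce the statement to Proposition \ref{homotopyequiv}, which characterises triviality of a fibration $f:X\to B$ as contractibility of the object $(X,f)$ in the local tribe $\mathcal{E}(B)$. Once we have this reformulation, the result becomes a formal consequence of the fact that base change is a morphism of tribes, together with preservation of terminal objects.

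More precisely, suppose $f:X\twoheadrightarrow B$ is a trivial fibration and $g:C\to B$ is any map in $\mathcal{E}$. Form the base change square, so that $g^\star(f):C\times_B X\twoheadrightarrow C$ is the desired fibration in $\mathcal{E}(C)$. By Proposition \ref{homotopyequiv}, the assumption that $f$ is trivial is equivalent to saying that the object $(X,f)\in\mathcal{E}(B)$ is contractible, i.e.\ that the unique map $(X,f)\to \top_B=(B,1_B)$ is a homotopy equivalence in the tribe $\mathcal{E}(B)$. We wish to conclude the analogous statement for $(C\times_B X, g^\star(f))=g^\star(X,f)\in\mathcal{E}(C)$.

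The key step is that the base change functor $g^\star:\mathcal{E}(B)\to\mathcal{E}(C)$ is a morphism of tribes by Theorem \ref{basechangehomotopical}, hence preserves the homotopy relation and therefore homotopy equivalences by Proposition \ref{h-homotohomotop}. Since $g^\star$ also preserves terminal objects (as any morphism of clans does), applying $g^\star$ to the homotopy equivalence $(X,f)\to \top_B$ yields a homotopy equivalence $g^\star(X,f)\to \top_C$ in $\mathcal{E}(C)$. Thus $g^\star(X,f)$ is contractible in $\mathcal{E}(C)$, and invoking Proposition \ref{homotopyequiv} once more in the tribe $\mathcal{E}$ applied to the fibration $g^\star(f):C\times_B X\twoheadrightarrow C$ shows that $g^\star(f)$ is a trivial fibration.

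There is no real obstacle here, since all the heavy lifting has already been done: the content is packaged into Proposition \ref{homotopyequiv} (which identifies triviality with fibrewise contractibility) and Theorem \ref{basechangehomotopical} (which shows base change is homotopically well-behaved). The only point worth spelling out carefully is that $g^\star$ sends the terminal object of $\mathcal{E}(B)$ to the terminal object of $\mathcal{E}(C)$, so that contractibility is indeed transported across base change.
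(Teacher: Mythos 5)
Your proposal is correct and follows essentially the same route as the paper: both reduce triviality of the fibration to contractibility of the corresponding object in the local tribe via Proposition \ref{homotopyequiv}, transport contractibility along the base change functor (which preserves homotopy equivalences because it is a morphism of tribes, Theorem \ref{basechangehomotopical} together with Corollary \ref{inverthoequiv3}), and then apply Proposition \ref{homotopyequiv} again. The only cosmetic difference is that you cite Proposition \ref{h-homotohomotop} directly where the paper invokes the packaged Corollary \ref{inverthoequiv3}.
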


 \begin{proof} Let  $p:E\to B$ a trivial fibration in a tribe $ \mathcal{E}$;
 if $f:A\to B$ is a map in $ \mathcal{E}$, let us show that the projection $p_1$
 in the following pullback square is a trivial fibration
 $$ \xymatrix{
A\times_BC  \ar[d]_{p_1} \ar[r]& \ar[d]^{p} C  \\
A\ar[r]^{f}  & B.
}$$
The object $(E,p)$ is contractible in $\mathcal{E}(B)$ by Proposition \ref{homotopyequiv}.
Hence the object $(A\times_BC,p_1)=f^\star(C,p)$ is contractible in $\mathcal{E}(A)$,
 since the base change functor $f^\star: \mathcal{E}(B) \to \mathcal{E}(A)$
preserves homotopy equivalences by \ref{inverthoequiv3}.
It follows  by Proposition \ref{homotopyequiv} that the map $p_1:A\times_BX \to A$
is a homotopy equivalence in $\mathcal{E}$.
 \end{proof}

The following result uses the notion of  {\it fibration category} defined
in appendix \ref{Appendix on fibration categories}.

\begin{thm} \label{atribeisafibrationcategory} A tribe has the structure of a Brown fibration category in which a weak equivalence
is a homotopy equivalence.
\end{thm}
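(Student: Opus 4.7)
The plan is to verify, point by point, the axioms of a Brown fibration category (as recalled in the appendix referred to by the theorem), with the class of weak equivalences taken to be the homotopy equivalences and the class of fibrations taken to be the fibrations of the clan. The axioms to check are: (i) the category has finite products; (ii) both classes contain all isomorphisms and are closed under composition; (iii) the 3-for-2 property holds for weak equivalences; (iv) pullbacks of fibrations exist and the class of fibrations is stable under base change; (v) trivial fibrations are stable under base change; and (vi) every map factors as a weak equivalence followed by a fibration.

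Axioms (i), (ii) concerning fibrations, and (iv) are immediate from the definition of a clan (Definition \ref{defclan}) together with Proposition \ref{tribeofgenuineprojection0}, which gives finite products. For the weak equivalences, the fact that isomorphisms are homotopy equivalences and that homotopy equivalences are closed under composition is immediate from the definition of $Ho(\mathcal{E})$: any isomorphism in $\mathcal{E}$ descends to an isomorphism in $Ho(\mathcal{E})$, and a composite of maps invertible in $Ho(\mathcal{E})$ is again invertible there. The 3-for-2 property (iii) is exactly Proposition \ref{6four2homotopyequiv}.

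The factorization axiom (vi) is the substantive part that comes directly from the tribe axioms. By Definition \ref{defpretypos}, every map $f : A \to B$ in $\mathcal{E}$ admits an $AF$-factorization $f = pu$ with $u : A \rightarrowtail E$ anodyne and $p : E \twoheadrightarrow B$ a fibration. Since every anodyne map is a homotopy equivalence by Proposition \ref{anohomotopyequiv}, this is a factorization of $f$ as a weak equivalence followed by a fibration, as required.

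The final and most delicate axiom (v), stability of trivial fibrations under base change, is exactly the content of Proposition \ref{trivialfibrationbasechange}, which was proved using the characterization of trivial fibrations via contractibility in the local tribe (Proposition \ref{homotopyequiv}) and the fact that the base change functor $f^\star$ preserves homotopy equivalences (Corollary \ref{inverthoequiv3}, itself a consequence of Theorem \ref{basechangehomotopical}). With this, all Brown axioms are satisfied, and the theorem follows by assembling these previously established results; there is no further obstacle to overcome, since each of the non-trivial ingredients has been proved in the preceding sections.
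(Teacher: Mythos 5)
Your proposal is correct and follows essentially the same route as the paper, which simply observes that the result "follows directly from Proposition \ref{trivialfibrationbasechange}" — the stability of trivial fibrations under base change being the only axiom not already immediate from the clan/tribe structure, Proposition \ref{6four2homotopyequiv}, and Proposition \ref{anohomotopyequiv}. You have merely made explicit the bookkeeping that the paper leaves implicit.
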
 

 \begin{proof} This follows directly from proposition \ref{trivialfibrationbasechange}.
 \end{proof}

\begin{cor} \label{basechangehomotopyequiv} In a tribe, the base change of a homotopy equivalence along a fibration 
is a homotopy equivalence. 
\end{cor}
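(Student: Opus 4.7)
The plan is to apply Ken Brown's factorization trick, which is exactly the mechanism that powers results of this shape in any Brown fibration category. Let $w\colon X\to Y$ be a homotopy equivalence and $f\colon A\to Y$ a fibration; the pullback $A\times_Y X$ exists since $f$ is carrable, and the goal is to show that the projection $p_1\colon A\times_Y X\to A$ is a homotopy equivalence.

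First I would choose an $AF$-factorization $w=gu\colon X\rightarrowtail E\twoheadrightarrow Y$. Proposition \ref{anohomotopyequiv} tells us that the anodyne map $u$ is a homotopy equivalence, so by the 3-for-2 property (Proposition \ref{6four2homotopyequiv}) applied to $w=gu$, the fibration $g\colon E\twoheadrightarrow Y$ is a homotopy equivalence, i.e.\ a trivial fibration.

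Next I would pull the whole factorization back along $f$, obtaining a diagram
$$\xymatrix{
A\times_Y X \ar[r]\ar[d] & A\times_Y E \ar[r]\ar[d] & A\ar[d]^f \\
X\ar[r]^u & E\ar[r]^g & Y
}$$
in which both squares are cartesian, so by Lemma \ref{lemmacartesiansq} the outer rectangle exhibits $A\times_Y X$ as $(A\times_Y E)\times_E X$. The right-hand square is the base change of the trivial fibration $g$ along $f$, so Proposition \ref{trivialfibrationbasechange} shows that $A\times_Y E\to A$ is a trivial fibration, hence a homotopy equivalence. Meanwhile, the vertical map $A\times_Y E\to E$ in the left-hand square is a fibration (it is the base change of the fibration $f$ along $g$), and the top arrow $A\times_Y X\to A\times_Y E$ is the base change of the anodyne map $u$ along this fibration. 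By the defining axiom of a tribe (Definition \ref{defpretypos}) this base change is anodyne, and therefore a homotopy equivalence by Proposition \ref{anohomotopyequiv}.

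Composing the two homotopy equivalences $A\times_Y X\to A\times_Y E\to A$ gives $p_1$ as a homotopy equivalence, completing the argument. There is essentially no obstacle here: the whole content is the $AF$-factorization combined with Proposition \ref{trivialfibrationbasechange} and stability of anodyne maps under base change along fibrations, which is precisely the shape of Ken Brown's lemma afforded by the fibration-category structure of Theorem \ref{atribeisafibrationcategory}.
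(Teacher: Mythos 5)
Your proof is correct, and it takes a more self-contained route than the paper does. The paper disposes of this corollary in one line by invoking Theorem \ref{atribeisafibrationcategory} (a tribe is a Brown fibration category) and then relying on the appendix result, Proposition \ref{basechangeofacylicmap2}, which establishes the statement for arbitrary Brown fibration categories. That general argument is necessarily more delicate: since a fibration category has no class of anodyne maps, the "section" half of Ken Brown's factorization has to be handled via Lemma \ref{basechangeofacylicmap}, i.e.\ by analyzing fibers of fibrations over contractible objects. Your argument instead exploits the tribe axiom directly: after the $AF$-factorization $w=gu$, the fibration part $g$ is trivial by 3-for-2 and its base change is handled by Proposition \ref{trivialfibrationbasechange}, while the anodyne part $u$ is handled by the defining stability of anodyne maps under base change along fibrations (Definition \ref{defpretypos}), with Lemma \ref{lemmacartesiansq} gluing the two pullback squares. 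What you lose is generality --- your proof does not apply to an arbitrary fibration category --- but what you gain is a short, direct argument that stays entirely within the first few sections on tribes and never touches the appendix. Both proofs are instances of the same Ken Brown factorization trick; yours simply has a better-behaved class of "good" left maps to work with.
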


 \begin{proof}
This follows from Theorem \ref{atribeisafibrationcategory}, since a tribe is a Brown fibration category by Proposition \ref{atribeisafibrationcategory}.
 \end{proof}

\subsection{Weak equivalences of tribes}

We shall denote  the category of tribes and morphism of tribes by $\mathbf{Trib}$.
The category $\mathbf{Trib}$ has also the structure of a 2-category in which a 
2-cell is a natural transformation.
An {\bf equivalence of tribes} is an equivalence in this 2-category.
A morphism of tribes $F:\mathcal{E} \to \mathcal{E}'$
is an equivalence of tribes if and only if it is an
equivalence of clans.

\begin{defi} \label{weakequivaltribedef}  We say that a morphism of tribes
$F:\mathcal{E} \to \mathcal{E}'$ is a {\bf  weak equivalence}
if the induced functor $Ho(F): Ho(\mathcal{E}) \to Ho(\mathcal{E}')$ is 
an equivalence of categories.
\end{defi}

\begin{defi}
We shall say that a tribe $\mathcal{E}$ is {\bf contractible} if 
if every object of $\mathcal{E}$ is contractible.
\end{defi} 

\begin{prop} \label{caraterisationcontracttribe} A tribe $\mathcal{E}$ is contractible
if and only if the canonical functor $\mathcal{E}\to \mathbf{1}$ is a weak equivalence.
\end{prop}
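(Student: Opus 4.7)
The plan is to verify the two implications separately, using one preliminary observation. Since $\mathbf{1}$ is the terminal tribe, we have $Ho(\mathbf{1}) = \mathbf{1}$, so the canonical morphism $F:\mathcal{E} \to \mathbf{1}$ is a weak equivalence precisely when the category $Ho(\mathcal{E})$ is equivalent to the terminal category $\mathbf{1}$. Recall that a category is equivalent to $\mathbf{1}$ if and only if it is non-empty, every pair of objects is connected by an isomorphism, and between any two objects there is at most one morphism.

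For the direct implication ($\Rightarrow$), assume every object of $\mathcal{E}$ is contractible. I would first note that for each object $A$, the homotopy class $[t_A]$ of the unique map $t_A:A \to 1$ is by hypothesis an isomorphism in $Ho(\mathcal{E})$, so every object of $Ho(\mathcal{E})$ is isomorphic to $1$. To see that $Ho(\mathcal{E})(A,B)$ contains at most one element, I would choose any homotopy inverse $b:1 \to B$ of $t_B$ (which exists since $B$ is contractible); then for any parallel pair $f,g:A \to B$, using the terminal-object identity $t_B \circ f = t_A = t_B \circ g$ together with $[b]\circ[t_B] = [1_B]$, one computes
$[f] = [b \circ t_B \circ f] = [b \circ t_A] = [b \circ t_B \circ g] = [g]$
in $Ho(\mathcal{E})$, so $f \sim g$. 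This exhibits $Ho(\mathcal{E})$ as equivalent to $\mathbf{1}$.

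For the converse ($\Leftarrow$), assume $Ho(F)$ is an equivalence, so $Ho(\mathcal{E})$ is equivalent to $\mathbf{1}$; in particular every morphism in $Ho(\mathcal{E})$ is invertible. For each object $A \in \mathcal{E}$, the homotopy class $[t_A]:A \to 1$ is a morphism in $Ho(\mathcal{E})$ and is therefore an isomorphism, which by the definition of homotopy equivalence means $t_A$ is a homotopy equivalence, i.e.\ $A$ is contractible.

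The argument is essentially a direct unpacking of the definitions, so there is no substantial obstacle; the only small trick is the computation in the forward direction, which exploits contractibility of the codomain $B$ to collapse any two parallel maps $A \to B$ to the same homotopy class via composition with a fixed homotopy inverse of $t_B$.
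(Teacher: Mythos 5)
Your proof is correct, and since the paper leaves this proposition to the reader there is no competing argument to compare it with; your argument is the natural unpacking of the definitions. The one computation that requires any care — collapsing two parallel maps $f,g:A\to B$ to the same homotopy class by composing with a homotopy inverse of $t_B$ and using $t_B f = t_A = t_B g$ — is done correctly, and the converse direction correctly invokes Definition \ref{homotopyequiv} (a map is a homotopy equivalence iff its class is invertible in $Ho(\mathcal{E})$).
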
 

\begin{proof} Left to the reader.
\end{proof}

\begin{prop} \label{fullyfaithfulhomo} If a morphism of tribes $F: \mathcal{E} \to  \mathcal{E}'$ 
is fully faithful, then so is the functor $Ho(F): Ho(\mathcal{E}) \to Ho(\mathcal{E}')$.
\end{prop}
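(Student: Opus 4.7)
The plan is to verify separately that $Ho(F)$ is full and that it is faithful, using as the main tool the fact from Lemma \ref{homotakepathobject} that morphisms of tribes take path objects to path objects, together with Lemma \ref{independancepathob} which says that the homotopy relation does not depend on the choice of path object.

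Fullness is immediate. Given any morphism $\alpha : F(A) \to F(B)$ in $Ho(\mathcal{E}')$, pick a representative $g : F(A) \to F(B)$ in $\mathcal{E}'$; by fullness of $F$, there is a map $f : A \to B$ in $\mathcal{E}$ with $F(f)=g$, and then $Ho(F)([f])=[F(f)]=[g]=\alpha$.

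For faithfulness, suppose $f,g : A \to B$ in $\mathcal{E}$ satisfy $Ho(F)([f])=Ho(F)([g])$, i.e.\ $F(f) \sim F(g)$ in $\mathcal{E}'$. Choose a path object $(PB,\partial_0,\partial_1,\sigma)$ for $B$ in $\mathcal{E}$. By Lemma \ref{homotakepathobject}, $(F(PB), F(\partial_0), F(\partial_1), F(\sigma))$ is a path object for $F(B)$ in $\mathcal{E}'$, so by Lemma \ref{independancepathob} the given homotopy $F(f)\sim F(g)$ can be realized as a map $H' : F(A) \to F(PB)$ with $F(\partial_0)H' = F(f)$ and $F(\partial_1)H' = F(g)$. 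Since $F$ is full, lift $H'$ to a map $H : A \to PB$ with $F(H)=H'$. Then $F(\partial_0 H) = F(f)$ and $F(\partial_1 H) = F(g)$, and faithfulness of $F$ forces $\partial_0 H = f$ and $\partial_1 H = g$. Hence $H$ is a homotopy $f \leadsto g$ in $\mathcal{E}$, so $[f]=[g]$.

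The only subtle point is the appeal to Lemma \ref{independancepathob} to move the homotopy witnessing $F(f)\sim F(g)$ onto the specific path object $F(PB)$; without it, the given homotopy lives in some unrelated path object for $F(B)$ in $\mathcal{E}'$ and cannot be lifted directly. Once that reduction is made, full faithfulness of $F$ does the rest mechanically, so I expect no further obstacle.
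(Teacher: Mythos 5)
Your proof is correct and follows essentially the same route as the paper: fullness of $Ho(F)$ from fullness of $F$, and faithfulness by transporting the homotopy onto the path object $(F(PB),F(\partial_0),F(\partial_1),F(\sigma))$ via Lemma \ref{homotakepathobject} and then lifting the homotopy through the fully faithful $F$. The only difference is that you explicitly invoke Lemma \ref{independancepathob} to justify moving the given homotopy onto that particular path object, a step the paper leaves implicit.
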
 

\begin{proof} The map $\pi_0\mathcal{E}(A,B)\to \pi_0\mathcal{E}(FA,FB)$
induced by $F$ is surjective for every objects $A,B\in  \mathcal{E} $,
since the functor $F$ is full. Let us show that it is injective.
Let $f,g:A\to B$ and suppose that the maps $F(f),F(g):FA\to FB$
are homotopic. If $(PB,\partial_0,\partial_1,\sigma)$
is a path object for $B$, then $(FPB,F(\partial_0), F(\partial_1),F(\sigma))$
is a path object for $FB$ by Lemma \ref{homotakepathobject}.
Hence there is a map $H:A\to FPB$ such that $F(\partial_0)H=F(f)$
and $F(\partial_1)H=F(g)$, since $F(f)$ is homotopic to $F(f)$.
But we have $H=F(h)$ for a map $h:A\to FPB$, since
$F$ is full. Moreover, we have $\partial_0 h=f$, since the functor $F$ is faithful
and we have $F(\partial_0 h)=F(\partial_0) F(h)=F(\partial_0)H=F(f)$,
Similarly, $\partial_1 h=g$.
 Thus, $f$ is homotopic to $g$. This shows that the functor $Ho(F)$
 is faithful. \end{proof}

 \begin{defi} \label{defembeddingtribe} 
 We say that a morphism of tribes $F:\mathcal{E}\to \mathcal{E}'$
 is an {\bf embedding} if it is fully faithful
 and it reflects fibrations and anodyne maps.
 \end{defi}

\begin{lemma} \label{embeddingtribes} If a morphism of tribes  $F:\mathcal{E}\to \mathcal{E}'$ is
 fully faithful and reflects fibrations, then it is an embedding.
\end{lemma}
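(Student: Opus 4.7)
The plan is to unpack the definition: to show $F$ is an embedding of tribes, by Definition \ref{defembeddingtribe} we need $F$ fully faithful, reflecting fibrations, and reflecting anodyne maps. The first two are hypotheses, so the only thing to establish is that $F$ reflects anodyne maps. This will follow immediately from Lemma \ref{reflectionofanodynemorphclan}, since a morphism of tribes is in particular a morphism of clans.

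For completeness, I would also sketch the direct argument, since it is short and clarifies where each hypothesis is used. Let $u \colon A \to B$ be a map in $\mathcal{E}$ with $F(u)$ anodyne; to see that $u$ is anodyne, let $f \colon X \to Y$ be a fibration in $\mathcal{E}$ and consider an arbitrary lifting problem
$$\xymatrix{A \ar[d]_u \ar[r]^a & X \ar[d]^f \\ B \ar[r]^b & Y.}$$
Applying $F$ produces the square with vertical arrows $F(u)$ and $F(f)$; here $F(f)$ is a fibration because $F$ is a morphism of clans, and $F(u)$ is anodyne by hypothesis, so this image square admits a diagonal filler $d' \colon FB \to FX$. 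Since $F$ is full, there exists $d \colon B \to X$ with $F(d) = d'$; faithfulness of $F$ then promotes the identities $F(f) d' = F(b)$ and $d' F(u) = F(a)$ to $fd = b$ and $du = a$. Thus $d$ is a diagonal filler of the original square, and $u$ is anodyne.

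This argument does not use the assumption that $F$ reflects fibrations; that hypothesis is needed only to conclude the second clause of Definition \ref{defembeddingtribe}. No step looks like an obstacle: the only subtlety is recognising that, once one has a lift in $\mathcal{E}'$, full faithfulness of $F$ transports it back together with the two commutativity identities to $\mathcal{E}$.
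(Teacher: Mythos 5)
Your proposal is correct and takes the same route as the paper, which likewise proves this by a one-line appeal to Lemma \ref{reflectionofanodynemorphclan}. Your supplementary direct argument (transporting the lifting problem along $F$ and using fullness and faithfulness to pull the filler back) is a valid proof of that cited lemma, whose proof the paper leaves to the reader.
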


\begin{proof} This follows from Proposition \ref{reflectionofanodynemorphclan}.
\end{proof}

\medskip

 \begin{prop} \label{embeddingtribeclan} 
 Let  $F:\mathcal{E}\to \mathcal{E}'$ be an embedding of clans.
 Suppose that $ \mathcal{E}'$ is a tribe
 and that every map $f:X\to Y$ in $\mathcal{E}$  admits a factorization
 $f=pu:X\to E\to Y$ where $F(u)$ is anodyne in $\mathcal{E}'$
 and  $F(p)$ is a fibration  $\mathcal{E}'$.
 Then $\mathcal{E}$ is a tribe and $F$ is an embedding of tribes.
  \end{prop}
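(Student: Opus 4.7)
The plan is to verify the two tribe axioms for $\mathcal{E}$ and then deduce that $F$ is an embedding of tribes. The key ingredients are: $F$ preserves fibrations and base changes of fibrations (as a morphism of clans), reflects fibrations (as an embedding of clans), and reflects anodyne maps by Lemma \ref{reflectionofanodynemorphclan} (as a fully faithful morphism of clans). Note that the proof of that lemma only uses that $F$ preserves fibrations, not that $\mathcal{E}$ is already a tribe, so it is available here.

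The $AF$-factorization axiom follows immediately from the hypothesis together with these reflection properties: given $f:X\to Y$ in $\mathcal{E}$, the supplied factorization $f=pu:X\to E\to Y$ in $\mathcal{E}$ with $F(u)$ anodyne and $F(p)$ a fibration in $\mathcal{E}'$ transports to an anodyne map $u$ followed by a fibration $p$ in $\mathcal{E}$.

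I will then show that $F$ preserves anodyne maps, which will be needed for the base change axiom. Given $v:A\to B$ anodyne in $\mathcal{E}$, use the just-established factorization to write $v = qv'$ with $v'$ anodyne and $q$ a fibration in $\mathcal{E}$, so that in particular $F(v')$ is anodyne in $\mathcal{E}'$. The lifting problem with $v$ on the left, $q$ on the right, $v'$ on top and $1_B$ on the bottom has a filler $s:B\to C$ since $v$ is anodyne and $q$ a fibration; the relations $sv = v'$ and $qs = 1_B$ exhibit $v$ as a codomain retract of $v'$. Applying $F$ preserves this retract diagram, so $F(v)$ is a retract of the anodyne map $F(v')$, hence anodyne by Proposition \ref{anodyne closurecompositionret}.

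Finally, for the base change axiom, let $u:X\to B$ be anodyne and $p:E\to B$ a fibration in $\mathcal{E}$, and form the base change $u':E\times_B X\to E$ of $u$ along $p$ (which exists since $p$ is carrable). Since $F$ preserves base changes of fibrations, $F(u')$ is the corresponding base change of $F(u)$ along $F(p)$ in $\mathcal{E}'$; since $F(u)$ is anodyne and $F(p)$ a fibration in the tribe $\mathcal{E}'$, $F(u')$ is anodyne in $\mathcal{E}'$, whence $u'$ is anodyne in $\mathcal{E}$ by reflection of anodyne maps. Thus $\mathcal{E}$ is a tribe. Since $F$ preserves anodyne maps it is a morphism of tribes, and since it is fully faithful and reflects both fibrations and anodyne maps, it is an embedding of tribes by Definition \ref{defembeddingtribe}. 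The proof is essentially bookkeeping with $F$ and its fully-faithful inverse; the only point of care is to establish the factorization in $\mathcal{E}$ before appealing to it in the retract argument.
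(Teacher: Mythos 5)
Your proof is correct and follows the same overall route as the paper: transport the $AF$-factorization back along $F$ using reflection of fibrations (embedding of clans) and reflection of anodyne maps (Lemma \ref{reflectionofanodynemorphclan}), then transport the base-change axiom. The paper's own proof is much terser and dismisses the base-change axiom with ``by a similar argument''; that phrase hides exactly the point you address explicitly, namely that one must first know $F$ \emph{preserves} anodyne maps before one can push a base-change square into $\mathcal{E}'$ and pull the conclusion back. The hypothesis only guarantees anodyne images for the maps $u$ occurring in the chosen factorizations, so your intermediate step --- factor an arbitrary anodyne $v$ as $qv'$, use the lifting property of $v$ against the fibration $q$ to exhibit $v$ as a codomain retract of $v'$, and conclude that $F(v)$ is a retract of the anodyne map $F(v')$, hence anodyne by Proposition \ref{anodyne closurecompositionret} --- is a genuine and necessary addition rather than mere bookkeeping. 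In short: same strategy as the paper, but your write-up fills a real gap in the published argument.
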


\begin{proof} The map $p:E\to Y$ is a fibration in $\mathcal{E}$ since the functor $F$ is an embedding of clans. 
Moreover, the map $u:X\to E$ is anodyne in $\mathcal{E}$, since the functor $F$
reflects anodyne maps by Proposition \ref{reflectionofanodynemorphclan}.
The class of anodyne maps in $\mathcal{E}$ is closed under base changes
along fibrations in  $\mathcal{E}$ by a similar argument,
\end{proof}

Recall from Proposition \ref{subclan=embedding} 
that a full sub-category $\mathcal{L}$  of a clan $\mathcal{E}$
is a {\it sub-clan} if is a clan 
and the inclusion functor $\mathcal{L}\to \mathcal{E}$
is an embedding of clans.

 \begin{defi} \label{defsubtribe} 
 If $\mathcal{E}$ is a tribe, we say that a sub-clan $\mathcal{L}$ 
is a {\bf sub-tribe} if every map $f:A\to B$ in $\mathcal{L}$ 
admits an $AF$-factorisation $f=pu:A\to E\to B$ in $\mathcal{E}$, with $E\in \mathcal{L}$.
 \end{defi} 

A sub-tribe $\mathcal{L}\subseteq \mathcal{E}$ has the structure of a tribe
and the inclusion functor $\mathcal{L}\to \mathcal{E}$ 
is an embedding if tribes. Conversely, if $F:\mathcal{E}\to \mathcal{E}'$ is an embedding of tribes,
then the (essential) image of the functor $F$ is a sub-tribe $F(\mathcal{E})\subseteq \mathcal{E}'$ 
and the functor $\mathcal{E} \to F(\mathcal{E})$ induced by $F$
is an equivalence of tribes.

\medskip

If $\top$ is a terminal object of a tribe $\mathcal{E}$, 
then the (full) sub-category of $\mathcal{E}$ spanned by the singleton $\{\top\}$ is a sub-tribe.
A full sub-category $\mathcal{L}$ of a tribe $\mathcal{E}$
is a sub-tribe if and only its replete closure $\mathcal{L}^{rep} \subseteq \mathcal{E}$
is a sub-tribe (see Proposition \ref{subclanvsreplete}).

\begin{defi}\label{homotopicallyrepletedef} If $\mathcal{E}$ is a tribe,
we say that a full subcategory $\mathcal{L}\subseteq \mathcal{E}$
is {\bf  homotopically replete} (in short, {\bf h-replete}) if every object of $\mathcal{E}$ which is homotopically equivalent 
to an object of $\mathcal{L}$ belongs to $\mathcal{L}$.
\end{defi}

Every full subcategory $\mathcal{L}\subseteq \mathcal{E}$ is contained
in a smallest h-replete (full) sub-category  $\mathcal{L}^{hrep}$,
the {\it homotopically replete closure} of $\mathcal{L}$. 
By construction, an object $X\in \mathcal{E}$ belongs
to  $\mathcal{L}^{hrep}$ if and only if $X$ is homotopically equivalent to an object in $\mathcal{L}$.

\begin{prop}\label{repleteclosureofsubtribe} 
If $\mathcal{L}$ is a sub-tribe of a tribe $\mathcal{E}$, then so is the subcategory $\mathcal{L}^{hrep}$ 
and the inclusion functor $\mathcal{L}\subseteq \mathcal{L}^{hrep}$
is a weak equivalence.
\end{prop}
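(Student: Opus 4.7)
The plan is to verify separately that $\mathcal{L}^{hrep}$ is a sub-tribe of $\mathcal{E}$ and that the inclusion $\mathcal{L}\subseteq \mathcal{L}^{hrep}$ is a weak equivalence. The weak equivalence statement is immediate: the inclusion is fully faithful (both are full subcategories of $\mathcal{E}$), and every object of $\mathcal{L}^{hrep}$ is by definition homotopy equivalent to one in $\mathcal{L}$, so $Ho(\mathcal{L})\to Ho(\mathcal{L}^{hrep})$ is essentially surjective. Moreover a path object for $B\in\mathcal{L}$ can be chosen inside $\mathcal{L}$ (as $\mathcal{L}$ is a sub-tribe), and any two path objects in $\mathcal{E}$ give the same homotopy relation by Lemma \ref{independancepathob}, so the relation on $\mathcal{L}(A,B)=\mathcal{L}^{hrep}(A,B)$ is the same whether computed in $\mathcal{L}$ or in $\mathcal{L}^{hrep}$, giving full faithfulness on homotopy categories.

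For the sub-tribe property, the terminal object lies in $\mathcal{L}\subseteq\mathcal{L}^{hrep}$ by Definition \ref{defsubclan}. Any AF-factorization $f=pu:A\rightarrowtail M\twoheadrightarrow B$ in $\mathcal{E}$ of a map in $\mathcal{L}^{hrep}$ already has middle object in $\mathcal{L}^{hrep}$: the anodyne $u$ is a homotopy equivalence by Proposition \ref{anohomotopyequiv}, so $M\simeq A\in\mathcal{L}^{hrep}$ and $M\in\mathcal{L}^{hrep}$. The real content is closure under pullbacks of fibrations: given $f:A\to B$ and a fibration $p:E\twoheadrightarrow B$ with $A,B,E\in\mathcal{L}^{hrep}$, one must show $A\times_B E\in\mathcal{L}^{hrep}$. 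I would proceed by three successive reductions, each producing a replacement pullback homotopy equivalent to $A\times_B E$, and ultimately landing in $\mathcal{L}$.

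First, replace $f$ by a fibration: factor $f=qw:A\rightarrowtail M\twoheadrightarrow B$, so that $M\in\mathcal{L}^{hrep}$ and the canonical map $A\times_B E\to M\times_B E$ is the base change of the anodyne $w$ along the fibration $M\times_B E\twoheadrightarrow M$, hence anodyne. So we may assume $f$ itself is a fibration. Second, reduce to $B\in\mathcal{L}$: pick a homotopy equivalence $\phi:B'\to B$ with $B'\in\mathcal{L}$ and factor it as $B'\rightarrowtail M_B\twoheadrightarrow B$, whose right leg is a trivial fibration since $\phi$ is. Pulling the original square back first along $M_B\twoheadrightarrow B$ (invoking Proposition \ref{trivialfibrationbasechange}) and then along $B'\rightarrowtail M_B$ (using that base changes of anodynes along fibrations are anodyne) yields fibrations $A'',E''\twoheadrightarrow B'$ with $A'',E''\in\mathcal{L}^{hrep}$ and $A''\times_{B'}E''\simeq A\times_B E$. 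So we may further assume $B\in\mathcal{L}$.

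Third, replace $A$ and $E$ by objects of $\mathcal{L}$: pick $A'\in\mathcal{L}$ with $\alpha:A'\to A$ a homotopy equivalence; since $f\alpha:A'\to B$ is a map in $\mathcal{L}$, the sub-tribe property of $\mathcal{L}$ supplies a factorization $f\alpha=q_A u_A:A'\rightarrowtail\hat A\twoheadrightarrow B$ with $\hat A\in\mathcal{L}$. Lifting the resulting commutative square between the anodyne $u_A$ and the fibration $f$ produces $r_A:\hat A\to A$ over $B$ with $r_A u_A=\alpha$, which is a homotopy equivalence by 3-for-2. Produce $\hat E\in\mathcal{L}$ with a fibration $\hat E\twoheadrightarrow B$ and $r_E:\hat E\to E$ a homotopy equivalence over $B$ in the same way. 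Then $\hat A\times_B\hat E\in\mathcal{L}$ by the sub-clan property of $\mathcal{L}$, and the comparison map $\hat A\times_B\hat E\to A\times_B E$ factors through $A\times_B\hat E$ as a composite of two base changes of homotopy equivalences ($r_A$, then $r_E$) along fibrations (the projections $A\times_B\hat E\twoheadrightarrow A$ and $A\times_B E\twoheadrightarrow E$, themselves base changes of fibrations); each factor is a homotopy equivalence by Corollary \ref{basechangehomotopyequiv}, so $A\times_B E\simeq\hat A\times_B\hat E\in\mathcal{L}$. The main obstacle is arranging this third step: one needs the replacements $\hat A,\hat E$ to live in $\mathcal{L}$ \emph{and} carry homotopy equivalences to $A,E$ over $B$; once this is set up, the identification of the two pullbacks up to homotopy reduces to the already-established preservation of homotopy equivalences under base change along fibrations.
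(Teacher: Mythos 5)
Your proof is correct, and it reaches the same destination as the paper's — exhibiting $A\times_B E$ as homotopy equivalent to a pullback formed entirely inside $\mathcal{L}$ — but by a noticeably different route. The paper first manufactures a single \emph{homotopy cartesian} square replacing the fibration $f:X\to B$ by a fibration $p:Y\to C$ in $\mathcal{L}$, and then compares pullbacks by pasting homotopy cartesian squares, leaning on the appendix lemmas \ref{homotopycartesiancriterion1} and \ref{lemmahocartesiansqcriterion2}. You avoid the homotopy-pullback formalism entirely and instead perform three successive \emph{strict} pullback replacements (fibrant replacement of $f$, replacement of the base $B$, then simultaneous replacement of $A$ and $E$ by fibrant objects over $B$ living in $\mathcal{L}$), gluing the stages together with Corollary \ref{basechangehomotopyequiv}, base changes of anodyne maps along fibrations, and the lifting property of anodynes against fibrations to produce the comparison maps $r_A$, $r_E$ over $B$. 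Your version is more elementary in its prerequisites and makes each comparison map explicit; the paper's is shorter once the homotopy-cartesian machinery is available and localizes the replacement work in a single reusable square. Two small points worth tightening: in the third step, Definition \ref{defsubclan} only guarantees that \emph{some} pullback of $\hat A\to B\leftarrow\hat E$ lies in $\mathcal{L}$ (the paper inserts a ``WLOG $\mathcal{L}$ is replete'' for exactly this reason), which is harmless since $\mathcal{L}^{hrep}$ is closed under isomorphism; and for essential surjectivity of $Ho(\mathcal{L})\to Ho(\mathcal{L}^{hrep})$ you should note explicitly (as the paper does via conservativity of the inclusion into $\mathcal{E}$) that a homotopy equivalence in $\mathcal{E}$ between objects of $\mathcal{L}^{hrep}$ is already a homotopy equivalence in $\mathcal{L}^{hrep}$ — which does follow from your observation that the homotopy relation agrees.
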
 

 \begin{proof} Without loss of generality, we may suppose that the sub-tribe $\mathcal{L}$ is replete.
 Let us first show that for any map $f:X\to B$ in $\mathcal{L}^{hrep}$ 
 there exists a homotopy cartesian square
\begin{equation} \label{ahopulback1}
\xymatrix{
X \ar[d]_f  \ar[rr]^k&&  Y \ar[d]^{p}  \\
B \ar[rr]^j&&  C 
}  \end{equation} 
with $p$ a fibration in $\mathcal{L}$.  
There exists an object $E\in  \mathcal{L}$ together
with a homotopy equivalence $i:E\to X$, since $X\in \mathcal{L}^{hrep}$.
There is also an object $C\in  \mathcal{L}$ together
with a homotopy equivalence $j:B\to C$, since $B\in \mathcal{L}^{hrep}$.
\begin{equation} \label{ahopulback0}
\xymatrix{
X \ar[d]_f  &&    \ar[ll]_i E  \\
B \ar[rr]^j&&  C 
}  \end{equation} 
The map $jfi:E\to C$ belongs to $ \mathcal{L}$, since $\mathcal{L}$ is a full subcategory.
 It thus admits an $AF$-factorisation $pu:E\to Y\to C$ with $Y\in \mathcal{L}$,
 since $\mathcal{L}$ is a sub-tribe. Let us choose an $AF$-factorisation $i=qv:E\to D\to X$.
  $$\xymatrix{
X \ar[dd]_f  && \ar[ll]_{i} E \ar[dr]^{u}  \ar[dl]_v  & \\
 &D \ar[ul]_q   && Y \ar[ld]^{p}  \\
B \ar[rr]^{j} && C & 
}  $$ 
There is a map $d:D\to Y$ such that $pd=jfq$ and $dv=u$, 
 since $v$ is anodyne and $p$ is a fibration.
  $$\xymatrix{
X \ar[dd]_f  && \ar[ll]_{i} E \ar[dr]^{u}  \ar[dl]_v  & \\
 & D \ar[ul]_q \ar@{-->}[rr]^d     \ar[dl]_(0.4){fq}   \ar[dr]_(0.4){jfq}  && Y \ar[ld]^{p}  \\
B \ar[rr]^(0.4){j} && C  & 
}  $$ 
 The maps $u$ and $v$ are homotopy equivalences, since an anodyne map
 is a homotopy equivalence by Proposition \ref{anohomotopyequiv}.
Thus, $d$ is a homotopy equivalence by 3-for-2, since $dv=u$.
The map $q$ is also a homotopy equivalence by 3-for-2, since $qv=i$
is a homotopy equivalence. Hence the fibration $q$ admits a section $s:X\to C$
by Proposition \ref{trivialfibrationhavesection}.
The following square commutes, since $pds=jfqs=jf1_X=jf$.
\begin{equation} \label{ahopulback}
\xymatrix{
X \ar[d]_f  \ar[rr]^{ds} &&  Y \ar[d]^{p}  \\
B \ar[rr]^{j}&&  C 
} 
\end{equation} 
The map $s$ is a homotopy equivalence by 3-for-2, since $q$ is a homotopy equivalence and $qs=1_X$.
Thus, $ds:X\to Y$ is a homotopy equivalence, since $d$ is a homotopy equivalence.
It then follows from Lemma \ref{homotopycartesiancriterion1} that the square (\ref{ahopulback}) is homotopy cartesian.
We can now prove that the base change of a fibration  $f:X\to B$ in $\mathcal{L}^{hrep}$
along any map $g:A\to B$ in $\mathcal{L}^{hrep}$ belongs to $\mathcal{L}^{hrep}$.
There exists a homotopy cartesian square (\ref{ahopulback}),
with $p$ a fibration in $\mathcal{L}$ by what we just proved.
There exists a homotopy equivalence $w:A'\to A$ with $A'\in \mathcal{L}$, since $A\in \mathcal{L}^{hrep}$.
The first two squares of the following diagram are homotopy cartesian,
since $f:X\to Y$ is a fibration and the two squares
are cartesian. 
$$\xymatrix{
A'\times_B X  \ar[d]  \ar[rr]^{w\times_B X}&& A\times_B X\ar[d]_{p_1}  \ar[rr] && X \ar[d]_f  \ar[rr]&&  Y \ar[d]^{p}  \\
A '\ar[rr]^w && A \ar[rr]^g && B \ar[rr]^j &&  C 
}  $$
Hence the composite square is homotopy cartesian by Lemma \ref{lemmahocartesiansqcriterion2}.
It follows that the induced map $A'\times_B X\to (jgw)^\star(Y)$ is a homotopy
equivalence, since $p$ is a fibration.
But the object $(jgw)^\star(Y)$ belongs to $\mathcal{L}$, since the objects $A'$, $C$ and $Y$ belongs to $ \mathcal{L}$,
and $\mathcal{L}$ is a replete sub-tribe of $ \mathcal{E}$.
Thus, $A'\times_B X\in \mathcal{L}^{hrep}$, since the map $A'\times_B X\to (jgw)^\star(Y)$ is a homotopy
equivalence.
The map $p_1:A\times_B X\to A$ is a fibration by base change, since $f$ is a fibration
and the middle square is cartesian.
The map $w\times_B X$ is the base change of the map $w:A'\to A$
along $p_1$, since the first square is cartesian.
Hence the map $w\times_B X$ is a homotopy equivalence
by Corollary \ref{basechangehomotopyequiv}, since $w$ is a homotopy equivalence
and $p_1$ is a fibration.
Thus, $A\times_B X\in \mathcal{L}^{hrep}$, since $A'\times_B X\in \mathcal{L}^{hrep}$.
Hence the projection $p_1:A\times_B X\to A$ belongs to $ \mathcal{L}^{hrep}$, since the subcategory
$ \mathcal{L}^{hrep}$ is full.
We have proved that the base change of $f:X\to B$
along $g:A\to B$ belongs to $\mathcal{L}^{hrep}$.
The subcategory $\mathcal{L}^{hr}$ contains a terminal object $1 \in \mathcal{E}$,
since $\mathcal{L}\subseteq \mathcal{L}^{hrep}$ and $1 \in \mathcal{L}$.
We have proved that $\mathcal{L}^{hrep}$ is a sub-clan of $\mathcal{L}^{hrep}$.
Let us now show that every map $f:A\to B$ in  $\mathcal{L}^{hrep}$ admits an $AF$-factorisation in  $\mathcal{L}^{hrep}$.
The map $f:A\to B$ admits an $AF$-factorisation $f=pu:A\to E\to B$ in  $ \mathcal{E}$, since  $ \mathcal{E}$ is a tribe.
But the map $u:A\to E$ is a homotopy equivalence, 
since an anodyne map
 is a homotopy equivalence by Proposition \ref{anohomotopyequiv}.
 Thus, $E\in \mathcal{L}^{hrep}$, since $A\in \mathcal{L}^{hrep}$. 
This shows that $\mathcal{L}^{hrep}$ is a sub-tribe.
Let us show that the inclusion functor $J:\mathcal{L} \subseteq \mathcal{L}^{hrep}$
is a weak equivalence. 
The functor $Ho(J)$ is fully faithful by Proposition \ref{fullyfaithfulhomo},
since $J$ is fully faithful. 
Let us show that the functor $Ho(J)$ is essentially surjective.
The inclusion functor $K:\mathcal{L}^{hrep}\to  \mathcal{E}$
is a morphism of tribes. The
functor $Ho(K)$ is fully faithful by Proposition \ref{fullyfaithfulhomo},
since $K$ is fully faithful.
Thus, $Ho(K)$ is conservative, since a fully faithful functor
is conservative.
For every object $X\in \mathcal{L}^{hrep}$
there exists an object $Y\in \mathcal{L}$ 
together with a homotopy equivalence $u:X\to Y$.
But the map $u:X\to Y$ is a homotopy equivalence in $ \mathcal{L}^{hrep}$,
since the functor $Ho(K)$ is conservative.
It follows that the functor $Ho(J)$ is essentially surjective,
since $u:X\to J(Y)$ is an equivalence in $ \mathcal{L}^{hrep}$.
 \end{proof}

\begin{cor}\label{propcontratibleobjecttribe} 
If $\mathcal{E}$ is a tribe, then the full subcategory $\mathcal{E}^c$ of contractible objects of $\mathcal{E}$
is a sub-tribe. It is the largest contractible sub-tribe of $\mathcal{E}$.
\end{cor}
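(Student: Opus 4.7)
The plan is to identify $\mathcal{E}^c$ as the homotopically replete closure of an easy sub-tribe and then invoke Proposition \ref{repleteclosureofsubtribe}. Recall from the remark following Definition \ref{defsubclan} that the singleton subcategory $\{\top\}$ spanned by a terminal object of $\mathcal{E}$ is a sub-clan; it is moreover a sub-tribe, since the only map it contains is $1_\top$, which factors trivially as $1_\top = 1_\top \circ 1_\top$ with $1_\top$ both anodyne and a fibration.

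Next I would observe that $\{\top\}^{hrep} = \mathcal{E}^c$. Indeed, unfolding the definitions, an object $X \in \mathcal{E}$ lies in the homotopically replete closure of $\{\top\}$ precisely when $X$ is homotopy equivalent to $\top$, which by three-for-two (Proposition \ref{6four2homotopyequiv}) is equivalent to the unique map $X \to \top$ being a homotopy equivalence, i.e.\ to $X$ being contractible. Applying Proposition \ref{repleteclosureofsubtribe} to $\{\top\} \subseteq \mathcal{E}$ then shows that $\mathcal{E}^c$ is a sub-tribe of $\mathcal{E}$.

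For the maximality statement, let $\mathcal{L} \subseteq \mathcal{E}$ be an arbitrary contractible sub-tribe. Since $\mathcal{L}$ contains a terminal object $\top$ of $\mathcal{E}$ (condition (2) of Definition \ref{defsubclan}), and the inclusion functor $J : \mathcal{L} \to \mathcal{E}$ is a morphism of tribes, Proposition \ref{h-homotohomotop} ensures that $J$ preserves the homotopy relation and hence carries homotopy equivalences to homotopy equivalences. For every $X \in \mathcal{L}$ the canonical map $X \to \top$ is a homotopy equivalence in $\mathcal{L}$ by contractibility, so its image in $\mathcal{E}$ is again a homotopy equivalence, proving that $X$ is contractible in $\mathcal{E}$ and hence that $\mathcal{L} \subseteq \mathcal{E}^c$.

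The only subtle point I anticipate is verifying that the inclusion $J : \mathcal{L} \to \mathcal{E}$ of a sub-tribe is actually a morphism of tribes in the sense of Definition \ref{defmorphismhotribes}; this requires checking that anodyne maps in $\mathcal{L}$ (computed with the AF-factorizations chosen inside $\mathcal{L}$, but these agree with those of $\mathcal{E}$ by Definition \ref{defsubtribe}) are anodyne in $\mathcal{E}$, which follows because a map in $\mathcal{L}$ is anodyne iff it is a strong deformation retract (Proposition \ref{strongdefoormation}) and this characterization is preserved by the fully faithful inclusion once one uses a path object lying in $\mathcal{L}$. Beyond this bookkeeping, the argument is essentially a direct application of Proposition \ref{repleteclosureofsubtribe}.
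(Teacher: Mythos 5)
Your proof is correct, and it follows what is evidently the intended route: the paper leaves the proof to the reader, but the statement is placed as a corollary of Proposition \ref{repleteclosureofsubtribe}, and identifying $\mathcal{E}^c$ as $\{\top\}^{hrep}$ and applying that proposition is exactly the expected argument. Your maximality argument via the inclusion of a contractible sub-tribe being a morphism of tribes (hence preserving homotopy equivalences) is also sound, and the ``subtle point'' you flag is already covered by the paper's assertion that the inclusion of a sub-tribe is an embedding of tribes.
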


 \begin{proof} Left to the reader.
 \end{proof}

\bigskip

If $p:E\to B$ is a fibration  in a tribe $ \mathcal{E}$, we shall denote by
$\Gamma(E,p)$ the set of sections of $p$.
A morphism of tribes $F:\mathcal{E} \to \mathcal{E}'$
induces a map $\Gamma(E,p)\to \Gamma(FE,F(p))$.
We say that $F$ is {\bf full  on the sections} of $p:E\to B$
if the map $\Gamma(E,p)\to \Gamma(FE,F(p))$ induced by $F$ is surjective.

\medskip

 \begin{defi} \label{generoushomomorphismdefinition} We say
 that a morphism of tribes $F: \mathcal{E}\to \mathcal{E}'$
 is {\bf generous} if it satisfies the following two conditions:
 \begin{itemize}
 \item{} The functor $F$ is full on sections of every fibration $p:E\to B$ in $ \mathcal{E}$.
  \item{} For every object $Y\in  \mathcal{E}'$ there exists
  an object $X\in  \mathcal{E}$ together with an anodyne map
 $Y\to FX$.
  \end{itemize}
  \end{defi}

\medskip

If $\mathcal{E}$ is a tribe and $A, B\in  \mathcal{E}$, then every section of the projection $p_1:A\times B\to A$
is of the form $\langle 1_A,f\rangle:A\to A\times B$ for a map $f:A\to B$. 
This defines a canonical bijection $\Gamma(A\times B,p_1)\simeq \mathcal{E}(A,B)$.

 \begin{lemma} \label{genereousisfull} 
 A generous morphism of tribes is full.
    \end{lemma}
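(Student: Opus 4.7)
The plan is to leverage the first condition of generousness applied to the projection fibration $p_1 \colon A\times B \to A$. Recall that for any tribe $\mathcal{E}$ and any pair of objects $A, B \in \mathcal{E}$, there is a canonical bijection $\Gamma(A\times B, p_1) \simeq \mathcal{E}(A,B)$ which identifies a map $f \colon A \to B$ with the section $\langle 1_A, f\rangle$. The key observation is that this bijection is compatible with $F$, because $F$ is a morphism of clans and hence preserves finite products (Proposition \ref{morphismtribecart}).

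First, I would fix $A, B \in \mathcal{E}$ and a map $g \colon FA \to FB$ in $\mathcal{E}'$, and seek $f \colon A \to B$ with $F(f) = g$. Since $F$ preserves binary products and the terminal object, the canonical comparison $F(A \times B) \to FA \times FB$ is an isomorphism, and under this identification $F(p_1) = p_1 \colon FA \times FB \to FA$. The map $g$ corresponds to the section $\langle 1_{FA}, g\rangle \colon FA \to FA \times FB$ of this projection, which lies in $\Gamma(F(A\times B), F(p_1))$.

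Next, I would invoke the first clause of Definition \ref{generoushomomorphismdefinition}: since $F$ is generous, it is full on the sections of the fibration $p_1 \colon A\times B \twoheadrightarrow A$. Hence there exists a section $s \in \Gamma(A\times B, p_1)$ such that $F(s) = \langle 1_{FA}, g\rangle$. Because $s$ is a section of $p_1$, it must have the form $s = \langle 1_A, f\rangle$ for a unique $f \colon A \to B$.

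Finally, the naturality of the product decomposition gives $F(s) = \langle F(1_A), F(f)\rangle = \langle 1_{FA}, F(f)\rangle$, and comparing with $\langle 1_{FA}, g\rangle$ yields $F(f) = g$. This exhibits $g$ as lying in the image of $F$, proving fullness. There is no real obstacle here: the proof is essentially a translation between maps and sections of projections, and the second clause of generousness (the existence of anodyne maps $Y \to FX$) plays no role in this particular lemma — it will presumably be needed for subsequent results about essential surjectivity up to homotopy.
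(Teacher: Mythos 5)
Your proof is correct and follows essentially the same route as the paper: the paper also reduces fullness to surjectivity of $\Gamma(A\times B,p_1)\to \Gamma(FA\times FB,F(p_1))$ via the canonical bijection $\Gamma(A\times B,p_1)\simeq \mathcal{E}(A,B)$ and the fact that $F$ preserves products. Your version merely spells out the identification of sections with maps more explicitly; there is no substantive difference.
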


\begin{proof} 
If $F: \mathcal{E}\to \mathcal{E}'$ is a morphism of tribes and $A, B\in  \mathcal{E}$,
then the following square commutes
\begin{equation}\label{squareforgeneroushomo}
 \xymatrix{
\Gamma(A\times B,p_1) \ar@{=}[d]_{}\ar[rr]^{}  && \Gamma(FA\times FB,F(p_1))   \ar@{=}[d]_{} \\
 \mathcal{E}(A,B) \ar[rr]^{ }&&  \mathcal{E}'(FA,FB)
 }
 \end{equation}
where the horizontal maps are induced by $F$.
The top map of the square is surjective,
since $F$ is generous by hypothesis.
Hence the bottom map is surjective, since the vertical maps
are bijective. This proves that $F$ is full.
\end{proof}

 \begin{prop} \label{anodynearegenerous} If $u:A\to B$ is an anodyne map in a tribe $\mathcal{E}$,
 then the base change functor $u^\star: \mathcal{E}(B)\to \mathcal{E}(A)$ is generous.
  \end{prop}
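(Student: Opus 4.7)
The plan is to verify directly the two conditions of Definition \ref{generoushomomorphismdefinition} for the morphism of tribes $u^\star:\mathcal{E}(B)\to\mathcal{E}(A)$. The single mechanism driving both verifications is the following observation: for any fibration $r:X\to B$, the second projection $p_2:A\times_B X\to X$ is the base change of the anodyne map $u$ along $r$, and so by the defining property of a tribe it is itself anodyne.

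For the second condition, given $(Y,q)\in\mathcal{E}(A)$, I would factor the composite $uq:Y\to B$ as an anodyne map $w:Y\to X$ followed by a fibration $r:X\to B$; then $(X,r)\in\mathcal{E}(B)$. The universal property of the pullback produces a canonical map $(q,w):(Y,q)\to u^\star(X,r)=(A\times_B X,p_1)$ over $A$, and post-composing with $p_2$ recovers $w$. Since both $w$ and $p_2$ are anodyne (the latter by the observation above), Lemma \ref{anodyne closure} forces $(q,w)$ to be anodyne in $\mathcal{E}$, hence anodyne in $\mathcal{E}(A)$ by Proposition \ref{tribeslice}.

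For the first condition, let $p:(E,e)\to(F,f)$ be a fibration in $\mathcal{E}(B)$ and let $s$ be a section of $u^\star(p)$ in $\mathcal{E}(A)$. Writing $s=(p_1,s_2)$ with $s_2:A\times_B F\to E$, the pullback relations read $ps_2=p_2$ and $es_2=up_1$. Applying the observation to $r=f$, the projection $p_2:A\times_B F\to F$ is anodyne, so the commutative square
$$\xymatrix{
A\times_B F \ar[d]_{p_2}\ar[r]^-{s_2} & E \ar[d]^{p} \\
F \ar@{=}[r] \ar@{-->}[ur]^{s'} & F
}$$
admits a diagonal filler $s':F\to E$ with $ps'=1_F$ and $s'p_2=s_2$. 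Using $es'p_2=es_2=up_1=fp_2$ together with the fact that the anodyne map $p_2$ is a split monomorphism by Lemma \ref{charanodyne}, one cancels $p_2$ on the right to obtain $es'=f$, so $s'$ is a section of $p$ in $\mathcal{E}(B)$. Finally $u^\star(s')=(p_1,s'p_2)=(p_1,s_2)=s$, completing the lift.

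There is no genuine obstacle in either half: both rest on the tribe axiom that the base change of an anodyne map along a fibration is anodyne, combined with the cancellation property of Lemma \ref{anodyne closure}. The mildest subtlety is the cancellation $es'p_2=fp_2\Rightarrow es'=f$ at the end, which relies on the fact that the anodyne map $p_2$ is split monic.
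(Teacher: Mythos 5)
Your argument follows the paper's proof essentially step for step: for h-surjectivity you factor $uq$ and use cancellation of anodyne maps exactly as the paper does, and for fullness on sections your $p_2:A\times_B F\to F$ is precisely the paper's anodyne map $u_C$, with the same lifting square and the same diagonal filler. One justification is misstated, though it does not damage the proof: you claim to cancel $p_2$ on the \emph{right} in $es'p_2=fp_2$ because $p_2$ is a split monomorphism, but a split monomorphism is left-cancellable, not right-cancellable (right cancellation would require $p_2$ to be an epimorphism). The conclusion $es'=f$ is nevertheless immediate and needs no cancellation at all: since $p:(E,e)\to(F,f)$ is a morphism in $\mathcal{E}(B)$ one has $e=fp$, hence $es'=fps'=f$ from $ps'=1_F$; the paper implicitly uses this same observation, which is why it never raises the issue.
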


\begin{proof} 
Let us show that the map $\Gamma(E,g) \to \Gamma(u^\star E,u^\star(g))$
induced by $u^\star$ is surjective for any fibration 
$g:(E,p)\to (C,q)$ in $ \mathcal{E}(B)$.
The top square of the following diagram is cartesian by Lemma \ref{lemmacartesiansq}.
$$
 \xymatrix{
u^\star(E) \ar[rr]^{u_E} \ar[d]_{u^\star(g)
}  && E \ar[d]^{g} \\
u^\star(C)\ar[rr]^{u_C}\ar[d]&& C\ar[d]^q \\
A\ar[rr]^{u}&& B
}$$
The map $u_C$ is anodyne by base change, since $u$ is anodyne
and $q$ is a fibration. If $s:u^\star(C)\to u^\star(E)$
is a section of the fibration $u^\star(g)$, then
the following square commutes,  since $g\circ u_E\circ s=u_C\circ  u^\star(g)\circ s=u_C$.
$$
 \xymatrix{
u^\star(C) \ar[rr]^{u_E\circ s} \ar[d]_{u_C}  && E \ar[d]^{g} \\
C \ar@{=}[rr]&& C
}$$
Hence the square has a diagonal filer $s':C\to E$.
We have $s'\in \Gamma(E,g)$ since $g\circ s'=1_C$.
Moreover, the following square commutes, since $u_C\circ s'=s\circ u_E$.
 $$
 \xymatrix{
u^\star(C) \ar[rr]^{u_E}   && E \\
u^\star(C) \ar[rr]^{u_C} \ar[u]^s&& C\ar[u]_{s'} 
}$$
 Thus,  $u^\star(s')=s$. This shows that the map $\Gamma(E,g) \to \Gamma(u^\star E,u^\star(g))$
induced by $u^\star$ is surjective.
Let us show that for every object $(Y,q)\in  \mathcal{E}(A)$ there exists
  an object $(X,p)\in  \mathcal{E}(B)$ together with an anodyne map
 $(Y,q)\to u^\star(X,p)$. For this, let us factor the map $uq:Y\to B$
as an anodyne map $v:Y\to X$ followed by a fibration $p:X\to B$.
$$
 \xymatrix{
Y  \ar[d]_q \ar[rr]^{v}   && X  \ar[d]^{p}  \\
A \ar[rr]^{u}&& B
}$$
By pulling back the fibration $p:X\to B$ along $u$, we obtain 
the following diagram
$$
 \xymatrix{
Y \ar[dr]_q \ar[r]^{(q,v)}& u^\star(X)\ar[d]^{p_1}\ar[rr]^{p_2}   && X  \ar[d]^{p}  \\
&A \ar[rr]^{u}&& B
}$$
The projection $p_2$ is anodyne by base change, since $u$ is anodyne
and $p$ is a fibration. Hence the map $(q,v)$ is anodyne by Lemma \ref{anodyne closure},
since $p_2(q,v)=v$ is anodyne. This show that
the morphism of tribes $u^\star: \mathcal{E}(B)\to \mathcal{E}(A)$ is generous.
 \end{proof}

\medskip

We say that two section $s,t\in \Gamma(E,p)$ of a fibration $p:E\to A$ are {\it fibrewise homotopic} if the morphism
$s,t:(A,1_A)\to (E,p)$ are homotopic in the tribe $ \mathcal{E}(A)$.
We shall denote by $\pi_0\Gamma(E,p)$ the quotient of $\Gamma(E,p)$ by the fibrewise homotopy 
relation.

 \begin{lemma} \label{sectionvsmaps} If $A$ and $B$ are two objects of a tribe $\mathcal{E}$, then the
canonical bijection $\Gamma(A\times B, p_1)\simeq \mathcal{E}(A,B)$ respects the homotopy relation. It thus
 induces a bijection $\pi_0\Gamma(A\times B, p_1)\simeq \pi_0\mathcal{E}(A,B).$
 \end{lemma}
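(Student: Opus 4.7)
The plan is to show both implications of the equivalence separately, using path objects and the fact that homotopy is a congruence.

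First, suppose two sections $s = \langle 1_A, f\rangle$ and $t = \langle 1_A, g\rangle$ of $p_1: A\times B \to A$ are fibrewise homotopic in the tribe $\mathcal{E}(A)$. By Lemma \ref{fibrewisehomotopicishomotopic}, the underlying maps $s, t: A \to A\times B$ are homotopic in $\mathcal{E}$. Since the homotopy relation is a congruence by Proposition \ref{homotopycongruence}, composing with the projection $p_2: A\times B \to B$ yields $f = p_2 s \sim p_2 t = g$. This gives one direction.

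For the converse, suppose $f, g: A \to B$ are homotopic in $\mathcal{E}$, and let $h: A \to PB$ be a homotopy with respect to some path object $(PB,\partial_0,\partial_1,\sigma)$ for $B$. I would apply the base change functor $e_A: \mathcal{E} \to \mathcal{E}(A)$ along $A\to 1$, which is a morphism of tribes by Theorem \ref{basechangehomotopical}. By Lemma \ref{homotakepathobject}, the image $(e_A(PB), e_A(\partial_0), e_A(\partial_1), e_A(\sigma))$ is a path object for $e_A(B) = (A\times B, p_1)$ in $\mathcal{E}(A)$. The map $H \defeq \langle 1_A, h\rangle: A \to A\times PB = e_A(PB)$ lives over $A$ (since $p_1 H = 1_A$), so defines a morphism $(A,1_A) \to e_A(PB)$ in $\mathcal{E}(A)$. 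A straightforward computation gives $e_A(\partial_0) H = \langle 1_A, \partial_0 h\rangle = \langle 1_A, f\rangle = s$ and similarly $e_A(\partial_1) H = t$. Hence $H$ exhibits $s$ and $t$ as fibrewise homotopic in $\mathcal{E}(A)$.

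Neither direction presents a genuine obstacle: both reduce to compatibility between the canonical bijection $\Gamma(A\times B, p_1) \simeq \mathcal{E}(A,B)$ and the transport of path objects across the base change functor $e_A$. The only subtlety is verifying that the constructed homotopy $H$ is indeed a morphism over $A$, which is immediate from the shape $\langle 1_A, h\rangle$. The induced bijection on homotopy classes then follows formally.
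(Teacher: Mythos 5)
Your proof is correct, but it takes a different route from the paper's. The paper observes that the canonical bijection $\Gamma(A\times B,p_1)\simeq \mathcal{E}(A,B)$ is precisely the adjunction bijection $Hom_A((A,1_A), e_A(B))\simeq Hom(\Sigma_A(A,1_A),B)$ for $\Sigma_A\dashv e_A$, and then invokes Proposition \ref{inverthoequiv4}, which says this adjunction descends to an adjunction $Ho(\Sigma_A)\dashv Ho(e_A)$ on homotopy categories; the bijection on $\pi_0$ falls out formally. You instead verify the two directions by hand: one direction via Lemma \ref{fibrewisehomotopicishomotopic} plus the congruence property (postcomposing with $p_2$), the other by transporting a homotopy $h:A\to PB$ to the fibrewise homotopy $\langle 1_A,h\rangle$ with values in the path object $e_A(PB)$ for $e_A(B)$, which is legitimate by Lemma \ref{homotakepathobject} since $e_A$ is a morphism of tribes, and by the independence of the homotopy relation from the choice of path object (Lemma \ref{independancepathob}). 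The underlying ingredients largely coincide — Proposition \ref{inverthoequiv4} itself rests on Lemma \ref{fibrewisehomotopicishomotopic} and on $e_A$ preserving the homotopy relation — so your argument is essentially an unpacking of the paper's. The paper's version is shorter and exhibits the statement as an instance of a general fact about derived adjunctions; yours is more self-contained and makes the explicit correspondence of homotopies visible, which is useful if one wants to track the actual witnesses.
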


\begin{proof} The functor $\Sigma_A: \mathcal{E}(A)\to  \mathcal{E}$
is left adjoint to the functor $e_A: \mathcal{E} \to  \mathcal{E}(A)$
and we have $\Sigma_A(1_A)=A$, where $1_{\! A}=(A,1_A)$ is the terminal object of $ \mathcal{E}(A)$.
If $B\in \mathcal{E}$, then the bijection $Hom_A( 1_{\! A}, e_A(B)) \simeq Hom(\Sigma_A( 1_{\! A}),B)$
defined by the adjunction $\Sigma_A\dashv e_A$
coincide with the canonical bijection $\Gamma(A\times B,p_1)\simeq \mathcal{E}(A,B)$.
But the adjunction $\Sigma_A\dashv e_A$ induces an adjunction $Ho(\Sigma_A)\dashv Ho(e_A)$
by Proposition \ref{inverthoequiv4}. Hence the bijection $Hom_A( 1_{\! A}, e_A(B)) \simeq Hom(\Sigma_A( 1_{\! A}),B)$
 induces a bijection $\pi_0Hom_A( 1_{\! A}, e_A(B)) \simeq \pi_0Hom(\Sigma_A( 1_{\! A}),B)$.
 This shows that the bijection $\Gamma(A\times B,p_1)\simeq \mathcal{E}(A,B)$
 induces a bijection $\pi_0\Gamma(A\times B,p_1)\simeq \pi_0\mathcal{E}(A,B).$
\end{proof}

\medskip

 \begin{lemma} \label{veryfulbasechange} Let $F: \mathcal{E}\to \mathcal{E}'$
 be a morphism of tribes. If  the map
 $$\pi_0\Gamma(E,p) \to \pi_0\Gamma(FE,F(p))$$
  induced by $F$ is surjective for every fibration $p:E\to A$ in $ \mathcal{E}$,
  then the functor $$Ho(F):Ho(\mathcal{E})\to Ho(\mathcal{E}')$$
  is fully faithful.
 \end{lemma}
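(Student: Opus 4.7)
The plan is to prove fullness and faithfulness of $Ho(F)$ separately, in both cases reducing a question about the homotopy relation on parallel maps to a question about sections of a fibration, so that the hypothesis becomes directly applicable.

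For fullness, I would apply the hypothesis to the projection $p_1:A\times B\to A$. Since $F$ is a morphism of clans, it preserves finite products by Proposition \ref{morphismtribecart}, hence $F(A\times B)=FA\times FB$ and $F(p_1)=p_1$. The hypothesis therefore yields a surjection
$$\pi_0\Gamma(A\times B,p_1)\longrightarrow \pi_0\Gamma(FA\times FB,p_1).$$
By Lemma \ref{sectionvsmaps}, the two sides are canonically identified with $\pi_0\mathcal{E}(A,B)$ and $\pi_0\mathcal{E}'(FA,FB)$ respectively, and the identifications intertwine the map induced by $F$ with the action of $Ho(F)$ on the corresponding hom-set (indeed, a section of $p_1$ is of the form $(1_A,f)$ and $F(1_A,f)=(1_{FA},F(f))$). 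This gives fullness of $Ho(F)$.

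For faithfulness, suppose $f,g:A\to B$ in $\mathcal{E}$ satisfy $F(f)\sim F(g)$, and choose a path object $(PB,\partial_0,\partial_1,\sigma)$ for $B$. By Lemma \ref{homotakepathobject}, $(F(PB),F(\partial_0),F(\partial_1),F(\sigma))$ is a path object for $FB$, so the assumed homotopy can be realised as a map $H:FA\to F(PB)$ with $(F(\partial_0),F(\partial_1))H=(F(f),F(g))$. Form the pullback $E:=(f,g)^\star(PB)$ with its fibration $p:E\to A$; by the universal property of pullback, a section of $p$ is exactly a map $A\to PB$ covering $(f,g)$, that is, a homotopy $f\leadsto g$. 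Since $F$ preserves base changes of fibrations, $F(E)=(F(f),F(g))^\star(F(PB))$ and $F(p):F(E)\to FA$ is the analogous fibration, of which $H$ provides a section. Thus $\pi_0\Gamma(F(E),F(p))$ is non-empty, so by the surjectivity hypothesis $\pi_0\Gamma(E,p)$ is also non-empty, yielding a section of $p$ in $\mathcal{E}$ and hence a homotopy $f\sim g$.

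The main conceptual step is the reformulation of the homotopy relation between parallel maps in terms of sections of a pullback of a path-object fibration; once this translation is in place, each half of fully faithfulness reduces immediately to the section-level hypothesis. The only remaining check is the naturality bookkeeping for Lemma \ref{sectionvsmaps} in the fullness argument, which is immediate from the explicit form $(1_A,f)$ of a section of $p_1$, and the observation that replacing the path object in $\mathcal{E}'$ used to witness $F(f)\sim F(g)$ by $F(PB)$ does not affect the homotopy relation (by Lemma \ref{independancepathob}).
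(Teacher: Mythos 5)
Your proposal is correct and follows essentially the same route as the paper's own proof: fullness via the identification $\pi_0\Gamma(A\times B,p_1)\simeq\pi_0\mathcal{E}(A,B)$ of Lemma \ref{sectionvsmaps}, and faithfulness by encoding the homotopy relation as the existence of a section of the pullback of $(\partial_0,\partial_1):PB\to B\times B$ along $(f,g)$, then transporting along $F$. The only addition is your explicit appeal to Lemma \ref{independancepathob} to justify realising the homotopy $F(f)\sim F(g)$ with values in $F(PB)$, which the paper leaves implicit.
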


\begin{proof} The commutative square (\ref{squareforgeneroushomo})
induces a commutative square
 $$ \xymatrix{
\pi_0\Gamma(A\times B,p_1) \ar@{=}[d]_{}\ar[rr]^{}  && \pi_0\Gamma(FA\times FB,F(p_1))   \ar@{=}[d]_{} \\
\pi_0 \mathcal{E}(A,B) \ar[rr]^{ }&& \pi_0 \mathcal{E}'(FA,FB)
 }$$
 The top map of the square is surjective by the hypothesis on $F$.
 Hence the bottom map is surjective, since the vertical maps are bijective
 by Lemma \ref{sectionvsmaps}. 
 This shows that the functor $Ho(F)$ is full. Let us show that it is faithful.
 Let $f,g:A\to B$ be two maps in  $\mathcal{E}$
and suppose that the maps $F(f),F(g):FA\to FB$ are homotopic.
If $(PB,\partial_0,\partial_1,\sigma)$ is a path object of $B$,
then the maps $f$ and $g$ are homotopic if and only if 
the fibration $p_1:E \to A$ defined by the following 
 pullback square has a section
 \begin{equation}\label{hsurjectiontriangle}
 \xymatrix{
E \ar[rr]^{p_2} \ar[d]_{p_1}  && PB \ar[d]^{\langle \partial_0,\partial_1\rangle } \\
A\ar[rr]^{\langle f,g\rangle }&& B\times B 
}
 \end{equation}
The image by the functor $F$ of the square (\ref{hsurjectiontriangle})
 is a pullback square of the same nature, since 
 $(F(PB), F(\partial_0), F(\partial_1), F(\sigma))$ is a path object for $FB$
by Lemma \ref{homotakepathobject}.
 \begin{equation}\label{hsurjectiontriangle2}
 \xymatrix{
FE \ar[rr]^{F(p_2)} \ar[d]_{F(p_1)}  && F(PB) \ar[d]^{\langle F(\partial_0),F(\partial_1)\rangle } \\
FA\ar[rr]^{\langle F(f),F(g)\rangle}&& FB\times FB 
}
\end{equation}
Hence the fibration $F(p_1):FE \to FA$ has a section, since the maps $F(f),F(g):FA\to FB$ are homotopic
by hypothesis.
But the map $\pi_0 \Gamma(E,p_1)\to  \pi_0\Gamma(FE,F(p_1))$
induced by $F$ is surjective by the hypothesis on $F$.
Thus, $\Gamma(E,p_1)\neq \emptyset$ and
this shows that $f$ is homotopic to $g$.
We have proved that the functor $Ho(F)$ is faithful.
  \end{proof} 

\medskip

 \begin{prop} \label{quasi-surjectiveisequivalence} 
 A generous morphism of tribes is a weak equivalence.
  \end{prop}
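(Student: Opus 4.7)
The plan is to establish the two defining properties of an equivalence of categories for the induced functor $Ho(F):Ho(\mathcal{E})\to Ho(\mathcal{E}')$: essential surjectivity and full faithfulness. Both should follow almost immediately from the two clauses of Definition \ref{generoushomomorphismdefinition}, combined with results already proved in the excerpt.

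For essential surjectivity, I would take an arbitrary object $Y\in \mathcal{E}'$ and apply the second clause of generosity: there exists $X\in \mathcal{E}$ together with an anodyne map $Y\rightarrowtail F(X)$ in $\mathcal{E}'$. By Proposition \ref{anohomotopyequiv}, every anodyne map is a homotopy equivalence, so this map becomes an isomorphism in $Ho(\mathcal{E}')$. Hence $Y$ is isomorphic to $Ho(F)(X)$ in $Ho(\mathcal{E}')$, which is exactly essential surjectivity of $Ho(F)$.

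For full faithfulness, I would invoke Lemma \ref{veryfulbasechange}: it suffices to show that for every fibration $p:E\to A$ in $\mathcal{E}$, the induced map
\[
\pi_0\Gamma(E,p)\longrightarrow \pi_0\Gamma(FE,F(p))
\]
is surjective. But the first clause of generosity gives us the stronger statement that the map $\Gamma(E,p)\to \Gamma(FE,F(p))$ itself is surjective, and surjectivity trivially descends to the quotients by the fibrewise homotopy relation. Applying Lemma \ref{veryfulbasechange} then yields that $Ho(F)$ is fully faithful.

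The proof is essentially a bookkeeping exercise combining the two clauses of generosity with the already-established results; I do not expect any substantive obstacle. The only mild subtlety is noticing that the second clause of generosity gives an anodyne map \emph{into} $F(X)$ rather than out of it, but since anodyne maps are homotopy equivalences (not merely sections of trivial fibrations, which they also are by Proposition \ref{anohomotopyequiv}), the direction does not matter for invertibility in the homotopy category. Combining essential surjectivity and fully faithfulness, $Ho(F)$ is an equivalence of categories, so $F$ is a weak equivalence of tribes by Definition \ref{weakequivaltribedef}.
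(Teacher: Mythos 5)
Your proof is correct and follows essentially the same route as the paper's: full faithfulness via Lemma \ref{veryfulbasechange} (noting that surjectivity on sections descends to $\pi_0$ of sections), and essential surjectivity from the second clause of generosity together with the fact that anodyne maps become invertible in the homotopy category by Proposition \ref{anohomotopyequiv}. No gaps.
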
 

\begin{proof} Let $F: \mathcal{E}\to \mathcal{E}'$ be
a generous morphism of tribes. The functor $Ho(F):Ho(\mathcal{E})\to Ho(\mathcal{E}')$
is fully faithful by Proposition \ref{veryfulbasechange}.
Let us show that it is essentially surjective.
For every object $Y\in  \mathcal{E}'$ there exists
  an object $X\in  \mathcal{E}$ together with an anodyne map $u:Y\to FX$,
  since $F$ is generous.
  But $u$ is invertible in the homotopy category $Ho(\mathcal{E}')$ by \ref{anohomotopyequiv}.
 Hence the functor $Ho(F)$ is essentially surjective.
 \end{proof}

 \begin{cor} \label{anodynebasechangeweakequiv} 
 If  $u:A\to B$ is an anodyne map in a tribe $\mathcal{E}$, 
then the base change functor $u^\star: \mathcal{E}(B)\to \mathcal{E}(A)$ is 
a weak equivalence of tribes.
  \end{cor}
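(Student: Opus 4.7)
The plan is to combine the two immediately preceding results. By Proposition \ref{anodynearegenerous}, the base change functor $u^\star:\mathcal{E}(B)\to\mathcal{E}(A)$ along an anodyne map $u:A\to B$ is a generous morphism of tribes. By Proposition \ref{quasi-surjectiveisequivalence}, every generous morphism of tribes is a weak equivalence. Chaining these two facts gives the corollary.

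There is essentially no obstacle: both ingredients have just been established, and the only thing to check is that $u^\star$ is indeed a morphism of tribes (so that ``generous'' applies to it in the sense of Definition \ref{generoushomomorphismdefinition}), which is exactly the content of Theorem \ref{basechangehomotopical}. So the proof will read: ``The functor $u^\star$ is a morphism of tribes by Theorem \ref{basechangehomotopical}, it is generous by Proposition \ref{anodynearegenerous}, hence it is a weak equivalence by Proposition \ref{quasi-surjectiveisequivalence}.''
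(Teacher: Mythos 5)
Your proposal is correct and is exactly the paper's own proof: the corollary is derived by combining Proposition \ref{anodynearegenerous} (generosity of $u^\star$) with Proposition \ref{quasi-surjectiveisequivalence} (generous morphisms are weak equivalences). The extra remark that Theorem \ref{basechangehomotopical} guarantees $u^\star$ is a morphism of tribes is a reasonable, harmless addition.
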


\begin{proof} 
This follows from Proposition \ref{anodynearegenerous}
and Proposition \ref{quasi-surjectiveisequivalence}.
 \end{proof}

 \begin{prop} \label{hobasechangehasaleftadjointallmap} 
 If $f:A\to B$ is a map in a tribe $ \mathcal{E}$,
then the functor  $Ho(f^\star):Ho( \mathcal{E}(B))\to  Ho(\mathcal{E}(A))$
 has a left adjoint 
 $$\tilde{\Sigma}_f:Ho(\mathcal{E}(A))\to Ho( \mathcal{E}(B))$$
   \end{prop}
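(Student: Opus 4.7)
The plan is to reduce to the case of a fibration by using the $AF$-factorization axiom of a tribe. Since $\mathcal{E}$ is a tribe, I can choose a factorization
\[
f \;=\; g\circ u : A \;\rightarrowtail\; C \;\twoheadrightarrow\; B,
\]
where $u$ is anodyne and $g$ is a fibration. By Proposition~\ref{compositebasechangetribe}, the base change functor decomposes (up to canonical isomorphism) as
\[
f^\star \;\simeq\; u^\star \circ g^\star : \mathcal{E}(B) \to \mathcal{E}(C) \to \mathcal{E}(A).
\]
Both $u^\star$ and $g^\star$ are morphisms of tribes (Theorem~\ref{basechangehomotopical}), so they induce functors at the level of homotopy categories and we obtain $Ho(f^\star) \simeq Ho(u^\star) \circ Ho(g^\star)$.

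Now I apply two results already established. First, because $g$ is a fibration, Proposition~\ref{inverthoequiv4} provides a left adjoint
\[
Ho(\Sigma_g) \;\dashv\; Ho(g^\star).
\]
Second, because $u$ is anodyne, Corollary~\ref{anodynebasechangeweakequiv} says that $u^\star : \mathcal{E}(B)\to \mathcal{E}(A)$ is a weak equivalence of tribes, which by definition means that $Ho(u^\star)$ is an equivalence of categories. Choose a quasi-inverse $Q : Ho(\mathcal{E}(A)) \to Ho(\mathcal{E}(C))$; then $Q \dashv Ho(u^\star)$ (and also $Ho(u^\star) \dashv Q$).

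Composing the two adjunctions, I set
\[
\tilde{\Sigma}_f \;:=\; Ho(\Sigma_g) \circ Q : Ho(\mathcal{E}(A)) \to Ho(\mathcal{E}(B)),
\]
and the composite adjunction $Q \dashv Ho(u^\star)$ with $Ho(\Sigma_g) \dashv Ho(g^\star)$ yields $\tilde{\Sigma}_f \dashv Ho(u^\star)\circ Ho(g^\star) \simeq Ho(f^\star)$, as required. The construction depends on the choice of factorization and of quasi-inverse, but the resulting left adjoint is determined up to unique natural isomorphism, so the statement is well-defined.

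The only subtle point, which I would expect to be the main obstacle in a more detailed write-up, is to verify cleanly that the isomorphism $f^\star \simeq u^\star g^\star$ descends to an isomorphism of the induced functors on the homotopy categories and that the two adjunction units and counits compose to a genuine adjunction unit/counit for $\tilde{\Sigma}_f \dashv Ho(f^\star)$; this is routine once one uses that $Ho$ is a $2$-functor on $\Trib$ (Proposition~\ref{the2-functorHo}), which carries natural isomorphisms between morphisms of tribes to natural isomorphisms between the induced functors on homotopy categories.
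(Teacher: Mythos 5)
Your proof is correct and follows essentially the same route as the paper: choose an $AF$-factorization $f=pu$, decompose $Ho(f^\star)\simeq Ho(u^\star)Ho(p^\star)$ via Proposition~\ref{compositebasechangetribe}, take the left adjoint $Ho(\Sigma_p)$ from Proposition~\ref{inverthoequiv4} and a quasi-inverse of $Ho(u^\star)$ from Corollary~\ref{anodynebasechangeweakequiv}, and compose. The paper's own argument is exactly this, with the quasi-inverse denoted $W$ instead of $Q$.
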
 
   
\begin{proof} Let us choose an $AF$-factorisation $f=pu:A\to C\to B$.
Then the functor $Ho(f^\star)$ is isomorphic to the functor $Ho(u^\star)Ho(p^\star)$,
since the functor $f^\star$ is isomorphic to the functor $u^\star p^\star$
by Proposition \ref{compositebasechangetribe}. 
The functor $Ho(p^\star)$ has 
a left adjoint $Ho(\Sigma_p)$ by Proposition \ref{inverthoequiv4}, since $p$ is a fibration.
The functor $Ho(u^\star)$ has 
has a pseudo-inverse $W$ by Corollary \ref{anodynebasechangeweakequiv},
since $u$ is anodyne. 
It follows that the functor $Ho(f^\star)=Ho(u^\star)Ho(p^\star)$
has a left adjoint $\tilde{\Sigma}_f=Ho(\Sigma_p)W$.
\end{proof}

When $f$ is a fibration, we have $\tilde{\Sigma}_f=Ho(\Sigma_f)$
 by Proposition \ref{inverthoequiv4}.  
 In general, if $(E,p)\in \mathcal{E}(A)$
 let us choose a factorization of the map $fp:E\to B$
 as an anodyne map $v:E\to E'$ followed
 by a fibration $p':E'\to B$,
 $$ \xymatrix{
E\ar[d]_p  \ar[rr]^v && E'\ar[d]^{p'} \\
A \ar[rr]^f && B
}$$
 Then $\tilde{\Sigma}_f(E,p)=(E',p')$.

 \begin{prop} \label{hobasechangehasaleftadjointcomp} 
 If $f:A\to B$ and $g:B\to C$ are two maps in a tribe,
 then $\tilde{\Sigma}_{gf}\simeq  \tilde{\Sigma}_g \tilde{\Sigma}_f $.
  \end{prop}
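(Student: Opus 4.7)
The plan is to deduce the isomorphism by dualizing the corresponding statement for base change functors. By Proposition \ref{compositebasechangetribe}, the base change functor $(gf)^\star : \mathcal{E}(C) \to \mathcal{E}(A)$ is canonically isomorphic to the composite $f^\star g^\star$. Since $Ho$ is a $2$-functor on $\Trib$ by Proposition \ref{the2-functorHo}, applying $Ho$ to this isomorphism yields a canonical natural isomorphism
\[
Ho\bigl((gf)^\star\bigr) \;\simeq\; Ho(f^\star)\circ Ho(g^\star) : Ho(\mathcal{E}(C)) \to Ho(\mathcal{E}(A)).
\]

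By Proposition \ref{hobasechangehasaleftadjointallmap}, each of the functors $Ho(f^\star)$, $Ho(g^\star)$, and $Ho((gf)^\star)$ admits a left adjoint, denoted $\tilde{\Sigma}_f$, $\tilde{\Sigma}_g$, and $\tilde{\Sigma}_{gf}$ respectively. The composite $\tilde{\Sigma}_g \circ \tilde{\Sigma}_f$ is left adjoint to the composite $Ho(f^\star)\circ Ho(g^\star)$, since adjunctions compose in the standard way (left adjoints compose in reverse order with respect to their right adjoints). Hence $\tilde{\Sigma}_g \circ \tilde{\Sigma}_f$ is left adjoint to $Ho((gf)^\star)$, and so is $\tilde{\Sigma}_{gf}$ by definition.

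By the uniqueness of adjoints up to canonical isomorphism, we obtain $\tilde{\Sigma}_{gf} \simeq \tilde{\Sigma}_g \circ \tilde{\Sigma}_f$. There is no real obstacle here: the whole argument is formal once Proposition \ref{hobasechangehasaleftadjointallmap} has supplied the three adjoints. The only subtlety worth noting is that when $f$ and $g$ are not fibrations, $\tilde{\Sigma}_f$ is not literally induced by any functor on $\mathcal{E}(A)$, but is constructed through a choice of $AF$-factorisation; the formal adjoint argument above sidesteps any need to compute $\tilde{\Sigma}_{gf}$ directly from factorisations of $gf$.
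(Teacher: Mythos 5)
Your proof is correct and follows essentially the same route as the paper's: both deduce $Ho((gf)^\star)\simeq Ho(f^\star)\,Ho(g^\star)$ from $(gf)^\star\simeq f^\star g^\star$ and then conclude by uniqueness of left adjoints. Your write-up is just a more explicit version of the paper's terse argument, with the useful added remark that no direct comparison of $AF$-factorisations is needed.
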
 
  
 \begin{proof} We have $Ho((gf)^\star)\simeq Ho(f^\star) Ho(g^\star)$,
 since we have $(gf)^\star\simeq f^\star g^\star$,
 The result follows from the adjunctions 
 $$\tilde{\Sigma}_f\vdash Ho(f^\star), \quad  \quad
\tilde{\Sigma}_g\vdash Ho(g^\star) \quad {\rm and} \quad \tilde{\Sigma}_{gf}\vdash Ho((gf)^\star)$$
  \end{proof}

\medskip

\begin{prop} \label{homotopicimplieshomtoyisomorphic} If two maps $f,g: A\to B$ in a tribe $ \mathcal{E}$ are homotopic, then the functors 
$$Ho(f^\star), Ho(g^\star):Ho(\mathcal{E}(B))\to Ho(\mathcal{E}(A))$$
are isomorphic.
\end{prop}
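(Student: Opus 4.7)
The plan is to exploit the path object for $B$ together with the fact that anodyne base change induces a weak equivalence of homotopy categories (Corollary \ref{anodynebasechangeweakequiv}). Fix a path object $(PB,\partial_0,\partial_1,\sigma)$ for $B$ and a homotopy $H\colon A\to PB$ with $\partial_0 H=f$ and $\partial_1 H=g$. By Proposition \ref{compositebasechangetribe}, we have canonical natural isomorphisms $f^\star\simeq H^\star\partial_0^\star$ and $g^\star\simeq H^\star\partial_1^\star$. Hence it suffices to produce a natural isomorphism between the functors
\[
Ho(\partial_0^\star),\; Ho(\partial_1^\star):\; Ho(\mathcal{E}(B))\longrightarrow Ho(\mathcal{E}(PB)),
\]
because applying $Ho(H^\star)$ to such an isomorphism yields the desired $Ho(f^\star)\simeq Ho(g^\star)$.

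To construct that isomorphism, I would use $\sigma$ as a common right inverse. Since $\partial_0\sigma=1_B=\partial_1\sigma$, Proposition \ref{compositebasechangetribe} gives natural isomorphisms
\[
\sigma^\star\partial_0^\star\;\simeq\;(\partial_0\sigma)^\star\;=\;\mathrm{id}\;=\;(\partial_1\sigma)^\star\;\simeq\;\sigma^\star\partial_1^\star,
\]
whence $Ho(\sigma^\star)\circ Ho(\partial_0^\star)\simeq Ho(\sigma^\star)\circ Ho(\partial_1^\star)$. Because $\sigma$ is anodyne, Corollary \ref{anodynebasechangeweakequiv} says that $Ho(\sigma^\star)$ is an equivalence of categories, so it has a quasi-inverse $W$ with $W\circ Ho(\sigma^\star)\simeq \mathrm{id}$. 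Composing on the left with $W$ cancels $Ho(\sigma^\star)$ and yields a natural isomorphism $Ho(\partial_0^\star)\simeq Ho(\partial_1^\star)$.

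Combining the two steps gives
\[
Ho(f^\star)\;\simeq\;Ho(H^\star)\,Ho(\partial_0^\star)\;\simeq\;Ho(H^\star)\,Ho(\partial_1^\star)\;\simeq\;Ho(g^\star),
\]
which is the required natural isomorphism. The only non-formal ingredient is the statement that $Ho(\sigma^\star)$ is an equivalence, which is already established in Corollary \ref{anodynebasechangeweakequiv}; everything else is 2-categorical manipulation of the canonical isomorphisms of iterated base-change functors from Proposition \ref{compositebasechangetribe}. The main conceptual obstacle is simply recognizing that one must pass to the homotopy category to invert $Ho(\sigma^\star)$: at the level of $\mathcal{E}$ itself, the functors $\partial_0^\star$ and $\partial_1^\star$ are typically not isomorphic, and the isomorphism only materializes after localizing at the anodyne maps.
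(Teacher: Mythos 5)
Your proof is correct and is essentially identical to the paper's: both reduce to showing $Ho(\partial_0^\star)\simeq Ho(\partial_1^\star)$ via the relations $\sigma^\star\partial_i^\star\simeq \mathrm{id}$ and the invertibility of $Ho(\sigma^\star)$ coming from $\sigma$ being anodyne. The only cosmetic difference is that you cite Corollary \ref{anodynebasechangeweakequiv} where the paper cites Proposition \ref{quasi-surjectiveisequivalence} directly; these amount to the same fact.
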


\begin{proof} If $PB=(PB, \partial_0,\partial_1,\sigma)$ be a path object for $B$, then
 the following diagram of maps commutes:
$$ \xymatrix{
&&  \ar@/_1pc/[lld]_-{1_B} B\ar[d]_\sigma   \ar@/^1pc/[rrd]^-{1_B} &&  \\
B &&\ar[ll]_{\partial_0}  PB \ar[rr]^{\partial_1} && B.
 }$$
Hence the following diagram of functors commutes up to natural isomorphisms by Proposition \ref{compositebasechangetribe},
$$ \xymatrix{
&& \mathcal{E}(B) & &  \\
 \ar@/^1pc/[rru]^-{Id}  \mathcal{E}(B) \ar[rr]^{\partial_0^\star}   &&\ar[u]^{\sigma^\star}  \mathcal{E}(PB)  && 
 \ar[ll]_{\partial_1^\star} \mathcal{E}(B)  \ar@/_1pc/[llu]_-{Id}  
 }$$
This shows that $\sigma^\star \partial_0^\star \simeq  Id \simeq \sigma^\star \partial_1^\star $. 
A fortiori we have  $Ho(\sigma^\star) Ho( \partial_0^\star)  \simeq Ho(\sigma^\star) Ho(\partial_1^\star)$. 
But the functor $Ho(\sigma^\star)$
is an equivalence of categories by \ref{quasi-surjectiveisequivalence}, since $\sigma$ is anodyne.
It follows that $Ho( \partial_0^\star)  \simeq Ho(\partial_1^\star)$.
If $h:A\to PB$ is a homotopy $f\leadsto g$, then $\partial_0h=f$ and $\partial_1h=g$.
Hence we have  $f^\star\simeq h^\star \partial_0^\star $ and $g^\star\simeq h^\star \partial_1^\star $ 
by Proposition \ref{compositebasechangetribe}.
It follows that 
$$Ho(f^\star)\simeq Ho(h^\star)Ho(\partial_0^\star)\simeq Ho(h^\star)Ho(\partial_1^\star)\simeq Ho(g^\star).$$
\end{proof}

 \begin{thm} \label{homotopyequivalenceareweakequiv} 
A map $f:A\to B$ is a homotopy equivalence 
if and only if the  functor $f^\star: \mathcal{E}(B)\to \mathcal{E}(A)$
 is a weak equivalence of tribes.
  \end{thm}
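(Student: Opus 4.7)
The plan is to use the $AF$-factorization to reduce each direction to the case of a fibration, and then to exploit the adjunction $Ho(\Sigma_p)\dashv Ho(p^\star)$ of Proposition \ref{inverthoequiv4}.

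For the forward direction, factor $f=pu$ with $u:A\to C$ anodyne and $p:C\to B$ a fibration. By Proposition \ref{anohomotopyequiv} the map $u$ is a homotopy equivalence, so by 3-for-2 (Proposition \ref{6four2homotopyequiv}) the fibration $p$ is a homotopy equivalence, hence a trivial fibration. Corollary \ref{trivialfibrationhavesection} gives a section $s:B\to C$ with $ps=1_B$; since $[p]$ is invertible and $[p][s]=1$ we also get $sp\sim 1_C$. Proposition \ref{compositebasechangetribe} then gives $s^\star p^\star\simeq(ps)^\star=\mathrm{Id}$ and $p^\star s^\star\simeq(sp)^\star$, and Proposition \ref{homotopicimplieshomtoyisomorphic} upgrades the second to $Ho(p^\star s^\star)\simeq\mathrm{Id}$. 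Hence $Ho(p^\star)$ is an equivalence of categories with quasi-inverse $Ho(s^\star)$, so $p^\star$ is a weak equivalence. Combining with Corollary \ref{anodynebasechangeweakequiv}, which gives that $u^\star$ is a weak equivalence, and with the isomorphism $f^\star\simeq u^\star p^\star$, we conclude that $f^\star$ is a weak equivalence.

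For the converse, again factor $f=pu$ with $u$ anodyne and $p$ a fibration. Corollary \ref{anodynebasechangeweakequiv} tells us $u^\star$ is a weak equivalence, and from $Ho(f^\star)\simeq Ho(u^\star)\circ Ho(p^\star)$ together with the 2-out-of-3 property for equivalences of categories it follows that $Ho(p^\star)$ is an equivalence. Since $p$ is a fibration, Proposition \ref{inverthoequiv4} supplies the adjunction $Ho(\Sigma_p)\dashv Ho(p^\star)$; because the right adjoint is an equivalence, $Ho(\Sigma_p)$ is a quasi-inverse and the counit $Ho(\Sigma_p)Ho(p^\star)\to\mathrm{Id}$ is a natural isomorphism. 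Evaluate this counit at the terminal object $\top_B=(B,1_B)$: we have $p^\star(\top_B)=(C,1_C)$ and $\Sigma_p(C,1_C)=(C,p)$, and the counit component is simply $p:(C,p)\to(B,1_B)$ in $\mathcal{E}(B)$. Thus $[p]$ is an isomorphism in $Ho(\mathcal{E}(B))$, i.e.\ the object $(C,p)$ is contractible in $\mathcal{E}(B)$. By Proposition \ref{homotopyequiv} this means $p$ is a trivial fibration, and then $f=pu$ is a composite of two homotopy equivalences.

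The heart of the argument, and the only nontrivial step, is the converse: the identification of the counit of $Ho(\Sigma_p)\dashv Ho(p^\star)$ at $\top_B$ as the homotopy class $[p]$ is what converts the abstract statement that $Ho(p^\star)$ is an equivalence into concrete homotopical information about the single map $p$, after which Proposition \ref{homotopyequiv} immediately finishes the job.
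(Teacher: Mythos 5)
Your proof is correct, and the converse direction -- which is indeed the substantive half -- coincides with the paper's argument: AF-factorize $f=pu$, use Corollary \ref{anodynebasechangeweakequiv} and 3-for-2 to see that $Ho(p^\star)$ is an equivalence, and then extract contractibility of $(C,p)$ from the adjunction $Ho(\Sigma_p)\dashv Ho(p^\star)$ before invoking Proposition \ref{homotopyequiv}. The only cosmetic difference there is that you identify the counit at $\top_B$ with $[p]$, whereas the paper observes that the equivalence $Ho(\Sigma_p)$ sends the terminal object $(C,1_C)$ of $Ho(\mathcal{E}(C))$ to a terminal object of $Ho(\mathcal{E}(B))$, which must be $(C,p)=\Sigma_p(C,1_C)$; these are two phrasings of the same fact. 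Where you genuinely diverge is the forward direction. The paper argues in two lines: take a homotopy inverse $g$ of $f$, apply Proposition \ref{homotopicimplieshomtoyisomorphic} to $gf\sim 1_A$ and $fg\sim 1_B$, and use $(gf)^\star\simeq f^\star g^\star$ to conclude that $Ho(f^\star)$ and $Ho(g^\star)$ are mutually quasi-inverse. You instead factor $f=pu$, deduce that $p$ is a trivial fibration, produce a section $s$ via Corollary \ref{trivialfibrationhavesection}, and run the same two-sided argument with $s$ in place of $g$. This is valid but strictly longer; its only advantage is that every base change functor you invert is taken along a fibration, an anodyne map, or a section of a trivial fibration, which is a mild structural economy you do not actually need since $f^\star$ is defined (and Proposition \ref{homotopicimplieshomtoyisomorphic} holds) for arbitrary maps.
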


 \begin{proof} 
  ($\Rightarrow$) The map $f:A\to B$
 has a homotopy inverse $g:B\to A$ by hypothesis.
 By definition, we have $gf\sim 1_A$ and $fg\sim 1_B$.
 It then follows from Proposition \ref{homotopicimplieshomtoyisomorphic}
 that we have $Ho((gf)^\star)\simeq Id$ and $Ho((fg)^\star)\simeq Id$.
 But we have 
 $$Ho((gf)^\star)\simeq Ho(f^\star g^\star)=Ho(f^\star) Ho(g^\star) \quad {\rm and} \quad Ho((fg)^\star)\simeq Ho(g^\star f^\star)=Ho(g^\star) Ho(f^\star)$$
by Proposition \ref{compositebasechangetribe}.
It follows that $Ho(f^\star) Ho(g^\star) \simeq Id$ and $Ho(g^\star) Ho(f^\star) \simeq Id$.
This shows that the functor $Ho(f^\star)$ is an equivalence
 of categories. ($\Leftarrow$)  Let us choose an $AF$-factorisation $f=pu:A\to C\to B$.
Then the functor $Ho(f^\star)$ is isomorphic to the functor $Ho(u^\star)Ho(p^\star)$.
But the functor $Ho(u^\star)$
is an equivalence of categories by Corollary \ref{anodynebasechangeweakequiv},
since $u$ is anodyne. Hence the functor $Ho(p^\star)$ is an equivalence
of categories by 3-for-2, since the functor 
$Ho(u^\star)Ho(p^\star)\simeq Ho(f^\star)$ is an equivalence of categories by hypothesis.
The functor $Ho(p^\star): Ho(\mathcal{E}(B))\to (\mathcal{E}(C))$
 has a left adjoint $Ho(\Sigma_p): Ho(\mathcal{E}(C))\to Ho(\mathcal{E}(B))$
 by Theorem \ref{inverthoequiv4}, since $p$ is a fibration.
The functor $Ho(\Sigma_p)$ is an equivalence of categories,
 since the functor $Ho(p^\star)$ is an equivalence of categories.
 Hence the object $Ho(\Sigma_p) (C,1_C)$ is terminal in $Ho(\mathcal{E}(B))$,
 since the object $(C,1_C)$ is terminal object of $Ho(\mathcal{E}(C))$ by Proposition \ref{hocatiscartesian}.
 This shows that the object $(C,p)=\Sigma_p (C,1_C)$ is contractible.
  It then follows by Proposition \ref{homotopyequiv} that the map $p:C\to B$ is a homotopy equivalence.
 But the map $u:A\to C$ is a homotopy equivalence by Proposition
 \ref{anohomotopyequiv}, since $u$ is anodyne.
It follows by 3-for-2 that $f=pu$ is a homotopy equivalence.
 \end{proof}

\bigskip

\begin{defi} \label{def:h-conservative}  We say that a morphism of tribes
$F:\mathcal{E} \to \mathcal{E}'$ is  {\bf homotopoically conservative} (in short,  {\bf h-surjective})  
if the functor $Ho(F): Ho(\mathcal{E}) \to Ho(\mathcal{E}')$ is conservative.
Similarly we say that $F$ is  {\bf homotopically surjective} (in short,  {\bf h-surjective}) 
if the functor  $Ho(F)$ is essentially surjective.
\end{defi}

By definition, a morphism of tribes $F:\mathcal{E} \to \mathcal{E}'$ i is h-conservative if and only if
it reflects homotopy equivalences. 
The morphism of tribes $F$ is h-surjective if and only if
for every object $Y\in  \mathcal{E}'$ there exists an object $X\in \mathcal{E}$
together with a homotopy equivalence $Y\to FX$.

\bigskip

Recall that if $F:\mathcal{E} \to \mathcal{E}'$ is a morphism of tribes,
then so is the functor $ F_{(A)}:\mathcal{E}(A)\to \mathcal{E}'(FA)$ induced by $F$
for each object $A\in \mathcal{E}$
by Proposition \ref{inducedhfunctor}.

\begin{lemma}\label{h-conservativelemma}  If a morphism of tribes $F:\mathcal{E} \to \mathcal{E}'$ is h-conservative, then so is 
the morphism $F_{(A)}:\mathcal{E}(A)\to \mathcal{E}'(F\!A)$ for every object $A\in \mathcal{E}$.
\end{lemma}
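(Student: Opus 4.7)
The plan is to exploit the fact that, by Theorem \ref{homotopyequirefl} applied to the canonical fibration $A \twoheadrightarrow 1$, the forgetful functor $\Sigma_A : \mathcal{E}(A) \to \mathcal{E}$ both preserves and reflects homotopy equivalences, and similarly for $\Sigma_{FA} : \mathcal{E}'(FA) \to \mathcal{E}'$. This will reduce homotopy equivalences in the slice tribes to homotopy equivalences in the base tribes, where the hypothesis on $F$ directly applies.

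The key observation is that the functors fit into a strictly commuting square
$$\xymatrix{
\mathcal{E}(A) \ar[d]_{\Sigma_A} \ar[rr]^{F_{(A)}} && \mathcal{E}'(FA) \ar[d]^{\Sigma_{FA}} \\
\mathcal{E} \ar[rr]^{F} && \mathcal{E}'
}$$
since $\Sigma_{FA} F_{(A)}(E,p) = \Sigma_{FA}(FE, F(p)) = FE = F\Sigma_A(E,p)$ on objects, and identically on morphisms. Now let $u : (X,p) \to (Y,q)$ be a morphism in $\mathcal{E}(A)$ such that $F_{(A)}(u)$ is a homotopy equivalence in $\mathcal{E}'(FA)$. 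Applying Theorem \ref{homotopyequirefl} to $FA \twoheadrightarrow 1$ in $\mathcal{E}'$, the functor $\Sigma_{FA}$ preserves homotopy equivalences, so $F(u) = \Sigma_{FA} F_{(A)}(u)$ is a homotopy equivalence in $\mathcal{E}'$. Since $F$ is h-conservative by hypothesis, $u : X \to Y$ is a homotopy equivalence in $\mathcal{E}$. Finally, applying Theorem \ref{homotopyequirefl} to $A \twoheadrightarrow 1$ in $\mathcal{E}$, the functor $\Sigma_A$ reflects homotopy equivalences, hence $u : (X,p) \to (Y,q)$ is a homotopy equivalence in $\mathcal{E}(A)$. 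This shows that $F_{(A)}$ reflects homotopy equivalences, i.e.\ is h-conservative.

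There is no real obstacle here: the entire argument is a three-step diagram chase, with the preservation half of Theorem \ref{homotopyequirefl} used for $\Sigma_{FA}$, h-conservativity of $F$ as the middle bridge, and the reflection half of Theorem \ref{homotopyequirefl} used for $\Sigma_A$. The only point worth checking when writing the proof in full is that $F_{(A)}$ is indeed a morphism of tribes (already recorded in Proposition \ref{inducedhfunctor}) so that it makes sense to speak of its h-conservativity in the first place, and that the square above commutes on the nose — both are immediate from the definition $F_{(A)}(E,p) = (FE, F(p))$.
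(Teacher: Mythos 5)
Your proof is correct and follows essentially the same route as the paper: the same commuting square $\Sigma_{F\!A}\circ F_{(A)} = F\circ\Sigma_A$, with Theorem \ref{homotopyequirefl} supplying preservation of homotopy equivalences by $\Sigma_{F\!A}$ and reflection by $\Sigma_A$, and h-conservativity of $F$ as the bridge. No issues.
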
  

\begin{proof}  We shall use the following commutative square of functors.
$$ \xymatrix{
\mathcal{E}(A) \ar[rr]^{F_{(A)}} \ar[d]_{\Sigma_A}  && \mathcal{E}'(F\!A) \ar[d]^{\Sigma_{F\!A}} \\
\mathcal{E} \ar[rr]^{F}  &&\mathcal{E}'
}$$
The horizontal functors preserve homotopy equivalences by 
 Corollary \ref{inverthoequiv3}. 
The vertical functors preserve and reflect homotopy equivalences by 
Proposition \ref{homotopyequirefl}.
Hence the functor $F_{(A)}$ reflects  homotopy equivalences,
since the functor $F$  reflects  homotopy equivalences by hypothesis.
\end{proof}

 \begin{prop} \label{genereousisfullandstable} 
  If a morphism of tribes $F: \mathcal{E}\to \mathcal{E}'$ is generous, then so is the morphism of tribes
 $F_{(A)}: \mathcal{E}(A)\to \mathcal{E}'(FA)$ for every object $A\in  \mathcal{E}$.  
\end{prop}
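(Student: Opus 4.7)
The plan is to verify the two clauses of Definition \ref{generoushomomorphismdefinition} for $F_{(A)}$ in turn, reducing each to the corresponding property of $F$.

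First I would handle fullness on sections. Let $g\colon(E,p)\to(C,q)$ be a fibration in $\mathcal{E}(A)$, so that $g\colon E\to C$ is a fibration in $\mathcal{E}$ with $qg=p$. A section of $g$ in $\mathcal{E}(A)$ is by definition a map $s\colon(C,q)\to(E,p)$ with $gs=1_{(C,q)}$, and this condition alone forces $ps=qgs=q$, so sections of $g$ in $\mathcal{E}(A)$ correspond bijectively to sections of the underlying fibration $g\colon E\to C$ in $\mathcal{E}$. The same analysis applied to $F_{(A)}(g)$ in $\mathcal{E}'(F\!A)$ gives a bijection between its sections and those of $F(g)\colon FE\to FC$ in $\mathcal{E}'$, and the maps induced by $F_{(A)}$ and $F$ agree under these identifications. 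Generosity of $F$ thus yields surjectivity of the $F_{(A)}$-induced map on sections.

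For the second clause, given $(Y,q)\in\mathcal{E}'(F\!A)$, I would build the required anodyne in four steps. Factor $q\colon Y\to F\!A$ in $\mathcal{E}'$ as an anodyne map $a\colon Y\to W$ followed by a fibration $b\colon W\to F\!A$; by the second generosity condition applied to $W$, choose $X_{0}\in\mathcal{E}$ and an anodyne map $u\colon W\to FX_{0}$ in $\mathcal{E}'$; pick a retraction $r\colon FX_{0}\to W$ with $ru=1_{W}$ (available because $u$ is anodyne, hence split monic by Lemma \ref{charanodyne}); and use that a generous morphism is full (Lemma \ref{genereousisfull}) to produce $h\colon X_{0}\to A$ in $\mathcal{E}$ with $F(h)=br$. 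Finally, factor $h=pe\colon X_{0}\rightarrowtail X\twoheadrightarrow A$ as an anodyne $e$ followed by a fibration $p$, so that $(X,p)\in\mathcal{E}(A)$, and set $v:=F(e)\circ u\circ a\colon Y\to FX$. Then $v$ is anodyne in $\mathcal{E}'$ as a composite of three anodyne maps (using that $F$ preserves anodyne maps), and
\[
F(p)\,v \;=\; F(p)F(e)\,u\,a \;=\; F(h)\,u\,a \;=\; b\,r\,u\,a \;=\; b\,a \;=\; q,
\]
so $v\colon(Y,q)\to F_{(A)}(X,p)$ is a morphism in $\mathcal{E}'(F\!A)$, and it is anodyne there by Proposition \ref{tribeslice}.

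I expect no serious obstacle: the first clause is bookkeeping about what a section in a slice tribe is, and the second is essentially a repeated factor-and-lift argument modeled on the proof of Proposition \ref{anodynearegenerous}. The only delicate point is to keep track of the compatibility $F(p)v=q$, which is why the retraction $r$ is used to manufacture the lift $h$ with $F(h)=br$, making the final computation collapse.
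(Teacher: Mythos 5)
Your proof is correct and follows essentially the same route as the paper's: identify sections of a fibration in $\mathcal{E}(A)$ with sections of the underlying fibration in $\mathcal{E}$ for the first clause, and for the second clause produce an anodyne map $Y\to FE$ by generosity, extend $q$ over it, lift the extension through $F$ by fullness, and conclude with an $AF$-factorisation in $\mathcal{E}$. The only (harmless) variations are your redundant initial factorisation of the already-fibrant $q$ and your use of a retraction of the anodyne map to build the extension of $q$, where the paper instead lifts directly against the fibration $FA\to 1$.
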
 
   
\begin{proof} 
Let us show that  the morphism of tribes $F_{(A)}$ is full on sections of every fibration in $ \mathcal{E}(A)$.
Every fibration in $ \mathcal{E}(A)$ is of the form $p:(E,fp)\to (B,f)$,
where  $p: E\to B$ and  $f: B\to A$ are fibrations in $ \mathcal{E}$.
Every section $s:B\to E$ of $p:E\to B$ defines a section
$s:(B,f)\to (E,fp)$ of $p:(E,fp)\to (B,f)$  and conversely.
Moreover, the following square commutes by functorriality
 $$ \xymatrix{
\Gamma(E,p) \ar@{=}[d]_{}\ar[rr]^{}  &&\Gamma(FE,F(p))   \ar@{=}[d]_{} \\
\Gamma((E,fp),p)\ar[rr]^{ }&&\Gamma((FE,F(fp)),F(p))
 }$$
The top map of the square is surjective, since $F$ is full on sections of every fibration.
Hence the bottom map of the square is surjective.
This shows that the morphism of tribes $F_{(A)}$ is full on sections of every fibration.
It remains to show that for every object $(Y,q)\in  \mathcal{E}'(FA)$,
there exists an object $(X,p)\in \mathcal{E}(A)$ together with an anodyne map $u:(Y,q)\to (FX,F(p))$.
In other words, it remains to show that for
every fibration $q:Y\to FA$ in  $\mathcal{E}'$ there exists
a fibration $p:X\to A$ in $ \mathcal{E}$ together with an anodyne map $u:Y\to FX$
such that $F(p)u=q$. But there exists an object $E\in  \mathcal{E}$ together
with an anodyne map $v:Y\to FE$, since $F$ is generous.
The map $q:Y\to FA$ can be extended along $v$ as a map $q':FE\to FA$,
since $v$ is anodyne.
There is then a map $g:E\to A$ such that $F(g)=q'$, since $F$ is full.
Let us choose an $AF$-factorisation $g=pw:E\to X\to A$.
The map $F(w):FE\to FX$ is anodyne, since $F$
is a morphism of tribes.
Hence map $u\defeq F(w)v:Y\to FX$ is a anodyne, since $v$ is anodyne.
Moreover, we have $F(p)u=F(p)F(w)v=F(pw)v=F(g)v=q'v =q$.
\end{proof}  

\begin{cor}\label{genereousisfullandstable2}
 If $F: \mathcal{E}\to \mathcal{E}'$ is generous, then for every fibration $q:Y\to FA$ in $ \mathcal{E}'$, there exists
a fibration $p:X\to A$ in $ \mathcal{E}$ together with an anodyne map $u:Y\to FX$
such that $F(p)u=q$. 
$$ \xymatrix{
Y \ar[d]_{q}\ar[r]^{u}  &  \ar[dl]^{F(p)} FX \\
 FA& 
 }$$
\end{cor}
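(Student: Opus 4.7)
The plan is to obtain this statement as a direct unpacking of Proposition \ref{genereousisfullandstable} applied to the local tribe $\mathcal{E}'(F\!A)$. By that proposition, since $F$ is generous, the induced morphism of tribes $F_{(A)}: \mathcal{E}(A) \to \mathcal{E}'(F\!A)$ is also generous. The second clause of Definition \ref{generoushomomorphismdefinition} then tells us that for the given object $(Y,q) \in \mathcal{E}'(F\!A)$, there exists an object $(X,p) \in \mathcal{E}(A)$ and an anodyne map $u : (Y,q) \to F_{(A)}(X,p) = (FX, F(p))$ in the tribe $\mathcal{E}'(F\!A)$.

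It remains only to translate this back into a statement about $\mathcal{E}'$. A morphism $(Y,q) \to (FX, F(p))$ in $\mathcal{E}'(F\!A)$ is, by definition, a map $u : Y \to FX$ in $\mathcal{E}'$ satisfying $F(p)\, u = q$. Moreover, by Proposition \ref{tribeslice}, a map in the local tribe $\mathcal{E}'(F\!A)$ is anodyne if and only if its underlying map in $\mathcal{E}'$ is anodyne. Finally, $p : X \to A$ is a fibration in $\mathcal{E}$ because $(X,p) \in \mathcal{E}(A)$. Assembling these observations yields exactly the data promised by the corollary.

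There is no genuine obstacle here: the content is entirely extracted in the proof of Proposition \ref{genereousisfullandstable}, and the corollary simply records the explicit form of the second generousness condition for $F_{(A)}$ in the ambient tribe $\mathcal{E}'$. Accordingly, the proof can be written in a single short paragraph invoking Proposition \ref{genereousisfullandstable} and Proposition \ref{tribeslice}.
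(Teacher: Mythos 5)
Your proof is correct and follows exactly the paper's own route: apply Proposition \ref{genereousisfullandstable} to conclude that $F_{(A)}$ is generous, then unpack the second generousness condition for the object $(Y,q)\in\mathcal{E}'(F\!A)$. The only difference is that you make explicit the translation back to $\mathcal{E}'$ via Proposition \ref{tribeslice}, which the paper leaves implicit.
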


\begin{proof}  The functor $F_{(A)}:\mathcal{E}(A)\to \mathcal{E}'(FA)$ is generous
by Lemma \ref{genereousisfullandstable}. Thus, for every object $(Y,q)\in  \mathcal{E}'(FA)$,
there exists an object $(X,p)\in \mathcal{E}(A)$ together with an anodyne map $u:(Y,q)\to (FX,F(p))$.
\end{proof}

\bigskip

\begin{thm}\label{h-equivalencetheorem}  A morphism of tribes
$F:\mathcal{E} \to \mathcal{E}'$ is a weak equivalence
if and only if the following two conditions are satisfied:
\begin{itemize}
\item{} $F$ is h-conservative;
\item{} the morphism $F_{(A)}:\mathcal{E}(A)\to \mathcal{E}'(F\!A)$ induced by $F$ is h-surjective
for every object $A\in \mathcal{E} $.
\end{itemize}
\end{thm}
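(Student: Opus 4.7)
I will treat the two implications separately.

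\emph{Forward direction.} Assume $F$ is a weak equivalence, so $Ho(F)$ is an equivalence of categories. Since an equivalence is conservative, $F$ is h-conservative. For the h-surjectivity of $F_{(A)}$, given a fibration $q:Y\twoheadrightarrow FA$ in $\mathcal{E}'$, I would use essential surjectivity of $Ho(F)$ to produce $X_0\in\mathcal{E}$ and a homotopy equivalence $\alpha:Y\to FX_0$ in $\mathcal{E}'$. Fullness of $Ho(F)$ then lifts the composite $q\alpha^{-1}:FX_0\to FA$ (taken in $Ho(\mathcal{E}')$) to a map $g:X_0\to A$ in $\mathcal{E}$ with $F(g)\alpha\sim q$. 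Choosing an $AF$-factorisation $g=pu$ with $u$ anodyne and $p:X\twoheadrightarrow A$ a fibration, the composite $\beta:=F(u)\alpha:Y\to FX$ is still a homotopy equivalence and satisfies $F(p)\beta\sim q$. The straightening lemma (Lemma \ref{homotopyliffting}) applied to the fibration $F(p)$ produces a map $\beta'$ homotopic to $\beta$ with $F(p)\beta'=q$ strictly, and Theorem \ref{homotopyequirefl} applied to $\Sigma_{FA}$ promotes $\beta'$ to a homotopy equivalence in $\mathcal{E}'(FA)$, exhibiting $(Y,q)\simeq F_{(A)}(X,p)$.

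\emph{Backward direction.} Assume h-conservativity of $F$ and h-surjectivity of every $F_{(A)}$. Specialising to $A=1$ (where $\mathcal{E}(1)=\mathcal{E}$, $F(1)$ is terminal in $\mathcal{E}'$, and $F_{(1)}$ agrees with $F$ up to equivalence of clans) shows that $F$ is h-surjective, hence $Ho(F)$ is essentially surjective. For fullness and faithfulness of $Ho(F)$ I would apply Lemma \ref{veryfulbasechange}: it suffices to check that for every fibration $p:E\twoheadrightarrow A$ in $\mathcal{E}$, the induced map $\pi_0\Gamma(E,p)\to\pi_0\Gamma(FE,F(p))$ is surjective. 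Given a section $s:FA\to FE$ of $F(p)$, I factor $s$ in the tribe $\mathcal{E}'(FA)$ as an anodyne map $u:1_{FA}\rightarrowtail(Z,r)$ followed by a fibration $g:(Z,r)\twoheadrightarrow(FE,F(p))$. Since $ru=1_{FA}$ with $u$ anodyne, $r$ is a trivial fibration by 3-for-2, and $(Z,r)$ is contractible in $\mathcal{E}'(FA)$ by Proposition \ref{homotopyequiv}. Applying h-surjectivity of $F_{(A)}$ to $(Z,r)$ produces $(X,p')\in\mathcal{E}(A)$ together with a homotopy equivalence $\alpha:(Z,r)\to(FX,F(p'))$ in $\mathcal{E}'(FA)$, so $F(p')$ is itself a trivial fibration. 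By h-conservativity of $F$ (inherited by $F_{(A)}$ via Lemma \ref{h-conservativelemma}), $p'$ is a trivial fibration in $\mathcal{E}$, and Corollary \ref{trivialfibrationhavesection} supplies a section $\sigma:A\to X$ of $p'$. A diagram chase, using that $\alpha u$ and $F(\sigma)$ are both sections of the contractible fibration $F(p')$ (hence fibrewise homotopic in $\mathcal{E}'(FA)$), then yields $s\sim_{FA}g\alpha''F(\sigma)$, where $\alpha''$ is a fibrewise homotopy inverse of $\alpha$.

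\emph{Main obstacle.} The remaining and hardest step is to realise the morphism $g\alpha'':(FX,F(p'))\to(FE,F(p))$ in $\mathcal{E}'(FA)$ as $F_{(A)}(\xi)$, up to fibrewise homotopy, for some $\xi:(X,p')\to(E,p)$ in $\mathcal{E}(A)$; once this is done, $t:=\xi\sigma:A\to E$ is a section of $p$ with $F(t)\sim_{FA}s$, completing the argument. The delicate point is that contractibility of $(X,p')$ in $\mathcal{E}(A)$ makes the lifting problem for $\xi$ equivalent, via precomposition with the homotopy equivalence $\sigma:1_A\to(X,p')$, to the very surjectivity statement on $\pi_0\Gamma(E,p)$ being proved, so a naive iteration is circular. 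I plan to break this by passing to the slice $\mathcal{E}(X)$, where by Lemma \ref{compositebasechangetribecor2} the hypothesis of the theorem is inherited by $F_{(X)}$: apply h-surjectivity there to a fibration constructed from $g$ (together with the trivial fibration $p'$) to produce $\xi$ directly, and use h-conservativity to transport the resulting fibrewise homotopy back to $\mathcal{E}(A)$. The careful handling of this descent --- making essential use that both h-conservativity and the slice-wise h-surjectivity hypothesis pass to every further slice --- is what I expect to be the technical core of the proof.
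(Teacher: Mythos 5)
Your forward direction is correct and follows the paper's argument essentially verbatim (essential surjectivity plus fullness of $Ho(F)$, an $AF$-factorisation, the straightening lemma \ref{homotopyliffting}, and Theorem \ref{homotopyequirefl} to promote the resulting equivalence to a fibrewise one).

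The backward direction, however, has a genuine gap, and it is exactly the one you flag as the ``main obstacle'': you never produce the map $\xi:(X,p')\to(E,p)$ in $\mathcal{E}(A)$ realising $g\alpha''$ up to fibrewise homotopy, and your proposed fix is not carried out. The difficulty is structural: you applied h-surjectivity of $F_{(A)}$ to the object $(Z,r)\in\mathcal{E}'(FA)$, which produces a fibration $p':X\to A$ but no map from $X$ to $E$; recovering one forces you to invert $\alpha$ and then lift $g\alpha''$ through $F$, which is the very fullness statement being proved. Passing to $\mathcal{E}(X)$ does not obviously help either, since the fibration $g$ lives over $FE$, not over $FX$, and transporting it across $\alpha$ reintroduces the same lifting problem.

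The paper circumvents this entirely by proving a weaker-looking statement first: for every object $A$ and every \emph{point} $b:1\to FA$, there is a point $a:1\to A$ with $F(a)\sim b$. The proof factors $b=qv$ with $q:Y\twoheadrightarrow FA$ a fibration and $v$ anodyne, applies h-surjectivity of $F_{(A)}$ to $(Y,q)$ to get $(X,p)$ with a homotopy equivalence $u:(Y,q)\to(FX,F(p))$, observes that $uv:1\to FX$ makes $FX$ contractible, hence $X$ is contractible by h-conservativity, hence $X$ has a point $x_0$, and sets $a=px_0$; the homotopy $F(a)=F(p)F(x_0)\sim F(p)uv=b$ is automatic because any two points of the contractible $FX$ are homotopic. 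No map out of $X$ other than $p$ itself is ever needed. The section-lifting statement required by Lemma \ref{veryfulbasechange} is then obtained by applying this point-lifting lemma to $F_{(A)}$ in place of $F$ (a section of $F(p)$ being a point of $F_{(A)}(E,p)$ in $\mathcal{E}'(FA)$), which is legitimate because $F_{(A)}$ inherits both hypotheses: it is h-conservative by Lemma \ref{h-conservativelemma}, and $(F_{(A)})_{(E,p)}=F_{(E)}$ is h-surjective by assumption. In effect, the slice in which h-surjectivity must be invoked is $\mathcal{E}(A)(E,p)=\mathcal{E}(E)$, i.e.\ the slice over the \emph{codomain} $E$ of the sought section, so that the replacement object comes equipped with a map to $E$ from the outset; this is the idea missing from your argument.
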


\begin{proof} ($\Rightarrow$) The functor $Ho(F)$ is conservative, since it is an equivalence of categories.
Thus, $F$ h-conservative.
Let us show that the functor $Ho(F_{(A)})$
is essentially surjective for every object $A\in \mathcal{E}$.
If $q:Y\to FA$ is a fibration in $ \mathcal{E}'$, then
there exists an object $E\in \mathcal{E}$ together
with a homotopy equivalence $v:Y\to FE$, since  the functor $Ho(F)$ is essentially surjective. 
If $v':FE \to Y$ is homotopy inverse to $v$, 
then there is a map $g:E\to A$, such that $F(g)$ is homotopic to $qv':FE\to FA$,
since the functor $Ho(F)$ is full. 
The map $F(g)v:Y\to FA$ is then homotopic to $q$, since $F(g)$ is homotopic to $qv'$.
Hence the following triangle commutes up to homotopy,
$$ \xymatrix{
&FE\ar[dd]^{F(g)}\\
Y \ar[ru]^{v} \ar[dr]_{q}  &  \\
& FA  
}$$
Let us choose a factorization
$g=pw:E\to X\to A$ with $w$ a homotopy equivalence and $p$ a fibration.
$$ \xymatrix{
E  \ar[dd]_{g} \ar[rd]^{w}  & \\
 & X \ar[dl]^{p} \\
 A & 
}\quad \quad \quad
 \xymatrix{
&FE\ar[dd]^{F(g)} \ar[dr]^{F(w)} & \\
Y \ar[ru]^{v} \ar[dr]_{q}&  & FX   \ar[dl]^{F(p)} \\
& FA  &
}$$
The map $F(p)$ is a fibration, since $p$ is a fibration.
The following triangle commutes up to homotopy, since $F(p)F(w)v=F(pw)v=F(g)v\sim q$.
$$ \xymatrix{
Y \ar[rr]^{F(w)v} \ar[dr]_{q}  && FX \ar[dl]^{F(p)} \\
& FA & 
}$$
It then follows from Lemma \ref{homotopyliffting} that there exists a map $u:Y\to FX$ homotopic to 
$F(w)v$ such that $F(p)u=q$. 
$$ \xymatrix{
Y \ar[rr]^{u} \ar[dr]_{q}  && FX \ar[dl]^{F(p)} \\
& FA & 
}$$
The map $F(w)$ is a homotopy
equivalence, since $w$ is a homotopy equivalence.
Thus, $F(w)v$ is a homotopy equivalence, since $v$ is a homotopy
equivalence. It follows that $u$ is a homotopy equivalence,
since $u$ is homotopic to $F(w)v$. 
Moreover, the morphism $u:(Y,q)\to (FX,F(p))$ is a fibrewise homotopy
equivalence in $ \mathcal{E}'(FA)$ by \ref{homotopyequirefl},
since $u:Y\to FX$
is a homotopy equivalence in $ \mathcal{E}'$.
This shows that the functor $Ho(F_{(A)})$
is essentially surjective.
($\Leftarrow$) 
Let us show that the functor $Ho(F)$ is is an equivalence of categories.
 For this, let us first verify that the morphism of tribes $F_{(A)}:\mathcal{E}(A)\to \mathcal{E}'(FA)$ satisfies the same hypothesis as the functor $F$
 for every object $A\in \mathcal{E}$.
 The morphism of tribes $F_{(A)}$ is
 h-conservative by Lemma \ref{h-conservativelemma},
 since $F$ is h-conservative.
 For every object $(E,p)\in \mathcal{E}(A)$, we have $(F_{(A)})_{(E,p)}=F_{(E)}$.
 Hence  then the morphism of tribes $(F_{(A)})_{(E,p)}$ is h-surjective,
 since the morphism of tribes $F_{(E)}$  is h-surjective by the hypothesis on $F$.
 This shows the morphism of tribes $F_{(A)}$ satisfies the same hypothesis as the functor $F$.
The functor $Ho(F)$ is essentially surjective. since $F$ is h-surjective by hypothesis.
 Let us show that the $Ho(F)$ is full. We shall first prove that for every object $A\in \mathcal{E}$
and every element $b:FA$, there exists an element
$a:A$ such that $F(a)\sim b$.
For this,  let us choose a factorization $b=qv:1 \to Y\to FA$
 with $v$ a homotopy equivalence and $q:Y\to FA$ a fibration.
 Then there exists a fibration $p:X\to A$ in $ \mathcal{E}$ together with a homotopy equivalence $u:Y\to FX$
such that $F(p)u =q$, since the functor $F_{(A)}$ is h-surjective,
$$ \xymatrix{
&Y \ar[rr]^{u} \ar[dr]^{q}  && FX \ar[dl]^{F(p)} \\
 1\ar[ru]^v \ar[rr]^b && FA & 
}$$
The map $uv:1\to  FX$ is a homotopy equivalence, since
$u$ and $v$ are equivalence. Hence the object $FX$
is contractible. It follows that $X$ is contractible, since $F$
reflects homotopy equivalences. Hence there exists an element $x_0:X$.
Let us put $a=px_0$.
The map $F(x_0):1\to  FX$ is homotopic to the map $uv:1 \to FX$,
since $FX$ is contractible. Hence the element $F(a)=F(px_0)=F(p)F(x_0)$
is homotopic to the element $F(p)uv=b$. 
This shows that there exists an element $a:A$ such that $F(a)\sim b$.
Let us now show that the functor $Ho(F)$ is full.
We saw above that the functor $F_{(A)}:\mathcal{E}(A)\to \mathcal{E}'(FA)$
satisfies the same hypothesis as the functor $F$.
If apply the result just proved to the functor $F_{(A)}$ instead of $F$,
we obtain that  if $p:E\to A$ is a fibration in $\mathcal{E}$
and $b:FA\to FE$ is a section of $F(p):FE\to FA$, then there exist a section $a:A\to E$
of the fibration $p:E\to A$ 
such that $F(a)$ is fibrewise homotopic to $b$.
It then follows from Proposition \ref{veryfulbasechange}
that the functor $Ho(F)$ is fully faithful.
 \end{proof}

\begin{cor}\label{h-equivalencestable}  If 
$F:\mathcal{E} \to \mathcal{E}'$ is a weak equivalence of tribes,
then so is the morphism of clans 
$F_{(A)}:\mathcal{E}(A)\to \mathcal{E}'(FA)$ 
for every object $A\in \mathcal{E} $.
\end{cor}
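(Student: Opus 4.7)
The plan is to apply Theorem \ref{h-equivalencetheorem} twice: first to extract the two characterizing conditions from the hypothesis that $F$ is a weak equivalence, and then to verify that $F_{(A)}$ satisfies the same two conditions, from which we may conclude that $F_{(A)}$ is itself a weak equivalence.

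First I would invoke the forward implication of Theorem \ref{h-equivalencetheorem}. Since $F:\mathcal{E}\to\mathcal{E}'$ is a weak equivalence, we obtain that $F$ is h-conservative and that for every object $B\in\mathcal{E}$ the induced morphism $F_{(B)}:\mathcal{E}(B)\to\mathcal{E}'(F\!B)$ is h-surjective. Fix now $A\in\mathcal{E}$; the task is to verify these two conditions for $F_{(A)}$ in place of $F$.

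For h-conservativity of $F_{(A)}$, I would appeal directly to Lemma \ref{h-conservativelemma}: since $F$ is h-conservative, so is $F_{(A)}$. For the second condition, I need to show that for every object $(E,p)\in\mathcal{E}(A)$, the induced morphism
$$(F_{(A)})_{(E,p)}:\mathcal{E}(A)((E,p))\to\mathcal{E}'(F\!A)(F_{(A)}(E,p))$$
is h-surjective. Here the key observation, already used in the proof of Theorem \ref{h-equivalencetheorem}, is the identification $(F_{(A)})_{(E,p)}=F_{(E)}$, which follows from Lemma \ref{compositebasechangetribecor2} applied to the fibration $p:E\twoheadrightarrow A$ (and its image under $F$). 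Once this identification is in hand, the h-surjectivity of $(F_{(A)})_{(E,p)}$ is exactly the h-surjectivity of $F_{(E)}$, which we already have from the first paragraph.

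Having verified both conditions for $F_{(A)}$, the reverse implication of Theorem \ref{h-equivalencetheorem} yields that $F_{(A)}$ is a weak equivalence of tribes, as required. I do not anticipate any real obstacle here: the entire argument is a bookkeeping exercise, and the one substantive point, namely the equality $(F_{(A)})_{(E,p)}=F_{(E)}$, is naturality of the slice construction and was used implicitly in the proof of Theorem \ref{h-equivalencetheorem}.
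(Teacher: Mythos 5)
Your proposal is correct and follows essentially the same route as the paper: h-conservativity of $F_{(A)}$ via Lemma \ref{h-conservativelemma}, the identification $(F_{(A)})_{(E,p)}=F_{(E)}$ to transfer h-surjectivity from the hypothesis on $F$, and then the converse direction of Theorem \ref{h-equivalencetheorem}. The only difference is that you spell out the justification of $(F_{(A)})_{(E,p)}=F_{(E)}$ via Lemma \ref{compositebasechangetribecor2}, which the paper leaves implicit.
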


\begin{proof}  
The morphism of clans $F_{(A)}:\mathcal{E}(A)\to \mathcal{E}'(FA)$ is
 h-conservative by Lemma \ref{h-conservativelemma}.
 Moreover, the morphism of clans $(F_{(A)})_{(E,p)}=F_{(E)}$ is h-surjective for every object $(E,p)\in \mathcal{E}(A)$ by Theorem \ref{h-equivalencetheorem}.
 It then follows by the same theorem that  $F_{(A)}$ is a weak equivalence.
 \end{proof}

\subsection{Truncated types}

There is a notion of homotopy cartesian square in a tribe,
since there is a notion of homotopy cartesian square in any Brown fibration category.
See Definition \ref{def:hcsquare}.

\begin{defi} \label{def:hmonic} A map $u:A\to B$
in a tribe is said to be a {\bf homotopy monomorphism}, 
if the following square is homotopy cartesian
$$
\xymatrix{
A \ar@{=}[r] \ar@{=}[d] & A \ar[d]^u\\
 A \ar[r]^u & B
}
$$
\end{defi} 

A fibration $u:A\to B$
is homotopy monic if and only if its diagonal $A\to A\times_B A$
is a homotopy equivalence.

\begin{defi} \label{def:-1obj} An object $A$  in a tribe
is say to be a {\it mere proposition} if the map
$A\to 1$ is homotopy monic.
\end{defi}

\begin{prop} \label{prop:caractprop} 
If $(PA,\partial_0,\partial_1,\sigma)$ is a path object for $A$, then the following conditions are equivalent:
\begin{enumerate}
\item{} The object $A$ is a mere proposition;
\item{} the diagonal $\delta_A:=(1_A,1_A):A\to A\times A$ is a homotopy equivalence;
\item{} The map $A\to 1$ is monic in the homotopy category;
\item{} the fibration $(\partial_0,\partial_1):PA\to A\times A$ is trivial;
\item{} the fibration $(\partial_0,\partial_1):PA\to A\times A$ has a section.
\end{enumerate}
\end{prop}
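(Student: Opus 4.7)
The plan is to prove the cycle $(1)\Leftrightarrow(2)\Leftrightarrow(3)$, then $(2)\Leftrightarrow(4)\Leftrightarrow(5)$. For $(1)\Leftrightarrow(2)$, since $A\to 1$ is a fibration (in a clan every map to the terminal object is one), the strict pullback $A\times_1 A = A\times A$ is already a homotopy pullback, and the comparison map from $A$ (the top-left corner of the square in Definition \ref{def:hmonic}) into it induced by the two identity legs is precisely the diagonal $\delta_A$. Thus homotopy cartesianness of that square is literally the assertion that $\delta_A$ is a homotopy equivalence. For $(2)\Leftrightarrow(3)$, in any cartesian category a map $u\colon A\to B$ is monic iff its diagonal $A\to A\times_B A$ is an isomorphism; by Proposition \ref{hocatiscartesian} the homotopy category $Ho(\mathcal{E})$ is cartesian and the quotient functor is cartesian, so $A\times_1 A = A\times A$ in $Ho(\mathcal{E})$ with diagonal $[\delta_A]$. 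Hence the map $[A\to 1]$ is monic in $Ho(\mathcal{E})$ iff $[\delta_A]$ is an isomorphism there, iff $\delta_A$ is a homotopy equivalence.

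For $(2)\Leftrightarrow(4)$, the path-object factorization reads $\delta_A=(\partial_0,\partial_1)\circ\sigma$ with $\sigma$ anodyne, hence a homotopy equivalence by Proposition \ref{anohomotopyequiv}; the 3-for-2 property of homotopy equivalences (Proposition \ref{6four2homotopyequiv}) then gives that $\delta_A$ is a homotopy equivalence iff $(\partial_0,\partial_1)$ is, and since $(\partial_0,\partial_1)$ is already a fibration this says exactly that it is trivial. The implication $(4)\Rightarrow(5)$ is Corollary \ref{trivialfibrationhavesection}.

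The only non-formal step is the converse $(5)\Rightarrow(2)$, which is the main obstacle. Given a section $s\colon A\times A\to PA$ of $(\partial_0,\partial_1)$, one has $\partial_0 s=p_1$ and $\partial_1 s=p_2$, so $s$ is itself a homotopy $p_1\leadsto p_2$ and therefore $[p_1]=[p_2]$ in $Ho(\mathcal{E})(A\times A,A)$. By Proposition \ref{hocatiscartesian}, $A\times A$ is the product of $A$ with itself in $Ho(\mathcal{E})$ with projections $[p_1],[p_2]$. Consider the endomorphism $[\delta_A]\circ[p_1]$ of $A\times A$ in $Ho(\mathcal{E})$: its first projection is $[p_1\delta_A p_1]=[p_1]$ and its second is $[p_2\delta_A p_1]=[p_1]=[p_2]$, so the universal property of the product in $Ho(\mathcal{E})$ forces $[\delta_A]\circ[p_1]=[1_{A\times A}]$. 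Combined with $[p_1]\circ[\delta_A]=[1_A]$, this exhibits $[\delta_A]$ as an isomorphism in $Ho(\mathcal{E})$, i.e., $\delta_A$ is a homotopy equivalence, establishing (2) and closing the cycle.
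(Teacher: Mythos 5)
Your proof is correct and follows essentially the same route as the paper's: the factorization $\delta_A=(\partial_0,\partial_1)\sigma$ with $\sigma$ anodyne plus 3-for-2 for $(2)\Leftrightarrow(4)$, Corollary \ref{trivialfibrationhavesection} for $(4)\Rightarrow(5)$, and the observation that a section of $(\partial_0,\partial_1)$ is a homotopy $p_1\leadsto p_2$, hence $[p_1]=[p_2]$, to close the cycle. The only cosmetic difference is that you close the loop at $(2)$, exhibiting $[p_1]$ as a two-sided inverse of $[\delta_A]$ via the universal property of the product in $Ho(\mathcal{E})$, whereas the paper closes it at $(3)$ by deducing that $A\to 1$ is monic in the homotopy category; these are equivalent rephrasings of the same fact.
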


\begin{proof}
($1 \Leftrightarrow 2\Leftrightarrow 3$) The map $A\to 1$ is homotopy monic if 
and only if the diagonal $\delta_A:A\to A\times A$ is a homotopy equivalence 
if and only if the map $A\to 1$ is monic in the homotopy category.
($2\Leftrightarrow 4$) The map $\sigma:A\to PA$ is a homotopy equivalence by \ref{anohomotopyequiv}, since it is anodyne.
But we have $(\partial_0,\partial_1)\sigma=\delta_A$,
  $$ \xymatrix{
  & PA  \ar[d]^{(\partial_0,\partial_1)} \\
A \ar[r]_(0.5){\delta_A} \ar[ur]^\sigma & A\times A 
 }$$
 It follows by 3-for-2 that $\delta_A$ is a homotopy equivalence
if and only if the map $(\partial_0,\partial_1)$ is a homotopy equivalence
 ($4\Rightarrow 5$) A  trivial fibration has a section by \ref{trivialfibrationhavesection}.
  ($5\Rightarrow 3$) A section $H: A\times A\to PA$ of the map $(\partial_0,\partial_1)$
   is a homotopy
  between the projections $p_1,p_2 :A\times A\to A$.
  Thus, $[p_1]=[p_2]$ in the homotopy category.
  It follows that the map
$A\to 1$ is monic in the homotopy category.
\end{proof}

\begin{defi} \label{def:rightcancellation} We shall say that
a class  $\mathcal{M}$ of maps in a category $\mathcal{E}$
is {\it closed under right cancellation}
for every commutative triangle
$$
\xymatrix{
X    \ar[dr]_{h}   \ar[rr]^f && Y \ar[dl]^g  \\
 & Z &
}$$
if $h\in \mathcal{M}$ and $g\in  \mathcal{M}$, then $ f\in \mathcal{M}.$
 \end{defi}

\begin{prop} \label{monoclosedcompcancel} 
The class of homotopy monomorphisms is closed 
under composition and right cancellation. 
\end{prop}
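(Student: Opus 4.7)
The approach is to reformulate ``homotopy monomorphism'' as a homotopy-equivalence condition on the homotopy diagonal, and then apply pasting of homotopy cartesian squares (available since a tribe is a Brown fibration category by Theorem~\ref{atribeisafibrationcategory}). First, a direct reading of Definition~\ref{def:hmonic} shows that $u:A\to B$ is a homotopy monomorphism if and only if the homotopy diagonal $\delta_u:A\to P(u)$, the canonical map from $A$ into a homotopy pullback of $u$ along itself, is a homotopy equivalence.

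Given $f:X\to Y$ and $g:Y\to Z$, write $P(f)$ for the homotopy pullback of $f$ along $f$ and $P(gf)$ for the homotopy pullback of $gf$ along $gf$, with diagonals $\delta_f:X\to P(f)$ and $\delta_{gf}:X\to P(gf)$. Post-composition of paths with $g$ induces a canonical comparison $\varphi:P(f)\to P(gf)$, and by construction $\delta_{gf}=\varphi\circ\delta_f$. The key step is to show that $\varphi$ is a homotopy base change of $\delta_g:Y\to P(g)$. By pasting the homotopy pullback squares that define $P(f)$ and $P(gf)$ (both living over $X\times X$) with the square defining $P(g)$ (over $Y\times Y$) and the square induced by $f\times f:X\times X\to Y\times Y$, one obtains a homotopy cartesian square
$$
\xymatrix{
P(f) \ar[r]^{\varphi}\ar[d] & P(gf)\ar[d] \\
Y \ar[r]^{\delta_g} & P(g)
}
$$
whose bottom edge is $\delta_g$. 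Hence $\varphi$ is a homotopy base change of $\delta_g$, and in a Brown fibration category homotopy equivalences are stable under homotopy base change.

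With the factorization $\delta_{gf}=\varphi\circ\delta_f$ in hand, both claims follow. For closure under composition: if $f$ and $g$ are both homotopy monic then $\delta_f$ and $\delta_g$ are homotopy equivalences, whence $\varphi$ is a homotopy equivalence by the base-change stability, and therefore $\delta_{gf}$ is a composite of homotopy equivalences, so $gf$ is homotopy monic. For right cancellation: if $gf$ and $g$ are both homotopy monic, then $\delta_{gf}$ and $\varphi$ are homotopy equivalences, so by the $3$-for-$2$ property (Proposition~\ref{6four2homotopyequiv}) applied to $\delta_{gf}=\varphi\circ\delta_f$, the map $\delta_f$ is a homotopy equivalence, so $f$ is homotopy monic.

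The principal obstacle is the pasting computation identifying $\varphi$ as a homotopy base change of $\delta_g$; this is most cleanly handled by first $AF$-factoring $f$, $g$, and $gf$ so that all homotopy pullbacks in sight can be modeled by strict pullbacks of fibrations, after which the identification reduces to an elementary diagram chase using Lemma~\ref{lemmacartesiansq} and Lemma~\ref{cubelemmacartesiansq}.
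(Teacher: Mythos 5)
Your proof is correct. The paper gives no argument here (the proof is left to the reader), so there is nothing to match against directly; but the statement reappears as Corollary~\ref{cor:docileclassmono}, where it is deduced from the general principle that the derived class $D(\mathcal{M})$ of a docile class inherits closure under composition and right cancellation (Proposition~\ref{prop:deriveddocileclass}, also left to the reader). Your argument is exactly the content of that principle in the case where $\mathcal{M}$ is the class of homotopy equivalences: the factorization ${}^h\Delta(gf)=\varphi\circ {}^h\Delta(f)$, with $\varphi$ a homotopy base change of ${}^h\Delta(g)$, is what makes Proposition~\ref{prop:deriveddocileclass} work, so your write-up in effect supplies the missing proof of both results at once. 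Two remarks. First, the key homotopy cartesian square does not require $AF$-factoring everything: since $P(g)$ is the homotopy pullback of $Z\to Z\times Z$ along $g\times g$, pasting (Lemma~\ref{lemmahocartesiansqcriterion2}) identifies $P(gf)$ with the homotopy pullback of $P(g)\to Y\times Y$ along $f\times f$, and then factoring the diagonal $Y\to Y\times Y$ as $\delta_g$ followed by $P(g)\to Y\times Y$ and pasting again exhibits $P(f)$ as the homotopy pullback of $P(gf)\to P(g)$ along $\delta_g$; the stability of homotopy equivalences under homotopy base change you invoke is Lemma~\ref{homotopycartesiancriterion1}. Second, your argument actually yields the sharper statement that when $g$ is homotopy monic, $gf$ is homotopy monic if and only if $f$ is, which gives composition and right cancellation in a single stroke.
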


\begin{proof} Left to the reader.
\end{proof}

\begin{prop} \label{prop:morphismpreserves-1truncated} A morphism of tribes
$F:\mathcal{E}\to \mathcal{E}'$
takes homotopy monomorphisms to  homotopy monomorphisms.
 \end{prop}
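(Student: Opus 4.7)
The plan is to reduce the statement to the case of a fibration, where homotopy monicness admits the concrete characterization in terms of the diagonal being a homotopy equivalence (the remark immediately after Definition \ref{def:hmonic}), and then observe that morphisms of tribes preserve all the relevant structure: fibrations, anodyne maps, pullbacks along fibrations, and homotopy equivalences.

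First, let $u : A \to B$ be a homotopy monomorphism and choose an $AF$-factorization $u = pv$ with $v : A \rightarrowtail E$ anodyne and $p : E \twoheadrightarrow B$ a fibration. Since $v$ is a homotopy equivalence by Proposition \ref{anohomotopyequiv}, invariance of homotopy cartesian squares under homotopy equivalences (from the appendix on fibration categories, cf. Lemma \ref{homotopycartesiancriterion1}) implies that $u$ is homotopy monic if and only if $p$ is homotopy monic. By the remark after Definition \ref{def:hmonic}, the fibration $p$ is homotopy monic if and only if its diagonal $\delta_p : E \to E \times_B E$ is a homotopy equivalence.

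Next, apply $F$. Since $F$ is a morphism of tribes, $F(v)$ is anodyne in $\mathcal{E}'$ and $F(p)$ is a fibration in $\mathcal{E}'$, so $F(u) = F(p) F(v)$ is itself an $AF$-factorization of $F(u)$. By the same reduction in $\mathcal{E}'$, it suffices to show that the diagonal $\delta_{F(p)} : F(E) \to F(E) \times_{F(B)} F(E)$ is a homotopy equivalence. Because $p$ is a fibration, the pullback square defining $E \times_B E$ is a base change of the fibration $p$ along itself, and hence preserved by $F$ (Definition \ref{defmorphismclans}). Consequently $F(\delta_p)$ coincides, up to the canonical isomorphism $F(E \times_B E) \simeq F(E) \times_{F(B)} F(E)$, with $\delta_{F(p)}$.

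Finally, a morphism of tribes preserves homotopy equivalences by Proposition \ref{h-homotohomotop}, so $F(\delta_p)$ is a homotopy equivalence in $\mathcal{E}'$, hence so is $\delta_{F(p)}$. Therefore $F(p)$ is homotopy monic, and by the reduction above so is $F(u)$.

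The main obstacle is the implicit appeal to the invariance of homotopy cartesian squares under homotopy equivalence in the domain, which is the content of the appendix on Brown fibration categories rather than a result visible in the excerpt; once that reduction from arbitrary maps to fibrations is granted, everything else is a direct verification that the functor $F$ transports the relevant data.
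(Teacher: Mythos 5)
Your argument is correct; the paper itself leaves this proof to the reader, so there is no official proof to compare against, but your route (reduce to a fibration via an $AF$-factorization, then transport the diagonal through $F$) is the natural one, and every step after the reduction is directly supported by the text: $F$ preserves fibrations and anodyne maps by definition, preserves base changes of fibrations by Definition \ref{defmorphismclans} (so $F(E\times_B E)\simeq FE\times_{FB}FE$ and $F(\delta_p)$ is identified with $\delta_{F(p)}$), and preserves homotopy equivalences by Proposition \ref{h-homotohomotop}.

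The one loose point is the citation for the reduction ``$u$ is homotopy monic iff $p$ is homotopy monic'': Lemma \ref{homotopycartesiancriterion1} is not quite the statement you need, since it concerns squares whose bottom map is acyclic. You can make the reduction precise using only Definition \ref{def:hcsquare}: unwinding that definition with the factorization $u=pv$ of the right-hand vertical map, the defining square of $u$ is homotopy cartesian iff the gap map $(1_A,v):A\to A\times_B E$ is a homotopy equivalence. Now $v\times_B E:A\times_B E\to E\times_B E$ is the base change of the homotopy equivalence $v$ along the fibration $p_1:E\times_B E\to E$, hence a homotopy equivalence by Corollary \ref{basechangehomotopyequiv}, and $(v\times_B E)\circ(1_A,v)=\delta_p\circ v$; so by 3-for-2 (Proposition \ref{6four2homotopyequiv}) the map $(1_A,v)$ is a homotopy equivalence iff $\delta_p$ is. This gives both implications you use, once in $\mathcal{E}$ and once in $\mathcal{E}'$. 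Alternatively you could invoke Corollary \ref{cor:docileclassmono} (homotopy monomorphisms form a docile class closed under composition), which packages exactly the two implications, at the cost of citing a result that appears after this proposition in the text.
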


\begin{proof} Left to the reader.
\end{proof}

Obviously, every contractible object $A$ is a $(-1)$-truncated, since the map $A\to 1$ is invertible in the homotopy category.

\begin{lemma} \label{propcontratible} 
A mere proposition $A$ is contractible if and only if there exists a map $a:1 \to A$.
\end{lemma}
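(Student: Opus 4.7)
The plan is to exploit the equivalent characterizations of a mere proposition given in Proposition \ref{prop:caractprop}, in particular condition (3): the unique map $!_A : A \to 1$ is monic in the homotopy category $Ho(\mathcal{E})$.

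The direction $(\Rightarrow)$ is immediate: if $A$ is contractible then $!_A : A \to 1$ is a homotopy equivalence, hence has a homotopy inverse $a : 1 \to A$, and in particular we obtain a map $a : 1 \to A$ as required.

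For the converse $(\Leftarrow)$, suppose that $A$ is a mere proposition and that we are given some $a : 1 \to A$. I claim that $a$ is homotopy inverse to $!_A$, so that $A$ is contractible. One composite is trivial: $!_A \circ a = 1_1$ since $1$ is terminal. For the other composite, I want to show $a \circ {!_A} \sim 1_A$. Compute both sides after postcomposition with $!_A$:
\[
 {!_A} \circ (a \circ {!_A}) = ({!_A} \circ a) \circ {!_A} = 1_1 \circ {!_A} = {!_A} = {!_A} \circ 1_A .
\]
Thus $[!_A] \circ [a \circ {!_A}] = [!_A] \circ [1_A]$ in $Ho(\mathcal{E})$. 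By Proposition \ref{prop:caractprop}(3), $[!_A]$ is monic in $Ho(\mathcal{E})$, so $[a \circ {!_A}] = [1_A]$, i.e.\ $a \circ {!_A} \sim 1_A$.

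There is no real obstacle; the key input is the third equivalent condition of Proposition \ref{prop:caractprop} (monicity of $A \to 1$ in the homotopy category), which turns the verification of the non-trivial homotopy $a \circ {!_A} \sim 1_A$ into a one-line cancellation.
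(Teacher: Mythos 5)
Your proof is correct and is essentially identical to the paper's: both directions proceed exactly as in the text, with the converse obtained by computing $!_A \circ a = 1_1$ and then cancelling the monomorphism $[!_A]$ in $Ho(\mathcal{E})$ (condition (3) of Proposition \ref{prop:caractprop}) to conclude $a\circ {!_A}\sim 1_A$. No differences worth noting.
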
 

\begin{proof} The necessity is clear. Conversely, let us show
that the maps $a:1 \to A$ and  $t_A:A\to 1$ are mutually inverses in the homotopy category.
We have $t_Aa=1_1$, since the object $1$ is terminal.
Hence we have $t_Aat_A=1_1t_A=t_A=t_A1_A$.
It follows that $at_A=1_A$ in the homotopy category, since
 the map $t_A$ is monic in the homotopy category by 
Proposition \ref{prop:caractprop}. 
\end{proof}

\smallskip

Let $e_A:\mathcal{E}\to \mathcal{E}(A)$
be the base change functor defined 
by putting $e_A(X)=(X\times A,p_2)$.

\begin{lemma} \label{merepropcaract}
An object $A$ in a tribe $\mathcal{E}$ is a mere proposition
if and only if the object $e_A(A)=(A\times A, p_2)$
 is contractible.
\end{lemma}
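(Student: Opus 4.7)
The plan is to combine the characterisation of mere propositions from Proposition \ref{prop:caractprop} with the characterisation of trivial fibrations from Proposition \ref{homotopyequiv}, using the diagonal $\delta_A:A\to A\times A$ as the bridge.

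First I would recall that by Proposition \ref{homotopyequiv}, the object $e_A(A)=(A\times A,p_2)\in \mathcal{E}(A)$ is contractible if and only if the fibration $p_2:A\times A\to A$ is a homotopy equivalence in $\mathcal{E}$. So the problem reduces to showing that $A$ is a mere proposition if and only if $p_2:A\times A\to A$ is a homotopy equivalence.

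Next I would use Proposition \ref{prop:caractprop}, which says that $A$ is a mere proposition if and only if the diagonal $\delta_A=(1_A,1_A):A\to A\times A$ is a homotopy equivalence. The key observation is then the identity $p_2\delta_A=1_A$, which by the $3$-for-$2$ property of homotopy equivalences (Proposition \ref{6four2homotopyequiv}) yields immediately that $\delta_A$ is a homotopy equivalence if and only if $p_2$ is a homotopy equivalence.

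Chaining these equivalences gives the result. No step is a real obstacle here; the only thing to be careful about is that the notion of ``contractible object of $\mathcal{E}(A)$'' is the one from Definition \ref{homotopyequiv}, i.e.\ the structure map to the terminal object $(A,1_A)$ of $\mathcal{E}(A)$ is a homotopy equivalence in $\mathcal{E}(A)$; invoking Proposition \ref{homotopyequiv} translates this back to a statement about $p_2$ being a homotopy equivalence in the ambient tribe $\mathcal{E}$, after which the proof is essentially a one-line application of $3$-for-$2$.
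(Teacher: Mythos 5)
Your proof is correct, but it follows a genuinely different route from the paper's. The paper argues the forward direction by noting that $e_A:\mathcal{E}\to\mathcal{E}(A)$ is a morphism of tribes, hence carries the mere proposition $A$ to a mere proposition $e_A(A)$ of $\mathcal{E}(A)$ (Proposition \ref{prop:morphismpreserves-1truncated}), which is then contractible because it is inhabited by the point $\delta_A:\top_A\to e_A(A)$ (Lemma \ref{propcontratible}); for the converse it uses that $\Sigma_A$ reflects homotopy equivalences (Theorem \ref{homotopyequirefl}) to transport the equivalence $\delta_A:\top_A\to e_A(A)$ down to $\Delta_A:A\to A\times A$ in $\mathcal{E}$. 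You instead translate contractibility of $(A\times A,p_2)$ in $\mathcal{E}(A)$ into triviality of the fibration $p_2:A\times A\to A$ via Proposition \ref{homotopyequiv}, and then close the loop with the retraction identity $p_2\delta_A=1_A$ and $3$-for-$2$, together with the characterisation of mere propositions in Proposition \ref{prop:caractprop}. Your version has the advantage of being symmetric in the two directions and of resting only on results whose proofs are actually written out (Proposition \ref{prop:morphismpreserves-1truncated}, which the paper leans on, is left to the reader); the paper's version has the advantage of staying entirely inside the slice tribe $\mathcal{E}(A)$ and of exhibiting the general mechanism (morphisms of tribes preserve truncation levels, pointed mere propositions are contractible) that recurs in the proof of Proposition \ref{iscontract3}. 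Both are complete; the nontrivial content in each case is concentrated in the previously established Proposition \ref{homotopyequiv} or Theorem \ref{homotopyequirefl}, which are themselves closely related.
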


\begin{proof} ($\Rightarrow$) If $A$ is a mere proposition, then so is the object $e_A(A)$
by Proposition \ref{prop:morphismpreserves-1truncated}, since the  functor $e_A:\mathcal{E}\to \mathcal{E}(A)$
is a morphism of tribes by Proposition \ref{basechangehomotopical}.
But the diagonal $\Delta_A:A\to A\times A$
is a map $\delta_A:\top_A\to e_A(A)$.  Thus, $e_A(S)$
is contractible. ($\Leftarrow$) If $e_A(A)$ is contractible,
then the map $\delta_A:\top_A\to e_A(A)$ is a homotopy
equivalence. It then follows by Theorem \ref{homotopyequirefl}
that the map $\Delta_A=\Sigma_A(\delta_A)$ is a homotopy
equivalence.
\end{proof}

\medskip

\begin{defi} \label{def:closedhomobasechange} We say that a class $\mathcal{M}$ of maps
in a tribe $\mathcal{E}$ is closed under {\it homotopy base changes}
if for any  homotopy cartesian square
$$
\xymatrix{
X    \ar[d]_f   \ar[r] & Y \ar[d]^g  \\
A \ar[r]  & B
}$$
with $g\in \mathcal{M}$, we have $f\in \mathcal{M}$.
\end{defi}

\begin{defi} \label{def:docileclass} We say that a class $\mathcal{M}$ of maps
in a tribe $\mathcal{E}$ is {\it docile} if the following two conditions hold:
\begin{enumerate}
\item{} Every homotopy equivalence belongs to $\mathcal{M}$;
\item{}  $\mathcal{M}$  is closed under homotopy base changes;
\item{} If $w$ is a homotopy equivalence and $gw \in \mathcal{M}$, then $g\in \mathcal{M}$
\end{enumerate}
 \end{defi} 

For example, the class of homotopy equivalences
is docile. We shall see below in \ref{cor:docileclassmono} that the class
of homotopy monomorphisms is docile. 

\begin{lemma} Let $\mathcal{M}$ 
be a docile class of maps in a tribe $\mathcal{E}$.
Suppose that the horizontal maps of the following
commutative square are homotopy equivalences:
\begin{equation} \label{squarehequiv}
\xymatrix{
A' \ar[r]^v \ar[d]_{f'} & A \ar[d]^{f}\\
 B' \ar[r]^w & B
}
\end{equation}
Then $f'\in \mathcal{M}\Leftrightarrow f\in \mathcal{M}$
\end{lemma}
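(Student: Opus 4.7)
The proof plan hinges on two observations: (i) by Lemma \ref{homotopycartesiancriterion1}, a commutative square in a tribe whose horizontal arrows are homotopy equivalences is automatically homotopy cartesian, so the given square is h.c.; and (ii) docility was designed so that such h.c.\ squares can be used to transfer $\mathcal{M}$-membership in both directions between the two vertical maps.

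The direction $f\in\mathcal{M}\Rightarrow f'\in\mathcal{M}$ is immediate: apply condition (2) of docility (closure under homotopy base change) to the h.c.\ square, getting the left column $f'$ from the right column $f$.

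For the converse direction, my plan is to replace $w$ by a fibration and work with a strict pullback so that condition (3) can be leveraged. Factor $w=qi$ with $i\colon B'\to N$ anodyne and $q\colon N\to B$ a trivial fibration, and form the strict pullback
$$\xymatrix{P \ar[r]^{\pi_N}\ar[d]_{\pi_A} & N \ar[d]^{q}\\ A \ar[r]^{f} & B}$$
in which $\pi_A$ is a trivial fibration by Proposition \ref{trivialfibrationbasechange}; this square is homotopy cartesian because $q$ is a fibration. The universal property supplies $\phi\colon A'\to P$ with $\pi_A\phi=v$ and $\pi_N\phi=if'$, and by pasting (Lemma \ref{lemmahocartesiansqcriterion2}) the induced square on $A'\to P\to N\leftarrow B'$ is h.c.\ with bottom edge $i$; hence $\phi$ is an h.e. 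A retraction $r\colon N\to B'$ of $i$ is also a homotopy equivalence, so the h.c.\ square with horizontal maps $\mathrm{id}_{A'}$ and $r$ and closure under (2) gives $if'\in\mathcal{M}$ from $f'\in\mathcal{M}$; then condition (3), applied to $\pi_N\phi=if'$ with $\phi$ an h.e., yields $\pi_N\in\mathcal{M}$.

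To conclude $f\in\mathcal{M}$, use a section $s$ of $q$ to define $\psi=(\mathrm{id}_A,sf)\colon A\to P$, a section of the trivial fibration $\pi_A$ (hence an h.e.) with $\pi_N\psi=sf$; closure under (2) in the h.c.\ square $A\xrightarrow{\psi}P\to N\xleftarrow{=}N$ gives $sf\in\mathcal{M}$. Factoring both $f$ and $sf$ as anodyne$\circ$fibration and using condition (3) to pass between these factorisations lets one reduce to showing $p\in\mathcal{M}$ for the fibration part $p$ of $f$. Finally, since $p=qp'i'$ for the anodyne$\circ$fibration factorisation $sp=p'i'$ with $p'\in\mathcal{M}$, applying closure under (2) to the h.c.\ square with horizontal anodyne $i'$ transports membership from $qp'$ to $p$, and the result follows by running condition (3) once more with $\pi_A$ as the homotopy equivalence on the right.

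The main obstacle is the asymmetry of condition (3) (right cancellation of homotopy equivalences): the naive argument would need \emph{left} cancellation of h.e.\ from compositions like $w f'$, which docility does not provide directly. My plan circumvents this by never attempting left cancellation in isolation; instead, every ``left composition with an h.e.'' step is first rewritten, via a strict pullback (possible because $q$ is a fibration) or a section (possible because $q$ is a trivial fibration and $i$ is anodyne), into a shape where condition (3) can be applied with the homotopy equivalence on the right. Threading this replacement through consistently is the delicate part of the argument.
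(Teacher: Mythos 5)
Your argument follows a genuinely different route from the paper's: the paper takes a homotopy inverse $h$ of $w$, pulls $f$ back along $h$ and then along $w$, and invokes Proposition \ref{homotopicimplieshomtoyisomorphic} to identify $(wh)^\star(A,f)$ with $(A,f)$ by a homotopy equivalence over $B$, before reducing the general case to the fibration case by factoring $f$. You instead factor $w$ as an anodyne map $i$ followed by a trivial fibration $q$ and exploit the retraction of $i$ and a section of $q$. Everything in your argument up to and including the conclusion $sf\in\mathcal{M}$ is correct: $\phi$ is a homotopy equivalence by pasting and Lemma \ref{homotopycartesiancriterion1}, $if'\in\mathcal{M}$ by condition (2), $\pi_N\in\mathcal{M}$ by condition (3) applied to $\pi_N\phi=if'$, and $sf\in\mathcal{M}$ by condition (2) applied to the square built from $\psi$.

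The final paragraph, however, contains a genuine gap. To transport membership ``from $qp'$ to $p$'' across the homotopy cartesian square with horizontal $i'$, you must first know $qp'\in\mathcal{M}$, but you have only established $p'\in\mathcal{M}$; passing from $p'$ to $qp'$ is post-composition with the homotopy equivalence $q$, which is exactly the ``left composition'' move that docility does not license and that you yourself flagged as the obstacle. The closing appeal to ``condition (3) with $\pi_A$ as the homotopy equivalence on the right'' does not connect to the composite $qp'$: the map $\pi_A$ lives over $A$, not over the intermediate object of the factorisation of $sp$, so no composite of the form $(qp')\circ(\text{h.e.})$ is available there. The repair is immediate and renders the whole final paragraph unnecessary: having $sf\in\mathcal{M}$, consider the commutative square with top edge $1_A$, bottom edge $s$, left vertical $f$ and right vertical $sf$ (it commutes because $s\circ f=(sf)\circ 1_A$). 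Both horizontals are homotopy equivalences, so the square is homotopy cartesian by Lemma \ref{homotopycartesiancriterion1}, and condition (2) transfers membership from the right vertical $sf$ to the left vertical $f$. This is the same move you already used correctly to obtain $if'\in\mathcal{M}$ and $sf\in\mathcal{M}$, applied once more.
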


\begin{proof} ($\Rightarrow$)
The square is homotopy cartesian by Lemma \ref{homotopycartesiancriterion1},
since the horizontal maps are homotopy equivalences.
Thus, $f\in \mathcal{M}\Rightarrow f'\in \mathcal{M}$, since $ \mathcal{M}$
is closed under homotopy base changes. Conversely,
 if $ f'\in \mathcal{M}$ let us show that $f\in \mathcal{M}$.
We shall first consider the case where $f$ is a fibration and
the square (\ref{squarehequiv}) is cartesian. 
The map $f'$ is then a fibration, since $f$ is a fibration.
The map $w$ has a homotopy inverse $h:B\to B'$,
since it is a homotopy equivalence. 
Consider the following diagram of cartesian squares
$$
\xymatrix{
B\times_{B'} A' \ar[r] \ar[d]_{p_1} & A' \ar[d]_{f'} \ar[r]& A\ar[d]^f \\
 B \ar[r]^h & B'  \ar[r]^w &  B
}
$$
We have $(wh)^\star(A,f)=(B\times_{B'} A',p_1)$,
since the composite of the two squares is cartesian.
But $p_1\in \mathcal{M}$, since $ f'\in \mathcal{M}$
and the left hand square is (homotopy) cartesian.
The map $wh:B\to B$ is homotopic to the identity,
since $w$ and $h$ are homotopy inverses.
Hence the functor $(wh)^\star :Ho(\mathcal{E}(B))\to Ho(\mathcal{E}(B))$
is isomorphic to the identity functor by Proposition \ref{homotopicimplieshomtoyisomorphic}.
It follows that the object $(B\times_{B'} A',p_1)$ of $\mathcal{E}(B)$
is homotopically equivalent to the object $(A,f)$.
Hence there exists a homotopy equivalence $e:A\to B\times_{B'} A'$
such that $p_1e=f$. Thus, $f\in \mathcal{M}$ by the part ($\Rightarrow$) of the
proof, since $p_1\in \mathcal{M}$.
We can now prove the implication $f'\in \mathcal{M}\Rightarrow f\in \mathcal{M}$
in the general case by reducing it to the special case.
For this, let us choose a factorisation $f=g u:A\to E\to B$,
with $u$ a homotopy equivalence and $g$ a fibration.
Consider the following diagram with a pullback square:
$$
\xymatrix{
A' \ar[rr]^v \ar[d]_{(f',uv)} && A \ar[d]^{u}\\
B'\times_B E \ar[rr]^{p_2} \ar[d]_{p_1} && E \ar[d]^{g}\\
 B' \ar[rr]^w && B
}
$$
The map $p_2$ is a homotopy equivalence by Corollary \ref{basechangehomotopyequiv},
since $w$ is a homotopy equivalence and $g$ is a fibration. 
Hence the map $(f',uv)$ is a homotopy equivalence by 3-for-2, 
since the top square of the diagram commutes and
the maps $v,u$ and $p_2$ are homotopy equivalences.
Thus,  $p_1\in  \mathcal{M}$ since since $p_1(f',uv)=f'\in  \mathcal{M}$
and the class $ \mathcal{M}$ is docile.
 It then follows by the first part of the 
proof that $g\in  \mathcal{M}$, since $g$ is a fibration and the 
bottom square of the diagram is a pullback.
Thus, $f\in  \mathcal{M}$ by the part ($\Rightarrow$) of the proof, since $f=gu$ and $u$ is a homotopy equivalence.
\end{proof}

Recall that a {\it fibrant replacement} of a map $f:A\to B$
is the fibration $p:E\to B$ in a factorisation $f=pu$
with $u$ a homotopy equivalence and $p$ a fibration.
The {\it homotopy diagonal} ${}^h\Delta(f)$ of $f:A\to B$
is defined to be diagonal $E\to E\times_B E$
of the fibration $p:E\to B$.

\begin{defi} \label{def:deriveddocileclass} Let $\mathcal{M}$
be a docile class of maps in a tribe $\mathcal{E}$.
We shall denote by $D(\mathcal{M})$ the class 
of maps $f:A\to B$ such that ${}^h\Delta(f)\in \mathcal{M}$.
We shall say that $D(\mathcal{M})$ is the {\it derived class}
of  $\mathcal{M}$.
 \end{defi}

\begin{prop} \label{prop:deriveddocileclass} The derived class $D(\mathcal{M})$
of a docile class $\mathcal{M}$ is docile.
Moreover, $D(\mathcal{M})$
is closed under composition (resp. right cancellation) if $\mathcal{M}$
is closed under composition (resp. right cancellation).
 \end{prop}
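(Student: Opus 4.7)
The plan is to verify each docility condition for $D(\mathcal{M})$ in turn and then handle closure under composition and right cancellation. The key technical tool throughout will be the preceding (unnumbered) lemma: for a docile class, membership is invariant under commutative squares whose horizontal sides are homotopy equivalences.

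First I would verify that $D(\mathcal{M})$ is well-defined. Given two fibrant replacements $f = pu = p'u'$, there is a homotopy equivalence $h: E \to E'$ with $p'h = p$ (obtained by a lifting since $u$ is anodyne and $p'$ a fibration, and shown to be a homotopy equivalence by 3-for-2). The induced map $h \times_B h: E \times_B E \to E' \times_B E'$ is again a homotopy equivalence by Corollary \ref{basechangehomotopyequiv}, so the preceding lemma applied to the square with vertical sides $\Delta_p$ and $\Delta_{p'}$ gives $\Delta_p \in \mathcal{M} \Leftrightarrow \Delta_{p'} \in \mathcal{M}$. The same lemma, applied to the factorisation $\Delta_p = \partial_p \sigma_p$ through the fiberwise path object, shows that $\Delta_p \in \mathcal{M}$ iff the associated fibration $\partial_p: P(E/B) \to E \times_B E$ lies in $\mathcal{M}$; this allows me to replace strict diagonals by fibrations whenever needed to invoke homotopy base change.

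For the three docility conditions: (1) If $f$ is a homotopy equivalence, any fibrant replacement $p$ is a trivial fibration by 3-for-2; by Proposition \ref{trivialfibrationbasechange} the projection $p_1: E \times_B E \to E$ is trivial, and its section $\Delta_p$ is a homotopy equivalence, hence in $\mathcal{M}$. (3) If $g = pu$ is a fibrant replacement and $w$ is a homotopy equivalence, then $gw = p(uw)$ is also a fibrant replacement, so $g$ and $gw$ share a homotopy diagonal. (2) For a homotopy cartesian square with $g$ on the right, let $q: E_g \to B$ be a fibrant replacement of $g$; then $q' = \mathrm{pr}_1: C \times_B E_g \to C$ is a fibrant replacement of $f$. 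One checks that $P(C \times_B E_g / C) = C \times_B P(E_g/B)$, so $\partial_{q'}$ is the strict pullback of $\partial_q$ along $C \to B$; since $\partial_q$ is a fibration this is homotopy cartesian, and docility of $\mathcal{M}$ gives $\partial_{q'} \in \mathcal{M}$, hence $f \in D(\mathcal{M})$.

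For composition and right cancellation, take fibrant replacements $f = pu$ with $p: E_f \to B$ and $g = qv$ with $q: E_g \to C$, and factor $vp = p'u'$ with $u': E_f \to E$ anodyne and $p': E \to E_g$ a fibration. Then $gf = qp' \cdot u'u$ is a fibrant replacement, and the homotopy diagonal factors as $\Delta_{qp'} = i \circ \Delta_{p'}$, where $i: E \times_{E_g} E \hookrightarrow E \times_C E$ is the canonical inclusion. I claim both pieces lie in $\mathcal{M}$. For $\Delta_{p'}$: the square with corners $E_f, E, B, E_g$ is homotopy cartesian (since $p'$ is a fibration and $u', v$ are homotopy equivalences, via Corollary \ref{basechangehomotopyequiv} and 3-for-2), so the induced map $E_f \times_B E_f \to E \times_{E_g} E$ is a homotopy equivalence, and the preceding lemma comparing $\Delta_p$ with $\Delta_{p'}$ gives $\Delta_{p'} \in \mathcal{M}$. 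For $i$: it is the strict pullback of $\Delta_q$ along the fibration $p' \times p': E \times_C E \to E_g \times_C E_g$. Replacing $\Delta_q$ by $\partial_q$, the corresponding pullback $i'$ is a homotopy base change of a fibration in $\mathcal{M}$, so $i' \in \mathcal{M}$ by docility; the original $i$ differs from $i'$ by an anodyne gap (the pullback of $\sigma_q$ along the fibration $E\times_C E \to E_g \times_C E_g$, which is anodyne), and one more invocation of the preceding lemma yields $i \in \mathcal{M}$. For composition, $\Delta_{qp'} = i \circ \Delta_{p'}$ lies in $\mathcal{M}$ by composition closure; for right cancellation, $i\Delta_{p'} \in \mathcal{M}$ together with $i \in \mathcal{M}$ gives $\Delta_{p'} \in \mathcal{M}$ by right cancellation in $\mathcal{M}$, and thus $f \in D(\mathcal{M})$.

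The main obstacle will be the composition step: correctly identifying the factorisation $\Delta_{qp'} = i \circ \Delta_{p'}$ and verifying that each factor lies in $\mathcal{M}$ requires moving back and forth between strict diagonals (which are not in general fibrations, so direct homotopy base change arguments are not available) and their fibration replacements through fiberwise path objects. The preceding lemma is precisely what bridges this gap and allows the docility hypotheses to be applied.
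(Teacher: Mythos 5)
The paper gives no proof of this proposition --- it is stated with ``Left to the reader'' --- so there is nothing to compare your argument against; I can only assess it on its own terms, and on those terms it is sound. Your overall architecture is the right one: establish that membership of the homotopy diagonal in $\mathcal{M}$ is independent of the choice of fibrant replacement, use the preceding lemma to pass freely between the strict diagonal $\Delta_p$ and the fibration $\partial_p$ of a fibrewise path object, and then reduce each docility axiom and the composition/cancellation claims to the corresponding property of $\mathcal{M}$ via the factorisation ${}^h\Delta(gf) = i \circ \Delta_{p'}$ with $i$ a (homotopy) base change of ${}^h\Delta(g)$. The treatment of condition (3) (shared fibrant replacement), condition (1) (trivial fibration, section, 3-for-2), and condition (2) (pulling back $\partial_q$ rather than $\Delta_q$) are all correct.

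Two points deserve tightening, though neither is a fatal gap. First, in the well-definedness step you say the comparison map $h:E\to E'$ is ``obtained by a lifting since $u$ is anodyne'': the definition of fibrant replacement in the paper only requires $u$ to be a homotopy equivalence, so a strict diagonal filler is not available; you should either compare both replacements to a common $AF$-factorisation of $f$ (where the lift does exist) or invoke the straightening lemma \ref{homotopyliffting}. Second, the assertions that $h\times_B h$ and the induced map $E_f\times_B E_f\to E\times_{E_g}E$ are homotopy equivalences are correct but not immediate from Corollary \ref{basechangehomotopyequiv} as stated; each requires factoring the map in question into two genuine base changes along fibrations (e.g.\ $h\times_B h=(h\times_B E')\circ(E\times_B h)$, each factor being a base change of $h$ along a projection which is itself a fibration). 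Since these fibre-product comparisons carry the main weight of the composition argument, they should be written out rather than asserted.
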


\begin{proof} Left to the reader.
\end{proof}

\begin{cor} \label{cor:docileclassmono} 
The class of homotopy monomorphisms is docile
and it is closed under composition and right cancellation.
 \end{cor}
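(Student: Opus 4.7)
The plan is to identify the class of homotopy monomorphisms with the derived class $D(\mathcal{W})$, where $\mathcal{W}$ is the class of homotopy equivalences, and then invoke Proposition~\ref{prop:deriveddocileclass}. So the key auxiliary claim to establish is:
\emph{a map $u:A\to B$ is a homotopy monomorphism iff its homotopy diagonal ${}^h\Delta(u)$ is a homotopy equivalence.}
To check this, factor $u=pv:A\rightarrowtail E\twoheadrightarrow B$ with $v$ a homotopy equivalence and $p$ a fibration (so ${}^h\Delta(u)$ is by definition the diagonal $\delta_p:E\to E\times_B E$). Unfolding Definition~\ref{def:hmonic} via fibrant replacements, the identity square defining the homotopy monomorphism property is homotopy cartesian iff the induced comparison map $A\to E\times_B E$, which factors as $A\xrightarrow{v}E\xrightarrow{\delta_p}E\times_B E$, is a homotopy equivalence. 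Since $v\in\mathcal{W}$, by $3$-for-$2$ (Proposition~\ref{6four2homotopyequiv}) this is equivalent to $\delta_p\in\mathcal{W}$, i.e.\ ${}^h\Delta(u)\in\mathcal{W}$. Hence homotopy monomorphisms $=D(\mathcal{W})$.

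Next I would verify that $\mathcal{W}$ itself satisfies the hypotheses of Proposition~\ref{prop:deriveddocileclass}: namely that $\mathcal{W}$ is docile and closed under composition and right cancellation. Composition and both forms of cancellation are immediate from Proposition~\ref{6four2homotopyequiv} ($3$-for-$2$). For docility: (i) every homotopy equivalence belongs to $\mathcal{W}$ by definition; (ii) $\mathcal{W}$ is closed under homotopy base changes because a tribe is a Brown fibration category (Theorem~\ref{atribeisafibrationcategory}) and in such categories homotopy base changes preserve weak equivalences; (iii) if $w\in\mathcal{W}$ and $gw\in\mathcal{W}$ then $g\in\mathcal{W}$ by $3$-for-$2$.

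With both ingredients in place, Proposition~\ref{prop:deriveddocileclass} applied to $\mathcal{M}=\mathcal{W}$ immediately gives that $D(\mathcal{W})$ is docile and closed under composition and right cancellation, which is the desired conclusion. Closure under composition and right cancellation also recovers Proposition~\ref{monoclosedcompcancel} as a special case.

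The main obstacle is the \emph{identification} homotopy monomorphisms $=D(\mathcal{W})$: one must verify carefully that the ad hoc description of ``homotopy cartesian'' for the specific square of Definition~\ref{def:hmonic} matches the fibrant-replacement definition used to build ${}^h\Delta(u)$, and that this matching is independent of the chosen factorization $u=pv$. (Independence is a routine consequence of Ken Brown's lemma combined with $3$-for-$2$, but it is the one place in the argument where the details of the fibration-category machinery of the appendix really have to be used.)
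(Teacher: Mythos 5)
Your proposal is correct and follows essentially the same route as the paper: identify the homotopy monomorphisms with the derived class $D(\mathcal{W})$ of the (docile, composition- and right-cancellation-closed) class $\mathcal{W}$ of homotopy equivalences, then apply Proposition~\ref{prop:deriveddocileclass}. The extra care you take in justifying the identification of ``homotopy monomorphism'' with ``homotopy diagonal is an equivalence'' (via a factorization and 3-for-2) is a detail the paper simply asserts, so your write-up is if anything slightly more complete.
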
 
 
 \begin{proof} A map $f:A\to B$
 is a homotopy monomorphism if and only if
 its homotopy diagonal ${}^h\Delta(f)$ 
 is  a homotopy equivalence.
 But the class $\mathcal{J}$ of homotopy equivalences
 is docile and closed under composition (resp. right cancellation). It follows by Proposition \ref{prop:deriveddocileclass}
 that the class of homotopy monomorphisms
 is docile and closed under composition (resp. right cancellation). 
 \end{proof}

 The notion of $n$-truncated map is defined by induction on $n\geq -2$.

\begin{defi} \label{def:n-truncated} A map $f:A\to B$ in a tribe
is said to be {\bf $(-2)$-truncated} if it is a homotopy equivalence.
If $n\geq -1$, a map $f:A\to B$ is said to be {\bf $n$-truncated} if its homotopy diagonal
${}^h\Delta(f)$ is $(n-1)$-truncated.
We say that an object $A$ is {\bf $n$-truncated} 
if the map $A\to 1$ is $n$-truncated.
 \end{defi}

In particular, an object is $(-2)$-truncated if and only if it is contractible.
An object is $(-1)$-truncated if and only if it is a mere proposition.
A {\it $0$-truncated} object is said to be {\bf discrete}.
A map $f:A\to B$ is $(-1)$-truncated if it is a homotopy monomorphism.

\begin{cor} \label{cor:docileclassntruncated} 
The class of $n$-truncated maps is docile and it is
closed under composition and right cancellation.
 \end{cor}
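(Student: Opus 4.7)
The plan is to proceed by induction on $n \geq -2$, using Proposition \ref{prop:deriveddocileclass} as the inductive engine. The base case is $n = -2$, where the class of $n$-truncated maps is, by definition, the class $\mathcal{J}$ of homotopy equivalences. This class is docile: homotopy equivalences contain all homotopy equivalences (trivially), they are closed under homotopy base changes (by Lemma \ref{homotopycartesiancriterion1} together with the pasting property of homotopy cartesian squares), and the cancellation clause (3) of Definition \ref{def:docileclass} is part of the 3-for-2 property given by Proposition \ref{6four2homotopyequiv}. The same 3-for-2 property shows that $\mathcal{J}$ is closed under composition and right cancellation.

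For the inductive step, suppose the class $\mathcal{T}_n$ of $n$-truncated maps is docile and closed under composition and right cancellation. By Definition \ref{def:n-truncated}, a map $f$ is $(n+1)$-truncated if and only if its homotopy diagonal ${}^h\Delta(f)$ is $n$-truncated, that is, if and only if $f \in D(\mathcal{T}_n)$ in the sense of Definition \ref{def:deriveddocileclass}. Proposition \ref{prop:deriveddocileclass} immediately gives that $D(\mathcal{T}_n)$ is docile, and that it inherits closure under composition and right cancellation from $\mathcal{T}_n$. This completes the induction.

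The only subtle point to verify at the base case is that $\mathcal{J}$ is literally docile in the sense of Definition \ref{def:docileclass}, since this is used implicitly in the proof of Corollary \ref{cor:docileclassmono} but not explicitly recorded. The main obstacle is thus just the bookkeeping of the base case: clause (2) (closure under homotopy base changes) follows because a homotopy cartesian square with one vertical map a homotopy equivalence has its opposite vertical map a homotopy equivalence, which is precisely the content of the pasting and two-out-of-three lemmas for homotopy cartesian squares in a Brown fibration category (and $\mathcal{E}$ is one, by Theorem \ref{atribeisafibrationcategory}); clauses (1) and (3) are immediate. With the base case in hand, the induction runs mechanically and the corollary follows.
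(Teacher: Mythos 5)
Your proof is correct and is essentially the argument the paper intends: the paper's own proof is just the one-line citation of Proposition \ref{prop:deriveddocileclass}, with the induction on $n$ (base case the homotopy equivalences, inductive step via the derived-class construction) left implicit, exactly as you spell it out. Your explicit verification that the class of homotopy equivalences is docile -- which the paper merely asserts after Definition \ref{def:docileclass} -- is a welcome addition rather than a deviation.
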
 
 
 \begin{proof} This follows from Proposition \ref{prop:deriveddocileclass}.
  \end{proof}

\begin{prop} \label{prop:morphismpreservesntruncated} A morphism of tribes
$F:\mathcal{E}\to \mathcal{E}'$
takes $n$-truncated maps to $n$-truncated maps.
 \end{prop}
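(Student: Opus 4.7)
The plan is to proceed by induction on $n \geq -2$. For the base case $n=-2$, a $(-2)$-truncated map is a homotopy equivalence, and any morphism of tribes $F:\mathcal{E}\to \mathcal{E}'$ preserves homotopy equivalences by Proposition \ref{h-homotohomotop} (since $F$ induces a functor $Ho(F):Ho(\mathcal{E})\to Ho(\mathcal{E}')$ which takes isomorphisms to isomorphisms).

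For the inductive step, suppose the result holds for $(n-1)$-truncated maps, and let $f:A\to B$ be $n$-truncated in $\mathcal{E}$. The key observation is that $F$ transports the homotopy diagonal construction to the corresponding construction on $F(f)$. Concretely, choose a factorization $f=pu:A\rightarrowtail E\twoheadrightarrow B$ with $u$ anodyne and $p$ a fibration, so that the homotopy diagonal ${}^h\Delta(f)$ is the diagonal $\delta_p:E\to E\times_B E$ of $p$. Since $F$ is a morphism of tribes, $F(u)$ is anodyne and $F(p)$ is a fibration, so $F(f)=F(p)F(u)$ is again an $AF$-factorization and ${}^h\Delta(F(f))$ can be computed as the diagonal $\delta_{F(p)}:F(E)\to F(E)\times_{F(B)} F(E)$ of $F(p)$. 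Because $F$ preserves base changes of fibrations (and terminal objects), the canonical comparison $F(E\times_B E)\to F(E)\times_{F(B)} F(E)$ is an isomorphism, and under it $F(\delta_p)$ is identified with $\delta_{F(p)}$. Thus $F({}^h\Delta(f)) = {}^h\Delta(F(f))$ up to canonical isomorphism.

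By hypothesis ${}^h\Delta(f)$ is $(n-1)$-truncated, so by the inductive hypothesis applied to ${}^h\Delta(f)$ the map $F({}^h\Delta(f)) = {}^h\Delta(F(f))$ is $(n-1)$-truncated, which means $F(f)$ is $n$-truncated. This completes the induction.

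The main conceptual issue is that the homotopy diagonal is only defined after a choice of fibrant replacement, so one must check that the notion of $n$-truncation is well-defined and invariant under such choices; this is exactly the content of Corollary \ref{cor:docileclassntruncated}, which guarantees that the class of $n$-truncated maps is docile (in particular closed under the equivalence arising from different fibrant replacements). Given this, the argument above goes through cleanly, and the only real verification required is the compatibility $F(\delta_p)=\delta_{F(p)}$, which reduces to the fact that morphisms of tribes preserve finite limits of fibrations.
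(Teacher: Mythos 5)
Your proof is correct. The paper gives no argument here (the proof is ``Left to the reader''), and your induction on $n$ is the natural one: the base case follows from Proposition \ref{h-homotohomotop}, and in the inductive step the identification $F({}^h\Delta(f))\cong{}^h\Delta(F(f))$ is justified exactly as you say, since a morphism of tribes sends an $AF$-factorisation $f=pu$ to an $AF$-factorisation of $F(f)$ and preserves the pullback $E\times_B E$ of the fibration $p$ along itself. Your remark that independence of the choice of fibrant replacement is needed, and is supplied by the docility of the class of $n$-truncated maps (Corollary \ref{cor:docileclassntruncated} together with the lemma following Definition \ref{def:docileclass}), is the one genuinely delicate point, and you have addressed it.
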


\begin{proof} Left to the reader.
\end{proof}

\smallskip

Let $e_A:\mathcal{E}\to \mathcal{E}(A)$
be the base change functor defined 
by putting $e_A(X)=(X\times A,p_2)$.

\begin{lemma} 
An object $A$ in a tribe $\mathcal{E}$ is $n$-truncated
if and only if the map $\delta_A: \top_A\to e_A(A)$
 is $(n-1)$-truncated.
\end{lemma}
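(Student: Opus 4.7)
The plan is to reduce the statement to the assertion that the summation functor $\Sigma_A:\mathcal{E}(A)\to \mathcal{E}$ preserves and reflects $n$-truncated maps for every $n\geq -2$. Since the map $A\to 1$ is automatically a fibration, its homotopy diagonal is the ordinary diagonal $\Delta_A:A\to A\times A$, and $\Sigma_A(\delta_A)=\Delta_A$. Hence $A$ being $n$-truncated (i.e.\ $\Delta_A$ being $(n-1)$-truncated in $\mathcal{E}$) is equivalent, via this key assertion, to $\delta_A$ being $(n-1)$-truncated in $\mathcal{E}(A)$, which is exactly what must be shown.

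I would prove the key assertion by induction on $n\geq -2$. The base case $n=-2$ is Theorem \ref{homotopyequirefl} applied to the fibration $A\to 1$: the functor $\Sigma_A$ preserves and reflects homotopy equivalences. For the inductive step, the main technical observation is that $\Sigma_A$ commutes with the formation of homotopy diagonals. Given a morphism $u:(E,p)\to (F,q)$ in $\mathcal{E}(A)$, choose an $AF$-factorisation $u=gv:(E,p)\to (M,r)\to (F,q)$ in $\mathcal{E}(A)$. By Proposition \ref{tribeslice}, the underlying map of $v$ is anodyne and that of $g$ is a fibration in $\mathcal{E}$, so $\Sigma_A(u)=\Sigma_A(g)\Sigma_A(v)$ is a fibrant replacement of $\Sigma_A(u)$ in $\mathcal{E}$. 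Since pullbacks in $\mathcal{E}(A)=\mathcal{E}/A|_{\mathrm{fib}}$ are computed underlyingly as pullbacks in $\mathcal{E}$, the diagonal of $g$ in $\mathcal{E}(A)$ is carried by $\Sigma_A$ to the diagonal of $\Sigma_A(g)$ in $\mathcal{E}$; that is,
$$\Sigma_A\bigl({}^h\Delta(u)\bigr)={}^h\Delta\bigl(\Sigma_A(u)\bigr).$$
Applying the inductive hypothesis to the homotopy diagonal, we obtain: $u$ is $n$-truncated in $\mathcal{E}(A)$ iff ${}^h\Delta(u)$ is $(n-1)$-truncated in $\mathcal{E}(A)$ iff $\Sigma_A({}^h\Delta(u))={}^h\Delta(\Sigma_A u)$ is $(n-1)$-truncated in $\mathcal{E}$ iff $\Sigma_A(u)$ is $n$-truncated in $\mathcal{E}$.

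The main obstacle is the bookkeeping needed to verify that $\Sigma_A$ really does commute with homotopy diagonals; concretely, that the $AF$-factorisations and the pullback used to build ${}^h\Delta(u)$ in $\mathcal{E}(A)$ are transported verbatim by $\Sigma_A$ to those used to build ${}^h\Delta(\Sigma_A(u))$ in $\mathcal{E}$. This relies on the explicit description of the clan structure on $\mathcal{E}(A)$ given in Proposition \ref{tribeslice} together with the standard fact that slice categories compute limits as in the ambient category. Once this compatibility is noted, the induction runs without further subtlety, and the final specialisation $u=\delta_A$, $\Sigma_A(\delta_A)=\Delta_A$, closes the argument; note that this recovers Lemma \ref{merepropcaract} in the case $n=-1$, since a section $\delta_A$ of $e_A(A)\twoheadrightarrow\top_A$ is a homotopy equivalence iff its target is contractible.
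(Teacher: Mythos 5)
The paper offers no proof of this lemma (it is ``Left to the reader''), so there is nothing to compare against; your argument stands on its own and it is correct. Your reduction to the claim that $\Sigma_A:\mathcal{E}(A)\to\mathcal{E}$ preserves and reflects $n$-truncated maps is a natural strengthening of Theorem \ref{homotopyequirefl} (which is exactly your base case $n=-2$), and the inductive step is sound: Proposition \ref{tribeslice} guarantees that an $AF$-factorisation in $\mathcal{E}(A)$ has anodyne/fibration underlying maps in $\mathcal{E}$, and Proposition \ref{sumalong} guarantees that $\Sigma_A$ preserves base changes of fibrations, so the fibrant replacement and the pullback defining ${}^h\Delta(u)$ in $\mathcal{E}(A)$ are carried verbatim to data defining ${}^h\Delta(\Sigma_A u)$ in $\mathcal{E}$. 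The one point you gloss over is that ${}^h\Delta$ depends on a choice of fibrant replacement, so ``preserves and reflects'' requires knowing that $(n-1)$-truncatedness of the homotopy diagonal is independent of that choice; this is exactly what the docility of the class of $n$-truncated maps (Corollary \ref{cor:docileclassntruncated}) provides, and it is worth citing explicitly in both directions of the ``iff''. The final specialisation $u=\delta_A$, $\Sigma_A(\delta_A)=\Delta_A={}^h\Delta(A\to 1)$ is correct (note the lemma is then only meaningful for $n\geq -1$), and your consistency check against Lemma \ref{merepropcaract} for $n=-1$ is a nice sanity check. An alternative route, closer in spirit to the paper's proof of Lemma \ref{merepropcaract}, would use that $e_A$ is a morphism of tribes together with Proposition \ref{prop:morphismpreservesntruncated} for one direction; but that direction alone does not obviously yield the statement about $\delta_A$ rather than about $e_A(A)$, so your symmetric argument via $\Sigma_A$ is arguably the cleaner one.
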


\begin{proof} Left to the reader.
\end{proof}

\subsection{$\pi$-tribes}

\begin{defi} We say that a $\pi$-clan  $\mathcal{E}$ is a $\pi$-{\bf tribe} if it is a tribe
and the internal product functor 
$$\Pi_f:\mathcal{E}(A)\to \mathcal{E}(B)$$
along a fibration $f:A\to B$
takes anodyne maps to anodyne maps.
\end{defi}

\begin{lemma} \label{localpitribe} If $\mathcal{E}$ is a $\pi$-tribe, then so is the
local tribe $\mathcal{E}(A)$ for every object $A\in \mathcal{E}$.
\end{lemma}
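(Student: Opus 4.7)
The plan is to check the three requirements for $\mathcal{E}(A)$ to be a $\pi$-tribe. That $\mathcal{E}(A)$ is a tribe is Proposition \ref{tribeslice}, and that it is a $\pi$-clan is Proposition \ref{localisationpitribe}. The substance of the proof is therefore the remaining condition: for every fibration $f$ in $\mathcal{E}(A)$, the internal product functor $\Pi_f$ preserves anodyne maps.

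Fix a fibration $f\colon (X, p_X) \twoheadrightarrow (Y, p_Y)$ in $\mathcal{E}(A)$, where $p_X\colon X \to A$ and $p_Y\colon Y \to A$ are fibrations in $\mathcal{E}$ and $f\colon X \to Y$ is a fibration in $\mathcal{E}$ with $p_Y f = p_X$. I would first invoke Lemma \ref{compositebasechangetribecor2} to obtain canonical identifications of clans $\mathcal{E}(A)(X, p_X) = \mathcal{E}(X)$ and $\mathcal{E}(A)(Y, p_Y) = \mathcal{E}(Y)$. Under these identifications, the base change functor $f^\star\colon \mathcal{E}(A)(Y, p_Y) \to \mathcal{E}(A)(X, p_X)$ coincides with the base change functor $f^\star\colon \mathcal{E}(Y) \to \mathcal{E}(X)$ along $f$ in $\mathcal{E}$, since both send an object $(E, g)$ over $Y$ to the fibre product $X \times_Y E$ equipped with its projection to $X$. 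By uniqueness of right adjoints, the internal product $\Pi_f$ in $\mathcal{E}(A)$ corresponds, under these identifications, to the internal product $\Pi_f$ in $\mathcal{E}$ (an identity already implicit in Proposition \ref{localisationpitribe}).

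It remains to check that anodyne maps also match under the identifications. By Proposition \ref{tribeslice}, a morphism in $\mathcal{E}(A)(X, p_X)$ is anodyne precisely when its underlying arrow in $\mathcal{E}$ is anodyne, and the same holds in $\mathcal{E}(X)$; so the two notions of anodyne map agree. Consequently, the preservation of anodyne maps by $\Pi_f$ in $\mathcal{E}(A)$ reduces to the same property for $\Pi_f\colon \mathcal{E}(X) \to \mathcal{E}(Y)$ in $\mathcal{E}$, which holds by hypothesis since $\mathcal{E}$ is a $\pi$-tribe. There is no serious obstacle in the proof; the only point to handle carefully is to verify cleanly that the local-tribe identifications of Lemma \ref{compositebasechangetribecor2} intertwine base change and therefore intertwine the right adjoints $\Pi_f$, after which the conclusion is immediate.
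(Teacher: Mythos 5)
Your proof is correct. The paper's own proof is left to the reader, so there is nothing to compare against, but your argument is the natural one: the tribe and $\pi$-clan structures are supplied by Propositions \ref{tribeslice} and \ref{localisationpitribe}, and the remaining anodyne-preservation condition reduces, via the identification $\mathcal{E}(A)(X,p_X)=\mathcal{E}(X)$ of Lemma \ref{compositebasechangetribecor2} (which matches base changes, hence their right adjoints $\Pi_f$, and matches anodyne maps by iterating Proposition \ref{tribeslice}), to the corresponding property of $\Pi_f:\mathcal{E}(X)\to\mathcal{E}(Y)$ in $\mathcal{E}$.
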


\begin{proof} Left to the reader.
\end{proof}

Examples of $\pi$-tribes:

\begin{itemize}
\item{} The category of groupoids $\mathbf{Grpd}$ (Hofmann and Streicher); 
\item{} The category of Kan complexes $\mathbf{Kan}$ (Awodey-Warren, Voevodsky);
\item{}  The syntaxic category of type theory with the axiom of 
extensionality (Gambino and Garner).
\end{itemize}

\begin{lemma} \label{lem:productisamorphismoftribes} If $\mathcal{E}$ is a $\pi$-tribe,
then the internal product functor 
$$\Pi_f:\mathcal{E}(A)\to \mathcal{E}(B)$$
along a fibration $f:A\to B$ is a morphism of tribes. Moreover, the
functor $[A,-]: \mathcal{E}\to \mathcal{E}$
is a morphism of tribes for every object $A\in \mathcal{E}$.
\end{lemma}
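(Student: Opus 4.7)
The plan is to derive the lemma almost directly from what is already on the table. For the first statement, Proposition \ref{productmorphismofclan} already gives that $\Pi_f:\mathcal{E}(A)\to \mathcal{E}(B)$ is a morphism of clans whenever $f:A\twoheadrightarrow B$ is a fibration in a $\pi$-clan. The only missing ingredient is preservation of anodyne maps, but this is literally the defining extra condition of a $\pi$-tribe: $\Pi_f$ carries anodyne maps in $\mathcal{E}(A)$ to anodyne maps in $\mathcal{E}(B)$. Together with Definition \ref{defmorphismhotribes}, this yields that $\Pi_f$ is a morphism of tribes.

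For the second statement, I would factor $[A,-]$ through $\mathcal{E}(A)$ as in Remark \ref{rem:exponentialareprod}. Writing $t_A:A\to 1$ for the canonical (fibration) map and $e_A = t_A^\star:\mathcal{E}\to \mathcal{E}(A)$ for the base change functor, one has $[A,B] = \Pi_{t_A}(e_A(B)) = \Pi_A(B\times A, p_2)$ naturally in $B$, so $[A,-]$ is the composite
\[
\xymatrix{\mathcal{E} \ar[r]^-{e_A} & \mathcal{E}(A) \ar[r]^-{\Pi_A} & \mathcal{E}.}
\]
The first functor $e_A$ is a morphism of tribes by Theorem \ref{basechangehomotopical}. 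The second functor is exactly $\Pi_{t_A}$ for the fibration $t_A:A\twoheadrightarrow 1$, hence a morphism of tribes by the first statement of the lemma applied with $B=1$ (using $\mathcal{E}(1)=\mathcal{E}$). A composite of morphisms of tribes is again a morphism of tribes, so $[A,-]$ is a morphism of tribes.

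There is essentially no obstacle: the content of the lemma is a pure book-keeping assembly of Proposition \ref{productmorphismofclan}, the definition of a $\pi$-tribe, Theorem \ref{basechangehomotopical}, and Remark \ref{rem:exponentialareprod}. The only point worth flagging is to check that the identification $\mathcal{E}(1)=\mathcal{E}$ lets one invoke the ``$\Pi_f$'' case of the lemma to handle $\Pi_A$, which is immediate.
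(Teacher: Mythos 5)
Your proof is correct and follows the same route as the paper: the first claim is Proposition \ref{productmorphismofclan} plus the defining anodyne-preservation condition of a $\pi$-tribe, and the second claim factors $[A,-]$ as $\Pi_A\circ e_A$ using Remark \ref{rem:exponentialareprod}, Theorem \ref{basechangehomotopical}, and the first part. Nothing is missing.
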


\begin{proof} The  functor $\Pi_f$
is a morphism of clans 
by Proposition \ref{productmorphismofclan}.
It is thus a morphism of tribes, since it takes
anodyne maps to anodyne maps.
Let us shows that  the functor $[A,-]$
is a morphism of tribes.
By Remark \ref{rem:exponentialareprod}, the functor $[A,-]$
is the composite of the functors
$$\xymatrix{
\mathcal{E} \ar[rr]^{e_A} && \mathcal{E}(A)\ar[rr]^{\Pi_A} && \mathcal{E}
}$$
where $e_A$ is the base change functor along the map $A\to 1$.
The functor $e_A$ is a morphism of tribes 
by \ref{basechangehomotopical}.
The  functor $\Pi_A$  s a morphism of tribes 
by the first part of the proof.
The functor $e_A$ is a morphism of tribes 
by \ref{basechangehomotopical}, 
Hence the composite $[A,-]$
is a morphism of tribes.
\end{proof}

\medskip

\begin{prop} \label{prop:derivedrightardj1}  If  $f:A\to B$ is a fibration in a $\pi$-tribe $\mathcal{E}$, then the 
adjunction
 $f^\star:\mathcal{E}(B)\leftrightarrow \mathcal{E}(A):\Pi_f$
induces an adjunction
$$Ho(f^\star):Ho(\mathcal{E}(A))\leftrightarrow Ho(\mathcal{E}(B)):Ho(\Pi_f)$$
\end{prop}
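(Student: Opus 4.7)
The plan is to apply a general abstract principle: given an adjunction of functors between tribes in which both functors are morphisms of tribes, the adjunction descends to the homotopy categories. Here the two functors are $f^\star$ and $\Pi_f$, and the task reduces to confirming both are morphisms of tribes and then invoking the $2$-functoriality of $Ho:\Trib\to\Cat$.

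First I would verify that both functors induce functors between homotopy categories. The base change functor $f^\star:\mathcal{E}(B)\to\mathcal{E}(A)$ is a morphism of tribes by Theorem \ref{basechangehomotopical}, so it descends to $Ho(f^\star):Ho(\mathcal{E}(B))\to Ho(\mathcal{E}(A))$ by Proposition \ref{h-homotohomotop}. The product functor $\Pi_f:\mathcal{E}(A)\to\mathcal{E}(B)$ is a morphism of clans by Proposition \ref{productmorphismofclan}, and it preserves anodyne maps since $\mathcal{E}$ is a $\pi$-tribe, so it is a morphism of tribes by Lemma \ref{lem:productisamorphismoftribes}. Hence it too descends, giving $Ho(\Pi_f):Ho(\mathcal{E}(A))\to Ho(\mathcal{E}(B))$.

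Next I would transport the adjunction $f^\star\dashv \Pi_f$ established in Proposition \ref{pitribecartclosed} to the homotopy categories. The unit $\eta:1_{\mathcal{E}(B)}\to\Pi_f f^\star$ and counit $\epsilon:f^\star\Pi_f\to 1_{\mathcal{E}(A)}$ are natural transformations between morphisms of tribes, hence are $2$-cells in the $2$-category $\Trib$. By Proposition \ref{the2-functorHo}, the $2$-functor $Ho:\Trib\to\Cat$ sends them to natural transformations $Ho(\eta):1_{Ho(\mathcal{E}(B))}\to Ho(\Pi_f)Ho(f^\star)$ and $Ho(\epsilon):Ho(f^\star)Ho(\Pi_f)\to 1_{Ho(\mathcal{E}(A))}$. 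The triangle identities satisfied by $\eta$ and $\epsilon$ are equalities of $2$-cells in $\Trib$, so by the same $2$-functoriality they are preserved, yielding the triangle identities for $Ho(\eta)$ and $Ho(\epsilon)$. This establishes $Ho(f^\star)\dashv Ho(\Pi_f)$.

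There is essentially no obstacle beyond the verification that $\Pi_f$ is a morphism of tribes, which rests on the hypothesis that $\mathcal{E}$ is a $\pi$-tribe (so that $\Pi_f$ preserves anodyne maps) and hence sends homotopy equivalences to homotopy equivalences by Proposition \ref{h-homotohomotop}. Once both functors are known to be morphisms of tribes, the rest is a purely formal consequence of the $2$-functoriality of $Ho$.
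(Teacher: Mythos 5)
Your proof is correct and follows essentially the same route as the paper: both arguments establish that $f^\star$ and $\Pi_f$ are morphisms of tribes (via Theorem \ref{basechangehomotopical} and Lemma \ref{lem:productisamorphismoftribes} respectively) and then descend the adjunction through the $2$-functor $Ho:\Trib\to\Cat$ of Proposition \ref{the2-functorHo}. Your version merely makes explicit the unit/counit and triangle-identity bookkeeping that the paper leaves implicit in the appeal to $2$-functoriality.
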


\begin{proof} The functor $f^\star$ is a 
morphism of tribes by \ref{basechangehomotopical}, and
the functor $\Pi_f$ is a 
morphism of tribes by Lemma  \ref{lem:productisamorphismoftribes}.
Hence the adjunction $f^\star\dashv \Pi_f$ induces an adjuntion
 $Ho(f^\star)\dashv Ho(\Pi_f)$ by Proposition \ref{the2-functorHo}.
 \end{proof}

\begin{prop} \label{homotopycatclosed} The homotopy category $Ho(\mathcal{E})$
of a $\pi$-tribe $\mathcal{E}$ is cartesian closed.
Moreover, the canonical functor $\mathcal{E}\to Ho(\mathcal{E})$
is cartesian closed.
\end{prop}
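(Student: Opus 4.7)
The plan is to lift the triple adjunction $\Sigma_A \dashv e_A \dashv \Pi_A$ from the tribe $\mathcal{E}$ to its homotopy category. Fix an object $A\in\mathcal{E}$; since $\mathcal{E}$ is a clan, the terminal map $t_A:A\to 1$ is a fibration, so $e_A = t_A^\star$ and $\Pi_A = \Pi_{t_A}$. By Proposition \ref{inverthoequiv4} applied to the fibration $t_A$, the adjunction $\Sigma_A \dashv e_A$ descends to an adjunction $Ho(\Sigma_A) \dashv Ho(e_A)$ on homotopy categories. On the other hand, since $\mathcal{E}$ is a $\pi$-tribe, the functor $\Pi_A$ preserves anodyne maps and is thus a morphism of tribes (Lemma \ref{lem:productisamorphismoftribes}); Proposition \ref{prop:derivedrightardj1} then produces an adjunction $Ho(e_A)\dashv Ho(\Pi_A)$.

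Composing the two descended adjunctions yields
\[
Ho(\Sigma_A)\circ Ho(e_A) \;\dashv\; Ho(\Pi_A)\circ Ho(e_A),
\]
which by the 2-functoriality of $Ho$ (Proposition \ref{the2-functorHo}) coincides with $Ho(\Sigma_A\circ e_A) \dashv Ho(\Pi_A\circ e_A)$. Now $\Sigma_A\circ e_A$ is exactly the product functor $(-)\times A:\mathcal{E}\to\mathcal{E}$, and by Remark \ref{rem:exponentialareprod} the composite $\Pi_A\circ e_A$ is the exponential functor $[A,-]$. Moreover, by Proposition \ref{hocatiscartesian}, $Ho(\mathcal{E})$ is cartesian and the canonical functor $Q:\mathcal{E}\to Ho(\mathcal{E})$ is cartesian, so $Ho((-)\times A)$ is the cartesian product functor $(-)\times A$ in $Ho(\mathcal{E})$. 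This exhibits $Ho([A,-])$ as right adjoint to $(-)\times A$ in $Ho(\mathcal{E})$, proving that $Ho(\mathcal{E})$ is cartesian closed with exponentials $[A,B]$ taken from $\mathcal{E}$.

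For the second assertion, $Q$ already preserves finite products, so it remains to check that the comparison map $\psi_{A,B}:Q[A,B]\to [QA,QB]$ is invertible. By the construction above the exponential $[QA,QB]$ in $Ho(\mathcal{E})$ is literally $Q[A,B]$, equipped with the counit of the composite adjunction. The counit of a composite of adjunctions is built from the component counits, so under the identifications $\Sigma_A e_A = (-)\times A$ and $\Pi_A e_A = [A,-]$ this counit is precisely the image $Q(\epsilon)$ of the evaluation $\epsilon:[A,B]\times A\to B$ of $\mathcal{E}$. Consequently $\psi_{A,B}$ is the identity and $Q$ is cartesian closed.

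The only real obstacle is the last bookkeeping step: one must unwind the counits of $\Sigma_A\dashv e_A$ and $e_A\dashv\Pi_A$ and check that their composite in $\mathcal{E}$ recovers the evaluation $\epsilon$ of Remark \ref{rem:exponentialareprod}. This is a routine diagram chase, but it is the point where the proof stops being formal nonsense about adjunctions and actually uses the explicit construction of $[A,B]=\Pi_A(e_A(B))$.
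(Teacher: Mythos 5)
Your proof is correct, and it is a slightly different (and in places more careful) route than the paper's. The paper's proof works directly with the composite functor $[A,-]=\Pi_A\circ e_A$: it invokes Lemma \ref{lem:productisamorphismoftribes} to see that $[A,-]$ is a morphism of tribes, hence descends to $Ho(\mathcal{E})$ by Proposition \ref{h-homotohomotop}, and then simply asserts that the descended functor is right adjoint to $(-)\times A$. You instead keep the two halves of the triple adjunction separate, descending $\Sigma_A\dashv e_A$ via Proposition \ref{inverthoequiv4} and $e_A\dashv\Pi_A$ via Proposition \ref{prop:derivedrightardj1}, and then compose the two homotopy-level adjunctions. What your decomposition buys is an actual justification of the step the paper glosses over: $(-)\times A$ is not itself a morphism of clans (it does not preserve the terminal object), so one cannot literally apply the 2-functor $Ho:\Trib\to\Cat$ to the adjunction $(-)\times A\dashv[A,-]$; your two constituent adjunctions are exactly the ones the paper has already verified to descend. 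You also address the second assertion (that $\mathcal{E}\to Ho(\mathcal{E})$ is cartesian closed), which the paper's proof does not treat explicitly; your identification of the counit of the composite adjunction with $Q(\epsilon)$, making the comparison map the identity, is the right way to do this, and your closing remark correctly locates the one non-formal step as the unwinding of the composite counit against Remark \ref{rem:exponentialareprod}.
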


\begin{proof} If $A\in \mathcal{E}$, then
the functor $E(A):=[A,-]: \mathcal{E}\to \mathcal{E}$
is right adjoint to the cartesian product functor
$(-)\times A: \mathcal{E}\to \mathcal{E}$.
The functor $E(A)$ induces a functor $Ho(E(A)): Ho(\mathcal{E})\to Ho(\mathcal{E})$
by Proposition \ref{h-homotohomotop},
since it is a morphism of tribes by Lemma \ref{lem:productisamorphismoftribes}.
The induced functor  $Ho(E(A))$ is then
right adjoint to the cartesian product functor
$(-)\times A: Ho(\mathcal{E})\to Ho(\mathcal{E})$.
\end{proof}

\begin{cor} If $\mathcal{E}$ is a $\pi$-tribe,
then the category $Ho(\mathcal{E}(A))$
is cartesian closed for every object $A\in \mathcal{E}$.
\end{cor}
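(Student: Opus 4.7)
The plan is to reduce the corollary to the immediately preceding Proposition \ref{homotopycatclosed} by using the stability result for local $\pi$-tribes.

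First I would invoke Lemma \ref{localpitribe}, which asserts that if $\mathcal{E}$ is a $\pi$-tribe, then so is the local tribe $\mathcal{E}(A)$ for every object $A\in \mathcal{E}$. This is the crucial structural fact that allows us to transfer cartesian closedness of the homotopy category from the ambient $\pi$-tribe to the slice.

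Then I would apply Proposition \ref{homotopycatclosed} directly to the $\pi$-tribe $\mathcal{E}(A)$ in place of $\mathcal{E}$. That proposition states that the homotopy category of any $\pi$-tribe is cartesian closed (and that the canonical quotient functor is cartesian closed as well). Applied to $\mathcal{E}(A)$, this yields that $Ho(\mathcal{E}(A))$ is cartesian closed, which is exactly the conclusion sought.

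There is no real obstacle here: the corollary is a one-line application of a stability lemma followed by the preceding proposition. The only thing worth remarking is that one could additionally observe that the base change morphism $e_{(A,p)}$ along a fibration, and more generally the base change functors $f^\star$ between slice $\pi$-tribes, induce cartesian closed functors on homotopy categories — but this is not required for the statement as given, which only asserts cartesian closedness of the object $Ho(\mathcal{E}(A))$. Hence the proof reduces to a single sentence: \emph{by Lemma \ref{localpitribe} the clan $\mathcal{E}(A)$ is a $\pi$-tribe, and so $Ho(\mathcal{E}(A))$ is cartesian closed by Proposition \ref{homotopycatclosed}}.
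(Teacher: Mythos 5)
Your proof is correct and is exactly the argument the paper gives: the paper's own proof reads ``This follows from Proposition \ref{homotopycatclosed} and Lemma \ref{localpitribe},'' i.e.\ $\mathcal{E}(A)$ is a $\pi$-tribe by the stability lemma, so its homotopy category is cartesian closed by the preceding proposition. Nothing to add.
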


\begin{proof} This follows from Proposition \ref{homotopycatclosed} 
and Lemma \ref{localpitribe}.
\end{proof}

 \begin{prop}  \label{prop:derivedrightardj2}  
$ \mathcal{E}$ is a $\pi$-tribe,
then the functor  $Ho(f^\star):Ho( \mathcal{E}(B))\to  Ho(\mathcal{E}(A))$
 has a right adjoint 
 $$\tilde{\Pi}_f:Ho(\mathcal{E}(A))\to Ho(\mathcal{E}(B))$$
 for any map $f:A\to B$.
   \end{prop}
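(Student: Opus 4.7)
The plan is to mimic the construction of the left adjoint $\tilde{\Sigma}_f$ from Proposition \ref{hobasechangehasaleftadjointallmap}, but on the right side. First I would choose an $AF$-factorization $f = pu : A \rightarrowtail C \twoheadrightarrow B$, with $u$ anodyne and $p$ a fibration. By Proposition \ref{compositebasechangetribe}, the base change functor decomposes as $f^\star \simeq u^\star p^\star$, hence passing to homotopy categories we get $Ho(f^\star) \simeq Ho(u^\star) \circ Ho(p^\star)$.

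Next, since $p$ is a fibration and $\mathcal{E}$ is a $\pi$-tribe, Proposition \ref{prop:derivedrightardj1} supplies a right adjoint $Ho(\Pi_p) : Ho(\mathcal{E}(C)) \to Ho(\mathcal{E}(B))$ to $Ho(p^\star)$. Since $u$ is anodyne, Corollary \ref{anodynebasechangeweakequiv} shows that $u^\star : \mathcal{E}(C) \to \mathcal{E}(A)$ is a weak equivalence of tribes, so $Ho(u^\star)$ is an equivalence of categories; choose a pseudo-inverse $W : Ho(\mathcal{E}(A)) \to Ho(\mathcal{E}(C))$. Any equivalence of categories is simultaneously a left and a right adjoint of its pseudo-inverse, so $W$ is in particular right adjoint to $Ho(u^\star)$. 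Composing right adjoints, I define
\[
\tilde{\Pi}_f \;\defeq\; Ho(\Pi_p) \circ W \;:\; Ho(\mathcal{E}(A)) \to Ho(\mathcal{E}(B)),
\]
and the resulting adjunction $Ho(f^\star) \dashv \tilde{\Pi}_f$ follows by composing the adjunctions $Ho(u^\star) \dashv W$ and $Ho(p^\star) \dashv Ho(\Pi_p)$.

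Finally I would check that this definition is independent (up to canonical isomorphism) of the chosen $AF$-factorization; the cleanest way is to observe that any two $AF$-factorizations of $f$ can be compared by an anodyne map between the intermediate objects (using the lifting property, as in the proof of Lemma \ref{independancepathob}), which induces a natural isomorphism between the corresponding candidates for $\tilde{\Pi}_f$. Alternatively, uniqueness of right adjoints makes this automatic once existence is established. The only conceptual obstacle is verifying that $Ho(\Pi_p)$ genuinely exists at the level of homotopy categories, which in turn relies on $\Pi_p$ being a morphism of tribes; this is exactly the content of Lemma \ref{lem:productisamorphismoftribes}, so the argument goes through without further work.
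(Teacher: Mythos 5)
Your proof is correct and follows essentially the same route as the paper: choose an $AF$-factorization $f=pu$, use Proposition \ref{compositebasechangetribe} to write $Ho(f^\star)\simeq Ho(u^\star)Ho(p^\star)$, take the right adjoint $Ho(\Pi_p)$ from Proposition \ref{prop:derivedrightardj1} and a pseudo-inverse $W$ of the equivalence $Ho(u^\star)$ from Corollary \ref{anodynebasechangeweakequiv}, and set $\tilde{\Pi}_f=Ho(\Pi_p)W$. The extra remarks on independence of the factorization are fine but not needed, since right adjoints are unique up to canonical isomorphism.
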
 
   
\begin{proof} Let us choose an $AF$-factorisation $f=pu:A\to C\to B$.
Then the functor $Ho(f^\star)$ is isomorphic to the functor $Ho(u^\star)Ho(p^\star)$,
since the functor $f^\star$ is isomorphic to the functor $u^\star p^\star$
by Proposition \ref{compositebasechangetribe}. The functor $Ho(p^\star)$ has 
a right adjoint $Ho(\Pi_p)$ by Proposition \ref{prop:derivedrightardj1}, since $p$ is a fibration.
The functor $Ho(u^\star)$ has has a pseudo-inverse $W$ by Corollary \ref{anodynebasechangeweakequiv},
since $u$ is anodyne. 
It follows that the functor $Ho(f^\star)=Ho(u^\star)Ho(p^\star)$ has a right
adjoint $\tilde{\Pi}_f:=Ho(\Pi_p)W$.
\end{proof}

When $f$ is a fibration, we have $\tilde{\Pi}_f=Ho(\Pi_f)$
 by Proposition \ref{prop:derivedrightardj1}.

 \begin{prop} \label{hobasechangehasarightadjointcomp} 
 If $f:A\to B$ and $g:B\to C$ are two maps in a tribe,
 then $\tilde{\Pi}_{gf}\simeq  \tilde{\Pi}_g \tilde{\Pi}_f $.
  \end{prop}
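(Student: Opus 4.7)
The plan is to imitate verbatim the argument used for $\tilde{\Sigma}$ in Proposition \ref{hobasechangehasaleftadjointcomp}, exploiting the fact that right adjoints of isomorphic functors are canonically isomorphic and that the right adjoint of a composite is the composite of the right adjoints (in reverse order).

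First, I would invoke Proposition \ref{compositebasechangetribe}, which gives a canonical isomorphism $(gf)^\star \simeq f^\star g^\star$ between functors $\mathcal{E}(C)\to \mathcal{E}(A)$. Applying the 2-functor $Ho:\Trib\to \Cat$ of Proposition \ref{the2-functorHo}, I obtain a natural isomorphism
\[
Ho((gf)^\star)\ \simeq\ Ho(f^\star)\,Ho(g^\star)\ :\ Ho(\mathcal{E}(C))\to Ho(\mathcal{E}(A)).
\]
Next, I would invoke Proposition \ref{prop:derivedrightardj2} three times to produce the adjunctions
\[
Ho(f^\star)\dashv \tilde{\Pi}_f,\qquad Ho(g^\star)\dashv \tilde{\Pi}_g,\qquad Ho((gf)^\star)\dashv \tilde{\Pi}_{gf}.
\]
Composing the first two adjunctions yields $Ho(f^\star)\,Ho(g^\star)\dashv \tilde{\Pi}_g\,\tilde{\Pi}_f$. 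Since isomorphic functors have isomorphic right adjoints, the displayed isomorphism above forces
\[
\tilde{\Pi}_{gf}\ \simeq\ \tilde{\Pi}_g\,\tilde{\Pi}_f,
\]
as required.

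There is no real obstacle: the proof is purely formal once Propositions \ref{compositebasechangetribe} and \ref{prop:derivedrightardj2} are in hand. The only minor point to be careful about is that $\tilde{\Pi}_f$ and $\tilde{\Pi}_g$ were constructed via auxiliary $AF$-factorisations (hence only determined up to canonical isomorphism in the homotopy category), but this is precisely the reason the statement is phrased with $\simeq$ rather than equality, so it creates no difficulty.
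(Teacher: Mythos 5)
Your proposal is correct and follows essentially the same route as the paper's own proof: both use the isomorphism $(gf)^\star\simeq f^\star g^\star$ (hence $Ho((gf)^\star)\simeq Ho(f^\star)Ho(g^\star)$) together with the three adjunctions from Proposition \ref{prop:derivedrightardj2} and uniqueness of right adjoints up to isomorphism. Your remark about the $AF$-factorisations only determining $\tilde{\Pi}_f$ up to canonical isomorphism is a sensible clarification, but nothing further is needed.
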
 
  
 \begin{proof} We have $Ho((gf)^\star)\simeq Ho(f^\star) Ho(g^\star)$,
 since we have $(gf)^\star\simeq f^\star g^\star$,
 The result follows by using the adjunctions: 
 $$Ho(f^\star)  \vdash \tilde{\Pi}_f, \quad \quad  Ho(g^\star) \vdash  \tilde{\Pi}_g \quad {\rm and} \quad  Ho((gf)^\star) \vdash \tilde{\Pi}_{gf}.$$
  \end{proof}

 \begin{defi} \label{embeddingpi-tribe} We shall say that a morphism of $\pi$-tribes  $F:\mathcal{E}\to \mathcal{E}'$
 is an {\it embedding} if it is fully faithful and if it reflects fibrations and anodyne maps.
  \end{defi}

\begin{lemma} \label{embeddingpitribes} If a morphism of $\pi$-tribes  $F:\mathcal{E}\to \mathcal{E}'$ is
 fully faithful and reflects fibrations, then it is an embedding.
\end{lemma}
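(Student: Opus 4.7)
The plan is to reduce this directly to the analogous statement for tribes, namely Lemma \ref{embeddingtribes}, and ultimately to the underlying clan-level fact in Lemma \ref{reflectionofanodynemorphclan}. A morphism of $\pi$-tribes is in particular a morphism of tribes (and a morphism of clans), so the proof should consist of unpacking Definition \ref{embeddingpi-tribe} and citing the results already established.

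More precisely, I would argue as follows. By hypothesis, $F:\mathcal{E}\to \mathcal{E}'$ is fully faithful and reflects fibrations; it remains to show that $F$ reflects anodyne maps. But since $F$ is a morphism of clans which is fully faithful, Lemma \ref{reflectionofanodynemorphclan} applies and gives precisely that $F$ reflects anodyne maps. Combined with the hypothesis that $F$ reflects fibrations, this matches Definition \ref{embeddingpi-tribe}, so $F$ is an embedding of $\pi$-tribes.

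There is no real obstacle here: the statement is a bookkeeping lemma whose content has already been supplied at the clan level. The only thing to be careful about is that Definition \ref{embeddingpi-tribe} requires reflection of \emph{both} fibrations and anodyne maps, and that Lemma \ref{reflectionofanodynemorphclan} is stated for morphisms of clans (not only of tribes); since every morphism of $\pi$-tribes is in particular a morphism of clans, this causes no difficulty. The proof will therefore be a single short paragraph citing the two results.
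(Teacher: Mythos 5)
Your proof is correct and follows exactly the paper's route: the paper likewise reduces the statement to Lemma \ref{embeddingtribes}, which in turn rests on Lemma \ref{reflectionofanodynemorphclan} to get reflection of anodyne maps from full faithfulness. Nothing is missing.
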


\begin{proof} This follows from Lemma  \ref{embeddingtribes}.
\end{proof}

 \begin{defi} \label{subtribe=embedding} If  $\mathcal{E}$ is a $\pi$-tribe,
 we say that a sub-tribe $\mathcal{L}\subseteq \mathcal{E}$
 is {\it $\pi$-closed} if $\mathcal{L}$
is a $\pi$-tribe and the inclusion functor 
$\mathcal{L}\to \mathcal{E}$ is $\pi$-closed.
\end{defi}

\medskip

Recall from Corollary \ref{propcontratibleobjecttribe} 
that if $\mathcal{E}$ is a tribe, then the full subcategory $\mathcal{E}^c$ of contractible objects of $\mathcal{E}$
is a sub-tribe.

\begin{prop}\label{propcontratibleobjecpittribe} 
If $\mathcal{E}$ is a $\pi$-tribe, then the sub-tribe $\mathcal{E}^c$ 
is $\pi$-closed.
\end{prop}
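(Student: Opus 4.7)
The plan is to reduce the entire statement to the single closure property that if $f : A \twoheadrightarrow B$ and $p : E \twoheadrightarrow A$ are fibrations in $\mathcal{E}$ with $A$, $B$, and $E$ all contractible, then the internal product $\Pi_f(E,p)$ computed in the ambient $\pi$-tribe $\mathcal{E}$ is again contractible. Once this is established, the sub-clan $\mathcal{E}^c \subseteq \mathcal{E}$ inherits internal products of fibrations along fibrations with the same structure map, so the inclusion $\mathcal{E}^c \to \mathcal{E}$ preserves internal products by construction; and since anodyne maps in the sub-tribe $\mathcal{E}^c$ are inherited from $\mathcal{E}$ (Proposition \ref{tribeslice} applied within the sub-tribe), the preservation of anodyne maps by $\Pi_f$ on $\mathcal{E}^c$ transfers directly from the corresponding property in $\mathcal{E}$.

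To prove the closure property, I would first observe that the fibration $p : E \twoheadrightarrow A$ is a homotopy equivalence (by 3-for-2 applied to $E \to 1$ and $A \to 1$, both of which are homotopy equivalences), and is therefore a trivial fibration. By Proposition \ref{homotopyequiv}, this is equivalent to the object $(E,p)$ being contractible in the local tribe $\mathcal{E}(A)$. By Lemma \ref{lem:productisamorphismoftribes} the internal product functor $\Pi_f : \mathcal{E}(A) \to \mathcal{E}(B)$ is a morphism of tribes; as such it preserves the terminal object (being a morphism of clans) and respects the homotopy relation (Proposition \ref{h-homotohomotop}), so it sends contractible objects to contractible objects. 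Hence $\Pi_f(E,p)$ is contractible in $\mathcal{E}(B)$, and applying Proposition \ref{homotopyequiv} in the reverse direction tells us that the structure map $\Pi_f(E) \twoheadrightarrow B$ is a trivial fibration. A final 3-for-2 argument, using that $B$ is contractible, then shows that the total space $\Pi_f(E)$ itself is contractible, as required.

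To conclude that the inclusion $\mathcal{E}^c \hookrightarrow \mathcal{E}$ is $\pi$-closed in the sense of Definition \ref{definitionhomopitribe}, I then need to show the induced functor $\mathcal{E}^c(A) \to \mathcal{E}(A)$ is cartesian closed for every $A \in \mathcal{E}^c$. Preservation of finite products is immediate from full-faithfulness, since products in $\mathcal{E}^c(A)$ are inherited from $\mathcal{E}(A)$. For the exponential, Remark \ref{rem:exponentialareprod} expresses $[X,Y]$ as an internal product of the form $\Pi_X(Y\times_A X, p_2)$ in $\mathcal{E}(A)$; Proposition \ref{trivialfibrationbasechange} combined with 3-for-2 shows that $Y\times_A X$ is contractible whenever $X$ and $Y$ are (pull the trivial fibration $X \to A$ back along $Y \to A$), so the closure property from the preceding paragraph applies and the exponential computed in $\mathcal{E}(A)$ already lies in $\mathcal{E}^c(A)$.

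The only genuine obstacle is the closure property of the second paragraph, and it is cleanly isolated by Proposition \ref{homotopyequiv}, which provides the two-way bridge between ``$(E,p)$ is a contractible object of the local tribe $\mathcal{E}(A)$'' and ``$p : E \twoheadrightarrow A$ is a trivial fibration of $\mathcal{E}$''. Everything else is bookkeeping within the sub-tribe framework already set up by Corollary \ref{propcontratibleobjecttribe}.
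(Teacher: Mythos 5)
The paper gives no proof of this proposition (it is left to the reader), so there is nothing to compare against; your argument stands on its own and is correct. The key reduction — that contractibility of $\Pi_f(E,p)$ for fibrations between contractibles follows from Proposition \ref{homotopyequiv} (contractible object of $\mathcal{E}(A)$ $\Leftrightarrow$ trivial fibration), the fact that $\Pi_f$ is a morphism of tribes and hence preserves terminal objects and homotopy equivalences, and two applications of 3-for-2 — is exactly the right lever, and the remaining bookkeeping (fullness of $\mathcal{E}^c$ guaranteeing that the internal product of $\mathcal{E}$ restricts to one in $\mathcal{E}^c$, and the transfer of the anodyne-preservation condition) is handled adequately.
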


 \begin{proof} Left to the reader.
 \end{proof}

\subsection{Contractibilty in a $\pi$-tribe}

Recall that an object $A$ is a tribe $\mathcal{E}$ is said to be {\it contractible}
if and only if the map $t_A:A\to 1$
has a homotopy inverse.
But a map $a:1\to A$ is a homotopy inverse of
the map $t_A:A\to 1$ if and only if 
there exists a  homotopy 
$h:at_A\leadsto 1_A$;
the pair $(a,h)$ is called a {\it contraction}
of the object $A$.

\medskip

If $A$ is an object of a $\pi$-tribe, consider the functors
$$\xymatrix{
\mathcal{E}(A\times A) \ar[rr]^{\Pi_{p_1}} && \mathcal{E}(A) \ar[rr]^{\Sigma_{A}} && \mathcal{E}
}$$
If $PA=(PA, \partial_0,\partial_1,\sigma)$ is a path object for $A$,
then the map $(\partial_0,\partial_1):PA\to A\times A$ is a fibration.
The  functor  $ \Pi_{p_1}$
takes the object $PA=(PA, (\partial_0,\partial_1))$
to an object $(\Pi_{p_1} (PA),p)$ and the functor 
$\Sigma_A$
takes the object $(\Pi_{p_1} (PA),p)$
to the object  $\Pi_{p_1} (PA)$.
Let us put
$$isCont(A):=\Sigma_A \Pi_{p_1} (PA).$$
In type theoretic notation, we have
$$isCont(A):=\sum_{x_1:A}\prod_{x_2:A} x_1=_A x_2,$$
where $x_1=_A x_2$ denotes the fiber of the map $(\partial_0,\partial_1):PA\to A\times A$
at $(x_1,x_2):A\times A$.

\begin{prop} {\rm (Voevodsky)} \label{iscontract3} The object $isCont(A)$
is a mere proposition for every object $A$.
An object $A$ is contractible if and only if 
the object $isCont(A)$ is inhabited.
\end{prop}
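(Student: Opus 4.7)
The plan is to dispose of the second statement first by a direct unpacking, and then to bootstrap the mere-proposition claim from a key contractibility lemma via a slice argument.

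For the equivalence ``$A$ contractible $\Leftrightarrow$ $isCont(A)$ inhabited,'' I would just chase the definitions. An element $c:1\to isCont(A)=\Sigma_A\Pi_{p_1}(PA)$ is by definition a pair $(a,\alpha)$ where $a:1\to A$ and $\alpha$ is an element of type $\Pi_{p_1}(PA)$ in the fiber over $a$, i.e. a section of $a^\star\Pi_{p_1}(PA)\to 1$. Using Beck--Chevalley (Theorem \ref{thm:BC}) to commute $a^\star$ past $\Pi_{p_1}$, this is the same data as a map $h:A\to PA$ with $\partial_0 h=a\circ t_A$ and $\partial_1 h=1_A$, which is precisely a contraction of $A$ with center $a$. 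So $isCont(A)$ is inhabited iff $A$ admits a contraction, proving the second statement.

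Next I would prove the key lemma: \emph{if $A$ is contractible then $isCont(A)$ is contractible}. The idea is that both $A\times A$ and $PA$ become contractible, so $(\partial_0,\partial_1):PA\to A\times A$ is a homotopy equivalence between contractible objects, and then $\Pi_{p_1}(PA)$ is itself contractible over $A$. Concretely: since $A\to 1$ is a homotopy equivalence, so is $\sigma:A\to PA$ (anodyne), hence $PA$ is contractible; since the diagonal $\delta_A$ factors as $(\partial_0,\partial_1)\sigma$ and both $A$ and $A\times A$ are contractible, the map $(\partial_0,\partial_1)$ is a homotopy equivalence. By Proposition \ref{homotopyequiv}, the object $PA\in\mathcal{E}(A\times A)$ is contractible in the local tribe. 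The projection $p_1:A\times A\to A$ is a homotopy equivalence (its section $\delta_A$ is one, by $2$-out-of-$3$), so $p_1^\star$ is a weak equivalence of tribes by Theorem \ref{homotopyequivalenceareweakequiv}, and its right adjoint $\Pi_{p_1}$ is a morphism of tribes (Lemma \ref{lem:productisamorphismoftribes}) hence preserves contractibility. Thus $\Pi_{p_1}(PA)$ is contractible in $\mathcal{E}(A)$, the structure map $\Pi_{p_1}(PA)\to A$ is trivial, so the total space $\Sigma_A\Pi_{p_1}(PA)=isCont(A)$ is homotopy equivalent to $A$, hence contractible.

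For the first statement, I would apply Lemma \ref{merepropcaract}: setting $B:=isCont(A)$, it suffices to show that $e_B(B)=(B\times B,p_2)\in\mathcal{E}(B)$ is contractible. The base change functor $e_B:\mathcal{E}\to\mathcal{E}(B)$ is a morphism of tribes (Theorem \ref{basechangehomotopical}) and is $\pi$-closed (Proposition \ref{charhomopitribe3}); it therefore preserves path objects, $\Sigma$, and $\Pi$ up to canonical equivalence, giving a canonical homotopy equivalence
\[
e_B\bigl(isCont(A)\bigr)\simeq isCont_{\mathcal{E}(B)}\bigl(e_B(A)\bigr).
\]
The diagonal $\delta_B$ provides a canonical element $\top_B\to e_B(B)$ in $\mathcal{E}(B)$, and through the above equivalence this is an element of $isCont_{\mathcal{E}(B)}(e_B(A))$. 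Applying the second statement \emph{inside} the tribe $\mathcal{E}(B)$ shows that $e_B(A)$ is contractible in $\mathcal{E}(B)$; applying the key lemma inside $\mathcal{E}(B)$ then shows that $isCont_{\mathcal{E}(B)}(e_B(A))$, and hence $e_B(B)$, is contractible in $\mathcal{E}(B)$.

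The main technical obstacle is the naturality statement $e_B(isCont(A))\simeq isCont_{\mathcal{E}(B)}(e_B(A))$: one must verify that the specific construction of $isCont$ behaves well under base change, which reduces to checking (i) Beck--Chevalley for $\Sigma_A$ and $\Pi_{p_1}$ along $e_B$ and (ii) that $e_B$ of a path object is again a path object, which follows from $e_B$ preserving fibrations, anodyne maps, and finite limits. Once this compatibility is in place, the slice argument packages the biconditional and the key lemma into the mere-proposition statement without circularity.
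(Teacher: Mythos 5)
Your proposal is correct and follows essentially the same route as the paper: establish the bijection between elements of $isCont(A)$ and contractions of $A$, show that $isCont(A)$ is contractible when $A$ is, and then deduce the mere-proposition claim by applying Lemma \ref{merepropcaract} together with the generic element $\delta_K:e_K(K)$ and the identification $e_K(isCont(A))=isCont(e_K(A))$ coming from $e_K$ being a morphism of $\pi$-tribes. The only cosmetic difference is that you prove the intermediate ``contractible $A$ implies contractible $isCont(A)$'' step by hand, whereas the paper simply cites Proposition \ref{propcontratibleobjecpittribe} ($\pi$-closedness of the sub-tribe of contractible objects); both justifications are sound.
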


\begin{proof} Let us first show that 
the set of elements of 
$isCont(A)$ is in bijection with the set of contractions
of $A$. If $p:\Pi_{p_1} (PA)\to A$ is the structure map,
then to each element $c:1\to \Pi_{p_1} (PA)$
corresponds an element $a=p(c):A$
and the following triangle
commutes
$$\xymatrix{
1\ar[dr]_{a}\ar[rr]^(0.4){c}&&  \Pi_{p_1} (PA)\ar[dl]^{p}\\
&A&
}$$ 
Thus, $c$ is a map $(1,a)\to  \Pi_{p_1} (PA)$
in the category $ \mathcal{E}(A)$.
But $c$ corresponds by the adjunction $p_1^\star \dashv \Pi_{p_1}$ to a map $h:p_1^{\star}(1,a)\to PA$
in $\mathcal{E}(A\times A)$.
But the following square is cartesian,
where $t_A$ is the map $A\to 1$, 
$$\xymatrix{
A\ar[rr]^{(at_A,1_A)} \ar[d] && A\times A  \ar[d]^{p_1}\\
1 \ar[rr]^{a} &&  A
}$$ 
Thus, $h:A\to PA$ and $(\partial_0,\partial_1 )h=(a t_A,1_A)$.
This shows that $h:at_A\leadsto 1_A$.
Hence the pair $(a,h)$ is a contraction of $A$.
The map $c\mapsto (a,h)$ so defined is obviously bijective.
It follows that object $A$ is contractible if and only if 
the object $isCont(A)$ is inhabited.
Observe also that if $A$  is contractible, then so is the object $isCont(A)=\Sigma_A\Pi_{p_1}A$
by Proposition \ref{propcontratibleobjecpittribe}.
Thus, the object  $isCont(A)$ is inhabited if and only if it is contractible.
It remains to show that the object $isCont(A)$ is a mere proposition for any object $A$.
By Lemma \ref{merepropcaract}, it suffices
to show that the image of the object $K:=isCont(A)$
by the base change functor $e_K:\mathcal{E}\to \mathcal{E}(K)$
is contractible. By \ref{genericelement}, object $e_K(K)$ is inhabited by the element $\delta_K:\bot_K: e_K(K)$.
We have $e_K(K)=e_K(isCont(A))=isCont(e_K(A))$
since the functor $e_K$ is a morphism of $\pi$-tribes.
Hence the object $isCont(e_K(A))$ is inhabited. 
It follows by the argument above that the object $isCont(e_K(A))$ is contractible.
Thus, $e_K(K)$ is contractible. 
This proves that the object $K=isCont(A)$
is a mere proposition.
\end{proof}

\newpage

\section{Simplicial tribes}

\subsection{Simplicial clans}

We shall denote the category of simplicial sets by ${\sSet}$
and the full sub-category of finite simplicial sets by ${\sSet}_f$.
For the notion of simplicial category see Appendix \ref{hcsquareinaBrownfibcat}.
We shall denote the underlying category of a simplicial category $\mathcal{C}$  by $\mathcal{C}_0$.
Recall that a simplicial category $\mathcal{C}$ is said to be {\it finitely cotensored}
if every object $X\in \mathcal{C}$ admits a cotensor product $X^K$ by every
finite simplicial set $K$.
For simplicity, we shall write $X^{[n]}$ instead of $X^{\Delta[n]}$ 
and write $X^{\partial [n]}$ instead of $X^{\partial \Delta[n]}$.
In particular, $X^{[0]}=X$ and $X^{\partial [0]}=1$.
Moreover, we shall write
$$\partial_i:=X^{d_i}:X^{[n]}\to X^{[n-1]}\quad {\rm and} \quad  \sigma_j:=X^{s_j}:X^{[n-1]}\to X^{[n]}$$
for every $i\in [n]$ and $j\in [n-1]$.

\medskip

\begin{defi}\label{def:simpclan} Let $\mathcal{E}$ be a simplicial category.
If $\mathcal{E}$ is finitely cotensored, then 
a {\it simplicial clan structure} on $\mathcal{E}$ is a clan structure on $\mathcal{E}_0$
such that the following conditions hold:
\begin{itemize}
\item{} the functor $(-)^{[n]}:\mathcal{E}_0\to \mathcal{E}_0$
is a morphism of clans for every $n\geq 0$.
\item{} if $f:X\to Y$ is a fibration, then so is the gap map 
$$X^{[n]}\to Y^{[n]} \times_{Y^{\partial [n]} } X^{\partial [n]}$$
of the following square
$$
\xymatrix{
X^{[n]} \ar[d]_{f^{[n]}} \ar[rr]^{X^{i_n}} &&\ar[d]  X^{\partial [n]}\ar[d]^{f^{\partial[n]}}  \\
Y^{[n]} \ar[rr]^{Y^{i_n}}  && Y^{\partial [n]}
}
$$ 
for every $n\geq 0$.
\end{itemize}
\end{defi}

For example, the category ${\sSet}^{op}$ has the structure
of a simplicial clan, where a map $u^{o}:K^o\to L^o$
is a fibration if the map $u:L\to K$ is monic.
Similarly, the category $({\sSet}_f)^{op}$ 
has the structure of a simplicial clan.

\begin{rem} If $\mathcal{E}$ is a simplicial clan, then the terminal object $1\in \mathcal{E}_0$
is $s$-teminal. In other words, we have  $ \mathcal{E}(Q,1)=1$ for every object $Q\in \mathcal{E}$.
To see this, it suffices to show that we have $ \mathcal{E}(Q,1)_n=1$ for every $n\geq 0$.
But we have $ \mathcal{E}(Q,1)_n= \mathcal{E}_0(Q,1^{[n]})$ 
by definition of the cotensor product. Moreover, 
we have $1^{[n]}=1$ for every $n\geq 0$, since the functor $(-)^{[n]}:\mathcal{E}_0\to \mathcal{E}_0$
is a morphism of clans. 
Thus,
$$ \mathcal{E}(Q,1)_n= \mathcal{E}_0(Q,1^{[n]})_0=\mathcal{E}_0(Q,1)_0=1$$
since the object $1$ is terminal in $\mathcal{E}_0$.
\end{rem}

\begin{rem} In a simplicial clan $\mathcal{E}$ every fibration $f:X\to B$ is $s$-carrable.
To see this, we have to show that the square 
\begin{equation} \label{s-cartesiansquare}
\xymatrix{
A\times_B X \ar[d]_{p_1} \ar[rr]^{p_2}  && \ar[d]^{f} X  \\
A \ar[rr]^{g}  && B
}\end{equation}
is $s$-cartesian  for every map $g:A\to B$.
In other words, we have to show
that the square
 $$
\xymatrix{
\mathcal{E}(Q,A\times_B X)_n \ar[d] \ar[rr] && \ar[d]^{\mathcal{E}(Q,f)_n} \mathcal{E}(Q,X)_n  \\
\mathcal{E}(Q, A)_n \ar[rr]^{\mathcal{E}(Q,g)_n}  && \mathcal{E}(Q, B)_n
}$$
is cartesian for every object $Q\in \mathcal{E}$ and every $n\geq 0$.
But we have  $\mathcal{E}(Q,P)_n=\mathcal{E}_0(Q,P^{[n]})$ for every object
$P\in \mathcal{E}_0$ by definition of the cotensor product. 
Moreover, the following square is cartesian 
$$
\xymatrix{
\mathcal{E}_0(Q,(A\times_B X)^{[n]}) \ar[d] \ar[rr] && \ar[d]^{\mathcal{E}_0(Q,f^{[n]})} \mathcal{E}_0(Q,X^{[n]})  \\
\mathcal{E}_0(Q, A^{[n]}) \ar[rr]^{\mathcal{E}_0(Q,g^{[n]})}  && \mathcal{E}_0(Q, B^{[n]})
}$$
since  the functor $(-)^{[n]}:\mathcal{E}_0\to \mathcal{E}_0$
is a morphism of clans.
\end{rem}

Let us say that a monomorphism of simplicial sets $u:A\to B$
is {\it elementary} if it is the cobase change of an inclusion $i_n:\partial\Delta[n] \to \Delta[n] $. In which case we have 
a pushout square
\begin{equation} \label{elementmono}
\xymatrix{
\partial\Delta[n] \ar[d]_{i_n} \ar[rr]^{a} && \ar[d]^{u}  A  \\
\Delta[n] \ar[rr]^{b}  && B
}
\end{equation}
Every monorphism in ${\sSet}_f$ is the composite of  a finite 
sequence of elementary monomorphisms.

\begin{lemma}\label{expmono} If $X$ is an object in a simplicial clan $\mathcal{E}$
and  $u:A\to B$ is a monomorphism in $\mathcal{S}_f$,
then the map $X^u:X^B\to X^A$
is a fibration.
\end{lemma}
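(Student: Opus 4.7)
My plan is to reduce to the case of an elementary monomorphism using two facts: that every monomorphism in $\sSet_f$ is a finite composite of cobase changes of boundary inclusions $i_n:\partial\Delta[n]\to\Delta[n]$, and that the class of fibrations is closed under composition. Since the contravariant functor $X^{(-)}:(\sSet_f)^{op}\to\mathcal{E}_0$ turns composites into composites, it suffices to show $X^u$ is a fibration whenever $u$ is elementary.

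Next, I would establish the building block: the map $X^{i_n}:X^{[n]}\to X^{\partial[n]}$ is a fibration for every $n\geq 0$. This falls out of the second axiom of Definition \ref{def:simpclan} applied to the unique map $f:X\to 1$, which is a fibration by the clan axioms. Since the functor $(-)^{[n]}$ preserves terminal objects (being a morphism of clans), we have $1^{[n]}=1$ and $1^{\partial[n]}=1$, so the gap map of the relevant square collapses to $X^{[n]}\to X^{\partial[n]}$, and this is a fibration by hypothesis.

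Now suppose $u:A\to B$ is elementary, arising from a pushout square as in (\ref{elementmono}). The key general fact I would invoke is that the cotensor functor $X^{(-)}:(\sSet_f)^{op}\to\mathcal{E}_0$ preserves limits, and so sends pushouts of simplicial sets to pullbacks in $\mathcal{E}_0$. Applied to (\ref{elementmono}), this yields a cartesian square
$$
\xymatrix{
X^B \ar[d]_{X^u} \ar[rr] && X^{[n]} \ar[d]^{X^{i_n}}  \\
X^A \ar[rr]  && X^{\partial[n]}
}
$$
Since $X^{i_n}$ is a fibration by the previous step and fibrations are closed under base change in any clan, $X^u$ is a fibration. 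Concatenating such elementary steps finishes the proof for an arbitrary monomorphism in $\sSet_f$.

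The main obstacle is the verification that cotensoring in the first argument (over $\sSet_f$) sends finite colimits to limits in $\mathcal{E}_0$. This is a standard consequence of the adjunction characterizing cotensors, $\mathcal{E}(Q,X^K)\cong \sSet(K,\mathcal{E}(Q,X))$, together with the Yoneda lemma in $\mathcal{E}_0$, but it should be recorded explicitly. The factorization of a monomorphism in $\sSet_f$ as a finite composite of elementary monomorphisms is a standard skeletal argument on the target, dimension by dimension, so I would cite it rather than prove it.
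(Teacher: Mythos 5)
Your proposal is correct and follows essentially the same route as the paper's proof: reduce to elementary monomorphisms by closure of fibrations under composition, obtain $X^u$ as a base change of $X^{i_n}$ from the cotensored pushout square, and identify $X^{i_n}$ as the gap map of the square over $X\to 1$ (which collapses since $1^{[n]}=1^{\partial[n]}=1$). Your explicit remarks on why cotensoring sends pushouts to pullbacks and why the gap map collapses are details the paper leaves implicit, but the argument is the same.
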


\begin{proof} If the proposition is true for two monomorphisms $u:A\to B$ and
 $v:B\to C$ in $\mathcal{S}_f$, then it is also true for the composite $vu:A\to C$.
 Hence it suffices to prove the proposition in the case where $u:A\to B$
 is an elementary monomorphism. From the pushout square (\ref{elementmono})
 we obtain a pullback square
$$
\xymatrix{
X^B \ar[d]_{X^u} \ar[rr]^{X^{b}} &&\ar[d]  X^{[n]} \ar[d]^{X^{i_n}}  \\
X^A \ar[rr]^{X^{a}}  &&X^{\partial [n]} 
}
$$ 
Thus, it suffices to show that the map $X^{i_n}$ is a fibration.
But  $X^{i_n}$ is the gap map of the following square.
$$
\xymatrix{
X^{[n]} \ar[d]_{t^{[n]}} \ar[rr]^{X^{i_n}} &&\ar[d]  X^{\partial [n]}\ar[d]^{t^{\partial[n]}}  \\
1^{[n]} \ar[rr]^{1^{i_n}}  && 1^{\partial [n]}
}
$$
where $t$ is the map $X\to 1$.
Thus,  $X^{i_n}$ is a fibration, since $t$ is a fibration and $\mathcal{E}$ is a simplicial clan.
 \end{proof}

\medskip

\begin{prop}\label{strongdefsimpclan} 
If $f:X\to Y$ is a fibration in a simplicial clan $\mathcal{E}$, 
then the following square 
\begin{equation} \label{squareforsimpclan}
\xymatrix{
X^B \ar[d]_{f^B} \ar[rr]^{X^u} &&\ar[d]  X^A\ar[d]^{f^A}  \\
Y^B \ar[rr]^{Y^u}  && Y^A
}
\end{equation}
is Reedy fibrant for every monomorphism $u:A\to B$ in $\mathcal{S}_f$.
\end{prop}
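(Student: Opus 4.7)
The plan is to reduce the statement to the case of elementary monomorphisms (cobase changes of the boundary inclusions $i_n : \partial\Delta[n] \to \Delta[n]$) and then to invoke the two hypotheses in Definition \ref{def:simpclan}. Before attacking the main claim, however, I would first establish the auxiliary fact that $f^K : X^K \to Y^K$ is itself a fibration for every finite simplicial set $K$. This proceeds by induction on the number of non-degenerate simplices of $K$; the empty case is trivial since $X^{\emptyset} = 1 = Y^{\emptyset}$. For the inductive step, write $K$ as a pushout $K' \cup_{\partial\Delta[n]} \Delta[n]$ with $K'$ having strictly fewer non-degenerate simplices, so that $X^K \cong X^{K'} \times_{X^{\partial[n]}} X^{[n]}$ and similarly for $Y^K$. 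The map $f^K$ then factors as
\[
X^{K'} \times_{X^{\partial[n]}} X^{[n]} \xrightarrow{\alpha} X^{K'} \times_{Y^{\partial[n]}} Y^{[n]} \xrightarrow{\beta} Y^{K'} \times_{Y^{\partial[n]}} Y^{[n]},
\]
where $\alpha$ is the base change along $X^{K'} \to X^{\partial[n]}$ of the simplicial-clan gap map $X^{[n]} \to Y^{[n]} \times_{Y^{\partial[n]}} X^{\partial[n]}$, and $\beta$ is the base change of $f^{K'}$ along the projection. Both are fibrations (the first by the second bullet of Definition \ref{def:simpclan}, the second by the inductive hypothesis), hence so is $f^K$.

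With this auxiliary fact in hand, I turn to the main claim. Every monomorphism $u : A \to B$ in $\sSet_f$ factors as a finite composite of elementary monomorphisms (by filtering $B$ over $A$ one non-degenerate simplex at a time). Since Reedy fibrant squares compose horizontally by Lemma \ref{compostionfibsquare}, it suffices to treat the case where $u$ is elementary, say the cobase change of $i_n$ along some map $\partial\Delta[n] \to A$. Then $X^B \cong X^A \times_{X^{\partial[n]}} X^{[n]}$ and $Y^B \cong Y^A \times_{Y^{\partial[n]}} Y^{[n]}$. I verify the three Reedy conditions for the square: the maps $f^A$ and $f^B$ are fibrations by the auxiliary claim; $Y^u$ is a fibration by Lemma \ref{expmono}; and it remains to show that the gap map $X^B \to Y^B \times_{Y^A} X^A$ is a fibration. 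A direct calculation identifies $Y^B \times_{Y^A} X^A \cong X^A \times_{Y^{\partial[n]}} Y^{[n]}$, under which the gap map becomes the base change of the simplicial-clan gap map $X^{[n]} \to Y^{[n]} \times_{Y^{\partial[n]}} X^{\partial[n]}$ along the canonical $X^A \to X^{\partial[n]}$, hence is a fibration.

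The main obstacle is the bookkeeping in these iterated pullbacks: one must check coherence of the canonical isomorphism $Y^B \times_{Y^A} X^A \cong X^A \times_{Y^{\partial[n]}} Y^{[n]}$, and verify that the natural map arising as the gap map is literally the claimed base change of a known fibration. These checks are routine categorical manipulations, but they require careful attention to which of the two pullback descriptions of $X^B$ and $Y^B$ is being used at each step, and to the cocycle compatibility of the structure maps to $X^{\partial[n]}$ and $Y^{\partial[n]}$.
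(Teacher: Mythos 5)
Your proof is correct and follows essentially the same route as the paper's: reduce to elementary monomorphisms via Lemma \ref{compostionfibsquare}, then exhibit the gap map of the resulting square as a base change of the gap map $X^{[n]}\to Y^{[n]}\times_{Y^{\partial [n]}}X^{\partial [n]}$ supplied by Definition \ref{def:simpclan}, using stability of fibrations under base change. The only organizational difference is that you extract the statement that $f^K$ is a fibration for every finite $K$ as a separate preliminary induction, whereas the paper carries it as the inductive hypothesis inside a single induction and packages the pullback bookkeeping as a cube-and-diagonal-square argument; the underlying computation is the same.
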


\begin{proof} The composite of two Reedy fibrant squares is Reedy fibrant
by Lemma \ref{compostionfibsquare}. Thus, if the proposition is true for two monomorphisms $u:A\to B$ and
 $v:B\to C$ in $\mathcal{S}_f$, then it is also true for their composite $vu:A\to C$.
 Hence it suffices to prove the proposition in the case where $u:A\to B$
 is an elementary monomorphism. 
 We may suppose that $f^A$ is a fibration by induction on the number of (non-degenerate) cells of $A$
 (the map $f^\emptyset:1\to 1$ is a fibration, since it is invertible).
 We shall use the following commutative cube:
\begin{equation}\label{cubeformapofsquaresimplicialcaln}
 \xymatrix{
 X^B \ar[dr] \ar[rrr]^{X^{u}} \ar[ddd]_{f^B} & &  &  X^A \ar '[d] [ddd]^(0.4){f^A}   \ar[dr]   &  \\
  &  X^{[n]} \ar[ddd]_(0.4){}\ar[rrr]^(0.4){X^{i_n}} &  & &  X^{\partial[n]} \ar[ddd]^(0.5){} \\ 
  &&&&&\\
 Y^B \ar '[r] [rrr]^(0.4){Y^{u}} \ar[dr] & &  &  Y^A\ar[dr] &  \\
  &  Y^{[n]} \ar[rrr]^{Y^{i_n}} &  & &   Y^{\partial[n]}   }
\end{equation}
The top and bottom faces of the cube are pullback squares,
since the square (\ref{elementmono}) is a pushout.
The maps $X^u$, $X^{i_n}$, $Y^u$ and  $Y^{i_n}$
are fibrations by Lemma \ref{expmono}.
The cartesian gap map of the front face of the cube is a fibration by the hypothesis on $\mathcal{E}$.
Let us show that the cartesian gap map of the back face is a fibration.
Let $S$ be the (diagonal) square obtained by composing the back face of the cube
with the bottom face. The cartesian gap map of 
$S$ is equal to the cartesian gap map of the back face of the cube,
since the bottom face is cartesian. 
Hence it suffices to show that the cartesian gap map of 
$S$ is a fibration. Observe that 
$S$ is also the composite of the top and front faces, since the cube commutes.
Hence the square $S$ admits the following decomposition,
where $\gamma'$ is the cartesian gap map of $S$
and $\gamma$ is the cartesian gap map of the front face of the cube,
$$
\xymatrix{
X^B \ar[r]^(0.3){\gamma'} \ar[d]&Y^{[n]}\times_{Y^{\partial [n]}} X^{A} \ar[d] \ar[rr]^{p_2}  & & X^A \ar[d] \\
X^{[n]}\ar[r]^(0.3)\gamma \ar[dr]_{f^{[n]}}&Y^{[n]}\times_{Y^{\partial [n]}} X^{\partial[n]} \ar[d]_{p_1} \ar[rr]^{p_2} && \ar[d]^{f^{\partial[n]}}   X^{\partial[n]}   \\
& Y^{[n]} \ar[rr]^{Y^{i_n}}  && Y^{\partial [n]}
}
$$ 
The composite of the top squares of this diagram is the top face of the cube.
Hence the top square on the left is a pullback by Lemma \ref{lemmacartesiansq}, since
the top face of the cube is a pullback. It follows that 
$\gamma'$ is a fibration, since $\gamma$ is a fibration.
This shows that the  cartesian gap map of $S$ is a fibration.
It follows that the cartesian gap map $\gamma'':X^B\to  Y^{B}\times_{Y^A} X^{A} $
of the back face of the cube is a fibration.
It remains to show that the map $f^B$ is a fibration.
But the map $f^A$ is a fibration by induction hypothesis.
Thus, the projection $p_1 $ in the following diagram is a fibration by base change,
$$
\xymatrix{
X^{B}\ar[r]^(0.3){\gamma''} \ar[dr]_{f^{B}}&Y^{B}\times_{Y^{A}} X^{A} \ar[d]_{p_1} \ar[rr]^{p_2} && \ar[d]^{f^{A}}   X^{A}   \\
& Y^{B} \ar[rr]^{Y^{u}}  && Y^{A}
}
$$ 
This shows that $f^B=p_1 \gamma''$ is a fibration.

\end{proof}

Recall that if $A$ is an object of a simplicial category  $\mathcal{C}$, then 
the slice category $\mathcal{C}/A$ is simplicial.
By construction, an object of $\mathcal{C}/A$ is an object $X\in \mathcal{C}$
equipped with a map $p:X\to A$.
If $X=(X,p)$ and $Y=(Y,q)$ are two objects of $\mathcal{C}/A$,
then the simplicial set $(\mathcal{C}/A)(X,Y)$
is defined by 
the following pullback square of simplicial sets:
\begin{equation} \label{squareforsimpslicehom}
\xymatrix{
(\mathcal{C}/A)(X,Y) \ar[d] \ar[r] &\mathcal{C}(X,Y) \ar[d]^{ \mathcal{C}(X,q) }  \\
1 \ar[r]^{p}  &  \mathcal{C}(X,A) 
}
\end{equation}

If $\mathcal{E}$ is a simplicial clan, we shall denote by $\mathcal{E}(A)$
the full simplical subcategory of $\mathcal{E}/A$ whose
objects are fibrations $p:X\to A$.

If $\mathcal{E}$ is a simplicial clan, then the category 
 $\mathcal{E}(A)_0=:\mathcal{E}_0(A)$ has the structure
 of a clan for every object $A\in \mathcal{E}$.

\begin{prop}\label{localsimpclan} If $\mathcal{E}$ is a simplicial clan, then so is the
simplicial category $\mathcal{E}(A)$ for every object $A\in \mathcal{E}$.
A map $f:(X,p)\to (Y,q)$ in  $\mathcal{E}(A)$ is a fibration if $f$ is a fibration in $\mathcal{E}$.
\end{prop}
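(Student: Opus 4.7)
The plan is to construct cotensors in $\mathcal{E}(A)$ via pullbacks in $\mathcal{E}_0$ and then reduce both simplicial clan axioms to the corresponding ones in $\mathcal{E}$. For $(X,p)\in \mathcal{E}(A)$ and a finite simplicial set $K$, I would define $(X,p)^K$ as the pullback
$$\xymatrix{
(X,p)^K \ar[r] \ar[d]_{r} & X^K \ar[d]^{p^K} \\
A \ar[r]^{A^!} & A^K
}$$
where $A^!\colon A=A^{\Delta[0]}\to A^K$ is induced by the unique map $K\to \Delta[0]$. Applying Proposition \ref{strongdefsimpclan} to the monomorphism $\emptyset \hookrightarrow K$ shows that $p^K\colon X^K\to A^K$ is a fibration in $\mathcal{E}_0$, so the pullback exists and $r$ is a fibration; thus $(X,p)^K\in \mathcal{E}(A)$. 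The universal property follows from the pullback formula (\ref{squareforsimpslicehom}) for the hom-simplicial-set in a slice, together with the fact that the internal hom $(-)^K$ of $\sSet$ preserves limits: a map $(Y,q)\to (X,p)^K$ in $\mathcal{E}(A)$ corresponds to a map $g\colon Y\to X^K$ satisfying $p^K\circ g = A^!\circ q$, which is exactly an element of $\mathcal{E}(A)((Y,q),(X,p))^K$.

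Next I would verify the two simplicial clan axioms. For the first axiom, the terminal object $(A,1_A)$ is preserved, since $1_A^K=1_{A^K}$ and the pullback of an identity is an identity, so $(A,1_A)^K\cong (A,1_A)$. If $f\colon (X,p)\to (Y,q)$ is a fibration in $\mathcal{E}(A)_0$, then the induced map $f^{[n]}\colon (X,p)^{[n]}\to (Y,q)^{[n]}$ is the base change of the fibration $f^{[n]}\colon X^{[n]}\to Y^{[n]}$ of $\mathcal{E}_0$ along the structure map $(Y,q)^{[n]}\to Y^{[n]}$, hence is a fibration. Preservation of base changes of fibrations follows by pullback pasting (Lemma \ref{lemmacartesiansq}) from the analogous property in $\mathcal{E}_0$; both sides compute as $A\times_{A^{[n]}}(X^{[n]}\times_{Y^{[n]}} Z^{[n]})$.

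For the second axiom, given a fibration $f\colon (X,p)\to (Y,q)$ in $\mathcal{E}(A)_0$, I would compute the target of the gap map in $\mathcal{E}(A)$ directly. Pullbacks in $\mathcal{E}(A)_0$ are inherited from $\mathcal{E}_0$, and using the naturality of $(-)^{(-)}$ together with $q^{[n]}\circ f^{[n]}=p^{[n]}$ (which makes one of the two constraints on the $A$-coordinate redundant), one obtains
$$(Y,q)^{[n]}\times_{(Y,q)^{\partial[n]}}(X,p)^{\partial[n]} \;\cong\; A\times_{A^{[n]}}\bigl(Y^{[n]}\times_{Y^{\partial[n]}}X^{\partial[n]}\bigr).$$
Under this identification, the gap map of $f$ in $\mathcal{E}(A)$ is precisely the base change along $A^!\colon A\to A^{[n]}$ of the gap map of $f$ in $\mathcal{E}$. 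The latter is a fibration by the simplicial clan structure on $\mathcal{E}$, and fibrations are stable under base change in $\mathcal{E}_0$, so we are done.

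The main obstacle will be the pullback bookkeeping for the second axiom: identifying the iterated pullback on the left with a single base change along $A\to A^{[n]}$ requires verifying carefully that one of the constraints arising from the outer pullback becomes redundant, a fact that rests on naturality of the cotensor in the simplicial set variable combined with the relation $q\circ f=p$. Once this identification is in place, both axioms for $\mathcal{E}(A)$ are direct consequences of the corresponding axioms for $\mathcal{E}$.
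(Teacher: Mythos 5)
Your proposal is correct and follows essentially the same route as the paper: the cotensor $(X,p)^K$ is constructed as the pullback of $p^K$ along the diagonal $A\to A^K$ (your $A^!$ is the paper's $\delta$), and the key step is the same identification of the gap map in $\mathcal{E}(A)$ as a base change along $A\to A^{[n]}$ of the gap map in $\mathcal{E}$, which the paper carries out by a diagram decomposition where you compute the iterated pullback directly. The redundancy of one $A$-constraint that you flag is exactly what the paper's cartesian squares $(d)$ and $(e)$ encode, so no gap remains.
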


\begin{proof} Let us show that the functor $(-)^{[n]}:\mathcal{E}_0(A)\to \mathcal{E}_0(A)$
is a morphism of clans for every $n\geq 0$.
 If $(X,p)\in \mathcal{E}(A)_0$, then
the object $(X,p)^{[n]}\in \mathcal{E}(A)_0$ is constructed
by the following pullback squares 
$$
\xymatrix{
(X,p)^{[n]}\ar[d]  \ar[r] & X^{[n]}\ar[d]^{p^{[n]}} \\
  A  \ar[r]^{\delta} & A^{[n]}
 }$$
where $\delta$ is the diagonal map.
A map $(X,p)\to (Y,q)$ in  $\mathcal{E}_0(A)$ 
is a map $f:X\to Y$ such that $qf=p$;
let us denote $f/A$ the map $(X,p)\to (Y,q)$ defined by $f$.
Then the map $(f/A)^{[n]}:(X,p)^{[n]}\to (X,p)^{[n]}$ in $\mathcal{E}_0(A)$ 
is constructed by  the following diagram with two pullback squares
$$
\xymatrix{
(X,p)^{[n]}\ar[d]_{(f/A)^{[n]}} \ar[r] & X^{[n]}\ar[d]^{f^{[n]}} \\
 (Y,q)^{[n]} \ar[r] \ar[d] & Y^{[n]} \ar[d]^{q^{[n]}} \\
 A  \ar[r]^{\delta} & A^{[n]}
 }$$
If $f:X\to Y$ is a fibration in $\mathcal{E}$, then so is the map $f^{[n]}:X^{[n]}\to Y^{[n]}$,
since $\mathcal{E}$ is a simplicial clan.
Thus, $(f/A)^{[n]}:(X,p)^{[n]}\to (X,p)^{[n]}$
is a fibration in $\mathcal{E}_0(A)$.
We leave to the reader the verification that
the functor $(-)^{[n]}:\mathcal{E}_0(A)\to \mathcal{E}_0(A)$
preserves base changes of fibrations and terminal objects.
It remains to show that if $f/A:(X,p)\to (Y,q)$ is a fibration, then so is the gap map 
$$(X,p)^{[n]}\to (Y,q)^{[n]} \times_{(Y,q)^{\partial [n]} } (X,p)^{\partial [n]}$$
of the following square for every $n\geq 0$.
\begin{equation}\label{gapmapslice}
\xymatrix{
(X,p)^{[n]} \ar[d]_{(f/A)^{[n]}} \ar[rr]^{(X,p)^{i_n}} &&\ar[d]  (X,p)^{\partial [n]}\ar[d]^{(f/A)^{\partial[n]}}  \\
(Y,q)^{[n]} \ar[rr]^{(Y,q)^{i_n}}  && (Y,q)^{\partial [n]}
}
\end{equation}
The following commutative diagram 
\begin{equation} \label{1943}
\xymatrix{
 X^{[n]}\ar[d]_{f^{[n]}} \ar[rr]^{X^{i_n}}\ar@{}[rrd]|{(a)} &&  X^{\partial [n]}  \ar[d]^{f^{\partial [n]}}     \\
 Y^{[n]}\ar[d]_{q^{[n]}} \ar[rr]^{Y^{i_n}} \ar@{}[rrd]|{(b)}&&  Y^{\partial[n]}  \ar[d]^{q^{[n]}} \\
  A^{[n]}  \ar[rr]^{A^{i_n}}&& A^{\partial [n]}
 }\end{equation}
admits the following decomposition with three pullback squares:
$$
\xymatrix{
 X^{[n]}\ar[drr]_{f^{[n]}} \ar[rr]^{\gamma_0} && C \ar[rr]^{\gamma_1}\ar[d] \ar@{}[rrd]|{(c)} &&D \ar[rr]\ar[d] \ar@{}[rrd]|{(d)} &&  X^{\partial [n]}  \ar[d]^{f^{\partial [n]}}     \\
&& Y^{[n]}\ar[drr]_{q^{[n]}} \ar[rr]^{\gamma_2} && \ar[rr]\ar[d] E \ar@{}[rrd]|{(e)}  && Y^{\partial[n]}  \ar[d]^{q^{[n]}} \\
&& &&   A^{[n]}  \ar[rr]^{A^{i_n}}&& A^{\partial [n]}
 }$$
 The base change of the maps $X^{\partial [n]} \to  Y^{\partial[n]} \to A^{\partial [n]}$ along the diagonal $A\to A^{\partial [n]}$
 is equal to the base change of the maps $D\to E\to A^{[n]}$ along the diagonal $A\to A^{[n]}$, since the
 squares $(d)$ and $(e)$ are cartesian.
It follows that the square (\ref{gapmapslice}) is a base change along the map $A\to A^{[n]}$ of the square $(f)$ in the following diagram 
 \begin{equation}\label{gapmapsliceprogress}
\xymatrix{
 X^{[n]}\ar[d]_{f^{[n]}} \ar[rr]^{\gamma_1 \gamma_0} \ar@{}[rrd]|{(f)} &&D  \ar[d]  \\
 Y^{[n]}\ar[drr]_{q^{[n]}} \ar[rr]^{\gamma_2} &&\ar[d] E\\
&&   A^{[n]} 
 }\end{equation}
 Hence the gap map of the the square (\ref{gapmapslice}) is a base change
 of the gap map of the square $(f)$. Hence it suffices to show that the
 gap map of the square $(f)$ is a fibration.
 But $\gamma_0$ is the gap map of the square  $(f)$, since the square $(c)$
 of diagram (\ref{1943}) is cartesian.
The map $\gamma_0$ is also the gap map of the square  $(a)$ in the diagram (\ref{1943}),
since the square $(d)$ is cartesian. Thus $\gamma_0$ is a fibration, since 
$\mathcal{E}$ is a simplicial clan. We have proved that the
 gap map of square (\ref{gapmapslice}) is a fibration.
\end{proof}

\medskip

\begin{defi}\label{morphsimpclan} 
If $\mathcal{E}$ and $ \mathcal{E}'$ are simplicial clans,
we say that a simplicial functor  $F:\mathcal{E}\to \mathcal{E}'$
is a {\it morphism of simplicial clans} if it
preserves finite cotensors and it is a morphism of of clans.
\end{defi}

We shall denote by $\mathbf{sClan}$
the category of small simplicial clans and morphisms of simplicial clans.
The category $\mathbf{sClan}$ has the structure of a 2-category
where a 2-cell is a strong natural transformation.

\begin{prop}\label{cotensorsimpenriched} If $\mathcal{E}$ is a simplicial clan,
then the simplicial functor $(-)^K:\mathcal{E}\to \mathcal{E}$
is a morphism of simplicial clans for every finite simplicial set $K$.
\end{prop}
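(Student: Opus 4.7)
The plan is to verify the three conditions of Definition \ref{morphsimpclan}: that $(-)^K$ is a simplicial functor (which is automatic from the universal property of cotensor), that it preserves finite cotensors, and that it is a morphism of clans (preserves fibrations, terminal objects, and base changes of fibrations). The cotensor preservation is formal: for any object $X\in\mathcal{E}$ and any finite simplicial set $L$, both $(X^L)^K$ and $(X^K)^L$ represent the functor $Q\mapsto \mathcal{E}(Q,X)^{L\times K}$ in $\mathbf{sSet}$, using the defining isomorphism $\mathcal{E}(Q,X^K)\cong \mathcal{E}(Q,X)^K$ twice together with the canonical iso $L\times K\cong K\times L$ in $\mathbf{sSet}$. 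So by Yoneda one gets a natural isomorphism $(X^L)^K\cong (X^K)^L$.

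Next I would show that $(-)^K:\mathcal{E}_0\to\mathcal{E}_0$ preserves limits. This reduces to the fact that the internal hom $[K,-]:\mathbf{sSet}\to\mathbf{sSet}$ preserves limits: if $X=\lim X_i$ in $\mathcal{E}_0$, then for every $Q\in\mathcal{E}$,
\[
\mathcal{E}_0(Q,X^K)=\mathcal{E}(Q,X)^K_0\cong\big(\lim\mathcal{E}(Q,X_i)\big)^K_0\cong\lim\mathcal{E}(Q,X_i^K)_0,
\]
so $X^K=\lim X_i^K$ by Yoneda. In particular $(-)^K$ preserves the terminal object (hence $1^K=1$) and pullbacks, which handles preservation of base changes of fibrations (the base change exists on the target side because $f^K$ will be shown to be a fibration).

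Finally, to see that $(-)^K$ preserves fibrations, I would apply Proposition \ref{strongdefsimpclan} to the unique monomorphism $u:\emptyset\to K$ in $\mathcal{S}_f$. The associated square
\[
\xymatrix{
X^K \ar[d]_{f^K} \ar[rr] && X^\emptyset \ar[d]^{f^\emptyset} \\
Y^K \ar[rr] && Y^\emptyset
}
\]
is Reedy fibrant for every fibration $f:X\to Y$. Since $X^\emptyset=Y^\emptyset=1$ (by the limit-preservation above, applied to the empty diagram), the right-hand column is the identity on $1$, so the Reedy gap map coincides with $f^K$ itself; hence $f^K$ is a fibration. (Equivalently, one may invoke the fact that all four sides of a Reedy fibrant square are fibrations.)

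No step is particularly hard; the only mild subtlety is making sure that the limit-preservation argument through representables is valid in the enriched sense, which is guaranteed by the defining isomorphism $\mathcal{E}(Q,X^K)\cong \mathcal{E}(Q,X)^K$ of the cotensor together with the fact that $1\in\mathcal{E}$ is $s$-terminal (as recorded in the remark after Definition \ref{def:simpclan}), which is what gives $X^\emptyset\cong 1$ uniformly in $X$.
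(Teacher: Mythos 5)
The paper states this proposition without a proof, so there is nothing to compare against; judged on its own, your argument is essentially correct and is almost certainly the intended one. The three ingredients all work: commutation of cotensors $(X^L)^K\cong (X^K)^L$ via representability and $[K\times L,-]\cong[L\times K,-]$; preservation of fibrations by applying Proposition \ref{strongdefsimpclan} to the monomorphism $\emptyset\to K$, where $X^\emptyset\cong 1$ makes the Reedy gap map of the resulting square equal to $f^K$ itself (and the base case $f^\emptyset:1\to 1$ is explicitly covered in the proof of that proposition); and preservation of terminal objects and of base changes of fibrations by reading the relevant limits through the representables $\mathcal{E}(Q,-)$. The one point you should tighten is the blanket claim that $(-)^K:\mathcal{E}_0\to\mathcal{E}_0$ preserves limits: a limit in the underlying category of a simplicial category need not be an enriched ($s$-)limit, and only $s$-limits are detected by the simplicial hom functors you are computing with, so the displayed chain $\mathcal{E}(Q,\lim X_i)\cong\lim\mathcal{E}(Q,X_i)$ is not automatic. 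Fortunately you only need the two instances that the remarks following Definition \ref{def:simpclan} actually supply, namely that the terminal object is $s$-terminal and that pullbacks along fibrations are $s$-cartesian ($s$-carrability of fibrations). You cite the first of these at the end but not the second, and the second is the one doing the work in the base-change step; with that reference added, the argument is complete.
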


 If $\mathcal{E}$ is a simplicial clan,
then the base change functor $f^\star:\mathcal{E}_0(B)\to \mathcal{E}_0(A)$
is simplicially enriched for every map  $f:A\to B$ in $\mathcal{E}$.
We shall denote
the resulting simplicial functor by $f^\star:\mathcal{E}(B)\to \mathcal{E}(A)$.

\begin{prop}\label{basechangesimpclan} 
If $\mathcal{E}$ is a simplicial clan,
then the base change functor $f^\star:\mathcal{E}(B)\to \mathcal{E}(A)$
is a morphism of simplicial clans for any map $f:A\to B$ in $\mathcal{E}$.
\end{prop}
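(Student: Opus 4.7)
The plan is to verify the two conditions in Definition \ref{morphsimpclan}: that $f^\star:\mathcal{E}(B)\to \mathcal{E}(A)$ is a morphism of clans at the level of underlying categories, and that the simplicial functor $f^\star$ preserves finite cotensors. The first is immediate from Proposition \ref{basechangetribe}, so the real content is preservation of cotensors, which I would reduce to a pullback-pasting calculation.

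First I would unwind the construction of cotensors in the slice, following the proof of Proposition \ref{localsimpclan}. Given a fibration $p:X\twoheadrightarrow B$, the object $(X,p)^{[n]}\in \mathcal{E}(B)$ is obtained as the pullback of $p^{[n]}:X^{[n]}\to B^{[n]}$ along the diagonal $\delta_B:B\to B^{[n]}$. Applying $f^\star$ then yields a further pullback along $f:A\to B$. By Lemma \ref{lemmacartesiansq} (the pasting lemma), the composite of the two pullback squares exhibits $f^\star\!\bigl((X,p)^{[n]}\bigr)$ as the pullback of $X^{[n]}\to B^{[n]}$ along the map $\delta_B\circ f:A\to B^{[n]}$.

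On the other hand, I would compute $(f^\star(X,p))^{[n]}$ as the pullback of $(A\times_B X)^{[n]}\to A^{[n]}$ along $\delta_A:A\to A^{[n]}$. Since the simplicial clan axioms require the functor $(-)^{[n]}:\mathcal{E}_0\to \mathcal{E}_0$ to be a morphism of clans, it preserves the base-change square for $p$, giving a canonical isomorphism $(A\times_B X)^{[n]}\simeq A^{[n]}\times_{B^{[n]}}X^{[n]}$. Another application of the pasting lemma then identifies $(f^\star(X,p))^{[n]}$ with the pullback of $X^{[n]}\to B^{[n]}$ along $f^{[n]}\circ \delta_A:A\to B^{[n]}$. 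Naturality of the diagonal gives $\delta_B\circ f=f^{[n]}\circ \delta_A$, so both objects represent the same pullback and are canonically isomorphic in $\mathcal{E}(A)$.

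The main potential obstacle is bookkeeping: checking that the canonical isomorphism so obtained is compatible with the structure maps to $A$ (i.e.\ that it is an isomorphism in $\mathcal{E}(A)$ and not merely in $\mathcal{E}_0$) and that it is natural in $(X,p)$, so that $f^\star$ genuinely commutes with $(-)^{[n]}$ as simplicial functors. Both checks, however, reduce to inspecting which leg of each pullback serves as the projection to $A$, and this follows directly from the construction. Finally, I would note that the argument for cotensors by $\Delta[n]$ suffices, since any finite simplicial set $K$ is built from $\Delta[n]$'s by finite colimits and cotensors turn these into finite limits, which $f^\star$ preserves.
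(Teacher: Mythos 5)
The paper states this proposition without proof, so there is nothing to compare against directly; your argument is correct and uses exactly the machinery the paper sets up in the proof of Proposition \ref{localsimpclan} (the construction of $(X,p)^{[n]}$ as the pullback of $p^{[n]}$ along the diagonal $B\to B^{[n]}$), together with the clan axiom for $(-)^{[n]}$ and the pasting lemma \ref{lemmacartesiansq}. The identification of both $f^\star\bigl((X,p)^{[n]}\bigr)$ and $\bigl(f^\star(X,p)\bigr)^{[n]}$ with the pullback of $X^{[n]}\to B^{[n]}$ along $\delta_B\circ f=f^{[n]}\circ\delta_A$ is the right computation, and the compatibility of the two projections to $A$ does follow by inspecting the legs of the pasted squares.

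One step is stated more loosely than it should be: the reduction from $\Delta[n]$ to a general finite $K$. You say the resulting finite limits are ones ``which $f^\star$ preserves,'' but a base change functor between clans is only guaranteed to preserve terminal objects and pullbacks along fibrations, not arbitrary finite limits. This is harmless here, but only because of Lemma \ref{expmono}: building $K$ cell by cell expresses $X^K$ as an iterated pullback along the maps $X^{i_n}$, which are fibrations by the simplicial clan axiom, and these pullbacks are exactly the ones $f^\star$ preserves (with the empty simplicial set giving the terminal object). You should say this explicitly rather than appeal to preservation of finite limits in general; with that adjustment the proof is complete.
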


\subsection{Simplicial $\pi$-clans}

Let $f:A\to B$ be a fibration in a clan $\mathcal{E}$.
Recall from Definition \ref{definitiontribe} that the {\it internal product} of an object $E=(E,p)\in \mathcal{E}(A)$
along the map $f$  is defined to be an object $P=(P,q)\in \mathcal{E}(B)$
equipped with a map $\epsilon:f^\star(P)\to E$  in   $\mathcal{E}(A)$
which is cofree with respect to the functor $f^\star:\mathcal{E}/B \to \mathcal{E}/A$.

\begin{defi}\label{def:simpproduct}
Let $f:A\to B$ be a fibration in a simplicial clan $\mathcal{E}$
and let $P\in \mathcal{E}(B)$ be the internal
product of an object $E=(E,p)\in \mathcal{E}(A)$ along the map $f$
and let $\epsilon:f^\star(P)\to E$ be the evaluation.
We shall say that the pair $(P,\epsilon)$ is a {\it strong internal product} 
if the map $\epsilon:f^\star(P)\to E$ is cofree
with respect to the {\it simplicial} functor $f^\star:\mathcal{E}/B \to \mathcal{E}/A$.
\end{defi}

By definition, the pair $(P,\epsilon)$ is a strong internal product if the map
$$
\xymatrix{
(\mathcal{E}/B)(Q,P)\ar[rr]^{f^\star} && (\mathcal{E}/A)(f^\star Q,f^\star P) \ar[rr]^{\epsilon\circ (-)} &&  (\mathcal{E}/A)(f^\star Q,E)  
}
$$
is an {\it isomorphism of simplicial sets} for every object $C=(C,g)\in \mathcal{E}/B$.

\bigskip

Recall that a fibration in a simplicial clan is $s$-carrable.

\begin{defi}\label{definitionsimppiclan}
We say that a simplicial clan $\mathcal{E}$ is a {\bf simplicial $\pi$-clan} if it is a $\pi$-clan
and  the internal product $\Pi_f(E)=\Pi_f(E,p)$ of every object $E=(E,p)\in \mathcal{E}(A)$
is strong for every fibration $f:A\to B$.
\end{defi}

\begin{prop}\label{localsimppiclan} If $\mathcal{E}$ is a simplicial $\pi$-clan, then so is the
simplicial clan $\mathcal{E}(A)$ for every object $A\in \mathcal{E}$.
\end{prop}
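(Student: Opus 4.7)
My plan is to reduce the verification to the identification $\mathcal{E}(A)(X,p)=\mathcal{E}(X)$ from Lemma \ref{compositebasechangetribecor2}, enhanced to the simplicial setting. By Proposition \ref{localsimpclan} we already know that $\mathcal{E}(A)$ is a simplicial clan, and by Proposition \ref{localisationpitribe} we know that $\mathcal{E}(A)$ is a $\pi$-clan with internal products that agree with those of $\mathcal{E}$. Thus the only remaining point is to show, for every fibration $f:(X,p)\to (Y,q)$ in $\mathcal{E}(A)$ and every object $(E,r)\in \mathcal{E}(A)(X,p)$, that the evaluation
$$
\epsilon: f^{\star}\Pi_f(E,r) \to (E,r)
$$
is cofree with respect to the \emph{simplicial} base change functor $f^{\star}:\mathcal{E}(A)/(Y,q)\to \mathcal{E}(A)/(X,p)$ in the sense of Definition \ref{def:simpproduct}.

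The first step is to upgrade Lemma \ref{compositebasechangetribecor2} to an isomorphism of simplicial clans. Given a fibration $p:X\twoheadrightarrow A$ in $\mathcal{E}$, and two objects $(E_1,r_1),(E_2,r_2)\in \mathcal{E}(X)$, a double application of the defining pullback square (\ref{squareforsimpslicehom}) identifies both simplicial hom-sets
$$
\mathbf{Hom}_{\mathcal{E}(X)}\bigl((E_1,r_1),(E_2,r_2)\bigr) \quad\text{and}\quad \mathbf{Hom}_{\mathcal{E}(A)(X,p)}\bigl(((E_1,pr_1),r_1),((E_2,pr_2),r_2)\bigr)
$$
as the fiber of $\mathbf{Hom}_{\mathcal{E}}(E_1,r_2):\mathbf{Hom}_{\mathcal{E}}(E_1,E_2)\to \mathbf{Hom}_{\mathcal{E}}(E_1,X)$ over the vertex $r_1$. (In the second case, one writes the hom in $\mathcal{E}(A)(X,p)$ as a fiber of the hom in $\mathcal{E}(A)$ over $r_1$, and the hom in $\mathcal{E}(A)$ as a fiber of the hom in $\mathcal{E}$ over $pr_1$; composing the two pullbacks and simplifying collapses to the single fiber above.) This identification is natural, so the isomorphism $\mathcal{E}(X)\cong \mathcal{E}(A)(X,p)$ is simplicial.

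The second step is to observe that under these simplicial identifications $\mathcal{E}(A)(X,p)\simeq \mathcal{E}(X)$ and $\mathcal{E}(A)(Y,q)\simeq \mathcal{E}(Y)$, the base change functor $f^{\star}$ in $\mathcal{E}(A)$ corresponds to the base change functor $f^{\star}$ in $\mathcal{E}$ along the underlying fibration $f:X\to Y$. This is immediate for the underlying functors (both are defined by the same pullback in $\mathcal{E}$), and the simplicial naturality follows from Proposition \ref{basechangesimpclan}. Consequently the bijection
$$
\mathbf{Hom}_{\mathcal{E}/Y}\bigl((Q,g),\Pi_f(E,r)\bigr) \;\cong\; \mathbf{Hom}_{\mathcal{E}/X}\bigl(f^{\star}(Q,g),(E,r)\bigr)
$$
provided by the strong internal product in $\mathcal{E}$ restricts, under the pullback defining the simplicial hom in slices, to the corresponding bijection of simplicial hom-sets in $\mathcal{E}(A)$. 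This exhibits $\Pi_f(E,r)$ as a strong internal product in $\mathcal{E}(A)$.

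The main obstacle is the bookkeeping of the first step: one must trace carefully through the nested pullback squares defining simplicial homs in iterated slices, and verify that the two a priori different simplicial structures on $\mathcal{E}(X)=\mathcal{E}(A)(X,p)$ coincide on the nose (not merely up to isomorphism). Once this is in hand, the preservation of strongness under passage to local clans is a purely formal consequence of the naturality of the defining pullback square (\ref{squareforsimpslicehom}) and the fact that the $\pi$-clan structure on $\mathcal{E}(A)$ has already been built from that of $\mathcal{E}$ by Proposition \ref{localisationpitribe}.
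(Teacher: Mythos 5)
The paper states this proposition without proof, so there is nothing to compare against; I can only assess your argument on its own terms, and it is correct. Your reduction is the right one: Propositions \ref{localsimpclan} and \ref{localisationpitribe} dispose of the simplicial-clan and $\pi$-clan structures, and the only substantive point is that the internal products of $\mathcal{E}(A)$ inherit strongness from those of $\mathcal{E}$. The mechanism you identify --- iterated fibers over degenerate simplices collapse, because the condition $q_*u = p$ is implied by the condition $h'_*u = h$ once $qh'=g'$ --- is exactly what makes this work.

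One point to tighten. Definition \ref{def:simpproduct} asks for cofreeness of $\epsilon$ with respect to $f^\star$ on the \emph{full} slices $\mathcal{E}(A)/(Y,q)\to\mathcal{E}(A)/(X,p)$, whose objects are arbitrary maps $(Q,g)\to(Y,q)$ in $\mathcal{E}(A)$, not only fibrations. Your first step identifies the simplicial homs of the local clans $\mathcal{E}(A)(X,p)$ and $\mathcal{E}(X)$, but what the second step actually needs is the analogous identification of $\bigl(\mathcal{E}(A)/(Y,q)\bigr)(-,-)$ with $(\mathcal{E}/Y)(-,-)$, i.e.\ that $\mathcal{E}(A)/(Y,q)$ sits inside $\mathcal{E}/Y$ as a full simplicial subcategory closed under $f^\star$ and containing $\Pi_f(E,r)$ (the latter because its structure map to $Y$, composed with the fibration $q$, is a fibration to $A$). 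The computation is word-for-word the same collapse of nested pullbacks, so this is a presentational adjustment rather than a gap; once stated this way, the detour through $\mathcal{E}(A)(X,p)=\mathcal{E}(X)$ and Lemma \ref{compositebasechangetribecor2} is not even needed --- strong cofreeness quantified over all of $\mathcal{E}/Y$ restricts verbatim to the full subcategory $\mathcal{E}(A)/(Y,q)$.
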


\begin{lemma}\label{basechangesimpenriched} If $f:A\to B$
is a fibration in a simplicial $\pi$-clan $\mathcal{E}$,
then the simplicial functor $$\Pi_f:\mathcal{E}(A)\to \mathcal{E}(B)$$
is right adjoint to the simplicial functor $f^\star:\mathcal{E}(B)\to \mathcal{E}(A)$
and it is a morphism of simplicial clans. 
Moreover, the
functor $[A,-]: \mathcal{E}\to \mathcal{E}$
is a morphism of simplicial clans for every object $A\in \mathcal{E}$.
\end{lemma}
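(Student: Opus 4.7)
The plan is to read the simplicial adjunction $f^\star \dashv \Pi_f$ directly off the strong internal product condition, and then derive the remaining properties by standard enriched-category arguments. First I would unpack the definition: since the evaluation $\epsilon:f^\star(\Pi_f(E))\to E$ is cofree with respect to the simplicial functor $f^\star:\mathcal{E}/B\to\mathcal{E}/A$, Definition \ref{def:simpproduct} supplies a natural isomorphism of simplicial sets
$$(\mathcal{E}/B)(Q,\Pi_f(E))\;\cong\;(\mathcal{E}/A)(f^\star Q,E)$$
for every $Q\in\mathcal{E}/B$ and every $E\in\mathcal{E}(A)$. Restricting to objects $Q=(C,g)$ whose structure map is a fibration, and using that $f^\star$ preserves fibrations (so $f^\star Q\in\mathcal{E}(A)$) together with the fact that $\mathcal{E}(A)$ and $\mathcal{E}(B)$ are full simplicial subcategories of the respective slices, one obtains the simplicial hom-iso $\mathcal{E}(B)(Q,\Pi_f(E))\cong\mathcal{E}(A)(f^\star Q,E)$, which is precisely the statement that $\Pi_f$ is simplicially right adjoint to $f^\star$.

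Next I would upgrade $\Pi_f$ to a morphism of simplicial clans. By Proposition \ref{productmorphismofclan} the underlying functor $\Pi_f$ is already a morphism of clans, so only the preservation of finite cotensors remains. This is the standard Yoneda argument for simplicial right adjoints: for every finite simplicial set $K$, every $Q\in\mathcal{E}(B)$, and every $E\in\mathcal{E}(A)$,
\[
\begin{aligned}
\mathcal{E}(B)(Q,\Pi_f(E)^K) &\cong \mathcal{E}(B)(Q,\Pi_f(E))^K \cong \mathcal{E}(A)(f^\star Q,E)^K \\
&\cong \mathcal{E}(A)(f^\star Q,E^K) \cong \mathcal{E}(B)(Q,\Pi_f(E^K)),
\end{aligned}
\]
where the third isomorphism uses that $f^\star$ preserves cotensors (Proposition \ref{basechangesimpclan}) and the first and fourth use the defining property of the cotensor in a simplicial category. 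By the simplicial Yoneda lemma, the canonical comparison $\Pi_f(E)^K\to\Pi_f(E^K)$ is then an isomorphism.

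For the final assertion I would invoke Remark \ref{rem:exponentialareprod}, which identifies $[A,-]:\mathcal{E}\to\mathcal{E}$ with the composite $\Pi_{t_A}\circ e_A$ of base change along the fibration $t_A:A\to 1$ followed by its simplicial right adjoint. The factor $e_A$ is a morphism of simplicial clans by Proposition \ref{basechangesimpclan}, the factor $\Pi_{t_A}$ is one by what was just proved (applied to $f=t_A$), and morphisms of simplicial clans compose, so $[A,-]$ is itself a morphism of simplicial clans. The only mildly delicate step is the first one, namely transferring the strong-internal-product condition from $\mathcal{E}/A,\mathcal{E}/B$ down to the subcategories of fibrations $\mathcal{E}(A),\mathcal{E}(B)$; but this reduces to the elementary observations that base change along $f$ preserves fibrations and that hom-simplicial-sets in $\mathcal{E}(A)$ and $\mathcal{E}(B)$ are precisely those inherited from the slices.
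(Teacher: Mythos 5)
Your proof is correct; the paper states this lemma without proof, and your argument is the natural one, mirroring the unenriched versions in Proposition \ref{pitribecartclosed} and Lemma \ref{lem:productisamorphismoftribes}: extract the enriched adjunction from Definition \ref{def:simpproduct} by restricting the strong cofreeness isomorphism to the full simplicial subcategories $\mathcal{E}(B)\subseteq \mathcal{E}/B$ and $\mathcal{E}(A)\subseteq \mathcal{E}/A$ (using that $\Pi_f(E)\to B$ and $f^\star(g)$ are fibrations), deduce cotensor preservation because simplicial right adjoints preserve finite cotensors, and factor $[A,-]=\Pi_A\circ e_A$ as in Remark \ref{rem:exponentialareprod}. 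Two small slips are worth fixing: in your chain of isomorphisms the step $\mathcal{E}(A)(f^\star Q,E)^K\cong \mathcal{E}(A)(f^\star Q,E^K)$ is just the universal property of the cotensor $E^K$ in $\mathcal{E}(A)$ (which exists by Proposition \ref{localsimpclan}) and does not use that $f^\star$ preserves cotensors --- Proposition \ref{basechangesimpclan} is not actually needed anywhere in that chain --- and the canonical comparison map whose invertibility the Yoneda argument establishes runs $\Pi_f(E^K)\to \Pi_f(E)^K$, not in the direction you wrote.
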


\begin{defi}\label{morphsimppiclan} 
If $\mathcal{E}$ and $ \mathcal{E}'$ are simplicial $\pi$-clans,
we say that a morphism of simplicial clans $F:\mathcal{E}\to \mathcal{E}'$
is {\it $\pi$-closed}, or that it is a {\it morphism of simplicial $\pi$-clans}, if it preserves internal products.
\end{defi}

\begin{prop}\label{cotensorsimpenrichedpiclan} If $\mathcal{E}$ is a simplicial $\pi$-clan,
then the simplicial functor $(-)^K:\mathcal{E}\to \mathcal{E}$
is a morphism of simplicial $\pi$-clans for every finite simplicial set $K$.
\end{prop}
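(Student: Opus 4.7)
Since Proposition \ref{cotensorsimpenriched} already gives that $(-)^K$ is a morphism of simplicial clans for every finite simplicial set $K$, what remains is to check that $(-)^K$ is $\pi$-closed; equivalently, that for every fibration $f:A\twoheadrightarrow B$ and every $(E,p)\in \mathcal{E}(A)$ the canonical comparison map
\[
\psi_K \,:\, (\Pi_f(E,p))^K \;\longrightarrow\; \Pi_{f^K}((E,p)^K)
\]
in $\mathcal{E}_0(B^K)$ is an isomorphism. Here $\psi_K$ is obtained by transposing the cotensored evaluation $\epsilon^K:(f^K)^\star (\Pi_f E)^K \to E^K$, where one uses the isomorphism $(f^\star\Pi_f E)^K \cong (f^K)^\star(\Pi_f E)^K$ coming from $(-)^K$ being a morphism of simplicial clans.

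The plan is to proceed by induction on the number of non-degenerate simplices of $K$. The base case $K=\emptyset$ is trivial, since $X^\emptyset$ is terminal for every $X$. For the inductive step, write $K = L \cup_{\partial\Delta[n]} \Delta[n]$, where $L$ has strictly fewer cells and the result is known for $L$, for $\partial\Delta[n]$ (by a further induction on dimension), and for $\Delta[n]$. Because cotensor sends pushouts to pullbacks, $X^K = X^L \times_{X^{\partial[n]}} X^{[n]}$ for every $X$. Applied to $\Pi_f E$ on one side and to the construction of $\Pi_{f^K}(E^K)$ on the other, this presents both sides of $\psi_K$ as pullbacks over $B^L\times_{B^{\partial[n]}} B^{[n]} = B^K$ of the corresponding sides of $\psi_L$, $\psi_{\partial\Delta[n]}$, and $\psi_{[n]}$. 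A diagram chase, using the compatibility of internal products with base change along fibrations (a consequence of the Beck--Chevalley law of \ref{BeckChevaly2}), reduces $\psi_K$ to a pullback of three comparison maps which are isomorphisms by the induction hypothesis.

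The proposition thus reduces to the case $K = \Delta[n]$, which is the main technical step. For this, one verifies the strong universal property of Definition \ref{def:simpproduct} directly: given any $(Q,g)\in \mathcal{E}/B^{[n]}$, one constructs a natural isomorphism of simplicial sets
\[
(\mathcal{E}/B^{[n]})(Q,(\Pi_f E)^{[n]}) \;\cong\; (\mathcal{E}/A^{[n]})((f^{[n]})^\star Q, E^{[n]}).
\]
The two key ingredients are the enriched cotensor adjunction $\mathcal{E}(Y,X^{[n]}) \cong [\Delta[n],\mathcal{E}(Y,X)]$, which identifies maps into the cotensor with $n$-simplices in the simplicial hom, and the strong universal property of $\Pi_f E$. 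Combining them, while tracking the structure maps into $B^{[n]}$ induced from the cotensor of the structure map $\Pi_f E\to B$, yields the desired isomorphism.

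The main obstacle is the mismatch of bases: a generic test object $(Q,g)$ lives over $B^K$ rather than over $B$, so the strong universal property of $\Pi_f E\in\mathcal{E}(B)$ cannot be applied to $Q$ directly. The cell-by-cell reduction is designed precisely to sidestep this difficulty, isolating $K = \Delta[n]$ as the only nontrivial case, where the enriched cotensor adjunction supplies exactly the bridge between the simplicial enrichment of $\mathcal{E}(-,-)$ and the $n$-th cotensor. Carefully checking that the pullback step in the induction respects not only the clan structure but the full $\pi$-closed structure --- in particular, that the three smaller comparison isomorphisms glue coherently over the iterated pullback $B^K = B^L\times_{B^{\partial[n]}}B^{[n]}$ --- is the most delicate piece of bookkeeping.
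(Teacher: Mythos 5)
The paper states this proposition without proof, so your argument must stand on its own; unfortunately it does not, and the failure sits exactly at the step you single out as the main technical one. The strong universal property of Definition \ref{def:simpproduct} gives an isomorphism of simplicial sets $(\mathcal{E}/B)(Q',\Pi_f E)\cong(\mathcal{E}/A)(f^\star Q',E)$ only for test objects $Q'$ lying over $B$. Unwinding the enrichment of the slice via the square (\ref{squareforsimpslicehom}), the $n$-simplices of the left-hand side are the maps $Q'\to(\Pi_f E)^{[n]}$ whose composite to $B^{[n]}$ is the \emph{degenerate} $n$-simplex on the structure map $Q'\to B$, i.e.\ factors through $B\to B^{[n]}$. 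The universal property you must verify for $(\Pi_f E)^{[n]}\to\Pi_{f^{[n]}}(E^{[n]})$ to be invertible quantifies over \emph{arbitrary} $g:Q\to B^{[n]}$. The enriched cotensor adjunction $\mathcal{E}(Q,X^{[n]})\cong[\Delta[n],\mathcal{E}(Q,X)]$ converts maps into cotensors into simplices of hom-spaces, but it does not convert an arbitrary $g$ into a degenerate one; the ``mismatch of bases'' you correctly identify is not sidestepped by the cell induction --- it reappears, undiminished, in the case $K=\Delta[n]$. (The inductive step is also shakier than you suggest: $E^K\to A^K$ is a limit of the three maps $E^L\to A^L$, $E^{\partial[n]}\to A^{\partial[n]}$, $E^{[n]}\to A^{[n]}$, not a base change of any one of them, so the squares to which you would apply the Beck--Chevalley law (\ref{BeckChevaly2}) are not of the required form.)

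In fact no argument can close this gap, because the comparison map is not invertible in the simplicial $\pi$-clan $\mathbf{Kan}$. Take $B=1$, let $A=F$ be the nerve of the indiscrete groupoid on two objects (a Kan complex), let $E=A\times F$ with $p=p_1:E\to A$, and let $K=\Delta[1]$. Then $\Pi_A(E)=F^A$, so $(\Pi_A E)^K=F^{A\times\Delta[1]}$, whereas $E^K=A^K\times F^K$ over $A^K$ gives $\Pi_{A^K}(E^K)=F^{A^{\Delta[1]}\times\Delta[1]}$, and the comparison map is restriction along the evaluation $A^{\Delta[1]}\times\Delta[1]\to A\times\Delta[1]$. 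A simplicial map into the nerve of an indiscrete groupoid is freely determined by its values on vertices, so $\mathrm{Hom}(X,F)=2^{|X_0|}$: the source has $2^{4}=16$ vertices and the target has $2^{8}=256$. The map is a weak equivalence but not an isomorphism. What the strong universal property actually yields is the commutation of $\Pi_f$ with the cotensors computed \emph{in the slices}, namely $B\times_{B^K}(\Pi_f E)^K\cong\Pi_f(A\times_{A^K}E^K)$ in $\mathcal{E}(B)$ (both sides are $F^{A\times K}$ in the example). So before writing a proof you need either to reinterpret ``preserves internal products'' in Definition \ref{morphsimppiclan} along these lines, or to weaken the conclusion from an isomorphism to a homotopy equivalence; the literal statement is not provable.
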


\begin{prop}\label{basechangesimppiclan} If $\mathcal{E}$ is a simplicial $\pi$-clan,
then the base change functor $f^\star:\mathcal{E}(B)\to \mathcal{E}(A)$
is a morphism of simplicial $\pi$-clans for any map $f:A\to B$ in $\mathcal{E}$.
\end{prop}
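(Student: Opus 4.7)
The plan is to adapt the argument of Proposition \ref{charhomopitribe3} to the simplicially enriched setting. By Proposition \ref{basechangesimpclan}, the functor $f^\star:\mathcal{E}(B)\to \mathcal{E}(A)$ is already a morphism of simplicial clans, so by Definition \ref{morphsimppiclan} it only remains to check that it preserves internal products, which in a simplicial $\pi$-clan means preserving the \emph{strong} internal products guaranteed by Definition \ref{definitionsimppiclan}.

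Let $h:X\twoheadrightarrow Y$ be a fibration in $\mathcal{E}(B)$ and let $(E,g)\in \mathcal{E}(B)(X)$. Using the adjunction $(f^\star h)^\star \dashv \Pi_{f^\star h}$ of Lemma \ref{basechangesimpenriched}, I would form the canonical comparison
\[
\theta : f^\star(\Pi_h E) \longrightarrow \Pi_{f^\star h}(f^\star E)
\]
in $\mathcal{E}(A)(f^\star Y)$ as the transpose of the composite $(f^\star h)^\star f^\star(\Pi_h E)\simeq f^\star(h^\star\Pi_h E)\xrightarrow{f^\star(\epsilon)} f^\star E$, where $\epsilon$ is the counit of $h^\star\dashv \Pi_h$. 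Applied to the underlying $\pi$-clans $\mathcal{E}(B)_0$ and $\mathcal{E}(A)_0$, Proposition \ref{charhomopitribe3} already asserts that the underlying functor $(f^\star)_0$ preserves internal products, so $\theta$ is an isomorphism in the underlying category.

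The main obstacle is to promote $\theta$ to a \emph{simplicial} isomorphism, and for this I would argue cotensor by cotensor. On the one hand, $f^\star$ preserves finite cotensors by Proposition \ref{basechangesimpclan}; on the other, the functor $\Pi_h$ is a simplicial right adjoint by Lemma \ref{basechangesimpenriched} and therefore preserves all weighted limits, in particular cotensors by finite simplicial sets. Combining these with Proposition \ref{charhomopitribe3} applied level-wise, one obtains, for every $n\geq 0$, a canonical chain of isomorphisms
\[
f^\star(\Pi_h E)^{[n]} \simeq f^\star\!\bigl(\Pi_h(E^{[n]})\bigr) \simeq \Pi_{f^\star h}\!\bigl(f^\star(E^{[n]})\bigr) \simeq \Pi_{f^\star h}\!\bigl((f^\star E)^{[n]}\bigr) \simeq \Pi_{f^\star h}(f^\star E)^{[n]},
\]
whose composite coincides, by construction of $\theta$, with the $n$-th cotensor of $\theta$. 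Since an object of a finitely cotensored simplicial category is determined, up to canonical isomorphism, by its system of cotensors, and since the above isomorphisms are natural in $n$, this forces $\theta$ to be an isomorphism of simplicial objects, proving that $f^\star$ preserves strong internal products.

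A fallback route, should the coherence bookkeeping above prove delicate, would be to imitate the proof of Proposition \ref{charhomopitribe3} verbatim by working inside a category of simplicial presheaves $[\mathcal{E}^{op},\mathbf{sSet}]$ of extended simplicial fibrations: simplicial presheaves form a simplicially locally cartesian closed category, so the Beck--Chevalley argument and the Yoneda reduction both run simplicially. The remaining hurdle in that approach is to establish the simplicial analogue of Lemma \ref{lem:extpresheafpiclan}, namely that the sub-clan of extended simplicial fibrations is closed under strong internal products.
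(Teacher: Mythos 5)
The paper states this proposition without any proof (as with several neighbouring results in the section on simplicial $\pi$-clans), so there is no argument of the author's to compare yours against; I can only assess your proposal on its own terms.

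Your reduction is sound: Proposition \ref{basechangesimpclan} handles the simplicial-clan part, and Proposition \ref{charhomopitribe3} applied to the underlying $\pi$-clan $\mathcal{E}_0$ (which is a $\pi$-clan by Definition \ref{definitionsimppiclan}, with $\mathcal{E}(B)_0=\mathcal{E}_0(B)$ by Proposition \ref{localsimpclan}) shows that the comparison map $\theta : f^\star(\Pi_h E)\to \Pi_{f^\star h}(f^\star E)$ is invertible in the underlying category. However, the step you flag as ``the main obstacle'' is not actually an obstacle, and the cotensor-by-cotensor argument you build to overcome it is dispensable. An isomorphism in the underlying category $\mathcal{E}(A)_0$ \emph{is} an isomorphism in the simplicial category $\mathcal{E}(A)$ --- invertibility of a $0$-cell is not an enriched condition --- so there is nothing to ``promote.'' The genuine content of the enriched statement is rather that the \emph{strong} (simplicial) universal property holds for $f^\star(\Pi_h E)$ equipped with the evaluation $f^\star(\epsilon)$. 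This follows at once: $\mathcal{E}(A)$ is a simplicial $\pi$-clan by Proposition \ref{localsimppiclan}, so $\Pi_{f^\star h}(f^\star E)$ already satisfies the strong universal property of Definition \ref{def:simpproduct}, and since $\theta$ is by construction compatible with the two evaluations, the isomorphism $\theta$ transports that simplicial cofreeness to $f^\star(\Pi_h E)$. Your levelwise chain of isomorphisms is not wrong, but it only re-derives what the single observation above gives for free, and it leaves unverified the coherence claim that the composite equals $\theta^{[n]}$; I would drop it in favour of the transport argument. The fallback route through simplicial presheaves is also viable and is the natural enriched analogue of the technique used in Proposition \ref{charhomopitribe3}, but, as you note, it requires a simplicial version of Lemma \ref{lem:extpresheafpiclan}, which is more work than the direct argument.
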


\subsection{Simplicial tribes}

\begin{defi}\label{defsimptribe} We say that a simplicial clan $\mathcal{E}$ is a 
 {\it simplicial tribe} if it is a tribe and the map $\sigma_0:X\to X^{[1]}$
 is anodyne for every object $X$.
 \end{defi}

\begin{lemma}\label{pathsimpclan} If $X$ is an object in a simpicial tribe $\mathcal{E}$,
then the quadruple $PX=(X^{[1]}, \partial_0,\partial_1, \sigma_0)$ is a path object for $X$.
\end{lemma}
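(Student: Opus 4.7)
The plan is to verify the three defining conditions of a path object for the quadruple $(X^{[1]}, \partial_0, \partial_1, \sigma_0)$: (i) that $(\partial_0, \partial_1) \circ \sigma_0$ equals the diagonal $X \to X \times X$; (ii) that $\sigma_0 : X \to X^{[1]}$ is anodyne; and (iii) that $(\partial_0, \partial_1) : X^{[1]} \to X \times X$ is a fibration. Condition (ii) is immediate from Definition \ref{defsimptribe}.

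For (i), recall that $\partial_i = X^{d_i}$ and $\sigma_0 = X^{s_0}$, where $d_0, d_1 : [0] \to [1]$ and $s_0 : [1] \to [0]$. Since $s_0 \circ d_i = \mathrm{id}_{[0]}$ for $i = 0, 1$, contravariant functoriality of the cotensor gives
$$\partial_i \circ \sigma_0 = X^{d_i} \circ X^{s_0} = X^{s_0 \circ d_i} = X^{\mathrm{id}_{[0]}} = \mathrm{id}_X,$$
so $(\partial_0, \partial_1) \circ \sigma_0 = (\mathrm{id}_X, \mathrm{id}_X)$ is the diagonal, as required.

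For (iii), I would apply the second axiom of Definition \ref{def:simpclan} to the fibration $t : X \to 1$ with $n = 1$. Since $\partial\Delta[1] = \Delta[0] \sqcup \Delta[0]$, cotensoring any object by $\partial\Delta[1]$ yields a binary product with itself; in particular $X^{\partial[1]} = X \times X$ and $1^{\partial[1]} = 1 \times 1 = 1$, and $X^{i_1} : X^{[1]} \to X^{\partial[1]}$ is precisely the map $(\partial_0, \partial_1)$ under this identification. The relevant square is
$$\xymatrix{ X^{[1]} \ar[d]_{t^{[1]}} \ar[rr]^{X^{i_1}} && X^{\partial[1]} \ar[d]^{t^{\partial[1]}} \\ 1^{[1]} \ar[rr]^{1^{i_1}} && 1^{\partial[1]} }$$
whose right-hand column target is terminal, so that the cartesian gap map coincides with $X^{i_1}$ itself. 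The simplicial clan axiom guarantees this gap map is a fibration, giving (iii).

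The only subtle step is the identification in (iii), but it is purely formal: the cotensor functor $(-)^{(-)} : \mathcal{E} \times \mathbf{sSet}_f^{op} \to \mathcal{E}$ sends coproducts in the second variable to products in $\mathcal{E}$, because cotensors turn colimits of simplicial sets into limits in $\mathcal{E}$. No nontrivial obstacle remains.
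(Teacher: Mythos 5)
Your proof is correct and follows essentially the same route as the paper: functoriality of the cotensor for the diagonal identity, the simplicial-tribe axiom for anodyneness of $\sigma_0$, and the simplicial-clan gap-map axiom applied to $X\to 1$ for the fibration condition. The only difference is cosmetic: the paper cites Lemma \ref{expmono} for the fact that $X^{i_1}:X^{[1]}\to X^{\partial[1]}\cong X\times X$ is a fibration, whereas you inline the $n=1$ case of that lemma's proof directly from Definition \ref{def:simpclan}.
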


\begin{proof} We have $(\partial_0,\partial_1)\sigma_0=(1_X,1_X)$ by the functoriality of the 
cotensor product $K\mapsto X^K$.
The map $(\partial_0,\partial_1):X^{[1]}\to X\times X$
is a fibration by Corollary \ref{expmono}, since $\mathcal{E}$ is a simpliclal clan.
Moreover, the map $\sigma_0:X\to X^{\Delta[1]}$ is anodyne by Definition \ref{defsimptribe}.
Thus, the quadruple $PX=(X^{[1]}, \partial_0,\partial_1, \sigma_0)$ is a path object for $X$
by Definition \ref{pathobjecetdef}.
\end{proof}

\medskip

We saw in Proposition \ref{localsimpclan} that if $\mathcal{E}$ is a simplicial clan, then so is the
simplicial category $\mathcal{E}(A)$ for every object $A\in \mathcal{E}$.
A map $f:(X,p)\to (Y,q)$ in  $\mathcal{E}(A)$ is a fibration if $f$ is a fibration in $\mathcal{E}$.

\begin{prop}\label{localsimptribe} If $\mathcal{E}$ is a simplicial tribe, then so is the
simplicial clan $\mathcal{E}(A)$ for every object $A\in \mathcal{E}$.
\end{prop}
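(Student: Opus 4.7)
The plan is to reduce everything to the one condition not already provided by earlier results. By Proposition \ref{localsimpclan}, $\mathcal{E}(A)$ is already a simplicial clan, and by Proposition \ref{tribeslice}, its underlying clan $\mathcal{E}_0(A)$ is a tribe (with the crucial additional fact that a morphism in $\mathcal{E}(A)$ is anodyne if and only if its underlying arrow in $\mathcal{E}$ is anodyne). So the only thing left to verify is the defining condition of a simplicial tribe: that the cotensor structure map $\sigma_0:(X,p)\to (X,p)^{[1]}$ is anodyne in $\mathcal{E}(A)$ for every object $(X,p)$.

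The key observation is the construction used in the proof of Proposition \ref{localsimpclan}: the underlying object of $(X,p)^{[1]}$ is the pullback $A\times_{A^{[1]}}X^{[1]}$ along the diagonal $\delta:A\to A^{[1]}$ and the fibration $p^{[1]}:X^{[1]}\to A^{[1]}$. I would first observe that this diagonal is exactly the degeneracy $\sigma_0:A\to A^{[1]}$, hence anodyne by hypothesis on $\mathcal{E}$. Since $p^{[1]}$ is a fibration (because $(-)^{[1]}$ is a morphism of clans in a simplicial clan), the tribe axiom ``base change of anodyne along fibration is anodyne'' applied to this pullback yields that the second projection $\pi:(X,p)^{[1]}\to X^{[1]}$ is anodyne.

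Next I would identify the underlying map of $\sigma_0:(X,p)\to (X,p)^{[1]}$ in $\mathcal{E}$. Because the forgetful functor $\mathcal{E}/A\to \mathcal{E}$ is simplicial and sends $(X,p)$ to $X$, naturality of the canonical comparison maps for cotensors forces this underlying map to be the unique arrow $\tilde\sigma:X\to A\times_{A^{[1]}}X^{[1]}$ satisfying $q\tilde\sigma=p$ (structure map over $A$) and $\pi\tilde\sigma=\sigma_0^X$, where $\sigma_0^X:X\to X^{[1]}$ is anodyne by hypothesis.

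The proof then finishes in one line: we have a factorization $\sigma_0^X=\pi\circ\tilde\sigma$ in which both the composite and the second factor $\pi$ are anodyne, so $\tilde\sigma$ is anodyne by Lemma \ref{anodyne closure}. Pulling this back through Proposition \ref{tribeslice} gives that $\sigma_0:(X,p)\to (X,p)^{[1]}$ is anodyne in $\mathcal{E}(A)$. I expect the only mildly delicate point to be the bookkeeping in step two, checking that the simpliciality of the forgetful functor indeed identifies the underlying arrow of $\sigma_0:(X,p)\to(X,p)^{[1]}$ with the universal map into the defining pullback; everything else is a direct application of already-established lemmas.
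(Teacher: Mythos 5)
Your proposal is correct and follows essentially the same route as the paper: both identify $(X,p)^{[1]}$ as the pullback of $p^{[1]}$ along $\sigma_0:A\to A^{[1]}$, deduce that the projection to $X^{[1]}$ is anodyne by base change of the anodyne map $\sigma_0:A\to A^{[1]}$ along the fibration $p^{[1]}$, and conclude via the right-cancellation property of Lemma \ref{anodyne closure} applied to the factorization $\sigma_0^X=\pi\circ\tilde\sigma$, together with Proposition \ref{tribeslice} for the identification of anodyne maps in $\mathcal{E}(A)$.
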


\begin{proof} The clan $\mathcal{E}(A)_0=\mathcal{E}_0(A)$
is a tribe, since the clan $\mathcal{E}_0$ is a tribe.
 If $(X,p)\in \mathcal{E}(A)$, let us show that 
 the diagonal $\sigma:(X,p)\to (X,p)^{[1]}$ is anodyne.
 The map $p^{[1]}: X^{[1]}\to A^{[1]}$ is a fibration in $\mathcal{E}_0$, since $p:X\to A$ is a fibration  in $\mathcal{E}_0$.
The object $(X,p)^{[1]}\in \mathcal{E}(A)$ and the map $\sigma$
are constructed the following commutative diagram with a 
pullback square
$$
\xymatrix{
X   \ar@/_1.5pc/[ddr]_-{p}   \ar@/^1.5pc/[drr]^-{\sigma_0} \ar[dr]^\sigma &  &  \\
& (X,p)^{[1]} \ar[d]_{p_1}  \ar[r]^{p_2}  &  X^{[1]}  \ar[d]^{p^{[1]}}    \\
& A \ar[r]^{\sigma_0}  &  A^{[1]} 
}
$$ 
The map $\sigma_0: A\to   A^{[1]} $
and $\sigma_0: X\to   X^{[1]} $ are anodyne,
since $\mathcal{E}$ is a simplicial tribe.
The map $p_2: (X,p)^{[1]}\to   X^{[1]} $ is also anodyne
by base change, since $\sigma_0: A\to   A^{[1]} $ is anodyne.
It then follows by Lemma \ref{anodyne closure}  that $\sigma$ is anodyne,
since $p_2\sigma =\sigma_0: X\to   X^{[1]} $ is anodyne.
\end{proof}

\begin{lemma}\label{simpcaractanodyne} A map $u:A\to B$ in a simplicial tribe
 is anodyne if and only if  there exists a pair of maps $r:B\to A$
 and $h:B\to B^{[1]}$ such that $ru=1_A$  and
the following diagram commutes 
\begin{equation} \label{diagforstrongdefretracts}
\xymatrix{
A \ar[d]_u  \ar[r]^{\sigma_0}  &  A^{[1]}  \ar[r]^{  u^{[1]}  }   & B^{[1]}  \ar[d]^{(\partial_0,\partial_1)} \\
B \ar[rur]_{h}   \ar[rr]_-{(ur,1_B)} && B\times B
}
\end{equation}
\end{lemma}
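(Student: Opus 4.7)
The plan is to derive this lemma directly from Proposition \ref{strongdefoormation} (anodyne iff strong deformation retract) applied to the specific path object provided by the simplicial structure.

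First I would invoke Lemma \ref{pathsimpclan}, which tells us that $PB := (B^{[1]}, \partial_0, \partial_1, \sigma_0)$ is a path object for $B$ in $\mathcal{E}$. Then I would unpack the two commutativity conditions in diagram (\ref{diagforstrongdefretracts}). The lower triangle $(\partial_0,\partial_1)h = (ur,1_B)$ says precisely that $h : B \to B^{[1]}$ is a homotopy $h : ur \leadsto 1_B$ with respect to the chosen path object $PB$. The upper pentagon says $hu = u^{[1]}\sigma_0$. By naturality of $\sigma$ (viewed as the simplicial operator $s_0$ applied to the cotensor), we have $u^{[1]}\sigma_0 = \sigma_0 u$, so the condition becomes $hu = \sigma_0 u = \sigma u$, where $\sigma = \sigma_0$ is the unit of $PB$.

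Combined with the equation $ru = 1_A$, these are exactly the defining conditions of a strong deformation retract in Definition \ref{defstrongdefretract}, using the path object $PB$. Hence a map $u:A\to B$ makes the diagram (\ref{diagforstrongdefretracts}) commute (for some $r$ and $h$) if and only if $u$ is a strong deformation retract relative to the path object $PB$. Since the notion of strong deformation retract is independent of the choice of path object (by the remark following Definition \ref{defstrongdefretract}), this is equivalent to $u$ being a strong deformation retract in the sense of the general theory.

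Finally, I would invoke Proposition \ref{strongdefoormation}, which states that a map in a tribe is anodyne if and only if it is a strong deformation retract. Combining with the translation above gives the equivalence asserted by the lemma. There is no real obstacle here: the lemma is essentially a transcription of the general tribe-theoretic characterization of anodyne maps in the language made available by the simplicial enrichment, the only substantive observation being the naturality identity $u^{[1]}\sigma_0 = \sigma_0 u$ that rewrites the upper triangle in the familiar form $hu = \sigma u$.
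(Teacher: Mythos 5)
Your proposal is correct and follows essentially the same route as the paper: invoke Lemma \ref{pathsimpclan} to identify $(B^{[1]},\partial_0,\partial_1,\sigma_0)$ as a path object, translate the two commutativity conditions into the definition of a strong deformation retract via the naturality identity $u^{[1]}\sigma_0=\sigma_0 u$, and conclude by Proposition \ref{strongdefoormation}. Your explicit appeal to the independence of the choice of path object is a small clarification the paper leaves implicit, but the argument is otherwise identical.
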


\begin{proof} By proposition \ref{strongdefoormation},
a map $u:A\to B$ in a tribe is anodyne if and only if it is a strong deformation retract.
This means that 
there exists a map $r:B \to A$ such that $ru=1_A$ together with
a homotopy $h:ur\leadsto 1_B$ 
such that the homotopy $hu:u\leadsto u$ is the identity homotopy $\sigma_0 u:u\leadsto u$.
The homotopy $h:ur\leadsto 1_B$ is a map $B\to  B^{[1]}$
such that $(\partial_0,\partial_1)h=(ur,1_B)$.
The condition $hu=\sigma_0 u$ is equivalent to the condition 
$hu=u^{[1]} \sigma_0$, since the following square commutes,
$$
\xymatrix{
A \ar[d]_u  \ar[r]^{\sigma_0}  &  A^{[1]}  \ar[d]^{  u^{[1]}  }   \\
B \ar[r]^{\sigma_0}  &  B^{[1]} 
}
$$ 
\end{proof}

\begin{lemma}\label{morphsimptribe} If $\mathcal{E}$ and $ \mathcal{E}'$ are simplicial tribes,
then any morphism of simplicial clans  $F:\mathcal{E}\to \mathcal{E}'$
preserves anodyne maps; it is therefore a morphism of tribes.
\end{lemma}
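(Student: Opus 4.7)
The plan is to invoke the characterization of anodyne maps given in Lemma \ref{simpcaractanodyne} and observe that the data witnessing anodyneness are built entirely out of cotensors with $\Delta[1]$, simplicial structure maps ($\sigma_0, \partial_0, \partial_1$), and finite products/base changes, all of which are preserved by any morphism of simplicial clans.

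More precisely, suppose $u:A\to B$ is anodyne in $\mathcal{E}$. By Lemma \ref{simpcaractanodyne}, there exist maps $r:B\to A$ and $h:B\to B^{[1]}$ such that $ru = 1_A$ and the diagram \eqref{diagforstrongdefretracts} commutes, i.e.\ $(\partial_0,\partial_1)h = (ur,1_B)$ and $hu = u^{[1]}\sigma_0$. Apply $F$. Since $F$ is a morphism of simplicial clans, it preserves finite cotensors, so the canonical comparison $F(B^{[1]}) \to F(B)^{[1]}$ is an isomorphism; under this identification, $F$ sends the structure maps $\sigma_0,\partial_0,\partial_1$ on $B^{[1]}$ to the corresponding structure maps on $F(B)^{[1]}$, and it sends $u^{[1]}$ to $F(u)^{[1]}$. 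Moreover $F$ preserves finite products (being a morphism of clans, hence cartesian by Proposition \ref{morphismtribecart}), so $F((ur,1_B)) = (F(u)F(r), 1_{F(B)})$.

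Consequently $F(r):F(B)\to F(A)$ and $F(h):F(B)\to F(B)^{[1]}$ satisfy $F(r)F(u) = 1_{F(A)}$ together with $(\partial_0,\partial_1)F(h) = (F(u)F(r), 1_{F(B)})$ and $F(h)F(u) = F(u)^{[1]}\sigma_0$. That is, the diagram \eqref{diagforstrongdefretracts} commutes for $F(u)$ with witnesses $F(r), F(h)$. By Lemma \ref{simpcaractanodyne} applied in $\mathcal{E}'$, the map $F(u)$ is anodyne in $\mathcal{E}'$.

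The only nontrivial point is verifying that the preservation of finite cotensors by $F$ is strong enough to carry along the simplicial structure maps $\sigma_0,\partial_0,\partial_1$ and the cotensor action on morphisms $u\mapsto u^{[1]}$; this is a direct unfolding of what it means for $F$ to be a simplicial functor preserving cotensors by $\Delta[1]$ and $\Delta[0]$, so no real obstacle arises. Once this is noted, the last sentence of the statement is immediate: $F$ is a morphism of clans and preserves anodyne maps, hence is a morphism of tribes in the sense of Definition \ref{defmorphismhotribes}.
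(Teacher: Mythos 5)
Your proof is correct and follows exactly the paper's argument: both transport the witnesses $(r,h)$ of the characterization in Lemma \ref{simpcaractanodyne} through $F$, using that a morphism of simplicial clans preserves finite cotensors and products, and then reapply that lemma in $\mathcal{E}'$. The paper's proof is terser but identical in substance; your extra care about the comparison isomorphism $F(B^{[1]})\simeq F(B)^{[1]}$ and the structure maps is a reasonable elaboration of what the paper leaves implicit.
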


\begin{proof} The functor $F$ preserves cotensor products, 
since it is a morphism of simplicial clans.
If $u:A\to B$
is anodyne, then there exists a pair of maps $r:B\to A$
 and $h:B\to B^{[1]}$ satisfying the
 conditions of Lemma \ref{simpcaractanodyne}.
 The pair of maps $F(r):F(B)\to A$
 and $F(h):F(B)\to F(B)^{[1]}$ 
 then satisfies the same conditions.
Thus, $F(u)$ is anodyne by Lemma \ref{simpcaractanodyne}. 
\end{proof}

\begin{defi} If $\mathcal{E}$ and $ \mathcal{E}'$ are simplicial tribes,
we say that a simplicial functor  $F:\mathcal{E}\to \mathcal{E}'$
is a {\it morphism of simplicial tribes} if it 
is a morphism of simplicial clans.
\end{defi}

We shall denote by $\mathbf{sTrib}$
the category of small simplicial tribes and morphisms between them.
The category $\mathbf{sTrib}$ has the structure of a 2-category
where a 2-cell is a strong natural transformation.

\begin{prop}\label{cotensorsimpenrichedtribe} If $\mathcal{E}$ is a simplicial tribe,
then the simplicial functor $(-)^K:\mathcal{E}\to \mathcal{E}$
is a morphism of simplicial tribes for every finite simplicial set $K$.
\end{prop}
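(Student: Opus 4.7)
The plan is to observe that this proposition is essentially a corollary of two earlier results. By Definition \ref{defsimptribe} together with the definition immediately preceding the present proposition, a morphism of simplicial tribes is by definition simply a morphism of simplicial clans whose source and target happen to be simplicial tribes. Since a simplicial tribe is in particular a simplicial clan, Proposition \ref{cotensorsimpenriched} already provides the essential content: the simplicial functor $(-)^K:\mathcal{E}\to \mathcal{E}$ is a morphism of simplicial clans for every finite simplicial set $K$. Thus there is nothing additional to verify at the level of clan structure.

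For completeness, I would point out that as a consistency check one can verify directly that $(-)^K$ preserves anodyne maps, which is the content of Lemma \ref{morphsimptribe} applied to our situation. Concretely, using the characterization of Lemma \ref{simpcaractanodyne}: given an anodyne map $u:A\to B$ with witnessing retraction $r:B\to A$ and homotopy $h:B\to B^{[1]}$ satisfying $ru=1_A$, $(\partial_0,\partial_1)h=(ur,1_B)$, and $hu=u^{[1]}\sigma_0$, applying the simplicial functor $(-)^K$ to these data yields $r^K:B^K\to A^K$ and $h^K:B^K\to (B^{[1]})^K$. The natural isomorphism
\[
(X^{[1]})^K \;\cong\; X^{\Delta[1]\times K} \;\cong\; (X^K)^{[1]}
\]
obtained from associativity of the cotensor product intertwines the faces and degeneracies induced by $d_0,d_1:[0]\to[1]$ and $s_0:[1]\to[0]$, so under this identification the pair $(r^K,h^K)$ becomes a witness of anodyneness for $u^K:A^K\to B^K$ in the sense of Lemma \ref{simpcaractanodyne}.

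Both parts being immediate once the prior results are in hand, there is no real obstacle. The only point requiring any care is the identification $(X^{[1]})^K\cong (X^K)^{[1]}$ together with its compatibility with the cosimplicial structure in the first exponent; but this is a standard consequence of the associativity of cotensor products in a simplicial category and is already implicit in Proposition \ref{cotensorsimpenriched}.
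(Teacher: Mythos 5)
Your proof is correct and takes essentially the same route as the paper: the paper's own proof simply cites Proposition \ref{cotensorsimpenriched} together with the fact (Lemma \ref{morphsimptribe}) that a morphism of simplicial clans between simplicial tribes is automatically a morphism of simplicial tribes. Your additional explicit verification that $(-)^K$ preserves anodyne maps via Lemma \ref{simpcaractanodyne} and the identification $(X^{[1]})^K\cong (X^K)^{[1]}$ is a sound consistency check but not needed beyond what the cited lemma already provides.
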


\begin{proof} This follows from Proposition \ref{cotensorsimpenriched}
since a morphism of simplicial clans between simplicial
tribes is a morphism of simplicial tribes.
\end{proof}

\begin{prop}\label{basechangesimptribe} If $\mathcal{E}$ is a simplicial tribe,
then the base change functor $f^\star:\mathcal{E}(B)\to \mathcal{E}(A)$
is a morphism of simplicial tribes for any map  $f:A\to B$ in $\mathcal{E}$.
\end{prop}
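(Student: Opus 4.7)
The plan is to reduce this statement immediately to results already established in the excerpt, so the proof is essentially a two-line citation argument. First I would observe that, by Proposition \ref{basechangesimpclan}, the functor $f^\star:\mathcal{E}(B)\to\mathcal{E}(A)$ is a morphism of simplicial clans for any map $f:A\to B$ in a simplicial clan $\mathcal{E}$. Second, by Proposition \ref{localsimptribe}, both $\mathcal{E}(A)$ and $\mathcal{E}(B)$ are themselves simplicial tribes (since $\mathcal{E}$ is). Third, by Lemma \ref{morphsimptribe}, any morphism of simplicial clans whose source and target are simplicial tribes automatically preserves anodyne maps and is therefore a morphism of simplicial tribes. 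Combining these three facts proves the proposition.

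If one wishes to avoid invoking the black-box character of Lemma \ref{morphsimptribe}, the concrete verification is the following: given an anodyne map $v:(X,p)\to (Y,q)$ in $\mathcal{E}(B)$, apply the characterisation of anodyne maps given by Lemma \ref{simpcaractanodyne} to produce $r:Y\to X$ with $rv=1_X$ together with a strong deformation homotopy $h:Y\to Y^{[1]}$ satisfying $(\partial_0,\partial_1)h=(vr,1_Y)$ and $hv=v^{[1]}\sigma_0$. Since $f^\star$ is a morphism of simplicial clans, it preserves finite cotensors, pullbacks along fibrations, and the structural maps $\sigma_0$, $\partial_0$, $\partial_1$; so $f^\star(r)$ and $f^\star(h)$ satisfy the analogous identities in $\mathcal{E}(A)$, showing $f^\star(v)$ is anodyne by the converse direction of Lemma \ref{simpcaractanodyne}. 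There is no genuine obstacle here --- the substance of the result is already buried in Lemma \ref{morphsimptribe}, whose proof uses precisely this cotensor-preservation argument. The only point to double-check is that the simplicial enrichment of $f^\star$ is the one compatible with cotensors in $\mathcal{E}(A)$ and $\mathcal{E}(B)$, which is exactly the content of Proposition \ref{basechangesimpclan}.
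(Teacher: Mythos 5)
Your proposal is correct and follows essentially the same route as the paper, which likewise reduces the statement to Proposition \ref{basechangesimpclan} together with the observation that a morphism of simplicial clans between simplicial tribes is automatically a morphism of simplicial tribes. Your explicit citation of Proposition \ref{localsimptribe} (to ensure source and target are simplicial tribes) and the optional unwinding via Lemma \ref{simpcaractanodyne} are just slightly more detailed versions of the same argument.
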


\begin{proof} This follows from Proposition \ref{basechangesimpclan},
since a morphism of simplicial clans between simplicial
tribes is a morphism of simplicial tribes.
\end{proof}

\subsection{Simplicial $\pi$-tribes}

\begin{lemma}\label{simppitribe} If a simplicial tribe $\mathcal{E}$
is a $\pi$-clan, then it is a $\pi$-tribe.
  \end{lemma}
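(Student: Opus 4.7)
The strategy is to show that for every fibration $f \colon A \twoheadrightarrow B$, the internal-product functor $\Pi_f \colon \mathcal{E}(A) \to \mathcal{E}(B)$ is a morphism of simplicial clans. Once established, Lemma \ref{morphsimptribe} applied to $\Pi_f$ (using Propositions \ref{localsimpclan} and \ref{localsimptribe} to know that $\mathcal{E}(A)$ and $\mathcal{E}(B)$ are themselves simplicial tribes) immediately gives that $\Pi_f$ preserves anodyne maps, which is exactly the $\pi$-tribe condition.

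By Proposition \ref{productmorphismofclan}, $\Pi_f$ is already a morphism of clans, so what remains is to upgrade it to a simplicial functor preserving the finite cotensors $(-)^{[n]}$. The key input is that $f^\star \colon \mathcal{E}(B) \to \mathcal{E}(A)$ is itself a morphism of simplicial clans by Proposition \ref{basechangesimpclan}, and in particular commutes with cotensors. Using the adjunction $f^\star \dashv \Pi_f$ and its counit $\epsilon$, one constructs, for every finite simplicial set $K$ and every $Y \in \mathcal{E}(A)$, the canonical comparison map
\[
c_K \colon (\Pi_f Y)^K \;\longrightarrow\; \Pi_f(Y^K)
\]
as the adjoint transpose of $\epsilon^K \colon f^\star((\Pi_f Y)^K) = (f^\star \Pi_f Y)^K \to Y^K$; the task is to show $c_K$ is an isomorphism for every finite $K$.

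The main obstacle is precisely this invertibility of $c_K$, equivalently the assertion that the underlying adjunction $f^\star \dashv \Pi_f$ lifts automatically to a simplicial adjunction. By the Yoneda lemma it suffices to check, for $K = \Delta[n]$ and every $Q \in \mathcal{E}(B)$, the bijection
\[
\mathcal{E}(B)(Q, \Pi_f Y)_n = \mathcal{E}_0(B)(Q, (\Pi_f Y)^{[n]}) \;\simeq\; \mathcal{E}_0(B)(Q, \Pi_f(Y^{[n]})) = \mathcal{E}_0(A)(f^\star Q, Y^{[n]}) = \mathcal{E}(A)(f^\star Q, Y)_n,
\]
at $n = 0$ this is the ordinary adjunction, and the extension to all $n$ is where the interaction between the simplicial cotensor structure and the $\pi$-clan adjunction is actually used. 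Once $c_K$ is shown invertible, the simplicial enrichment of $\Pi_f$ is forced and $\Pi_f$ becomes a morphism of simplicial clans, so $\mathcal{E}$ is a $\pi$-tribe by the first paragraph. Concretely, the conclusion can then also be read off by applying $\Pi_f$ to the strong-deformation-retract data $(r, h \colon Y \to Y^{[1]})$ witnessing an anodyne $u \colon X \to Y$ via Lemma \ref{simpcaractanodyne} and transporting $\Pi_f(h)$ along $c_{[1]}^{-1}$ to obtain the analogous witnessing data $(\Pi_f r,\, c_{[1]}^{-1} \circ \Pi_f h)$ for $\Pi_f u$ in $\mathcal{E}(B)$.
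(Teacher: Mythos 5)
Your overall route is the same as the paper's: reduce the $\pi$-tribe condition to the statement that $\Pi_f$ preserves finite cotensors, then conclude that $\Pi_f$ preserves anodyne maps from the cotensor characterization of anodyne maps in Lemma \ref{simpcaractanodyne} (the paper does this directly rather than routing through Lemma \ref{morphsimptribe}, but that is cosmetic). The problem is that you correctly isolate the invertibility of the comparison map $c_K\colon (\Pi_f Y)^K\to \Pi_f(Y^K)$ as ``the main obstacle'' and then never overcome it. Your displayed chain of bijections is circular at precisely the critical step: the identification $\mathcal{E}_0(B)(Q,(\Pi_f Y)^{[n]})\simeq \mathcal{E}_0(B)(Q,\Pi_f(Y^{[n]}))$ \emph{is} the assertion that $c_{[n]}$ is invertible, so it cannot serve as an intermediate step in its own proof. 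The remaining identifications in your chain are just the ordinary adjunction and the definitions of the cotensors in $\mathcal{E}(A)$ and $\mathcal{E}(B)$; together they show that simplicial adjointness of $f^\star\dashv\Pi_f$ is \emph{equivalent} to cotensor preservation by $\Pi_f$, not that either one holds.

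This is a genuine issue rather than a formality: a cotensor $Y^{[n]}$ is a weighted limit, not a conical one, so the fact that $\Pi_f$ is an ordinary right adjoint (hence preserves ordinary limits) does not by itself make $c_{[n]}$ invertible, and the simpliciality and cotensor-preservation of the \emph{left} adjoint $f^\star$ (Proposition \ref{basechangesimpclan}) do not transfer automatically to the right adjoint in the absence of tensors. In the paper's own terminology, invertibility of $c_K$ is exactly the condition that the internal products be \emph{strong} (Definition \ref{def:simpproduct}, Definition \ref{definitionsimppiclan}). The paper's proof dispatches the point by declaring that $\Pi_f$, being a right adjoint, preserves cotensors --- i.e.\ it tacitly treats the adjunction as already simplicial --- so your proposal stalls at the same place the paper is terse; but since you explicitly announce the step as the main obstacle and then write ``the extension to all $n$ is where the interaction \dots is actually used'' without exhibiting that interaction, the argument is incomplete. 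You should either assume strongness of the internal products outright or supply an actual derivation of the invertibility of $c_K$ from the simplicial-tribe axioms.
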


\begin{proof} It suffices to show that if $f:A\to B$ is a fibration,
then the functor  $\Pi_f:\mathcal{E}(A)\to \mathcal{E}(B)$
takes anodyne maps to anodyne maps.
But the simplicial functor $\Pi_f$ preserves cotensor products,
since it is a right adjoint. It thus preserves
anodyne maps by Lemma \ref{simpcaractanodyne}.
\end{proof}

\begin{defi}\label{definitionpitribe} 
We say that a simplicial tribe $\mathcal{E}$
is a {\bf simplicial $\pi$-tribe} if it is a $\pi$-clan.
\end{defi}

\begin{prop}\label{localsimppitribe} If $\mathcal{E}$ is a simplicial $\pi$-tribe, then so is the
simplicial tribe $\mathcal{E}(A)$ for every object $A\in \mathcal{E}$.
\end{prop}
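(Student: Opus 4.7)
The plan is to assemble this result directly from the local structure results already established for each layer of structure. By Definition \ref{definitionpitribe}, it suffices to show that $\mathcal{E}(A)$ is both a simplicial tribe and a $\pi$-clan.

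First I would invoke Proposition \ref{localsimptribe} to conclude that $\mathcal{E}(A)$ is a simplicial tribe: its underlying clan $\mathcal{E}_0(A)$ is a tribe, and the path-diagonal $\sigma_0\colon (X,p)\to (X,p)^{[1]}$ in $\mathcal{E}(A)$ is anodyne for every object $(X,p)\in \mathcal{E}(A)$. Next I would invoke Proposition \ref{localisationpitribe} (or equivalently the simplicial refinement Proposition \ref{localsimppiclan}) to conclude that $\mathcal{E}(A)$ is a $\pi$-clan, with internal products given by $\Pi_f(E,p)=\Pi_{(A,f)}(E,p)$ computed in the ambient $\pi$-clan $\mathcal{E}$.

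Having these two facts in hand, Lemma \ref{simppitribe} immediately implies that $\mathcal{E}(A)$ is a simplicial $\pi$-tribe: indeed, any simplicial tribe which is also a $\pi$-clan automatically satisfies the tribe compatibility condition on internal products, because the simplicial functor $\Pi_f$ preserves cotensor products (being a simplicial right adjoint) and hence preserves anodyne maps by the characterisation in Lemma \ref{simpcaractanodyne}.

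There is no real obstacle here; the statement is essentially a bookkeeping corollary combining Propositions \ref{localsimptribe}, \ref{localisationpitribe}, and Lemma \ref{simppitribe}. The only subtlety worth double-checking is that the internal product in $\mathcal{E}(A)$ coming from Proposition \ref{localisationpitribe} agrees, as a simplicial functor, with the right adjoint to the simplicial base change functor $f^\star\colon \mathcal{E}(A)(B)\to \mathcal{E}(A)(C)$, so that the strongness condition of Definition \ref{def:simpproduct} is preserved under localisation. This follows from the isomorphism $\mathcal{E}(A)(B)\simeq \mathcal{E}(B)$ of Lemma \ref{compositebasechangetribecor2} together with the simplicial version of that identification, which is a direct consequence of the pullback construction of hom simplicial sets in slice categories recorded in (\ref{squareforsimpslicehom}).
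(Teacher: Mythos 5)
The paper states this proposition without proof, so there is nothing to compare against directly; your argument is the evident one the author intended. Your assembly is correct: by Definition \ref{definitionpitribe} it suffices that $\mathcal{E}(A)$ be a simplicial tribe (Proposition \ref{localsimptribe}) and a $\pi$-clan (Proposition \ref{localisationpitribe}, applied to the underlying clan $\mathcal{E}_0$), and your closing remark about strongness of the internal products, handled via Proposition \ref{localsimppiclan} and the identification $\mathcal{E}(A)(B)\simeq\mathcal{E}(B)$, correctly covers the only point that is not pure bookkeeping.
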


\begin{lemma}\label{lem:productisamorphismofsimptribes}  If $f:A\to B$
is a fibration in a simplicial $\pi$-tribe $\mathcal{E}$,
then the simplicial functor $$\Pi_f:\mathcal{E}(A)\to \mathcal{E}(B)$$
is right adjoint to the simplicial functor $f^\star:\mathcal{E}(B)\to \mathcal{E}(A)$
and it is a morphism of simplicial tribes. 
Moreover, the
functor $[A,-]: \mathcal{E}\to \mathcal{E}$
is a morphism of simplicial tribes for every object $A\in \mathcal{E}$.
\end{lemma}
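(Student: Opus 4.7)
The plan is to deduce this lemma by combining the simplicial $\pi$-clan version already established in Lemma \ref{basechangesimpenriched} with the automatic preservation of anodyne maps by morphisms of simplicial clans between simplicial tribes from Lemma \ref{morphsimptribe}. Since a simplicial $\pi$-tribe is by definition a simplicial tribe that is also a $\pi$-clan (Definition \ref{definitionpitribe}), and since a simplicial tribe which is a $\pi$-clan is automatically a $\pi$-tribe by Lemma \ref{simppitribe}, all the structural results proved in earlier sections are available here.

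First I would apply Lemma \ref{basechangesimpenriched} directly: since $\mathcal{E}$ is in particular a simplicial $\pi$-clan and $f:A\to B$ is a fibration, that lemma immediately yields the simplicial adjunction $f^\star\dashv \Pi_f$ between $\mathcal{E}(B)$ and $\mathcal{E}(A)$, and shows $\Pi_f$ is a morphism of simplicial clans. To upgrade this to a morphism of simplicial tribes, observe that by Proposition \ref{localsimptribe} the simplicial clans $\mathcal{E}(A)$ and $\mathcal{E}(B)$ are in fact simplicial tribes. Hence Lemma \ref{morphsimptribe} applies to the morphism of simplicial clans $\Pi_f$: it automatically preserves anodyne maps and therefore qualifies as a morphism of simplicial tribes.

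For the second statement, I would reduce to the first via the decomposition from Remark \ref{rem:exponentialareprod}, which gives $[A,-]\simeq \Pi_A\circ e_A$, where $e_A:\mathcal{E}\to \mathcal{E}(A)$ is the base change functor along the unique fibration $A\to 1$. The functor $e_A$ is a morphism of simplicial tribes by Proposition \ref{basechangesimptribe}, and $\Pi_A:\mathcal{E}(A)\to \mathcal{E}(1)=\mathcal{E}$ is a morphism of simplicial tribes by the first part of the present lemma (applied to the fibration $A\to 1$). Since the composite of two morphisms of simplicial tribes is a morphism of simplicial tribes, the conclusion follows.

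There is no genuine obstacle: the proof is essentially a bookkeeping argument combining Lemma \ref{basechangesimpenriched}, Proposition \ref{localsimptribe}, Lemma \ref{morphsimptribe}, Remark \ref{rem:exponentialareprod}, and Proposition \ref{basechangesimptribe}. The only point that requires care is ensuring that the passage from "morphism of simplicial clans" to "morphism of simplicial tribes" is justified \emph{within the simplicial tribe structure}, which is guaranteed by Lemma \ref{morphsimptribe} once we know the codomain and domain are simplicial tribes.
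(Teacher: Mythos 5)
The paper itself gives no proof of this lemma (it is one of several unproved statements in the simplicial sections), so there is nothing to compare against line by line; I can only assess your argument on its own terms. Your overall plan — quote the simplicial $\pi$-clan version (Lemma \ref{basechangesimpenriched}), use Proposition \ref{localsimptribe} to see that $\mathcal{E}(A)$ and $\mathcal{E}(B)$ are simplicial tribes so that ``morphism of simplicial clans'' upgrades for free to ``morphism of simplicial tribes'' (by the definition following Lemma \ref{morphsimptribe}), and then factor $[A,-]\simeq \Pi_A\circ e_A$ via Remark \ref{rem:exponentialareprod} and Proposition \ref{basechangesimptribe} — is the natural one and almost certainly what the author intended.

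There is, however, one step you assert without justification, and it is exactly where the real mathematical content sits: you claim that a simplicial $\pi$-tribe ``is in particular a simplicial $\pi$-clan,'' so that Lemma \ref{basechangesimpenriched} applies. Definition \ref{definitionpitribe} only asks that a simplicial tribe be a $\pi$-clan; Definition \ref{definitionsimppiclan} asks in addition that every internal product be \emph{strong}, i.e.\ that the evaluation be cofree with respect to the simplicially enriched functor $f^\star$, not merely its underlying ordinary functor. Strongness is what makes $\Pi_f$ a simplicial functor and a simplicial right adjoint in the first place; unwinding it, it amounts to a natural isomorphism $\Pi_f\bigl(A\times_{A^{[n]}}E^{[n]}\bigr)\cong B\times_{B^{[n]}}(\Pi_f E)^{[n]}$, which does not follow formally from the ordinary adjunction $f^\star\dashv\Pi_f$ together with the simplicial clan axioms. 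So either you must prove that in a simplicial tribe any $\pi$-clan structure automatically has strong internal products (e.g.\ by showing $(-)^{[n]}$ preserves internal products and then running a Beck--Chevalley comparison along the diagonal $B\to B^{[n]}$ — neither step is in the paper), or you must read Definition \ref{definitionpitribe} as implicitly including the strongness condition. To be fair, the paper commits the same sin: the proof of Lemma \ref{simppitribe} already treats $\Pi_f$ as a simplicial right adjoint that ``preserves cotensor products.'' But as written, your proof inherits this gap rather than closing it, and it should at least be flagged as an additional hypothesis or a lemma to be supplied.
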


\begin{lemma}\label{morphsimppiclanaretribe} If $\mathcal{E}$ and $ \mathcal{E}'$ are simplicial $\pi$-tribes,
then any morphism of simplicial $\pi$-clans  $F:\mathcal{E}\to \mathcal{E}'$
preserves anodyne maps; it is therefore a morphism of $\pi$-tribes.
\end{lemma}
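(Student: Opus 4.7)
The plan is to reduce this to the already-established Lemma \ref{morphsimptribe}. Since a morphism of simplicial $\pi$-clans $F\colon \mathcal{E}\to \mathcal{E}'$ is, by Definition \ref{morphsimppiclan}, in particular a morphism of simplicial clans, and since simplicial $\pi$-tribes are by definition simplicial tribes, Lemma \ref{morphsimptribe} applies verbatim: $F$ preserves anodyne maps.

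More explicitly, I would unwind the argument in the style of the proof of Lemma \ref{morphsimptribe} for the reader's convenience. A morphism of simplicial $\pi$-clans preserves finite cotensor products (being a morphism of simplicial clans). Given an anodyne map $u\colon A\to B$ in $\mathcal{E}$, Lemma \ref{simpcaractanodyne} furnishes maps $r\colon B\to A$ and $h\colon B\to B^{[1]}$ making the diagram \eqref{diagforstrongdefretracts} commute with $ru=1_A$. Applying $F$ and using that $F$ preserves finite cotensors (so $F(B^{[1]})=F(B)^{[1]}$ and the face/degeneracy maps are preserved), the pair $(F(r),F(h))$ exhibits $F(u)$ as satisfying the hypotheses of Lemma \ref{simpcaractanodyne} in $\mathcal{E}'$. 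Hence $F(u)$ is anodyne.

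Once we know $F$ preserves anodyne maps, it is a morphism of tribes by Definition \ref{defmorphismhotribes}. Combined with the fact that $F$ preserves internal products (which is part of being a morphism of simplicial $\pi$-clans, cf.\ Definition \ref{morphsimppiclan}), this yields a morphism of $\pi$-tribes in the sense of Definition \ref{definitionhomopitribe} applied to the underlying tribes.

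There is no real obstacle here; the statement is a direct corollary of Lemma \ref{morphsimptribe} once one observes that a morphism of simplicial $\pi$-clans forgets to a morphism of simplicial clans. The only content is bookkeeping with the definitions, and the one-line proof ``Left to the reader,'' or at most a reference to Lemma \ref{morphsimptribe}, is what I would write.
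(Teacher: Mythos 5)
Your proposal is correct and matches the paper, whose entire proof reads ``Similar to the proof of Lemma \ref{morphsimptribe}'': you invoke the same Lemma \ref{simpcaractanodyne} characterization of anodyne maps via cotensors and the same preservation of finite cotensors. Observing that the statement is essentially a direct corollary of Lemma \ref{morphsimptribe} is exactly the intended argument.
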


\begin{proof} Similar to the proof of Lemma \ref{morphsimptribe}.
\end{proof}

\begin{defi} If $\mathcal{E}$ and $ \mathcal{E}'$ are simplicial $\pi$-tribes,
we say that a simplicial functor  $F:\mathcal{E}\to \mathcal{E}'$
is a {\it morphism of simplicial $\pi$-tribes} if it 
is a morphism of simplicial $\pi$-clans.
\end{defi}

\begin{prop}\label{cotensorsimpenrichedpitribe} If $\mathcal{E}$ is a simplicial $\pi$-tribe,
then the simplicial functor $(-)^K:\mathcal{E}\to \mathcal{E}$
is a morphism of simplicial $\pi$-tribes for every finite simplicial set $K$.
\end{prop}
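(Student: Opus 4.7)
The plan is to reduce this statement to the already-established Proposition~\ref{cotensorsimpenrichedpiclan} together with the key observation formalized in Lemma~\ref{morphsimppiclanaretribe}. Concretely, a simplicial $\pi$-tribe is by Definition~\ref{definitionpitribe} a simplicial tribe that happens to be a $\pi$-clan, and the notion of morphism of simplicial $\pi$-tribes is defined to coincide with that of morphism of simplicial $\pi$-clans. So the task is really to show two things about $(-)^K\colon\mathcal{E}\to\mathcal{E}$: first, that it is a morphism of simplicial $\pi$-clans, and second, that it preserves anodyne maps (the latter is automatic from the fact that $\mathcal{E}$ is a simplicial tribe and hence a morphism of simplicial clans between simplicial tribes respects anodynes).

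First I would invoke Proposition~\ref{cotensorsimpenrichedpiclan}: it tells us that if $\mathcal{E}$ is a simplicial $\pi$-clan, then for every finite simplicial set $K$ the simplicial functor $(-)^K\colon\mathcal{E}\to\mathcal{E}$ is $\pi$-closed, i.e.\ a morphism of simplicial $\pi$-clans. Since a simplicial $\pi$-tribe is in particular a simplicial $\pi$-clan, this applies here. Thus $(-)^K$ preserves finite cotensors, fibrations, base changes of fibrations, terminal objects, and internal products of fibrations along fibrations.

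Next I would appeal to Lemma~\ref{morphsimppiclanaretribe}, which says that any morphism of simplicial $\pi$-clans between two simplicial $\pi$-tribes is automatically a morphism of simplicial $\pi$-tribes; equivalently, it must preserve anodyne maps. Alternatively, one can give a direct argument using Lemma~\ref{simpcaractanodyne}: an anodyne $u\colon A\to B$ comes equipped with $r\colon B\to A$ and $h\colon B\to B^{[1]}$ satisfying the strong deformation retract diagram~(\ref{diagforstrongdefretracts}), and the simplicial functor $(-)^K$, being enriched and preserving cotensor products (hence preserving the path object $(-)^{[1]}$), carries this data to the analogous data for $u^K\colon A^K\to B^K$, which then qualifies $u^K$ as anodyne.

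The main \emph{conceptual} obstacle is just to notice that there is essentially nothing new to prove: the two earlier results compose to give the claim. The only substantive check is the preservation of anodyne maps, and this is handled uniformly by the observation that $(-)^K$ preserves the simplicial ingredients (cotensors, and in particular $X\mapsto X^{[1]}$ giving the path object via Lemma~\ref{pathsimpclan}) used to characterize anodyne maps as strong deformation retracts; the $\pi$-closure was already established in Proposition~\ref{cotensorsimpenrichedpiclan}. Thus the proof reduces to a one-line citation: combine Proposition~\ref{cotensorsimpenrichedpiclan} with Lemma~\ref{morphsimppiclanaretribe}.
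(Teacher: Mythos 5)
Your proof is correct and is exactly the argument the paper intends: the paper leaves this proposition without a written proof, but its proof of the parallel statement for simplicial tribes (Proposition~\ref{cotensorsimpenrichedtribe}) is precisely the one-line citation you give, namely combine the $\pi$-clan version (Proposition~\ref{cotensorsimpenrichedpiclan}) with the fact that a morphism of simplicial $\pi$-clans between simplicial $\pi$-tribes is automatically a morphism of simplicial $\pi$-tribes (Lemma~\ref{morphsimppiclanaretribe}). Your remark that the anodyne-preservation can also be checked directly via the strong-deformation-retract characterization of Lemma~\ref{simpcaractanodyne} is exactly the content of the cited lemma, so nothing is missing.
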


\begin{prop}\label{basechangesimppitribe} If $\mathcal{E}$ is a simplicial $\pi$-tribe, 
then the base change functor $f^\star:\mathcal{E}(B)\to \mathcal{E}(A)$
is a morphism of simplicial $\pi$-tribes for any map $f:A\to B$ in $\mathcal{E}$.
\end{prop}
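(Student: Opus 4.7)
The plan is to assemble the statement from results already in place, rather than prove anything from scratch. The definition of a morphism of simplicial $\pi$-tribes given just before the proposition is that it is a morphism of simplicial $\pi$-clans, and by Lemma \ref{morphsimppiclanaretribe} any such morphism between simplicial $\pi$-tribes is automatically a morphism of $\pi$-tribes. So it suffices to check that $f^\star:\mathcal{E}(B)\to\mathcal{E}(A)$ is a morphism of simplicial $\pi$-clans, i.e.\ that it is a morphism of simplicial clans and that it is $\pi$-closed in the sense that it preserves strong internal products.

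First, I would invoke Proposition \ref{basechangesimpclan}, which already says that $f^\star:\mathcal{E}(B)\to\mathcal{E}(A)$ is a morphism of simplicial clans for any map $f:A\to B$ in a simplicial clan. This takes care of the simplicial-clan half of the definition with no extra work, since a simplicial $\pi$-tribe is in particular a simplicial clan.

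Second, to see that $f^\star$ is $\pi$-closed, I would combine two facts. On the one hand, Proposition \ref{charhomopitribe3} shows that the base change functor $f^\star:\mathcal{E}(B)\to\mathcal{E}(A)$ between the underlying $\pi$-clans is $\pi$-closed, i.e.\ it preserves the (plain) internal product $\Pi_g(E,p)$ for every fibration $g$ and every object $(E,p)\in\mathcal{E}(B)(\cdot)$. On the other hand, because $\mathcal{E}$ is a simplicial $\pi$-clan, Proposition \ref{localsimppiclan} gives that each $\mathcal{E}(A)$ is again a simplicial $\pi$-clan, so every internal product computed there is automatically strong in the sense of Definition \ref{def:simpproduct}. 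Preservation of the underlying internal product by $f^\star$ therefore automatically upgrades to preservation as a strong internal product, since the strong internal product and the plain internal product agree on the nose in a simplicial $\pi$-clan.

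Putting the two halves together, $f^\star$ is a morphism of simplicial $\pi$-clans, hence a morphism of simplicial $\pi$-tribes by Lemma \ref{morphsimppiclanaretribe}. I do not foresee a genuine obstacle: the only thing one could worry about is whether the plain $\pi$-closedness from Proposition \ref{charhomopitribe3} really gives strong $\pi$-closedness, but this is immediate once one notes that strength of the evaluation map is a property of the target tribe $\mathcal{E}(A)$, not an extra condition on $f^\star$ itself.
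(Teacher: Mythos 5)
Your proposal is correct, and it matches the argument the paper intends: the paper actually states Proposition \ref{basechangesimppitribe} without any proof, but the neighbouring results (e.g.\ Proposition \ref{basechangesimptribe}, proved by citing Proposition \ref{basechangesimpclan} plus the automatic preservation of anodyne maps) are established by exactly the same kind of citation-assembly you carry out. Your observation that strength of the internal product is a property of the target $\mathcal{E}(A)$ (via Proposition \ref{localsimppiclan}) rather than an extra condition on $f^\star$ is the right way to dispose of the only delicate point, so nothing further is needed.
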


\newpage

\newpage
\section{Appendix on category theory}

\subsection{Grothendieck fibrations}

We first recall the notion of Grothendieck fibrations.
If $P:\mathcal{E}\to  \mathcal{B}$ is a functor,
and $f:S \to T$ is a map in $\mathcal{E}$, then
 the following square commutes, where 
 the vertical sides are the maps induced by $P$.
\begin{equation}\label{defcartmorphism}
\xymatrix{
\mathcal{E}(C,S)\ar[rr]^{ \mathcal{E}(C,f)}  \ar[d]&&\mathcal{E}(C,T)  \ar[d]   \\
\mathcal{B}(PC,PS) \ar[rr]^{ \mathcal{E}(PC,P(f))}  &&\mathcal{B}(PC,PT)
}
\end{equation}
This means that for any map $u:PC\to PS$
and any map $g:C\to T$ such that $P(g)=P(f)u$, there exists a unique
map $v: C\to S$ such that $P(v)=u$ and $g=fu$.
$$\xymatrix{
C  \ar@{--}[dd]   \ar@{..>}[dr]_(0.5)v  \ar@/^1.5pc/[drrr]^g&&&\\
&S  \ar@{--}[dd]  \ar[rr]_{f}& & T \ar@{--}[dd] \\
PC  \ar@/^1.5pc/[drrr]^{F(g)} \ar[dr]_u &&&\\
&PS \ar[rr]_{P(f)} && PT 
}$$ 

The functor $P$ is said to be a {\it Grothendieck fibration}
if for every object $T\in \mathcal{E}$ 
and every arrow $u:A\to PT$ with target $PT$
there exist a cartesian arrow $f\in \mathcal{E}$ 
with target $T$ such that $P(f)=g$.
$$\xymatrix{
S  \ar@{--}[d]  \ar[rr]^{f}& & T \ar@{--}[d] \\
A \ar[rr]^{u} && PT 
}$$ 
We shall say that the arrow $f$ is a {\it cartesian lift} of the arrow $u$.

\medskip

If $F:\mathcal{E} \to \mathcal{B}$ is a Grothendieck fibration,
we shall denote by $F^{-1}A$ the {\it fiber} of $F$ at an object $A\in \mathcal{B}$.
A map $f:X\to Y$ in $\mathcal{E}$ is {\it over} 
a map $u:A\to B$ in $\mathcal{B}$ if $F(f)=u$.
$$\xymatrix{ 
X  \ar[rr]^{f}   \ar@{--}[d]   && Y \ar@{--}[d]  \\
A \ar[rr]^{u} && B\\ 
}
$$
A map $f:X\to Y$ in $\mathcal{E}$ is said to be a $F$-{\it unit}, 
if $F(f)$ is a unit. 
Recall that for every map $u:A\to B$  in  $\mathcal{B}$ and for every object $X\in F^{-1}(B)$,
there exists a cartesian morphism $v:X'\to X$ over $u$.
The pair $(X',v)$ is unique up to a unique isomorphism in $F^{-1}(A)$,
and we shall denote it by $(u^{\star}(X), u_X)$.
$$\xymatrix{ 
u^\star(X)  \ar[rr]^{u_X}   \ar@{--}[d]   && X \ar@{--}[d]  \\
A \ar[rr]^{u} && B\\ 
}
$$
The {\it base change functor} $u^\star:F^{-1}(B)\to F^{-1}(A)$
is obtained by choosing a cartesian morphism $u_X:u^\star(X)\to X$
 for each object $X\in F^{-1}(B)$. By construction,
if $g:X\to Y$ is a map in $F^{-1}(B)$, then $u^\star(g)$
is the unique map $u^\star(X)\to u^\star(Y)$
 in $F^{-1}(A)$ such that the following square commutes
\begin{equation} \label{cartsquarebasechange2}
\xymatrix{
u^\star(X)  \ar[d]_{u^\star(g)} \ar[rr]^{u_X}&& X  \ar[d]^g  \\
u^\star(Y)    \ar[rr]^{u_Y}& & Y 
}
\end{equation}
The square is actually cartesian in the category $\mathcal{E}$.
Every map $f:X\to Y$ in $\mathcal{E}$ admits a factorization
$f=f^\sharp f_\sharp$, with $f_\sharp$ a $P$-unit
and $f^\sharp$ a cartesian morphism,
$$\xymatrix{ 
X   \ar[d]_-{f_\sharp}    \ar[drr]^-{f}  && \\
\ar@{--}[d] \ar[rr]_{f^\sharp}  F(f)^\star(Y)  &&Y \ar@{--}[d] \\
 FX \ar[rr]^{F(f)} && FY\\ 
}
$$

\subsection{Free and cofree objects}

Let $F:\mathcal{C}\to \mathcal{D}$ be a functor between two categories.
If $S\in \mathcal{C}$  and $X\in  \mathcal{D}$ we shall say that 
a map $\epsilon:F(S)\to X$ is  {\it cofree with respect to the functor $F$}, or {\it $F$-cofree},
if for very object $T\in   \mathcal{C}$ and every map
$v:F(T)\to X$ in $ \mathcal{D}$ there exists a unique map
$u:T\to S$ in $ \mathcal{C}$ such $\epsilon F(u) =v$.
$$\xymatrix{
F(S) \ar[rr]^{\epsilon} && X \\
F(T) \ar[u]^{F(u)} \ar[rru]_v &&
}$$
Suppose that for every object $X \in  \mathcal{D}$, there exists an object $U(X) \in \mathcal{C} $ 
equipped with a $F$-cofree map  $\epsilon_X:F(U(X))\to X$.
Then for every map $g:X\to Y$ in $ \mathcal{D}$
 there is a unique map $U(g):U(X)\to U(Y)$  in $ \mathcal{C}$
such that the following square commutes,
$$\xymatrix{
F(U(X))  \ar[d]_{F(U(g))} \ar[rr]^{\epsilon_X} && X  \ar[d]^g    \\
F(U(Y))   \ar[rr]^{\epsilon_Y} && Y\ . 
}$$
This defines a functor $U: \mathcal{D}\to  \mathcal{C}$ right adjoint to the functor $F$.
The counit of the adjunction $F\dashv U$ is
the natural transformation $\epsilon:FU \to Id$
defined by the maps $\epsilon_X:F(U(X))\to X$.
Hence the map 
$$\theta: Hom_\mathcal{C}(S,U(X)) \to  Hom_\mathcal{D}(F(S),X)$$
defined by putting 
$\theta(u)=\epsilon_X F(u)$ is bijective.

\medskip

In type theory, the right adjoint $U$ to a functor $F:\mathcal{C}\to \mathcal{D}$  is usually described by the following set of rules:
\begin{itemize}
\item{} an operation $X\mapsto U(X)$
called the {\it $U$-formation rule};
\item{} an operation $\lambda:Hom_\mathcal{D}(F(S),X) \to  Hom_\mathcal{C}(S,U(X))$
called the {\it $U$-introduction rule};
\item{} an operation which associates an object $X$ a morphism $\epsilon_X: FU(X)\to X$
called the {\it $U$-elimination rule};
\item{} the condition $\theta \lambda (u)=u$
called the {\it $U$-computation rule};
\item{} the condition $\lambda\theta (v)=v$
called the {\it $U$-uniqueness principle}.
\end{itemize}

\medskip

\begin{rem} A word about the terminology.  The operation $\lambda$ is said to be an {\it introduction} rule because 
it takes a morphism $u:F(S)\to X$ to a morphism $\lambda(u):S\to U(X)$ with target
$U(X)$ ($\lambda(u)$ is {\it introducing} something in the object $U(X)$). Similarly, 
the operation $X\mapsto \epsilon_X$
is said to be an {\it elimination} rule because the map $\epsilon_X: FU(X)\to X$ is $FU(X)$ is {\it eliminating} something from the object $U(X)$). 
\end{rem}

\medskip

For example, in a category $\mathcal{E}$ with binary cartesian product,
the product functor $\sqcap:\mathcal{E}\times \mathcal{E} \to \mathcal{E}$
is right adjoint to the diagonal functor $\mathcal{E}\to \mathcal{E}\times \mathcal{E}$,
\begin{itemize}
\item{} the $\sqcap$-fomation rule associates to a pair of objects $(A,B)\in \mathcal{E}\times \mathcal{E} $,
an object $A\sqcap B$;
\item{} the $\sqcap$-introduction rule 
associates to a pair
of morphisms $f:C\to A$ and $g:C\to B$, a morphism $(f,g):C\to A\times B$;
\item{} the $\sqcap$-elimination rule associates to a pair of objects $(A,B)$,
a pair of morphisms $p_1:A\sqcap B\to A$ and $p_2:A\sqcap B\to B$;
\item{} the $\sqcap$-computation rule asserts that $p_1( f,g)=f$
and $p_2( f,g)=g$;
\item{} $\sqcap$-uniqueness principle asserts that 
$(p_1h, p_2h)=h$ for every morphism $h:C\to A\times B$.
\end{itemize}

\medskip

A carrable object $A$ in a category $\mathcal{C}$ is exponentiable if and only if the
functor $(-)\times A:\mathcal{C} \to \mathcal{C}$
has a right adjoint $[A,-]:\mathcal{C}\to \mathcal{C}.$
In type theory, the right adjoint $[A,-]$ is usually described by the following set of rules.
\begin{itemize}
\item{} the formation rule associates  to each object $B\in \mathcal{E}$ an object $[A,B]$;
\item{} the introduction rule associates 
 to each map $f:C\times A\to B$ a new map $\lambda^A(f):C\to [A,B]$;
\item{} the elimination rule associates
 to each object $B$ a map $\epsilon_B:[A,B]\times A\to B$;
\item{} the computation rule asserts that 
$\epsilon_B(\lambda^A(u)\times A)=u$;
\item{} the uniqueness principle asserts that 
$ v=\lambda^A(\epsilon_B(v\times A))$.
\end{itemize}

\bigskip

Dually, let $U:\mathcal{D}\to \mathcal{C}$ be a functor between two categories.
If $S\in \mathcal{C}$ and $X\in \mathcal{D}$ we shall say
that a map $\eta:S\to U(X)$ is {\it $U$-free} 
if for very object $Y\in   \mathcal{D}$ 
and every map
$u:S\to U(Y)$ in $ \mathcal{C}$ there exists a unique map
$v:X\to Y$ in $ \mathcal{D}$ such $U(v)\eta =u$,
$$\xymatrix{
S\ar[drr]_u \ar[rr]^{\eta} && U(X) \ar[d]^{U(v)} \\
 && U(Y) \ .
}$$
Suppose that for every object $S \in  \mathcal{C}$, there exists an object $F(S)\in \mathcal{D} $ 
equipped with a $U$-free map $\eta_S:S\to U(F(S))$.
Then for every map $f:S\to T$ in $ \mathcal{C}$
 there is a unique map $F(f):F(S)\to F(T)$  in $ \mathcal{D}$
such that the following square commutes,
$$\xymatrix{
S\ar[d]_f \ar[rr]^{\eta_S} && U(F(S)) \ar[d]^{U(F(f))} \\
T \ar[rr]^{\eta_T} && U(F(T)) 
}$$
This defines a functor $F: \mathcal{C}\to  \mathcal{D}$ left adjoint to
the functor $U$. The unit of the adjunction $F\dashv U$
is the natural transformation $\eta:Id \to UF$
defined by the maps $\eta_S: S \to UF(S)$.
Hence the map 
$$\theta: Hom_\mathcal{D}(F(S),X)\to  Hom_\mathcal{C}(S,U(X))$$
defined by putting 
$\theta(v)=U(v)\eta_S$ is bijective.

\medskip

In type theory the left adjoint $F$ to a functor $U:\mathcal{D}\to \mathcal{C}$ is normally described by a set of rules:
\begin{itemize}
\item{} a map $S\mapsto F(S)$ called the {\it $F$-formation rule};
\item{} an operation $\sigma : Hom_\mathcal{C}(S,U(X)) \to Hom_\mathcal{D}(F(S),X)$
called the {\it $F$-elimination rule};
\item{} a family of morphism $\eta_S: S\to UF(S)$ called the {\it $F$-introduction rule};
\item{} the condition $\theta \sigma(u)=u$ called the {\it $F$-computation rule};
\item{} the condition $\sigma\theta (v)=v$
is called the {\it $F$-uniqueness principle}.
\end{itemize}

\medskip

\begin{rem} About the terminology.  The operation $\sigma$  is said to be an {\it elimination} rule because 
it takes a morphism $u:S\to U(X)$ to
a morphism $\sigma(u):F(S) \to X$ (it is {\it eliminating} something from the object $F(S)$). 
Similarly, the operation $S\mapsto \eta_S$
is said to be an {\it introduction} rule because the map $\eta_S: S\to UF(S)$
is {\it introducing} something in the object $F(S)$).
\end{rem}

\medskip

For example, a category $\mathcal{E}$ has binary coproducts iff the diagonal
functor $\Delta:\mathcal{E}\to \mathcal{E}\times \mathcal{E}$
has a left adjoint $\sqcup:\mathcal{E}\times \mathcal{E} \to \mathcal{E}$.
\begin{itemize}
\item{} the $\sqcup$-fomation rule associates to a pair of objects $(A,B)\in \mathcal{E}\times \mathcal{E} $,
an object $A\sqcup B$;
\item{} $\sqcup$-introduction rule associates to a pair of objects $(A,B)$,
a  pair of morphisms $i_1:A\to A\sqcup B$ and $i_2:A\to A\sqcup B$;
\item{} the $\sqcup$-elimination rule associates to a pair
of morphisms $f:A\to C$ and $g:B\to C$, a morphism $\langle f,g\rangle:A\sqcup B\to C$;
\item{} the $\sqcup$-computation rule asserts that $\langle f,g\rangle i_1=f$
and $\langle f,g\rangle i_2=g$;
\item{} $\sqcup$-uniqueness principle asserts that 
$\langle hi_1, hi_2 \rangle =h$ for every morphism $h:A\sqcup B\to C$.
\end{itemize}

\subsection{Cartesian squares}

Recall that a diagram 
$$
\xymatrix{
A & C  \ar[l]_{p}   \ar[r]^{q}& B  
}
$$
in a category $\mathcal{C}$, is said to {\it exhibit} the object $C$ as the {\it cartesian product}
of $A$ with $B$ if for every object $K\in \mathcal{C}$
and every pair of maps $a:K\to A$ and $b:K\to B$, there exists a unique map $c:K\to C$
such that $pc=a$ and $qc=b$,
$$\xymatrix{
 & K \ar@{..>}[d]^c  \ar[dl]_a \ar[dr]^b &\\
A &C  \ar[l]^{p} \ar[r]_{q}&  B 
}$$ 
In which case we often put $(A\times B,p_1,p_2)\defeq (C,p,q)$,
the map $p_1:A\times B\to A$ is called
the  {\it first projection} and the map $p_2:A\times B\to B$
 the  {\it second projection}. Moreover, the unique map $c:K\to A\times B$
 such that $p_1c=a$ and $p_2c=b$ is written as a pair $c=(a,b):K\to A\times B$.

\medskip

Recall that a commutative square in a category $\mathcal{C}$ 
\begin{equation}\label{pullbacksquaredef}
\xymatrix{
C  \ar[d]_{p} \ar[rr]^{q}& & B \ar[d]^{v} \\
A \ar[rr]^{u} && E
}
\end{equation} 
can be viewed as a diagram 
\begin{equation}\label{productdiagram}
\xymatrix{
(A,u)  & (C,w)  \ar[l]_{p} \ar[r]^{q}& (B,v)  
}
\end{equation}
in the category $\mathcal{C}/E$, where  $w\defeq up=vq$.
The square (\ref{pullbacksquaredef}) is said to be {\it cartesian}, or to be a {\it pullback},
if (\ref{productdiagram})
is a product diagram  in $\mathcal{C}/E$.
This means that
for every object $K\in \mathcal{C}$ and every 
pair of maps $a:K\to A$ and $b:K\to B$
such that $ua=vb$, there exists a unique map $c:K\to C$
such that $pc=a$ and $qc=b$. 
$$\xymatrix{
K \ar@{..>}[dr]^(0.65)c  \ar[ddr]_a \ar[drrr]^b &&&\\
&C  \ar[d]^{p} \ar[rr]_{q}& & B \ar[d]^{v} \\
&A \ar[rr]^{u} && E 
}$$ 
The object $C$ in a cartesian square (\ref{pullbacksquaredef}) is said to be the {\it fiber product} of the maps $u:A\to E$ and $v:B\to E$
and we my write that $(C, p, q)=(A\times_E B, p_1, p_2)$. 
$$\xymatrix{
A\times_E B  \ar[d]_{p_1} \ar[rr]^{p_2}& & B \ar[d]^{v} \\
A \ar[rr]^{u} && E
}$$ 
The map $p_1:A\times_E B \to A$ is said to be
the {\it base change} of the map $v:B\to E$
{\it along} the map $u:A\to E$.
Dually, the map $p_2:A\times_E B \to B$
is said to be the {\it base change} of $u:A\to E$
along $v:B\to E$.

\begin{lemma} \label{lemmacartesiansq} Suppose that we have a commutative diagram
$$\xymatrix{
A'  \ar[d] \ar[rr]^{u'}& & B' \ar[d] \ar[rr]^{v'} && C' \ar[d]\\
A \ar[rr]^{u} && B \ar[rr]^{v}  && C
}  $$ 
in which right hand square is cartesian.
Then the left hand square is cartesian 
if and only if the composite square is cartesian,
$$\xymatrix{
A'  \ar[d] \ar[rr]^{v'u'}  && C' \ar[d]\\
A  \ar[rr]^{vu}  && C 
}$$ 
\end{lemma}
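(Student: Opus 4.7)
The plan is to prove both implications directly from the universal property of cartesian squares, using the right-hand square (which is cartesian by hypothesis) as the mediating tool. Let me write the vertical maps as $a:A'\to A$, $b:B'\to B$, $c:C'\to C$, so the commutativity relations are $bu'=ua$, $cv'=vb$, and hence $cv'u'=vua$.

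For the forward direction, assume the left square is cartesian and suppose given an object $K$ with maps $f:K\to A$ and $g:K\to C'$ satisfying $vuf=cg$. I will produce the unique mediating map $h:K\to A'$ with $ah=f$ and $v'u'h=g$ in two stages. First, since $v(uf)=cg$ and the right square is cartesian, there is a unique $k:K\to B'$ with $bk=uf$ and $v'k=g$. Next, since $uf=bk$ and the left square is cartesian, there is a unique $h:K\to A'$ with $ah=f$ and $u'h=k$. Then $v'u'h=v'k=g$, as required. Uniqueness of $h$ follows by running the argument in reverse: any competitor $h'$ yields $k':=u'h'$ satisfying $bk'=ua h'=uf$ and $v'k'=g$, so $k'=k$ by the universality of the right square, and then $h'=h$ by the universality of the left square.

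For the backward direction, assume the composite square is cartesian and suppose given $K$ with maps $f:K\to A$ and $k:K\to B'$ satisfying $uf=bk$. Set $g:=v'k:K\to C'$; then $cg=cv'k=vbk=vuf$, so by the universal property of the composite square there is a unique $h:K\to A'$ with $ah=f$ and $v'u'h=g$. To check that $u'h=k$, note that both maps satisfy $b(u'h)=uah=uf=bk$ and $v'(u'h)=g=v'k$, so they agree by the universal property of the right cartesian square. Uniqueness of $h$ with respect to the left square follows because any such $h$ automatically satisfies $v'u'h=v'k=g$, reducing it to uniqueness for the composite square.

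There is no real obstacle here: the whole argument is just a bookkeeping exercise on universal properties, and the only subtlety is ensuring the cancellation step in the backward direction genuinely uses the right square's cartesianness (not just its commutativity). I would present the two directions in parallel so the symmetry between the constructions is visible.
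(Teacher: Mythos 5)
Your proof is correct and is the standard pullback-pasting argument via universal properties; the paper itself leaves this lemma as an exercise to the reader, so there is no authorial proof to compare against. Both directions are handled cleanly, and you correctly isolate the one subtle point, namely that the cancellation $u'h=k$ in the backward direction relies on the right-hand square being cartesian (so that maps into $B'$ are determined by their composites with $b$ and $v'$) rather than merely commutative.
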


\begin{proof}
Left as an exercise to the reader.
\end{proof}

\begin{lemma} \label{cubelemmacartesiansq} 
Suppose that we have commutative cube
\[
\xymatrix{
A' \ar[dr] \ar[rrr] \ar[ddd] & &  & B' \ar '[d] [ddd]^(0,3){}   \ar[dr]   &  \\
  & A \ar[ddd] \ar[rrr] &  & &  B\ar[ddd] \\ 
  &&&&\\
C' \ar '[r] [rrr]^(0.3){} \ar[dr] & &  &D' \ar[dr] &   \\
  & C  \ar[rrr]^{} &  & & D  \ .}
\]
in which the left and right vertical faces are cartesian.
If the front face is cartesian, then the back face is cartesian.
\end{lemma}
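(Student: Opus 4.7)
The plan is to reduce the claim to two applications of the pasting lemma for cartesian squares (Lemma \ref{lemmacartesiansq}). The cube contains six faces, and the crucial observation is that if one pastes the back face together with the right face (glued along the edge $B'\to D'$), one obtains a rectangle from $A'$ to $D$ whose top row is $A'\to B'\to B$ and whose bottom row is $A'\to C'\to D'\to D$; by commutativity of the cube, the latter equals $A'\to C'\to C\to D$ and the former equals $A'\to A\to B$. Thus this same rectangle is also obtained by pasting the left face with the front face (glued along the edge $A\to C$).

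First I would make this decomposition explicit by drawing the two $2\times 1$ rectangles
\[
\xymatrix{
A' \ar[r] \ar[d] & B' \ar[d] \ar[r] & B \ar[d] \\
C' \ar[r] & D' \ar[r] & D
}
\qquad
\xymatrix{
A' \ar[r] \ar[d] & A \ar[d] \ar[r] & B \ar[d] \\
C' \ar[r] & C \ar[r] & D
}
\]
and noting that by commutativity of the cube they have the same outer rectangle
\[
\xymatrix{
A' \ar[r] \ar[d] & B \ar[d] \\
C' \ar[r] & D.
}
\]

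Next, since the left face (the left half of the second rectangle) and the front face (its right half) are both cartesian by hypothesis, Lemma \ref{lemmacartesiansq} applied to the second rectangle shows that the outer rectangle is cartesian. Hence, viewing this same outer rectangle as the first decomposition, the composite of the back face and the right face is cartesian. Since the right face is cartesian by hypothesis, a second application of Lemma \ref{lemmacartesiansq} — this time in the converse direction, detecting the left-hand constituent square from the composite and the right-hand one — forces the back face to be cartesian, as required.

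There is no genuine mathematical obstacle here: the entire argument is purely formal, resting only on the two directions of Lemma \ref{lemmacartesiansq}. The only thing to be careful about is the bookkeeping in the cube, i.e.\ verifying that the two $2\times 1$ pastings yield the \emph{same} outer rectangle, which is a direct consequence of commutativity of the four side faces of the cube.
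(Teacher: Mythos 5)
Your proof is correct and is essentially identical to the paper's: both form the diagonal rectangle $A'\to B$, $C'\to D$ by pasting the left and front faces, identify it with the pasting of the back and right faces, and then apply the converse direction of Lemma \ref{lemmacartesiansq} to the latter decomposition. No further comment is needed.
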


\begin{proof}
The diagonal square $A'BC'D$ is cartesian by lemma \ref{lemmacartesiansq}, since the left vertical face and the front face are cartesian,
\[
\xymatrix{
A' \ar[dr] \ar[rrrrd] \ar[ddd] & &  &   &  \\
  & A \ar[ddd] \ar[rrr] &  & & B \ar[ddd] \\ 
  &&&&\\
C' \ar[drrrr]  \ar[dr] & &  & &   \\
  & C  \ar[rrr]^{} &  & & D   \ .}
\]
But the diagonal square is also the composite 
of the back face with the right vertical face:
\[
\xymatrix{
A' \ar[rrr] \ar[ddd] \ar[rrrrd] & &  & B' \ar[ddd]^(0,3){}   \ar[dr]   &  \\
  &  &  & &  B \ar[ddd] \\ 
  &&&&\\
C' \ar[rrr]^(0.3){} \ar[rrrrd] & &  &D' \ar[dr] &   \\
  &&  & & D   \ .}
\]
It then follows from lemma  \ref{lemmacartesiansq} that the 
back face is cartesian, since the right vertical face is cartesian.
\end{proof}

\subsection{Carrable objects and maps}

\begin{defi} \label{defcarrableobject} We say that an object $B$  in a category $\mathcal{C}$ is {\bf carrable}
if the cartesian product $A\times B$ exists for any object $A\in \mathcal{C}$, 
 $$\xymatrix{
A & \ar[l]_{p_1} A\times B    \ar[r]^(0.6){p_2}& B.
}$$ 
\end{defi}

It is easy to see that an object $B$  is carrable
if and only if the forgetful functor $\mathcal{C}/B\to \mathcal{C}$
has a right adjoint $e_B:  \mathcal{C} \to \mathcal{C}/B$; by construction,
$e_B(X)=(B\times X,p_1)$ for every $X\in \mathcal{C}$.

\medskip

\begin{defi}  \label{defcarrablemap} We say that a map $p:X\to B$  in a category $\mathcal{C}$
 is {\bf carrable} if the object $(X,p)$ of the category $ \mathcal{E}/B$
 is carrable.  
\end{defi}

A map $p:X\to B$ is carrable if and only if the fiber product
$$\xymatrix{
A\times_B X  \ar[d]_{p_1}\ar[rr]^{p_2}& & X \ar[d]^{p} \\
A \ar[rr]^{f}&& B
}$$ 
exists for every map $f:A\to B$.

\medskip

Recall that a map $f:A\to B$ in a category $\mathcal{C}$ induces a {\bf push-forward functor} 
$f_!:\mathcal{C}/A\to \mathcal{C}/B$ defined by putting $f_!(X,p)=(X,fp)$ for a map $p:X\to A$.
$$\xymatrix{
X  \ar[d]_{p}\ar@{=}[rr] & & X \ar[d]^{fp} \\
A \ar[rr]^{f}&& B
}$$

\begin{prop} \label{carrablerightadjoint} A map $f:A\to B$  in a category $\mathcal{C}$
 is carrable if and only if the push-forward functor $f_!:\mathcal{C}/A\to \mathcal{C}/B$
 admits a right adjoint $f^\star:\mathcal{C}/B\to \mathcal{C}/A$.
\end{prop}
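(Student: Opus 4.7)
The plan is to reduce this statement to the corresponding criterion for carrability of an \emph{object}, by observing that carrability of the map $f$ is, by definition, carrability of the object $(A,f)$ in the slice category $\mathcal{C}/B$, and that the push-forward functor $f_!$ corresponds under a standard identification to the forgetful functor from a double slice.

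More precisely, first I would recall (or quickly verify) the general fact behind Definition \ref{defcarrableobject}: an object $Z$ in a category $\mathcal{D}$ is carrable if and only if the forgetful functor $\Sigma_Z:\mathcal{D}/Z\to \mathcal{D}$ admits a right adjoint. One direction is the definition of the base change functor $e_Z(X)=(Z\times X,p_1)$; the converse is obtained by taking $e_Z(X)$ to be the value of the right adjoint on $X$, with counit giving the two projections of a product diagram. Applying this with $\mathcal{D}=\mathcal{C}/B$ and $Z=(A,f)$ shows that $f$ is carrable if and only if the forgetful functor
\[
\Sigma_{(A,f)}:(\mathcal{C}/B)/(A,f)\longrightarrow \mathcal{C}/B
\]
has a right adjoint.

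Next I would exhibit the canonical isomorphism of categories $\Phi:(\mathcal{C}/B)/(A,f)\xrightarrow{\sim}\mathcal{C}/A$ sending $((X,q),p):((X,q)\to(A,f))$ to $(X,p)$ (so $q=fp$), with inverse sending $(X,p)\in \mathcal{C}/A$ to $((X,fp),p)$. Under $\Phi$, the forgetful functor $\Sigma_{(A,f)}$ becomes exactly the push-forward functor $f_!:\mathcal{C}/A\to \mathcal{C}/B$, because $\Sigma_{(A,f)}((X,fp),p)=(X,fp)=f_!(X,p)$. Combining with the previous step yields the proposition: $f$ is carrable iff $\Sigma_{(A,f)}$ has a right adjoint iff $f_!$ has a right adjoint.

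The steps are all routine; the only thing that requires a bit of care is the verification that $\Phi$ is a well-defined isomorphism of categories and that it intertwines $\Sigma_{(A,f)}$ with $f_!$. I do not foresee a real obstacle. For completeness I would note the explicit form of the resulting right adjoint: when $f$ is carrable, $f^\star(Y,q)=(A\times_B Y,p_1)$ formed by the pullback, and the adjunction bijection
\[
(\mathcal{C}/B)\!\bigl(f_!(X,p),(Y,q)\bigr)\;\cong\;(\mathcal{C}/A)\!\bigl((X,p),f^\star(Y,q)\bigr)
\]
is the one induced by the universal property of the pullback, identifying a map $h:X\to Y$ with $qh=fp$ with the unique map $(p,h):X\to A\times_B Y$ over $A$.
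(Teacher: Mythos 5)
Your argument is correct, and it follows exactly the route the paper's own definitions set up: the paper states this proposition without proof, but Definition \ref{defcarrablemap} defines a carrable map as one whose associated object $(A,f)$ is carrable in $\mathcal{C}/B$, and the remark preceding it records the object-level criterion via the forgetful functor, so your reduction through the isomorphism $(\mathcal{C}/B)/(A,f)\cong\mathcal{C}/A$ (which also appears in the paper as Lemma \ref{compositebasechangetribecor2} in the clan setting) is precisely the intended proof. Nothing is missing.
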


If $f:A\to B$ is cartable, then $f^\star:\mathcal{C}/B\to \mathcal{C}/A$ is the
 {\bf base change functor} along $f$. By construction 
 $f^\star(Y)=(A\times_B Y,p_1)$ for $Y=(Y,q)\in \mathcal{C}/B$.
We shall put $f^\star(q)\defeq p_1$ and $f_Y\defeq p_2$,
$$\xymatrix{
f^\star(Y)  \ar[d]_{f^\star(q)}\ar[rr]^{f_Y}& & Y \ar[d]^{q} \\
A \ar[rr]^{f}&& B.
}$$ 
The counit of the adjunction $f_!\dashv f^\star$ is given by the map $f_Y:f_!f^\star(Y)\to Y$
for $Y=(Y,q)\in  \mathcal{C}/B$.

\begin{prop}\label{basechangetribe0} 
If $u:(X,p)\to (Y,q)$ in $\mathcal{E}/B$ is a map in $\mathcal{E}/B$,
then the following square is cartesian,
\begin{equation}\label{basechangemap}
\xymatrix{
f^\star(X)  \ar[d]_{f^\star(u)} \ar[rr]^{f_X}&& X  \ar[d]^u  \\
f^\star(Y) \ar[rr]^{f_Y}& & Y 
}
\end{equation}
\end{prop}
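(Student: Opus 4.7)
The plan is to deduce the result from the pasting lemma for cartesian squares (Lemma \ref{lemmacartesiansq}), by exhibiting the given square as the top half of a vertical composite whose bottom half and whose total are both cartesian by construction.

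First, I would set up the following commutative diagram, stacking the square in question on top of the cartesian square defining $f^\star(Y)$:
\[
\xymatrix{
f^\star(X)  \ar[d]_{f^\star(u)} \ar[rr]^{f_X}&& X  \ar[d]^u  \\
f^\star(Y) \ar[rr]^{f_Y} \ar[d]_{f^\star(q)} & & Y \ar[d]^q \\
A \ar[rr]^f && B
}
\]
The bottom square is cartesian by the very definition of $f^\star(Y,q) = (A\times_B Y, p_1)$ together with $f_Y = p_2$. The outer rectangle, whose vertical arrows are $f^\star(q)\circ f^\star(u) = f^\star(qu) = f^\star(p)$ on the left and $qu = p$ on the right, is precisely the cartesian square defining $f^\star(X,p) = (A\times_B X, p_1)$ with $f_X = p_2$. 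The commutativity of the top square needs to be checked: the identity $f_Y \circ f^\star(u) = u \circ f_X$ holds because $f^\star(u) = A\times_B u$ is the unique map determined by the universal property of $f^\star(Y)$ (applied to the cone with legs $f^\star(p)$ and $u \circ f_X$, which satisfies $q \circ u \circ f_X = p \circ f_X = f \circ f^\star(p)$).

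With this diagram in place, the proof concludes in one step: the bottom square is cartesian and the outer rectangle is cartesian, so by Lemma \ref{lemmacartesiansq} the top square is cartesian as well. There is no real obstacle here; the only minor point to verify carefully is that the induced map $f^\star(u)$ really is the canonical comparison produced by the universal property, so that the top square genuinely commutes — and this is immediate from the definition of $f^\star$ on morphisms given in the discussion preceding the statement (square (\ref{squareforbasechange6})).
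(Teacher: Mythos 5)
Your proof is correct and is essentially identical to the paper's: both stack the square over the cartesian square defining $f^\star(Y)$, note that the outer rectangle is the cartesian square defining $f^\star(X)$, and conclude by Lemma \ref{lemmacartesiansq}. Your additional verification that the top square commutes (via the universal property defining $f^\star(u)$) is a detail the paper leaves implicit.
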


\begin{proof}
We have a commutative diagram
 $$\xymatrix{
  \ar@/_2pc/[dd]_-{f^\star(p)} f^\star(X)  \ar[d]^{f^\star(u)} \ar[rr]^{f_X}& & X \ar[d]^{u}   \ar@/^2pc/[dd]^-{p}  \\
 f^\star(Y)  \ar[d]^{f^\star(q)}\ar[rr]^{f_Y}& & Y \ar[d]^{q}   \\
A \ar[rr]^{f}& & B  
}$$ 
where $f^\star(u)\defeq A\times_Bu:A\times_B X  \to  A\times_B Y$.
The top square of the diagram is cartesian by lemma \ref{lemmacartesiansq},
since the bottom square and the composite square are cartesian.
\end{proof}

\begin{prop}\label{propofquadrable} Every isomorphism is carrable.
The composite of two carrable maps is carrable.
The base change of a carrable map along any map
is carrable.
\end{prop}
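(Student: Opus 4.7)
The plan is to verify each of the three claims in turn, using Lemma \ref{lemmacartesiansq} as the main tool.

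For the first claim, if $u \colon A \to B$ is an isomorphism and $f \colon C \to B$ is any map, I would exhibit the square
\[
\xymatrix{
C \ar[d]_{f} \ar[rr]^{u^{-1}f} && A \ar[d]^{u} \\
B \ar@{=}[rr] && B
}
\]
as cartesian by a direct check of the universal property (any pair $(a,b)$ of maps into $C$ and $A$ with $fa = ub$ forces $a = u^{-1}ub = u^{-1}fb\cdot 1 $ and is determined by $b$). More slickly, any isomorphism in the slice $\mathcal{C}/B$ is carrable, and $u$ represents such an isomorphism into the terminal object of $\mathcal{C}/B$ via a trivial pullback.

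For the second claim, let $f \colon A \to B$ and $g \colon B \to C$ be carrable and let $h \colon D \to C$ be an arbitrary map. Since $g$ is carrable, the pullback of $g$ along $h$ exists; call it $D \times_C B$, with projection $q_1 \colon D \times_C B \to D$. Since $f$ is carrable, the pullback of $f$ along the second projection $D \times_C B \to B$ exists; call it $E$, with projection $r_1 \colon E \to D \times_C B$. This gives a diagram of two cartesian squares stacked on top of each other, and by Lemma \ref{lemmacartesiansq} the composite square exhibits $E$ as the pullback of $gf$ along $h$. This shows $gf$ is carrable.

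For the third claim, suppose $p \colon X \to B$ is carrable with base change $p_1 \colon A \times_B X \to A$ along some map $g \colon A \to B$, and let $h \colon C \to A$ be arbitrary. I would form the pullback of $p$ along the composite $gh \colon C \to B$, which exists because $p$ is carrable, yielding an object $C \times_B X$ together with a map to $C$. Then, using that both the outer rectangle and the right-hand square (the defining pullback of $p_1$) are cartesian, Lemma \ref{lemmacartesiansq} implies that the induced left-hand square exhibiting a map $C \times_B X \to A \times_B X$ is cartesian, hence $C \times_B X$ serves as the pullback of $p_1$ along $h$.

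None of the three arguments is deep; each amounts to an application of the pasting lemma for pullbacks (Lemma \ref{lemmacartesiansq}) together with the relevant hypothesis. The mild bookkeeping obstacle is simply to set up the squares so that the pasting lemma applies in the correct direction, which is routine.
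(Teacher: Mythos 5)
Your treatments of the second and third claims are correct, and the third coincides with the paper's argument (pull back along the composite, then use Lemma \ref{lemmacartesiansq} to recognize the left-hand square as cartesian). For the second claim, however, you take a genuinely different route: you paste two cartesian squares vertically and invoke Lemma \ref{lemmacartesiansq} directly, whereas the paper argues through Proposition \ref{carrablerightadjoint}, namely that $f$ is carrable iff the push-forward $f_!:\mathcal{C}/A\to \mathcal{C}/B$ admits a right adjoint, so that $(gf)_!=g_!f_!$ admits the right adjoint $f^\star g^\star$. Your version is more elementary and makes the construction of the pullback explicit; the paper's is shorter and reuses the adjoint characterization it has already set up. Both are perfectly valid.

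One slip worth fixing: your displayed square for the first claim is not cartesian. To show an isomorphism $u:A\to B$ is carrable along $f:C\to B$, the witnessing square should have $C$ in both left-hand corners, with $\mathrm{id}_C$ as the left edge, $f$ as the bottom edge, and $u^{-1}f$ on top; as drawn (with $f$ on the left and $\mathrm{id}_B$ on the bottom) the square commutes but fails the universal property whenever $f$ is not itself invertible, and the parenthetical verification does not typecheck ($a$ lands in $C$ while $u^{-1}ub$ lands in $A$). Your "slicker" remark — that $(A,u)$ is isomorphic to the terminal object of $\mathcal{C}/B$, which is trivially carrable — is the correct argument and saves this part; note that the paper's own proof omits the isomorphism claim entirely.
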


\begin{proof} Let $f:A\to B$
and $g:B\to C$ be two carrable maps  in a category $\mathcal{C}$. 
The functor $f_!:\mathcal{C}/A\to \mathcal{C}/B$
admits a right adjoint $f^\star$ by proposition \ref{carrablerightadjoint}, since $f$ is carrable. Similarly, the functor $g_!:\mathcal{C}/B\to \mathcal{C}/C$
admits a right adjoint $g^\star$. It follows that the functor $(gf)_!=g_! f_!:\mathcal{C}/A\to \mathcal{C}/C$
admits a right adjoint $ f^\star g^\star$.  Hence the map $gf$ is carrable by proposition \ref{carrablerightadjoint}.
Let us show that the base change of a carrable map $p:E\to C$
 along any map $g:B\to C$ is carrable.  For this we have to show that 
 the projection $B\times_C E\to B$ in the following pullback square is quarrable,    
$$\xymatrix{
B\times_C E\ar[d]_{} \ar[r]^(0.6){}    & E \ar[d]^p\\
B  \ar[r]^{g}  & C.
}$$ 
If $f:A\to B$, then the fiber product $A\times_CE$ exists, since $p$ is carrable.
Moreover,  the left hand square of the following diagram is cartesian
by lemma \ref{lemmacartesiansq},
$$\xymatrix{
A\times_C E   \ar[d]^{} \ar[rr]^{f \times_C E}& & B\times_C E  \ar[d]_{} \ar[rr]^{} && E \ar[d]\\
A \ar[rr]^{f} && B \ar[rr]^{g}  && C
}$$ 
This proves that the map  $ B\times_C E \to C$
is carrable.
 \end{proof}

\begin{defi}\label{genuineprojectiondef}
We shall say that a map $p:E\to B$ in a category 
is a {\bf cartesian projection}, if there exists a map $q:E\to F$ such that the
pair $(p,q)$ exhibits $E$ as the cartesian product of $B$ by $F$.
\end{defi}

Recall that a category with finite cartesian products is
said to be {\it cartesian}.

\begin{prop}\label{genuineprojection}
Cartesian projections are closed under composition. 
 In a cartesian category, every cartesian projection is carrable
and the base change of a cartesian projection is a cartesian projection.
\end{prop}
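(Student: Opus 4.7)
The plan is to unfold the definition: a cartesian projection $p:E\to B$ comes equipped with a map $q:E\to F$ such that $(p,q)$ exhibits $E$ as a cartesian product $B\times F$. All three assertions will follow by manipulating such factorisations and invoking the universal property of cartesian products.

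For closure under composition, suppose $p:E\to B$ is a cartesian projection with witness $q:E\to F$, and $g:B\to A$ is a cartesian projection with witness $r:B\to G$. I would introduce $(r p, q):E\to G\times F$ (assuming the product $G\times F$ exists; otherwise work with the underlying universal property directly, without naming the product object) and show that $(gp,(rp,q)):E\to A\times(G\times F)$ exhibits $E$ as a cartesian product of $A$ and $G\times F$. This amounts to a straightforward check: given $a:K\to A$ and $(b,c):K\to G\times F$, the map $(a,b):K\to A\times G$ factors uniquely through $B$ via $(g,r)$ as some $h:K\to B$ with $gh=a$ and $rh=b$; then $(h,c):K\to B\times F$ factors uniquely through $E$ via $(p,q)$ as the desired map $K\to E$. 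Since the universal property of $A\times(G\times F)$ is built up from two applications of the hypotheses, the verification is purely formal.

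For carrability in a cartesian category, I would show more strongly that in that setting a cartesian projection $p:E\to B$ has, along any $f:A\to B$, the base change $A\times F\to A$, where $(p,q):E\xrightarrow{\sim} B\times F$. Concretely, form the map $(f p_1, q p_2):A\times F\to B\times F$ (identifying $E$ with $B\times F$ via the given iso) and check that the square with $p_1:A\times F\to A$ and the first projection $p$ is cartesian: the universal property follows from repeatedly using that $(p,q)$ exhibits $E$ as $B\times F$, so that specifying a map into $E$ over $B$ is the same as specifying a map into $F$. This simultaneously establishes the second assertion (the fibre product exists, hence $p$ is carrable) and the third (the base change is the first projection $A\times F\to A$, which is tautologically a cartesian projection, witnessed by the second projection $A\times F\to F$).

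The only delicate point is bookkeeping: in a general category the product $G\times F$ used for composition need not exist, so either one restricts to cartesian categories throughout or rephrases the argument in terms of compatible families of maps without naming the product object. I would take the latter route for the first claim, since the statement of closure under composition is asserted in the general setting, while both other claims are explicitly made under the assumption of a cartesian category, where no such worry arises. The verifications are entirely formal manipulations of universal properties, so I expect no substantive obstacle—just the need to be careful about what structure is assumed in each of the three sub-claims.
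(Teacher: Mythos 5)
Your argument is correct, and it is essentially the argument the paper itself gives for the parallel statement in the main text: the proof of Proposition~\ref{genuineprojection} is left to the reader, but Proposition~\ref{tribeofgenuineprojection} establishes the same three facts (in a cartesian category) by pasting cartesian squares over the terminal object via Lemma~\ref{lemmacartesiansq}, forming the complement $B'\times E'$ of the composite exactly as you form $G\times F$, and deducing carrability from the facts that every object of a cartesian category is carrable and that a base change of a carrable map is carrable (Proposition~\ref{propofquadrable}). Your verification with generalized elements $a:K\to A$, $b:K\to G$, $c:K\to F$ is the unwound form of that pasting argument, and your explicit identification of $A\times_B E$ with $A\times F$ via $f\times F=(fp_1,p_2)$ (note the small typo: $(fp_1,qp_2)$ does not typecheck, since $q$ has domain $E$) delivers the second and third claims in one stroke.

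One caveat on the ``delicate point'' you raise. Your fallback of rephrasing the composition claim ``in terms of compatible families of maps without naming the product object'' cannot actually close that case in a completely general category: by Definition~\ref{genuineprojectiondef}, being a cartesian projection means exhibiting a specific complement object $F'$ with a map $E\to F'$ making $(gp,\,-)$ a product diagram, and the only candidate is $G\times F$, whose existence does not follow from the existence of the ternary product $A\times G\times F\cong E$. So the composition claim should be read with the ambient category cartesian (or with $G\times F$ assumed to exist), which is the setting in which the paper states and proves it in Proposition~\ref{tribeofgenuineprojection}; under that reading your argument is complete.
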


 \begin{proof}
 Left to the reader.
   \end{proof}

 \begin{prop} {\rm (Frobenius reciprocity law, first form)}\label{Frobrecip1}
  Let  $f:A\to B$ and $g:B\to C$ be two maps in a category $\mathcal{E}$.
 If a map $p:X\to C$ is carrable, then the map $B\times_C X\to B$ is carrable and we have
   \begin{equation}\label{frobeniusidentity}
  A\times_B (B\times_C X) =  A\times_C X 
     \end{equation}
   \end{prop}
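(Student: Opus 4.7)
The plan is to invoke two results already proved: Proposition \ref{propofquadrable} (base changes of carrable maps are carrable) and Lemma \ref{lemmacartesiansq} (the pullback pasting lemma).

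First I would observe that since $p:X\to C$ is carrable, the fibre product $B\times_C X$ exists and fits into a cartesian square with projections $q:B\times_C X\to B$ and $p':B\times_C X\to X$ satisfying $gq=pp'$. By Proposition \ref{propofquadrable}, the map $q$ is carrable as a base change of the carrable map $p$ along $g$. In particular, the fibre product $A\times_B(B\times_C X)$ exists.

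Next I would build the pasting diagram
$$\xymatrix{
A\times_B(B\times_C X) \ar[d] \ar[r] & B\times_C X \ar[d]_{q} \ar[r]^-{p'} & X \ar[d]^{p} \\
A \ar[r]^{f} & B \ar[r]^{g} & C
}$$
The right-hand square is cartesian by construction of $B\times_C X$, and the left-hand square is cartesian by construction of $A\times_B(B\times_C X)$. By Lemma \ref{lemmacartesiansq}, the outer rectangle is then cartesian, so it exhibits $A\times_B(B\times_C X)$ as a fibre product of $p:X\to C$ with $gf:A\to C$. Hence $A\times_B(B\times_C X) = A\times_C X$ canonically.

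There is essentially no obstacle here: the proof is a one-step application of the pasting lemma, together with the stability of carrability under base change. The only thing to be slightly careful about is the symbol $=$ in the statement, which should be read as a canonical identification (the universal property identifies the two objects up to a unique isomorphism over $A$), and I would note this explicitly at the end of the proof.
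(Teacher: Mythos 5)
Your proof is correct and follows essentially the same route as the paper: both arguments paste the same two cartesian squares over $A\to B\to C$ and invoke Lemma \ref{lemmacartesiansq} once. The only (immaterial) difference is the direction: you first secure the existence of $A\times_B(B\times_C X)$ via Proposition \ref{propofquadrable} and deduce that the outer rectangle is cartesian, whereas the paper starts from $A\times_C X$, notes that the composite and right-hand squares are cartesian by construction, and deduces that the left-hand square is cartesian.
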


 \begin{proof}
 The right hand square of the following diagram is cartesian by construction,
  $$\xymatrix{
A\times_C X \ar[rr]^{f \times_C X} \ar[d]&& B\times_C X  \ar[rr] \ar[d] &&X\ar[d]^p \\
 A\ar[rr]^f && B \ar[rr]^g && C} $$
 The composite square is also cartesian for the same reason.
  It then follows from lemma \ref{lemmacartesiansq} that the left
  hand square is cartesian.
  \end{proof}

 Let $p:X\to A$ and  $f:A\to B$ be two carrable maps in a category $\mathcal{E}$.
 Then the map $fp:X\to B$ is carrable and  $f_!(X)=(X,fp)$.
 If $Y=(Y,q)\in  \mathcal{E}/B$ then the fiber products
$f_!(X)\times_BY$ and $X\times_A f^\star(Y)$ exists.

 \begin{prop} {\rm (Frobenius reciprocity law, second form)}\label{Frobrecip2}
 With the hypothesis above, we have 
   \begin{equation}\label{frobeniusidentity2}
 f_!(X\times_A f^\star(Y) )=f_!(X)\times_BY
     \end{equation}
   \end{prop}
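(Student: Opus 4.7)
My plan is to reduce the identity (\ref{frobeniusidentity2}) to a single application of the pasting lemma for cartesian squares (Lemma \ref{lemmacartesiansq}), together with a careful identification of structure maps over $B$.

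First I would unfold the two sides. On the right, $f_!(X)\times_B Y$ is defined by the cartesian square
$$\xymatrix{
f_!(X)\times_B Y  \ar[d]_{\pi_1}\ar[rr]^{\pi_2}& & Y \ar[d]^{q} \\
X \ar[rr]^{fp}&& B
}$$
in $\mathcal{E}$, with its structure map to $B$ being $fp\,\pi_1=q\,\pi_2$. On the left, $f^\star(Y)=(A\times_B Y,p_1)$ is defined by the cartesian square whose top side is $f_Y$ and bottom side is $f$, and then $X\times_A f^\star(Y)$ is defined by the cartesian square
$$\xymatrix{
X\times_A f^\star(Y)  \ar[d]_{\rho_1}\ar[rr]^{\rho_2}& & A\times_B Y \ar[d]^{p_1} \\
X \ar[rr]^{p}&& A
}$$
in $\mathcal{E}$. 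The object $f_!(X\times_A f^\star(Y))$ then has $X\times_A f^\star(Y)$ as its underlying object and structure map to $B$ equal to $fp\,\rho_1$.

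Next I would stack these two cartesian squares vertically:
$$\xymatrix{
X\times_A f^\star(Y)  \ar[d]_{\rho_1}\ar[rr]^{\rho_2}& & A\times_B Y \ar[d]^{p_1} \ar[rr]^{f_Y}&& Y \ar[d]^{q}\\
X \ar[rr]^{p}&& A \ar[rr]^{f}&& B
}$$
By Lemma \ref{lemmacartesiansq}, the composite outer square is cartesian. But this composite outer square exhibits $X\times_A f^\star(Y)$ as the fiber product of $fp:X\to B$ with $q:Y\to B$, which is precisely the defining property of $f_!(X)\times_B Y$. Hence there is a canonical isomorphism $X\times_A f^\star(Y)\simeq f_!(X)\times_B Y$ identifying $\rho_1$ with $\pi_1$ and $f_Y\rho_2$ with $\pi_2$.

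The only thing left is to check that this isomorphism is an isomorphism in $\mathcal{E}/B$, i.e. that the two structure maps to $B$ agree. On the left they are $fp\,\rho_1$; on the right they are $fp\,\pi_1$; under the identification $\rho_1=\pi_1$ these coincide tautologically. This gives the desired equality $f_!(X\times_A f^\star(Y))=f_!(X)\times_B Y$ in $\mathcal{E}/B$. I do not expect any genuine obstacle here: the result is purely a pasting-of-pullbacks argument, and the only care needed is to keep track of which structure map to $B$ one is using on each side.
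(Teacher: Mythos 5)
Your argument is exactly the paper's: paste the defining pullback square of $X\times_A f^\star(Y)$ over $p$ with the defining pullback square of $f^\star(Y)$ over $f$, apply Lemma \ref{lemmacartesiansq} to see the composite square is cartesian, and identify the result with $f_!(X)\times_B Y$. The paper's proof is just a terser version of this; your extra check that the structure maps to $B$ agree is a harmless (and welcome) elaboration.
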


 \begin{proof}
 Consider
 the following diagram of cartesian squares,
 $$\xymatrix{
X\times_A f^\star(Y) \ar[r] \ar[d]& f^\star(Y)  \ar[rr] \ar[d] &&Y\ar[d]^q\\
 X\ar[r]^p & A \ar[rr]^f && B 
} $$
The composite square is cartesian by lemma \ref{lemmacartesiansq}.
\end{proof}

\subsection{Lifting properties}

Recall that a map $u:A\to B$ in a category $\mathcal{E}$  is said to have the
{\it left lifting property} with respect to a map $f:X\to Y$,
and that $f$ is said to have the {\it right lifting property} 
with respect to $u$, if every commutative square
$$
\xymatrix{ A\ar[d]_u \ar[r]^a& X\ar[d]^f \\
B \ar[r]^b &Y
}
$$
has a diagonal filler $d:B\to X$ ($fd=b$ and $du =a$),
$$
\xymatrix{ A\ar[d]_u \ar[r]^a& X\ar[d]^f \\
B \ar[r]^b \ar@{-->}[ru]^{d} \ar@{..>}[ur]^{d} &Y \ .
}
$$
We shall denote this relation by $u\pitchfork f$.
A map $f \in \mathcal{E}$ is invertible if and only if we have $f\pitchfork f$.

\medskip

If $\mathcal{K}$ is a class of maps in a category $ \mathcal{E}$,
let us put 
$$\mathcal{K}^\pitchfork =\{f\in \mathcal{E}: \forall u\in \mathcal{K}\  \  u\pitchfork f \}$$
$${}^\pitchfork \mathcal{K}=\{u\in \mathcal{E}: \forall f\in \mathcal{K}\  \  u\pitchfork f \}$$

\medskip

Recall a map $r:B\to A$ in a category $\mathcal{E}$ is said be a {\it retraction} of a map $i:A\to B$ is $ri=1_A$.
$$\xymatrix{
A \ar[rr]^i && B   \ar@/_2pc/[ll]_-{r}
}$$ 
A map $i:A\to B$ is said to be a {\it split monomorphism} it it admits a retraction.
An object $A$ is said to be
a {\it retract} of an object $B$ if there exists a split monomorphism $i:A\to B$.
A map $f$  is said to be 
a {\it retract} of a map $g$ if the object $f$  of the category of arrows $\mathcal{E}^I$
is a retract of the object $g$.
A map $f:A\to B$  is said to be a {\it codomain retract}
of a map $g:A\to B$ if the object $(B,f)$ of the category $A\backslash \mathcal{E}$
is a retract of the object $(B,g)$
Dually, a map $f:A\to B$  is said to be a {\it domain retract}
of a map $g:C\to B$ if the object $(A,f)$ of the category $\mathcal{E}/B$
is a retract of the object $(C,g)$.

\medskip

Recall that a class $\mathcal{C}$ of maps in a category $\mathcal{E}$ is said to be {\it closed under retracts}
if every retract of a map in $\mathcal{C}$ belongs to $\mathcal{C}$.

\medskip

\begin{prop}\label{triviftingprop}
The class $\mathcal{K}^\pitchfork $ contains the isomorphisms, 
and is closed under composition, retracts and base changes.
Dually, the class ${}^\pitchfork \mathcal{K} $ contains the isomorphisms, 
and is closed under composition, retracts and cobase changes .
\end{prop}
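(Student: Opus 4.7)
The plan is to prove the first half of the proposition directly and then obtain the second half by duality (applying the first half in the opposite category, noting that $u \pitchfork f$ in $\mathcal{E}$ is the same as $f^{op} \pitchfork u^{op}$ in $\mathcal{E}^{op}$, and that base changes in $\mathcal{E}^{op}$ are cobase changes in $\mathcal{E}$).

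For the first half, I would verify the four closure properties in turn, for a fixed $u : A \to B$ in $\mathcal{K}$ and a commutative square with right side $f$. (i) If $f$ is an isomorphism, the map $f^{-1}b : B \to X$ is a diagonal filler. (ii) For composition: if $f = f_2 f_1$ with $f_1, f_2 \in \mathcal{K}^\pitchfork$, factor the lifting problem by first lifting against $f_2$ (applied to the square with bottom $b$ and top $f_1 a$) to get $d_1 : B \to Y$ with $f_2 d_1 = b$ and $d_1 u = f_1 a$, then lift against $f_1$ (applied to the square with top $a$ and bottom $d_1$) to get the desired $d : B \to X$. (iii) For retracts, if $f$ is a retract of $g \in \mathcal{K}^\pitchfork$ in the arrow category, I paste the given square onto the retract diagram to produce a square over $g$, use the lifting property for $g$, and then compose with the retraction to obtain a filler for $f$.

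The most delicate of the four, though still routine, is (iv) closure under base changes. Given $f : X \to Y$ in $\mathcal{K}^\pitchfork$ and a pullback
\[
\xymatrix{
Z \times_Y X \ar[d]_{p_1} \ar[r]^-{p_2} & X \ar[d]^{f} \\
Z \ar[r]^{g} & Y
}
\]
(assuming the base change exists), I take a square with $u \in \mathcal{K}$ on the left, top $a : A \to Z \times_Y X$ and bottom $b : B \to Z$, and form the outer square with top $p_2 a : A \to X$ and bottom $gb : B \to Y$, which commutes since $f p_2 a = g p_1 a = g b u$. The lifting property of $f$ yields $d' : B \to X$ with $f d' = gb$ and $d' u = p_2 a$; combined with $b : B \to Z$, the universal property of the pullback produces a unique map $d : B \to Z \times_Y X$ with $p_1 d = b$ and $p_2 d = d'$, and uniqueness in the pullback forces $d u = a$ from the equalities $p_1 d u = bu = p_1 a$ and $p_2 d u = d' u = p_2 a$.

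No real obstacle is expected: every step is diagram-chasing in the arrow category, and the only mild subtlety is keeping track of which small square provides which piece of data in the retract and base-change cases. The dual half then requires no new work, since reversing arrows turns isomorphisms into isomorphisms, composition into composition, retracts into retracts, and cartesian squares into cocartesian squares.
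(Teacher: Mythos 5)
Your proof is correct: all four closure properties are verified by the standard diagram chases, the base-change case correctly uses the uniqueness clause of the pullback to get $du=a$, and the dual half does follow formally by passing to the opposite category. The paper leaves this proof to the reader, and your argument is exactly the routine one intended.
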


\begin{proof} Left to the reader.
\end{proof}

\subsection{Simplicial categories} \label{simplicialcategories}

We shall denote by $\Delta$ the category whose objects are finite non-empty ordinals $[n]=\{0,\ldots, n\}$ ($n\geq 0$)
and whose morphisms are order preserving maps. Recall that a {\it simplicial set} is a pre-sheaf on $\Delta$.
We shall denote the category of simplicial sets by $\sSet$.
The category $\sSet$ is cartesian closed. We shall denote by $[X,Y]$
the simplicial set of maps $X\to Y$ between two simplicial sets $X$ and $Y$.
A {\it simplicial category} $\mathcal{C}$ is a category enriched over $\sSet$. 
We shall denote the ordinary category underlying a simplicial category $\mathcal{C}$ by $\mathcal{C}_0$.
We shall say that a commutative square
\begin{equation} \label{squareforsimppullback}
\xymatrix{
C \ar[d]_{u} \ar[rr]^{g} &&\ar[d]  D\ar[d]^{v}  \\
A \ar[rr]^{f}  && B
}
\end{equation}
is 
 in $\mathcal{C}_0$ is {\it $s$-cartesian} if the following square of simplicial sets
 is cartesian for every object $K\in \mathcal{C}$.
 \begin{equation} \label{squareforsimppullback2}
\xymatrix{
\mathcal{C}(K,C) \ar[d]_{\mathcal{C}(K,u)} \ar[rr]^{\mathcal{C}(K,g)} &&\ar[d]  \mathcal{C}(K,D) \ar[d]^{\mathcal{C}(K,v)}  \\
\mathcal{C}(K,A) \ar[rr]^{\mathcal{C}(K,f)}  && \mathcal{C}(K,B)
}
\end{equation}

\begin{defi}  \label{defstrongterminal} We say that an object $\top$
in a simplicial category $\mathcal{C}$
 is {\it $s$-terminal} if $\mathcal{C}(K,\top)=1$
 for every object $K\in \mathcal{C}$.
  \end{defi}

\begin{defi}  \label{defstronglycarrablemap} We say that a map $f:A\to B$  in a simplicial category $\mathcal{C}$
 is {\bf $s$-carrable} if $f$ is carrable and  
 for every map $p:X\to B$ the cartesian square
\begin{equation} \label{stronglycaarble}
\xymatrix{
A\times_B X \ar[d]_{} \ar[rr]^{} &&\ar[d]  X\ar[d]^{p}  \\
A \ar[rr]^{f}  && B
}
\end{equation}
is $s$-cartesian 
 \end{defi}

If a map $f:A\to B$ in a simplicial category $\mathcal{E}$ is a strongly carrable, 
then the base change functor $f^\star:\mathcal{E}/B\to \mathcal{E}/A$
is simplicial.

If $A$ is an object in a simplicial category  $\mathcal{C}$, then the category $\mathcal{C}_0/A$
is simplicially enriched for every object $A\in \mathcal{C}$.
If $p:X\to A$ and $q:Y\to A$, then the simplicial set $Hom((X,p),(Y,q))$
is defined by the following pullback square of simplicial sets:
\begin{equation} \label{squareforsimphomset}
\xymatrix{
Hom((X,p),(Y,q)) \ar[d] \ar[r] &\mathcal{C}(X,Y) \ar[d]^{ \mathcal{C}(X,q) }  \\
1 \ar[r]^{p}  &  \mathcal{C}(X,A) 
}
\end{equation}
This defines a simplicial category $\mathcal{C}/A$ if we put 
$(\mathcal{C}/A)(X,Y):=Hom((X,p),(Y,q))$.

\medskip

We shall denote by $\sCat$ the category of simplicial categories and
simplicial functors. Recall that if $\mathcal{C}$ and $ \mathcal{D}$
are simplicial categories, then a natural transformation $\alpha:F\to G$
between simplicial functors $F,G:\mathcal{C}\to \mathcal{D}$
is said to be {\it strong} if the following square 
commutes for every pair of objects $A,B\in\mathcal{C}$.
$$\xymatrix{
\mathcal{C}(A,B) \ar[rr]^{F_{AB}} \ar[d]_{G_{AB}} && \mathcal{D}(FA,FB) \ar[d]^{\mathcal{D}(FA,\alpha_B)}\\
\mathcal{D}(GA,GB) \ar[rr]^{\mathcal{D}(\alpha_A,GB)} && \mathcal{D}(FA,GB)
}$$

The category  $\sCat$ has the structure 
of a 2-category in which a 2-cell is a strong natural transformation.
An adjunction $F\vdash G$ between two simplicial functors $F:\mathcal{C}\leftrightarrow \mathcal{D}:G$ is said to be {\it strong}
if its unit and counit are strong natural transformations.
A {\it simplicial presheaf} on a simplicial category $\mathcal{C}$ is a simplicial functor $\mathcal{C}^{op}\to \mathcal{S}$. 
The category of simplicial pre-sheaves on $\mathcal{C}$ is a cartesian closed simplicial category
$[\mathcal{C}^{op}, \mathcal{S}]$.
 If $F,G\in [\mathcal{C}^{op}, \mathcal{S}]$,
then $[F,G]_n$ is the set of strong natural transformations $\Delta[n]\times F\to G$.
The {\it Yoneda functor} $y:\mathcal{C} \to [\mathcal{C}^{op}, \mathcal{S}]$
takes an object  $A\in \mathcal{C}$ to the simplicial functor $y(A)=\mathcal{C}(-,A)$.
If $F:\mathcal{C}^{op}\to \mathcal{S}$ is a simplicial 
pre-sheaf and $A\in \mathcal{C}$, then the map $\eta:[y(A),F]_0\to F(A)_0$
defined by putting $\eta(\alpha)=\alpha(1_A)\in F(A)_0$
is a bijection between the set of strong natural transformations $\alpha:y(A)\to F$
and the set $F(A)_0$ (Yoneda lemma).  
When $\alpha:y(A)\to F$ is invertible, we say that the presheaf
$F$ is represented by the object $A$ equipped with  $\alpha(1_A)\in F(A)_0$.
If $X$ is an object of a simplicial category $\mathcal{C}$
and $K$ is a simplicial set, we say that an object $X^K\in \mathcal{C}$
equipped with a map $e:K\to \mathcal{C}(X^K,X)$
is the {\it cotensor} of $X$ by $K$
if the simplicial presheaf  $[K,\mathcal{C}(-,X)]: \mathcal{C}^{op}\to \mathcal{S}$
is represented by $X^K$ equipped with the map $e:K\to \mathcal{C}(X^K,X)$.
In which case the map $\epsilon: K\to \mathcal{C}(X^K, X)$ is universal
in the following sense: for every object $Y\in \mathcal{C}$
and every map $f:K\to  \mathcal{C}(Y, X)$ there is a unique map
$g:Y\to X^K$ such that the following square diagram commutes
$$
\xymatrix{
K \ar[dr]_f \ar[r]^(0.4){e} &\ar[d]  \mathcal{C}(X^K, X) \ar[d]^{\mathcal{C}(g, X)}  \\
&  \mathcal{C}(Y, X) 
}
$$
We say that the cotensor $X^K$ is {\it finite} when $K$ is finite.
We say that a simplicial category  $\mathcal{C}$ is 
{\it finitely cotensored} if the cotensor $X^K$ exists for any object
$X\in \mathcal{C}$ and any finite simpllcial set $K$.
In which case the cotensor product  $(K,X)\mapsto X^K$ is a
simplicial functor $(\mathcal{S}_f)^{op}\times \mathcal{C}\to \mathcal{C}$
where $\mathcal{S}_f$ denotes the category of finite simplicial sets.

\newpage

\section{Appendix on homotopy theory}\label{Appendix on homotopy theory}

\subsection{Brown fibration categories}\label{Appendix on fibration categories}

The following notion  is originally due to Ken Brown  \cite{Br} \cite{RB}.

\begin{defi} \label{definitionfibrationcat} A {\bf  fibration category} is clan $\mathcal{E}$ 
equipped with a class of  {\bf acyclic maps} such 
that:
\begin{itemize}
\item{} Every isomorphism is acyclic;
\item{} The class of acyclic maps has the 3-for-2 property; 
\item{} Every morphism can be factored as an acyclic map followed by a fibration;
\item{} The class of acyclic fibrations is stable under base change.
\end{itemize}
\end{defi}

\medskip

We shall say that a fibration category is {\it degenerated} if every map is acyclic.
Every tribe can be given the structure of a degenerated fibration category.

\medskip

Examples of fibration categories:
\begin{itemize}
\item{}  The category of fibrant objects of a model category has the structure of a fibration category in which the acyclic maps are the weak equivalences; 
\item{} We shall see in \ref{htribeisfibcat} that a $h$-tribe has the structure of a fibration category in which the acyclic maps are the homotopy equivalences.

\end{itemize}

If $A$ is an object of a fibration category $\mathcal{E}$, we shall denote by $\mathcal{E}(A)$
the full subcategory of $\mathcal{E}/A$ whose objects are the fibrations $E\to A$.

\begin{prop}\label{localfibcat} If $\mathcal{E}$ is a fibration category,
then so is the tribe $\mathcal{E}(A)$ for every object $A\in \mathcal{E}$,
where a map $f:(X,p)\to (Y,q)$ in $\mathcal{E}(A)$ is acyclic iff the map $f:X\to Y$  
is acyclic in $\mathcal{E}$.
\end{prop}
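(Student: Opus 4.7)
The plan is to check each of the four axioms of Definition \ref{definitionfibrationcat} in turn, leveraging the fact that by Proposition \ref{clanslice} the category $\mathcal{E}(A)$ is already known to be a clan, with fibrations being those morphisms whose underlying map in $\mathcal{E}$ is a fibration. The key observation throughout is that the forgetful functor $\Sigma_A : \mathcal{E}(A) \to \mathcal{E}$ both preserves and reflects acyclic maps by the very definition of acyclic maps in $\mathcal{E}(A)$, and it preserves and reflects fibrations by Proposition \ref{clanslice}. So all four axioms will descend directly from the fibration category structure on $\mathcal{E}$.

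First I would verify that every isomorphism in $\mathcal{E}(A)$ is acyclic (its underlying map in $\mathcal{E}$ is an isomorphism, hence acyclic), and that the 3-for-2 property holds in $\mathcal{E}(A)$ (a commutative triangle in $\mathcal{E}(A)$ yields a commutative triangle in $\mathcal{E}$, and the acyclicity condition in $\mathcal{E}(A)$ is defined by pullback along $\Sigma_A$, so 3-for-2 transfers immediately). Next, for the factorization axiom, given a morphism $f : (X,p) \to (Y,q)$ in $\mathcal{E}(A)$, I would factor the underlying map $f : X \to Y$ in $\mathcal{E}$ as $f = g u$ with $u : X \to E$ acyclic and $g : E \to Y$ a fibration; then $qg : E \to A$ is a fibration (as a composite of fibrations), so $(E, qg) \in \mathcal{E}(A)$, and one checks that $u : (X,p) \to (E, qg)$ and $g : (E, qg) \to (Y,q)$ are morphisms in $\mathcal{E}(A)$ giving the desired factorization, with $u$ acyclic (by definition) and $g$ a fibration in $\mathcal{E}(A)$.

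For the last axiom, I need that the base change of an acyclic fibration in $\mathcal{E}(A)$ along any map in $\mathcal{E}(A)$ is an acyclic fibration. Given an acyclic fibration $g : (X,p) \to (Y,q)$ and a map $h : (Z,r) \to (Y,q)$ in $\mathcal{E}(A)$, the pullback in $\mathcal{E}(A)$ is computed by the pullback $Z \times_Y X$ in $\mathcal{E}$ equipped with the fibration to $A$; this is an instance of Proposition \ref{clanslice} (the pullback in $\mathcal{E}(A)$ is the pullback in $\mathcal{E}/A$, which is computed from the pullback in $\mathcal{E}$). Since $g : X \to Y$ is an acyclic fibration in $\mathcal{E}$ and acyclic fibrations in $\mathcal{E}$ are stable under base change, the projection $Z \times_Y X \to Z$ is an acyclic fibration in $\mathcal{E}$, hence an acyclic fibration in $\mathcal{E}(A)$.

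None of these steps is hard; the mild subtlety is simply keeping straight that acyclicity in $\mathcal{E}(A)$ is defined by looking at the underlying map in $\mathcal{E}$ and not fibrewise, so that everything reduces cleanly to the corresponding axiom in $\mathcal{E}$. The only place where a small argument is needed is the factorization axiom, where one must verify that the intermediate object lies in $\mathcal{E}(A)$, which follows from the fact that fibrations in $\mathcal{E}$ compose.
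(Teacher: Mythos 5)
Your proof is correct, and the paper in fact leaves this proof to the reader; your argument is exactly the expected one (and parallels the paper's own proof of Proposition \ref{tribeslice}, the analogous statement for tribes). The one point worth making fully explicit in the factorization step is that $u:(X,p)\to(E,qg)$ is a morphism of $\mathcal{E}(A)$ because $qgu=qf=p$, but this is immediate.
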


\begin{proof} Left to the reader
\end{proof}

\begin{defi} \label{defexactfunctorfibcat} An {\it exact functor} $F:\mathcal{E}\to \mathcal{E}'$ between fibration categories
is a morphism of tribes which preserves acyclic maps.
\end{defi}

 The fibration categories are the objects of a 2-category in which a 1-cell is an exact functor and
 a 2-cell is a natural transformation.
 We shall denote the 2-category of fibration categories by $\mathbf{FCat}$.

\medskip

The {\bf homotopy category} of a fibration category $\mathcal{E}$ is the category of fractions
$$Ho (\mathcal{E})=\mathcal{W}^{-1}\mathcal{E}$$
where $\mathcal{W}$ is the class of acyclic maps.
If $F:\mathcal{E} \to \mathcal{E}'$ is an exact functor,
then the following square of functors 
commutes by definition, where the vertical functors
are canonical,
\begin{equation} \label{compathomo}
\xymatrix{
\mathcal{E} \ar[d]_{[-]} \ar[rr]^{F} && \mathcal{E}'\ar[d]^{[-]}\\
Ho(\mathcal{E}) \ar[rr]^{Ho(F)} && Ho(\mathcal{E}') 
}\end{equation}

 \begin{defi} \label{defweakequivalenceofftribes} 
 We shall say that an exact
 functor $F: \mathcal{E}\to \mathcal{E}'$ between fibration categories is a {\it weak equivalence}
 if the induced functor $Ho(F): Ho(\mathcal{E})\to Ho(\mathcal{E}')$
 is an equivalence of categories.
 \end{defi}

\begin{prop} \label{htribeisfibcat} A $h$-tribe has the structure of a fibration category
in which the acyclic maps are the homotopy equivalences.
A sharp functor between $h$-tribes is exact.
\end{prop}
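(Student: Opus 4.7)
The plan is to verify the four axioms of Definition \ref{definitionfibrationcat} directly, taking as acyclic maps the homotopy equivalences. Since the notion of $h$-tribe is a mild weakening of tribe and the results cited below were established for tribes (and they transfer, since their proofs only use the clan structure together with $AF$-factorisations and stability of anodyne maps under base change along fibrations), the argument is essentially an assembly of results already in the paper.

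First, every isomorphism is trivially a homotopy equivalence. Second, the 3-for-2 property for homotopy equivalences is exactly Proposition \ref{6four2homotopyequiv}. Third, for the factorisation axiom, given any map $f:A\to B$ I would use the $AF$-factorisation available in a tribe (Definition \ref{defi:AF-fact} and Definition \ref{defpretypos}) to write $f=pu:A\rightarrowtail E\twoheadrightarrow B$ with $u$ anodyne and $p$ a fibration; then $u$ is a homotopy equivalence by Proposition \ref{anohomotopyequiv}, so $f$ is factored as an acyclic map followed by a fibration as required.

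The fourth axiom — stability of acyclic fibrations under base change — is the one non-trivial input, and this is precisely Proposition \ref{trivialfibrationbasechange}, which in turn relies on the characterisation of trivial fibrations via contractibility in $\mathcal{E}(B)$ (Proposition \ref{homotopyequiv}) and the fact that base change functors $f^{\star}:\mathcal{E}(B)\to\mathcal{E}(A)$ preserve homotopy equivalences (Corollary \ref{inverthoequiv3}). Note that this is just Theorem \ref{atribeisafibrationcategory}, so at this stage there is nothing new to prove beyond invoking it.

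For the second sentence, I would argue that a sharp morphism $F:\mathcal{E}\to\mathcal{E}'$ of $h$-tribes, being in particular a morphism of tribes, preserves the homotopy relation by Proposition \ref{h-homotohomotop} (the image of a path object is a path object by Lemma \ref{homotakepathobject}, so the image of a homotopy is a homotopy). Hence $F$ carries homotopy equivalences to homotopy equivalences, so it sends acyclic maps to acyclic maps; combined with the fact that $F$ is already a morphism of clans (and therefore of tribes), this yields exactness in the sense of Definition \ref{defexactfunctorfibcat}. The main subtlety to watch for, and the only place one could stumble, is to make sure that the definition of ``sharp functor'' for $h$-tribes still guarantees preservation of fibrations and of anodyne maps (so that Proposition \ref{h-homotohomotop} applies); assuming sharp morphisms are by definition the morphisms of clans preserving anodyne maps, this is automatic.
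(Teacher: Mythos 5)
Your proposal is correct and follows essentially the same route as the paper: verify the axioms of Definition \ref{definitionfibrationcat} using the 3-for-2 property of homotopy equivalences, the $AF$-factorisation together with Proposition \ref{anohomotopyequiv}, and Proposition \ref{trivialfibrationbasechange} for stability of acyclic fibrations under base change, then deduce exactness of sharp functors from preservation of the homotopy relation. If anything, your citations are the more apt ones (the paper points to \ref{trivialfibrationbasechange} for 3-for-2 and to \ref{inverthoequiv3} for preservation of the homotopy relation, where \ref{6four2homotopyequiv} and \ref{h-homotohomotop} are what is actually needed), and your closing caveat about what ``sharp'' must guarantee is exactly the right thing to watch.
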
 

\begin{proof} 
 The class of acyclic maps in a $h$-tribe  has the 3-for-2 property by \ref{trivialfibrationbasechange}.
Moreover, every morphism can be  factored as an anodyne map followed by a fibration, by definition of a $h$-tribe.
Thus, every morphism can be  factored as an acyclic map followed by a fibration,
since an anodyne map
is a homotopy equivalence by \ref{anohomotopyequiv}. 
Finally,
the class of acyclic fibrations is stable under base change by \ref{trivialfibrationbasechange}.
The first statement is proved.
Let us prove the second statement.
A sharp functor  between $h$-tribes 
 respects the homotopy relation by \ref{inverthoequiv3}.
 It thus preserves homotopy equivalences.
  \end{proof}

\begin{defi}
A  {\bf weak path object} of an object $A$ in a fibration category is a
commutative diagram
$$ \xymatrix{
&& A \\
A  \ar@/_1pc/[rrd]_-{1_A}  \ar@/^1pc/[rru]^-{1_A}  \ar[r]^\sigma  & PA \ar[ur]_{\partial^0}\ar[dr]^{\partial^1}& \\
 && A
 }$$
obtained by factoring the diagonal $A\to A\times A$ 
as an acyclic map $\sigma:A\to PA$ followed by a fibration $(\partial^0,\partial^1):PA \to A\times A$.
\end{defi}

The maps $\partial^0$ and $\partial^1$ 
are fibrations, since  $\partial^0=p_1( \partial^0,\partial^1)$,
$\partial^1=p_2(\partial^0,\partial^1)$ and 
the projections $p_1,p_2:A\times A\to A$
are fibrations. The maps $\delta^0$ and $\delta^0$ are acyclic by 3-for-2, since $\sigma$ is acyclic
and we have $\delta^0\sigma=1_A$ and  $\delta^1\sigma=1_A$.

\medskip

 \begin{defi}
 A  {\bf weak mapping path object} of a map $f:A\to B$ in a fibration category is a
diagram
$$ \xymatrix{
P(f)   \ar[d]^-{r} \ar[drr]^{p}  & &  \\
 \ar@/^1.5pc/[u]^-{s}     A  \ar[rr]^(0.4)f && B
 }$$
 obtained by factoring the map $(1_A,f) :A\to A\times B$ as 
an acyclic map  $s:A \to P(f)$
followed by a fibration $(r,p) :P(f)\to A\times B$.
 \end{defi}
 
By construction, we have $f=ps$ and $rs=1_A$.
The maps $r$ and $p$
are fibrations, since the projections $p_1:A\times B\to A$ and $p_2:A\times B\to B$
are fibrations and we have $r=p_1(r,p)$ and $p=p_2(r,p)$.
The map $r$ is acyclic by 3-for-2, since $rs=1_A$
and $s$ is acyclic.

\begin{lemma} \label{KBrownlemma0} 
Every map $f:A\to B$ in a fibration category admits a factorization
$f=ps:A\to P\to B$ with $p$ a fibration and $s$ a section of an acyclic fibration $r:P\to A$.
$$ \xymatrix{
P   \ar[d]^-{r} \ar[drr]^p  & &  \\
 \ar@/^1.5pc/[u]^-{s}     A  \ar[rr]^(0.4)f&& B
 }$$
\end{lemma}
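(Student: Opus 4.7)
The plan is to simply invoke the weak mapping path object construction introduced immediately above the lemma, then unpack its properties. Given $f : A \to B$, I will factor the map $(1_A, f) : A \to A \times B$ as an acyclic map $s : A \to P$ followed by a fibration $(r, p) : P \to A \times B$, using the factorisation axiom of a fibration category.

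Next I would verify the required properties component-wise. Setting $r := p_1 \circ (r,p)$ and $p := p_2 \circ (r,p)$, I get $ps = p_2 \circ (r,p) \circ s = p_2 \circ (1_A, f) = f$, which gives the desired factorisation, and $rs = p_1 \circ (r,p) \circ s = p_1 \circ (1_A, f) = 1_A$, so $s$ is a section of $r$. Both $r$ and $p$ are fibrations as composites of $(r,p)$ (a fibration by construction) with the projections $p_1, p_2 : A \times B \to A, B$ (fibrations by Proposition \ref{tribeofgenuineprojection0}, since a fibration category is a clan). Finally, $r$ is acyclic by the 3-for-2 property: the identity $1_A = rs$ is acyclic (every isomorphism is acyclic) and $s$ is acyclic by construction, so $r$ is acyclic.

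There is no real obstacle here — the lemma is essentially a repackaging of the definition of a weak mapping path object, with the only substantive observation being that the acyclicity of $r$ follows from 3-for-2 applied to the identity $rs = 1_A$. The factorisation $f = ps$ is then automatic, and the fibration properties of $r$ and $p$ come for free from closure of fibrations under composition.
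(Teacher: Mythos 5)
Your proof is correct and is essentially identical to the paper's argument: the paper proves this lemma via the weak mapping path object, i.e.\ by factoring $(1_A,f):A\to A\times B$ as an acyclic map followed by a fibration and then making exactly the observations you make (composition with the projections for the fibration properties, and 3-for-2 applied to $rs=1_A$ for the acyclicity of $r$). Nothing is missing.
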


\begin{lemma} \label{KBrownlemma} {\rm (Ken Brown's lemma)} 
 Let $ \mathcal{E}$ be a fibration category and
$\mathcal{C} $ be a category equipped with a class of weak equivalences
 having the 3-for-2 property and containing the isomorphisms.
If a functor $F:\mathcal{E} \to \mathcal{C} $ 
takes acyclic fibrations to weak equivalences,
then it takes acyclic maps  to weak equivalences.
\end{lemma}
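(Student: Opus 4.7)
The plan is to reduce an arbitrary acyclic map to a composite of acyclic fibrations, to which the hypothesis on $F$ applies directly, using the 3-for-2 property on the $\mathcal{C}$-side to transfer weak equivalence along sections.

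First I would take an acyclic map $f:A\to B$ in $\mathcal{E}$ and invoke Lemma \ref{KBrownlemma0} to produce a factorization $f=ps:A\to P\to B$ in which $p$ is a fibration and $s$ is a section of an acyclic fibration $r:P\to A$ (so $rs=1_A$). The point of this factorization is that it expresses $f$ in terms of maps whose behavior is controlled: $r$ is an acyclic fibration by construction, and I claim $p$ is also an acyclic fibration. Indeed, $s$ is acyclic by 3-for-2 applied to $rs=1_A$ (since $r$ is acyclic and $1_A$ is acyclic), and then $p$ is acyclic by 3-for-2 applied to $f=ps$ (since $f$ and $s$ are both acyclic). Thus $p$ is an acyclic fibration.

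Next I would apply the functor $F$. By hypothesis, $F(r)$ and $F(p)$ are weak equivalences in $\mathcal{C}$, since $r$ and $p$ are acyclic fibrations. Applying $F$ to $rs=1_A$ gives $F(r)F(s)=1_{FA}$, which is a weak equivalence since isomorphisms are weak equivalences. By the 3-for-2 property in $\mathcal{C}$, $F(s)$ is a weak equivalence. Finally $F(f)=F(p)F(s)$ is a composite of weak equivalences in $\mathcal{C}$, hence a weak equivalence by 3-for-2 once more (composing with the identity, or directly since the class of weak equivalences is closed under composition via 3-for-2).

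I do not expect any real obstacle: the only subtle step is recognizing that the factorization of Lemma \ref{KBrownlemma0} together with 3-for-2 upgrades $p$ from a fibration to an acyclic fibration when $f$ is acyclic, so that both $r$ and $p$ fall into the class on which $F$ is assumed to behave well. Everything else is a formal application of 3-for-2 in $\mathcal{C}$, which is available by hypothesis.
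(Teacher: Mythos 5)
Your proof is correct and follows essentially the same route as the paper: factor $f=ps$ via Lemma \ref{KBrownlemma0}, use 3-for-2 to see that $p$ is an acyclic fibration, deduce that $F(s)$ is a weak equivalence from $F(r)F(s)=1_{FA}$, and conclude by composing. The only cosmetic difference is that you explicitly derive the acyclicity of $s$ from $rs=1_A$ by 3-for-2, which the paper uses implicitly.
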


\begin{proof} If a map $f:A\to B$  in $\mathcal{E} $ is acyclic, let us show that $F(f)$ is a weak equivalence.
By lemma  \ref{KBrownlemma0}, the map $f:A\to B$ 
admits a factorization
$f=ps:A\to P\to B$, with $p$ a fibration an $s$ a section of an acyclic fibration $r:P\to A$.
The map $F(r)$ is a weak equivalence by the hypothesis on $F$.
It follows that $F(s)$ is a weak equivalence by 3-for-2, since $F(r)F(s)=F(rs)=F(1_A)=1_{FA}$
and $1_{FA}$ is a  weak equivalence.
The fibration $p$ is acyclic by 3-for-2, since the maps $f$ and $s$ are acyclic
and we have $f=ps$. Thus, $F(p)$ is a weak equivalence by the hypothesis on $F$.
It follows that $F(f)$ is a  weak equivalence by 3-for-2, since $F(f)=F(ps)=F(p)F(s)$
and $F(s)$ is a weak equivalence.
 \end{proof}

\begin{prop} \label{exactfunctorfibcat}
A morphism of tribes $F:\mathcal{E}\to \mathcal{E}'$ between fibration categories is exact if and only if
it takes acyclic fibrations to acyclic fibrations.
\end{prop}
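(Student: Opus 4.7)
The plan is to prove the two implications separately, with the forward direction being immediate and the backward direction being a direct application of Ken Brown's lemma (Lemma \ref{KBrownlemma}).

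For the forward implication, suppose $F$ is exact. Then by Definition \ref{defexactfunctorfibcat}, $F$ preserves acyclic maps. In particular, if $p:E\to B$ is an acyclic fibration in $\mathcal{E}$, then $F(p):F(E)\to F(B)$ is acyclic; it is also a fibration since $F$ is a morphism of tribes, and hence morphism of clans. Thus $F(p)$ is an acyclic fibration in $\mathcal{E}'$.

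For the backward implication, suppose $F:\mathcal{E}\to \mathcal{E}'$ is a morphism of tribes that takes acyclic fibrations to acyclic fibrations. I would apply Ken Brown's lemma with $\mathcal{C}=\mathcal{E}'$, equipped with its class $\mathcal{W}'$ of acyclic maps. By Definition \ref{definitionfibrationcat}, the class $\mathcal{W}'$ contains the isomorphisms and has the 3-for-2 property, so the hypotheses of Lemma \ref{KBrownlemma} are satisfied. By our assumption, $F$ takes every acyclic fibration in $\mathcal{E}$ to an acyclic fibration in $\mathcal{E}'$, which is in particular a weak equivalence in $\mathcal{C}=\mathcal{E}'$ (i.e. an acyclic map). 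Hence Ken Brown's lemma applies and gives that $F$ takes every acyclic map in $\mathcal{E}$ to an acyclic map in $\mathcal{E}'$. Combined with the hypothesis that $F$ is a morphism of tribes, this shows $F$ is exact in the sense of Definition \ref{defexactfunctorfibcat}.

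There is no real obstacle here: once Ken Brown's lemma is in hand, the backward direction is immediate. The only thing worth double-checking is that $F$ is indeed assumed to be a functor in the ambient setting required by Brown's lemma, namely a functor $\mathcal{E}\to \mathcal{C}=\mathcal{E}'$ between a fibration category and a category with a 3-for-2 class of weak equivalences containing isomorphisms; both conditions are automatic from the hypothesis that $F$ is a morphism of tribes between fibration categories. Accordingly, the proof in the paper should reduce to two or three lines invoking Lemma \ref{KBrownlemma}.
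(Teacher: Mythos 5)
Your proof is correct and follows exactly the route the paper takes: the paper's own proof is the single line ``This follows from Lemma \ref{KBrownlemma}'', i.e.\ the forward direction is immediate from Definition \ref{defexactfunctorfibcat} and the backward direction is Ken Brown's lemma applied with $\mathcal{C}=\mathcal{E}'$ and its class of acyclic maps as weak equivalences. You have simply written out the details the paper leaves implicit.
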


\begin{proof} This follows from Lemma \ref{KBrownlemma}. 
 \end{proof}

\begin{prop} \label{basechangeexact} If $ \mathcal{E}$ is a fibration category, 
then the base change functor $f^\star:\mathcal{E}(B) \to \mathcal{E}(A)$
is exact for every map $f:A\to B$. 
\end{prop}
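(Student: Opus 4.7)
The plan is to combine three facts already established in the paper: (i) base change is a morphism of clans, (ii) to verify exactness it suffices to check that acyclic fibrations are preserved, and (iii) the base change of a map in $\mathcal{E}(B)$ is, at the level of underlying arrows in $\mathcal{E}$, itself a base change in $\mathcal{E}$. Given the characterisation of acyclic maps in $\mathcal{E}(A)$ and $\mathcal{E}(B)$ provided by Proposition \ref{localfibcat}, the statement will then reduce directly to the stability axiom of Definition \ref{definitionfibrationcat}.

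First I would recall that by Proposition \ref{basechangetribe} the functor $f^\star:\mathcal{E}(B)\to \mathcal{E}(A)$ is already a morphism of clans, so (modulo the minor terminological issue in Definition \ref{defexactfunctorfibcat}) what remains is to check that it preserves acyclic maps. By Proposition \ref{exactfunctorfibcat}, it is enough to show that $f^\star$ takes acyclic fibrations to acyclic fibrations.

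Let $u:(X,p_X)\to (Y,p_Y)$ be an acyclic fibration in $\mathcal{E}(B)$. By the description of fibrations in $\mathcal{E}(B)$ (given just before Proposition \ref{clanslice}) together with Proposition \ref{localfibcat}, the underlying map $u:X\to Y$ is both a fibration and an acyclic map in $\mathcal{E}$, hence an acyclic fibration in $\mathcal{E}$. By Lemma \ref{squareforbasechange5}, the top square of diagram (\ref{squareforbasechange6})
$$\xymatrix{
f^\star(X) \ar[d]_{f^\star(u)} \ar[rr]^{f_X} && X \ar[d]^{u}\\
f^\star(Y) \ar[rr]^{f_Y} && Y
}$$
is cartesian in $\mathcal{E}$, so the underlying map of $f^\star(u)$ is the base change of the acyclic fibration $u$ along $f_Y$. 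The axiom that acyclic fibrations are stable under base change (Definition \ref{definitionfibrationcat}) then yields that this underlying map is an acyclic fibration in $\mathcal{E}$, and hence that $f^\star(u)$ is an acyclic fibration in $\mathcal{E}(A)$, again by Proposition \ref{localfibcat}.

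There is no real obstacle here, since the nontrivial work has been placed elsewhere: the only step that is not entirely formal is recognising that being an acyclic fibration in $\mathcal{E}(B)$ is the same as being an acyclic fibration in $\mathcal{E}$, which is precisely what Proposition \ref{localfibcat} (together with the definition of fibrations in a slice clan) provides. I would simply remark on this identification and then invoke Proposition \ref{exactfunctorfibcat} to conclude.
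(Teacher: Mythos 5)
Your proof is correct and follows essentially the same route as the paper's: the paper likewise reduces to preservation of acyclic fibrations via Proposition \ref{exactfunctorfibcat} (i.e.\ Ken Brown's lemma), observes that $f^\star(u)$ is a base change of $u$ because the relevant square is cartesian (citing Proposition \ref{basechangetribe0} where you cite Lemma \ref{squareforbasechange5}, which records the same fact), and concludes by the stability axiom. Your additional remark that Proposition \ref{localfibcat} identifies acyclic fibrations in the slice with those in $\mathcal{E}$ makes explicit a step the paper leaves implicit, but the argument is the same.
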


\begin{proof} Let us first verify that the base change functor $f^\star:\mathcal{E}(B) \to \mathcal{E}(A)$
preserves acyclic fibrations. If $p:X\to Y$ is a fibration in $\mathcal{E}(B)$,
then $f^\star(p)$ is a base change of $p$, since
 the following square is cartesian by \ref{basechangetribe0}, 
$$ \xymatrix{
f^\star X \ar[d]_{f^\star(p)}\ar[r]^(0.6){f_X}   & X \ar[d]^{p}      \\
f^\star Y \ar[r]^(0.6){f_Y }  & Y
 }$$
Thus, $f^\star(p)$ is an acyclic fibration  when $p$ is an acyclic fibration.
It then follows from \ref{exactfunctorfibcat} that the functor $f^\star$
preserves acyclic maps.
 \end{proof}

We say that an object $A$ in a fibration category is {\it contractible}
if the map $A\to 1$ is acyclic.

\begin{lemma}\label{basechangeofacylicmap} Let $B$ a contractible object of a fibration category.
If $f^{-1}(b)$ is the fiber of a fibration $f:A\to B$ at a point $b:B$,
then the inclusion  $i:f^{-1}(b)\to A$ is acyclic,
    $$ \xymatrix{
f^{-1}(b)  \ar[d] \ar[r]^(0.6){i} & \ar[d]  A \ar[d]^-{f}  \\
1 \ar[r]^{b}  & B \ .
}$$
\end{lemma}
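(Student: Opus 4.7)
The plan has two main components. First, I would establish that $b : 1 \to B$ is acyclic. Since $B$ is contractible by hypothesis, the structure map $t_B : B \to 1$ is acyclic. The composite $1 \xrightarrow{b} B \xrightarrow{t_B} 1$ equals $1_1$, which is acyclic as an isomorphism (every isomorphism is acyclic). The 3-for-2 property applied to this composite then forces $b$ to be acyclic.

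Second, I would observe that $i : f^{-1}(b) \to A$ is, by construction of the pullback defining $f^{-1}(b)$, the base change of the acyclic map $b$ along the fibration $f$. The lemma therefore reduces to the general claim that in a fibration category the base change of an acyclic map along a fibration is acyclic --- a classical theorem originally due to K.~Brown.

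The principal obstacle is establishing this general claim, which is \emph{not} an immediate consequence of the fibration category axioms: the axioms stipulate only the stability of acyclic fibrations (not of arbitrary acyclic maps) under base change. The standard inline argument factors $b = p\circ s$ via a weak mapping path object, where $s : 1 \to P$ is acyclic and $p : P \to B$ is a fibration; 3-for-2 on $b = ps$ forces $p$ to be an acyclic fibration. One then forms the iterated pullback
$$
\xymatrix{
f^{-1}(b) \ar[d] \ar[r]^{s'} & A \times_B P \ar[d]_{\pi_P} \ar[r]^{\pi_A} & A \ar[d]^f \\
1 \ar[r]^s & P \ar[r]^p & B
}
$$
so that $i = \pi_A \circ s'$. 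The map $\pi_A$ is an acyclic fibration, being the base change of the acyclic fibration $p$ along $f$, so by 3-for-2 it suffices to show $s'$ acyclic. The hard step is the acyclicity of $s'$; the intended strategy exploits the fact that, by the weak mapping path object construction, $s$ is a section of the acyclic fibration $r : P \to 1$. This extra retraction structure, combined with Ken Brown's lemma (Lemma~\ref{KBrownlemma}) applied to a suitable base change functor between fibration categories of fibrations, permits the recursion to terminate and yields $s'$ acyclic, completing the proof.
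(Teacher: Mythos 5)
Your first step (showing $b$ is acyclic by 3-for-2 against the acyclic map $B\to 1$) is fine, but the main reduction contains a genuine circularity. You reduce the lemma to the general statement that the base change of an acyclic map along a fibration is acyclic; in the paper this is Proposition \ref{basechangeofacylicmap2}, and its proof \emph{uses} the present lemma to handle precisely the case you end up in. Concretely: after your mapping-path factorization $b=ps$, the remaining task is to show that $s':f^{-1}(b)\to A\times_B P$ is acyclic, where $s:1\to P$ is a point of the contractible object $P$ and $s'$ is the inclusion of the fiber of the fibration $\pi_P$ at $s$. That is word-for-word another instance of the lemma you are proving, with $(B,f,b)$ replaced by $(P,\pi_P,s)$ --- and no structure has been gained, because in the original problem $b$ was already a section of the acyclic fibration $B\to 1$ (the structure map $B\to 1$ is a fibration by the clan axioms and acyclic by contractibility). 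So the ``recursion'' does not terminate; it reproduces the same problem. Your closing appeal to Ken Brown's lemma is the right instinct, but it is not actually carried out.

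The paper avoids the circle by applying an exact base-change functor to a \emph{different} acyclic map. Since $B\to 1$ is an acyclic fibration, its base change $p_1:A\times B\to A$ is an acyclic fibration, so $(1_A,f):A\to A\times B$ is acyclic by 3-for-2, as $p_1(1_A,f)=1_A$. Now $(1_A,f)$ is a morphism $(A,f)\to (A\times B,p_2)$ in $\mathcal{E}(B)$, and its base change along $b:1\to B$ is exactly $i:f^{-1}(b)\to A$. The functor $b^\star:\mathcal{E}(B)\to \mathcal{E}$ preserves acyclic fibrations (the base change of a fibration in $\mathcal{E}(B)$ along $b$ is a base change in $\mathcal{E}$), hence preserves all acyclic maps by Ken Brown's lemma; this is Proposition \ref{basechangeexact}. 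Therefore $i=b^\star(1_A,f)$ is acyclic. The idea you are missing is that one should not base-change the acyclic map $b$ along $f$, but rather base-change the acyclic map $(1_A,f)$ along $b$, where source and target are both fibrant over $B$ so that the exactness of $b^\star$ applies.
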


 \begin{proof}
 The projection $p_1:A\times B\to A$ is an acyclic fibration by base change, since  
 the map $B\to 1 $ is an acyclic fibration.
Hence the map $(1_A,f):A\to A\times B$ is acyclic by 3-for-2, since $p_1(1_A,f)=1_A$.
Observe that $(1_A,f)$ is a map between two objects of $\mathcal{E}(B)$, since
$p_2(1_A,f)=f$ and $f$ is a fibration.
The top square of the following diagram is cartesian by lemma \ref{lemmacartesiansq}, 
since the composite square is cartesian,
 $$ \xymatrix{
 f^{-1}(b) \ar[d]_-{i}   \ar[r]^(0.6){i} & \ar[d] A  \ar[d]^-{(1_A,f)}  \\
A\ar[r]^{A\times b}\ar[d]  & A\times B   \ar[d]^-{p_2}\\
1 \ar[r]^{b}  & B 
}$$
Hence we have $b^\star(1_A,f)=i$, where $b^\star:\mathcal{E}(B)\to \mathcal{E}$ is the base change
functor along the map $b:1 \to B$.
It then follows from proposition \ref{basechangeexact} that $i$
is acyclic, since $(1_A,f)$ is acyclic.
\end{proof}

\begin{prop} \label{basechangeofacylicmap2} In a fibration category,  the base change of an acyclic map along a fibration 
is acyclic. 
\end{prop}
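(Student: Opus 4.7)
The plan is to reduce the statement to Lemma \ref{basechangeofacylicmap} by factoring the acyclic map through an acyclic fibration, and then applying that lemma in a local fibration category. Let $u:A\to B$ be acyclic and $p:E\to B$ be a fibration; we wish to show that the projection $p_2:A\times_B E\to E$ is acyclic.

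First I would apply Ken Brown's factorisation \ref{KBrownlemma0} to $u$, obtaining a factorisation $u=qs:A\to P\to B$ where $q$ is a fibration and $s:A\to P$ is a section of some acyclic fibration $r:P\to A$, so $rs=1_A$. Since $s$ is a section of an acyclic fibration, 3-for-2 gives that $s$ is acyclic; since $u=qs$ is acyclic and so is $s$, 3-for-2 gives that $q$ is acyclic. Thus $q$ is an acyclic fibration.

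Next I would decompose the pullback of $p$ along $u$ as two stacked pullback squares
$$\xymatrix{
A\times_B E \ar[r]^{\alpha} \ar[d] & P\times_B E \ar[r]^{\beta} \ar[d]_{\pi_1} & E \ar[d]^p \\
A \ar[r]^s & P \ar[r]^q & B
}$$
so that $p_2=\beta\alpha$. The map $\beta$ is the base change of the acyclic fibration $q$ along $p$, hence is itself an acyclic fibration by the stability axiom of a fibration category. It therefore remains to prove that $\alpha$, the base change of $s$ along the fibration $\pi_1$, is acyclic.

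To handle $\alpha$, I would pass to the local fibration category $\mathcal{E}(A)$ (Proposition \ref{localfibcat}). There $(P,r)$ is contractible, since the structural map to the terminal object $(A,1_A)$ is precisely $r$, which is acyclic. The map $\pi_1$ lifts to a fibration $(P\times_B E,r\pi_1)\to (P,r)$ in $\mathcal{E}(A)$, and the equality $rs=1_A$ shows that $s$ defines a point $(A,1_A)\to(P,r)$ in $\mathcal{E}(A)$. Lemma \ref{basechangeofacylicmap} applied inside $\mathcal{E}(A)$ then yields that the inclusion of the fibre of $\pi_1$ over $s$ into $P\times_B E$ is acyclic in $\mathcal{E}(A)$; by associativity of pullback this fibre is exactly $A\times_B E$ and the inclusion is $\alpha$. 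Since acyclic maps in $\mathcal{E}(A)$ are acyclic in $\mathcal{E}$, we conclude that $\alpha$, and hence $p_2=\beta\alpha$, is acyclic.

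The main conceptual point, and the only real obstacle, is recognising that Lemma \ref{basechangeofacylicmap} — proved for a global contractible base — can be leveraged in the non-contractible situation by moving to the slice $\mathcal{E}(A)$, where the fibration $r$ witnesses the contractibility of $(P,r)$ over its terminal object. Everything else is a routine diagram chase.
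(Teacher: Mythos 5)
Your proof is correct and follows essentially the same route as the paper's: Ken Brown's factorisation splits the acyclic map into an acyclic fibration (handled by the stability axiom after base change) and a section of an acyclic fibration, and the latter is handled by applying Lemma \ref{basechangeofacylicmap} in the slice $\mathcal{E}(A)$, where the retraction witnesses contractibility. The only difference is presentational — the paper proves the section case first and then reduces to it, while you reduce first — so there is nothing substantive to add.
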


 \begin{proof} Let $w:C\to B$ be an acyclic map and
$f:A\to B$ be a fibration.
 We wish to show that the map $w_A$ in the following cartesian square is acyclic,
$$ \xymatrix{
  \ar[d]_-{w^\star(f)} w^\star(A)  \ar[r]^(0.6){w_A} & \ar[d] A  \ar[d]^-{f}  \\
C\ar[r]^{w}  & B \ .
}$$
Let us first consider the case where $w$ is the section $s:C\to B$ of an acyclic 
fibration $r:B\to C$. 
    $$ \xymatrix{
  \ar[d]_-{s^\star(f)} s^\star(A)  \ar[r]^(0.6){s_A} & \ar[d] A  \ar[d]^-{f}  \\
C \ar[r]^{s}  & B  \ar@/^1pc/[l]^-{r}   \ 
}  $$
In this case, the square can be lifted to the category $\mathcal{E}(C)$, 
$$ \xymatrix{
  \ar[d]_-{s^\star(f)}  s^\star(A) \ar[r]^(0.6){s_A} & \ar[d] A  \ar[d]_-{f} \ar[ddr]^{rf}&  \\
C \ar@{=}[drr] \ar[r]^{s}  & B  \ar[dr]^(0.3){r} &\\
&& C
}  $$
The object $(B,r)$ of $\mathcal{E}(C)$ is contractible, since the fibration $r:B\to C$
is acyclic. Moreover, the object $s^\star(A)$ is the fiber
of the map $f:(A,rf) \to (B,r)$ at the point $s:(C,1_C)\to (B,r)$. It then follows from lemma \ref{basechangeofacylicmap}
that the map $s_A:s^\star(A) \to A$ is acyclic. Let us now consider the general case of an acyclic map $w:C\to B$.
By lemma \ref{KBrownlemma0}, the map $w:C\to B$
admits a factorization $w=ps:C\to P\to B$ with  $p$ a fibration and $s$ a section of an acyclic fibration $r:P\to A$.
The map $u$ is acyclic by 3-for-2, since $r$ is acyclic and $ru=1_A$. 
 The map $p$ is also acyclic by 3-for-2, since $w$ is acyclic and we have
   $ps=w$. 
The left hand square of following diagram  is cartesian by lemma \ref{lemmacartesiansq}, since the right hand square
   and the composite square are cartesian by construction,
     $$ \xymatrix{
 \ar[d]_-{f'}  C\times_B A  \ar[r]_{s'}    \ar@/^2pc/[rr]^-{w_A}  &  \ar[d]_-{p_1}  P\times_BA  \ar[r]_(0.6){p_2} &  \ar[d]^-{f}  A\\
 C \ar[r]^s  \ar@/_2pc/[rr]_-{w}  & P \ar[r]^{p} &   B
}$$
The map $p_2$ is an acyclic fibration by base change, since $p$ is an acyclic fibration.
Hence it suffices to show that the map $s'$ is acyclic, since $w_A=p_2's'$.
But $s'$ is the base change of $s$ along $p_1$, since the left hand square is cartesian.
The projection $p_1$ is a fibration by base change, since $f$ is a fibration.
The map $s$ is the section of the acyclic fibration $p: P\to C$ by construction.
It then follows by the first part of the proof that $s'$ is acyclic.
 \end{proof}

\subsection{Homotopy cartesian squares} \label{hcsquareinaBrownfibcat}

The notion of homotopy cartesian square can be defined in any Brown fibration category

\begin{defi} \label{def:hcsquare}
A commutative square in Brown fibration category 
\begin{equation} \label{hocartsquarefirst}
\xymatrix{
C    \ar[d]_u   \ar[r]^g  & D \ar[d]^v  \\
A \ar[r]^{f}  & B
}
\end{equation}
is said to be {\it homotopy cartesian} if the map 
$(u,v_0g):C\to A\times_B D'$ in the following diagram 
is acyclic
\begin{equation} \label{hocartsquare2}
\xymatrix{
C    \ar[d]_{(u,v_0g)}   \ar[r]^g  & D \ar[d]^{v_0}  \\
A\times_B D'    \ar[d]   \ar[r]  & D' \ar[d]^{v_1}  \\
A \ar[r]^{f}  & B
}
\end{equation}
for a choice of factorisation $v=v_1v_0:D\to D'\to B$ with $v_0$ an acyclic map
and $v_1$ a fibration.
\end{defi}

\begin{lemma} If the square (\ref{hocartsquarefirst}) is homotopy cartesian,
then the map $(u,v_0g):C\to A\times_B D'$ is acyclic
 for {\rm every} factorisation  $v=v_1v_0:D\to D'\to B$ with $v_0$ an acyclic map
and $v_1$ a fibration, Moreover, the transposed square 
$$
\xymatrix{
C    \ar[d]_g   \ar[r]^u  & A \ar[d]^f  \\
D \ar[r]^{v}  & B
}
$$
is homotopy 
cartesian:
\end{lemma}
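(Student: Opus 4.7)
The plan is to reduce both assertions to the two main tools available in a fibration category: stability of acyclic fibrations under arbitrary base change, together with the 3-for-2 property for acyclic maps. In each case the strategy will be identical: construct a comparison map between two candidate gap maps, exhibit it as a base change of an acyclic fibration (so that it is itself an acyclic fibration), and then conclude by 3-for-2 applied to the resulting commutative triangle.

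For independence from the factorisation of $v$, suppose that two factorisations $v = v_1 v_0 \colon D \to D' \to B$ and $v = v_1' v_0' \colon D \to D'' \to B$ are given. I would form the fibre product $D' \times_B D''$, consider the induced map $(v_0, v_0') \colon D \to D' \times_B D''$, and factor it as an acyclic map $w \colon D \to E$ followed by a fibration $E \to D' \times_B D''$. Composing with the two projections then yields fibrations $q \colon E \to D'$ and $q' \colon E \to D''$ (pullback projections of the fibrations $v_1'$ and $v_1$ are fibrations), and both are acyclic by 3-for-2 since $qw = v_0$ and $q'w = v_0'$ are acyclic and $w$ is acyclic. Base-changing the acyclic fibration $q$ along $A \times_B D' \to D'$ (identified via the pasting lemma \ref{lemmacartesiansq}) then produces an acyclic fibration $A \times_B E \to A \times_B D'$, and analogously for $D''$. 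The comparison $(u, wg) \colon C \to A \times_B E$ sits over both $(u, v_0 g) \colon C \to A \times_B D'$ and $(u, v_0' g) \colon C \to A \times_B D''$ via these acyclic fibrations, and 3-for-2 applied to the two resulting triangles will yield the equivalence of the three acyclicity conditions.

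For the symmetry statement, I would fix factorisations $f = f_1 f_0 \colon A \to A' \to B$ and $v = v_1 v_0 \colon D \to D' \to B$. By pasting of cartesian squares, the map $A \times_B D' \to A' \times_B D'$ induced by $f_0$ is the base change of the acyclic map $f_0$ along the fibration $A' \times_B D' \to A'$ (which is itself a base change of the fibration $v_1$ along $f_1$); hence it is acyclic by Proposition \ref{basechangeofacylicmap2}. Applying 3-for-2 to the triangle with apex $C$ projecting to $A \times_B D'$ and to $A' \times_B D'$ yields that $(u, v_0 g)$ is acyclic iff $(f_0 u, v_0 g)$ is acyclic. Symmetrically, $A' \times_B D \to A' \times_B D'$ will be acyclic, giving $(f_0 u, g)$ acyclic iff $(f_0 u, v_0 g)$ acyclic. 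Combining the two chains of equivalences, the homotopy cartesian condition for the original square is equivalent to the condition for the transposed square (up to the canonical isomorphism $A' \times_B D \cong D \times_B A'$).

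No substantial obstacle is expected. The only step requiring care is the repeated bookkeeping with Lemma \ref{lemmacartesiansq} to confirm that each comparison map really is the specific base change asserted, but this is entirely routine.
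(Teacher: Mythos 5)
Your proof is correct, and the second half (the symmetry of the homotopy cartesian condition) is essentially the paper's own argument: the paper forms the same $3\times 3$ grid of pullbacks around the corner $A'\times_B D'$, observes that $f_0\times_B D'\colon A\times_B D'\to A'\times_B D'$ and $A'\times_B v_0\colon A'\times_B D\to A'\times_B D'$ are acyclic by Proposition \ref{basechangeofacylicmap2} (each being a base change of an acyclic map along a pullback projection, which is a fibration), and concludes by 3-for-2 on the commuting square comparing $(u,v_0g)$ with $(f_0u,g)$.

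Where you genuinely diverge is in the first claim, the independence of the gap map's acyclicity from the chosen factorisation of $v$. The paper gives no separate argument for this: it only proves that $(f_0u,g)$ is acyclic for an arbitrary factorisation of $f$, and the independence statement is left implicit (it follows by applying the transposition result twice, passing through the transposed square and back with the second factorisation of $v$). You instead give a direct proof by building a common refinement: you factor $(v_0,v_0')\colon D\to D'\times_B D''$ as an acyclic map followed by a fibration, check by 3-for-2 that the two induced fibrations $q\colon E\to D'$ and $q'\colon E\to D''$ are acyclic fibrations, and then use the axiom that acyclic fibrations are stable under base change to compare all three gap maps over $A\times_B E$. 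Both routes are sound; yours has the advantage of making the first claim self-contained and of using only the base-change axiom for acyclic \emph{fibrations} in that step (rather than Proposition \ref{basechangeofacylicmap2} for general acyclic maps), at the cost of introducing the auxiliary object $E$. The paper's route is shorter on paper but leaves the reader to supply the double-transposition argument.
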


\begin{proof} By hypothesis, the map $(u,v_0g):C\to A\times_B D'$ in the diagram (\ref{hocartsquare2})
is acyclic for a choice of factorisation $v=v_1v_0:D\to D'\to B$ with $v_0$ an acyclic map
and $v_1$ a fibration.
Consider a factorisation $f=f_1f_0:D\to D'\to B$ with $f_0$ an acyclic map
and $f_1$ a fibration. Let us show that 
the map 
$(f_0u,g):C\to A'\times_B D$ in the following diagram 
\begin{equation} \label{hocartsquarefirst2}
\xymatrix{
C    \ar[d]_u  \ar[r]^(0.4){(f_0u,g)} & A'\times_B D \ar[d]  \ar[r]  & D \ar[d]^v  \\
A\ar[r]^{f_0} &  A' \ar[r]^{f_1}  & B
}
\end{equation}
is acyclic. Consider the following 
commutative diagram with three pullback squares $(b), (c)$ and $(d)$.
\begin{equation} \label{hocartsquarethird}
\xymatrix{
C    \ar[d]_{(u,v_0g)}    \ar[r]^{(f_0u,g)} \ar@{}[dr]|{(a)} & A'\times_B D \ar[d]_{v'_0}   \ar[r]^{}  \ar@{}[dr]|{(b)} & D \ar[d]^{v_0}  \\
 A\times_B D'    \ar[d]  \ar[r]^{f'_0}  \ar@{}[dr]|{(c)}  & A'\times_B D' \ar[d]  \ar[r]  \ar@{}[dr]|{(d)}  & D' \ar[d]^{v_1}  \\
A\ar[r]^{f_0} &  A' \ar[r]^{f_1}  & B
}
\end{equation}
The map $v'_0:=A'\times_B v_0:A'\times_B D \to A'\times_B D'$
is a base change of the map $v_0$ along the fibration $f_1$.
Hence the map $v'_0$ is acyclic by Proposition \ref{basechangeofacylicmap2},
since $v_0$ is acyclic.
Similarly, the map $f'_0:=f_0\times_B D' : A'\times_B D'   \to A'\times_B D'$
is acyclic since $f_0$ is acyclic.
It follows by 3-for-2 that the map $(f_0u,g)$
is acyclic, since the square $(a)$ commutes and the maps
$(u,v_0g)$, $f'_0$ and $v'_0$ are acyclic.
\end{proof}

The following three lemmas are classical.

\begin{lemma}\label{homotopycartesiancriterion1} Suppose that the map $f$
in the following a commutative square is acyclic.
\begin{equation} \label{hocartsquare7}
\xymatrix{
C    \ar[d]   \ar[r]^g  & D \ar[d]  \\
A \ar[r]^{f}  & B
}
\end{equation}
Then the square is homotopy cartesian if and only if the map $g$ is acyclic.
\end{lemma}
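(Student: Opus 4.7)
The plan is to unwind the definition of homotopy cartesian square and reduce the statement to a single application of the 3-for-2 property, via Proposition \ref{basechangeofacylicmap2}.

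First I would pick a factorisation $v=v_1v_0\colon D\to D'\to B$ with $v_0$ acyclic and $v_1$ a fibration, and form the pullback square
$$\xymatrix{
A\times_B D'    \ar[d]_{p_1}   \ar[r]^(0.6){p_2}  & D' \ar[d]^{v_1}  \\
A \ar[r]^{f}  & B
}$$
Since $f$ is acyclic by hypothesis and $v_1$ is a fibration, the base change $p_2\colon A\times_B D'\to D'$ is acyclic by Proposition \ref{basechangeofacylicmap2}. The gap map $\gamma\defeq (u,v_0g)\colon C\to A\times_B D'$ then satisfies the universal equations $p_1\gamma=u$ and $p_2\gamma=v_0g$.

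The key observation is the commutative triangle
$$\xymatrix{
C \ar[d]_\gamma  \ar[r]^g & D \ar[d]^{v_0} \\
A\times_B D' \ar[r]^(0.6){p_2} & D'
}$$
in which both $v_0$ and $p_2$ are acyclic. By definition, the original square is homotopy cartesian iff $\gamma$ is acyclic. Since $v_0g=p_2\gamma$, the 3-for-2 property applied to the composite $v_0 g$ (resp.\ $p_2\gamma$) shows that $\gamma$ is acyclic iff $g$ is acyclic: if $g$ is acyclic, then $v_0g$ is acyclic by composition, hence $p_2\gamma$ is acyclic, hence $\gamma$ is acyclic since $p_2$ is; conversely, if $\gamma$ is acyclic, then $p_2\gamma=v_0g$ is acyclic, hence $g$ is acyclic since $v_0$ is.

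No step is a genuine obstacle: the entire argument is one invocation of Proposition \ref{basechangeofacylicmap2} followed by two applications of 3-for-2. The only minor point to be careful about is that, although the definition of ``homotopy cartesian'' fixes a particular factorisation of $v$, this choice does not matter here since the argument proceeds equally well with any such factorisation (and in any case the preceding lemma in the text records that the property is independent of the choice).
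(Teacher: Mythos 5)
Your proof is correct, and it is the natural argument: the paper itself leaves this lemma as an exercise, so there is no written proof to compare against. Your two ingredients — that $p_2\colon A\times_B D'\to D'$ is acyclic as the base change of the acyclic map $f$ along the fibration $v_1$ (Proposition \ref{basechangeofacylicmap2}), and then 3-for-2 applied twice to the identity $p_2\circ(u,v_0g)=v_0\circ g$ — are exactly what is needed, and your closing remark about independence of the chosen factorisation correctly disposes of the only delicate point.
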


\begin{proof}
Left as an exercise to the reader.
\end{proof}

\begin{lemma} \label{lemmahocartesiansqcriterion2} Suppose that 
the right hand square of the following commutative diagram is homotopy cartesian:
$$\xymatrix{
X  \ar[d] \ar[r]^{}&  Y \ar[d] \ar[r]^{} & Z \ar[d]\\
A \ar[r]^{} & B \ar[r]^{}  & C
}  $$ 
Then the left hand square is homotopy cartesian 
if and only if the composite square is homotopy cartesian.
\end{lemma}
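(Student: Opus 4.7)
The plan is to reduce both the homotopy cartesianness of the left square and of the composite square to a single gap map, by choosing a well-adapted factorization of the rightmost vertical arrow.

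First, label the diagram as
$$
\xymatrix{
X  \ar[d]_p \ar[r]^h & Y \ar[d]_q \ar[r]^k & Z \ar[d]^r \\
A \ar[r]^f & B \ar[r]^g & C.
}
$$
I would begin by fixing a single factorisation $r = r_1 r_0 : Z \xrightarrow{r_0} \hat Z \xrightarrow{r_1} C$ with $r_0$ acyclic and $r_1$ a fibration. Form the pullback $\hat Y := B \times_C \hat Z$ with projections $\hat q : \hat Y \to B$ and $\hat k : \hat Y \to \hat Z$. Then $\hat q$ is a fibration by base change of $r_1$, and the gap map of the right square is the map $\alpha := (q, r_0 k) : Y \to \hat Y$.

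The key observation is that because the right square is homotopy cartesian, $\alpha$ is acyclic. Consequently $q = \hat q \circ \alpha$ is itself a factorisation of $q$ as an acyclic map followed by a fibration, and we are free to use this particular factorisation when computing the gap map of the left square. With this choice, the gap map of the left square is $(p,\alpha h) : X \to A \times_B \hat Y$.

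The main (and only real) step is now a pullback calculation: by Lemma~\ref{lemmacartesiansq} applied to the pasting
$$
\xymatrix{
A \times_B \hat Y \ar[d] \ar[r] & \hat Y \ar[d]_{\hat q} \ar[r]^{\hat k} & \hat Z \ar[d]^{r_1} \\
A \ar[r]^f & B \ar[r]^g & C,
}
$$
the outer rectangle is cartesian, so there is a canonical identification $A \times_B \hat Y = A \times_C \hat Z$ under which the projection to $\hat Z$ is the composite $\hat k \circ \mathrm{pr}_2$. Under this identification, the gap map $(p,\alpha h)$ of the left square becomes $(p, \hat k \alpha h) = (p, r_0 k h) : X \to A \times_C \hat Z$, which is precisely the gap map of the composite square computed using the \emph{same} factorisation $r = r_1 r_0$.

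Therefore the gap map of the left square is acyclic if and only if the gap map of the composite square is acyclic, proving both implications at once. I do not expect a genuine obstacle here; the only subtlety is the legitimacy of using the non-standard factorisation $q = \hat q \alpha$ when verifying homotopy cartesianness of the left square, which is licensed by the preceding lemma asserting that in the definition of a homotopy cartesian square, any factorisation of the vertical arrow as acyclic followed by fibration may be used.
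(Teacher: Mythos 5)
Your proof is correct. The paper leaves this lemma as an exercise, so there is no argument to compare against, but your reduction is exactly the natural one: since the gap map $\alpha=(q,r_0k):Y\to B\times_C\hat Z$ of the right-hand square is acyclic and $\hat q$ is a fibration, $q=\hat q\,\alpha$ is itself an admissible factorisation, and the independence-of-factorisation lemma licenses using it to test the left-hand square. The pasting identification $A\times_B\hat Y\cong A\times_C\hat Z$ then literally identifies the two gap maps $(p,\alpha h)$ and $(p,r_0kh)$, giving both implications simultaneously. No gaps.
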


\begin{proof}
Left as an exercise to the reader.
\end{proof}

\begin{lemma} \label{cubelemmahocartesiansqcriterion3} 
Suppose that we have commutative cube
\[
\xymatrix{
X' \ar[dr] \ar[rrr] \ar[ddd] & &  & Y' \ar '[d] [ddd]^(0,3){}   \ar[dr]   &  \\
  & X \ar[ddd] \ar[rrr] &  & &  Y\ar[ddd] \\ 
  &&&&\\
A' \ar '[r] [rrr]^(0.3){} \ar[dr] & &  &B' \ar[dr] &   \\
  & A  \ar[rrr]^{} &  & & B   \ .}
\]
in which both the left and the right hand faces are homotopy cartesian.
If the front face is homotopy cartesian, then the back face is homotopy cartesian.
\end{lemma}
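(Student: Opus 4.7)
The plan is to mimic the proof of the cartesian-square cube lemma (Lemma \ref{cubelemmacartesiansq}), replacing the pasting lemma \ref{lemmacartesiansq} at each step by its homotopy analogue \ref{lemmahocartesiansqcriterion2}. The central geometric fact about the cube is that the two ways of traversing from $X'$ to $Y$ and from $A'$ to $B$ around the boundary both factor through the same diagonal square
\[
\xymatrix{
X'    \ar[d]   \ar[r]  & Y \ar[d]  \\
A' \ar[r]  & B,
}
\]
and this diagonal may be realized either as the horizontal composite of the front face $X'Y'A'B'$ with the right face $Y'YB'B$, or as the horizontal composite of the left face $X'XA'A$ with the back face $XYAB$. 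This two-fold decomposition of the diagonal is exactly what drove the cartesian case, and it is all we need here.

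First I would apply Lemma \ref{lemmahocartesiansqcriterion2} to the pair (front face, right face): both are homotopy cartesian by hypothesis, hence their composite, namely the diagonal square $X'YA'B$ described above, is homotopy cartesian. Second, I would observe that this same diagonal square is also the horizontal composite of the left face $X'XA'A$ with the back face $XYAB$. Since the left face is homotopy cartesian by hypothesis and the composite is homotopy cartesian by the first step, a second application of Lemma \ref{lemmahocartesiansqcriterion2} (in its ``composite is homotopy cartesian iff the remaining square is'' direction) yields that the back face is homotopy cartesian, which is the desired conclusion.

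The argument is really just a bookkeeping exercise once one has the pasting lemma in its ``if and only if'' form. The only subtle point, and the one worth double-checking when writing out the full proof, is that Lemma \ref{lemmahocartesiansqcriterion2} does apply symmetrically to the two decompositions of the diagonal: concretely, it was stated for a horizontal juxtaposition of two squares sharing a middle column, so one must verify that both compositions (front--then--right, and left--then--back) fit this template. Since homotopy cartesianness is invariant under transposition of a square (a fact recorded just after Definition \ref{def:hcsquare}), one may freely re-orient the cube to bring either decomposition into the required form, and no further work is needed.
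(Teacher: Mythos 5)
Your overall strategy --- realize the diagonal square $X'\to Y$ over $A'\to B$ in two ways and apply the pasting lemma \ref{lemmahocartesiansqcriterion2} twice --- is the right one; it is the homotopy analogue of the proof of Lemma \ref{cubelemmacartesiansq}. But you have assigned the two decompositions to the wrong roles, and this breaks the argument. In the paper's convention (fixed by the proof of Lemma \ref{cubelemmacartesiansq}, where the diagonal is obtained by composing the left vertical face with the \emph{inner} square) the front face is the inner square $XYAB$ and the back face is the outer square $X'Y'A'B'$. So the hypotheses are that $X'XA'A$ (left), $Y'YB'B$ (right) and $XYAB$ (front) are homotopy cartesian, and the conclusion is that $X'Y'A'B'$ is. Your first step composes $X'Y'A'B'$ with the right face and asserts that ``both are homotopy cartesian by hypothesis'' --- but $X'Y'A'B'$ is precisely the square you are trying to prove homotopy cartesian, so this step assumes the conclusion. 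Your second step then ``concludes'' that $XYAB$ is homotopy cartesian, which is already a hypothesis.

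There is a second, independent problem with your step 2 even on its own terms. In the juxtaposition $X'\to X\to Y$ over $A'\to A\to B$, the left face is the \emph{left-hand} square and $XYAB$ is the \emph{right-hand} square. Lemma \ref{lemmahocartesiansqcriterion2} only permits cancellation when the \emph{right-hand} square is the one known to be homotopy cartesian: it says that if the right square is homotopy cartesian, then the left square is homotopy cartesian if and only if the composite is. Knowing the left square and the composite does not determine the right square; this cancellation already fails for ordinary pullbacks. The repair is to run your two steps in the opposite order. First, in the juxtaposition $X'\to X\to Y$ over $A'\to A\to B$ the right-hand square is the front face (homotopy cartesian by hypothesis) and the left-hand square is the left face (homotopy cartesian by hypothesis), so the composite, namely the diagonal, is homotopy cartesian. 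Second, in the juxtaposition $X'\to Y'\to Y$ over $A'\to B'\to B$ the right-hand square is the right face (homotopy cartesian by hypothesis) and the composite is the same diagonal, so by the ``if and only if'' the left-hand square, i.e.\ the back face $X'Y'A'B'$, is homotopy cartesian. This is the argument of Lemma \ref{cubelemmacartesiansq} transported word for word.
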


\begin{proof}
Left as an exercise to the reader.
\end{proof}

\newpage

\end{document}

It is obvious that mathematics is produced by human minds, not by machines.
I do not wish to discuss the nature of the human mind here,
consciousness is a great mystery. We can all agree that
mathematics is produced by an effort of consciousness combined with some kind of formal
calculation and verification. Mental representations of 
ideas have an essential r\^ ole. Most of our mental representations
are obtained by giving meaning to a formal language,
but not all: Cantor's set theory pre-existed its formalisation
by Zermelo-Fraenkel. The influence of
a formal system on our thinking can be determinant:
we often end up thinking entirely within a formal system,
as if it were reality itself. There is a positive side to this
curse: the formal system may protect us for making stupid error
and we may become very skillful at it.  
One way out of this curse is to be become fluent in a
panoplie of formal systems that are related but not equivalent.
 The notion of tribe introduced here is a categorical
approximation of a fragment of Martin-L\"of type theory.
It is a simplified version of a Brown fibration category
which is familiar to homotopy theorists.
We hope that {\it most} results proved in the language of tribes 
will have a formulation in the language of type theory and
conversely. We also hope that the theory of tribes will eventually 
be admitted in the toolbox of homotopy theorists.

\medskip

Homotopy theorists are amoung the biggest users of category theory:
homological algebra, homotopical algebra and simplicial homotopy theory
are all expressed in the language of category theory.
Category theory was extended to 
quasi-categories, a notion introduced by Boardman and Vogt in their
work on homotopy invariant algebraic structures.
 The notion of tribe introduced here is a categorical
approximation of a fragment of Martin-L\"of type theory.
It is a simplified version of a Brown fibration category
and belongs to a family of notions familiar to homotopy theorists.
It  is closely related to the syntaxic category of type theory,
but not an exact replicate of it. 
We hope that {\it most} results proved in the language of tribes 
will have a formulation in the language of type theory and
conversely. We also hope that the theory of tribes will eventually 
be admitted in the toolbox of homotopy theorists.

\medskip

The theory of tribes is incremental.
We first formalise a fragment of dependant type theory without
propositional equality and without internal product.
The notion of {\it clan} only formalises dependant types,
substitutions, contexts and sums. 
A clan may or may not have products.

Quillen's homotopical algebra is undoubtedly the most successful
axiomatic system in homotopy theory and will probably remain so in the forseeable future.
The great generality of Quillen's homotopical algebra  is one of its great virtue: it can be used 
almost everywhere (this is an exageration, but it is not far from the truth).
In this respect, it can be compared to category theory. Actually,
Quillen homotopical algebra is more like the theory of categories with
finite limits and colimits, since homotopy pullbacks
and pushouts can be constructed in any Quillen model category.
But not all categories are equal: some are more important for mathematics 
than others. For example, the category of sets $Set$.
 During the sixties, William Lawvere recognised the importance of the category of sets for mathematics 
and he tried to describe it by proposing a set of "elementary" axioms.
It was later discovered by  Lawvere and Tierney 
that the notion of topos introduced by Grothendieck
may be described, at least partially, by a similar set of  "elementary" axioms.
A program for developing (Grothendieck) topos theory 
using  "elementary" topose was launched.
Many connections with constructive mathematics were found 
and the forcing methods for proving independance results in set
theory were reformulated in terms of sheaf theory.
Despite its many successes, the theory of "elementary" toposes was never popular
 among algebraic geometers, the prime users of  Grothedieck toposes. 
 A possible explanation is that it did not included cohomology theories;
 not that it cannot be included, but that the natural setting for
 cohomology is a Grothendick topos, not an "elementary" topos.
 The theory of "elementary" toposes may be perceived a diversion for someone primarily interested
 in the applications of topos theory to cohomolgy.